\newtheorem{theorem}{Theorem}[section]
\newtheorem{conjecture}[theorem]{Conjecture}
\newtheorem{corollary}[theorem]{Corollary}
\newtheorem{lemma}[theorem]{Lemma}
\newtheorem{proposition}[theorem]{Proposition}
\theoremstyle{definition}
\newtheorem{definition}[theorem]{Definition}
\newtheorem{example}[theorem]{Example}
\theoremstyle{remark}
\newtheorem{remark}[theorem]{Remark}
\numberwithin{equation}{section}
\let\emptyset\varnothing
\let\leq\leqslant \let\geq\geqslant
\def\bydef{\mathrel{\mathop:}=}
\def\dif{\mathop{}\!\textnormal{d}}
\def\ddif{\mathop{}\!\textnormal{dd}}
\def\dual{\raisebox{.5ex}{$\scriptscriptstyle \vee$}}
\def\mltp{\raisebox{.5ex}{$\scriptscriptstyle \times$}}
\DeclareMathOperator{\Bs}{Bs} 
\DeclareMathOperator{\Pluc}{Pluc} 
\DeclareMathOperator{\Bl}{Bl} 
\DeclareMathOperator{\Gr}{Gr}
\DeclareMathOperator{\Proj}{\textbf{Proj}} 
\DeclareMathOperator{\pr}{pr} 
\DeclareMathOperator{\res}{res}
\DeclareMathOperator{\Span}{Span} 
\DeclareMathOperator{\Sym}{Sym} 
\DeclareMathOperator{\Supp}{Supp} 
\DeclareMathOperator{\rank}{rank}
\DeclareMathOperator{\reg}{reg} 
\DeclareMathOperator{\loc}{loc} 
\DeclareMathOperator{\sing}{sing} 
\DeclareMathOperator{\Sing}{Sing} 
\DeclareMathOperator{\Ind}{Ind}
\begin{document}

\title{Lemma on logarithmic derivative over directed manifolds}
\author{Peiqiang Lin}
\address{Graduate School of Mathematics, Nagoya University}
\email{lpq.fj@hotmail.com}
\maketitle

\begin{abstract}
In this paper, we generalize Ahlfors' lemma on logarithmic derivative to holomorphic tangent curves of directed projective manifolds intersecting closed subschemes.
As a consequence, we obtain Algebro-Geometric Ahlfors' Lemma on Logarithmic Derivative (AALD for short) and General form of Algebro-Geometric Version of Ahlfors' Lemma on Logarithmic Derivative (GAALD for short) for holomorphic tangent curves of directed projective manifolds.
We also get a transform of AALD and GAALD with respect to a linear system.
Finally, we get the Second Main Theorem type results for holomorphic curves as the applications of GAALD and its transform.
\end{abstract}

\tableofcontents

\setcounter{section}{-1}
\section{Introduction}

\subsection{Background}
In 1925, Nevanlinna \cite{Nevanlinna1925} evolved a theory that carries his name, through his groundbreaking work on meromorphic functions.
It consists of two fundamental pillars: the First Main Theorem (FMT for short) and the Second Main Theorem (SMT for short).

The First Main Theorem can be viewed as the integral version of the fundamental theorem of algebra.
The fundamental theorem of algebra indicates the equality between the degree of polynomial functions and the number of their roots, and FMT extends this concept to meromorphic functions by taking the double integral operator $\displaystyle \int_0^r \frac{dt}{t} \int_{\bigtriangleup(t)}$ (see Section \ref{ssc:FMT} for details).
However, a boundary term (known as the proximity function) arises due to the non-compactness of the disk $\bigtriangleup(r)$.
It is evident that the proximity function is always non-negative.
Hence achieving the upper bound of the proximity function is crucial to understand the value distribution of meromorphic functions on non-compact domains $\bigtriangleup(r)$.
The Second Main Theorem, which stands as Nevanlinna's most profound achievement, addresses this very issue.
It gives an upper bound for the sum of proximity functions for arbitrary finite many values (including the infinity).
Moreover, the upper bound given by SMT also controls the ramification term, which implies that SMT can be viewed as the integral version of Riemann-Hurwitz theorem on non-compact domains $\bigtriangleup(r)$.
Because his work focuses on how functions distribute their values, it is also referred to as value distribution theory.

Afterwards, Cartan \cite{Cartan1933} extended Nevanlinna theory to linear non-degenerate holomorphic curves in (complex) projective space $\mathbb{P}^n$ intersecting hyperplanes in general position.
Notice that $\mathbb{P}^1 \cong \mathbb{C} \cup \{\infty\}$ and meromorphic functions can be regarded as holomorphic curves in $\mathbb{P}^1$.
Thus Cartan's work is a natural extension of Nevanlinna theory to higher dimensions.

The key lemma in establishing both Nevanlinna's and Cartan's results is known as (Nevanlinna's) Lemma on Logarithmic Derivative (LLD for short) for meromorphic functions (see also Chapter \uppercase\expandafter{\romannumeral1}, Section 6 of \cite{Lang1990} or Section 1.2 of \cite{Noguchi2014}).

\begin{lemma}[Nevanlinna's LLD]\cite{Nevanlinna1929}
Let $f \colon \mathbb{C} \to \mathbb{C} \bigcup \{\infty\}$ be a non-zero meromorphic function. Then
\[
m\left(r, \frac{f'}{f}\right) \leq O(\log r + \log T(r,f))
\]
holds for all $r \in \mathbb{R}_{\geq 0}$ outside a Borel subset $E$ of finite Lebesgue measure,
where
\[
m\left(r, \frac{f'}{f}\right) = \int_0^{2\pi} \!\log^{+} \left|\frac{f'}{f}(re^{i\theta})\right| \frac{\dif \theta}{2\pi}
,\]
and $\log^{+} t \bydef \log \max \{t, 1\}$ for any $t \in \mathbb{R}_{+}$.
Especially, if $f$ is a rational function, then
\[
m\left(r, \frac{f'}{f}\right) \leq O(1) \quad (r \to \infty)
.\]
\end{lemma}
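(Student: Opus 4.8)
The plan is to run the classical analytic argument: differentiate the Poisson--Jensen formula, control the boundary term by FMT, control the ``divisor'' term by a fractional--moment estimate, and finally specialize an auxiliary radius via the Borel calculus lemma (this last step is where the exceptional set $E$ is born).

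First, I would fix $0<r<\rho$ and differentiate the Poisson--Jensen formula for $f$ on $\overline{\bigtriangleup(\rho)}$ in the variable $z$, obtaining for $z=re^{i\theta}$ off the divisor of $f$
\[
\frac{f'}{f}(z)=\underbrace{\frac{1}{2\pi}\int_0^{2\pi}\frac{2\rho e^{i\varphi}}{(\rho e^{i\varphi}-z)^{2}}\,\log\bigl|f(\rho e^{i\varphi})\bigr|\dif\varphi}_{=:B(z)}\;+\;\underbrace{\sum_{\mu}\frac{1}{z-a_\mu}-\sum_{\nu}\frac{1}{z-b_\nu}}_{=:D_0(z)}\;+\;\underbrace{\sum_{\mu}\frac{\bar a_\mu}{\rho^{2}-\bar a_\mu z}-\sum_{\nu}\frac{\bar b_\nu}{\rho^{2}-\bar b_\nu z}}_{=:D_1(z)},
\]
where $a_\mu$ (resp.\ $b_\nu$) runs over the zeros (resp.\ poles) of $f$ in $\bigtriangleup(\rho)$, with multiplicity. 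By the sub-additivity of $\log^{+}$ (up to an additive constant), it then suffices to bound the three mean values $\tfrac{1}{2\pi}\int_0^{2\pi}\log^{+}|B|,\ \tfrac{1}{2\pi}\int_0^{2\pi}\log^{+}|D_0|,\ \tfrac{1}{2\pi}\int_0^{2\pi}\log^{+}|D_1|$ taken over $|z|=r$.

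The first two terms are routine. On $|z|=r$ one has $|\rho e^{i\varphi}-z|\geq\rho-r$, so $|B(z)|\leq\frac{2\rho}{(\rho-r)^{2}}\bigl(m(\rho,f)+m(\rho,1/f)\bigr)\leq\frac{2\rho}{(\rho-r)^{2}}\bigl(2T(\rho,f)+O(1)\bigr)$ by FMT, giving a contribution $\leq\log^{+}\rho+2\log^{+}\tfrac{1}{\rho-r}+\log^{+}T(\rho,f)+O(1)$; and $|\rho^{2}-\bar c z|\geq\rho(\rho-r)$ forces $|D_1(z)|\leq\frac{p}{\rho-r}$, contributing $\leq\log^{+}p+\log^{+}\tfrac{1}{\rho-r}$, where $p\bydef n(\rho,f)+n(\rho,1/f)$. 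The delicate term is $D_0=\sum_{j=1}^{p}\frac{\epsilon_j}{z-z_j}$ ($\epsilon_j=\pm1$): estimating $\log^{+}|D_0|$ termwise would only yield an $O(p)=O(T)$ bound, far too weak, so instead one must push the logarithm outside the integral first. Using $\log^{+}t=\tfrac1\delta\log^{+}(t^{\delta})$ for $\delta\in(0,1)$, Jensen's inequality for the concave function $\log(1+\,\cdot\,)$, and the subadditivity $\bigl(\sum a_j\bigr)^{\delta}\leq\sum a_j^{\delta}$,
\[
\frac{1}{2\pi}\int_0^{2\pi}\log^{+}\bigl|D_0(re^{i\theta})\bigr|\dif\theta\leq\frac1\delta\log^{+}\!\Bigl(\sum_{j=1}^{p}\frac{1}{2\pi}\int_0^{2\pi}\frac{\dif\theta}{|re^{i\theta}-z_j|^{\delta}}\Bigr)+O_\delta(1)\leq\frac1\delta\log^{+}\!\bigl(p\,C_\delta\min(1,r)^{-\delta}\bigr)+O_\delta(1),
\]
since $\frac{1}{2\pi}\int_0^{2\pi}|re^{i\theta}-z_j|^{-\delta}\dif\theta\leq C_\delta\min(1,r)^{-\delta}$ uniformly in $z_j$ (worst case $|z_j|=r$). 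With $\delta=\tfrac12$ this contributes $\leq 2\log^{+}p+\log^{+}\tfrac1r+O(1)$. Since $p\log\tfrac{\rho'}{\rho}\leq N(\rho',f)+N(\rho',1/f)\leq 2T(\rho',f)+O(1)$ for any $\rho'>\rho$, one gets $\log^{+}p\leq\log^{+}T(\rho',f)+\log^{+}\tfrac1{\rho'-\rho}+\log^{+}\rho'+O(1)$. Collecting everything: for all $0<r<\rho<\rho'$,
\[
m\!\Bigl(r,\frac{f'}{f}\Bigr)\leq C\Bigl(1+\log^{+}\tfrac1r+\log^{+}\rho'+\log^{+}\tfrac1{\rho-r}+\log^{+}\tfrac1{\rho'-\rho}+\log^{+}T(\rho',f)\Bigr).
\]

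It remains to choose the radii. Assuming $f$ non-constant, $T(r,f)$ is continuous, nondecreasing and $\to\infty$; I would fix $r_0$ with $T(r_0,f)\geq e$ and, for $r\geq r_0$, set $\rho=r+\tfrac1{T(r,f)}$ and $\rho'=\rho+\tfrac1{T(\rho,f)}$. By the Borel calculus lemma one can find $E\subset\mathbb{R}_{\geq0}$ of finite Lebesgue measure with $T(\rho,f),T(\rho',f)\leq 2T(r,f)$ for every $r\geq r_0$ with $r\notin E$; since also $\rho'\leq r+1$, substituting into the last display turns every term into $O(\log r+\log T(r,f))$, and enlarging $E$ by $[0,r_0]$ (still finite measure) finishes the general case. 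If $f$ is rational, $f'/f=(\log f)'$ is rational, vanishes at $\infty$, and has poles only at the finitely many zeros and poles of $f$; hence for $r$ large the circle $|z|=r$ avoids them all and $\sup_{|z|=r}|f'/f|\to 0$, so $m(r,f'/f)=0$ for $r\gg0$, a fortiori $O(1)$. The one real obstacle in this plan will be the $D_0$-estimate — extracting $\log^{+}p$ (not $p$) from the $p$ polar terms by moving the logarithm outside the integral through a fractional power; the remainder is bookkeeping combined with the standard Borel growth lemma.
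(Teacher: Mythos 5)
Your argument is correct, and it is essentially the classical proof: the paper itself does not prove this lemma but only cites it (Nevanlinna 1929; Lang, Ch.~I, \S 6; Noguchi--Winkelmann, \S 1.2), and those sources argue exactly as you do — differentiate Poisson--Jensen, bound the boundary term via the First Main Theorem, handle the polar sum $D_0$ by the fractional-moment ($\delta$-power) trick so that only $\log^{+}p$ enters, and then choose $\rho=r+1/T(r,f)$ via Borel's calculus lemma, which is where the exceptional set $E$ appears. The only point to tidy is the second application of Borel's lemma: the exceptional set it produces lives in the $\rho$-variable, so rather than pulling it back along $r\mapsto\rho(r)$ (which need not be monotone), use the standard variant that $T\left(r+\tfrac{c}{T(r,f)},f\right)\leq 2\,T(r,f)$ outside a set of finite measure for any fixed $c>0$, applied with $c=2$ since $\rho'\leq r+2/T(r,f)$; the constant $2$ versus any fixed multiple is immaterial once it sits inside $\log^{+}$.
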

Here $m\left(r, \dfrac{f'}{f}\right) = m_{\frac{f'}{f}}(r, \infty)$ is indeed the proximity function defined in Section \ref{ssc:NTnotation}, if we think of $\dfrac{f'}{f}$ as a holomorphic curve in $\mathbb{P}^1$ (more precisely, the analytic extension of $[f' : f]$) and $\infty$ as the divisor $\{[z_0 : z_1] \mid z_1 = 0\}$ in $\mathbb{P}^1$.

Following \cite{Weyl1938}, Ahlfors \cite{Ahlfors1941} generalized Cartan's work \cite{Cartan1933} to the case of intersecting linear subspaces in general position.
Thereupon Ahlfors' and the related works is called Weyl-Ahlfors theory, or Ahlfors-Weyl theory in some literature (see \cite{Weyl1943} and \cite{Stoll1983}).
In his proof, Ahlfors established an innovative estimate as follows, which is the key to Weyl-Ahlfors theory and a lot of papers on value distribution theory for holomorphic curves in projective manifolds.
\begin{lemma}[=Lemma \ref{lem:ALLD_1^1}, Ahlfors' LLD]
Let $H$ be a hyperplane of $\mathbb{P}^{n}$, and let $f \colon \mathbb{C} \to \mathbb{P}^n$ be a non-constant holomorphic curve such that $f(\mathbb{C}) \not\subset H$.
Then for any $0 < \varepsilon < 1$, one has
\[
\varepsilon \mathcal{N}\left( \frac{\phi_1(H)}{\phi_0(H)^{1-\varepsilon}} \Omega_0, r \right) \leq (1+\varepsilon) T_f(r, \mathcal{O}_{\mathbb{P}^{n}}(1)) + O(1)
,\]
where $\phi_k(H)$ is the $k$-th contact function of $f$ for $H$ and $\Omega_0 = f^{\ast} \omega_0$ is the pullback of the Kähler form $\omega_0$ associated to the Fubini-Study metric on $\mathbb{P}^{n}$.
\end{lemma}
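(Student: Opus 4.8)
The plan is to carry out the classical Ahlfors computation, translated into the paper's contact-function notation. First I would fix a reduced representation $f=[f_{0}:\cdots:f_{n}]$ (so the $f_{j}$ are entire without common zero), choose a unit linear form $\ell$ defining $H$, and put $\|f\|^{2}\bydef\sum_{j}|f_{j}|^{2}$ and $F\bydef\ell(f_{0},\dots,f_{n})$, a nonzero entire function. I would then record what the definitions supply on $\mathbb C$: $\phi_{0}(H)=|F|^{2}/\|f\|^{2}\in(0,1]$; $\Omega_{0}=dd^{c}\log\|f\|^{2}$, where $dd^{c}=\tfrac{\sqrt{-1}}{2\pi}\partial\bar\partial$; and $\mathcal N(\Omega_{0},r)=T_{f}(r,\mathcal O_{\mathbb P^{n}}(1))+O(1)$. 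From the Poincaré-Lelong formula $dd^{c}\log|F|^{2}=[\divr F]$ one gets $dd^{c}\log\phi_{0}(H)=[\divr F]-\Omega_{0}$, hence $dd^{c}\log\phi_{0}(H)=-\Omega_{0}$ off the zero set of $F$.

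The heart of the matter is a pointwise identity that I would extract from the definition of the first contact function, namely that on $\mathbb C$ minus the zeros of $F$
\[
\frac{\phi_{1}(H)}{\phi_{0}(H)}\,\Omega_{0}=\frac{\sqrt{-1}}{2\pi}\,\partial\log\phi_{0}(H)\wedge\bar\partial\log\phi_{0}(H);
\]
in other words $\phi_{1}(H)/\phi_{0}(H)$ is the energy density of $\log\phi_{0}(H)$ measured against the pulled-back Fubini-Study metric $\Omega_{0}$. Granting this, the chain rule (together with $\phi_{0}(H)^{\varepsilon}\cdot[\divr F]=0$, since $\phi_{0}(H)^{\varepsilon}$ vanishes along $\divr F$) yields, as currents on all of $\mathbb C$,
\[
dd^{c}\!\left(\phi_{0}(H)^{\varepsilon}\right)=\varepsilon^{2}\,\phi_{0}(H)^{\varepsilon}\,\frac{\sqrt{-1}}{2\pi}\,\partial\log\phi_{0}(H)\wedge\bar\partial\log\phi_{0}(H)-\varepsilon\,\phi_{0}(H)^{\varepsilon}\,\Omega_{0}.
\]
Here I use that $\varepsilon>0$ makes $\phi_{0}(H)^{\varepsilon}$ continuous with $dd^{c}$-mass locally finite and atom-free along $\divr F$, so the identity is legitimate globally. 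Eliminating the $\partial\bar\partial\log\phi_{0}(H)$ term between the two displays and using $0<\phi_{0}(H)^{\varepsilon}\leq1$ gives the pointwise bound
\[
\varepsilon^{2}\,\frac{\phi_{1}(H)}{\phi_{0}(H)^{1-\varepsilon}}\,\Omega_{0}=dd^{c}\!\left(\phi_{0}(H)^{\varepsilon}\right)+\varepsilon\,\phi_{0}(H)^{\varepsilon}\,\Omega_{0}\leq dd^{c}\!\left(\phi_{0}(H)^{\varepsilon}\right)+\varepsilon\,\Omega_{0}.
\]

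Next I would apply $\mathcal N(\cdot,r)=\int_{0}^{r}\tfrac{\dif t}{t}\int_{\bigtriangleup(t)}$ to this inequality. By the Green-Jensen formula, $\mathcal N\!\left(dd^{c}(\phi_{0}(H)^{\varepsilon}),r\right)$ equals the mean value of $\phi_{0}(H)^{\varepsilon}$ over $\{|z|=r\}$ minus its value at the origin, hence is $\leq1=O(1)$, because $\phi_{0}(H)^{\varepsilon}$ is globally bounded by $1$; this is exactly why, unlike Nevanlinna's LLD, the estimate needs \emph{no exceptional set}. The remaining term contributes $\varepsilon\,\mathcal N(\Omega_{0},r)=\varepsilon\,T_{f}(r,\mathcal O_{\mathbb P^{n}}(1))+O(1)$. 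Therefore $\varepsilon^{2}\,\mathcal N\!\left(\frac{\phi_{1}(H)}{\phi_{0}(H)^{1-\varepsilon}}\,\Omega_{0},\,r\right)\leq\varepsilon\,T_{f}(r,\mathcal O_{\mathbb P^{n}}(1))+O(1)$, and dividing by $\varepsilon$ and bounding $T_{f}\leq(1+\varepsilon)T_{f}+O(1)$ yields the stated inequality.

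I expect the one genuinely delicate point to be the pointwise identity of the second paragraph. Over $\mathbb P^{n}$ it is a bookkeeping computation: expressing $\phi_{1}(H)$ through $F$, $F'$ and the reduced representation and its derivative, one must recognize it as $\phi_{0}(H)\,|\partial_{z}\log\phi_{0}(H)|^{2}\,\|f\|^{4}/\|f\wedge f'\|^{2}$. In the present setting, however, $\phi_{1}(H)$ is defined via the tangent curve $f'$ of the directed manifold, so the identity must be re-established in the jet formalism; one should moreover allow for a harmless discrepancy there, which is precisely the slack that the factor $1+\varepsilon$ (rather than $1$) on the right-hand side provides. Alternatively, once the paper's general logarithmic-derivative lemma is available, the present statement is its specialization to $X=\mathbb P^{n}$ with the trivial directed structure and the closed subscheme $Z=H$, and only a translation of notation then remains.
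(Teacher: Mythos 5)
The paper does not prove Lemma~\ref{lem:ALLD_1^1} at all (it is quoted from Ahlfors' 1941 paper), so you are to be measured against the classical argument; your plan is indeed that argument: compute $\ddif^c\bigl(\phi_0(H)^{\varepsilon}\bigr)$, trade the gradient term for the first contact function, and finish with Green--Jensen applied to the bounded function $\phi_0(H)^{\varepsilon}$ (which is exactly why no exceptional set is needed). The genuine flaw is your key pointwise identity. Write $u=\ell\circ F$ and $G\bydef uF'-u'F=F^1\,\llcorner\,\bm{a}$. Then $\partial_z\log\phi_0(H)=\langle u'F-uF',F\rangle/(u\|F\|^2)$, so
\[
\frac{i}{2\pi}\,\partial\log\phi_0(H)\wedge\bar\partial\log\phi_0(H)=\frac{|\langle G,F\rangle|^2}{|u|^2\|F\|^4}\,\ddif^c|z|^2,
\qquad
\frac{\phi_1(H)}{\phi_0(H)}\,\Omega_0=\frac{\|G\|^2}{|u|^2\|F\|^2}\,\ddif^c|z|^2 .
\]
By Cauchy--Schwarz the left density is at most the right one, with equality only where $G$ is proportional to $F$; so the identity you assert is false, and the true inequality points the wrong way for your elimination: from $\ddif^c(\phi_0^{\varepsilon})+\varepsilon\phi_0^{\varepsilon}\Omega_0=\varepsilon^2\phi_0^{\varepsilon}\,\frac{i}{2\pi}\partial\log\phi_0\wedge\bar\partial\log\phi_0$ you only obtain a \emph{lower} bound for $\varepsilon^2\phi_1\phi_0^{\varepsilon-1}\Omega_0$, not the upper bound you need. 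A telltale sign is that your version would prove the lemma with constant $1$ in place of $1+\varepsilon$; and the ``$1+\varepsilon$ slack'' you invoke at the end cannot absorb the error, because the discrepancy between the two sides is of the same order as the quantity being estimated.

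The repair is small and your own computation then closes. Lagrange's identity gives $\|G\|^2\|F\|^2-|\langle G,F\rangle|^2=\|G\wedge F\|^2=|u|^2\,\|F\wedge F'\|^2$, hence the correct relation is
\[
\frac{i}{2\pi}\,\partial\log\phi_0(H)\wedge\bar\partial\log\phi_0(H)=\frac{\phi_1(H)}{\phi_0(H)}\,\Omega_0-\Omega_0 .
\]
Substituting this into your display for $\ddif^c(\phi_0^{\varepsilon})$ (your no-atom remark for $\varepsilon>0$ is fine) yields, as currents,
\[
\varepsilon^2\,\frac{\phi_1(H)}{\phi_0(H)^{1-\varepsilon}}\,\Omega_0=\ddif^c\bigl(\phi_0(H)^{\varepsilon}\bigr)+\varepsilon(1+\varepsilon)\,\phi_0(H)^{\varepsilon}\,\Omega_0\le\ddif^c\bigl(\phi_0(H)^{\varepsilon}\bigr)+\varepsilon(1+\varepsilon)\,\Omega_0 ,
\]
and applying $\mathcal{N}(\cdot,r)$, Green--Jensen for $0\le\phi_0^{\varepsilon}\le1$ (with the adjustment of Remark~\ref{rmk:zero} if $f(0)\in H$), and dividing by $\varepsilon$ gives exactly $(1+\varepsilon)T_f(r,\mathcal{O}_{\mathbb{P}^n}(1))+O(1)$; the factor $1+\varepsilon$ arises naturally rather than by padding. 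Finally, your closing concern that $\phi_1$ must be re-derived in the jet formalism is unfounded for this statement: in Lemma~\ref{lem:ALLD_1^1} the contact functions are the classical ones built from $F^1=F\wedge F'$, and the directed-manifold translation only enters later results of the paper.
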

Because this lemma plays a similar role as Nevanlinna's LLD in \cite{Nevanlinna1925}, we hereinafter prefer to call it Ahlfors' Lemma on Logarithmic Derivative (Ahlfors' LLD for short) for holomorphic curves in projective spaces.

Weyl-Ahlfors theory not only provides a geometric approach to the value distribution theory for holomorphic curves in projective spaces, but also inspires the development of value distribution theory for holomorphic curves with more general domains and targets.
For instance, Stoll \cite{Stoll1953, Stoll1954} developed Weyl-Ahlfors theory into the theory of meromorphic maps from parabolic spaces into projective spaces.

The significance of Weyl-Ahlfors theory extends far beyond its original scope, influencing various branches of mathematics, such as differential geometry, hyperbolic geometry, and number theory.
Chern investigated the value distribution of holomorphic curves from a differential geometry perspective and published several papers on this subject.
One of his works, joint with Bott \cite{Bott1965}, introduced a “connection” to value distribution theory and established a relation with the characteristic classes.

Inspired by Ahlfors' LLD, Carlson and Griffiths \cite{Carlson1972} obtained an analogous estimate for (non-degenerate) equidimensional holomorphic maps from $\mathbb{C}^n$ to projective manifolds $X$, and established value distribution theory for equidimensional holomorphic maps.
And Griffiths and King \cite{Griffiths1973} extended the theory to the case where $n \geq \dim X$.
Furthermore, this result in turn led to a SMT type conjecture for holomorphic curves in projective manifolds $X$.

\begin{conjecture}[Griffiths' Second Main Conjecture]\cite{Griffiths1972}
Let $X$ be a projective manifold, and let $D$ be an effective divisor on $X$ with possibly simple normal crossings singularities.
Let $f \colon \mathbb{C} \longrightarrow X$ be a algebraically non-degenerate holomorphic curve.
For arbitrary small $\varepsilon > 0$, the inequality
\[
m_f(r, D) + T_f(r, K_X) \leq \varepsilon T_f(r, A)
\]
holds for all $r \in \mathbb{R}_{\geq 0}$ outside a Borel subset $E$ of finite Lebesgue measure, where $K_X \bydef \det T_X^{\dual}$ is the canonical bundle of $X$ and $A$ is an ample divisor.
\end{conjecture}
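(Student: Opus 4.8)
This is a long-standing open problem, so rather than a complete argument I describe the logarithmic-derivative route toward it — the route along which the GAALD of this paper is designed to operate. The model is Cartan's proof of the Second Main Theorem for curves in $\mathbb{P}^n$ meeting hyperplanes in general position: there SMT is deduced from Ahlfors' LLD applied to the associated (derived) curves, with the canonical term $T_f(r,K_{\mathbb{P}^n})=-(n+1)T_f(r,\mathcal{O}_{\mathbb{P}^n}(1))$ appearing precisely as the Wronskian contribution, while the Plücker formulae convert contact functions into proximity and counting functions. The plan is to run this scheme with $\mathbb{P}^n$ replaced by a directed manifold manufactured from $(X,T_X)$, so that $K_X$ is produced intrinsically, and with ``hyperplanes in general position'' replaced by a single closed subscheme to which GAALD applies.

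\textbf{Step 1: the directed lift.} Given the algebraically non-degenerate $f\colon\mathbb{C}\to X$, pass to a Green--Griffiths--Demailly jet tower $\pi_k\colon(X_k,V_k)\to X$ (at lowest order, $(\mathbb{P}(T_X),V_1)$) and lift $f$ to its canonical holomorphic tangent curve $f_{[k]}\colon\mathbb{C}\to X_k$, which is again non-constant and, after finitely many blow-ups, tangentially non-degenerate with respect to the configuration below. For suitable weights and an ample divisor $A$, the twisted line bundle $L_k$ on $X_k$ whose restriction to a generic jet fiber is the relative canonical bundle admits, upon tensoring with $\pi_k^{\ast}A^{\otimes m}$ for $m\gg 0$, a nonzero section; let $Z$ be its zero subscheme and $\widetilde D$ the proper transform of $D$. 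The subscheme $Z\cup\widetilde D\subset X_k$ is the target configuration.

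\textbf{Step 2: GAALD and descent.} Apply GAALD to the tangent curve $f_{[k]}$ against $Z\cup\widetilde D$; this gives, for every $0<\varepsilon<1$ and all $r$ outside a set of finite Lebesgue measure, an inequality of the shape $\varepsilon\,\mathcal{N}\!\big(\phi_1/\phi_0^{\,1-\varepsilon}\cdot\Omega_0,\,r\big)\le(1+\varepsilon)\,T_{f_{[k]}}(r,\mathcal{O}_{X_k}(1))+O(1)$, the contact functions $\phi_j$ being taken relative to $Z\cup\widetilde D$. Then apply the transform of GAALD with respect to the linear system $|L_k\otimes\pi_k^{\ast}A^{\otimes m}|$ to reinterpret the left-hand side downstairs on $X$: the $\widetilde D$-part dominates the proximity $m_f(r,D)$, while the $Z$-part, together with the jet-differential (Wronskian) identity along the tower, contributes $T_f(r,K_X)$ up to an $o(T_f(r,A))$ error. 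Combining these and absorbing $(1+\varepsilon)\,T_{f_{[k]}}(r,\mathcal{O}_{X_k}(1))+O(1)$ into $\varepsilon\,T_f(r,A)$ would yield the asserted inequality $m_f(r,D)+T_f(r,K_X)\le\varepsilon\,T_f(r,A)$.

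\textbf{Main obstacle.} The crux is the final absorption in Step 2: it requires $T_{f_{[k]}}(r,\mathcal{O}_{X_k}(1))=O(T_f(r,A))$ with a constant that does not blow up with $k$ — that is, uniform control of how the order of growth of the lifted curve can increase along the jet tower — and, simultaneously, the persistence of the tangential non-degeneracy of $f_{[k]}$ with respect to $Z$. This is exactly where the simple-normal-crossings hypothesis on $D$ and the positivity of $K_X+D$ must be invoked, and where present methods — this paper included — yield only partial statements: for $X$ of special type, for bounded jet order, or with $\varepsilon\,T_f(r,A)$ on the right replaced by a truncated counting function plus $\varepsilon\,T_f(r,A)$. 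Those partial statements are precisely the Second Main Theorem-type applications of GAALD and its transform established in the remainder of the paper.
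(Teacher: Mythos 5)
The statement you were given is Griffiths' Second Main Conjecture, which the paper records purely as a conjecture and explicitly leaves open (``these two conjectures are still open, except for some special cases''), so there is no proof of it in the paper to compare against; and your text, being a strategy sketch that ends by naming an obstruction, is not a proof either. To that extent your framing is honest, but even as a sketch it has gaps beyond the one you identify. In Step 1, the existence of a nonzero section of your twisted bundle $L_k \otimes \pi_k^{\ast} A^{\otimes m}$ whose zero scheme $Z$ encodes the relative canonical bundle is not supplied by anything in the paper: producing global invariant jet differentials with controlled twisting is exactly the hard positivity input of the Green--Griffiths--Lang circle of problems, and without it $Z$ need not exist. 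In Step 2, GAALD (Lemma \ref{lem:GALLDD}) and its algebro-geometric forms (Theorems \ref{thm:Z^(1)veesum} and \ref{thm:Z_S^kveesum}) bound sums and differences of proximity functions of closed subschemes $Z_i$ and their jet lifts $Z_i^{(1)}$, together with tautological characteristic terms, by $S_f(r)$; none of these statements produces a $T_f(r,K_X)$ term, and your assertion that ``the $Z$-part contributes $T_f(r,K_X)$ up to $o(T_f(r,A))$'' is precisely the unproved heart of the conjecture rather than a consequence of the transform with respect to a linear system.

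Your ``main obstacle'' is also partly mislocated. The paper's Theorem \ref{thm:O_Xk(1)} already gives, for every fixed $k$,
\[
T_{f_{[k]}}(r, \mathcal{O}_{X_k}(1)) + \mathcal{N}([Z_{f_{[k-1]}'}], r) \leq S_f(r)
,\]
so absorbing $T_{f_{[k]}}(r,\mathcal{O}_{X_k}(1))$ into $\varepsilon T_f(r,A)$ at fixed jet order is not the bottleneck. The genuine bottleneck is that no step in the paper (or in your sketch) converts the tautological classes $\mathcal{O}_{X_k}(\bm{a})$ or Wronskian base loci into $K_X$ plus the full $m_f(r,D)$ with the correct sign on a general projective $X$; this is why the paper's actual applications in Section \ref{sec:Applications} are Cartan-type theorems on $\mathbb{P}^n$ and results for point targets, not Griffiths' conjecture.
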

Obviously, the well-known Green-Griffiths conjecture \cite{Green1980} is a special case of Griffiths' Second Main Conjecture.
\begin{remark}
The original statement of Griffiths' Second Main Conjecture is formulated in terms of Nevanlinna defect.
We reformulate it as a SMT type conjecture so that it is more convenient to compare with Lang's Second Main Conjecture.
This statement is a bit stronger than the original one, but it is still weaker than Lang's Second Main Conjecture.
\end{remark}

On the other hand, Nochka \cite{Nochka1983} introduced Nochka weights, and solved Cartan's conjecture on linear degenerate holomorphic curves in projective spaces, now called Cartan-Nochka theorem (cf. \cite{Chen1990} or Section 2.4 of \cite{Fujimoto1993} for a simplification of Nochka's proof).
Afterwards, using the trick of Nochka weights, many SMT type results for non-degenerate holomorphic curves or meromorphic maps were extended to the degenerate case.

Brody's theorem \cite{Brody1978} says that Brody hyperbolicity and Kobayashi hyperbolicity are equivalent for compact manifolds.
Therefore, value distribution theory for degenerate holomorphic curves offers the best approach to the hyperbolicity question.

In addition, Lang \cite{Lang1986} proposed a conjecture of a higher dimensional version of Mordell's conjecture (cf. \cite{Mordell1922}) on rational points, which is known as Lang's conjecture.
It states that there are at most finitely many rational points on a Kobayashi hyperbolic algebraic manifold over a number field.

What's more, Vojta \cite{Vojta1987} introduced the correspondence between Nevanlinna theory and Diophantine approximation in the case of one variable, and suggested this correspondence can be extended to several variables.
Hence Lang proposed a stronger version of SMT type conjecture, inspired by a counterpart in Diophantine approximation (see Conjecture 5.1 in Chapter \uppercase\expandafter{\romannumeral8} of \cite{Lang1991}).

\begin{conjecture}[Lang's Second Main Conjecture]
Let $X$ and $D$ be as in Griffiths' Second Main Conjecture.
There is a proper closed algebraic subset $Z \subsetneq X$ such that for any holomorphic curve $f \colon \mathbb{C} \longrightarrow X$ with $f(\mathbb{C}) \not\subset Z$, the inequality
\[
m_f (r, D) + N_{f, \textnormal{ram}}(r) + T_f (r, K_X) \leq O(\log r + \log^{+} T_f(r, A))
\]
holds for all $r \in \mathbb{R}_{\geq 0}$ outside a Borel subset $E$ of finite Lebesgue measure,
where $N_{f, \textnormal{ram}}(r) = \mathcal{N}([D_{f, \textnormal{ram}}], r)$, and $D_{f, \textnormal{ram}}$ is the ramification divisor of $f$.
\end{conjecture}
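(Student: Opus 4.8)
Lang's Second Main Conjecture is open in general, so I can only propose the route by which the estimates of this paper — AALD, GAALD and their linear-system transforms — bear on it, together with the hypotheses under which that route becomes unconditional; this is also the template for the Second Main Theorem type results announced in the abstract.

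The first step is geometric, and the plan is to trade the pair $(X,D)$ for a directed projective manifold on which the machinery of the paper applies. After a log resolution of $(X,D)$ and passage to a $k$-jet tower one obtains a directed projective manifold $(X_k,V_k)$, a closed subscheme $\widetilde D\subset X_k$ and a line bundle $L_k$ on $X_k$ with three features: every entire curve $f\colon\mathbb C\to X$ lifts to a holomorphic tangent curve $f_{[k]}\colon\mathbb C\to(X_k,V_k)$; the canonical bundle $K_X$ is recovered from $L_k$ and the relative canonical bundles of the tower, modulo contributions of the exceptional locus; and the ramification divisor $D_{f,\mathrm{ram}}$ together with the defect measured by $m_f(r,D)$ are visible as contact data of $f_{[k]}$ with $\widetilde D$. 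The Green--Griffiths--Lang algebraic part — existence of a global jet differential of some order and degree vanishing along an ample divisor — is what allows $L_k$ to be taken positive off a proper subset, and the exceptional set $Z$ of the conjecture is the image in $X$ of the common base locus of such jet differentials; by the fundamental vanishing theorem, a curve with $f(\mathbb C)\not\subset Z$ has $f_{[k]}$ landing where $L_k$ is effective.

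The second step is analytic and is where GAALD and its linear-system transform do the work. Applying the general algebro-geometric Ahlfors lemma on logarithmic derivative to $f_{[k]}$, the subscheme $\widetilde D$ and the linear system $|L_k|$ bounds a suitable weighted sum of contact functions of $f_{[k]}$ by $T_{f_{[k]}}(r,L_k)$ plus an error term of the shape $O(\log r+\log^{+}T_{f_{[k]}}(r,A))$ for all $r$ outside a set of finite measure. One then invokes the First Main Theorem to convert contact functions into proximity plus counting functions, re-expresses $T_{f_{[k]}}(r,L_k)$ through $T_f(r,K_X)$ and $T_f(r,A)$ using the second feature above, and discards the base-locus contribution by the first step; collecting terms leaves $m_f(r,D)+N_{f,\mathrm{ram}}(r)+T_f(r,K_X)$ on the left and $O(\log r+\log^{+}T_f(r,A))$ on the right, which is the assertion of the conjecture. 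Comparing $T_{f_{[k]}}(r,A)$ with $T_f(r,A)$ only affects implied constants, so the sharp Lang-type error term is preserved — which is exactly why the logarithmic-derivative estimate has to be proved with the $O(\log r+\log^{+}T)$ error rather than with a $(1+\varepsilon)T$ leading term.

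The main obstacle is the algebraic part: for $X$ of general type with $D$ of simple normal crossings one does not know in general that jet differentials of controlled order vanishing on an ample divisor exist, nor that their common base locus is a \emph{proper} algebraic subset, so the set $Z$ and even the positivity of $L_k$ remain conditional. Accordingly the unconditional output is the conjecture for those $(X,V,D)$ for which this input is available — for instance when the directed structure already forces enough jet differentials, or for those $X$ (curves, certain surfaces of general type, hypersurfaces of high degree) where the existence of such jet differentials with proper base locus is already known — together with, in general, the reduction of Lang's conjecture to the Green--Griffiths--Lang algebraic part \emph{with the optimal error term}. A secondary technical point, to be handled in climbing the jet tower, is that the singular locus of $V_k$ and the exceptional divisors of the resolution also contribute to the ramification count; one must show these are absorbed into $N_{f,\mathrm{ram}}(r)$ and into the $O(\log r+\log^{+}T_f(r,A))$ term rather than spoiling the inequality.
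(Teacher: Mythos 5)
There is no proof in the paper to compare against: the statement is Lang's Second Main Conjecture, which the paper records as an open conjecture (it explicitly notes that both Griffiths' and Lang's conjectures ``are still open, except for some special cases'' and then formulates directed-manifold analogues as further conjectures). Your text is candid about this, but as submitted it is a conditional strategy, not a proof, and the gap is exactly the one you name yourself: the algebraic input --- existence of jet differentials (equivalently, positivity of a bundle $L_k$ on the Demailly--Semple tower) vanishing along an ample divisor, with \emph{proper} base locus whose image furnishes the exceptional set $Z$ --- is unproven in general, and every subsequent step of your argument is conditional on it. So the statement is not established.

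Even granting that input, the route oversells what the paper's estimates give. AALD/GAALD and their transforms (Theorems \ref{thm:Z^(1)}, \ref{thm:Z^(k)sum}, \ref{thm:Z_S^ksum}) always leave a proximity term for the jet lifting, $m_{f_{[k]}}(r, Z^{(k)})$ or $m_{f_{[k]}}(r, \bigvee_{I}\sum_{i\in I}(Z_i)_{\mathfrak S}^{[k]})$, on the right-hand side; it disappears only in the special situations exploited in Section \ref{sec:Applications}, namely when the top jet subscheme is the Wronskian divisor (Cartan's SMT, Theorem \ref{thm:Cartan}) or empty (points, Corollary \ref{cor:pointsSMT}). ``Discarding the base-locus contribution'' in general is as hard as the conjecture itself. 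Moreover, the counting terms the paper controls are $\mathcal{N}([Z_{f_{[j]}'}],r)$ for the liftings, and the conversion of the jet-tower data into $N_{f,\mathrm{ram}}(r) + T_f(r,K_X)$ (or $K_V$) with error $S_f(r)$ is nowhere carried out --- the paper states precisely that passage as its own conjectures over directed manifolds. Your proposal is a reasonable roadmap consistent with the paper's stated aims, but it neither matches a proof in the paper (there is none) nor constitutes one.
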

In this conjecture, holomorphic curve $f$ need not be algebraically non-degenerate.
Instead, it suffices that $f(\mathbb{C}) \not\subset Z$ for some proper algebraic subset $Z$ independent of $f$.
So far, these two conjectures are still open, except for some special cases.

\subsection{Value Distribution Theory over Directed Manifolds}

Ochiai \cite{Ochiai1977} revived Bloch's paper \cite{Bloch1926} and established the Bloch-Ochiai Theorem.
In his work, Ochiai proved a lemma on holomorphic differentials, which is an analogue to Nevanlinna's LLD in Nevanlinna theory.
Shortly afterwards, Noguchi \cite{Noguchi1977} discovered its relation to Deligne's logarithmic forms \cite{Deligne1971, Deligne1974}, and established Lemma on Logarithmic Differentials for holomorphic curves in compact Kähler manifolds.
Noguchi's approach unifies the Bloch-Ochiai Theorem and the classical theorem of Borel.
Moreover, Noguchi \cite{Noguchi1985} extended Lemma on Logarithmic Differentials to the case of meromorphic maps in projective manifolds.

On the other hand, Kobayashi \cite{Kobayashi1996} proposed another geometric formulation of Nevanlinna's LLD by Radon transform and SMT for one variable (i.e., Nevanlinna's original version).
Inspired Kobayashi's proposal, Yamanoi \cite{Yamanoi2004} established a geometric formulation of Nevanlinna's LLD for meromorphic maps, using Lemma on Logarithmic Differentials for meromorphic maps.
And he called it Algebro-Geometric Version of Nevanlinna's Lemma on Logarithmic Derivative and took the initials ANLD for short.
However, Yamanoi's result does not completely coincide with the proposal in \cite{Kobayashi1996}.
Kobayashi considered the theorems over Demailly-Semple jet towers over directed manifolds $(X, T_X)$, while Yamanoi constructed the compactiﬁcation of jet space by defining each fiber at $x$ as $\bar{J}_kX_x \bydef \textnormal{P}(J_kX_x \times \mathbb{C})$.
Recently, Brotbek \cite{Brotbek2017} constructed Wronskians on Demailly-Semple jet towers, and gave a simple proof of Kobayashi conjecture (cf. \cite{Kobayashi1970, Kobayashi1998}) for sufficiently large degree of the hypersurfaces.
Brotbek's approach is much more simple than Siu's proof in \cite{Siu2015}, and Wronskians on Demailly-Semple jet towers provides the last missing piece of the puzzle for the implication between Kobayashi's proposal and Cartan's (or Ahlfors') SMT.

More generally, we propose that either Griffiths's or Lang's Second Main Conjecture can be extended to the category of directed manifolds.

\begin{conjecture}
Let $(X, V)$ be a directed projective manifold, and let $D$ be an effective divisor  $X$ with possibly simple normal crossings singularities.
For any algebraically non-degenerate holomorphic curve $f \colon (\mathbb{C}, T_\mathbb{C}) \to (X, V)$, we have
\[
m_f(r, D) + T_f(r, K_V) \leq S_f(r)
,\]
where $K_V \bydef \det V^{\dual}$ and $S_f(r)$ is given by Definition \ref{def:Sf}.
\end{conjecture}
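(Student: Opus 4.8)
The plan is to follow the Weyl--Ahlfors strategy, using the logarithmic derivative lemma over directed manifolds established here as the engine, and to reduce the desired estimate to an application of GAALD together with its transform with respect to a linear system.

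First I would pass to a convenient model. Since $D$ is assumed to have simple normal crossings support, write $D = \sum_i D_i$ with the $D_i$ smooth and meeting transversally. Choose the ample divisor $A$ so that $\mathcal{O}_X(A)$ (or a suitable multiple) is very ample and generates, together with the auxiliary bundles below, the relevant sheaves; a basis of $H^0(X, \mathcal{O}_X(A))$ embeds $X \hookrightarrow \mathbb{P}^N$ and turns $f$ into a holomorphic curve in $\mathbb{P}^N$ whose derivative stays inside the image of $V$. This is exactly the setting of a holomorphic tangent curve of a directed projective manifold intersecting the closed subscheme cut out by $D$, to which the results of this paper apply.

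Next I would identify the two left-hand terms with quantities that GAALD controls. For $T_f(r, K_V)$, the point is that $K_V = \det V^{\dual}$ is computed along $f$ by the Wronskian of $f$ built from successive $V$-derivatives (the directed analogue of the associated curves of a curve in $\mathbb{P}^N$); up to a counting term, this Wronskian data is precisely what the contact functions appearing in GAALD measure, so $T_f(r, K_V)$ is bounded by a combination of $\mathcal{N}$-terms of contact functions minus characteristic functions. For $m_f(r, D)$, the proximity to each $D_i$ is estimated by applying the directed logarithmic derivative lemma to a local defining section of $D_i$: its logarithmic $V$-derivative has proximity at most $S_f(r)$, which converts $m_f(r, D)$ into a contact-function contribution plus an $S_f(r)$ error, with the transform of AALD/GAALD with respect to $|A|$ absorbing the normal-crossings combinatorics and bookkeeping the general-position hypotheses. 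Summing the resulting inequalities, using the First Main Theorem $T_f(r, \mathcal{O}_X(D)) = m_f(r, D) + N_f(r, D) + O(1)$ and $N_f(r, D) \leq T_f(r, \mathcal{O}_X(D)) + O(1)$ to trade proximity for characteristic functions, and collecting every $O(\log r + \log^{+} T_f(r, A))$ and ramification contribution into $S_f(r)$, one would arrive at $m_f(r, D) + T_f(r, K_V) \leq S_f(r)$.

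The main obstacle is exactly the step that makes Griffiths- and Lang-type statements hard in general: algebraic non-degeneracy of $f$ as a tangent curve of $(X, V)$ does not by itself guarantee that the induced Wronskian and derived curves are non-degenerate, nor that they lie in general position with respect to the hyperplanes pulled back from $D$, so GAALD cannot be invoked off the shelf. Circumventing this requires either a Nochka-weight-type filtration replacing ``general position'' by a weighted estimate, or a descent on the degeneracy locus of $f$ inside the jet tower of $(X, V)$, and controlling that locus uniformly in $r$ is the input one cannot currently supply. I therefore expect the reduction above to go through cleanly, with essentially the entire difficulty concentrated in the degenerate derived curves and the locus where $f'$ fails to span $V$ generically.
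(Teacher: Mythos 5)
You are attempting to prove a statement that the paper itself presents as a \emph{conjecture}: it is the author's proposed extension of Griffiths' Second Main Conjecture to directed manifolds, it is never proved in the paper, and when $V = T_X$ it already contains Griffiths' conjecture (and hence the Green--Griffiths conjecture), which the paper explicitly notes are open. So there is no proof to compare yours against, and a complete argument would have to overcome exactly the obstructions that keep the classical conjectures open. Your own final paragraph concedes the decisive missing input: algebraic non-degeneracy of $f$ does not give non-degeneracy of the derived curves or of the liftings $f_{[k]}$ in the Demailly--Semple tower, and you have no way to control the degeneracy locus (nor a Nochka-weight substitute). Since every earlier step of your reduction is conditional on being able to invoke GAALD ``off the shelf'' in that non-degenerate setting, this is not a gap in a proof but the absence of one.

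Moreover, even the steps you present as routine do not work as stated. GAALD and its transform (Theorems \ref{thm:Z^(k)sum} and \ref{thm:Z_S^kveesum}) bound $\sum_i m_f(r,Z_i) + T_{f_{[k]}}(r,\mathcal{O}_{X_k}(\bm{\ell})) + \sum_j \ell_j\mathcal{N}([Z_{f_{[j]}'}],r)$ by a \emph{top-level proximity term} $m_{f_{[k]}}(r,\cdot)$ plus $S_f(r)$; they never produce the term $T_f(r,K_V)$, and the identity $\det V_k = \pi_k^{\ast}\det V_{k-1}\otimes\mathcal{O}_{X_k}(r-1)$ does not convert characteristic functions of the tautological bundles into $T_f(r,K_V)$ without genuinely new positivity or jet-differential input --- that conversion is precisely where the difficulty of the Green--Griffiths program sits, and the paper's actual applications (Cartan's SMT in $\mathbb{P}^n$, points) are exactly the cases where the top-level term can be absorbed into a Wronskian divisor. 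Your proposed use of the First Main Theorem also points the wrong way: the conjecture demands $m_f(r,D)\leq S_f(r)$, a quantity of size $O(\log T_f(r,A))$, while $T_f(r,\mathcal{O}_X(D))$ is comparable to $T_f(r,A)$ itself; trading the proximity function for a characteristic function can only weaken the left-hand side, it cannot establish the bound. In short, the reduction does not reduce the conjecture to the paper's results --- the uncontrolled top-level proximity term and the missing non-degeneracy/degeneracy-locus argument are the conjecture.
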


\begin{conjecture}
Let $(X, V)$ be a directed projective manifold, and let $D$ be an effective divisor  $X$ with possibly simple normal crossings singularities.
There is a closed algebraic subset $Z \subsetneq X$ such that for any holomorphic curve $f \colon (\mathbb{C}, T_\mathbb{C}) \to (X, V)$ with $f(\mathbb{C}) \not\subset Z$, we have
\[
m_f (r, D) + N_{f, \textnormal{ram}}(r) + T_f (r, K_V) \leq S_f(r)
.\]

\end{conjecture}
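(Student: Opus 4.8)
The plan is to run the Weyl--Ahlfors machinery inside the directed category, with GAALD and its linear-system transform playing the role that Ahlfors' LLD plays in the classical Second Main Theorem; since the target is a conjecture, what follows is a program rather than a complete argument. First I would fix the geometry: choose an ample line bundle $A$ on $X$ and, after replacing $A$ by a large multiple if necessary, pick effective divisors $D_1,\dots,D_q$ refining the components of $D$ so that each $D_j$ is smooth, the collection meets transversally, and each $\mathcal{O}(D_j)$ is very ample. Assemble the exceptional locus $Z$ once and for all from the singular locus $\Sing(V)$ of the directed structure, the common base locus of the linear systems used below, the locus over which the Demailly--Semple tower $X_k\to X$ attached to $(X,V)$ fails to be smooth, and (the image in $X$ of) the Wronskian degeneracy locus of the tower relative to those systems. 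Then for any tangent curve $f\colon(\mathbb{C},T_\mathbb{C})\to(X,V)$ with $f(\mathbb{C})\not\subset Z$, the canonical lift $f_{[k]}\colon\mathbb{C}\to X_k$ is a holomorphic tangent curve of a directed manifold for which the contact functions $\phi_\bullet(D_j)$ of $f$ for the $D_j$ are well defined and to which GAALD applies.

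Next I would invoke GAALD for $f$ (after lifting, for $f_{[k]}$) and each closed subscheme $D_j$: for every $0<\varepsilon<1$ it gives $\varepsilon\,\mathcal{N}\bigl(\tfrac{\phi_1(D_j)}{\phi_0(D_j)^{1-\varepsilon}}\,\Omega,\,r\bigr)\leq(1+\varepsilon)\,T_f\bigl(r,\mathcal{O}(D_j)\bigr)+O(1)$ with $\Omega=f^{\ast}\omega$ for a fixed Kähler form $\omega$ on $X$. Two standard bookkeeping moves turn each such inequality into Second Main Theorem shape. The $\phi_0$-weighting, unwound through the logarithmic-derivative mechanism exactly as in Nevanlinna's LLD, produces the proximity contribution $m_f(r,D_j)$; the Wronskian factor hidden in $\mathcal{N}(\phi_1(D_j)\,\Omega,r)$ produces the ramification term $N_{f,\textnormal{ram}}(r)$ (this is where Brotbek's Wronskians on the Demailly--Semple tower enter, now computed with respect to $V$ rather than $T_X$); and the curvature carried by $\Omega$, assembled together with $\sum_j T_f(r,\mathcal{O}(D_j))$ through an adjunction identity on $X_k$, collapses to $-T_f(r,K_V)$ — it is the directed canonical bundle $K_V=\det V^{\dual}$ and not $K_X$ that appears, precisely because the derivative is taken inside $V$. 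Summing over $j$ with Nochka weights $\theta_j$, where the transform of GAALD with respect to the Nochka-weighted linear system is exactly what preserves the logarithmic-derivative estimate when $f$ is only non-degenerate off $Z$ rather than linearly non-degenerate, and absorbing all errors into $S_f(r)$ (Definition \ref{def:Sf}), one would obtain
\[
m_f(r,D)+N_{f,\textnormal{ram}}(r)+T_f(r,K_V)\leq S_f(r),
\]
which is the asserted inequality.

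The hard part will be the uniform construction of $Z$ together with a proof that GAALD genuinely applies off $Z$ with the \emph{sharp} constants $1+\varepsilon$ and $\varepsilon$: one must control the jet-degeneracy locus of $(X,V)$, check that $f_{[k]}$ stays tangent to the directed structure induced on $X_k$, and verify that it avoids the Wronskian degeneracy divisor. A second, more technical obstacle is the adjunction identity that converts the curvature of the tautological Wronskian bundle on the tower into $K_V$: over $\Sing(V)$ one must pass to a resolution and keep track of the discrepancy, and it is not clear a priori that the correction so produced is of the order of $S_f(r)$ — roughly $O(\log r+\log^{+}T_f(r,A))$ outside a set of finite Lebesgue measure — rather than of the order of $\varepsilon\,T_f(r,A)$, the latter giving only the Griffiths-type statement. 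Controlling this last point is exactly where the full strength of GAALD as a logarithmic-derivative statement, and not merely as a Second Main Theorem, is needed.
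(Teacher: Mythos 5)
The statement you are working on is one of the paper's two \emph{conjectures} (the directed analogue of Lang's Second Main Conjecture), not a theorem: the paper proposes it, proves GAALD (Theorem \ref{thm:Z^(k)sum}) and its transform (Theorem \ref{thm:Z_S^kveesum}), and deduces SMT-type statements only in special cases (Cartan's SMT, Theorem \ref{thm:Cartan}, and curves through points), explicitly leaving the conjecture open. So there is no proof in the paper to compare against, and your proposal --- which you yourself describe as a program --- does not close the gap.

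The decisive missing step is the one you dismiss as ``bookkeeping.'' In the paper, GAALD bounds $\sum_i m_f(r,Z_i)+T_{f_{[k]}}(r,\mathcal{O}_{X_k}(\bm{\ell}))+\sum_j \ell_j\,\mathcal{N}([Z_{f_{[j]}'}],r)$ by the \emph{jet proximity term} $m_{f_{[k]}}(r,\bigvee\sum_{i\in I} Z_i^{(k)})$ (or its $\mathfrak S$-version with $m_{f_{[j]}}(r,Z_{\widebar{\mathfrak w}(X_j,\mathfrak S)})$ terms) plus $S_f(r)$; nowhere does $-T_f(r,K_V)$ or $N_{f,\textnormal{ram}}(r)$ appear. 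To reach the conjectured inequality one must (i) convert $\mathcal{O}_{X_k}(\bm{\ell})$ into $K_V$ plus an ample class with the right sign, and (ii) show the residual jet proximity term is $S_f(r)$ uniformly for all $f$ avoiding a \emph{fixed} algebraic set $Z$. In the only case where the paper carries out such a conversion (Theorem \ref{thm:Cartan}) it works because on $\mathbb{P}^n$ the system $\widebar{\mathfrak W}(X_n,\mathfrak S)$ is spanned by a single Wronskian section, so the jet term becomes a characteristic function via the FMT identity \eqref{eqn:TfZwn}; no analogue is known for a general directed $(X,V)$ and a general snc divisor $D$, and supplying it is exactly the content of the conjecture. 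Your ``adjunction identity'' collapsing the tower curvature to $-T_f(r,K_V)$, your claim that the Wronskian factor yields $N_{f,\textnormal{ram}}(r)$ (GAALD only controls the stationary counting functions $\mathcal{N}([Z_{f_{[j]}'}],r)$, which recover the full ramification only in the $\mathbb{P}^n$ Wronskian situation), and your Nochka-weight step (Nochka weights exist for hyperplanes in (sub)general position, and no ``transform of GAALD with respect to a Nochka-weighted linear system'' exists in the paper or elsewhere) are each unproved, and the proposed exceptional set $Z$ built from $\Sing(V)$, base loci and Wronskian degeneracy loci is not known to control them. Finally, note a mislabel: the $\phi_1(D_j)/\phi_0(D_j)^{1-\varepsilon}$ inequality you invoke is Ahlfors' LLD / GALLDD (Lemmas \ref{lem:ALLD_1^1} and \ref{lem:GALLDD}), not GAALD itself.
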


\subsection{Structure}
The paper is organized as follows.

Section \ref{sec:DV} gives a brief introduction to the construction of directed varieties, the morphisms between directed varieties, and the blow-up of directed varieties.
In Section \ref{sec:jets}, we introduce the concept of the (classic) jet spaces and Demailly-Semple jet towers, which are essential for understanding the higher-order derivatives of holomorphic curves.
And then we recall the Green-Griffiths jet differentials and invariant jet differentials, the latter one has better properties and will play an important role in the whole paper.
Section \ref{sec:Wronskians} introduces the Wronskians sections on Demailly-Semple jet towers, and generalizes the Wronskians ideal sheaves with respect to the linear systems.

In Section \ref{sec:logjets}, we give a brief introduction to logarithmic Demailly-Semple jet towers as a generalization of Demailly-Semple jet towers to the logarithmic case, which is introduced by Dethloff and Lu \cite{Dethloff2001}.
Section \ref{sec:logWronskians} constructs the logarithmic Wronskian ideal sheaves on logarithmic Demailly-Semple jet towers as an analogue of the Wronskian ideal sheaves in Section \ref{sec:Wronskians}.
Furthermore, we show that the the logarithmic Wronskian ideal sheaves corresponds to a subsheaves of the Wronskian ideal sheaves, which helps us to understand the jets of closed subschemes over directed manifolds.

In Section \ref{sec:NT}, we introduce the necessary preliminaries from Nevanlinna theory, including Weil functions, the first main theorem.
And we delve Crofton's formula for Weil functions and proximity functions, and slightly improve the typical application of Calculus Lemma.

In Section \ref{sec:ALLD}, we recall derived curves (or associated curves) of holomorphic curves in projective spaces, and present Ahlfors' LLD.
Then using the maps to the Grassmannian, we indicate a way to generalize Ahlfors' LLD to the case of holomorphic tangent curves of directed projective manifolds.
We also define the jets of closed subschemes over directed manifolds, and obtain Ahlfors' Lemma on Logarithmic Derivative over Directed Manifolds (ALLDD for short), which is one of the central result of this paper.
In addition, we prove General form of Ahlfors' Lemma on Logarithmic Derivative over Directed Manifolds (GALLDD for short) as a generalization of ALLDD to the case of several closed subschemes, from which we can obtain SMT type results for holomorphic curves more conveniently.

In Section \ref{sec:AALD}, we prove Algebro-Geometric Version of Ahlfors' Lemma on Logarithmic Derivative (AALD for short) and General form of Algebro-Geometric Version of Ahlfors' Lemma on Logarithmic Derivative (GAALD for short) for holomorphic tangent curves of directed projective manifolds.
What's more, we give various corollaries of AALD and GAALD.
Finally, Section \ref{sec:Applications} shows some SMT type results as the applications of GAALD.
For more details on the implication between these lemmas and theorems, we refer to the diagram below.

\begin{equation*}
\begin{tikzcd}[column sep=scriptsize, math mode=false, cells={text width=16ex, align=center}]
& Ahlfors' LLD \arrow[ld, Rightarrow] \arrow[rd, Rightarrow] & & \\
ALLDD \arrow[d, Rightarrow] & & GALLDD \arrow[d, Rightarrow] & \\
AALD for $1$-jets \arrow[r, Rightarrow] \arrow[d, Rightarrow] & AALD for $1$-jets w.r.t. \!$\mathfrak{S}$ & GAALD for $1$-jets \arrow[r, Rightarrow] \arrow[d, Rightarrow] & GAALD for $1$-jets w.r.t. \!$\mathfrak{S}$ \arrow[d, Rightarrow] \\
AALD & & GAALD & GAALD w.r.t. \!$\mathfrak{S}$
.\end{tikzcd}
\end{equation*}

Using the language of Weil functions for closed subschemes of Demailly-Semple jet towers, we generalize and modernize Ahlfors' LLD for $1$-jets as follows.
\begin{lemma}[= Lemma \ref{lem:ALLDZ^(1)}, ALLDD]
Let $(X, V)$ be a directed projective manifold, and let $Z$ be a closed subscheme of $X$.
Let $f \colon (\mathbb{C}, T_{\mathbb{C}}) \to (X, V)$ be a non-constant holomorphic curve such that $f(\mathbb{C}) \not\subset \Supp Z$.
Choose a linear system $\mathfrak{S} \subset |L|$ on $X$ satisfying that $\mathfrak{S}$ separates $1$-jets at every point of $X$ and $\mathcal{I}_Z = \mathfrak{s}_{\supset Z}$.
Then there exists a constant $c > 0$ such that the inequality
\[
\varepsilon \mathcal{N}\left(\exp\left(2(1-\varepsilon)\lambda_Z \circ f - 2\lambda_{Z^{(1)}} \circ f_{[1]}\right) f^{\ast} \ddif^c \varphi_{\mathfrak S}, r\right) \leq c T_f(r, L) + O(1)
\]
holds for any $0 < \varepsilon < 1$, where $\mathcal{N}(\Phi, r)$ is defined as in Definition \ref{def:currentcount} for $\Phi \in \mathcal{D}'^{(1, 1)}(\mathbb{C})$.
\end{lemma}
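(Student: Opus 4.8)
The idea is to deduce the lemma from Ahlfors' LLD (Lemma~\ref{lem:ALLD_1^1}) applied, finitely many times, to a holomorphic curve in a projective space built from $\mathfrak S$. Since $\mathcal{I}_Z=\mathfrak{s}_{\supset Z}$, one can choose finitely many sections $t_1,\dots,t_m\in\mathfrak S$, each vanishing on $Z$, which generate $\mathcal{I}_Z$; because $\Supp Z$ is the common zero set of all sections in $\mathfrak{s}_{\supset Z}$ and $f(\mathbb C)\not\subset\Supp Z$, a sufficiently general such choice also satisfies $f(\mathbb C)\not\subset\{t_k=0\}$ for every $k$. Put $\mathbb P^N\bydef\mathbb P(\mathfrak S^{\dual})$; the $t_k$ cut out hyperplanes $H_1,\dots,H_m\subset\mathbb P^N$, and $\mathfrak S$ determines a morphism $\Phi_{\mathfrak S}\colon X\to\mathbb P^N$ with $\Phi_{\mathfrak S}^{\ast}\mathcal{O}_{\mathbb P^N}(1)=L$ and $Z=\Phi_{\mathfrak S}^{-1}\!\bigl(\bigcap_k H_k\bigr)$ scheme-theoretically. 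The hypothesis that $\mathfrak S$ separates $1$-jets at every point makes $\Phi_{\mathfrak S}$ an immersion, so it lifts to an everywhere-defined morphism $\Phi_{\mathfrak S,[1]}\colon X_{[1]}\to(\mathbb P^N)_{[1]}$ of first Demailly-Semple jet towers. Put $g\bydef\Phi_{\mathfrak S}\circ f$ and $\Omega_0\bydef g^{\ast}\omega_0$; then $g$ is non-constant, $g_{[1]}=\Phi_{\mathfrak S,[1]}\circ f_{[1]}$, $g(\mathbb C)\not\subset H_k$ for each $k$, and $f^{\ast}\ddif^c\varphi_{\mathfrak S}=\Omega_0$, $T_f(r,L)=T_g(r,\mathcal{O}_{\mathbb P^N}(1))+O(1)$.

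Next I would rewrite the quantities in the conclusion in terms of $g$. Functoriality of Weil functions, together with the fact that the Weil function of a scheme-theoretic intersection is, up to $O(1)$, the minimum of the Weil functions of the pieces, gives $\lambda_Z\circ f=\min_k\lambda_{H_k}\circ g+O(1)$. For the jets one combines: (i) the scheme-theoretic identity $\bigl(\bigcap_k H_k\bigr)^{(1)}=\bigcap_k H_k^{(1)}$, which holds because the linear forms cutting out $\bigcap_k H_k$ and their jet-derivatives generate the same ideal as the ones attached to the individual $H_k$; (ii) the compatibility of the jet $Z^{(1)}$ of a closed subscheme over $(X,V)$ with pullback along $\Phi_{\mathfrak S,[1]}$ — so that $Z^{(1)}=\Phi_{\mathfrak S,[1]}^{-1}\!\bigl((\bigcap_k H_k)^{(1)}\bigr)$ — which rests on the Wronskian / Grassmannian description of such jets in Section~\ref{sec:Wronskians} and Section~\ref{sec:ALLD} and on $\mathcal{I}_Z=\mathfrak{s}_{\supset Z}$; and (iii) functoriality of Weil functions once more. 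This yields $\lambda_{Z^{(1)}}\circ f_{[1]}=\min_k\lambda_{H_k^{(1)}}\circ g_{[1]}+O(1)$, while $f_{[1]}(\mathbb C)\not\subset\Supp Z^{(1)}$ holds automatically, for otherwise $g(\mathbb C)\subset\bigcap_k H_k$. Finally one records from Section~\ref{sec:ALLD} the dictionary $\phi_0(H_k)=\exp(-2\lambda_{H_k}\circ g)$ and $\phi_1(H_k)=\exp(-2\lambda_{H_k^{(1)}}\circ g_{[1]})$ between the contact functions of $g$ and the relevant Weil functions.

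The estimate then assembles from $m$ instances of Ahlfors' LLD. Because $t\mapsto e^{t}$ is increasing and $0<2(1-\varepsilon)<2$,
\[
\exp\bigl(2(1-\varepsilon)\lambda_Z\circ f-2\lambda_{Z^{(1)}}\circ f_{[1]}\bigr)
\leq C\,\min_k e^{2(1-\varepsilon)\lambda_{H_k}\circ g}\cdot\max_k e^{-2\lambda_{H_k^{(1)}}\circ g_{[1]}}
\leq C\max_k\frac{\phi_1(H_k)}{\phi_0(H_k)^{1-\varepsilon}}
\leq C\sum_{k=1}^m\frac{\phi_1(H_k)}{\phi_0(H_k)^{1-\varepsilon}},
\]
for a constant $C>0$ independent of $\varepsilon$: the second inequality bounds the minimum by the term whose index realizes the maximum, and the bounded functions coming from the $O(1)$'s above are absorbed into $C$ since $2(1-\varepsilon)<2$. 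Multiplying by the positive $(1,1)$-form $\Omega_0=f^{\ast}\ddif^c\varphi_{\mathfrak S}$ and invoking monotonicity and additivity of $\mathcal N(\,\cdot\,,r)$ on positive currents gives $\mathcal N\bigl(\exp(\cdots)\,f^{\ast}\ddif^c\varphi_{\mathfrak S},r\bigr)\leq C\sum_k\mathcal N\!\bigl(\tfrac{\phi_1(H_k)}{\phi_0(H_k)^{1-\varepsilon}}\Omega_0,r\bigr)$. Now Ahlfors' LLD (Lemma~\ref{lem:ALLD_1^1}), applicable to $g$ and each $H_k$ since $g$ is non-constant and $g(\mathbb C)\not\subset H_k$, yields $\varepsilon\,\mathcal N\!\bigl(\tfrac{\phi_1(H_k)}{\phi_0(H_k)^{1-\varepsilon}}\Omega_0,r\bigr)\leq(1+\varepsilon)T_g(r,\mathcal{O}_{\mathbb P^N}(1))+O(1)\leq 2T_f(r,L)+O(1)$; summing over $k$ and multiplying by $\varepsilon$ gives the claim with $c=2mC$.

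The step I expect to be the main obstacle is the identity $\lambda_{Z^{(1)}}\circ f_{[1]}=\min_k\lambda_{H_k^{(1)}}\circ g_{[1]}+O(1)$ of the second paragraph — concretely, the three points it rests on: that $\mathfrak S$ separating $1$-jets genuinely yields $\Phi_{\mathfrak S,[1]}$ as an everywhere-defined morphism (so there is no indeterminacy when composing with $f_{[1]}$); that the jet $Z^{(1)}$ of a closed subscheme over a directed manifold, defined through the Wronskian ideal sheaves, is compatible with pullback, i.e.\ $Z^{(1)}=\Phi_{\mathfrak S,[1]}^{-1}\!\bigl((\bigcap_k H_k)^{(1)}\bigr)$; and the scheme-theoretic equality $(\bigcap_k H_k)^{(1)}=\bigcap_k H_k^{(1)}$. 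The remaining items — choosing the $t_k$ generically, the computation $f^{\ast}\ddif^c\varphi_{\mathfrak S}=\Omega_0$, and the bookkeeping that keeps $c$ independent of $\varepsilon$ while allowing the residual $O(1)$ to depend on $\varepsilon$ and on the fixed data $(X,V,Z,\mathfrak S,f)$ — are routine.
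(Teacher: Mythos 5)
Your proposal is correct and follows essentially the same route as the paper: the paper likewise picks finitely many divisors $D_i\in\mathfrak{S}_{\supset Z}$ with $f(\mathbb{C})\not\subset\Supp D_i$, applies Ahlfors' LLD to $\mu_{\mathfrak S}\circ f$ in $\mathfrak{S}^{\dual}\cong\mathbb{P}^N$ for each $D_i$ (packaged as Corollary \ref{cor:ALLDD^(1)}, via Lemma \ref{lem:ALLDdirected} and the contact-function identity \eqref{eqn:phikH}), and then uses $Z=\bigcap_i D_i$, Proposition \ref{prp:cap^(1)=^(1)cap}, and the same min/max manipulation of Weil functions to absorb constants and sum the estimates. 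The three points you single out as the main obstacle are exactly what the paper's machinery supplies: holomorphy of the lifted map when $\mathfrak S$ separates $1$-jets (Remark \ref{rmk:separate1}), the identification of $\phi_1$ with the Weil function of the first jet (\eqref{eqn:phikH} combined with Corollary \ref{cor:wZseparate1}, which plays the role of your pullback-compatibility $Z^{(1)}=\Phi_{\mathfrak S,[1]}^{-1}\bigl((\bigcap_k H_k)^{(1)}\bigr)$ in the form $D_{\mathfrak S}^{[1]}=D^{(1)}$ on $X_1$), and the scheme-theoretic equality $(\bigcap_k H_k)^{(1)}=\bigcap_k H_k^{(1)}$ (Proposition \ref{prp:cap^(1)=^(1)cap}).
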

For real-valued functions $g, h \in L^2(U)$ on a subset $U \subset \mathbb{C}$, we define the ``inner product'' of $g$ and $h$ with respect to the measure $\gamma_{\mathfrak S} \ddif^c |z|^2$ as $\langle g, h \rangle_{\gamma_{\mathfrak S}, U} = \int_{U} gh \gamma_{\mathfrak S} \ddif^c |z|^2$.
Then $\mathcal{N}\left( gh f^{\ast} \ddif^c \varphi_{\mathfrak S}, r\right) = \int_{0}^{r} \langle g, h \rangle_{\gamma_{\mathfrak S}, \bigtriangleup(t)} \frac{\dif t}{t}$ can be regarded as the growth of the ``inner product'' of $g$ and $h$ with respect to the measure $\gamma_{\mathfrak S} \ddif^c |z|^2 (= f^{\ast} \ddif^c \varphi_{\mathfrak S})$.
Dividing $\varepsilon$ on both sides of the inequality in ALLDD, we see that the left-hand side of the inequality above can be interpreted as the growth of the square of the ``$L^2$-norm'' of $\exp\left((1-\varepsilon)\lambda_Z \circ f - \lambda_{Z^{(1)}} \circ f_{[1]}\right)$ with respect to the measure $\gamma_{\mathfrak S} \ddif^c |z|^2$,
and the right-hand side can be interpreted as the growth of the square of the ``$L^2$-norm'' of the constant $\left(\dfrac{c}{\varepsilon}\right)^{\frac{1}{2}}$ with respect to the same measure.
Observe that $\exp\left(\lambda_Z \circ f - \lambda_{Z^{(1)}} \circ f_{[1]}\right)$ takes the poles only at the points where $f$ comes close to $Z$ but $f_{[1]}$ does not approach $Z^{(1)}$.

Therefore, ALLDD gives a somewhat upper bound for the growth of the number of points (with multiplicity) where $f$ comes close to $Z$ but $f_{[1]}$ does not approach $Z^{(1)}$ in the disk $\bigtriangleup(r)$.
More concretely, $\exp\left((1-\varepsilon)\lambda_Z \circ f - \lambda_{Z^{(1)}} \circ f_{[1]}\right)$ is an approximating evaluation of the exceptional cases when $f$ comes close to $Z$ but $f_{[1]}$ does not approach $Z^{(1)}$, and the growth of its ``$L^2$-norm'' with respect to the measure $\gamma_{\mathfrak S} \ddif^c |z|^2$ is controlled by the growth of the ``$L^2$-norm'' of the constant.

Moreover, in order to get a better estimation in the case where $f$ intersects the closed subschemes with singularities, we generalize Ahlfors' Lemma on Logarithmic Derivative over Directed Manifolds to the case of several closed subschemes.
\begin{lemma}[= Lemma \ref{lem:GALLDD}, GALLDD]
Let $(X, V)$ be a directed projective manifold, and let $Z_1, \dots, Z_q$ be closed subschemes of $X$. Let $f \colon (\mathbb{C}, T_{\mathbb{C}}) \to (X, V)$ be a non-constant holomorphic curve such that $f(\mathbb{C}) \not\subset \Supp (Z_1 + \dots + Z_q)$. For any linear system $\mathfrak{S} \subset |L|$ on $X$ satisfying that $\mathfrak{S}$ separates $1$-jets at every point of $X$ and $\mathcal{I}_Z = \mathfrak{s}_{\supset Z}$, there exists a constant $c > 0$ such that the inequality
\[
\varepsilon \mathcal{N}\left(\exp\left(2(1-\varepsilon)\mathop{\max}_{1 \leq i \leq q} \lambda_{Z_i} \circ f - 2\mathop{\max}_{1 \leq i \leq q} \lambda_{Z_i^{(1)}} \circ f_{[1]}\right) f^{\ast} \ddif^c \varphi_{\mathfrak S}, r\right) \leq c T_f(r, L) + O(1)
\]
holds for any $0 < \varepsilon < 1$.
\end{lemma}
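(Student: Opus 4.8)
The plan is to deduce GALLDD from ALLDD (Lemma~\ref{lem:ALLDZ^(1)}) by a pointwise domination argument that trades the two maxima for sums. Fix $0 < \varepsilon < 1$ and abbreviate, for each $i$, the exponent occurring in ALLDD for $Z_i$ by
\[
u_i \bydef 2(1-\varepsilon)\,\lambda_{Z_i}\circ f - 2\,\lambda_{Z_i^{(1)}}\circ f_{[1]}.
\]
First I would record the elementary pointwise inequality
\[
2(1-\varepsilon)\max_{1\le i\le q}\lambda_{Z_i}\circ f(z) \;-\; 2\max_{1\le i\le q}\lambda_{Z_i^{(1)}}\circ f_{[1]}(z) \;\le\; \max_{1\le i\le q} u_i(z),
\]
valid at every $z \in \mathbb{C}$: choosing an index $I$ that attains $\max_i \lambda_{Z_i}\circ f(z)$, one has $\max_i \lambda_{Z_i^{(1)}}\circ f_{[1]}(z) \ge \lambda_{Z_I^{(1)}}\circ f_{[1]}(z)$, so the left-hand side is bounded by $u_I(z) \le \max_i u_i(z)$. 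Exponentiating and estimating a maximum of non-negative (possibly $+\infty$-valued) quantities by their sum yields
\[
\exp\Bigl(2(1-\varepsilon)\max_i\lambda_{Z_i}\circ f - 2\max_i\lambda_{Z_i^{(1)}}\circ f_{[1]}\Bigr) \;\le\; \sum_{i=1}^{q}\exp(u_i)
\]
as functions on $\mathbb{C}$.

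Next I would multiply through by the non-negative density $\gamma_{\mathfrak S}$ of the positive measure $f^{\ast}\ddif^c\varphi_{\mathfrak S}$ and use the monotonicity of the counting operator $\mathcal{N}(\cdot, r)$ on non-negative $(1,1)$-currents --- immediate from its definition (Definition~\ref{def:currentcount}) as an iterated integral over the disks $\bigtriangleup(t)$ --- to obtain
\[
\mathcal{N}\Bigl(\exp\bigl(2(1-\varepsilon)\max_i\lambda_{Z_i}\circ f - 2\max_i\lambda_{Z_i^{(1)}}\circ f_{[1]}\bigr)\,f^{\ast}\ddif^c\varphi_{\mathfrak S},\, r\Bigr) \;\le\; \sum_{i=1}^{q}\mathcal{N}\bigl(\exp(u_i)\,f^{\ast}\ddif^c\varphi_{\mathfrak S},\, r\bigr).
\]
Applying ALLDD to each $Z_i$ in turn, with the same fixed linear system $\mathfrak S$ (which separates $1$-jets and, by hypothesis, cuts out each $Z_i$ so that $\mathcal{I}_{Z_i} = \mathfrak{s}_{\supset Z_i}$), produces constants $c_i > 0$, independent of $\varepsilon$, with $\varepsilon\,\mathcal{N}(\exp(u_i)\,f^{\ast}\ddif^c\varphi_{\mathfrak S}, r) \le c_i\, T_f(r, L) + O(1)$. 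Multiplying the displayed inequality by $\varepsilon$, summing over $i$, and putting $c \bydef c_1 + \dots + c_q > 0$ gives precisely the conclusion of GALLDD (Lemma~\ref{lem:GALLDD}), with $c$ uniform in $\varepsilon$.

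Apart from these routine steps, the only point that needs attention is the bookkeeping of the hypothesis on $\mathfrak S$: one must check that the single linear system appearing in the statement is admissible for ALLDD with respect to each individual $Z_i$ --- that it separates $1$-jets and cuts out each $Z_i$ scheme-theoretically. Once that is in place the argument is a clean reduction, and all the genuine analytic content --- the bound on the growth of the ``$L^2$-norm'' of $\exp(u_i)$ against $\gamma_{\mathfrak S}\ddif^c|z|^2$ --- is already carried by ALLDD, so I expect no serious obstacle here. (An alternative that avoids the factor $q$ would be to partition $\bigtriangleup(r)$ into the regions where a given $Z_i$ realizes $\max_i \lambda_{Z_i}\circ f$ and integrate region by region, but the crude sum bound above already suffices for the stated conclusion.)
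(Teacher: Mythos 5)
Your argument is correct and is essentially the paper's own proof: the paper likewise records the pointwise inequality $(1-\varepsilon)\max_i \lambda_{Z_i}\circ\pi_1 - \max_i\lambda_{Z_i^{(1)}} \leq \max_i\bigl((1-\varepsilon)\lambda_{Z_i}\circ\pi_1 - \lambda_{Z_i^{(1)}}\bigr)$ and then repeats the reasoning of Lemma \ref{lem:ALLDZ^(1)} (exponentiate, bound the maximum by the sum, use linearity and monotonicity of $\mathcal{N}$, and sum the individual estimates into one constant $c$). Your hypothesis bookkeeping (that $\mathfrak{S}$ works for each $Z_i$ and that $f(\mathbb{C})\not\subset\Supp(Z_1+\dots+Z_q)$ gives $f(\mathbb{C})\not\subset\Supp Z_i$ for every $i$) matches what the paper leaves implicit, so there is nothing further to add.
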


Taking logarithms on both sides of the inequality in ALLDD, and using the Green-Jensen formula and Calculus Lemma, we conclude AALD for $1$-jets.
\begin{theorem}[= Theorem \ref{thm:Z^(1)}, AALD for $1$-jets]
Let $(X, V)$ be a directed projective manifold, and let $Z$ be a closed subscheme of $X$.
Let $f \colon (\mathbb{C}, T_{\mathbb{C}}) \to (X, V)$ be a non-constant holomorphic curve such that $f(\mathbb{C}) \not\subset \Supp Z$.
Then we have
\[
m_f(r, Z) + T_{f_{[1]}}(r, \mathcal{O}_{X_1}(1)) + \mathcal{N}([Z_{f'}], r) \leq m_{f_{[1]}}(r, Z^{(1)}) + S_f(r)
.\]
\end{theorem}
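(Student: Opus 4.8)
The plan is to start from the inequality in ALLDD (Lemma~\ref{lem:ALLDZ^(1)}), pass from the ``area'' functionals $\mathcal N(\,\cdot\,,r)$ appearing there to ``boundary'' proximity functions by means of the Calculus Lemma of Section~\ref{sec:NT}, take logarithms so that the main term $c\,T_f(r,L)$ on the right of ALLDD collapses into the small term $S_f(r)$, and finally use the Green--Jensen formula together with the identity from Section~\ref{sec:ALLD} relating $\ddif^c\varphi_{\mathfrak S}$, the Chern form of $\mathcal O_{X_1}(1)$ and the ramification divisor $[Z_{f'}]$ to recognise the surviving terms.

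Concretely, write $\psi_\varepsilon\bydef\exp\!\big(2(1-\varepsilon)\lambda_Z\circ f-2\lambda_{Z^{(1)}}\circ f_{[1]}\big)\,\gamma_{\mathfrak S}$, so that (using $f^{\ast}\ddif^c\varphi_{\mathfrak S}=\gamma_{\mathfrak S}\ddif^c|z|^2$) ALLDD becomes $\varepsilon\,\Phi_\varepsilon(r)\leq c\,T_f(r,L)+O(1)$, where $\Phi_\varepsilon(r)\bydef\mathcal N(\psi_\varepsilon\ddif^c|z|^2,r)$ is nondecreasing in $r$; in particular $\log^{+}\Phi_\varepsilon(r)\leq\log^{+}T_f(r,L)+O(\log(1/\varepsilon))+O(1)$. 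First I would apply the (improved) Calculus Lemma of Section~\ref{sec:NT} to the nondecreasing function $\Phi_\varepsilon$: since $\int_{|z|=r}\psi_\varepsilon\,\tfrac{\dif\theta}{2\pi}$ is, up to a fixed normalising constant, $\tfrac1r\tfrac{\dif}{\dif r}\!\big(r\,\tfrac{\dif}{\dif r}\Phi_\varepsilon(r)\big)$, the Calculus Lemma bounds this circle integral by a fixed power of $r$ times a fixed power of $\Phi_\varepsilon(r)$ for all $r$ outside a set whose Lebesgue measure can be bounded uniformly in $\varepsilon$; taking logarithms and then using the concavity of $\log$ (Jensen's inequality on $|z|=r$) gives
\[
\int_{|z|=r}\log\psi_\varepsilon\,\frac{\dif\theta}{2\pi}\ \leq\ \log\!\int_{|z|=r}\psi_\varepsilon\,\frac{\dif\theta}{2\pi}\ \leq\ O\big(\log r+\log^{+}T_f(r,L)\big)+O(\log(1/\varepsilon)).
\]
By the definitions of the proximity functions, $\int_{|z|=r}\log\psi_\varepsilon\,\tfrac{\dif\theta}{2\pi}=2(1-\varepsilon)\,m_f(r,Z)-2\,m_{f_{[1]}}(r,Z^{(1)})+\int_{|z|=r}\log\gamma_{\mathfrak S}\,\tfrac{\dif\theta}{2\pi}$; and by the Green--Jensen formula $\int_{|z|=r}\log\gamma_{\mathfrak S}\,\tfrac{\dif\theta}{2\pi}=\mathcal N(\ddif^c\log\gamma_{\mathfrak S},r)+O(1)$. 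The identity established in Section~\ref{sec:ALLD} relating $\ddif^c\varphi_{\mathfrak S}$, the Chern form of $\mathcal O_{X_1}(1)$ and the divisor $[Z_{f'}]$ (Poincar\'e--Lelong for the zero locus of $f'$), together with the first main theorem for the lift $f_{[1]}$, then identifies this with $T_{f_{[1]}}(r,\mathcal O_{X_1}(1))+\mathcal N([Z_{f'}],r)$ up to the same normalising constant and an $O(1)$. Substituting, dividing through by that constant and rearranging gives, for every $\varepsilon\in(0,1)$,
\[
m_f(r,Z)+T_{f_{[1]}}(r,\mathcal O_{X_1}(1))+\mathcal N([Z_{f'}],r)-m_{f_{[1]}}(r,Z^{(1)})\ \leq\ \varepsilon\,m_f(r,Z)+O\big(\log r+\log^{+}T_f(r,L)\big)+O_\varepsilon(1)
\]
outside a set of finite measure. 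Finally, since $\mathcal I_Z=\mathfrak s_{\supset Z}$ with $\mathfrak S\subset|L|$ forces $m_f(r,Z)\leq T_f(r,L)+O(1)$, and $T_f(r,L)\leq C\,T_f(r,A)+O(1)$ for $A$ ample, the right-hand side is of the form $S_f(r)$ in the sense of Definition~\ref{def:Sf}, the family of inequalities indexed by $\varepsilon$ being exactly what that notation encodes; this is the asserted estimate.

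The hard part will be carrying out the Calculus Lemma step so that it is simultaneously lossless --- the main term $c\,T_f(r,L)$ must genuinely become a small term after the logarithm --- and uniform enough in $\varepsilon$ that the passage $\varepsilon\downarrow0$ is legitimate despite the $\varepsilon$-dependence of both $\psi_\varepsilon$ and the exceptional set; this is precisely where the sharpening of the standard application of the Calculus Lemma from Section~\ref{sec:NT} is used. A secondary, purely technical, point is the bookkeeping of the normalising constant in the identification of $\int_{|z|=r}\log\gamma_{\mathfrak S}\,\tfrac{\dif\theta}{2\pi}$ with $T_{f_{[1]}}(r,\mathcal O_{X_1}(1))+\mathcal N([Z_{f'}],r)$, and arranging that $f(0)\notin\Supp Z$ and that $f$ is unramified at $0$ (or else absorbing the exceptional point into the $O(1)$).
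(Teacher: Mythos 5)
Your route is the same as the paper's: apply ALLDD (Lemma \ref{lem:ALLDZ^(1)}) to $\gamma=\exp\big(2(1-\varepsilon)\lambda_Z\circ f-2\lambda_{Z^{(1)}}\circ f_{[1]}\big)\gamma_{\mathfrak S}$ with $\mathfrak S$ as in Lemma \ref{lem:Sexist}, compute $\int_0^{2\pi}\log\gamma\,\frac{\dif\theta}{2\pi}$ exactly by Green--Jensen together with \eqref{eqn:Nddclogg}, and bound it from above by Jensen's concavity inequality, Lemma \ref{lem:calculus} and ALLDD. The gap is in how you close the argument. Your final display leaves $\varepsilon\,m_f(r,Z)+O(\log r+\log^{+}T_f(r,L))+O_\varepsilon(1)$ on the right, and you assert that ``the family of inequalities indexed by $\varepsilon$'' is what $S_f(r)$ encodes, the remaining difficulty being a passage $\varepsilon\downarrow 0$. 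That is not a valid way to conclude: for each fixed $\varepsilon$ the term $\varepsilon\,m_f(r,Z)$ can be of the order $\varepsilon\,T_f(r,L)$, which is not $O(\log T_f(r,A))$, while letting $\varepsilon\to 0$ at fixed $r$ is blocked by the $O(\log(1/\varepsilon))$ contribution and by the $\varepsilon$-dependence of the exceptional set (a countable union of finite-measure sets need not have finite measure). The paper's mechanism is different and is the one missing step you need: since $\mathcal I_Z=\mathfrak s_{\supset Z}$ and $f(\mathbb C)\not\subset\Supp Z$ give $m_f(r,Z)\leq T_f(r,L)+O(1)$ (a fact you already quote), one substitutes the $r$-dependent choice $\varepsilon(r)=1/T_f(r,L)$ into the inequality, so that $\varepsilon\,m_f(r,Z)=O(1)$ and $O(\log(1/\varepsilon))=O(\log T_f(r,L))\leq S_f(r)$. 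With that substitution spelled out your argument coincides with the paper's proof; without it, the conclusion does not follow from the family of fixed-$\varepsilon$ inequalities.

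Two further points of bookkeeping. First, your Calculus Lemma step keeps an $O(\log r)$; Definition \ref{def:Sf} allows only $O(\log T_f(r,A))$ (plus a finite-measure exceptional set), and $\log r$ need not be $O(\log T_f(r,A))$ for slowly growing transcendental curves. The refined Lemma \ref{lem:calculus} is stated precisely so that no power of $r$ appears, so you should use it in that sharp form rather than the classical version with an $r$-factor. Second, $S_f(r)$ carries a separate clause for algebraic curves, namely an $O(1)$ bound as $r\to\infty$; the paper verifies this case by showing $\gamma(re^{i\theta})\leq O(1)$, comparing the poles of $\exp(\lambda_Z\circ f)$ and $\exp(\lambda_{Z^{(1)}}\circ f_{[1]})$ at infinity, and your sketch does not address it. The normalization at $z=0$ that you flag at the end is handled exactly as in Remark \ref{rmk:zero}, so that part is indeed only technical.
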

This implies that not only the difference between the proximity functions for $Z$ and $Z^{(1)}$ but also the sum of the characteristic function for $\mathcal{O}_{X_1}(1)$ and the counting function for $[Z_{f'}]$ is controlled by the growth of $S_f(r)$.
The controlling of the characteristic function for $\mathcal{O}_{X_1}(1)$ is essential to deal with the hyperbolicity questions.

Viewing $(X_k, V_k)$ as the Demailly-Semple $1$-jet tower of $(X_{k-1}, V_{k-1})$, we immediately get AALD by induction.
\begin{theorem}[= Theorem \ref{thm:Z^(k)}, AALD]
Let $(X, V)$ be a directed projective manifold, and let $Z$ be a closed subscheme of $X$.
Let $f \colon (\mathbb{C}, T_{\mathbb{C}}) \to (X, V)$ be a non-constant holomorphic curve such that $f(\mathbb{C}) \not\subset \Supp Z$.
For each $k \in \mathbb{Z}_{+}$, we have
\[
m_{f_{[k-1]}}(r, Z^{(k-1)}) + T_{f_{[k]}}(r, \mathcal{O}_{X_{k}}(1)) + \mathcal{N}([Z_{f_{[k-1]}'}], r) \leq m_{f_{[k]}}(r, Z^{(k)}) + S_f(r)
.\]
\end{theorem}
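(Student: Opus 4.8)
The plan is to prove the statement by induction on $k$, using Theorem \ref{thm:Z^(1)} (AALD for $1$-jets) as the engine at each level. For the base case $k = 1$, the asserted inequality reads
\[
m_{f_{[0]}}(r, Z^{(0)}) + T_{f_{[1]}}(r, \mathcal{O}_{X_1}(1)) + \mathcal{N}([Z_{f_{[0]}'}], r) \leq m_{f_{[1]}}(r, Z^{(1)}) + S_f(r),
\]
which is exactly Theorem \ref{thm:Z^(1)} once we read it under the conventions $f_{[0]} = f$, $Z^{(0)} = Z$ and $X_0 = X$. So the base case needs no new work.

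For the inductive step I would regard $(X_k, V_k)$ as a directed projective manifold in its own right and apply Theorem \ref{thm:Z^(1)} to the triple $\bigl((X_k, V_k),\, Z^{(k)},\, f_{[k]}\bigr)$. Four points must be checked: (i) $(X_k, V_k)$ is a directed projective manifold — true by the construction of Demailly–Semple jet towers in Section \ref{sec:jets}; (ii) $Z^{(k)}$ is a closed subscheme of $X_k$ — true by the definition of jets of closed subschemes given in Section \ref{sec:ALLD}; (iii) $f_{[k]}$ is a non-constant holomorphic tangent curve of $(X_k, V_k)$ — it is the canonical lift and lies over the non-constant curve $f$, hence is itself non-constant; and (iv) $f_{[k]}(\mathbb{C}) \not\subset \Supp Z^{(k)}$ — indeed $\Supp Z^{(k)}$ maps into $\Supp Z$ under the projection $X_k \to X$ (the defining ideal of $Z^{(k)}$ contains the pullback of $\mathcal{I}_Z$), so $f_{[k]}(\mathbb{C}) \subset \Supp Z^{(k)}$ would force $f(\mathbb{C}) \subset \Supp Z$, contrary to hypothesis. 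Invoking the iterative identifications $(X_k)_1 = X_{k+1}$, $(V_k)_1 = V_{k+1}$, $\mathcal{O}_{(X_k)_1}(1) = \mathcal{O}_{X_{k+1}}(1)$, $(f_{[k]})_{[1]} = f_{[k+1]}$, $(Z^{(k)})^{(1)} = Z^{(k+1)}$ and $(Z^{(k)})_{f_{[k]}'} = Z_{f_{[k]}'}$, Theorem \ref{thm:Z^(1)} then yields
\[
m_{f_{[k]}}(r, Z^{(k)}) + T_{f_{[k+1]}}(r, \mathcal{O}_{X_{k+1}}(1)) + \mathcal{N}([Z_{f_{[k]}'}], r) \leq m_{f_{[k+1]}}(r, Z^{(k+1)}) + S_{f_{[k]}}(r).
\]

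This is already the desired inequality at level $k+1$, \emph{except} that its error term is $S_{f_{[k]}}(r)$ instead of $S_f(r)$; so the whole substance of the induction is the comparison $S_{f_{[k]}}(r) \leq S_f(r)$. I would obtain it by iterating, through levels $1, \dots, k$, the stability of the error term under a single $1$-jet lift — $S_{g_{[1]}}(r) \leq S_g(r)$ for any holomorphic tangent curve $g$ into any directed projective manifold — which is built into Definition \ref{def:Sf} and the Calculus-Lemma estimates of Section \ref{sec:NT}; where the definition of $S_f(r)$ carries implicit characteristic-function terms $T_{f_{[j]}}(r, \cdot)$, the induction hypothesis at the intermediate levels together with the First Main Theorem supplies the bounds needed to keep them absorbed. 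The main obstacle is thus not geometric at all — every geometric ingredient is delivered by Theorem \ref{thm:Z^(1)} and the jet-tower formalism — but the careful bookkeeping of $S_f(r)$ across jet levels, and that is precisely what Definition \ref{def:Sf} is designed to make routine.
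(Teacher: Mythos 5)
Your proposal is correct and is essentially the paper's own argument: the paper likewise views $(X_k, V_k)$ as the Demailly--Semple $1$-jet tower of $(X_{k-1}, V_{k-1})$, checks $f_{[k-1]}(\mathbb{C}) \not\subset \Supp Z^{(k-1)}$ via $\Supp Z^{(k-1)} \subset \Supp \pi_{0,k-1}^{\ast} Z$, applies Theorem \ref{thm:Z^(1)} to $f_{[k-1]}$, and then replaces $S_{f_{[k-1]}}(r)$ by $S_f(r)$. The one point to state more carefully is that the comparison $S_{f_{[k]}}(r) \leq S_f(r)$ is not ``built into Definition \ref{def:Sf}'': it is exactly \eqref{eqn:Sfk}, which the paper derives from \eqref{eqn:O_X1(1)} (i.e.\ Theorem \ref{thm:O_Xk(1)}, the $Z=\emptyset$ instance) together with the ampleness of $\mathcal{O}_{X_1}(1)\otimes\pi_1^{\ast}A$ and Proposition \ref{prp:TrA} --- precisely the iterated one-step stability $S_{g_{[1]}}(r)\leq S_g(r)$ you invoke, so your bookkeeping goes through once you cite that estimate rather than the bare definition.
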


Furthermore, in order to prove SMT type results more conveniently, we need the following transform of AALD.
\begin{theorem}[= Theorem \ref{thm:Z_S^1}, AALD for $1$-jets w.r.t. \!$\mathfrak{S}$]
Let $(X, V)$ be a directed projective manifold, and let $Z$ be a closed subscheme of $X$.
Choose a linear system $\mathfrak{S} \subset |L|$ on $X$ satisfying that $\mathcal{I}_Z = \mathfrak{s}_{\supset Z}$.
Let $f \colon (\mathbb{C}, T_{\mathbb{C}}) \to (X, V)$ be a non-constant holomorphic curve such that $f_{[1]}(\mathbb{C}) \not\subset \Bs(\mathfrak{W}(X_1, \mathfrak{S}))$.
Suppose that $f(\mathbb{C}) \not\subset \Supp Z$.
Then,
\[
m_f(r, Z) + T_{f_{[1]}}(r, \mathcal{O}_{X_1}(1)) + \mathcal{N}([Z_{f'}], r) \leq m_{f_{[1]}}(r, Z_{\mathfrak S}^{[1]}) - m_f(r, Z_{\mathfrak s}) + S_f(r)
,\]
where $Z_{\mathfrak{S}}^{[k]}$ is the closed subscheme of $X_k$ with the ideal sheaf $\widebar{\mathfrak w}(X_k, Z, \mathfrak{S})$.
\end{theorem}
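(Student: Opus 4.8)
The plan is to deduce this statement directly from AALD for $1$-jets (Theorem~\ref{thm:Z^(1)}), which already provides
\[
m_f(r, Z) + T_{f_{[1]}}(r, \mathcal{O}_{X_1}(1)) + \mathcal{N}([Z_{f'}], r) \leq m_{f_{[1]}}(r, Z^{(1)}) + S_f(r),
\]
by replacing the proximity function of the abstract $1$-jet $Z^{(1)}$ with the explicit Wronskian data. Concretely, it suffices to establish the comparison
\[
m_{f_{[1]}}(r, Z^{(1)}) \leq m_{f_{[1]}}(r, Z_{\mathfrak S}^{[1]}) - m_f(r, Z_{\mathfrak s}) + O(1),
\]
substitute it into Theorem~\ref{thm:Z^(1)}, and absorb the bounded term into $S_f(r)$, which dominates any $O(1)$ by Definition~\ref{def:Sf}. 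Note that AALD for $1$-jets is applicable here since the present hypotheses already include that $f$ is non-constant and $f(\mathbb{C}) \not\subset \Supp Z$.

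The comparison is a local statement about ideal sheaves, extracted from the bookkeeping of Section~\ref{sec:logWronskians}. Write $\pi_1 \colon X_1 \to X$ for the projection, so that $\pi_1 \circ f_{[1]} = f$. Under the standing hypothesis $\mathcal{I}_Z = \mathfrak{s}_{\supset Z}$ — which forces every section used to form the Wronskians to vanish exactly along $Z$ — the description of $\widebar{\mathfrak w}(X_1, Z, \mathfrak{S})$ relative to the jet ideal $\mathcal{I}_{Z^{(1)}}$ takes the form of an inclusion $\widebar{\mathfrak w}(X_1, Z, \mathfrak{S}) \subseteq \mathcal{I}_{Z^{(1)}} \cdot \pi_1^{\ast}\mathcal{I}_{Z_{\mathfrak s}}$ (up to a twist by a line bundle, which is immaterial for Weil functions), where $Z_{\mathfrak s} \subset X$ is the defect subscheme isolated there. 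Invoking the functoriality of Weil functions for closed subschemes — additivity on products of ideal sheaves and compatibility with pullback, each valid up to $O(1)$ (Section~\ref{sec:NT}) — and using the hypothesis $f_{[1]}(\mathbb{C}) \not\subset \Bs(\mathfrak{W}(X_1, \mathfrak{S}))$ so that $\lambda_{Z_{\mathfrak S}^{[1]}} \circ f_{[1]}$, hence also $\lambda_{Z^{(1)}} \circ f_{[1]}$ and $\lambda_{Z_{\mathfrak s}} \circ f$, is finite off a discrete set, one obtains
\[
\lambda_{Z^{(1)}} \circ f_{[1]} \leq \lambda_{Z_{\mathfrak S}^{[1]}} \circ f_{[1]} - \lambda_{Z_{\mathfrak s}} \circ f + O(1)
\]
on $\mathbb{C}$. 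Averaging over the circles $\{|z| = r\}$ yields exactly the desired comparison of proximity functions. (The same base-locus hypothesis also guarantees $f(\mathbb{C}) \not\subset \Supp Z_{\mathfrak s}$, so that $m_f(r, Z_{\mathfrak s})$ is well-defined.)

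I expect the main obstacle to be the ideal-sheaf comparison itself: one must verify that the discrepancy between $\widebar{\mathfrak w}(X_1, Z, \mathfrak{S})$ and $\mathcal{I}_{Z^{(1)}}$ is pulled back from $X$, so that along $f_{[1]}$ its Weil function collapses to $\lambda_{Z_{\mathfrak s}} \circ f$ rather than genuinely involving the fibre directions of $X_1$. This is precisely where the hypothesis $\mathcal{I}_Z = \mathfrak{s}_{\supset Z}$ does the work and where the constructions of Section~\ref{sec:logWronskians} must be unwound carefully; once that identification is in hand, the remaining steps — functoriality of Weil functions and one integration — are routine, and the $\mathfrak{W}$-base-locus hypothesis serves only to keep every proximity function finite.
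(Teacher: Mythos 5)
Your reduction hinges on the claimed inclusion $\widebar{\mathfrak w}(X_1, Z, \mathfrak{S}) \subseteq \mathcal{I}_{Z^{(1)}} \cdot \pi_1^{\ast}\mathcal{I}_{Z_{\mathfrak s}}$, equivalently $Z^{(1)} + \pi_1^{\ast} Z_{\mathfrak s} \subset Z_{\mathfrak S}^{[1]}$, and this is false. Take $X = \mathbb{P}^2$, $V = T_{\mathbb{P}^2}$, $L = \mathcal{O}_{\mathbb{P}^2}(1)$, $\mathbb{S} = \langle x, y\rangle$ (the pencil of lines through $[0:0:1]$), and $Z = \{x = 0\} \in \mathfrak{S}$; then $\mathcal{I}_Z = \mathfrak{s}_{\supset Z} = (x)$ and $\mathfrak{s} = (x, y)$, so all hypotheses hold. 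In the chart $z = 1$ with fibre coordinates $\dif x, \dif y$ one has $\widebar{\mathfrak w}(X_1, Z, \mathfrak{S}) = (x\dif y - y\dif x)$, $\mathcal{I}_{Z^{(1)}} = (x, \dif x)$, $\pi_1^{-1}\mathfrak{s}\cdot\mathcal{O}_{X_1} = (x, y)$, and $x\dif y - y\dif x \notin (x^2, xy, x\dif x, y\dif x)$ (restrict to $y = \dif x = 0$). Worse, the pointwise Weil-function inequality you intend to average, $\lambda_{Z^{(1)}}\circ f_{[1]} + \lambda_{Z_{\mathfrak s}}\circ f \leq \lambda_{Z_{\mathfrak S}^{[1]}}\circ f_{[1]} + O(1)$, already fails on $X_1$: at points $(x, 0; [0:1])$ the left-hand side is comparable to $-2\log|x|$ while the right-hand side is comparable to $-\log|x|$, so no relation $\leq_{\langle X_1\rangle}$ of the required shape exists, and "averaging over circles" cannot rescue the step. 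Your inclusion is correct only when $\mathfrak{S}$ is base point free (it is then Proposition \ref{prp:Z^(1)<Z_S^1}, with $Z_{\mathfrak s} = \emptyset$), and that is the only case in which the theorem is a formal consequence of Theorem \ref{thm:Z^(1)}.

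The term $-m_f(r, Z_{\mathfrak s})$ does not come from comparing $Z^{(1)}$ with $Z_{\mathfrak S}^{[1]}$ on $X_1$. In the paper, when $Z_{\mathfrak s} = D_0$ is an effective Cartier divisor one writes $Z = D_0 + Z_1$ and uses $W(\sigma_0\sigma_1, \sigma_0\sigma_2) = \sigma_0^2 W(\sigma_1, \sigma_2)$ to get $Z_{\mathfrak S}^{[1]} = 2\pi_1^{\ast} D_0 + (Z_1)_{\mathfrak S_{\setminus D_0}}^{[1]}$; the coefficient $2$ is the whole point, since one copy of $\pi_1^{\ast}D_0$ accounts for $D_0 \subset Z$ and the second produces the negative term, after applying the base-point-free case to $Z_1$ and $\mathfrak{S}_{\setminus D_0}$ (note the theorem is applied to $Z_1$, not to $Z$). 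When $Z_{\mathfrak s}$ is not a divisor, one must in addition blow up $(X, V)$ along $\mathfrak{s}$, lift $f$ to $\hat f$, apply the divisorial case upstairs, and descend through a resolution $\varrho$ of the induced rational map on the $1$-jet towers, tracking the discrepancy divisor $\widetilde{\varGamma}_{\upsilon}$ in $\varrho^{\ast}\mathcal{O}_{\widehat{X}_1}(1) = \widetilde{\upsilon}_1^{\ast}\mathcal{O}_{X_1}(1)\otimes\mathcal{O}(\widetilde{\varGamma}_{\upsilon})$, the matching relation $\varrho^{\ast}\widehat{Z}^{[1]}_{\upsilon^{\ast}\mathfrak S} = \widetilde{\upsilon}_1^{\ast}Z^{[1]}_{\mathfrak S} + \widetilde{\varGamma}_{\upsilon}$, the change of ramification $\mathcal{N}([Z_{f'}], r) = N_{\tilde f_{[1]}}(r, \widetilde{\varGamma}_{\upsilon}) + \mathcal{N}([Z_{\hat f'}], r)$, and $S_{\hat f}(r) \leq S_f(r)$. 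None of this appears in your proposal, and the intermediate proximity inequality you posit has no pointwise justification, so the argument as written has a genuine gap precisely at its central step.
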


Since $Z_1^{(1)} + Z_2^{(1)} \subsetneqq (Z_1 + Z_2)^{(1)}$ and $(Z_1)_{\mathfrak S}^{(1)} + (Z_2)_{\mathfrak S}^{(1)} \subsetneqq (Z_1 + Z_2)_{\mathfrak S}^{(1)}$ in general, we refine AALD for $1$-jets and obtain the following two theorems in order to deal with SMT type questions better, especially in the case of intersecting closed subscheme with singularities.
\begin{theorem}[= Theorem \ref{thm:Z^(1)veesum}, GAALD for $1$-jets]
Let $(X, V)$ be a directed projective manifold, and let $Z_1, \dots, Z_q$ be closed subschemes of $X$.
Take an integer $1 \leq \ell \leq q$.
Let $f \colon (\mathbb{C}, T_{\mathbb{C}}) \to (X, V)$ be a non-constant holomorphic curve such that $f(\mathbb{C}) \not\subset \Supp (Z_1 + \dots + Z_q)$. Then,
\[
m_f(r, \bigvee_{\substack{I \subset J \\ |I| = \ell}} \sum_{i \in I} Z_i) + T_{f_{[1]}}(r, \mathcal{O}_{X_1}(\ell)) + \ell \mathcal{N}([Z_{f'}], r)
\leq m_{f_{[1]}}(r, \bigvee_{\substack{I \subset J \\ |I| = \ell}} \sum_{i \in I} Z_i^{(1)}) + S_f(r)
,\]
where $\displaystyle m_f(r, \bigvee_{\substack{I \subset J \\ |I| = \ell}} \sum_{i \in I} Z_i)$ and $\displaystyle m_{f_{[1]}}(r, \bigvee_{\substack{I \subset J \\ |I| = \ell}} \sum_{i \in I} Z_i^{(1)})$ are defined as \eqref{eqn:proximityvee}.
\end{theorem}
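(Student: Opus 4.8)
The plan is to derive the estimate from the one-subscheme case of GALLDD — that is, from ALLDD (Lemma~\ref{lem:ALLDZ^(1)}) — applied separately to each $Z_i$, together with GALLDD (Lemma~\ref{lem:GALLDD}) for the whole family, and then to assemble the pieces by the recipe that takes ALLDD to AALD for $1$-jets (Theorem~\ref{thm:Z^(1)}): passing to logarithms, the Green--Jensen formula, and the Calculus Lemma of Section~\ref{sec:NT}. First I would fix one linear system $\mathfrak{S}\subset|L|$ separating $1$-jets at every point of $X$ and satisfying $\mathcal{I}_{Z_i}=\mathfrak{s}_{\supset Z_i}$ \emph{simultaneously} for all $i$; such a system exists after replacing $L$ by a high enough power and refining the systems adapted to the individual $Z_i$, and this affects neither $\ell$, nor the $O(1)$-terms, nor $S_f(r)$. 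Because all the $Z_i^{(1)}\subset X_1$ are then cut out by the same Wronskian construction, the Weil functions $\lambda_{Z_i^{(1)}}$ share a common part whose pull-back along $f_{[1]}$ integrates to $T_{f_{[1]}}(r,\mathcal{O}_{X_1}(1))+\mathcal{N}([Z_{f'}],r)+O(1)$ — this is the source of the multiplicity $\ell$ in the conclusion. With this choice, ALLDD gives the following: for each $i$, each $0<\varepsilon<1$, and $g_i\bydef\exp\!\left((1-\varepsilon)\lambda_{Z_i}\circ f-\lambda_{Z_i^{(1)}}\circ f_{[1]}\right)$,
\[
\varepsilon\,\mathcal{N}\!\left(g_i^{2}\, f^{\ast}\ddif^{c}\varphi_{\mathfrak{S}},\,r\right)\leq c\,T_f(r,L)+O(1),
\]
with $c$ independent of $\varepsilon$ and of $i$.

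The combinatorial heart is a pointwise inequality for the sum of the $\ell$ largest terms. Writing $a_i=\lambda_{Z_i}\circ f$, $b_i=\lambda_{Z_i^{(1)}}\circ f_{[1]}$ on $\mathbb{C}$ and choosing at each point an $\ell$-element set $I^{\ast}\subset J$ with $\sum_{i\in I^{\ast}}a_i=\max_{|I|=\ell}\sum_{i\in I}a_i$, the bound $\max_{|I|=\ell}\sum_{i\in I}b_i\geq\sum_{i\in I^{\ast}}b_i$ together with the identity $a_i-b_i=\log g_i+\varepsilon a_i$ gives
\[
(1-\varepsilon)\max_{|I|=\ell}\sum_{i\in I}a_i\;-\;\max_{|I|=\ell}\sum_{i\in I}b_i\;\leq\;\sum_{i\in I^{\ast}}\log g_i.
\]
The essential observation is that what appears on the left here is precisely the join $\bigvee_{|I|=\ell}\sum_{i\in I}Z_i^{(1)}$ — built from the \emph{sums} $\sum_{i\in I}Z_i^{(1)}$, not from the strictly larger schemes $\bigl(\sum_{i\in I}Z_i\bigr)^{(1)}$ — because we applied ALLDD to each $Z_i$ separately and used additivity of Weil functions under sums of subschemes. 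This is exactly the refinement that makes the statement sharper than AALD for $1$-jets applied to $\bigvee_{|I|=\ell}\sum_{i\in I}Z_i$, and is what forces the termwise, rather than a single, use of the logarithmic derivative lemma.

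Next I would integrate this inequality over $|z|=r$ against $\tfrac{\dif\theta}{2\pi}$. By~\eqref{eqn:proximityvee} the left-hand side becomes $(1-\varepsilon)\,m_f\bigl(r,\bigvee_{|I|=\ell}\sum_{i\in I}Z_i\bigr)-m_{f_{[1]}}\bigl(r,\bigvee_{|I|=\ell}\sum_{i\in I}Z_i^{(1)}\bigr)$, so it remains to bound $\int_{|z|=r}\sum_{i\in I^{\ast}(z)}\log g_i(z)\,\frac{\dif\theta}{2\pi}$ from above. The $\ell$ summands $\log g_i=(1-\varepsilon)\lambda_{Z_i}\circ f-\lambda_{Z_i^{(1)}}\circ f_{[1]}$ each carry, through the common part of $\lambda_{Z_i^{(1)}}$, one copy of $-T_{f_{[1]}}(r,\mathcal{O}_{X_1}(1))-\mathcal{N}([Z_{f'}],r)$, and the growth of what is left is absorbed into $S_f(r)$ by the Green--Jensen formula and the Calculus Lemma applied to the $\mathcal{N}$-estimates above — exactly as in the proof of Theorem~\ref{thm:Z^(1)}, now carried out for the varying-index combination $\sum_{i\in I^{\ast}(z)}\log g_i$. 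Since $\ell\,T_{f_{[1]}}(r,\mathcal{O}_{X_1}(1))=T_{f_{[1]}}(r,\mathcal{O}_{X_1}(\ell))$ and the residual term $\varepsilon\,m_f(\cdots)$ is removed by letting $\varepsilon\to0$ along a sequence (again as in Theorem~\ref{thm:Z^(1)}), the claim follows.

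The main obstacle, I expect, is not any single estimate but making this last step rigorous with the \emph{sharp} multiplicity $\ell$: the maximising set $I^{\ast}(z)$ varies from point to point, the $\binom{q}{\ell}$ products $\prod_{i\in I}g_i$ cannot all be controlled by a single application of the logarithmic derivative lemma, and the naive bound $\sum_{i\in I^{\ast}(z)}\log g_i\leq\sum_{i=1}^{q}\log^{+}g_i$ produces the multiplicity $q$ rather than $\ell$ together with a spurious term $\sum_i m_{f_{[1]}}(r,Z_i^{(1)})$ that is too large for the asserted inequality. Overcoming this should require running the Jensen/Calculus-Lemma machinery of Section~\ref{sec:NT} \emph{jointly} over the indices in $I^{\ast}(z)$ — combining GALLDD for the whole family with the termwise ALLDD estimates, and exploiting that $\bigvee_{|I|=\ell}\sum_{i\in I}Z_i$ and $\bigvee_{|I|=\ell}\sum_{i\in I}Z_i^{(1)}$ are selected by the same index set — while tracking throughout the scheme inclusions $\sum_{i\in I}Z_i^{(1)}\subsetneqq\bigl(\sum_{i\in I}Z_i\bigr)^{(1)}$ so that the final inequality is the strong one. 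A secondary technical point, the existence of the common adapted linear system $\mathfrak{S}$, follows from the constructions of Sections~\ref{sec:Wronskians} and~\ref{sec:ALLD}.
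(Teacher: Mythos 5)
Your reduction to the pointwise inequality
\[
(1-\varepsilon)\max_{|I|=\ell}\sum_{i\in I}a_i-\max_{|I|=\ell}\sum_{i\in I}b_i\;\leq\;\sum_{i\in I^{\ast}(z)}\log g_i
\]
is fine, but the proof stops exactly where the theorem lives: you yourself concede that integrating $\sum_{i\in I^{\ast}(z)}\log g_i$ with the sharp multiplicity $\ell$ is ``the main obstacle,'' that the naive bound yields multiplicity $q$ plus a spurious term, and that the fix ``should require'' running Jensen and the Calculus Lemma jointly over the varying index set --- without saying how. None of the lemmas available (ALLDD controls $\mathcal{N}(g_i^{2}\gamma_{\mathfrak S}\ddif^c|z|^2,r)$ for a single index, GALLDD controls a single maximum) bounds the product $\prod_{i\in I}g_i^{2}\,\gamma_{\mathfrak S}^{\ell}$ that your scheme would need, and the heuristic that the $\lambda_{Z_i^{(1)}}$ ``share a common part'' producing $T_{f_{[1]}}(r,\mathcal{O}_{X_1}(1))+\mathcal{N}([Z_{f'}],r)$ is not the actual mechanism: in Theorem \ref{thm:Z^(1)} that term comes from the Green--Jensen evaluation of $\gamma_{\mathfrak S}$, i.e.\ from \eqref{eqn:Nddclogg}, not from the Weil functions of the jets. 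So as written the argument has a genuine gap precisely at the source of the factor $\ell$.

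The paper closes this gap by a different, purely combinatorial decomposition that avoids the varying set $I^{\ast}(z)$ altogether. Lemma \ref{lem:maxsum} gives
\[
\max_{\substack{I\subset J\\ |I|=\ell}}\sum_{i\in I}\lambda_{Z_i}\;=_{\langle X\rangle}\;\sum_{j=1}^{\ell}\,\max_{\substack{I\subset J\\ |I|=j}}\lambda_{Z_I},\qquad Z_I=\bigcap_{i\in I}Z_i,
\]
i.e.\ the sum of the $\ell$ largest Weil functions is the sum of the top $\ell$ order statistics, and each $j$-th order statistic is a \emph{join of single closed subschemes}, namely the intersections $Z_I$ with $|I|=j$. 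Proposition \ref{prp:cap^(1)=^(1)cap} ($Z_I^{(1)}=\bigcap_{i\in I}Z_i^{(1)}$) guarantees the identical decomposition upstairs, so \eqref{eqn:veesum} and \eqref{eqn:veesum^(1)} match term by term. One then applies Theorem \ref{thm:Z^(1)vee} (the $\vee$-form already derived from GALLDD by the Theorem~\ref{thm:Z^(1)} machinery) once for each $j=1,\dots,\ell$ to the family $\{Z_I\}_{|I|=j}$; each application contributes exactly one copy of $T_{f_{[1]}}(r,\mathcal{O}_{X_1}(1))+\mathcal{N}([Z_{f'}],r)$, and summing over $j$ produces the multiplicity $\ell$ cleanly. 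If you want to salvage your route, you would have to prove a multiplicative analogue of ALLDD for products over fixed $\ell$-element index sets; the order-statistics identity plus the compatibility of $(\cdot)^{(1)}$ with intersections is the idea your proposal is missing.
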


\begin{theorem}[= Theorem \ref{thm:Z_S^1veesum}, GAALD for $1$-jets w.r.t. \!$\mathfrak{S}$]
Let $(X, V)$ be a directed projective manifold, and let $Z_1, \dots, Z_q$ be closed subschemes of $X$.
Choose a linear system $\mathfrak{S}$ on $X$ satisfying that $\mathcal{I}_{Z_i} = \mathfrak{s}_{\supset Z_i}$ for each $1 \leq i \leq q$.
Take an integer $1 \leq \ell \leq q$.
Let $f \colon (\mathbb{C}, T_{\mathbb{C}}) \to (X, V)$ be a non-constant holomorphic curve such that $f_{[1]}(\mathbb{C}) \not\subset \Bs(\mathfrak{W}(X_1, \mathfrak{S}))$.
Suppose that $f(\mathbb{C}) \not\subset \Supp (Z_1 + \dots + Z_q)$. Then we have
\begin{align*}
&m_f(r, \!\bigvee_{\substack{I \subset J \\ |I| = \ell}} \!\sum_{i \in I} Z_i) + T_{f_{[1]}}(r, \mathcal{O}_{X_1}(\ell)) + \ell \mathcal{N}([Z_{f'}], r) \\
\leq \, &m_{f_{[1]}}\big(r, \!\bigvee_{\substack{I \subset J \\ |I| = \ell}} \!\sum_{i \in I} (Z_i)_{\mathfrak S}^{[1]}\big) - \ell m_f(r, Z_{\mathfrak s}) + S_f(r)
.\end{align*}
\end{theorem}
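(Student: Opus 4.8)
The plan is to derive GAALD for $1$-jets w.r.t. $\mathfrak{S}$ as a combination of GAALD for $1$-jets (Theorem~\ref{thm:Z^(1)veesum}) with the comparison between the jet subschemes $Z_i^{(1)}$ and the Wronskian-twisted subschemes $(Z_i)_{\mathfrak{S}}^{[1]}$ that was established for a single subscheme in AALD for $1$-jets w.r.t. $\mathfrak{S}$ (Theorem~\ref{thm:Z_S^1}). Concretely, the difference between the two statements is entirely on the right-hand side: Theorem~\ref{thm:Z^(1)veesum} bounds $m_{f_{[1]}}(r, \bigvee_{|I|=\ell}\sum_{i\in I} Z_i^{(1)})$, whereas here we want a bound in terms of $m_{f_{[1]}}\big(r, \bigvee_{|I|=\ell}\sum_{i\in I}(Z_i)_{\mathfrak S}^{[1]}\big) - \ell\, m_f(r, Z_{\mathfrak s})$. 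So the whole proof reduces to showing that, up to an $S_f(r)$ error,
\[
m_{f_{[1]}}\Big(r, \bigvee_{\substack{I\subset J\\ |I|=\ell}}\sum_{i\in I} Z_i^{(1)}\Big) \;\leq\; m_{f_{[1]}}\Big(r, \bigvee_{\substack{I\subset J\\ |I|=\ell}}\sum_{i\in I}(Z_i)_{\mathfrak S}^{[1]}\Big) \;-\; \ell\, m_f(r, Z_{\mathfrak s}) \;+\; S_f(r).
\]

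First I would recall, from the proof of Theorem~\ref{thm:Z_S^1}, the key local comparison: for a single closed subscheme $Z$ with $\mathcal{I}_Z = \mathfrak{s}_{\supset Z}$, one has an inclusion (or a bounded comparison of Weil functions) relating $\lambda_{Z^{(1)}}\circ f_{[1]}$ to $\lambda_{Z_{\mathfrak S}^{[1]}}\circ f_{[1]} - \lambda_{Z_{\mathfrak s}}\circ f$, coming from the construction of $\widebar{\mathfrak w}(X_1, Z, \mathfrak S)$ as the Wronskian ideal sheaf twisted by the base ideal $\mathfrak{s}$ of the linear system. This is exactly the sheaf-theoretic content that makes Theorem~\ref{thm:Z_S^1} follow from Theorem~\ref{thm:Z^(1)}. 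The second step is to check that this comparison is compatible with the join operation $\bigvee$ and the finite sums $\sum_{i\in I}$: since $\lambda_{\sum_i W_i} = \sum_i \lambda_{W_i} + O(1)$ and $\lambda_{\bigvee_\alpha W_\alpha} = \max_\alpha \lambda_{W_\alpha} + O(1)$ (up to bounded error, by the functorial properties of Weil functions recalled in Section~\ref{sec:NT}), applying the single-subscheme comparison to each $Z_i$ and then summing over $i\in I$ and taking the maximum over $I$ gives
\[
\lambda_{\bigvee_{|I|=\ell}\sum_{i\in I} Z_i^{(1)}}\circ f_{[1]} \;\leq\; \lambda_{\bigvee_{|I|=\ell}\sum_{i\in I}(Z_i)_{\mathfrak S}^{[1]}}\circ f_{[1]} \;-\; \ell\, \lambda_{Z_{\mathfrak s}}\circ f \;+\; O(1),
\]
where the coefficient $\ell$ appears because each $I$ has exactly $\ell$ elements and $Z_{\mathfrak s} = \mathrm{Bs}(\mathfrak S)$ contributes once per element of $I$. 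Integrating this pointwise inequality over the circle $|z|=r$ and applying the definition \eqref{eqn:proximityvee} of the proximity functions for joins yields the desired bound on proximity functions, with an $O(1)$ rather than $S_f(r)$ error — which is even better than needed. Then substituting into Theorem~\ref{thm:Z^(1)veesum} gives the conclusion. The hypothesis $f_{[1]}(\mathbb{C})\not\subset \Bs(\mathfrak{W}(X_1,\mathfrak S))$ is precisely what guarantees $\lambda_{Z_{\mathfrak S}^{[1]}}\circ f_{[1]}$ and hence the right-hand side are well defined (finite outside a discrete set), playing the same role it does in Theorem~\ref{thm:Z_S^1}.

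The main obstacle I anticipate is verifying the interaction of the Wronskian-twist construction with the join and sum operations at the level of ideal sheaves rather than just Weil functions — i.e., making sure that $\widebar{\mathfrak w}\big(X_1, \bigvee_{|I|=\ell}\sum_{i\in I} Z_i, \mathfrak S\big)$ really does compare to $\bigvee_{|I|=\ell}\sum_{i\in I}\widebar{\mathfrak w}(X_1, Z_i, \mathfrak S)$ with the correct power of $\mathfrak s$ (namely $\mathfrak s^\ell$) appearing, since the Wronskian construction is not literally additive and the $\bigvee$ on jet subschemes does not commute with $\bigvee$ on the base. Fortunately, because the final statement only asserts an \emph{inequality} of proximity functions up to $S_f(r)$, it suffices to establish a one-sided bound on Weil functions up to $O(1)$, which sidesteps the need for an exact sheaf identity: one only needs that the relevant ideal sheaves satisfy the appropriate inclusion after pulling back along $f_{[1]}$, and this follows from the base-point-freeness/separation hypotheses on $\mathfrak S$ exactly as in the single-subscheme case. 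If a cleaner route is available, one can instead mimic the proof of Theorem~\ref{thm:Z_S^1} line by line but carrying the $\bigvee_{|I|=\ell}\sum_{i\in I}$ combination throughout, starting from GALLDD (Lemma~\ref{lem:GALLDD}) with the subschemes $\sum_{i\in I} Z_i$ indexed by $I$, taking logarithms, and applying the Green--Jensen formula and Calculus Lemma as in the derivation of the other GAALD statements; the $\ell\,\mathcal{N}([Z_{f'}],r)$ term then emerges from the same source as in Theorem~\ref{thm:Z^(1)veesum}. I would present the first route as the main argument and note the second as an alternative.
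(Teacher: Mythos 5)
Your first (main) route has a genuine gap, and it is precisely the step you describe as the ``key local comparison''. The paper does \emph{not} deduce Theorem~\ref{thm:Z_S^1} from Theorem~\ref{thm:Z^(1)} via a Weil-function inequality of the form $\lambda_{Z^{(1)}}\circ f_{[1]} \leq \lambda_{Z_{\mathfrak S}^{[1]}}\circ f_{[1]} - \lambda_{Z_{\mathfrak s}}\circ f + O(1)$, and no such inequality holds in general. Proposition~\ref{prp:Z^(1)<Z_S^1} only gives $Z^{(1)} \subset Z_{\mathfrak S}^{[1]} \subset Z^{(1)} + Z_{\mathfrak{w}(X_1, \mathfrak{S})}$, which involves the Wronskian base scheme on $X_1$, not $\pi_1^{\ast}Z_{\mathfrak s}$. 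In fact your comparison fails already when $Z_{\mathfrak s}=D_0$ is a divisor: by \eqref{eqn:Z_S^1} one has $Z_{\mathfrak S}^{[1]} = 2\pi_1^{\ast}D_0 + (Z_1)_{\mathfrak S_{\setminus D_0}}^{[1]}$, so (taking $\mathfrak{S}_{\setminus D_0}$ separating $1$-jets, hence $(Z_1)_{\mathfrak S_{\setminus D_0}}^{[1]}=Z_1^{(1)}$ by Corollary~\ref{cor:wZseparate1}) your bound would force $\lambda_{(D_0+Z_1)^{(1)}} \leq \lambda_{\pi_1^{\ast}D_0} + \lambda_{Z_1^{(1)}} + O(1)$. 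Locally, with $D_0=\{x=0\}$, $Z_1=\{y=0\}$ and fiber coordinates $(u,v)$ normalized by $\max(|u|,|v|)=1$, this reads $\max(|xy|,|xv|) \leq C\max(|xy|,|yu+xv|)$, which fails along $x=y=\epsilon$, $(u,v)=(1,-1)$ (left side $\sim\epsilon$, right side $\sim C\epsilon^{2}$); such a configuration is realizable within the theorem's hypotheses (e.g.\ $X=\mathbb{P}^2$, $\mathfrak{S}=D_0\cdot|\mathcal{O}(d-1)|$, $Z=D_0+Z_1$). So your ``even better than needed, $O(1)$'' reduction is unavailable; moreover the reduction itself (comparing the two right-hand sides of Theorems~\ref{thm:Z^(1)veesum} and~\ref{thm:Z_S^1veesum}) is strictly stronger than the theorem and is nowhere established: the two right-hand sides are separate, in general incomparable, upper bounds for the same left-hand side. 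Relatedly, your reading of the proof of Theorem~\ref{thm:Z_S^1} is inaccurate: except in the base-point-free case, that proof splits off the divisorial part of the base locus (applying the base-point-free case to $Z_1$ and $\mathfrak{S}_{\setminus D_0}$, not to $Z$), and when $Z_{\mathfrak s}$ is not a divisor it blows up $(X,V)$ along $\mathfrak{s}$, which also changes the left-hand terms $T_{f_{[1]}}(r,\mathcal{O}_{X_1}(1))$ and $\mathcal{N}([Z_{f'}],r)$ (via $\widetilde{\varGamma}_{\upsilon}$), so the argument cannot be confined to a proximity comparison on $X_1$.

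The paper's actual route is different and you should follow it: first one has Theorem~\ref{thm:Z_S^1vee} (the $\max$-version with respect to $\mathfrak{S}$, obtained by repeating the whole case analysis and blow-up argument of Theorem~\ref{thm:Z_S^1} starting from Theorem~\ref{thm:Z^(1)vee}); then one applies Theorem~\ref{thm:Z_S^1vee} to the families of intersections $\{Z_I\}_{|I|=j}$ for each $j=1,\dots,\ell$ (noting $\mathcal{I}_{Z_I}=\mathfrak{s}_{\supset Z_I}$ follows from $\mathcal{I}_{Z_i}=\mathfrak{s}_{\supset Z_i}$), adds the $\ell$ resulting inequalities, and reassembles both sides using Lemma~\ref{lem:maxsum} together with Proposition~\ref{prp:cap_S^k=_S^kcap} (i.e.\ $(Z_I)_{\mathfrak S}^{[1]}=\bigcap_{i\in I}(Z_i)_{\mathfrak S}^{[1]}$), exactly as in the written proof of Theorem~\ref{thm:Z^(1)veesum}; this is where the coefficients $\ell$ on $T_{f_{[1]}}$, on $\mathcal{N}([Z_{f'}],r)$ and on $m_f(r,Z_{\mathfrak s})$ come from. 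Your ``alternative'' sketch points in this direction but, as written (GALLDD, Lemma~\ref{lem:GALLDD}, applied to the subschemes $\sum_{i\in I}Z_i$), it would produce the jets of sums $(\sum_{i\in I}Z_i)^{(1)}$ rather than the sums of jets $\sum_{i\in I}Z_i^{(1)}$ (these differ in general) and only coefficient $1$ on $T_{f_{[1]}}$ and $\mathcal{N}$, so it too misses the essential decomposition of $\bigvee_{|I|=\ell}\sum_{i\in I}$ into the intersections $Z_I$ over all cardinalities $j\leq\ell$.
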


Finally, we obtain GAALD and GAALD w.r.t. \!$\mathfrak{S}$ for high jets and the combination of closed subschemes.
\begin{theorem}[= Theorem \ref{thm:Z^(k)sum}, GAALD]
Let $(X, V)$ be a directed projective manifold, and let $Z_1, \dots, Z_q$ be closed subschemes of $X$. Set
\[
\ell_k \bydef \max \{|I| \mid I \subset J, \, \bigcap_{i \in  I} Z_i^{(k)} \neq \emptyset\} 
.\]
For each $k \in \mathbb{Z}_{+}$, let $f \colon (\mathbb{C}, T_{\mathbb{C}}) \to (X, V)$ be a non-constant holomorphic curve such that $f(\mathbb{C}) \not\subset \Supp (Z_1 + \dots + Z_q)$.
Then,
\[
\sum_{i =1}^q m_f(r, Z_i) + T_{f_{[k]}}(r, \mathcal{O}_{X_k}(\bm{\ell})) + \sum_{j=0}^{k-1} \ell_j \mathcal{N}([Z_{f_{[j]}'}], r)
\leq m_{f_{[k]}}(r, \bigvee_{\substack{I \subset J \\ |I| = \ell_k}} \sum_{i \in I} Z_i^{(k)}) + S_f(r)
,\]
where $\bm{\ell} = (\ell_0, \dots, \ell_{k-1}) \in \mathbb{Z}_{+}^k$.
\end{theorem}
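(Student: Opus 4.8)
The plan is to derive GAALD from GAALD for $1$-jets (Theorem~\ref{thm:Z^(1)veesum}) by induction on $k$ along the Demailly--Semple tower, at each stage viewing $(X_{j},V_{j})$ as the $1$-jet tower of $(X_{j-1},V_{j-1})$ (each again a directed projective manifold), so that $f_{[j]}=(f_{[j-1]})_{[1]}$ and the jet operation composes, $(Z_{i}^{(j-1)})^{(1)}=Z_{i}^{(j)}$. This is the same scheme by which Theorem~\ref{thm:Z^(k)} (AALD) is deduced from Theorem~\ref{thm:Z^(1)}; the one extra ingredient I would need is control of how the ``join'' of Weil functions changes along the tower. For this I would first record that, since the tower projection carries $\Supp Z_{i}^{(j)}$ into $\Supp Z_{i}^{(j-1)}$, the sequence $\ell_{0}\geq\ell_{1}\geq\cdots$ is non-increasing, while $\bm{\ell}\in\mathbb{Z}_{+}^{k}$ forces $\ell_{0},\dots,\ell_{k-1}\geq1$ (and I likewise assume $\ell_{k}\geq1$).

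The combinatorial input I would isolate is a bounded-overlap estimate: for each $j$ there is a constant $T_{j}$ such that at every $y\in X_{j}$ at most $\ell_{j}$ of the values $\lambda_{Z_{1}^{(j)}}(y),\dots,\lambda_{Z_{q}^{(j)}}(y)$ exceed $T_{j}$. Indeed, if some $(\ell_{j}+1)$-element $I$ had all its values $>T_{j}$, then $\bigcap_{i\in I}\Supp Z_{i}^{(j)}=\emptyset$ by the definition of $\ell_{j}$, so $\sum_{i\in I}\mathcal{I}_{Z_{i}^{(j)}}=\mathcal{O}_{X_{j}}$; by the functoriality of Weil functions $\min_{i\in I}\lambda_{Z_{i}^{(j)}}$ is then the Weil function of the empty scheme up to $O(1)$, hence bounded on $X_{j}$ --- a contradiction once $T_{j}$ is large. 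After normalizing the Weil functions to be $\geq0$, this gives, for every $\ell'$ with $\ell_{j}\leq\ell'\leq q$, the pointwise bound $\max_{|I|=\ell'}\sum_{i\in I}\lambda_{Z_{i}^{(j)}}\leq\max_{|I|=\ell_{j}}\sum_{i\in I}\lambda_{Z_{i}^{(j)}}+qT_{j}$ (keep the $\ell_{j}$ indices carrying the largest values; the rest cost at most $qT_{j}$). Composing with $f_{[j]}$ and integrating over circles yields the two reductions I will use: $\sum_{i=1}^{q}m_{f}(r,Z_{i})\leq m_{f}(r,\bigvee_{|I|=\ell_{0}}\sum_{i\in I}Z_{i})+O(1)$ (take $j=0$, $\ell'=q$), and, for each $j\geq1$, $m_{f_{[j]}}(r,\bigvee_{|I|=\ell_{j-1}}\sum_{i\in I}Z_{i}^{(j)})\leq m_{f_{[j]}}(r,\bigvee_{|I|=\ell_{j}}\sum_{i\in I}Z_{i}^{(j)})+O(1)$; here the proximity functions of joins are those of \eqref{eqn:proximityvee}.

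Now I would run the induction. For $k=1$: apply Theorem~\ref{thm:Z^(1)veesum} to $f$ with $\ell=\ell_{0}$, add the first reduction on the left, and, on the right, the second reduction with $j=1$ (lowering the join index from $\ell_{0}$ to $\ell_{1}$); since $\mathcal{O}_{X_{1}}(\ell_{0})=\mathcal{O}_{X_{1}}(\bm{\ell})$ for $\bm{\ell}=(\ell_{0})$ and $f_{[0]}=f$, this is precisely the claim. For the step from $k-1$ to $k$, the inductive hypothesis furnishes $m_{f_{[k-1]}}(r,\bigvee_{|I|=\ell_{k-1}}\sum_{i\in I}Z_{i}^{(k-1)})+S_{f}(r)$ on the right; to that proximity term I would apply Theorem~\ref{thm:Z^(1)veesum} with $(X,V)$ replaced by $(X_{k-1},V_{k-1})$, the curve $f$ by $f_{[k-1]}$, the $Z_{i}$ by $Z_{i}^{(k-1)}$, and $\ell=\ell_{k-1}$, using $(X_{k-1})_{1}=X_{k}$ and $(Z_{i}^{(k-1)})^{(1)}=Z_{i}^{(k)}$. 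This contributes $T_{f_{[k]}}(r,\mathcal{O}_{X_{k}}(\ell_{k-1}))+\ell_{k-1}\mathcal{N}([Z_{f_{[k-1]}'}],r)$ on the left and $m_{f_{[k]}}(r,\bigvee_{|I|=\ell_{k-1}}\sum_{i\in I}Z_{i}^{(k)})+S_{f_{[k-1]}}(r)$ on the right. I then lower the join index from $\ell_{k-1}$ to $\ell_{k}$ by the second reduction with $j=k$; combine $T_{f_{[k-1]}}(r,\mathcal{O}_{X_{k-1}}(\bm{\ell}'))+T_{f_{[k]}}(r,\mathcal{O}_{X_{k}}(\ell_{k-1}))=T_{f_{[k]}}(r,\mathcal{O}_{X_{k}}(\bm{\ell}))+O(1)$ via $f_{[k-1]}=\pi_{k}\circ f_{[k]}$ and $\mathcal{O}_{X_{k}}(\bm{\ell})=\pi_{k}^{\ast}\mathcal{O}_{X_{k-1}}(\bm{\ell}')\otimes\mathcal{O}_{X_{k}}(\ell_{k-1})$ with $\bm{\ell}'=(\ell_{0},\dots,\ell_{k-2})$; add the left-hand side of the inductive hypothesis; and absorb $S_{f_{[k-1]}}(r)+S_{f}(r)+O(1)$ into $S_{f}(r)$, exactly as in the deduction of Theorem~\ref{thm:Z^(k)} from Theorem~\ref{thm:Z^(1)}. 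The hypotheses of Theorem~\ref{thm:Z^(1)veesum} for $f_{[k-1]}$ are met: $f_{[k-1]}$ is non-constant since its tower projection is $f$, and if $f_{[k-1]}(\mathbb{C})\subseteq\Supp(\sum_{i}Z_{i}^{(k-1)})$ then, $\mathbb{C}$ not being a finite union of proper analytic subsets, $f_{[k-1]}(\mathbb{C})\subseteq\Supp Z_{i_{0}}^{(k-1)}$ for some $i_{0}$, whence $f(\mathbb{C})\subseteq\Supp Z_{i_{0}}$, contradicting the hypothesis.

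The only genuinely non-formal step is the bounded-overlap estimate of the second paragraph; everything else --- the jet compatibility $(Z^{(k-1)})^{(1)}=Z^{(k)}$, the functoriality and $O(1)$-additivity of the characteristic function, the multi-index twist $\mathcal{O}_{X_{k}}(\bm{\ell})$, and the stability of the error term under $1$-jet lifts --- is supplied by the earlier sections. The point I expect to require the most care is the passage from join index $\ell_{k-1}$ down to $\ell_{k}$: one must check that it costs only $O(1)$, which rests precisely on the definition $\ell_{k}=\max\{|I|:\bigcap_{i\in I}Z_{i}^{(k)}\neq\emptyset\}$ together with the monotonicity $\ell_{k}\leq\ell_{k-1}$. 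It is the non-trivial direction of that bound that reconciles the join term produced by Theorem~\ref{thm:Z^(1)veesum} at level $k$ --- which carries index $\ell_{k-1}$ --- with the (smaller) join term of index $\ell_{k}$ appearing in the statement of GAALD, so that the induction closes; the pointwise-obvious direction is the useless one.
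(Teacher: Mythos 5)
Your proposal is correct and follows essentially the same route as the paper: telescoping along the Demailly--Semple tower by applying Theorem \ref{thm:Z^(1)veesum} to $f_{[j-1]}$ with $\ell=\ell_{j-1}$ and lowering the join index from $\ell_{j-1}$ to $\ell_{j}$ at $O(1)$ cost, which is exactly what the paper's proof of Theorem \ref{thm:Z^(k)sum} does via Lemma \ref{lem:maxsum} together with \eqref{eqn:Sfk}. Your pointwise bounded-overlap estimate is merely an equivalent repackaging of Lemma \ref{lem:maxsum} (both rest on $\min_{i\in I}\lambda_{Z_i}=_{\langle X \rangle}\lambda_{\bigcap_{i\in I}Z_i}$ and the boundedness of the Weil function of an empty intersection on the compact $X_j$), so no genuinely different idea is involved.
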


\begin{theorem}[= Theorem \ref{thm:Z_S^kveesum}, GAALD w.r.t. \!$\mathfrak{S}$]
Let $(X, V)$ be a directed projective manifold, and let $Z_1, \dots, Z_q$ be closed subschemes of $X$.
Choose a linear system $\mathfrak{S}$ on $X$ satisfying that $\mathcal{I}_{Z_i} = \mathfrak{s}_{\supset Z_i}$ for each $1 \leq i \leq q$. Set
\[
\ell_{k, \mathfrak{S}} \bydef \max \{ |I| \mid I \subset J, \, \bigcap_{i \in I} (Z_i)_{\mathfrak S}^{[k]} \supsetneqq Z_{\widebar{\mathfrak w}(X_k, \mathfrak{S})} \} 
.\]
Let $k_{\mathfrak S}$ be the minimum of $k$ such that $\ell_{k, \mathfrak{S}} = 0$.
Let $f \colon (\mathbb{C}, T_{\mathbb{C}}) \to (X, V)$ be a non-constant holomorphic curve such that $f_{[k_{\mathfrak S}]}(\mathbb{C}) \not\subset \Bs(\mathfrak{W}(X_{k_{\mathfrak S}}, \mathfrak{S}))$.
Suppose that $f(\mathbb{C}) \not\subset \Supp (Z_1 + \dots + Z_q)$.
Then,
\begin{align*}
&\sum_{i =1}^q m_f(r, Z_i) + T_{f_{[k_{\mathfrak S}]}}(r, \mathcal{O}_{X_{k_{\mathfrak S}}}(\bm{\ell}_{\mathfrak S})) + \sum_{j=0}^{k_{\mathfrak S}-1} \ell_{j, \mathfrak{S}} \mathcal{N}([Z_{f_{[j]}'}], r) \\
\leq \, & \sum_{j=0}^{k_{\mathfrak S}} (\ell_{j-1, \mathfrak{S}} - 2\ell_{j, \mathfrak{S}} + \ell_{j+1, \mathfrak{S}}) m_{f_{[j]}}(r, Z_{\widebar{\mathfrak w}(X_{j}, \mathfrak{S})}) + S_f(r)
,\end{align*}
where $\bm{\ell}_{\mathfrak S} = (\ell_{0, \mathfrak{S}}, \dots, \ell_{k_{\mathfrak S}-1, \mathfrak{S}}) \in \mathbb{Z}_{+}^{k_{\mathfrak S}}$ and we make the conventions that $\ell_{-1, \mathfrak{S}} = q$, and $\widebar{\mathfrak w}(X_0, \mathfrak{S}) = \mathfrak{s}$.
\end{theorem}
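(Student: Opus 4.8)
The plan is to derive GAALD w.r.t.\ $\mathfrak{S}$ by iterating its $1$-jet form, Theorem~\ref{thm:Z_S^1veesum}, up the Demailly--Semple tower, exactly as Theorem~\ref{thm:Z^(k)sum} is obtained from Theorem~\ref{thm:Z^(1)veesum}, the new feature being the correction terms $m_{f_{[j]}}(r, Z_{\widebar{\mathfrak w}(X_j,\mathfrak S)})$ that now propagate. Throughout, write $J = \{1,\dots,q\}$ and abbreviate $\ell_j = \ell_{j,\mathfrak S}$ and $\mu_j = m_{f_{[j]}}(r, Z_{\widebar{\mathfrak w}(X_j,\mathfrak S)})$, so $\mu_0 = m_f(r, Z_{\mathfrak s})$ since $\widebar{\mathfrak w}(X_0,\mathfrak S) = \mathfrak s$.

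First I would record the compatibility of the $\mathfrak S$-decorated constructions of Section~\ref{sec:Wronskians} with the tower: viewing $(X_j, V_j)$ as a directed manifold with the Wronskian ideal sheaf $\widebar{\mathfrak w}(X_j,\mathfrak S)$ in the role of $\mathfrak s$, the subscheme $(Z_i)^{[j+1]}_{\mathfrak S}$ (resp.\ $Z_{\widebar{\mathfrak w}(X_{j+1},\mathfrak S)}$) is produced from $(Z_i)^{[j]}_{\mathfrak S}$ (resp.\ $Z_{\widebar{\mathfrak w}(X_j,\mathfrak S)}$) by the level-one construction; one has $(Z_i)^{[j]}_{\mathfrak S} \supseteq Z_{\widebar{\mathfrak w}(X_j,\mathfrak S)}$ because $\widebar{\mathfrak w}(X_j, Z_i, \mathfrak S) \subseteq \widebar{\mathfrak w}(X_j,\mathfrak S)$; and $\ell_j$ is non-increasing with $\ell_{k_{\mathfrak S}} = 0$ (so $k_{\mathfrak S}$ is finite). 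Since the base loci $\Bs(\mathfrak W(X_j,\mathfrak S))$ are compatible under the tower projections, the hypothesis $f_{[k_{\mathfrak S}]}(\mathbb C)\not\subset \Bs(\mathfrak W(X_{k_{\mathfrak S}},\mathfrak S))$ gives $f_{[j]}(\mathbb C)\not\subset \Bs(\mathfrak W(X_j,\mathfrak S))$ and $f_{[j]}(\mathbb C)\not\subset \Supp(\sum_i (Z_i)^{[j]}_{\mathfrak S})$ for all $0\leq j\leq k_{\mathfrak S}$, so Theorem~\ref{thm:Z_S^1veesum} applies at each level. Next I would prove the Weil-function reduction lemma: for $|I|>\ell_j$ the definition of $\ell_j$ and the inclusion above force $\bigcap_{i\in I}(Z_i)^{[j]}_{\mathfrak S} = Z_{\widebar{\mathfrak w}(X_j,\mathfrak S)}$ as subschemes, i.e.\ $\mathcal{I}_{Z_{\widebar{\mathfrak w}(X_j,\mathfrak S)}} = \sum_{i\in I}\mathcal{I}_{(Z_i)^{[j]}_{\mathfrak S}}$; expanding, at each point, a generator of $\mathcal{I}_{Z_{\widebar{\mathfrak w}(X_j,\mathfrak S)}}$ realising its Weil function there as an element of $\sum_{i\in I}\mathcal{I}_{(Z_i)^{[j]}_{\mathfrak S}}$ yields $\min_{i\in I}\lambda_{(Z_i)^{[j]}_{\mathfrak S}} \leq \lambda_{Z_{\widebar{\mathfrak w}(X_j,\mathfrak S)}} + O(1)$. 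Ordering the $\lambda_{(Z_i)^{[j]}_{\mathfrak S}}$ and using $\lambda_{\sum_{i\in I}(Z_i)^{[j]}_{\mathfrak S}} = \sum_{i\in I}\lambda_{(Z_i)^{[j]}_{\mathfrak S}} + O(1)$, this integrates to
\[
m_{f_{[j]}}\Big(r, \bigvee_{\substack{I\subset J\\ |I| = \ell_{j-1}}}\sum_{i\in I}(Z_i)^{[j]}_{\mathfrak S}\Big) \leq m_{f_{[j]}}\Big(r, \bigvee_{\substack{I\subset J\\ |I| = \ell_j}}\sum_{i\in I}(Z_i)^{[j]}_{\mathfrak S}\Big) + (\ell_{j-1} - \ell_j)\,\mu_j + O(1),
\]
valid also at $j = 0$ with $\ell_{-1} = q$ (the left side is then $\sum_i m_f(r, Z_i) + O(1)$).

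Now for $1\leq j\leq k_{\mathfrak S}$, applying Theorem~\ref{thm:Z_S^1veesum} on $(X_{j-1}, V_{j-1})$ to $(Z_1)^{[j-1]}_{\mathfrak S},\dots,(Z_q)^{[j-1]}_{\mathfrak S}$ with the integer $\ell = \ell_{j-1}$, and following it with the reduction lemma at level $j$, produces the one-step inequality
\begin{align*}
& m_{f_{[j-1]}}\Big(r, \bigvee_{\substack{I\subset J\\ |I| = \ell_{j-1}}}\sum_{i\in I}(Z_i)^{[j-1]}_{\mathfrak S}\Big) + \ell_{j-1}\Big(T_{f_{[j]}}(r, \mathcal{O}_{X_j}(1)) + \mathcal{N}([Z_{f_{[j-1]}'}], r) + \mu_{j-1}\Big) \\
\leq \, & m_{f_{[j]}}\Big(r, \bigvee_{\substack{I\subset J\\ |I| = \ell_j}}\sum_{i\in I}(Z_i)^{[j]}_{\mathfrak S}\Big) + (\ell_{j-1} - \ell_j)\,\mu_j + S_f(r),
\end{align*}
together with $\sum_i m_f(r, Z_i)\leq m_f(r, \bigvee_{|I| = \ell_0}\sum_{i\in I}Z_i) + (q - \ell_0)\,\mu_0 + O(1)$ from the $j=0$ reduction. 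Telescoping these over $j=1,\dots,k_{\mathfrak S}$: the consecutive proximity terms cancel, the bottom one is replaced by $\sum_i m_f(r, Z_i)$, and the top one vanishes because $\ell_{k_{\mathfrak S}}=0$ makes $\bigvee_{|I| = 0}$ the empty subscheme; the terms $\ell_{j-1}T_{f_{[j]}}(r,\mathcal{O}_{X_j}(1))$ sum to $T_{f_{[k_{\mathfrak S}]}}(r, \mathcal{O}_{X_{k_{\mathfrak S}}}(\bm{\ell}_{\mathfrak S}))$ by functoriality of the characteristic function, the ramification terms become $\sum_{j=0}^{k_{\mathfrak S}-1}\ell_j\mathcal{N}([Z_{f_{[j]}'}], r)$, and the $S_f(r)$'s merge. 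What remains on the right are the $\mu_j$'s; collecting their coefficients and reorganising by summation by parts, using $\ell_{-1}=q$, $\widebar{\mathfrak w}(X_0,\mathfrak S)=\mathfrak s$, and $\ell_{k_{\mathfrak S}}=\ell_{k_{\mathfrak S}+1}=0$, yields exactly the stated second-difference coefficients $\ell_{j-1,\mathfrak S}-2\ell_{j,\mathfrak S}+\ell_{j+1,\mathfrak S}$.

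I expect the substantive work to lie entirely in the preliminary step, not in the telescoping (which, like the Green--Jensen and Calculus lemmas, Crofton's formula, and the absorption of error terms into $S_f(r)$, goes as in the proofs of Theorems~\ref{thm:Z^(k)sum} and~\ref{thm:Z_S^1veesum}). Concretely: (i) the tower-functoriality of the $\mathfrak S$-decorated Wronskian subschemes, the condition that the level-$j$ Wronskian data be a genuine linear system cutting out the $(Z_i)^{[j]}_{\mathfrak S}$, the inclusion $(Z_i)^{[j]}_{\mathfrak S}\supseteq Z_{\widebar{\mathfrak w}(X_j,\mathfrak S)}$, the compatibility of base loci, and the finiteness of $k_{\mathfrak S}$ are structural facts about the constructions of Sections~\ref{sec:Wronskians}--\ref{sec:logWronskians} that must be quoted or proved there; and (ii) the reduction lemma must be established at the scheme-theoretic level — the equality of subschemes $\bigcap_{i\in I}(Z_i)^{[j]}_{\mathfrak S} = Z_{\widebar{\mathfrak w}(X_j,\mathfrak S)}$, not merely of supports, is what gives the comparison of Weil functions with constant $1$ and hence the exact coefficient $\ell_{j-1}-\ell_j$ rather than $C(\ell_{j-1}-\ell_j)$. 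Once these are in place the remaining steps are routine.
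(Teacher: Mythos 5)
There is a genuine gap, and it sits exactly where you deferred to ``structural facts to be quoted or proved'': the tower-functoriality you need --- that $(Z_i)_{\mathfrak S}^{[j]}$ and $Z_{\widebar{\mathfrak w}(X_j,\mathfrak S)}$ are reproduced from the level-$(j-1)$ data by applying the level-one construction on $(X_{j-1},V_{j-1})$ with respect to the Wronskian linear system $\widebar{\mathfrak W}(X_{j-1},\mathfrak S)$ --- is not a quotable fact in Sections \ref{sec:Wronskians}--\ref{sec:logWronskians}, and it fails in the naive form your one-step inequality requires. If you run Theorem \ref{thm:Z_S^1veesum} on $(X_{j-1},V_{j-1})$ with $\mathfrak S'=\widebar{\mathfrak W}(X_{j-1},\mathfrak S)$, the right-hand side involves $\big((Z_i)_{\mathfrak S}^{[j-1]}\big)_{\mathfrak S'}^{[1]}$, i.e.\ subschemes cut out by Wronskians of pairs of iterated Wronskians; these are not the generators of $\widebar{\mathfrak w}(X_j,Z_i,\mathfrak S)$, and there is no clean identity relating them in general. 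The paper's resolution is precisely the missing machinery: fix a divisor $D\in\mathfrak S$ and work with the sub-system $\mathfrak W(X_1,D,\mathfrak S)$ whose members all share the section of $D$ as first entry; prove the determinant identity of Lemma \ref{lem:(D,Z)_S^1_WD^k}, $\widebar{\omega}(\omega(\sigma,\varsigma_0),\dots,\omega(\sigma,\varsigma_k))=\pi_{0,k+1}^{\ast}\sigma^{k}\,\widebar{\omega}(\sigma,\varsigma_0,\dots,\varsigma_k)$, which shows the iteration reproduces the level-$(k{+}1)$ Wronskian data only up to the correction divisor $k\,\pi_{0,k+1}^{\ast}D$; and then remove the dependence on $D$ by averaging over $\mathfrak S$ with Crofton's formula for proximity functions (Lemma \ref{lem:(D,Z)_S^kavg}, resting on Theorem \ref{thm:m_Zbetaavg}). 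None of these three ingredients appears in your proposal, and they are the substantive content of the proof, via the intermediate Theorem \ref{thm:Z_S^kveesum}.

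The correction divisor also shows up numerically and exposes an internal inconsistency in your write-up. The correct one-step inequality (Theorem \ref{thm:Z_S^kveesum}) carries a two-level memory term $+\,\ell\, m_{f_{[k-2]}}(r,Z_{\widebar{\mathfrak w}(X_{k-2},\mathfrak S)})$ coming from the $k\,\pi^{\ast}D$ correction after Crofton averaging; it is this term that, upon combining the steps $k=1,\dots,k_{\mathfrak S}$ together with the reduction via Lemma \ref{lem:maxsum} and the definition of $\ell_{k,\mathfrak S}$, produces the second-difference coefficients $\ell_{j-1,\mathfrak S}-2\ell_{j,\mathfrak S}+\ell_{j+1,\mathfrak S}$. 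Your proposed one-step inequality omits that term, and telescoping it actually yields coefficients $\ell_{j-1,\mathfrak S}-2\ell_{j,\mathfrak S}$, not the stated second differences --- a sign that the inequality you wrote does not follow from Theorem \ref{thm:Z_S^1veesum} as you apply it. (Your preliminary reductions --- $(Z_i)_{\mathfrak S}^{[j]}\supseteq Z_{\widebar{\mathfrak w}(X_j,\mathfrak S)}$, the scheme-theoretic equality of $\bigcap_{i\in I}(Z_i)_{\mathfrak S}^{[j]}$ with $Z_{\widebar{\mathfrak w}(X_j,\mathfrak S)}$ for $|I|>\ell_{j,\mathfrak S}$, and the resulting Weil-function splitting via Lemma \ref{lem:maxsum} --- are fine and coincide with the paper's use of them; the gap is entirely in the inductive step up the tower.)
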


\section{Directed varieties and Blow-up}\label{sec:DV}

\subsection{Directed varieties}

Let us consider a pair $(X, V)$ consisting of a complex manifold $X$ of dimension $n$ equipped with a linear subspace $V \subset T_X$.
More precisely, if $X$ is connected, the total space of $V$ is by definition an irreducible closed analytic subspace of the total space of $T_X$ such that each fiber $V_x = V \cap T_{X,x}$ is a vector subspace of $T_{X,x}$.
If $X$ is not connected, $V$ should be irreducible merely over each connected component of $X$, but we will typically assume that the varieties are connected.

Let $\mathcal{W} \subset \mathcal{T}_X^{\dual}$ be the sheaf of $1$-forms vanishing on $V$.
Notice that $\mathcal{W}$ is indeed the direct image sheaf $(\pi_1)_{\ast} \mathcal{O}_{\textnormal{P}(V)}(1)$, where $\pi_1 \colon \textnormal{P}(V) \subset \textnormal{P}(T_X) \hookrightarrow X$ is a proper morphism.
It follows from the direct image theorem (cf. \cite{Grauert1960}) that $\mathcal{W}$ is coherent.
Take a small open subset $U \subset X$, and choose some local sections $\alpha_1, \dots, \alpha_q \in \mathcal{W}(U)$ generating $\mathcal{W}$ at every point of $U$.
Then $V$ is locally defined by
\vspace{-.5ex}
\[
V_x = \{\xi \in T_{X,x} \mid \alpha_j(x)\cdot\xi=0,\, \forall 1 \leq j \leq q\}
,\vspace{-.5ex}\]
for every point $x \in U$.
We can associate to $V$ a coherent sheaf $\mathcal{V} \bydef \mathcal{W}^{\perp} = \mathcal{H}om_{\mathcal{O}_X}(\mathcal{T}_X^{\dual}/\mathcal{W}, \mathcal{O}_X) \subset \mathcal{T}_X$,
which is a saturated subsheaf of $\mathcal{T}_X$, i.e., such that $\mathcal{T}_X/\mathcal{V}$ has no torsion.
Consequently, $\mathcal{V}$ is also reflexive, namely, $\mathcal{V}^{\dual\dual}=\mathcal{V}$.
We refer to such a pair as a (complex) directed variety.
In general, we will think of $V$ as a linear space, rather than considering the associated saturated subsheaf $\mathcal{V} \subset \mathcal{T}_X$.
A morphism $\varphi \colon (X, V) \to (Y, W)$ in the category of complex directed varieties is a holomorphic map $\varphi \colon X \to Y$ such that $\dif \varphi(V) \subset W$, where $\dif \varphi$ is the differential of $\varphi$.

The rank at each point $x \in X$ is given by a function $x \mapsto \dim_\mathbb{C} V_x$, which is Zariski lower semi-continuous and may a priori jump.
The rank $r \bydef \rank V$ of $V$ is defined as the dimension of $V_x$ at a generic point.
The dimension may be larger at non-generic points; this happens e.g., on $X=\mathbb{C}^n$ for the rank $1$ linear space $V$ generated by the Euler vector field: $V_z = \mathbb{C}\sum_{1 \leq j\le n} z_j\frac{\partial}{\partial z_j}$ for $z \neq 0$, and $V_0 = \mathbb{C}^n$.

We believe that directed varieties are useful even when studying the "absolute case" (where $V=T_X$).
By \eqref{eqn:functorial}, any natural embedding $(X, V) \hookrightarrow (X, T_X)$ induces a certain functorial constructions in the category of directed varieties.
We think of directed varieties as a kind of ``relative situation'', covering, e.g., the case when $V$ is the relative tangent space to a holomorphic map $X \to S$.
It is important to notice that the local sections of $\mathcal{V}$ need not generate the fibers of $V$ at singular points, as one sees already in the case of the Euler vector field when $n\ge 2$.
We also want to emphasize that no assumption need be made about the Lie bracket tensor $[\bullet, \bullet] \colon \mathcal{V} \times \mathcal{V}\to \mathcal{T}_X/\mathcal{V}$,
which amounts to saying that we do not assume any kind of integrability for $\mathcal{V}$ or $\mathcal{W}$.

The singular set $\Sing(V)$ is defined by the set of points where $\mathcal{V}$ is not locally free.
It can also be characterized as the indeterminacy set of the (meromorphic) classifying map to the Grassmannian of $r$-dimensional subspaces of $T_X$ as follows:
\begin{equation}\label{eqn:alpha}
\begin{aligned}
\alpha \colon X &\dashrightarrow \Gr(r, T_X) \\
z &\longmapsto V_z
.\end{aligned}
\end{equation}
We thus have $V_{\restriction X \setminus \Sing(V)} = \alpha^{\ast} S$, where $S \to \Gr(r, T_X)$ is the tautological subbundle of $\Gr(r, T_X)$.
The singular set $\Sing(V)$ is an analytic subset of $X$ of codimension at least $2$, and hence $V$ is always a holomorphic subbundle outside an analytic subset of codimension at least $2$.
Thanks to this remark, one can most often treat linear spaces as vector bundles (possibly modulo passing to the Zariski closure along $\Sing(V)$).

In addition, we call a (complex) directed variety $(X, V)$ a (complex) directed manifold, if $V$ is a holomorphic subbundle of $T_X$.
And we say that a directed manifold $(X, V)$ is a directed projective manifold, in the case where $X$ is a complex projective manifold.

\subsection{Blow-up of directed varieties}

For a complex variety $X$ and an ideal sheaf $\mathcal{I} \subset \mathcal{O}_X$, we have the blow-up of $X$ with respect to $\mathcal{I}$
\[
\upsilon \colon \widehat{X} = \Bl_{\mathcal{I}}(X) \longrightarrow X
,\]
where $\Bl_{\mathcal{I}}(X) \bydef \Proj \bigoplus_{d \geq 0} \mathcal{I}^d$ (see Chapter \uppercase\expandafter{\romannumeral2}, Section 7 of Hartshorne's book \cite{Hartshorne1977} for more details).
If $Z_{\mathcal I}$ is the closed subscheme of $X$ with the ideal sheaf $\mathcal{I}$, then we also call $\upsilon \colon \widehat{X} \to X$ the blow-up of $X$ with center $Z_{\mathcal I}$.
Moreover, we see that $\upsilon^{-1} \mathcal{I} \cdot \mathcal{O}_{\widehat{X}}$ associates to an effective Cartier divisor $E$ on $\widehat{X}$, called the exceptional divisor of $\upsilon$.
It amounts to saying that
\[
\mathcal{O}_{\widehat{X}}(-E) = \upsilon^{-1} \mathcal{I} \cdot \mathcal{O}_{\widehat{X}}
.\]
Apparently, $\upsilon_{\restriction U} \colon U \to X$ is an open embedding, where $U = \widehat{X} \setminus E$.

Now, we are going to generalize the blow-ups to complex directed varieties.
Let $(X, V)$ be a complex directed variety with $V$ being possibly singular, and let $\upsilon$ be the blow-up of $X$ with respect to $\mathcal{I}$ as above. We thus get a directed variety $(\widehat{X}, \widehat{V})$ by taking $\widehat{V}$ to be the Zariski closure of $(\dif \upsilon)^{-1}(V')$,
where $V' = V_{\restriction X'}$ is the restriction of $V$ over a Zariski open subset $X' \subset X \setminus (\Supp(\mathcal{O}_X / \mathcal{I}) \cup \Sing(V))$.
This construction is independent of the choice of $X'$, thanks to the Zariski density of $X'$.
Therefore, we say the morphism of directed varieties
\[
\upsilon \colon (\widehat{X}, \widehat{V}) \longrightarrow (X, V)
\]
is the blow-up of $(X, V)$ with respect to $\mathcal{I}$.

Let $f \colon \bigtriangleup(R) \to X$ be a (holomorphic) tangent curve of the directed variety $(X, V)$, in other words, $f'(z)\in V_{f(z)}$ for every $z \in \bigtriangleup(R)$,
where $\bigtriangleup(R)$ is the disk of center $0$ and radius $R$ in the complex plane $\mathbb{C}$.
Suppose that $f$ is non-constant and $f(\bigtriangleup(R)) \not\subset \Supp(\mathcal{O}_X / \mathcal{I}) \cup \Sing(V)$.
Obviously, there exists a unique curve $\hat{f} \colon \bigtriangleup(R) \to \widehat{X}$ such that its graph is the same to the (Euclidean) closure of the graph of $\upsilon_{\restriction U}^{-1}(f)$, where $U = \widehat{X} \setminus E$.
Observe that the following diagram
\begin{equation*}
\begin{tikzcd}
& \widehat{X} \arrow[d, "v"] \\
\Delta(R) \arrow[ur, dashed, "\hat{f}"] \arrow[r, "f"] & X
\end{tikzcd}
\end{equation*}
commutes, and $\hat{f}$ is a non-constant tangent curve of $(\widehat{X}, \widehat{V})$.
We thus call $\hat{f}$ the lifting of $f$ to $(\widehat{X}, \widehat{V})$.

Moreover, we claim that $\Sing(\widehat{V}) \subset \upsilon^{-1}(\Sing(V))$.
Actually, the composition $\alpha \circ \upsilon$ gives a rational map $\widehat{\alpha} \colon \widehat{X} \dashrightarrow \Gr(r, T_X)$, where $\alpha \colon X \dashrightarrow \Gr(r, T_X)$ is defined by \eqref{eqn:alpha}.
We easily check that $\widehat{\alpha}$ coincides with the meromorphic extension of $\alpha \circ \upsilon_{\restriction U'}$.
Hence $\Sing(\widehat{V}) = \Ind(\widehat{\alpha}) \subset \upsilon^{-1}(\Ind(\alpha)) = \upsilon^{-1}(\Sing(V))$.

\begin{remark}
In Section 2.C of \cite{Demailly2020}, Demailly also introduced the blow-up in the category of directed varieties.
Notice that $\upsilon$ restricted on $\upsilon^{-1}(X')$ is a (holomorphic) isomorphism, namely, biholomorphism for any Zariski open subset $X' \subset X \setminus (\Supp(\mathcal{O}_X / \mathcal{I}) \cup \Sing(V))$.
Therefore, our construction of the blow-up is identified with the construction in \cite{Demailly2020}.
\end{remark}

\section{Jets and jet differentials}\label{sec:jets}

\subsection{Jet spaces}
Let us first recall the classic jet space.

Let $X$ be a complex manifold of dimension $n$.
Given a point $x \in X$, a germ of holomorphic curves $f \colon (\mathbb{C}, 0) \to (X, x)$ is an equivalence class of pairs $(U, f_U)$,
where $U \subset \mathbb{C}$ is an open neighborhood of the origin $0$, $f_U$ is a holomorphic curve from $U$ to $X$ such that $f_U(0)=x$, and where $(U_1, f_{U_1})$ and $(U_2, f_{U_2})$ are equivalent if $f_{U_1}$ and $f_{U_2}$ agree on $U_1 \cap U_2$.
Moreover, for the germs of holomorphic curves $f, g \colon (\mathbb{C}, 0) \to (X, x)$,
one defines the equivalence relation $f \overset{k}{\sim} g$ if and only if $f^{(\ell)}(0)= g^{(\ell)}(0)$ for all $0 \leq \ell \leq k$,
where the derivatives are computed in some (equivalently, any) local coordinate system near $x$.
The equivalence class of $f$ is denoted by $j_kf$.
Set
\begin{align*}
J_kX_x &\bydef \{j_kf \mid f: (\mathbb{C}, 0)\to (X, x)\}, \\
J_kX &\bydef \bigcup_{x \in X} J_kX_x
.\end{align*}
One immediately gets a natural projection
\[
p_k \colon J_kX \longrightarrow X
\]
defined by $p_k(j_kf)= f(0)$.
Apparently, $J_kX$ is a complex manifold, and $p_k$ gives a $\mathbb{C}^{nk}$-fiber bundle over $X$, which is called $k$-jet bundle over $X$.
In the case when $k=1$, $J_1X = T_X$ is holomorphic tangent bundle over $X$.
However, for $k \geq 2$, there is no such linear structure in $k$-jet bundle.

Furthermore, following Definition 5.1 in \cite{Demailly1997}, one has the jet spaces over a complex directed manifold $(X, V)$.
Let $(X, V)$ be a complex directed manifold such that $V$ is a holomorphic subbundle of $T_X$.
One denotes by $J_kV$ the $k$-jet bundle of $X$ tangent to $V$, consisting of all $k$-jets $j_kf$ such that $f'(z) \in V_{f(z)}$ for all $z$ in some neighborhood of $0$ in the complex plane.
Apparently, $J_kV$ is a subbundle of $J_kX$. For shortness, one hereinafter writes
\[
p_{k, V} \bydef p_{k \restriction J_kV} \colon J_kV \longrightarrow X
.\]
The following construction indicates that $J_kV$ is indeed a $\mathbb{C}^{rk}$-fiber bundle over $X$ under this projection $p_{k, V}$, where $r=\rank V$.

Given any open subset $U \subset X$, for each $0 \leq \ell \leq k-1$, one can define a $\mathbb{C}$-linear map
\begin{equation}\label{eqn:difform}
\dif_{U, V}^{[\ell]} \colon \mathcal{V}^{\dual}(U) \longrightarrow \mathcal{O}(p_{k, V}^{-1}(U))
\end{equation}
by $\dif_{U, V}^{[\ell]}\omega(j_kf) = A^{(\ell)}(0)$ for every holomorphic 1-form $\omega \in \mathcal{V}^{\dual}(U)$ and $j_kf \in p_{k, V}^{-1}(U) \subset J_kV$,
where $A$ is a germ of functions on $(\mathbb{C}, 0)$ such that $f^{\ast}\omega = A(z) \dif z$.
Obviously, for any $j_kf \in J_kV$, one has $\dif_{U, V}^{[\ell]} (\res_{V} \omega)(j_kf) = \dif_{U, T_X}^{[\ell]} \omega(j_kf)$,
where $\res_V \colon \mathcal{T}_X^{\dual} \to \mathcal{V}^{\dual}$ is a quotient morphism induced by the restriction morphism $\mathcal{O}(T_X) \to \mathcal{O}(V)$.
Take an open subset $U \subset X$ such that there exist $r$ local sections $\omega_1, \dots, \omega_r \in \mathcal{V}^{\dual}(U)$ generating $V^{\dual}$ at every point of $U$.
Thus one obtains a local trivialization of $p_{k, V}^{-1}(U)$ given by
\begin{equation}\label{eqn:localizationJkV}
\begin{aligned}
p_{k, V} \times (\dif_{U, V}^{[\ell]}\omega_i)_{\substack{1 \leq i \leq r \quad \\ 0 \leq \ell \leq k-1}} \colon p_{k, V}^{-1}(U) &\longrightarrow U \times \mathbb{C}^{rk} \\
j_kf &\longmapsto \left(f(0), (\dif_{U, V}^{[\ell]}\omega_i(j_kf))_{\substack{1 \leq i \leq r \quad \\ 0 \leq \ell \leq k-1}}\right)
.\end{aligned}
\end{equation}
In the particular case where $\omega = \res_V \dif \sigma$ for some $\sigma \in \mathcal{O}(U)$, slightly extending the notation, one writes

\begin{equation}\label{eqn:diffun}
\dif_{U, V}^{[\ell]}\sigma \bydef
\begin{cases}
p_{k, V}^{\ast} \sigma & \ell = 0, \\[.5eM]
\dif_{U, V}^{[\ell-1]}\omega & 1 \leq \ell \leq k.
\end{cases}
\end{equation}
It is evident that $\dif_{U, V}^{[\ell]}\sigma(j_kf) = (\sigma \circ f)^{(\ell)}(0)$ for every $\sigma \in \mathcal{O}(U)$ and $j_kf \in p_{k, V}^{-1}(U) \subset J_kV$.
When no confusion can arise, one simply writes $\dif^{[\ell]}$ for brevity.

\subsection{Demailly-Semple jet towers}
Let us recall the Demailly-Semple jet tower (see Section 4 and 5 of \cite{Demailly1997} for more details).
Let us start from $(X_0, V_0) = (X, V)$, for a given directed manifold $(X, V)$.
In the spirit of \cite{Semple1954}, Demailly \cite{Demailly1997} inductively constructed a sequence of directed manifolds $(X_{\bullet}, V_{\bullet})$ as follows.

Let $X_k = \textnormal{P}(V_{k-1})$ with its natural projection $\pi_k$ to $X_{k-1}$, for each $k \in \mathbb{Z}_{+}$.
For every point $(x, [v]) \in X_k = \textnormal{P}(V_{k-1})$ associated with a tangent vector $v \in (V_{k-1})_x \setminus \{0\}$ for $x \in X_{k-1}$, one defines the tangent subbundle $V_k \subset T_{X_k}$ by
\[
(V_k)_{(x, [v])} = \{ \xi \in (T_{X_k})_{(x, [v])} \mid \dif \pi_k (\xi) \in \mathbb{C}\,v \}, \quad \mathbb{C}\,v \subset (V_{k-1})_x
,\]
where $\dif \pi_k \colon T_{X_k} \to \pi_k^{\ast} T_{X_{k-1}}$ is the differential of $\pi_k$.
One refers to the directed manifold $(X_k, V_k)$ as the Demailly-Semple $k$-jet tower of $(X, V)$.
\begin{remark}
Technically, the projective manifold $\textnormal{P}(V)$ of lines of the vector bundle $V$ is identified with the projective bundle $\mathbb{P}(V^{\dual}) \bydef \Proj \bigoplus_{d\geq 0} \Sym^d V^{\dual}$.
\end{remark}
On $X_k = \textnormal{P}(V_{k-1})$, one has the tautological line bundle $\mathcal{O}_{X_k}(-1) \subset \pi_k^{\ast} V_{k-1}$ such that $\mathcal{O}_{X_k}(-1)_{(x, [v])} = \mathbb{C}\,v$.
Actually, the definition of $V_k$ gives a exact sequence
\begin{equation}\label{eqn:defexact}
\begin{tikzcd}[column sep=scriptsize]
0 \arrow[r] & T_{X_k/X_{k-1}} \arrow[r] & V_k \arrow[r, "(\pi_k)_{\ast}"] &[.5em] \mathcal{O}_{X_k}(-1) \arrow[r] & 0
,\end{tikzcd}
\end{equation}
where $T_{X_k/X_{k-1}} \bydef \ker(\dif \pi_k)$ denotes the relative tangent bundle of $\pi_k$.
Moreover, thinking of $X_k$ as a projective bundle $X_k = \mathbb{P}(V_{k-1}^{\dual})$, one gets the Euler exact sequence
\begin{equation}\label{eqn:Eulexact}
0 \longrightarrow \mathcal{O}_{X_k} \longrightarrow \pi_k^{\ast} V_{k-1} \otimes \mathcal{O}_{X_k}(1) \longrightarrow T_{X_k/X_{k-1}} \longrightarrow 0
.\end{equation}
It is evident that
\[
\dim X_k = n+k(r-1), \qquad
\rank V_k = r
,\]
where $r \bydef \rank V$.
Hence, taking the determinants of \eqref{eqn:Eulexact}, one obtains
\begin{align*}
\det T_{X_k/X_{k-1}} &= \pi_k^{\ast} \det V_{k-1} \otimes \mathcal{O}_{X_k}(r), \\
\det V_k &= \pi_k^{\ast} \det V_{k-1} \otimes \mathcal{O}_{X_k}(r-1)
.\end{align*}

Obviously, $\pi_k$ induces a morphism of directed manifolds
\[
\pi_k \colon (X_k, V_k) \longrightarrow (X_{k-1}, V_{k-1})
,\]
for each $k \in \mathbb{Z}_{+}$.
One sees that this construction is functorial.
That is to say, for every morphism of directed manifolds $\varphi \colon (X, V) \to (Y, W)$, there is a commutative diagram
\begin{equation}\label{eqn:functorial}
\begin{tikzcd}[column sep=scriptsize]
(X_1, V_1) \arrow[r, "\pi_1"] \arrow[d, dashed, "\varphi_1"] & (X, V) \arrow[d, "\varphi"]  \\
(Y_1, W_1) \arrow[r, "\pi_1"] & (Y, W)
,\end{tikzcd}
\end{equation}
where $\varphi_1$ is induced by the differential $\dif \varphi$ restricted on $V$.
Notice that $\varphi_1$ is holomorphic if $\dif \varphi_{\restriction V} \colon V \to \varphi^{\ast} W$ is injective.

Observe that $\mathcal{O}_{X_k}(-1) \hookrightarrow \pi_k^{\ast} V_{k-1}$ is a canonical injection on $X_k = \mathbb{P}(V_{k-1}^{\dual})$,
and the composition with $(\pi_{k-1})_{\ast}$ (analogue for order $k-1$ of the arrow $(\pi_k)_{\ast}$ in sequence \eqref{eqn:defexact}) gives a line bundle morphism over $X_k$
\begin{equation}\label{eqn:Gamma}
\begin{tikzcd}[column sep=scriptsize]
\mathcal{O}_{X_k}(-1) \arrow[r, hook] & \pi_k^{\ast} V_{k-1} \arrow[r, "\pi_k^{\ast} (\pi_{k-1})_{\ast}"] &[2em] \pi_k^{\ast} \mathcal{O}_{X_{k-1}}(-1)
,\end{tikzcd}
\end{equation}
for any integer $k \geq 2$, which admits $\varGamma_k \bydef \textnormal{P}(T_{X_{k-1}/X_{k-2}}) \subset X_k$ as its zero divisor.
Immediately, one infers that
\begin{equation}\label{eqn:O(Gamma)}
\mathcal{O}_{X_k}(1) = \pi_k^{\ast} \mathcal{O}_{X_{k-1}}(1) \otimes \mathcal{O}(\varGamma_k)
.\end{equation}

Now, for any integer $0 \leq j \leq k-1$, one defines the composition of projections
\begin{equation}\label{eqn:pi}
\pi_{j, k} = \pi_{j+1} \circ \pi_{j+2} \circ \dots \circ \pi_k \colon X_k \longrightarrow X_j
.\end{equation}
For the sake of convenience, one writes
\begin{equation}\label{eqn:Oa}
\mathcal{O}_{X_k}(\bm{a}) \bydef \pi_{1, k}^{\ast} \mathcal{O}_{X_1}(a_1) \otimes \pi_{2, k}^{\ast} \mathcal{O}_{X_2}(a_2) \otimes \dots \otimes \mathcal{O}_{X_k}(a_k)
,\end{equation}
for every $\bm{a}=(a_1, \ldots, a_k) \in \mathbb{Z}^k$.
It follows from \eqref{eqn:O(Gamma)} that
\[
\pi_{j, k}^{\ast} \mathcal{O}_{X_j}(1) = \mathcal{O}_{X_k}(1) \otimes \mathcal{O}_{X_k}(-\pi_{j+1, k}^{\ast} \varGamma_{j+1} - \dots - \varGamma_k)
.\]
One thus gets
\begin{equation}\label{eqn:Oab}
\mathcal{O}_{X_k}(\bm{a}) = \mathcal{O}_{X_k}(b_k) \otimes \mathcal{O}_{X_k}(-\bm{b} \cdot \varGamma)
,\end{equation}
where
\begin{align*}
\bm{b} &= (b_1, \dots, b_k) \in \mathbb{Z}^k, \quad b_j = a_1+\dots+a_j, \\
\bm{b} &\cdot \varGamma = \sum_{1\leq j \leq k-1} b_j \pi_{j+1, k}^{\ast} \varGamma_{j+1}
.\end{align*}

\subsection{Lifting of curves to Demailly-Semple jet towers}
Let $f \colon \bigtriangleup(R)\to X$ be a (holomorphic) tangent curve of the directed manifold $(X, V)$, i.e., $f'(z)\in V_{f(z)}$ for every $z \in \bigtriangleup(R)$.
Suppose that $f$ is non-constant. Then we can write $f'(z)=(z-z_0)^m u(z)$ with $m \in \mathbb{N}$ and $u(z_0)\neq 0$, for every $z_0 \in \bigtriangleup(R)$. Here $m>0$ only when $z_0$ is a stationary point. Obviously, $[u(z_0)]$ gives a well-defined and unique tangent line in $\textnormal{P}(V_{f(z_0)})$, even at stationary points, and the map
\begin{align*}
f_{[1]} \colon \bigtriangleup(R) &\longrightarrow X_1 \\
z &\longmapsto f_{[1]}(z) = (f(z),[u(z)])
\end{align*}
is holomorphic.
By definition $f'(z) \in \mathcal{O}_{X_1}(-1)_{f_{[1]}(z)} = \mathbb{C}\,u(z)$, hence the derivative $f'$ defines a section
\[
f' \colon T_{\bigtriangleup(R)} \longrightarrow f_{[1]}^{\ast} \mathcal{O}_{X_1}(-1)
.\]
Moreover, seeing that
\[
\pi_1 \circ f_{[1]}(z) = f(z), \quad
(\pi_1)_{\ast} f_{[1]}'(z) = f'(z) \in \mathbb{C}\,u(z)
,\]
we have $f_{[1]}'(z) \in (V_1)_{(f(z), [u(z)])} = (V_1)_{f_{[1]}(z)}$, which implies that $f_{[1]}$ is a non-constant tangent curve of $(X_1, V_1)$. We thus call $f_{[1]}$ the canonical lifting of $f$ to $(X_1, V_1)$.

Furthermore, every non-constant tangent curve $f$ of $(X, V)$ inductively lifts to a unique non-constant tangent curve $f_{[k]}$ of $(X_k, V_k)$, for each $k \in \mathbb{Z}_{+}$. And the derivative $f_{[k-1]}'$ gives rise to a section
\begin{equation}\label{eqn:fk-1'}
f_{[k-1]}' \colon T_{\bigtriangleup(R)} \longrightarrow f_{[k]}^{\ast} \mathcal{O}_{X_k}(-1)
.\end{equation}

Now, let us investigate the multiplicity of tangent curves and their liftings. We define the multiplicity $m(f, z_0)$ of a curves $f \colon \bigtriangleup(R) \to X$ at a point $z_0$ to be the smallest positive integer $m \in \mathbb{Z}_{+}$ such that $f^{(m)}(z_0) \neq 0$. That is, we can write $f'(z)=(z-z_0)^{m-1} u(z)$ such that $u(z_0)\neq 0$. One easily sees that $m(f_{[k]}, z)$ is non-increasing with $k$. What's more, Demailly verified the following proposition.
\begin{proposition}[Proposition 5.11, \cite{Demailly1997}]
Let $f \colon (\mathbb{C}, 0) \to (X, x)$ be a germ of non-constant curves tangent to $V$. Then we have $m(f_{[j-2]}, 0) \geq m(f_{[j-1]}, 0)$ for any integer $j \geq 2$, and the inequality is strict if and only if $f_{[j]}(0) \in \varGamma_j$.
Conversely, if $x_k \in X_k$ is an arbitrary point and $m_0, m_1, \dots, m_{k-1}$ is a non-increasing sequence of positive integers satisfying
\[
m_{j-2} > m_{j-1} \quad \text{if and only if } \pi_{j, k}(x_k) \in \varGamma_j
,\]
for all integer $2 \leq j \leq k$, then there exist a germ of curves $f \colon (\mathbb{C}, 0) \to (X, x)$ tangent to $V$ such that $f_{[k]}(0)=x_k$ and $m(f_{[j]}, 0)=m_j$ for all integer $0 \leq j \leq k-1$, where $x = \pi_{0, k} (x_k)$.
\end{proposition}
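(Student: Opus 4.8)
The plan is to treat the two halves of the statement separately. The monotonicity assertion together with the ``jump'' criterion reduces to a short computation with the chain rule for the canonical liftings, whereas the converse is an inductive construction of $f$ resting on the Cauchy existence theorem for holomorphic ordinary differential equations, the hypotheses on the sequence $(m_j)$ playing the role of exactly the compatibility conditions that keep the induction consistent.

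For the first half, write $m_\ell \bydef m(f_{[\ell]},0)$ and factor $f_{[\ell]}'(z) = z^{m_\ell - 1} u_\ell(z)$ with $u_\ell(0) \neq 0$, where $u_\ell$ is a germ along $f_{[\ell]}$ with values in $V_\ell$; by construction of the lifting $u_\ell(z)$ spans the tautological line $\mathcal{O}_{X_{\ell+1}}(-1)_{f_{[\ell+1]}(z)}$, so that $f_{[\ell+1]}(z) = (f_{[\ell]}(z), [u_\ell(z)])$. Differentiating the lifting identity $\pi_{\ell+1} \circ f_{[\ell+1]} = f_{[\ell]}$ gives $\dif\pi_{\ell+1}\big(f_{[\ell+1]}'(z)\big) = f_{[\ell]}'(z)$, i.e. $z^{m_{\ell+1}-1}\dif\pi_{\ell+1}(u_{\ell+1}(z)) = z^{m_\ell-1}u_\ell(z)$. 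Comparing orders of vanishing at $0$ (using $u_\ell(0)\neq 0$) yields $m_{\ell+1} \leq m_\ell$, and dividing by $z^{m_{\ell+1}-1}$ gives the exact identity $\dif\pi_{\ell+1}(u_{\ell+1}(z)) = z^{m_\ell - m_{\ell+1}}u_\ell(z)$. Since $\varGamma_j = \textnormal{P}(T_{X_{j-1}/X_{j-2}}) = \textnormal{P}(\ker \dif\pi_{j-1})$ inside $X_j = \textnormal{P}(V_{j-1})$, the point $f_{[j]}(0) = (f_{[j-1]}(0), [u_{j-1}(0)])$ lies in $\varGamma_j$ if and only if $\dif\pi_{j-1}(u_{j-1}(0)) = 0$; taking $\ell = j-2$ above and using $u_{j-2}(0)\neq 0$, this occurs precisely when $m_{j-2} - m_{j-1} \geq 1$. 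Thus $m_{j-2} \geq m_{j-1}$, with strict inequality exactly when $f_{[j]}(0) \in \varGamma_j$.

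For the converse I would induct on $k$. Choose local coordinates $(z_1,\dots,z_n)$ on $X$ centered at $x = x_0$ such that $V$ is the common kernel of the forms $\dif z_\alpha - \sum_{i=1}^r a_{\alpha i}(z)\,\dif z_i$ ($r < \alpha \leq n$, $a_{\alpha i}(0) = 0$); then a tangent curve is uniquely determined by its first $r$ components $(f_1,\dots,f_r)$ and the values $f_\alpha(0)$, the remaining components being recovered by integrating $f_\alpha' = \sum_i a_{\alpha i}(f)\,f_i'$. The case $k=1$ is immediate: given $x_1 = (x_0,[v])$ and $m_0 \geq 1$, write $v = \sum_i c_i e_i(x_0)$ in a local frame $e_1,\dots,e_r$ of $V$ and solve $f'(z) = z^{m_0-1}\sum_i c_i e_i(f(z))$, $f(0) = x_0$; the solution is tangent to $V$, has $m(f,0) = m_0$, and $f_{[1]}(0) = x_1$. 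For the inductive step put $x_{k-1} = \pi_k(x_k)$ and observe that $(m_0,\dots,m_{k-2})$ satisfies the jump conditions for the height-$(k-1)$ tower, since these are the sub-collection with $j \leq k-1$ of the given conditions and $\pi_{j,k-1}(x_{k-1}) = \pi_{j,k}(x_k)$; the induction hypothesis then furnishes a tangent curve $f$ with $f_{[k-1]}(0) = x_{k-1}$ and $m(f_{[j]},0) = m_j$ for $j \leq k-2$. It remains to perturb the higher Taylor coefficients of $(f_1,\dots,f_r)$ so as to bring $f_{[k]}(0)$ onto $x_k$ and to realize $m(f_{[k-1]},0) = m_{k-1}$ without disturbing the jet data already arranged. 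Here the $j=k$ instance of the hypothesis, namely $m_{k-2} > m_{k-1} \iff x_k \in \varGamma_k$, dictates the alternative: if $x_k \notin \varGamma_k$ one keeps $m_{k-1} = m_{k-2}$ and need only adjust a single coefficient so that the leading term of $f_{[k-1]}'$ points in the direction of $x_k$; if $x_k \in \varGamma_k$ one instead forces $f_{[k-1]}'$ to vanish to order $m_{k-1}-1$ with leading coefficient in the ($\dif\pi_{k-1}$-vertical) direction of $x_k$, pinning down finitely many further coefficients. Tangency is preserved automatically because the components $f_\alpha$ are recomputed from the differential equation above.

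The genuinely delicate point is this last perturbation step: one must check that finitely many Taylor coefficients of $(f_1,\dots,f_r)$ can be chosen so that $f_{[k]}(0)$ is \emph{exactly} $x_k$ and $m(f_{[k-1]},0)$ is \emph{exactly} $m_{k-1}$, while leaving $f_{[j]}(0) = \pi_{j,k}(x_k)$ and $m(f_{[j]},0) = m_j$ intact for all $j < k$. This requires following how a perturbation of the coefficients of $(f_1,\dots,f_r)$ propagates through the iterated projectivizations defining $X_k$; the hypothesis $m_{j-2} > m_{j-1} \iff x_k \in \varGamma_j$ is precisely the bookkeeping identity that makes this propagation consistent at every level, being the first half of the statement read in reverse.
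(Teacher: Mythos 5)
Your first half is correct and is essentially the standard argument: factoring $f_{[\ell]}'(z)=z^{m_\ell-1}u_\ell(z)$ with $u_\ell(0)\neq 0$ and differentiating $\pi_{\ell+1}\circ f_{[\ell+1]}=f_{[\ell]}$ gives $\dif\pi_{\ell+1}(u_{\ell+1}(z))=z^{m_\ell-m_{\ell+1}}u_\ell(z)$, which is just the germ-level form of the relation $f_{[k-2]}'=(\sigma_{\varGamma_k}\circ f_{[k]})\cdot f_{[k-1]}'$, i.e.\ \eqref{eqn:Gammafk-1'}, and the equivalence with $f_{[j]}(0)\in\varGamma_j$ follows as you say. (Note the paper does not reprove this proposition; it quotes Demailly's Proposition 5.11, so the benchmark is Demailly's own proof, whose first half is this same computation.)

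The converse, however, contains a genuine gap. Your induction reduces everything to the final ``perturbation'' claim — that finitely many higher Taylor coefficients of $(f_1,\dots,f_r)$ can be chosen so that $f_{[k]}(0)$ is exactly $x_k$ and $m(f_{[k-1]},0)$ is exactly $m_{k-1}$, while all the values $f_{[j]}(0)$ and multiplicities $m(f_{[j]},0)$ for $j<k$ are left untouched — and you explicitly leave this claim unproved, calling it the delicate point. But this claim \emph{is} the converse: what must be shown is precisely that, as the admissible coefficients vary, the direction $[u_{k-1}(0)]$ sweeps out the whole fiber $\textnormal{P}((V_{k-1})_{x_{k-1}})$ compatibly with the dichotomy (inside $\varGamma_k$ when $m_{k-2}>m_{k-1}$, outside it when $m_{k-2}=m_{k-1}$), and this is not a formal consequence of the first half ``read in reverse''. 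Concretely, $u_{k-1}$ is produced from $f$ by $k-1$ successive differentiations and divisions by $z^{m_j-1}$, so which coefficients of $f$ the point $f_{[k]}(0)$ depends on, whether moving them reaches every prescribed vertical direction (your ``adjust a single coefficient'' is not obviously sufficient when $x_k\in\varGamma_k$, since the vertical component of $u_{k-1}(0)$ mixes contributions that also govern $m(f_{[k-1]},0)$), and why this does not disturb $f_{[j]}(0)$ and $m_j$ for $j<k$, all require an explicit computation that your argument does not supply. Demailly's proof avoids this difficulty by constructing the germ directly in adapted local coordinates near $x_k$, prescribing $f$ so that every lifting and every multiplicity is computable in closed form, rather than perturbing a curve furnished by an induction hypothesis; some such explicit construction (or a proof of the surjectivity-and-non-interference statement you assert) is needed before your induction closes.
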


Observe that we can always take $m_{k-1}=1$. Thus we have
\begin{corollary}[Corollary 5.12, \cite{Demailly1997}]\label{cor:regular}
For any $x_k \in X_k$, there is a germ of curves $f \colon (\mathbb{C}, 0) \to (X, x)$ such that $f_{[k]}(0)=x_k$ and $f'_{[k-1]}(0) \neq 0$. Moreover, if $x_k$ is taken in a sufficiently small neighborhood, then the germ $f = f_{x_k}$ can be taken to depend holomorphically on $x_k$.
\end{corollary}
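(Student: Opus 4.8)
The plan is to deduce the existence statement directly from the preceding proposition (Proposition~5.11 of~\cite{Demailly1997}), as the parenthetical remark ``we can always take $m_{k-1}=1$'' suggests, and to obtain the holomorphic dependence by a separate deformation argument.

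For the existence, the only input needed is a suitable multiplicity sequence. Given $x_k \in X_k$, I would set $m_{k-1}=1$ and define $m_{k-2}, \dots, m_0$ by downward recursion: for $j=k, k-1, \dots, 2$ put $m_{j-2}=m_{j-1}+1$ if $\pi_{j,k}(x_k) \in \varGamma_j$ and $m_{j-2}=m_{j-1}$ otherwise. Then $m_0 \geq m_1 \geq \cdots \geq m_{k-1}=1$ is a non-increasing sequence of positive integers for which ``$m_{j-2}>m_{j-1}$ if and only if $\pi_{j,k}(x_k) \in \varGamma_j$'' holds for every $2 \leq j \leq k$. The preceding proposition then produces a germ $f \colon (\mathbb{C},0) \to (X,x)$ tangent to $V$, with $x=\pi_{0,k}(x_k)$, such that $f_{[k]}(0)=x_k$ and $m(f_{[j]},0)=m_j$ for $0 \leq j \leq k-1$; in particular $m(f_{[k-1]},0)=1$, i.e.\ $f'_{[k-1]}(0) \neq 0$, and $f$ is non-constant since $m_0<\infty$.

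For the holomorphic dependence I would fix $x_k^0 \in X_k$ and argue through a submersion. For $N$ large enough (depending on $k$) the condition $f'_{[k-1]}(0) \neq 0$ cuts out an open submanifold $J_N^{\mathrm{reg}} \subset J_N V$, and one has the holomorphic evaluation map $\mathrm{ev}\colon J_N^{\mathrm{reg}} \to X_k$, $j_N f \mapsto f_{[k]}(0)$, which by the existence part is surjective. If $\mathrm{ev}$ is a submersion, it admits a local holomorphic section near any point of $\mathrm{ev}^{-1}(x_k^0)$, and this section, realized by polynomial curves of bounded degree, is the sought holomorphic family $x_k \mapsto f_{x_k}$. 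To see that $\mathrm{ev}$ is submersive at a jet $j_N f$ with $f'_{[k-1]}(0) \neq 0$, I would linearize the lifting operators $f \mapsto f_{[1]} \mapsto \cdots \mapsto f_{[k]}$ one step at a time, using the defining exact sequence \eqref{eqn:defexact} and the line-bundle morphism \eqref{eqn:Gamma}: an infinitesimal deformation of $f$ among the tangent curves of $(X,V)$ moves $f_{[k]}(0)$ along a tangent vector of $X_k$ at $x_k$, and the non-vanishing of $f'_{[k-1]}(0)$ is exactly what makes the pushforwards $(\pi_j)_{\ast}$ act surjectively at each stage, so that these deformation vectors fill out all of $T_{x_k}X_k$.

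The hard part will be this submersivity at the degenerate points, namely those $x_k^0$ lying on $\bigcup_{j=2}^{k}\pi_{j,k}^{-1}(\varGamma_j)$. Near such a point the multiplicity sequence of the curves is not locally constant — it jumps up along the $\varGamma_j$-preimages — so if one tries to build the family explicitly, as in the proof of the preceding proposition, by a polynomial ansatz of a fixed combinatorial type, spurious poles (division by the vanishing jet $f^{(m)}(0)$) appear; the point is that these poles cancel, which is the coordinate shadow of the submersivity of $\mathrm{ev}$. I would verify this by induction on $k$, choosing the correction terms added at each step so that their coefficients stay holomorphic as $x_k$ crosses a $\varGamma_j$-preimage, exactly as one can already check in the first nontrivial case $k=2$, where a single well-chosen quadratic correction keeps the family holomorphic across $\varGamma_2$ while $f'_{[1]}(0)$ stays nonzero.
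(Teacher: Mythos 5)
Your existence argument is exactly the paper's: set $m_{k-1}=1$, build the forced non-increasing sequence from the incidence data $\pi_{j,k}(x_k)\in\varGamma_j$, and invoke the preceding proposition. That is all the paper itself does for this part (the corollary is quoted from Demailly, and the holomorphic dependence on $x_k$ is inherited from the explicit inductive construction in Demailly's proof of Proposition 5.11, not re-proved in the text).

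The second half of your proposal, however, has a genuine gap, and it is not the one you flag. Your plan needs the evaluation map $\mathrm{ev}\colon j_Nf\mapsto f_{[k]}(0)$ to be holomorphic on the locus $\{f_{[k-1]}'(0)\neq 0\}$ near a preimage of $x_k^0$, but at jets lying over $X_k^{\sing}$ -- precisely where the corollary is nontrivial -- this map is not even continuous, so there is no submersion to take a section of. Concretely, take $k=2$, $X=\mathbb{C}^2$, $V=T_X$, $f(t)=(t^2,t^3)$: then $f_{[1]}'(0)=(0,0,\tfrac32)\neq 0$ and $f_{[2]}(0)\in\varGamma_2$, since the direction of $f_{[1]}'(0)$ is vertical; on the other hand the nearby jets $g_\epsilon(t)=(t^2+\epsilon t,\,t^3)$ also satisfy $g_{\epsilon,[1]}'(0)=(\epsilon,0,0)\neq 0$, yet $g_{\epsilon,[2]}(0)$ is the fixed point of $X_2\setminus\varGamma_2$ determined by the horizontal direction $[1:0:0]$ over $(0,[1:0])$, which does not tend to $f_{[2]}(0)$ as $\epsilon\to 0$ although $j_Ng_\epsilon\to j_Nf$. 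This is the same phenomenon underlying Theorem \ref{thm:DS}, i): only regular jets ($f'(0)\neq 0$) descend continuously to $X_k$, and only onto $X_k^{\reg}$. Consequently ``submersivity at the degenerate points'' cannot be rescued by better linearization -- the map you want to section does not extend holomorphically (or continuously) across the strata where the multiplicity type jumps. The family $f_{x_k}$ has to be produced directly, by choosing the coefficients in the inductive construction of Proposition 5.11 as holomorphic functions of $x_k$ and checking that the choices remain consistent as $x_k$ crosses the $\varGamma_j$-preimages; that is what Demailly's proof does, and your final paragraph only gestures at it, so the holomorphic-dependence statement remains unproved in your write-up.
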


Therefore, $f \colon (\mathbb{C}, 0) \to (X, x)$ is regular at the origin (i.e., $f'(0)\neq 0$) if and only if $x_k=f_{[k]}(0) \in X_k$ satisfies $\pi_{j, k}(x_k) \not\in \varGamma_j$ for all $2\leq j \leq k$. Hence, we naturally define
\begin{align*}
X_k^{\reg} &= \bigcap_{2\leq j \leq k} \pi_{j, k}^{-1}(X_j \setminus \varGamma_j), \\
X_k^{\sing} &= \bigcup_{2\leq j \leq k} \pi_{j, k}^{-1}(\varGamma_j) = X_k \setminus X_k^{\reg} 
.\end{align*}

\subsection{Green-Griffiths jet differentials}
We now introduce the concept of jet differentials in the sense of Green-Griffiths \cite{Green1980}.

Let $\mathbb{G}_k$ be the group of germs of $k$-jets of biholomorphisms of $(\mathbb{C}, 0)$, i.e., the group of germs of biholomorphic maps
\begin{align*}
\varphi \colon (\mathbb{C}, 0) &\longrightarrow (\mathbb{C}, 0) \\
z &\longmapsto \varphi(z) = a_1z+a_2z^2+\dots+a_kz^k
,\end{align*}
where $a_1 \in \mathbb{C}^{\mltp}$, $a_j \in \mathbb{C}$, $j\geq 2$, and the composition law is taken modulo $z^{k+1}$.
Apparently, $\mathbb{G}_k$ gives a natural reparametrization action on $J_kV$, $\varphi \cdot j_kf \mapsto j_k(f \circ \varphi)$.
Observe that the commutator subgroup $\mathbb{G}_k' \bydef [\mathbb{G}_k, \mathbb{G}_k]$ is the group of $k$-jets of biholomorphisms tangent to the identity. Thus there is an exact sequence of groups
\[
1 \longrightarrow \mathbb{G}_k' \longrightarrow \mathbb{G}_k \longrightarrow \mathbb{C}^{\mltp} \longrightarrow 1
,\]
where $\mathbb{G}_k \to \mathbb{C}^{\mltp}$ is the morphism $\varphi \mapsto \varphi'(0)$, and $\mathbb{C}^{\mltp}$ is interpreted as the group of homotheties $\varphi(z) = \lambda z$.
Suppose $j_kf$ is parametrized by $(f', f'', \ldots, f^{(k)})$.
Thereupon a reparametrization action $\lambda \in \mathbb{C}^{\mltp}$ on $J_kV$ is described by
 \[
\lambda \cdot (f', f'', \ldots, f^{(k)}) = (\lambda f', \lambda^2 f'', \ldots, \lambda^k f^{(k)})
.\]

Following \cite{Green1980}, we consider homogeneous polynomials $Q$ on $J_kV$ of weighted degree $m$ with respect to the reparametrization action of $\mathbb{C}^{\mltp}$, which amounts to satisfying that
\[
Q(\lambda f', \lambda^2 f'', \ldots, \lambda^k f^{(k)}) = \lambda^m Q(f', f'', \ldots, f^{(k)})
\]
for all $\lambda \in \mathbb{C}^{\mltp}$ and $(f', f'', \ldots, f^{(k)}) \in J_kV$.
Notice that the concept of polynomial on the fibers of $J_kV$ makes sense, for any coordinate changes $z \mapsto w=\Psi(z)$ on $X$ induce a polynomial transition automorphism on the fibers of $J_kV$, given by
\begin{equation}\label{eqn:Psi}
(\Psi\circ f)^{(\ell)}=\Psi'(f) \cdot f^{(\ell)} + \sum_{s=2}^{\ell} \sum_{\substack{\bm{j} \in \mathbb{Z}_{+}^s \\ \|\bm{j}\|=\ell}}c_{\bm{j}} \Psi^{(s)}(f) \cdot (f^{(j_1)} \cdots f^{(j_s)})
\end{equation}
with suitable integer constants $c_{\bm{j}}$ (these are easily computed by induction on $s$), where $\|\bm{j}\| \bydef j_1+\dots+j_s$.

Green-Griffiths jet differentials sheaf $\mathcal{E}_{k, m}^{GG}V^{\dual}$ of order $k$ and weighted degree $m$, is a locally free sheaf consisting of these polynomials $Q$, i.e., for any open subset $U \subset X$,
\[
\mathcal{E}_{k, m}^{GG}V^{\dual}(U) \bydef \{Q \in \mathcal{O}(p_{k, V}^{-1}(U)) \mid Q(\lambda \cdot j_kf) = \lambda^m Q(j_kf), \forall j_kf \in p_{k, V}^{-1}(U), \forall \lambda \in \mathbb{C}^{\mltp}\}
.\]
The corresponding vector bundle is denoted by $E_{k, m}^{GG}V^{\dual}$, and hereby called Green-Griffiths jet differentials bundle of order $k$ and weighted degree $m$.

For a holomorphic subbundle $V \subset T_X$, there is a natural inclusion $J_kV \hookrightarrow J_kX$. Hence, the restriction induces a surjective morphism $\mathcal{E}_{k, m}^{GG}T_X^{\dual} \twoheadrightarrow \mathcal{E}_{k, m}^{GG}V^{\dual}$.
This implies that $\mathcal{E}_{k, m}^{GG}V^{\dual}$ is a quotient of $\mathcal{E}_{k, m}^{GG}T_X^{\dual}$.

For any Green-Griffiths jet differentials $Q \in \mathcal{E}_{k, m}^{GG}V^{\dual}$, it can be decomposed into multihomogeneous components of multidegree $\bm{\ell} \bydef (\ell_1, \ell_2, \dots, \ell_k)  \in \mathbb{N}^k$ in $f', f'', \dots, \allowbreak f^{(k)}$ (the decomposition is certainly dependent on the choice of coordinate).
It is evident that the coordinate change must keep the weighted degree constant, that is, $|\bm{\ell}|_k \bydef \ell_1 + 2\ell_2 + \dots + k\ell_k = m$.
Moreover, investigate the partial weighted degree $|\bm{\ell}|_s \bydef \ell_1 + 2\ell_2 + \dots + s\ell_s$ of order $s$.
For $1 \leq s \leq k$, it follows from \eqref{eqn:Psi} that a coordinate change $f \mapsto \Psi \circ f$ transforms every monomial $(f^{(\bullet)})^{\bm{\ell}} \bydef (f')^{\ell_1} (f'')^{\ell_2} \cdots (f^{(k)})^{\ell_k}$ into a polynomial $((\Psi \circ f)^{(\bullet)})^{\bm{\ell}}$ whose non-zero monomials have a larger or equal partial weighted degree of order $s$.
In particular, if $\ell_{s+1} = \dots = \ell_k = 0$, it keeps the partial weighted degree of order $s$ the same.

Therefore, for each integer $1 \leq s \leq k-1$, one gets a well-defined decreasing filtration $F_s^\bullet$ on $\mathcal{E}_{k, m}^{GG}V^{\dual}$ as follows.
For every positive integer $p \leq m$, the sheaf $F_s^p(\mathcal{E}_{k,m}^{GG}V^{\dual})$ is the set of polynomials $Q(f', f'', \dots, f^{(k)}) \in \mathcal{E}_{k, m}^{GG}V^{\dual}$ involving only monomials $(f^{(\bullet)})^{\bm{\ell}}$ with $|\ell|_s \geq p$, regardless of the choice of coordinate.

One denotes the graded terms associated with the filtration $F_{k-1}^p(\mathcal{E}_{k, m}^{GG}V^{\dual})$ by
\[
\Gr_{k-1}^p(\mathcal{E}_{k, m}^{GG}V^{\dual}) = F_{k-1}^p(\mathcal{E}_{k, m}^{GG}V^{\dual}) / F_{k-1}^{p+1}(\mathcal{E}_{k, m}^{GG}V^{\dual})
.\]
Technically, $\Gr_{k-1}^p(\mathcal{E}_{k, m}^{GG}V^{\dual})$ are precisely the homogeneous polynomials $Q$ whose monomials $(f^{\bullet})^{\bm{\ell}}$ all have partial weighted degree $|\ell|_{k-1}=p$.
Obviously, the degree $\ell_k$ in $f^{(k)}$ of each monomial needs to satisfy $m-p=k\ell_k$, and $\Gr_{k-1}^p(\mathcal{E}_{k, m}^{GG}V^{\dual})=0$ unless $k|m-p$.
The transition automorphisms of the graded bundle are induced by coordinate changes $f \mapsto \Psi \circ f$, and they are described by substituting the arguments of $Q(f', f'', \dots, f^{(k)})$ according to formula \eqref{eqn:Psi}, namely,
$f^{(j)} \mapsto (\Psi \circ f)^{(j)}$ for $j<k$, and $f^{(k)} \mapsto \Psi'(f) \circ f^{(k)}$ for $j=k$ (when $j=k$, the other terms fall in the next stage $F^{p+1}_{k-1}$ of the filtration).
Hence $f^{(k)}$ behaves as an element of $V \subset T_X$ under coordinate changes. One infers
\[
\Gr_{k-1}^{m-k\ell_k}(\mathcal{E}_{k, m}^{GG}V^{\dual}) \cong \mathcal{E}_{k-1,m-k\ell_k}^{GG}V^{\dual} \otimes \Sym^{\ell_k}V^{\dual}
.\]
Combining all filtrations $F_s^{\bullet}$ together, one inductively finds a filtration $F^{\bullet}$ on $\mathcal{E}_{k, m}^{GG}V^{\dual}$ such that the graded terms are
\begin{equation}\label{eqn:Gr}
\Gr^{\bm{\ell}}(\mathcal{E}_{k, m}^{GG}V^{\dual}) \cong \Sym^{\ell_1}V^{\dual} \otimes \Sym^{\ell_2}V^{\dual} \otimes \dots \otimes \Sym^{\ell_k}V^{\dual}
,\end{equation}
for $\bm{\ell} \in \mathbb{N}^k$ with $|\bm{\ell}|_k = m$.

Associated with the graded algebra sheaf $\mathcal{E}_{k, \bullet}^{GG}V^{\dual} \bydef \bigoplus_{m \geq 0} \mathcal{E}_{k, m}^{GG}V^{\dual}$, one has an analytic fiber bundle
\begin{equation}
X_k^{GG} \bydef \Proj \mathcal{E}_{k, \bullet}^{GG}V^{\dual} = (J_kV \setminus (X \times \{0\}))/\mathbb{C}^{\mltp}
\end{equation}
over $X$, where $J_kV \setminus (X \times \{0\})$ is the set of non-constant $k$-jets. By construction, one easily sees that $X_k^{GG}$ has weighted projective spaces $\mathbb{P}(1^{[r]}, 2^{[r]}, \ldots, k^{[r]})$ as fibers. Moreover, under the natural projection $\pi_k^{GG} \colon X_k^{GG} \to X$, one obtains the direct image formula
\begin{equation}
(\pi_k^{GG})_{\ast} \mathcal{O}_{X_k^{GG}}(m) \cong \mathcal{E}_{k, m}^{GG}V^{\dual}
\end{equation}
for all positive integers $k$ and $m$.

\subsection{Invariant jet differentials}
In geometric context, $(J_kV \setminus (X \times \{0\}))/\mathbb{C}^{\mltp}$ is unwieldy, and we prefer $(J_kV \setminus \{0\})/\mathbb{G}_k$ if such quotients would exist.
We will see that $X_k$ plays the role of such a quotient.
First, we recall a canonical vector subbundle $E_{k, m}V^{\dual}$ of $E_{k, m}^{GG}V^{\dual}$ introduced by Demailly \cite{Demailly1997}.
The corresponding sheaf $\mathcal{E}_{k, m}V^{\dual}$ is locally defined by
\[
\mathcal{E}_{k, m}V^{\dual}(U) \bydef \{Q \in \mathcal{O}(p_{k, V}^{-1}(U)) \mid Q(\varphi \cdot j_kf) = \varphi'(0)^m Q(j_kf), \forall j_kf \in p_{k, V}^{-1}(U), \forall \varphi \in \mathbb{G}_k\}
,\]
for any open subset $U \subset X$. In other words, $E_{k, m}V^{\dual} = (E_{k, m}^{GG}V^{\dual})^{\mathbb{G}_k'}$ is the set of invariants of $E_{k, m}^{GG}V^{\dual}$ with respect to the reparametrization action of $\mathbb{G}_k'$. We call $\mathcal{E}_{k, m}V^{\dual}$ invariant jet differentials sheaf of order $k$ and weighted degree $m$, and $E_{k, m}V^{\dual}$ invariant jet differentials bundle of order $k$ and weighted degree $m$.

Seeing that $\mathcal{E}_{k, m}V^{\dual} \subset \mathcal{E}_{k, m}^{GG}V^{\dual}$, there are natural induced filtrations
\[
F_s^p(\mathcal{E}_{k, m}V^{\dual}) = \mathcal{E}_{k, m}V^{\dual} \bigcap F_s^p(\mathcal{E}_{k, m}^{GG}V^{\dual})
\]
on $\mathcal{E}_{k, m}V^{\dual}$, for $1 \leq s \leq k-1$, $0 \leq p \leq m$, and the graded terms are
\[
\Gr^{\bm{\ell}}(\mathcal{E}_{k, m}V^{\dual}) = \mathcal{E}_{k, m}V^{\dual} \bigcap \Gr^{\bm{\ell}}(\mathcal{E}_{k, m}^{GG}V^{\dual})
,\]
for $\bm{\ell} \in \mathbb{N}^k$ such that $|\bm{\ell}|_k = m$.

\begin{theorem}[Theorem 6.8, \cite{Demailly1997}]\label{thm:DS}
Suppose that $(X, V)$ is a directed manifold such that $V$ has rank $r \geq 2$. Let $\pi_{0, k} \colon X_k \to X$ be a natural projection defined by \eqref{eqn:pi}, and let $J_kV^{\reg}$ be the bundle of regular $k$-jets of maps $f \colon (\mathbb{C},0) \to X$, that is, $k$-jets $j_kf$ such that $f'(0)\neq 0$.
\begin{enumerate}[i)]
\item The quotient $J_kV^{\reg} / \mathbb{G}_k$ has the structure of a locally trivial bundle over $X$, and there is a holomorphic embedding $J_kV^{\reg} / \mathbb{G}_k \hookrightarrow X_k$ over $X$, which identifies $J_kV^{\reg} / \mathbb{G}_k$ with $X_k^{\reg}$. That is to say,
\[
J_kV^{\reg} / \mathbb{G}_k \cong X_k^{\reg}
.\]
\item The direct image sheaf $(\pi_{0,k})_{\ast}\mathcal{O}_{X_k}(m)$ is identified with the sheaf of invariant jet differentials $\mathcal{E}_{k, m}V^{\dual}$, namely,
\begin{equation}\label{eqn:EV}
(\pi_{0,k})_{\ast}\mathcal{O}_{X_k}(m) \cong \mathcal{E}_{k,m}V^{\dual}
.\end{equation}
\item\label{itm:relbs} For every $m>0$, the relative base locus of the linear system $|\mathcal{O}_{X_k}(m)|$ is equal to the set $X_k^{\sing}$. Moreover, $\mathcal{O}_{X_k}(1)$ is relatively big over $X$.
\end{enumerate}
\end{theorem}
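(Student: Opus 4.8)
\emph{Proof strategy.} The plan is to argue by induction on $k$, using the fact established above that $(X_k,V_k)$ is precisely the Demailly--Semple $1$-jet tower of $(X_{k-1},V_{k-1})$, together with the functoriality \eqref{eqn:functorial} and the canonical lifting $f\mapsto f_{[1]}$. For $k=1$ everything is immediate: $J_1V^{\reg}$ is the complement of the zero section in $V$, $\mathbb{G}_1=\mathbb{C}^{\mltp}$ acts by homotheties, so $J_1V^{\reg}/\mathbb{G}_1=\textnormal{P}(V)=X_1=X_1^{\reg}$ (there is no $\varGamma_j$ with $2\le j\le 1$); the tautological bundle gives $(\pi_1)_{\ast}\mathcal{O}_{X_1}(m)=\Sym^m V^{\dual}=\mathcal{E}_{1,m}V^{\dual}$; and $\mathcal{O}_{X_1}(1)$ is relatively very ample, so (iii) holds trivially with $X_1^{\sing}=\emptyset$. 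In the inductive step one reduces each claim to its ``one-step'' version for $\pi_k\colon (X_k,V_k)\to(X_{k-1},V_{k-1})$ and then feeds in the hypothesis at level $k-1$.

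For (i), I would first write down the classifying map $J_kV^{\reg}\to X_k$, $j_kf\mapsto f_{[k]}(0)$; since $f_{[k]}(0)$ depends only on $j_kf$ and $(f\circ\varphi)_{[k]}(0)=f_{[k]}(\varphi(0))=f_{[k]}(0)$ for $\varphi\in\mathbb{G}_k$, this descends to $J_kV^{\reg}/\mathbb{G}_k$. By the discussion following Corollary \ref{cor:regular}, a germ is regular precisely when $\pi_{j,k}(f_{[k]}(0))\notin\varGamma_j$ for all $2\le j\le k$, so the image lands in $X_k^{\reg}$, and conversely Corollary \ref{cor:regular} (with all multiplicities equal to $1$, forced by Proposition 5.11 of \cite{Demailly1997} over a point lying on no $\varGamma_j$) gives surjectivity onto $X_k^{\reg}$ together with a local holomorphic section. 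Injectivity I would get by induction: $f_{[k]}(0)=g_{[k]}(0)$ forces $f_{[k-1]}(0)=g_{[k-1]}(0)$, hence by the level-$(k-1)$ statement there is $\psi\in\mathbb{G}_{k-1}$ with $j_{k-1}(g\circ\psi)=j_{k-1}f$; replacing $g$ by $g\circ\widetilde\psi$ for a lift $\widetilde\psi\in\mathbb{G}_k$ reduces to $j_{k-1}f=j_{k-1}g$, and then $f_{[k]}(0)=g_{[k]}(0)$ pins down $f^{(k)}(0)$ modulo the residual one-parameter subgroup of $\mathbb{G}_k'$ that moves $f^{(k)}(0)$ by multiples of $f'(0)\neq0$ — exactly the freedom needed to realize $g\equiv f$ after a further reparametrization. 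The resulting holomorphic bijection between complex manifolds of equal dimension $n+k(r-1)$ is a biholomorphism, the local sections providing a holomorphic inverse and the local triviality of the quotient.

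For (ii), over $X_k^{\reg}$ the identification of (i) exhibits $\mathcal{O}_{X_k}(-1)$ as the line bundle with fibre $\mathbb{C}\,f_{[k-1]}'(0)\subset (V_{k-1})_{f_{[k-1]}(0)}$ over $f_{[k]}(0)$; since $f_{[k-1]}'(0)$ scales by $\varphi'(0)$ under $\mathbb{G}_k$, a section of $\mathcal{O}_{X_k}(m)$ over $\pi_{0,k}^{-1}(U)\cap X_k^{\reg}$ becomes a fibrewise-polynomial function $Q$ on $J_kV^{\reg}\!\restriction_U$ with $Q(j_k(f\circ\varphi))=\varphi'(0)^m Q(j_kf)$, i.e.\ an element of $\mathcal{E}_{k,m}V^{\dual}(U)$ read off on regular jets. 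As $\mathcal{E}_{k,m}V^{\dual}$ is locally free and $J_kV^{\reg}$ is Zariski dense in $J_kV$, this yields an injection $(\pi_{0,k})_{\ast}\mathcal{O}_{X_k}(m)\hookrightarrow\mathcal{E}_{k,m}V^{\dual}$. For surjectivity I would run the induction on the more flexible sheaves $\mathcal{O}_{X_k}(\bm{a})$: push down one level via $(\pi_k)_{\ast}\mathcal{O}_{X_k}(a_k)=\Sym^{a_k}V_{k-1}^{\dual}$ and the projection formula, decompose $\Sym^{\bullet}V_{k-1}^{\dual}$ using the exact sequences \eqref{eqn:defexact} and \eqref{eqn:Eulexact}, and match the graded pieces with \eqref{eqn:Gr} via the level-$(k-1)$ hypothesis; equivalently, a local computation with the transition formula \eqref{eqn:Psi} shows that any invariant jet differential, transported back through the isomorphism of (i), acquires no pole along the divisors $\varGamma_j$ because those transitions are polynomial rather than rational, hence extends across $X_k^{\sing}$ and defines an honest section of $\mathcal{O}_{X_k}(m)$.

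For (iii), relative bigness of $\mathcal{O}_{X_k}(1)$ follows from \eqref{eqn:Oab}: choosing $\bm{a}\in\mathbb{Z}_{+}^k$ with $\mathcal{O}_{X_k}(\bm{a})$ relatively ample over $X$ (a suitable positive combination of the relative hyperplane classes of the tower), all partial sums $b_j$ are positive, so $\mathcal{O}_{X_k}(b_k)=\mathcal{O}_{X_k}(\bm{a})\otimes\mathcal{O}_{X_k}(\bm{b}\cdot\varGamma)$ with $\bm{b}\cdot\varGamma$ an effective divisor supported on $X_k^{\sing}$; thus $\mathcal{O}_{X_k}(b_k)$, and hence $\mathcal{O}_{X_k}(1)$, is relatively big. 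For the base locus: the inclusion $X_k^{\sing}\subset\Bs$ comes from (i)--(ii), since points of $X_k^{\sing}$ correspond to non-regular jets ($f'(0)=0$) and every invariant jet differential of weight $m>0$ vanishes on non-regular jets — using the case $m=1$, where $(\pi_{0,k})_{\ast}\mathcal{O}_{X_k}(1)\cong V^{\dual}$ and the section attached to $\omega\in V^{\dual}(U)$ is, up to the canonical section of the effective divisor $\bm{b}\cdot\varGamma$, the pairing $\langle\omega(f(0)),f'(0)\rangle$, which vanishes identically on $X_k^{\sing}$; the converse inclusion follows because for $x_k\in X_k^{\reg}$ one can choose $\omega$ with $\langle\omega(x),f'(0)\rangle\neq0$, and for general $m$ one multiplies such sections and again multiplies by the equation of $\bm{b}\cdot\varGamma$ to land in $|\mathcal{O}_{X_k}(m)|$ with a section nonvanishing at $x_k$. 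The step I expect to be the main obstacle is exactly this passage from the birational statement (i) to the sheaf identity (ii) and to the base-locus description (iii) along $X_k^{\sing}$: since $X_k^{\sing}$ is a union of divisors rather than a set of codimension $\geq2$, Hartogs extension is unavailable, and the argument must exploit the polynomial (pole-free) nature of the jet transitions \eqref{eqn:Psi} and the precise structure of the $\varGamma_j$.
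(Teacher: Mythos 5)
The paper does not prove this statement at all --- it is quoted verbatim from \cite{Demailly1997} --- so the comparison is with Demailly's original argument. Your part (i) (classifying map $j_kf\mapsto f_{[k]}(0)$, surjectivity and local sections from Corollary \ref{cor:regular}, injectivity by induction up the tower) and the relative-bigness half of (iii) (write $\mathcal{O}_{X_k}(b_k)=\mathcal{O}_{X_k}(\bm a)\otimes\mathcal{O}_{X_k}(\bm b\cdot\varGamma)$ with $\mathcal{O}_{X_k}(\bm a)$ relatively ample and $\bm b\cdot\varGamma$ effective, via \eqref{eqn:Oab}) are essentially Demailly's route and are fine as sketches. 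The problems are in (ii) and in the inclusion $X_k^{\sing}\subset\Bs$ of (iii), and it is telling that your argument never uses the hypothesis $\rank V=r\geq 2$, which is exactly where Demailly's proof of (ii) bites. For the direction ``section $\Rightarrow$ invariant jet differential'', the function $Q(j_kf)=\sigma(f_{[k]}(0))\cdot(f'_{[k-1]}(0))^m$ is a priori only defined and holomorphic on $J_kV^{\reg}$; saying ``$\mathcal{E}_{k,m}V^{\dual}$ is locally free and $J_kV^{\reg}$ is Zariski dense'' does not make it an element of $\mathcal{E}_{k,m}V^{\dual}(U)$ --- one must extend it across the set of singular jets, which has codimension $r$ in the fibres of $J_kV$, and this is precisely where $r\geq 2$ (second Riemann extension theorem) is needed; for $r=1$ a weighted-homogeneous function on $\{f'(0)\neq0\}$ can perfectly well have a pole along $\{f'(0)=0\}$. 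For the converse direction your two suggestions do not close the step you yourself flag as ``the main obstacle'': matching graded pieces of filtrations cannot yield an isomorphism of sheaves (and the inductive hypothesis does not apply to $\Sym^{a_k}V_{k-1}^{\dual}$, which is not of the form $\mathcal{O}_{X_{k-1}}(m')$), while ``the transitions \eqref{eqn:Psi} are polynomial'' concerns coordinate changes on $X$ and says nothing about the degeneration of the quotient map along $X_k^{\sing}$. The correct mechanism, and Demailly's, is that no extension across the divisor $X_k^{\sing}$ is ever needed: Corollary \ref{cor:regular} provides, near \emph{every} point $x_k\in X_k$ (singular ones included), a holomorphic family of germs $f_{x_k}$ with $f'_{[k-1]}(0)\neq 0$, so one defines $\sigma(x_k)\bydef Q(j_kf_{x_k})\cdot\bigl(f'_{[k-1]}(0)\bigr)^{-m}$ directly in a neighbourhood of any point, and well-definedness follows from $\mathbb{G}_k$-invariance on the dense regular locus plus continuity.

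The second genuine gap is the inclusion $X_k^{\sing}\subset\Bs(|\mathcal{O}_{X_k}(m)|_{\mathrm{rel}})$. You reduce it to the claim that every invariant jet differential of weighted degree $m>0$ vanishes on non-regular jets, ``using the case $m=1$''. The case $m=1$ (where the differentials are the linear forms $\omega(f')$) does not imply the general case, because $\mathcal{E}_{k,m}V^{\dual}$ is not generated by weight-one differentials: already for $k=2$ the Wronskians $W_U(\sigma_0,\sigma_1,\sigma_2)\in\mathcal{E}_{2,3}V^{\dual}(U)$ of Section \ref{sec:Wronskians} have weight $3$ and are not products of weight-one invariants (they do vanish on $\{f'(0)=0\}$, but that is the point to be proved, not an instance of $m=1$). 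The vanishing claim is true, but it requires a genuine use of the $\mathbb{G}_k'$-invariance on the whole jet space --- e.g.\ for $k=2$, applying $Q(f',f''+2cf')=Q(f',f'')$ with $c$ of size $1/\varepsilon$ to the perturbed jets $(\varepsilon u,\xi_2)$ and letting $\varepsilon\to0$ shows $Q(0,\cdot)$ is constant, hence $0$ by homogeneity --- and an analogous (or Demailly's own) argument must be supplied for general $k$; it is essentially the whole content of this part of (iii). Your converse inclusion is fine once cleaned up: for $x_k\in X_k^{\reg}$ one takes $Q=\omega(f')^m$ with $\omega(f'_{x_k}(0))\neq0$, with no need to multiply by the canonical section of $\bm b\cdot\varGamma$ (doing so would force vanishing along $X_k^{\sing}$, harmlessly, but the mention of it suggests a confusion between $\mathcal{O}_{X_k}(m)$ and the bundles $\mathcal{O}_{X_k}(\bm a)$).
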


Let $\bm{a} \in \mathbb{Z}^k$ such that $\|\bm{a}\| = m$.
We then have a subsheaf $\widebar{F}^{\bm{a}}\mathcal{E}_{k, m}V^{\dual} \subset \mathcal{E}_{k, m}V^{\dual}$, which consists of polynomials $Q(f', \dots, f^{(k)}) \in \mathcal{E}_{k, m}V^{\dual}$ involving only monomials $(f^{(\bullet)})^{\bm{\ell}}$ such that
\[
|\bm{\ell}|_{>s} \bydef \ell_{s+1} + 2\ell_{s+2} + \dots + (k-s)\ell_k \leq a_{s+1} + \dots + a_k
,\]
regardless of the choice of coordinate, for each integer $0 \leq s \leq k-1$.
Apparently, $\widebar{F}^{\bm{a}}\mathcal{E}_{k, m}V^{\dual}$ is a locally free subsheaf of $\mathcal{E}_{k, m}V^{\dual}$ over $X$, and the corresponding vector bundle $\widebar{F}^{\bm{a}}E_{k, m}V^{\dual}$ is also a subbundle of $E_{k, m}V^{\dual}$.
Likewise, there are natural induced filtrations
\[
F_s^p(\widebar{F}^{\bm{a}}\mathcal{E}_{k, m}V^{\dual}) = \widebar{F}^{\bm{a}}\mathcal{E}_{k, m}V^{\dual} \bigcap F_s^p(\mathcal{E}_{k, m}^{GG}V^{\dual})
\]
on $\widebar{F}^{\bm{a}}\mathcal{E}_{k, m}V^{\dual}$, for $1 \leq s \leq k-1$, $0 \leq p \leq m$, and the graded terms are
\[
\Gr^{\bm{\ell}}(\widebar{F}^{\bm{a}}\mathcal{E}_{k, m}V^{\dual}) = \widebar{F}^{\bm{a}}\mathcal{E}_{k, m}V^{\dual} \bigcap \Gr^{\bm{\ell}}(\mathcal{E}_{k, m}^{GG}V^{\dual})
,\]
for $\bm{\ell} \in \mathbb{N}^k$ such that $|\bm{\ell}|_k = m$.

According to Proposition 6.16 in \cite{Demailly1997}, for any $k, m \geq 0$ and any $\bm{a}$ as above, one also has
\begin{equation}\label{eqn:FEV}
(\pi_{0,k})_{\ast}\mathcal{O}_{X_k}(\bm{a}) \cong \widebar{F}^{\bm{a}}\mathcal{E}_{k,m}V^{\dual}
.\end{equation}

\section{Wronskians on Demailly-Semple jet towers}\label{sec:Wronskians}

\subsection{Wronskian sections}
In this subsection, we are going to recall the Wronskian sections on the Demailly-Semple jet tower (see \cite{Brotbek2017}).
Let us start with a local construction of Wronskians.

Let $(X, V)$ be a complex directed manifold, and let $U$ be an open subset of $X$. Brotbek \cite{Brotbek2017} constructed the Wronskian of $k+1$ local holomorphic functions $\sigma_0, \sigma_1, \dots, \sigma_k \in \mathcal{O}(U)$ as follows:
\[
\renewcommand{\arraystretch}{1.5}
W_U^V(\sigma_0, \dots, \sigma_k) \bydef
\begin{vmatrix}
\dif^{[0]}\sigma_0 & \cdots & \dif^{[0]}\sigma_k \\
\vdots & \ddots & \vdots \\
\dif^{[k]}\sigma_0 & \cdots & \dif^{[k]}\sigma_k
\end{vmatrix}
\in \mathcal{O}(p_{k, V}^{-1}(U))
,\]
where $\dif^{[\ell]} = \dif_{U, V}^{[\ell]}$ defined by \eqref{eqn:diffun}.

For any positive integer $k$, set
\begin{align*}
k' &\bydef 1+2+\dots+k = \frac{k(k+1)}{2}, \\
\bm{a}^k &\bydef (k, k-1, \dots, 1) \in \mathbb{Z}^k
.\end{align*}
Applying Faà Di Bruno's formula, Brotbek \cite{Brotbek2017} verified that $W_U^V(\sigma_0, \dots, \sigma_k) \in \mathcal{E}_{k, k'}V^{\dual}(U)$.
More concretely, one easily checks that $W_U^V(\sigma_0, \dots, \sigma_k)(j_kf)$ involves only monomials $(f^{(\bullet)})^{\bm{\ell}}$ such that $|\bm{\ell}|_{>s} \leq a_{s+1}^k + \dots + a_k^k = \displaystyle\frac{(k-s)(k-s+1)}{2}$, for each integer $0 \leq s \leq k-1$.
In other words,
\[
W_U^V(\sigma_0, \dots, \sigma_k) \in \widebar{F}^{\bm{a}^k}\!\mathcal{E}_{k, k'}V^{\dual}(U) \subset \mathcal{E}_{k, k'}V^{\dual}(U)
.\]
In addition, thinking of the variant of $W_U^V$ as a vector $\begin{pmatrix} \sigma_0 & \sigma_1 & \dots & \sigma_k \end{pmatrix}$ in $\mathcal{O}(U)^{k+1}$, we easily see that $W_U^V$ is invariant under the action of $\mathrm{SL}(k+1, \mathbb{C})$ on $\mathcal{O}(U)^{k+1}$.

Observe that the restriction morphism $\mathcal{O}(J_kX) \to \mathcal{O}(J_kV)$ induces a quotient morphism $\res_{k, V} \colon \mathcal{E}_{k, \bullet}^{GG}T_X^{\dual} \to \mathcal{E}_{k, \bullet}^{GG}V^{\dual}$.
Obviously, $\res_{k, V}$ also maps $\overline{F}^{\bm{a}}\mathcal{E}_{k, m}T_X^{\dual}$ into $\overline{F}^{\bm{a}}\mathcal{E}_{k, m}V^{\dual}$, and we have
\[
W_U^V(\sigma_0, \dots, \sigma_k) = \res_{k, V} \left(W_U^{T_X}(\sigma_0, \dots, \sigma_k)\right)
.\]
When no confusion can arise, we hereinafter prefer to write $W_U(\sigma_0, \dots, \sigma_k)$ rather than $W_U^V(\sigma_0, \dots, \sigma_k)$.

Now, we will globalize this construction.
For a holomorphic line bundle $L$ on $X$, take an open subset $U$, on which $L$ can be trivialized. 
Take a trivialization of $L_{\restriction U}$, which associates $\sigma_U \in \mathcal{O}(U)$ to a global section $\sigma \in H^0(X, L)$.
Given global sections $\sigma_0, \dots, \sigma_k \in H^0(X, L)$, we immediately get a locally defined invariant jet differential
\[
W_U(\sigma_0, \dots, \sigma_k) \bydef W_U(\sigma_{0, U}, \dots, \sigma_{k, U}) \in \widebar{F}^{\bm{a}^k}\!\mathcal{E}_{k, k'}V^{\dual}(U) \subset \mathcal{E}_{k, k'}V^{\dual}(U)
.\]
Moreover, they glue together into a global section
\[
W(\sigma_0, \dots, \sigma_k) \in H^0(X, \widebar{F}^{\bm{a}^k}\!E_{k, k'}V^{\dual} \!\otimes L^{k+1}) \subset H^0(X, E_{k, k'}V^{\dual} \!\otimes L^{k+1})
.\]

Thanks to \eqref{eqn:EV}, there is a global section
\[
\omega(\sigma_0, \dots, \sigma_k) \in H^0(X_k, \mathcal{O}_{X_k}(k') \otimes \pi_{0, k}^{\ast} L^{k+1})
\]
corresponding to $W(\sigma_0, \dots, \sigma_k)$, that is, they satisfy the relation
\[
(\pi_{0, k})_{\ast}\omega(\sigma_0, \dots, \sigma_k) = W(\sigma_0, \dots, \sigma_k)
.\]
More precisely, following the proof of Theorem 6.8 in \cite{Demailly1997},

\begin{equation}\label{eqn:wf'}
W(\sigma_0, \dots, \sigma_k)(j_kf_{\!+z_0}) = \omega(\sigma_0, \dots, \sigma_k)(f_{[k]}(z_0)) \cdot (f'_{[k-1]}(z_0))^{k'}
\end{equation}
holds for any tangent curve $f$ of $(X, V)$ and every regular point $z_0$ of $f$, where $f_{\!+z_0}(z) \bydef f(z + z_0)$ and $f_{[k-1]}'$ is defined by \eqref{eqn:fk-1'}.
Likewise, there is also a global section
\[
\widebar{\omega}(\sigma_0, \dots, \sigma_k) \in H^0(X_k, \mathcal{O}_{X_k}(\bm{a}^k) \otimes \pi_{0, k}^{\ast} L^{k+1})
\]
corresponding to $W(\sigma_0, \dots, \sigma_k)$ under the isomorphism \eqref{eqn:FEV}.

According to \eqref{eqn:Oab}, one has
\[
\mathcal{O}_{X_k}(k') = \mathcal{O}_{X_k}(\bm{a}^k) \otimes \mathcal{O}_{X_k}(\bm{b}^k \cdot \varGamma)
,\]
where $\bm{b}^k = (b_1, \dots, b_k) \in \mathbb{Z}^k$ such that $b_j = k+(k-1)+\dots+(k-j+1) =\dfrac{j(2k-j+1)}{2}$ for each $1 \leq j \leq k$,
and $\bm{b}^k \cdot \varGamma = \sum_{1\leq j \leq k-1} b_j \pi_{j+1, k}^{\ast} \varGamma_{j+1}$.
We conclude from \eqref{eqn:EV} and \eqref{eqn:FEV} that the inclusion
\[
\widebar{F}^{\bm{a}^k}\!\mathcal{E}_{k, k'}V^{\dual} \lhook\joinrel\longrightarrow \mathcal{E}_{k, k'}V^{\dual}
\]
induces the inclusion
\[
\mathcal{O}_{X_k}(\bm{a}^k) \lhook\joinrel\longrightarrow \mathcal{O}_{X_k}(k') 
.\]
More concretely, the latter inclusion is written as
\begin{align*}
\sigma_{\bm{b}^k \cdot \varGamma} \otimes \colon \mathcal{O}_{X_k}(\bm{a}^k) &\lhook\joinrel\longrightarrow \mathcal{O}_{X_k}(k') \\
\xi &\longmapsto \sigma_{\bm{b}^k \cdot \varGamma}(\pi(\xi)) \otimes \xi
,\end{align*}
for a canonical section $\sigma_{\bm{b}^k \cdot \varGamma}$ of $\mathcal{O}_{X_k}(\bm{b}^k \cdot \varGamma)$, i.e., $(\sigma_{\bm{b}^k \cdot \varGamma}) = \bm{b}^k \cdot \varGamma$,
where $\pi \colon \mathcal{O}_{X_k}(\bm{a}^k) \to X_k$ is the bundle projection.
It amounts to saying that
\begin{equation}\label{eqn:bkGamma}
\omega(\sigma_0, \dots, \sigma_k) = \sigma_{\bm{b}^k \cdot \varGamma} \cdot \widebar{\omega}(\sigma_0, \dots, \sigma_k)
,\end{equation}
for any global sections $\sigma_0, \dots, \sigma_k \in H^0(X, L)$.

\subsection{Wronskian ideal sheaf}
In this subsection, we are going to generalize the Wronskian ideal sheaf in \cite{Brotbek2017} to linear systems.
Let us recall the linear system and the related terminology.

Let $X$ be a projective manifold, and choose a divisor $D_0$ on $X$.
A complete linear system $|D_0|$ on $X$ is defined as the set (maybe empty) of all effective divisors linearly equivalent to $D_0$, namely,
\[
|D_0| \bydef \{D \geq 0 \mid D \sim D_0\}
.\]
Let $L = \mathcal{O}_X(D_0)$ be a holomorphic line bundle on $X$.
It is evident that $|D_0|$ is in one-to-one corresponding with $(H^0(X, L) \setminus \{0\}) / \mathbb{C}^{\mltp}$, that is, $|D_0| \cong \textnormal{P}(H^0(X, L))$.
This implies that $|D_0|$ merely depends on the line bundle $L$ associated to $D$.
Thus $|L| \bydef |D_0|$ is well-defined.

Furthermore, a subset $\mathfrak{S} \subset |D_0|$ is called a linear system if it is a linear subspace for the projective space structure of $|D_0|$,
which amounts to saying that $\mathfrak{S}$ corresponds to a linear subspace $\mathbb{S} \subset H^0(X, L)$, where $\mathbb{S} = \{\sigma \in H^0(X, L) \mid (\sigma) \in \mathfrak{S}\} \cup \{0\}$.
Obviously, $\mathfrak{S} \cong \textnormal{P}(\mathbb{S})$.

A point $x \in X$ is said to be a base point of a linear system $\mathfrak{S}$ if $x \in \Supp D$ for all $D \in \mathfrak{S}$.
The set of all the base points of $\mathfrak{S}$ 
\[
\Bs(\mathfrak{S}) \bydef \{x \in X \mid x \in \Supp D, \, \forall \, D \in \mathfrak{S}\}
\]
is referred to as the base locus of $\mathfrak{S}$.
What's more, a linear system $\mathfrak{S}$ is said to be base point free, if $\Bs(\mathfrak{S}) = \emptyset$, i.e., for every $x \in X$, there exists $D \in \mathfrak{S}$ such that $x \notin \Supp D$.

Let us recall the base ideal of a linear system $\mathfrak{S} \subset |L|$ on $X$.
Consider the evaluation map
\[
\mathrm{ev} \colon \mathbb{S} \longrightarrow L
,\]
where $\mathbb{S} \subset H^0(X, L)$ is the linear subspace corresponding to $\mathfrak{S}$.
Then
\[
\mathfrak{s} \bydef \mathrm{im}(\mathrm{ev}) \otimes L^{-1} \subset \mathcal{O}_X
\]
is called the base ideal of $\mathfrak{S}$.
It is evident that $\mathfrak{s} = \mathcal{O}_X$ if and only if $\mathfrak{S}$ is base point free.

Take a directed projective manifold $(X, V)$, and choose a non-empty linear system $\mathfrak{S} \subset |L|$ on $X$ with the corresponding subspace $\mathbb{S} \subset H^0(X, L)$.
Set
\[
\mathbb{W}(X_k, \mathfrak{S}) \bydef \Span \{\omega(\sigma_0, \dots, \sigma_k) \mid \sigma_0, \dots, \sigma_k \in \mathbb{S}\}
,\]
which is a linear subspace of $H^0(X_k, \mathcal{O}_{X_k}(k') \otimes \pi_{0, k}^{\ast} L^{k+1})$.
We denote by $\mathfrak{W}(X_k, \mathfrak{S})$ the corresponding linear system, i.e.,
\[
\mathfrak{W}(X_k, \mathfrak{S}) \bydef \{(\omega) \mid \omega \neq 0, \, \omega \in \mathbb{W}(X_k, \mathfrak{S})\}
.\]
Therefore, we define the $k$-th Wronskian ideal sheaf of $\mathfrak{S}$ to be the base ideal of $\mathfrak{W}(X_k, \mathfrak{S})$, and denote it by $\mathfrak{w}(X_k, \mathfrak{S})$.

By convention, if $\mathfrak{S} = |L|$ is a complete linear system, we simply write $\mathbb{W}(X_k, L)$, $\mathfrak{W}(X_k, L)$ and $\mathfrak{w}(X_k, L)$ rather than $\mathbb{W}(X_k, \mathfrak{S})$, $\mathfrak{W}(X_k, \mathfrak{S})$ and $\mathfrak{w}(X_k, \mathfrak{S})$, respectively.
Obviously, our definition of $\mathfrak{w}(X_k, \mathfrak{S})$ generalizes the $k$-th Wronskian ideal sheaf of $L$, which is introduced by Brotbek \cite{Brotbek2017}.

In addition, set
\[
\widebar{\mathbb W}(X_k, \mathfrak{S}) \bydef \Span \{\widebar{\omega}(\sigma_0, \dots, \sigma_k) \mid \sigma_0, \dots, \sigma_k \in \mathbb{S}\}
\]
as a linear subspace of $H^0(X_k, \mathcal{O}_{X_k}(\bm{a}^k) \otimes \pi_{0, k}^{\ast} L^{k+1})$.
Likewise, we denote the corresponding linear system by $\widebar{\mathfrak W}(X_k, \mathfrak{S})$, and denote the base ideal of $\widebar{\mathfrak W}(X_k, \mathfrak{S})$ by $\widebar{\mathfrak w}(X_k, \mathfrak{S})$.
It follows from \eqref{eqn:bkGamma} that
\begin{align*}
\widebar{\mathfrak W}(X_k, \mathfrak{S}) &= \{D \mid D + \bm{b}^k \cdot \varGamma \in \mathfrak{W}(X_k, \mathfrak{S})\}, \\
\widebar{\mathfrak w}(X_k, \mathfrak{S}) &= \mathfrak{w}(X_k, \mathfrak{S}) \cdot \mathcal{O}_{X_k}(\bm{b}^k \cdot \varGamma)
.\end{align*}
In particular, $\widebar{\mathfrak W}(X_1, \mathfrak{S}) = \mathfrak{W}(X_1, \mathfrak{S})$ and $\widebar{\mathfrak w}(X_1, \mathfrak{S}) = \mathfrak{w}(X_1, \mathfrak{S})$.

We say that $\mathfrak{S}$ separates $k$-jets at a point $x \in X$ if the evaluation map
\[
\mathbb{S} \longrightarrow L \otimes \mathcal{O}_{X, x}/\mathfrak{m}_{X, x}^{k+1}
\]
is surjective.
Analogue to Lemma 2.4 in \cite{Brotbek2017}, we have the following lemma.
\begin{lemma}\label{lem:wseparate}
If $\mathfrak{S}$ separates $k$-jets at each point of $X$, then
\begin{equation*}
\Supp (\mathcal{O}_{X_k}/\mathfrak{w}(X_k, \mathfrak{S})) = X_k^{\sing}
.\end{equation*}
In other words, the base locus $\Bs(\mathfrak{W}(X_k, \mathfrak{S})) = X_k^{\sing}$.
\end{lemma}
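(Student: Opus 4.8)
The plan is to prove the equality $\Bs(\mathfrak{W}(X_k,\mathfrak{S})) = X_k^{\sing}$, from which the statement about $\Supp(\mathcal{O}_{X_k}/\mathfrak{w}(X_k,\mathfrak{S}))$ is immediate since $\mathfrak{w}(X_k,\mathfrak{S})$ is by definition the base ideal of the linear system $\mathfrak{W}(X_k,\mathfrak{S})$. The argument is the direct adaptation, to an arbitrary linear system $\mathfrak{S}\subset|L|$ rather than a complete one, of the proof of Lemma 2.4 in \cite{Brotbek2017}; I would establish the two inclusions separately.

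For $X_k^{\sing}\subset\Bs(\mathfrak{W}(X_k,\mathfrak{S}))$ no hypothesis on $\mathfrak{S}$ is needed. By \eqref{eqn:bkGamma} every spanning section factors as $\omega(\sigma_0,\dots,\sigma_k) = \sigma_{\bm{b}^k\cdot\varGamma}\cdot\widebar{\omega}(\sigma_0,\dots,\sigma_k)$, so its zero divisor dominates $\bm{b}^k\cdot\varGamma = \sum_{1\leq j\leq k-1}b_j\,\pi_{j+1,k}^{\ast}\varGamma_{j+1}$; since all the coefficients $b_j = j(2k-j+1)/2$ are strictly positive, the support of this divisor is exactly $\bigcup_{2\leq j\leq k}\pi_{j,k}^{-1}(\varGamma_j) = X_k^{\sing}$. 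Hence every element of $\mathbb{W}(X_k,\mathfrak{S})$ vanishes along $X_k^{\sing}$, and therefore so does the whole linear system $\mathfrak{W}(X_k,\mathfrak{S})$.

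For the reverse inclusion, fix $x_k\in X_k^{\reg}$ and put $x = \pi_{0,k}(x_k)$; I must exhibit $\sigma_0,\dots,\sigma_k\in\mathbb{S}$ with $\omega(\sigma_0,\dots,\sigma_k)(x_k)\neq 0$. By Corollary \ref{cor:regular} together with the characterization of $X_k^{\reg}$ recalled just after it, there is a germ $f\colon(\mathbb{C},0)\to(X,x)$ tangent to $V$ with $f_{[k]}(0) = x_k$ that is regular at the origin; in particular $f'(0)\neq 0$ and hence $f'_{[k-1]}(0)\neq 0$. Being an immersion germ, $f$ induces a surjection $\mathcal{O}_{X,x}/\mathfrak{m}_{X,x}^{k+1}\twoheadrightarrow\mathcal{O}_{\mathbb{C},0}/\mathfrak{m}_{\mathbb{C},0}^{k+1}$ (one component of $f$ is already a local coordinate on $\mathbb{C}$ at $0$, so $f^{\ast}$ is a surjective ring homomorphism on $k$-jets). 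Trivializing $L$ by a frame near $x$ and composing the hypothesis that $\mathbb{S}\to L\otimes\mathcal{O}_{X,x}/\mathfrak{m}_{X,x}^{k+1}$ is surjective with the previous surjection, I obtain a surjection $\mathbb{S}\twoheadrightarrow\mathcal{O}_{\mathbb{C},0}/\mathfrak{m}_{\mathbb{C},0}^{k+1}$ sending $\sigma$ to the $k$-jet at $0$ of $\sigma_U\circ f$, where $\sigma_U$ is the local representative of $\sigma$. Choose $\sigma_0,\dots,\sigma_k\in\mathbb{S}$ so that $\sigma_{i,U}\circ f$ has $k$-jet $z^i$ at $0$. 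Since $\dif_{U,V}^{[\ell]}\sigma(j_kf) = (\sigma\circ f)^{(\ell)}(0)$, the local Wronskian becomes $W_U^V(\sigma_0,\dots,\sigma_k)(j_kf) = \det\bigl((\sigma_{i,U}\circ f)^{(\ell)}(0)\bigr)_{0\leq\ell,i\leq k} = \prod_{\ell=0}^{k}\ell!\neq 0$, the matrix being diagonal. Finally \eqref{eqn:wf'} reads $W(\sigma_0,\dots,\sigma_k)(j_kf) = \omega(\sigma_0,\dots,\sigma_k)(x_k)\cdot(f'_{[k-1]}(0))^{k'}$, so $f'_{[k-1]}(0)\neq 0$ forces $\omega(\sigma_0,\dots,\sigma_k)(x_k)\neq 0$, whence $x_k\notin\Bs(\mathfrak{W}(X_k,\mathfrak{S}))$, as desired.

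The main obstacle lies in this second inclusion: one must be sure that the curve germ furnished at $x_k$ can be taken \emph{regular} at the origin, which is precisely why $X_k^{\reg}$, and not all of $X_k$, appears; and one must check that the two separation surjectivities (of $\mathfrak{S}$ at $x$ and of $f^{\ast}$ on $k$-jets) compose so as to prescribe the $k$-jet of $\sigma_U\circ f$ freely. A secondary point, routine but worth spelling out, is the bookkeeping of local trivializations of $L$ near $x$ and of $\mathcal{O}_{X_k}(k')\otimes\pi_{0,k}^{\ast}L^{k+1}$ near $x_k$ needed to read \eqref{eqn:wf'} as the scalar identity used above, together with the observation that $W_U^V$ evaluated on a $V$-tangent jet is just the classical Wronskian of the composites $\sigma_{i,U}\circ f$, so the subbundle $V$ does not intervene in this computation.
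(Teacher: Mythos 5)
Your argument is correct and is precisely the adaptation of Brotbek's proof of Lemma 2.4 in \cite{Brotbek2017} that the paper itself invokes (the lemma is stated there ``analogue to Lemma 2.4'' with the proof left to that reference). Both inclusions run as intended: the factorization \eqref{eqn:bkGamma} forces every $\omega(\sigma_0,\dots,\sigma_k)$ to vanish along $X_k^{\sing}$, and at a point of $X_k^{\reg}$ the regular germ supplied by Corollary \ref{cor:regular}, combined with the $k$-jet separation hypothesis on $\mathfrak{S}$, yields sections whose Wronskian is nonzero, so \eqref{eqn:wf'} with $f'_{[k-1]}(0)\neq 0$ gives $\omega(\sigma_0,\dots,\sigma_k)(x_k)\neq 0$.
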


What's more, analogue to Lemma 2.8 in \cite{Brotbek2017}, we see that $\mathfrak{w}(X_k, \mathfrak{S})$ is independent of the choice of the linear system $\mathfrak{S}$, provided that $\mathfrak{S}$ separates $k$-jets at each point of $X$.
\begin{lemma}
Let $\mathfrak{S}$ be a linear system that separates $k$-jets at each point of $X$.
For any $x \in X$ and any $x_k \in X_k$ such that $\pi_{0, k}(x_k)=x$, one has
\[
\mathfrak{w}(X_k, \mathfrak{S})_{x_k} = \big(\omega_{x_k}(f_0, \dots, f_k)\big)_{f_0, \dots, f_k \in \mathcal{O}_{X,x}} \subset \mathcal{O}_{X_k, x_k}
,\]
where the right hand side denotes the ideal whose generators are $\omega_{x_k}(f_0, \dots, f_k)$ for all $f_0, \dots, f_k \in \mathcal{O}_{X,x}$.
\end{lemma}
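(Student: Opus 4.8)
The plan is to read the statement as an equality of two ideals of the (Noetherian) local ring $\mathcal{O}_{X_k,x_k}$, to note that one inclusion is essentially a tautology, and to obtain the other by combining the hypothesis that $\mathfrak{S}$ separates $k$-jets with the Leibniz rule for the operators $\dif^{[\bullet]}$ and Nakayama's lemma. First I would trivialize: since $\mathfrak{S}$ separates $k$-jets it separates $0$-jets, so $\Bs(\mathfrak{S})=\emptyset$ and there is $\tau\in\mathbb{S}$ with $\tau(x)\ne0$; use $\tau$ to trivialize $L$ on a neighbourhood $U$ of $x$, so that the local representative of $\sigma\in H^0(X,L)$ is $\sigma_U=\sigma/\tau$, and fix a local frame of $\mathcal{O}_{X_k}(k')$ near $x_k$. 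By definition $\mathfrak{w}(X_k,\mathfrak{S})$ is the base ideal of the linear system spanned by the sections $\omega(\sigma_0,\dots,\sigma_k)$, $\sigma_i\in\mathbb{S}$; under the chosen trivializations (and using $\omega(\sigma_0,\dots,\sigma_k)\leftrightarrow W(\sigma_0,\dots,\sigma_k)$ through \eqref{eqn:EV}) this section is carried to the germ $\omega_{x_k}(\sigma_{0,U},\dots,\sigma_{k,U})$, so the left-hand side becomes the ideal of $\mathcal{O}_{X_k,x_k}$ generated by the germs $\omega_{x_k}(\sigma_{0,U},\dots,\sigma_{k,U})$ with $\sigma_i\in\mathbb{S}$, and the right-hand side the ideal generated by all germs $\omega_{x_k}(f_0,\dots,f_k)$ with $f_i\in\mathcal{O}_{X,x}$. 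Since every $\sigma_{i,U}$ lies in $\mathcal{O}_{X,x}$, the inclusion ``$\subseteq$'' is immediate.

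For ``$\supseteq$'', I would first observe that if $x_k\notin X_k^{\sing}$ then, by Lemma \ref{lem:wseparate}, $x_k\notin\Bs(\mathfrak{W}(X_k,\mathfrak{S}))$, so $\omega_{x_k}(\sigma_{0,U},\dots,\sigma_{k,U})$ is a unit germ for suitable $\sigma_i\in\mathbb{S}$; then the right-hand ideal is all of $\mathcal{O}_{X_k,x_k}$ and there is nothing more to prove. So one may assume $x_k\in X_k^{\sing}$. Given $f_0,\dots,f_k\in\mathcal{O}_{X,x}$, use that $\mathfrak{S}$ separates $k$-jets at $x$ to choose $\sigma_i\in\mathbb{S}$ with $\sigma_{i,U}\equiv f_i\pmod{\mathfrak{m}_{X,x}^{k+1}}$. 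Because the Wronskian is multilinear and alternating in its $k+1$ arguments, $\omega_{x_k}(f_0,\dots,f_k)-\omega_{x_k}(\sigma_{0,U},\dots,\sigma_{k,U})$ is a finite sum of terms $\pm\,\omega_{x_k}(r,g_1,\dots,g_k)$ with $r\in\mathfrak{m}_{X,x}^{k+1}$ and $g_1,\dots,g_k\in\mathcal{O}_{X,x}$, while the subtracted term lies in the left-hand ideal. Hence it suffices to prove: for $r\in\mathfrak{m}_{X,x}^{k+1}$ and arbitrary $g_1,\dots,g_k\in\mathcal{O}_{X,x}$, the germ $\omega_{x_k}(r,g_1,\dots,g_k)$ lies in $\mathfrak{m}_{X_k,x_k}$ times the right-hand ideal. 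Granting this, the right-hand ideal is contained in the left-hand ideal plus $\mathfrak{m}_{X_k,x_k}$ times itself, and — both ideals being finitely generated over the Noetherian local ring $\mathcal{O}_{X_k,x_k}$ — Nakayama's lemma forces them to coincide.

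The crux, and the step I expect to be the main obstacle, is therefore this last claim. The starting point is the Leibniz identity $\dif^{[i]}(hg)=\sum_{l=0}^{i}\binom{i}{l}\dif^{[l]}h\,\dif^{[i-l]}g$ for the operators of \eqref{eqn:diffun} (the same computation behind $W_U(hg_0,\dots,hg_k)=(\dif^{[0]}h)^{k+1}W_U(g_0,\dots,g_k)$). Writing $r=\sum_\nu a_\nu s_\nu$ with $a_\nu\in\mathfrak{m}_{X,x}$ and $s_\nu\in\mathfrak{m}_{X,x}^{k}$, and placing $r$ in the first column of the Wronskian matrix, the $l=0$ part of that column is $(\dif^{[0]}a_\nu)(\dif^{[i]}s_\nu)_{i}$, whose determinant against the remaining columns is $(\pi_{0,k}^{\ast}a_\nu)\,\omega_{x_k}(s_\nu,g_1,\dots,g_k)$ — already in $\mathfrak{m}_{X,x}$ times the right-hand ideal; the residual part of the column involves only the $\dif^{[m]}s_\nu$ with $m\le k-1$, and since $s_\nu\in\mathfrak{m}_{X,x}^{k}$ these vanish to order $\ge k-m\ge1$ along $\pi_{0,k}^{-1}(x)$, so every entry of that column lies in $\mathfrak{m}_{X,x}\cdot\mathcal{O}_{X_k,x_k}$. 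The delicate point is to upgrade this last observation to a genuine membership in $\mathfrak{m}_{X,x}$ times the right-hand ideal, i.e. to exhibit each such ``correction'' column, after expansion of the determinant, as an $\mathfrak{m}_{X,x}$-linear combination of Wronskians of local functions. This is the analogue, in the present directed and linear-system setting, of the corresponding bookkeeping in Lemma 2.8 of \cite{Brotbek2017}, and it is where the filtration structure of $\mathcal{E}_{k,k'}V^{\dual}$ (in particular membership in $\widebar{F}^{\bm{a}^k}\mathcal{E}_{k,k'}V^{\dual}$) and the precise shape of the Leibniz expansion must be exploited.
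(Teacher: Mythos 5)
Your setup is fine (the trivial inclusion, the reduction via Lemma \ref{lem:wseparate} to $x_k\in X_k^{\sing}$, the telescoping by multilinearity after choosing $\sigma_i\in\mathbb{S}$ with $\sigma_{i,U}\equiv f_i \bmod \mathfrak{m}_{X,x}^{k+1}$, and the Nakayama scheme), but the proof does not close: everything hinges on the claim that $\omega_{x_k}(r,g_1,\dots,g_k)$ lies in $\mathfrak{m}_{X_k,x_k}\cdot J$ (where $J$ is the right-hand ideal) whenever $r\in\mathfrak{m}_{X,x}^{k+1}$, and you do not prove it — you explicitly flag it as ``the main obstacle'' and only indicate where one would have to work. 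What your Leibniz computation actually establishes is strictly weaker: after splitting off the $l=0$ part (which is indeed $(\pi_{0,k}^{\ast}a_\nu)\,\omega_{x_k}(s_\nu,g_\bullet)\in\mathfrak{m}_{X,x}J$), the correction determinant is only shown to involve $\dif^{[m]}s_\nu$ with $m\le k-1$ and $s_\nu\in\mathfrak{m}_{X,x}^{k}$, hence to vanish along the fibre $\pi_{0,k}^{-1}(x)$, i.e.\ to lie in $\mathfrak{m}_{X,x}\cdot\mathcal{O}_{X_k,x_k}$. Membership in $\mathfrak{m}_{X_k,x_k}$ is not membership in $\mathfrak{m}_{X_k,x_k}\cdot J$, and the Nakayama step needs exactly $J\subseteq I+\mathfrak{m}_{X_k,x_k}J$; nothing in the proposal exhibits the correction term as a combination of Wronskian germs at all. (A smaller point to keep track of: a single modified column is not $\mathbb{G}_k'$-invariant, so one must argue, e.g.\ by writing the correction as $\omega(z_jr_j,g_\bullet)-(\pi_{0,k}^{\ast}z_j)\,\omega(r_j,g_\bullet)$, that it corresponds to a germ on $X_k$ via \eqref{eqn:EV} before speaking of its membership in ideals of $\mathcal{O}_{X_k,x_k}$.) For comparison, the paper itself offers no argument here: it states the lemma as an analogue of Lemma 2.8 in \cite{Brotbek2017}, so the burden of this step is precisely what a self-contained proof must carry.

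A way to finish that avoids the delicate bookkeeping altogether is to use the linearity of the Wronskian in $k$-jets rather than Taylor-approximation plus Nakayama. By \eqref{eqn:wf'}, at any point of $X_k^{\reg}$ lying over $x'$ the value of $\omega(g_0,\dots,g_k)$ depends only on the $k$-jets $j^k_{x'}g_i$, and is $\mathbb{C}$-multilinear and alternating in them. Since $\mathfrak{S}$ separates $k$-jets, the evaluation $\mathbb{S}\otimes\mathcal{O}_X\to J^k(L)$ into the bundle of $k$-jets of sections of $L$ is surjective near $x$ (fibrewise surjectivity suffices), hence so is $\bigwedge^{k+1}(\mathbb{S}\otimes\mathcal{O}_X)\to\bigwedge^{k+1}J^k(L)$; so, after trivializing $L$ near $x$, one can write $j^kf_0\wedge\dots\wedge j^kf_k=\sum_\alpha c_\alpha\, j^ks^\alpha_0\wedge\dots\wedge j^ks^\alpha_k$ with $c_\alpha\in\mathcal{O}_{X,x}$ and $s^\alpha_i\in\mathbb{S}$. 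The identity $\omega(f_0,\dots,f_k)=\sum_\alpha(\pi_{0,k}^{\ast}c_\alpha)\,\omega(s^\alpha_0,\dots,s^\alpha_k)$ then holds on $X_k^{\reg}$ by the jet-multilinearity just noted, hence everywhere near $x_k$ by density, giving $J\subseteq\mathfrak{w}(X_k,\mathfrak{S})_{x_k}$ directly — no Nakayama and no estimate on correction terms needed. I would either supply this argument or genuinely carry out the Wronskian-identity bookkeeping you allude to; as it stands, the central step of your proof is missing.
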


Therefore, we can define the asymptotic Wronskian ideal sheaf of $X_k$ by
\[
\mathfrak{w}_{\infty}(X_k) \bydef \mathfrak{w}(X_k, \mathfrak{S}) \subset \mathcal{O}_{X_k}
,\]
for any linear system $\mathfrak{S}$ that separates $k$-jets at each point of $X$.

\section{Logarithmic jets and jet differentials}\label{sec:logjets}

\subsection{Logarithmic jet spaces}

Let $X$ be a complex manifold of dimension $n$ with a normal crossing divisor $D$.
This amounts to saying that for each point $x \in X$, one can take an open subset $U \subset X$ containing $x$ with local coordinates $z_1, \dots, z_n$ centered at $x$ such that $D$ is defined by $z_1 z_2 \cdots z_{\ell} = 0$ in $U$ for some $\ell \leq n$ dependent on $x$.
Then such a pair $(X, D)$ is called a logarithmic manifold.
The logarithmic cotangent sheaf $\mathcal{T}_X^{\dual}(\log D)$ of $(X, D)$ is a locally free subsheaf of the sheaf of meromorphic $1$-form on $X$, whose localization at any point $x \in X$ is given by
\[
\mathcal{T}_{X, x}^{\dual}(\log D) \bydef \sum_{i=1}^{\ell} \mathcal{O}_{X, x} \frac{\dif z_i}{z_i} + \sum_{j=\ell+1}^{n} \mathcal{O}_{X, x} \dif z_j
.\]
Seeing that $\ell = 0$ when $x \notin D$, this local expression implies that the restriction of $\mathcal{T}_X^{\dual}(\log D)$ on $X \setminus D$ is identified with $\mathcal{T}_{X \setminus D}^{\dual}$.
The vector bundle associated to $\mathcal{T}_X^{\dual}(\log D)$ is denoted by $T_X^{\dual}(\log D)$ and called the logarithmic cotangent bundle along $D$.
And its dual is denoted by $T_X(-\log D)$, and called the logarithmic tangent bundle along $D$.
The corresponding sheaf $\mathcal{T}_X(-\log D)$ is a local free subsheaf of the holomorphic tangent sheaf $\mathcal{T}_X$, whose localization at any point $x \in X$ is given by
\[
\mathcal{T}_{X, x}(-\log D) \bydef \sum_{i=1}^{\ell} \mathcal{O}_{X, x} z_i \frac{\partial}{\partial z_i} + \sum_{j=\ell+1}^{n} \mathcal{O}_{X, x} \frac{\partial}{\partial z_j}
.\]
Obviously, its restriction on $X \setminus D$ is identified with $\mathcal{T}_{X \setminus D}$.

Given an open subset $U \subset X$, now consider holomorphic sections $\alpha \in H^0(U, J_kX)$.
One says that $\alpha$ is a logarithmic $k$-jet field along $D$ if $\dif^{[j]} \omega(\alpha_{\restriction U'})$ are holomorphic for all $\omega \in \mathcal{T}_X^{\dual}(\log D)(U')$ and $0 \leq j \leq k-1$, for all open subset $U' \subset U$.
Here $\dif^{[j]} = \dif_{U', T_X}^{[j]}$ defined by \eqref{eqn:difform}, which is also well-defined for meromorphic $1$-forms $\omega$.
One denotes by $\mathcal{J}_k(X, -\log D)$ the sheaf, which locally consists of germs of these logarithmic $k$-jet fields $\alpha$, and calls it logarithmic $k$-jet sheaf along $D$.
By \cite{Noguchi1986} (see also Section 4.6.3 of \cite{Noguchi2014}), $\mathcal{J}_k(X, -\log D)$ is the sheaf of sections of a holomorphic fiber bundle $J_k(X, -\log D)$ over $X$.
More precisely, there is a fiber map
\[
\tau \colon J_k(X, -\log D) \longrightarrow J_kX
,\]
such that the induced map of sections
\[
\tau_{\ast} \colon H^0(U, J_k(X, -\log D)) \longrightarrow \mathcal{J}_k(X, -\log D)(U)
\]
is isomorphism for any open subset $U \subset X$.
One refers to $J_k(X, -\log D)$ as the logarithmic $k$-jet bundle of $(X, D)$.
Likewise, its restriction on $X \setminus D$ is identified with $J_k(X \setminus D)$.

Following \cite{Dethloff2001}, a logarithmic directed manifold is a triple $(X, D, V(-\log D))$ consisting of a logarithmic manifold $(X, D)$ equipped with a holomorphic subbundle $V(-\log D) \subset T_X(-\log D)$.
It is evident that $V^{\circ} \bydef V(-\log D)_{\restriction X \setminus D}$ is a holomorphic subbundle of $T_{X \setminus D}$.
Seeing that $J_kV^{\circ} \subset J_k(X \setminus D) \cong J_k(X, -\log D)_{\restriction X \setminus D} \subset J_k(X, -\log D)$, $J_kV^{\circ}$ can be think of as a subset of $J_k(X, -\log D)$.
Thereupon one defines logarithmic jet spaces $J_kV(-\log D)$ over $(X, D, V(-\log D))$ to be the (Euclidean) closure of $J_kV^{\circ}$ in $J_k(X, -\log D)$.

Indeed, the natural projection map
\begin{equation}\label{eqn:logjets}
p_{k, V\!(\!-\!\log \!D)} \colon J_kV(-\log D) \longrightarrow X
\end{equation}
gives a $\mathbb{C}^{rk}$-fiber bundle over $X$, where $r = \rank (V(-\log D))$.
Let $V \subset T_X$ be the closure of $V^{\circ}$ in $T_X$, then one sees that $V$ is also a holomorphic subbundle of $T_X$.
Hence $\dif_{U, V}^{[j]}$ is well-defined for every local sections $\omega \in \mathcal{V}^{\dual}\!(\log D)$, where $\mathcal{V}^{\dual}(\log D)$ is the dual of $\mathcal{V}(-\log D)$.
Moreover, for any local section $\alpha \in \mathcal{J}_kV(-\log D)$, one has $\dif_{U, V}^{[\ell]} (\res_{V\!(\!-\!\log \!D)} \omega)(\alpha) = \dif_{U, T_X}^{[\ell]} \omega(\alpha)$, where $\res_V \colon \mathcal{T}_X^{\dual}(\log D) \to \mathcal{V}^{\dual}(\log D)$ is a quotient morphism induced by the restriction morphism $\mathcal{O}(T_X(-\log D)) \to \mathcal{O}(V(-\log D))$.
Take an open subset $U \subset X$ such that there exist $r$ local sections $\omega_1, \dots, \omega_r \in \mathcal{V}^{\dual}\!(\log D)(U)$ generating $V^{\dual}\!(\log D) \subset T_X^{\dual}(\log D)$ at every point of $U$.
Analogue to \eqref{eqn:localizationJkV}, one has a local trivialization of $p_{k, V\!(\!-\!\log \!D)}^{-1}(U)$ given by
\begin{equation}
p_{k, V\!(\!-\!\log \!D)} \times (\dif_{U, V}^{[\ell]}\omega_i)_{\substack{1 \leq i \leq r \quad \\ 0 \leq \ell \leq k-1}} \colon p_{k, V\!(\!-\!\log \!D)}^{-1}(U) \longrightarrow U \times \mathbb{C}^{rk}
.
\end{equation}

\begin{remark}
In the absolute case $V(-\log D) = T_X(-\log D)$, one has a canonical choice of $\omega_i$ as follows:
\[
\omega_1 = \dif \log z_1, \, \dots, \, \omega_{\ell} = \dif \log z_{\ell}, \, \omega_{\ell+1} = \dif z_{\ell+1}, \, \dots, \, \omega_n = \dif z_n
,\]
where $z_1, \cdots, z_n$ is the local coordinate of $U$, given by the definition of the normal crossing divisor $D$.
In general, given a holomorphic subbundle $V(-\log D) \subset T_X(-\log D)$ with rank $r$, after permuting $z_1, \dots, z_{\ell}$ and $z_{\ell+1}, \dots, z_n$, respectively,
one has a system of generators $\omega_1, \dots, \omega_p, \omega_{\ell+1}, \dots, \omega_{\ell+q} \in \mathcal{V}^{\dual}\!(\log D)(U)$,
where $0 \leq p \leq \ell$, $0 \leq q \leq n-\ell$ and $p+q=r$.
\end{remark}

\subsection{Logarithmic Demailly-Semple jet towers}

In \cite{Dethloff2001}, Dethloff and Lu generalized Demailly-Semple jet tower to the logarithmic case.
Given a logarithmic directed manifold $(X, D, V(-\log D))$, let us start from $(X_0(D), D_0, V_0(-\log D_0)) = (X, D, V)$.
A sequence of logarithmic directed manifolds $(X_{\bullet}(D), D_{\bullet}, V_{\bullet}(-\log D_{\bullet}))$ is inductively constructed as follows.

Let $X_k(D) = \textnormal{P}(V_{k-1}(-\log D_{k-1})) \allowbreak \cong \mathbb{P}(V_{k-1}^{\dual}\!(\log D_{k-1}))$ with its natural projection $\bar{\pi}_k$ to $X_{k-1}(D)$, and let $D_k = \bar{\pi}_k^{-1} (D_{k-1})$, for each $k \in \mathbb{Z}_{+}$.
Observe that the differential of $\bar{\pi}_k$, $\dif \bar{\pi}_k \colon T_{X_k(D)} \to \bar{\pi}_k^{\ast} T_{X_{k-1}(D)}$ behaves well as a differential map $T_{X_k(D)}(-\log D_k) \to \bar{\pi}_k^{\ast} T_{X_{k-1}(D)}(-\log D_{k-1})$.
One defines
\[
V_k(-\log D_k) \bydef (\dif \bar{\pi}_k)^{-1} \mathcal{O}_{X_k(D)}(-1)
,\]
where $\mathcal{O}_{X_k(D)}(-1)$ is the tautological line bundle of $\textnormal{P}(V_{k-1}(-\log D_{k-1}))$ and regarded as the subbundle of $\bar{\pi}_k^{\ast} V_{k-1}(-\log D_{k-1})$.
More concretely, for every point $(x, [v]) \in X_k(D) = \textnormal{P}(V_{k-1}(-\log D_{k-1}))$ associated with a logarithmic tangent vector $v \in (V_{k-1}(-\log D_{k-1}))_x \setminus \{0\}$ for $x \in X_{k-1}$, one defines the logarithmic tangent subbundle $V_k(-\log D_k) \subset T_{X_k(D)}(-\log D_k)$ by
\[
(V_k(-\log D_k))_{(x, [v])} = \{ \xi \in (T_{X_k(D)}(-\log D_k))_{(x, [v])} \mid \dif \bar{\pi}_k (\xi) \in \mathbb{C}\,v \}
,\]
where $\mathbb{C}\,v \subset (V_{k-1}(-\log D_{k-1}))_x$.
Then the logarithmic directed manifold $(X_k(D), \allowbreak D_k, V_k(-\log D_k))$ is called the logarithmic Demailly-Semple $k$-jet tower of $(X, D, \allowbreak V(-\log D))$.
When no confusion can arise, one simply writes $(\widebar{X}_k, D_k, \widebar{V}_k)$ in what follows.

Next, we are going to show that the logarithmic Demailly-Semple jet tower has similar properties to the Demailly-Semple jet tower.

The bundle $\widebar{V}_k$ is characterized by the two exact sequences:
\vspace{-1.5ex}
\begin{equation*}
\begin{tikzcd}[column sep=scriptsize]
0 \arrow[r] & T_{\widebar{X}_k/\widebar{X}_{k-1}} \arrow[r] & \widebar{V}_k \arrow[r, "(\bar{\pi}_k)_{\ast}"] &[.5em] \mathcal{O}_{\widebar{X}_k}(-1) \arrow[r] & 0
,\end{tikzcd}
\end{equation*}
\vspace{-3ex}
\begin{equation*}
\begin{tikzcd}[column sep=scriptsize]
0 \arrow[r] & \mathcal{O}_{\widebar{X}_k} \arrow[r] & \bar{\pi}_k^{\ast} \widebar{V}_{k-1} \otimes \mathcal{O}_{\widebar{X}_k}(1) \arrow[r] & T_{\widebar{X}_k/\widebar{X}_{k-1}} \arrow[r] & 0
.\end{tikzcd}
\vspace{-.5ex}
\end{equation*}
Moreover, for each $k \geq 2$, the composition of vector bundle morphisms over $\widebar{X}_k$ 
\vspace{-.5ex}
\begin{equation*}
\begin{tikzcd}[column sep=scriptsize]
\mathcal{O}_{\widebar{X}_k}(-1) \arrow[r, hook] & \bar{\pi}_k^{\ast} \widebar{V}_{k-1} \arrow[r, "\bar{\pi}_k^{\ast} (\bar{\pi}_{k-1})_{\ast}"] &[2em] \bar{\pi}_k^{\ast} \mathcal{O}_{\widebar{X}_{k-1}}(-1)
\end{tikzcd}
\vspace{-.5ex}
\end{equation*}
admits $\widebar{\varGamma}_k \subset \widebar{X}_k$ as its zero divisor,
which is effective and satisfies that
\begin{equation}\label{eqn:O(barGamma)}
\mathcal{O}_{\widebar{X}_k}(1) = \bar{\pi}_k^{\ast} \mathcal{O}_{\widebar{X}_{k-1}}(1) \otimes \mathcal{O}(\widebar{\varGamma}_k)
.\end{equation}

For the sake of convenience, for each $0 \leq j \leq k-1$, one writes
\begin{equation}\label{eqn:barpi}
\bar{\pi}_{j, k} = \bar{\pi}_{j+1} \circ \bar{\pi}_j \circ \dots \circ \bar{\pi}_k \colon \widebar{X}_k \longrightarrow \widebar{X}_j
,\end{equation}
and
\[
\mathcal{O}_{\widebar{X}_k}(\bm{a}) \bydef \bar{\pi}_{1, k}^{\ast} \mathcal{O}_{\widebar{X}_1}(a_1) \otimes \bar{\pi}_{2, k}^{\ast} \mathcal{O}_{\widebar{X}_2}(a_2) \otimes \dots \otimes \mathcal{O}_{\widebar{X}_k}(a_k)
,\]
for every $\bm{a}=(a_1, \ldots, a_k) \in \mathbb{Z}^k$.
Likewise, it follows from \eqref{eqn:O(barGamma)} that
\begin{equation}\label{eqn:Obarab}
\mathcal{O}_{\widebar{X}_k}(\bm{a}) = \mathcal{O}_{\widebar{X}_k}(b_k) \otimes \mathcal{O}_{\widebar{X}_k}(-\bm{b} \cdot \widebar{\varGamma})
,\end{equation}
where
\begin{align*}
\bm{b} &= (b_1, \dots, b_k) \in \mathbb{Z}^k, \quad b_j = a_1+\dots+a_j, \\
\bm{b} &\cdot \widebar{\varGamma} = \sum_{1\leq j \leq k-1} b_j \bar{\pi}_{j+1, k}^{\ast} \widebar{\varGamma}_{j+1}
.\end{align*}

Furthermore, one defines
\begin{align*}
\widebar{X}_k^{\reg} &= \bigcap_{2\leq j \leq k} \bar{\pi}_{j, k}^{-1}(\widebar{X}_j \setminus \widebar{\varGamma}_j), \\
\widebar{X}_k^{\sing} &= \bigcup_{2\leq j \leq k} \bar{\pi}_{j, k}^{-1}(\widebar{\varGamma}_j) = \widebar{X}_k \setminus \widebar{X}_k^{\reg} 
.\end{align*}

\subsection{Logarithmic jet differentials}

Let $(X, D, V(-\log D))$ be a logarithmic directed manifold, and take an open subset $U \subset X$. A local meromorphic $k$-jet differential $Q$ on $U$ is called a logarithmic $k$-jet differential,
if $Q(\alpha)$ is holomorphic for any logarithmic $k$-jet field $\alpha \in \mathcal{J}_kV(-\log D)(U)$,
where $\mathcal{J}_kV(-\log D) \subset \mathcal{J}_k(X, -\log D)$ is the sheaf of sections of $J_kV(-\log D)$ over $X$.
It amounts to saying that $\tau^{\ast} Q$ is a homogeneous polynomial on $p_{k, V\!(\!-\!\log \!D)}^{-1} (U)$.
One denotes the sheaf of logarithmic $k$-jet differential by $\mathcal{E}_{k, \bullet}^{GG}V^{\dual}\!(\log D)$.
There is a natural splitting
\[
\mathcal{E}_{k, \bullet}^{GG}V^{\dual}\!(\log D) = \bigoplus_{m \geq 0} \mathcal{E}_{k, m}^{GG}V^{\dual}\!(\log D)
,\]
where $\mathcal{E}_{k, m}^{GG}V^{\dual}\!(\log D)$ is locally defined by
\[
\mathcal{E}_{k, m}^{GG}V^{\dual}\!(\log D)(U) \bydef \{Q \in \mathcal{E}_{k, \bullet}^{GG}V^{\dual}\!(\log D)(U) \mid Q_{\restriction U \setminus D} \in \mathcal{E}_{k, m}^{GG}V^{\dual}(U \setminus D)\}
,\]
called logarithmic Green-Griffiths jet differential sheaf of order $k$ and weighted degree $m$.
Likewise, logarithmic invariant jet differential sheaf $\mathcal{E}_{k, m}V^{\dual}\!(\log D)$ of order $k$ and weighted degree $m$ is locally defined by
\[
\mathcal{E}_{k, m}V^{\dual}\!(\log D)(U) \bydef \{Q \in \mathcal{E}_{k, \bullet}^{GG}V^{\dual}\!(\log D)(U) \mid Q_{\restriction U \setminus D} \in \mathcal{E}_{k, m}V^{\dual}(U \setminus D)\}
.\]
More precisely, $\mathcal{E}_{k, m}V^{\dual}\!(\log D)$ is a locally free sheaf, which consists of logarithmic $k$-jet differentials $Q \in \mathcal{E}_{k, \bullet}^{GG}V^{\dual}\!(\log D)(U)$ such that
\[
Q(\varphi \cdot j_kf) = \varphi'(0)^m Q(j_kf)
,\]
for any $\varphi \in \mathbb{G}_k$ and any $j_kf \in p_{k, V^{\circ}}^{-1}(U \setminus D) \cap J_kV^{\circ \reg}$.

What's more, we define $J_kV(-\log D)^{\sing}$ the closure of $J_kV^{\circ\sing}$ in $J_kV(-\log D)$, and $J_kV(-\log D)^{\reg} = J_kV(-\log D) \setminus J_kV(-\log D)^{\sing}$, where $J_kV^{\circ\sing} \bydef J_kV^{\circ} \setminus J_kV^{\circ\reg}$ is the set of singular $k$-jets over $(X \setminus D, V^{\circ})$.

\begin{theorem} 
Suppose that $(X, D, V(-\log D))$ is a logarithmic directed manifold such that $V(-\log D)$ has rank $r \geq 2$. Let $\bar{\pi}_{0, k} \colon X_k(D) \to X$ be a natural projection defined by \eqref{eqn:barpi}.
\begin{enumerate}[i)]
\item The quotient $J_kV(-\log D)^{\reg} / \mathbb{G}_k$ has the structure of a locally trivial bundle over $X$, and there is a holomorphic embedding $J_kV(-\log D)^{\reg} / \mathbb{G}_k \hookrightarrow X_k(D)$ over $X$, which identifies $J_kV(-\log D)^{\reg} / \mathbb{G}_k$ with $X_k(D)^{\reg}$. That is to say,
\[
J_kV(-\log D)^{\reg} / \mathbb{G}_k \cong X_k(D)^{\reg}
.\]
\item The direct image sheaf $(\bar{\pi}_{0,k})_{\ast}\mathcal{O}_{X_k(D)}(m)$ is identified with the sheaf of logarithmic invariant jet differentials $\mathcal{E}_{k, m}V^{\dual}\!(\log D)$, namely,
\begin{equation}\label{eqn:EVlogD}
(\bar{\pi}_{0,k})_{\ast}\mathcal{O}_{X_k(D)}(m) \cong \mathcal{E}_{k,m}V^{\dual}\!(\log D)
.\end{equation}
\item For every $m>0$, the relative base locus of the linear system $|\mathcal{O}_{X_k(D)}(m)|$ lies in the set $X_k(D)^{\sing} \cup \bar{\pi}_{0,k}^{-1}(D)$. Moreover, $\mathcal{O}_{X_k(D)}(1)$ is relatively big over $X$.
\end{enumerate}
\end{theorem}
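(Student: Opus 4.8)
The plan is to deduce each of the three assertions from its non-logarithmic counterpart, Theorem~\ref{thm:DS} applied to the directed manifold $(X \setminus D, V^{\circ})$, and then to extend the resulting identifications across $D$ by a local computation in coordinates adapted to the normal crossing divisor. So I would fix a point $x \in D$, an open neighbourhood $U$ with coordinates $z_1, \dots, z_n$ such that $D \cap U = \{z_1 \cdots z_{\ell} = 0\}$, and — after permuting the $z_i$ as noted after \eqref{eqn:logjets} — local generators $\omega_1, \dots, \omega_r \in \mathcal{V}^{\dual}\!(\log D)(U)$ of $V^{\dual}\!(\log D)$, chosen in the absolute case among $\dif\log z_1, \dots, \dif\log z_{\ell}, \dif z_{\ell+1}, \dots, \dif z_n$ and in general among their $\mathcal{O}_U$-linear combinations. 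The functions $\dif_{U, V}^{[\ell]}\omega_i$ of \eqref{eqn:difform} give a local trivialisation of $p_{k, V(-\log D)}^{-1}(U)$, and the recurring point is that the constructions of \cite{Demailly1997} are insensitive to replacing the $\dif z_i$ by the $\omega_i$, the only extra input being the local description of $J_k(X, -\log D)$ from \cite{Noguchi1986}.

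For (i), the two exact sequences characterising $\widebar{V}_k$ and the divisors $\widebar{\varGamma}_j$ are the verbatim logarithmic analogues of \eqref{eqn:defexact}, \eqref{eqn:Eulexact} and \eqref{eqn:O(Gamma)}, so Proposition~5.11 and Corollary~5.12 of \cite{Demailly1997} go through: a non-constant curve tangent to $V^{\circ}$ lifts by the closure-of-graph construction of Section~\ref{sec:DV} to a curve into $X_k(D)$, and $j_kf \mapsto f_{[k]}(0)$ defines a $\mathbb{G}_k$-invariant morphism $J_kV(-\log D)^{\reg} \to X_k(D)$. By Theorem~\ref{thm:DS}(i) over $X \setminus D$ this descends to an isomorphism onto $X_k(D)^{\reg} \cap \bar{\pi}_{0, k}^{-1}(X \setminus D)$; in the trivialisation above the same formulas — the successive directions $[(\dif^{[0]}\omega_i)_i], [(\dif^{[1]}\omega_i)_i], \dots$ — extend this identification to $J_kV(-\log D)^{\reg}/\mathbb{G}_k \cong X_k(D)^{\reg}$ over all of $U$, and gluing gives (i).

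For (ii), the left-hand sheaf is locally free because $X_k(D) \to X$ is an iterated $\mathbb{P}^{r-1}$-bundle, with $(\bar{\pi}_k)_{\ast}\mathcal{O}_{\widebar{X}_k}(m) \cong \Sym^m \widebar{V}_{k-1}^{\dual}$ at each step, and the right-hand sheaf is locally free by hypothesis; the two agree over $X \setminus D$ by Theorem~\ref{thm:DS}(ii). To match them near $x$ I would reproduce the filtration argument of \cite{Demailly1997}: the exact sequences for $\widebar{V}_k$ equip $\mathcal{E}_{k, m}V^{\dual}\!(\log D)$ with a filtration whose graded quotients are the logarithmic analogues of those in \eqref{eqn:Gr}, matching the filtration of $(\bar{\pi}_{0, k})_{\ast}\mathcal{O}_{X_k(D)}(m)$ obtained by pushing $\mathcal{O}_{\widebar{X}_k}(m)$ down the tower one step at a time, the base case being the tautology $(\bar{\pi}_1)_{\ast}\mathcal{O}_{\widebar{X}_1}(m) = \Sym^m V^{\dual}\!(\log D) = \mathcal{E}_{1, m}V^{\dual}\!(\log D)$. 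The step that genuinely uses the adapted coordinates, and which I expect to be the main obstacle, is checking that $\mathcal{E}_{k, m}V^{\dual}\!(\log D)$ — which is \emph{defined} by restriction to $X \setminus D$ — coincides on $U$ with the sheaf of $\mathbb{G}_k'$-invariant, $\mathbb{G}_k$-homogeneous degree-$m$ polynomials in the $\dif^{[j]}\omega_i$; equivalently, that any such polynomial holomorphic off $D$ is already holomorphic across $D$ in $J_kV(-\log D)$, which is exactly where the structure of the closure of $J_kV^{\circ}$ inside $J_k(X, -\log D)$ enters and one must invoke \cite{Noguchi1986}.

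For (iii), Theorem~\ref{thm:DS}(iii) shows the relative base locus of $|\mathcal{O}_{X_k(D)}(m)|$ meets $\bar{\pi}_{0, k}^{-1}(X \setminus D)$ precisely in $X_k(D)^{\sing} \cap \bar{\pi}_{0, k}^{-1}(X \setminus D)$; since this base locus is closed and its remaining part lies over $D$, it is contained in $X_k(D)^{\sing} \cup \bar{\pi}_{0, k}^{-1}(D)$. For relative bigness of $\mathcal{O}_{X_k(D)}(1)$, the iterated projective bundle structure yields a tuple $\bm{a}$ with all entries positive (and sufficiently rapidly decreasing) for which $\mathcal{O}_{X_k(D)}(\bm{a})$ is relatively ample, and then \eqref{eqn:Obarab} writes $\mathcal{O}_{X_k(D)}(b_k) = \mathcal{O}_{X_k(D)}(\bm{a}) \otimes \mathcal{O}_{X_k(D)}(\bm{b} \cdot \widebar{\varGamma})$ with $b_k = a_1 + \dots + a_k > 0$ and $\bm{b} \cdot \widebar{\varGamma}$ effective, so $\mathcal{O}_{X_k(D)}(1)$ is relatively big over $X$, exactly as in \cite{Demailly1997}.
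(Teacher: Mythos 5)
Your proposal takes a genuinely different route from the paper for parts (i) and (ii): the paper does not re-derive these statements at all, it simply cites them from Dethloff--Lu (Propositions 3.8, c) and 3.9 of \cite{Dethloff2001}), and only part (iii) is argued directly. Your plan --- rerunning Demailly's construction from \cite{Demailly1997} with the logarithmic generators $\omega_1, \dots, \omega_r$ in place of the $\dif z_i$, descending from $X \setminus D$ and extending across $D$ in adapted coordinates --- is essentially a reconstruction of Dethloff--Lu's own proof, and it buys self-containedness at the price of the step you yourself single out: showing that $\mathcal{E}_{k, m}V^{\dual}\!(\log D)$, which is \emph{defined} by restriction to $X \setminus D$, coincides on an adapted chart with the sheaf of $\mathbb{G}_k$-homogeneous, $\mathbb{G}_k'$-invariant polynomials in the $\dif^{[j]}\omega_i$, i.e.\ that such a polynomial holomorphic off $D$ extends across $D$ on $J_kV(-\log D)$. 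You flag this as ``the main obstacle'' and point to \cite{Noguchi1986}, but you do not carry it out; that extension statement (and the analogous identification over points of $D$ needed to finish (i)) is precisely what the cited propositions of \cite{Dethloff2001} supply, so as written your treatment of (i)--(ii) is a plan rather than a proof, whereas the paper's is a citation. For part (iii) your argument agrees in substance with the paper's: the containment of the relative base locus follows by restricting over $X \setminus D$ and applying part iii) of Theorem~\ref{thm:DS} --- the paper phrases this via the observation that $\tau^{\ast}Q$ of an ordinary invariant jet differential is a logarithmic one vanishing on $D$, together with $X_k(D)^{\sing} \cap \bar{\pi}_{0,k}^{-1}(X \setminus D) = (X \setminus D)_k^{\sing}$ --- and your derivation of relative bigness of $\mathcal{O}_{X_k(D)}(1)$ from a relatively ample $\mathcal{O}_{X_k(D)}(\bm{a})$ and the effective divisor $\bm{b} \cdot \widebar{\varGamma}$ via \eqref{eqn:Obarab} is the standard Demailly argument, which the paper leaves implicit.
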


\begin{proof}
\romannumeral1) See Proposition 3.8, c) in \cite{Dethloff2001}.\\
\romannumeral2) See Proposition 3.9 in \cite{Dethloff2001}.\\
\romannumeral3) Let $U$ be an affine open subset of $X$.
By definition, every $Q \in \mathcal{E}_{k,m}V^{\dual}\!(\log D)(U)$ is a local meromorphic section of $E_{k,m}V^{\dual}(U)$, which is holomorphic on $U \setminus D$.
Conversely, for each $Q \in \mathcal{E}_{k,m}V^{\dual}(U)$, we see that $\tau^{\ast} Q$ is indeed a local section of $E_{k,m}V^{\dual}\!(\log D)$, which vanishes on $D$.
Seeing that $X_k(D)^{\sing} \cap \bar{\pi}_{0,k}^{-1}(X \setminus D) = (X \setminus D)_k^{\sing}$, the result follows from Theorem \ref{thm:DS}, \ref{itm:relbs}).
\end{proof}

\section{Logarithmic Wronskians}\label{sec:logWronskians}
In \cite{Brotbek2019}, Brotbek and Deng generalize the Wronskian sections and the Wronskian ideal sheaf to the logarithmic case.
In this section, we are going to further generalize these concepts.

\subsection{Logarithmic Wronskian sections}
Let $(X, D, V(-\log D))$ be a logarithmic directed manifold, and take an open subset $U \subset X$.
For each $k \geq 0$, Brotbek and Deng \cite{Brotbek2019} define a $\mathbb{C}$-linear map $\nabla_D^{[k]} \colon \mathcal{O}_X(D)(U) \to \mathcal{E}_{k, k}^{GG}T_X^{\dual}\!(\log D)(U)$ given by
\[
\nabla_D^{[\ell]}\sigma = \sigma_D \cdot \dif_{U, T_X}^{[\ell]}\!\left(\frac{\sigma}{\sigma_D\!}\right)
,\]
where $\sigma_D$ is a canonical section of $\mathcal{O}_X(D)$.
Hence we construct the logarithmic Wronskian of a local holomorphic function $\sigma_0 \in \mathcal{O}(U)$ and $k$ local holomorphic sections $\sigma_1, \dots, \sigma_k \in H^0(U, \mathcal{O}_X(D))$ as follows:
\begin{align*}
&W_{D, U}^{V\!(\!-\!\log \!D)}(\sigma_0; \sigma_1, \dots, \sigma_k) \\
\bydef &
\renewcommand{\arraystretch}{1.5}
\begin{vmatrix}
\dif_{U, V}^{[0]}\sigma_0 & \nabla_D^{[0]}\sigma_1 & \cdots & \nabla_D^{[0]}\sigma_k \\
\dif_{U, V}^{[1]}\sigma_0 & \nabla_D^{[1]}\sigma_1 & \cdots & \nabla_D^{[1]}\sigma_k \\
\vdots & \vdots & \ddots & \vdots \\
\dif_{U, V}^{[k]}\sigma_0 & \nabla_D^{[k]}\sigma_1 & \cdots & \nabla_D^{[k]}\sigma_k
\end{vmatrix}
\in \mathcal{E}_{k, k'}V^{\dual}\!(\log D) \otimes \mathcal{O}_X(kD)(U)
,\end{align*}
where $V \subset T_X$ is the (Euclidean) closure of $V^{\circ}$ in $T_X$ and we abusively write
\[
\nabla_D^{[\ell]} \bydef \res_{k, V\!(\!-\!\log \!D)} \circ \nabla_{D \restriction U}^{[\ell]} = \sigma_D \cdot \dif_{U, V}^{[\ell]}\!\left(\frac{\sigma}{\sigma_D\!}\right)
.\]
Here $\res_{k, V\!(\!-\!\log \!D)} \colon \mathcal{E}_{k, \bullet}^{GG}T_X^{\dual}\!(\log D) \to \mathcal{E}_{k, \bullet}^{GG}V^{\dual}\!(\log D)$ is a quotient morphism induced by the restriction morphism $\mathcal{O}(J_k(X, -\log D)) \to \mathcal{O}(J_kV(-\log D))$.
Moreover, we see that
\[
W_{D, U}^{V\!(\!-\!\log \!D)}(\sigma_0; \sigma_1, \dots, \sigma_k) = \res_{k, V\!(\!-\!\log \!D)} \left(W_{D, U}^{T\!_X\!(\!-\!\log \!D)}(\sigma_0; \sigma_1, \dots, \sigma_k)\right)
.\]
When no confusion can arise, we hereinafter prefer to write $W_{D, U}(\sigma_0; \sigma_1, \dots, \sigma_k)$ rather than $W_{D, U}^{V\!(\!-\!\log \!D)} \allowbreak (\sigma_0; \sigma_1, \dots, \sigma_k)$.

Furthermore, our construction of the logarithmic Wronskian is related to the Wronskian in the following way.
\begin{lemma}\label{lem:sigmaW_D}
Let $(X, D, V(-\log D))$ be a logarithmic directed manifold, and take an open subset $U \subset X$.
For any $\sigma_D \in \mathcal{O}(U)$ and any $\sigma_0, \dots, \sigma_k \in H^0(U, \mathcal{O}_X(D))$, we have
\[
W_U^V(\sigma_D \cdot \sigma_0, \sigma_1, \dots, \sigma_k) = \sigma_D \cdot W_{D, U}^{V\!(\!-\!\log \!D)}(\sigma_0; \sigma_1, \dots, \sigma_k)
,\]
where $V \subset T_X$ is the closure of $V^{\circ}$ in $T_X$.
\end{lemma}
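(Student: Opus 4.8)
The plan is to reduce the identity to a single determinant manipulation resting on the Leibniz rule for the operators $\dif_{U,V}^{[\ell]}$. I would first fix a local trivialization of $\mathcal{O}_X(D)$ over $U$, so that $\sigma_D$ and $\sigma_0,\dots,\sigma_k$ become honest germs of functions, with $\sigma_D$ vanishing along $D\cap U$; write $g=\sigma_D$ for short. From the defining property $\dif_{U,V}^{[\ell]}\sigma(j_kf)=(\sigma\circ f)^{(\ell)}(0)$ one gets the Leibniz formula
\[
\dif_{U,V}^{[\ell]}(\sigma\tau)=\sum_{j=0}^{\ell}\binom{\ell}{j}\,\dif_{U,V}^{[\ell-j]}\sigma\cdot\dif_{U,V}^{[j]}\tau
\]
for germs of functions $\sigma,\tau$, meromorphic ones included (this is classical for germs on $(\mathbb{C},0)$, hence valid on jets). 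Applying it to $\sigma_i=g\cdot(\sigma_i/g)$ and multiplying through by $g$ yields, for $1\le i\le k$,
\[
g\,\dif_{U,V}^{[\ell]}\sigma_i=\sum_{j=0}^{\ell}\binom{\ell}{j}\,\dif_{U,V}^{[\ell-j]}g\cdot\nabla_D^{[j]}\sigma_i
\]
since $\nabla_D^{[j]}\sigma_i=g\,\dif_{U,V}^{[j]}(\sigma_i/g)$; and applying it to $g\sigma_0$ directly yields
\[
\dif_{U,V}^{[\ell]}(g\sigma_0)=\sum_{j=0}^{\ell}\binom{\ell}{j}\,\dif_{U,V}^{[\ell-j]}g\cdot\dif_{U,V}^{[j]}\sigma_0 .
\]

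Next I would introduce the $(k+1)\times(k+1)$ lower triangular matrix $A=\bigl(\binom{\ell}{j}\dif_{U,V}^{[\ell-j]}g\bigr)_{0\le\ell,j\le k}$ (entries with $j>\ell$ being $0$) and the matrix $M$ whose first column is $(\dif_{U,V}^{[\ell]}\sigma_0)_{\ell}$ and whose $i$-th column, for $1\le i\le k$, is $(\nabla_D^{[\ell]}\sigma_i)_{\ell}$; by definition $\det M=W_{D,U}^{V(-\log D)}(\sigma_0;\sigma_1,\dots,\sigma_k)$. The three displayed identities say exactly that $AM$ is the matrix whose first column is $(\dif_{U,V}^{[\ell]}(g\sigma_0))_{\ell}$ and whose $i$-th column is $(g\,\dif_{U,V}^{[\ell]}\sigma_i)_{\ell}$ for $1\le i\le k$. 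Computing $\det(AM)$ in two ways: since $A$ is lower triangular with every diagonal entry $\dif_{U,V}^{[0]}g=g$, one has $\det(AM)=\det A\cdot\det M=g^{k+1}\,W_{D,U}^{V(-\log D)}(\sigma_0;\sigma_1,\dots,\sigma_k)$; on the other hand, factoring the common $g$ out of each of the last $k$ columns of $AM$ gives $\det(AM)=g^{k}\,W_U^V(g\sigma_0,\sigma_1,\dots,\sigma_k)$. Therefore
\[
g^{k}\,W_U^V(\sigma_D\sigma_0,\sigma_1,\dots,\sigma_k)=g^{k+1}\,W_{D,U}^{V(-\log D)}(\sigma_0;\sigma_1,\dots,\sigma_k),
\]
and cancelling $g^{k}=\sigma_D^{k}$ — legitimate since both sides are sections of locally free, hence torsion-free, sheaves over the integral $\mathcal{O}_U$ and $\sigma_D$ is a non-zero-divisor — gives the assertion.

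The only point requiring care is that the intermediate terms $\dif_{U,V}^{[j]}(\sigma_i/g)$ and $\nabla_D^{[j]}\sigma_i$ are a priori only meromorphic, with at worst logarithmic poles along $D$; there is no genuine obstacle, but one should either carry out the computation first on $U\setminus D$, where every function is holomorphic and the classical Wronskian calculus applies verbatim, and then extend the resulting identity of holomorphic sections over all of $U$ by density and torsion-freeness, or else work throughout in the sheaf of meromorphic jet differentials with poles along $D$, where the Leibniz rule and the determinant manipulations hold without change. As a shortcut one may also note that $W_U^V=\res_{k,V}(W_U^{T_X})$ and $W_{D,U}^{V(-\log D)}=\res_{k,V(-\log D)}(W_{D,U}^{T_X(-\log D)})$ with $\res$ being $\mathcal{O}_X$-linear, so it suffices to treat the absolute case $V=T_X$, where the argument above runs unchanged.
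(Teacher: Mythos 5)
Your proof is correct and follows essentially the same route as the paper: both apply the Leibniz rule for $\dif_{U,V}^{[\ell]}$ to the factorization $\sigma_i=\sigma_D\cdot(\sigma_i/\sigma_D)$ and then conclude by a triangular determinant manipulation, your version merely multiplying by the lower-triangular matrix $A$ with $\det A=\sigma_D^{k+1}$ and cancelling $\sigma_D^{k}$ at the end instead of using the unipotent transition matrix $\bigl(\tbinom{\ell}{j}\dif^{[\ell-j]}\sigma_D/\sigma_D\bigr)$ and factoring $\sigma_D$ out of the first column. Your added care about the meromorphic intermediate terms (working on $U\setminus D$ and extending, or reducing to $V=T_X$ via $\res$) is a harmless refinement of what the paper leaves implicit.
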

\begin{proof}
For any $\sigma \in H^0(U, \mathcal{O}_X(D))$, using the Leibniz rule to $\sigma_D \cdot \dfrac{\sigma}{\sigma_D\!}$, we get
\[
\dif_{U, V}^{[\ell]} \sigma = \sum_{i=0}^{\ell} \binom{k}{i} \frac{\dif_{U, V}^{[\ell-i]} \sigma_D}{\sigma_D\!} \nabla_D^{[i]} \sigma
.\]
Then it is evident that

\begin{align*}
W_U^V(\sigma_D \cdot \sigma_0, \sigma_1, \dots, \sigma_k) &=
\renewcommand{\arraystretch}{1.5}
\begin{vmatrix}
\nabla_D^{[0]}(\sigma_D \cdot \sigma_0) & \nabla_D^{[0]}\sigma_1 & \cdots & \nabla_D^{[0]}\sigma_k \\
\nabla_D^{[1]}(\sigma_D \cdot \sigma_0) & \nabla_D^{[1]}\sigma_1 & \cdots & \nabla_D^{[1]}\sigma_k \\
\vdots & \vdots & \ddots & \vdots \\
\nabla_D^{[k]}(\sigma_D \cdot \sigma_0) & \nabla_D^{[k]}\sigma_1 & \cdots & \nabla_D^{[k]}\sigma_k
\end{vmatrix} \\
&= \sigma_D \cdot W_{D, U}^{V\!(\!-\!\log \!D)}(\sigma_0; \sigma_1, \dots, \sigma_k)
.\qedhere\end{align*}
\end{proof}

For a holomorphic line bundle $L$ on $X$, take an open subset $U$, on which both $L \otimes \mathcal{O}_X(-D)$ and $\mathcal{O}_X(D)$ can be trivialized. 
Choose a trivialization of $L \otimes \mathcal{O}_X(-D)_{\restriction U}$ and a trivialization of $\mathcal{O}_X(D)_{\restriction U}$.
Observe that the trivialization of $L \otimes \mathcal{O}_X(-D)_{\restriction U}$ induces a map $H^0(X, L) \to H^0(U, \mathcal{O}_X(D))$,
which associates $\sigma_U^D = \left( \dfrac{\sigma}{\sigma_D\!} \right)_{U \setminus D} \!\! \sigma_D$ to a global section $\sigma \in H^0(X, L)$,
where $\left( \dfrac{\sigma}{\sigma_D\!} \right)_{U \setminus D} \!\!\in \mathcal{O}(U \setminus D)$ is a meromorphic function on $U$ corresponding to $\dfrac{\sigma}{\sigma_D\!} \in H^0(X \setminus D, L \otimes \mathcal{O}_X(-D))$ under the trivialization of $L \otimes \mathcal{O}_X(-D)_{\restriction U}$.
Given global sections $\sigma_0 \in H^0(X, L \otimes \mathcal{O}_X(-D))$ and $\sigma_1, \dots, \sigma_k \in H^0(X, L)$, we thus obtain a locally defined invariant jet differential
\[
W_{D, U}(\sigma_0; \sigma_1, \dots, \sigma_k) \bydef W_U(\sigma_{0, U}; \sigma_{1, U}^D, \dots, \sigma_{k, U}^D) \in \mathcal{E}_{k, k'}V^{\dual}\!(\log D)(U)
.\]
It follows from Lemma \ref{lem:sigmaW_D} that
\begin{equation}\label{eqn:sigmaW_D}
W_{D, U}(\sigma_0; \sigma_1, \dots, \sigma_k) = \sigma_D^{-1} \cdot W_U(\sigma_D \cdot \sigma_0, \sigma_1, \dots, \sigma_k)
,\end{equation}
where the Wronskian on the right-hand side is under the trivialization of $L_{\restriction U}$ given by the composition of the trivialization of $L \otimes \mathcal{O}_X(-D)_{\restriction U}$ and $\mathcal{O}_X(D)_{\restriction U}$.
In addition, these logarithmic Wronskians glue together into a global section
\[
W_D(\sigma_0; \sigma_1, \dots, \sigma_k) \in H^0(X, E_{k, k'}V^{\dual}\!(\log D) \otimes L^{k+1} \otimes \mathcal{O}_X(-D))
.\]

What's more, from \eqref{eqn:sigmaW_D}, we easily get the following proposition.
\begin{proposition}\label{prp:sigmaW_D}
Let $(X, D, V(-\log D))$ be a logarithmic directed manifold.
For any $\sigma_0 \in H^0(X, L \otimes \mathcal{O}_X(-D))$ and any $\sigma_1, \dots, \sigma_k \in H^0(X, L)$, one has
\[
W(\sigma_D \cdot \sigma_0, \sigma_1, \dots, \sigma_k) = \sigma_D \cdot W_D(\sigma_0; \sigma_1, \dots, \sigma_k)
.\]
\end{proposition}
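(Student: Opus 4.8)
The plan is to obtain the global identity by checking it locally on an open cover, where it becomes exactly the already-established relation \eqref{eqn:sigmaW_D} (equivalently Lemma \ref{lem:sigmaW_D}) multiplied through by $\sigma_D$. Note first that both sides of the asserted equality are honest global sections of one and the same sheaf on $X$: the Wronskian $W(\sigma_D\cdot\sigma_0,\sigma_1,\dots,\sigma_k)$ is, by the construction recalled in Section \ref{sec:Wronskians}, a global section of $E_{k,k'}V^{\dual}\otimes L^{k+1}$, which we regard inside $E_{k,k'}V^{\dual}(\log D)\otimes L^{k+1}$ via the inclusion $\mathcal{E}_{k,k'}V^{\dual}\hookrightarrow\mathcal{E}_{k,k'}V^{\dual}(\log D)$; and $\sigma_D\cdot W_D(\sigma_0;\sigma_1,\dots,\sigma_k)$ lands in the same space $H^0(X,E_{k,k'}V^{\dual}(\log D)\otimes L^{k+1})$ by pairing the factor $\mathcal{O}_X(D)$ of $\sigma_D$ against the factor $\mathcal{O}_X(-D)$ in the target of $W_D$. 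Since a global section is determined by its restrictions to the members of an open cover, it suffices to compare local representatives.

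Concretely, I would take an open cover $\{U\}$ of $X$ on which both $\mathcal{O}_X(D)$ and $L\otimes\mathcal{O}_X(-D)$ are trivialized, and equip $L_{\restriction U}=\mathcal{O}_X(D)_{\restriction U}\otimes(L\otimes\mathcal{O}_X(-D))_{\restriction U}$ with the product trivialization; this is precisely the set-up in which $W_{D,U}(\sigma_0;\sigma_1,\dots,\sigma_k)$ and the ordinary Wronskian $W_U(\sigma_D\cdot\sigma_0,\sigma_1,\dots,\sigma_k)$ of \eqref{eqn:sigmaW_D} are defined. Over such $U$: by the construction of the global Wronskian section, the local representative of $W(\sigma_D\cdot\sigma_0,\sigma_1,\dots,\sigma_k)$ with respect to the product trivialization of $L^{k+1}$ is exactly the entry $W_U(\sigma_D\cdot\sigma_0,\sigma_1,\dots,\sigma_k)$ on the right of \eqref{eqn:sigmaW_D} (one unwinds that the local data of $\sigma_D\cdot\sigma_0$ is $\sigma_{D,U}\cdot\sigma_{0,U}$ and that of each $\sigma_i$ is $\sigma_{i,U}^{D}$, which is where the product trivialization enters); on the other hand, the local representative of $\sigma_D\cdot W_D(\sigma_0;\sigma_1,\dots,\sigma_k)$ is $\sigma_{D,U}\cdot W_{D,U}(\sigma_0;\sigma_1,\dots,\sigma_k)$, directly from the definitions of $W_D$ and of multiplication by $\sigma_D$. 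Multiplying \eqref{eqn:sigmaW_D} by $\sigma_{D,U}$ shows these two local representatives agree on every $U$, hence the two global sections coincide.

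The only real work — and the step I expect to be the main, if minor, obstacle — is keeping the trivialization conventions straight: one must verify that the trivialization of $L_{\restriction U}$ tacitly used for the ordinary Wronskian in \eqref{eqn:sigmaW_D} is the composite of the chosen trivializations of $\mathcal{O}_X(D)_{\restriction U}$ and $(L\otimes\mathcal{O}_X(-D))_{\restriction U}$, so that the local data of $\sigma_D\cdot\sigma_0$, of the $\sigma_i$, and of $\sigma_D$ relate to the inputs of $W_{D,U}$ and $W_D$ exactly as in the construction of those sections; and that on overlaps the resulting gluing is consistent (which is automatic, since the transition cocycle of $L^{k+1}$ is the product of those of $\mathcal{O}_X(D)^{k+1}$ and $(L\otimes\mathcal{O}_X(-D))^{k+1}$ and $W_U^{V}$ is homogeneous of degree $k+1$ in its arguments). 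Once this bookkeeping is in place, the identity is a one-line consequence of Lemma \ref{lem:sigmaW_D}; alternatively one may argue stalkwise, where the statement reduces directly to Lemma \ref{lem:sigmaW_D} applied to local trivializing data, bypassing any discussion of cocycles.
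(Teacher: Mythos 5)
Your argument is correct and is essentially the paper's own: the paper deduces the proposition directly from the local identity \eqref{eqn:sigmaW_D} (itself a consequence of Lemma \ref{lem:sigmaW_D}), with the gluing and trivialization bookkeeping left implicit, which is exactly what you spell out. No gaps; your careful tracking of the product trivialization of $L_{\restriction U}$ is precisely the convention the paper uses when stating \eqref{eqn:sigmaW_D}.
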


Thanks to \eqref{eqn:EVlogD}, there is a global section
\[
\omega_D(\sigma_0; \sigma_1, \dots, \sigma_k) \in H^0(X_k(D), \mathcal{O}_{X_k(D)}(k') \otimes \bar{\pi}_{0, k}^{\ast} (L^{k+1} \otimes \mathcal{O}_X(-D)))
\]
corresponding to $W_D(\sigma_0; \sigma_1, \dots, \sigma_k)$.
That is to say,
\[
(\bar{\pi}_{0, k})_{\ast} \omega_D(\sigma_0; \sigma_1, \dots, \sigma_k) = W_D(\sigma_0; \sigma_1, \dots, \sigma_k)
.\]

\begin{remark}
In the case where $L = \mathcal{O}_X(D)$ and $\sigma_0 = 1 \in H^0(X, \mathcal{O}_X)$, our construction of the logarithmic Wronskian is identified with the construction in \cite{Brotbek2019}.
\end{remark}

\subsection{Logarithmic Wronskian ideal sheaf}
Let $X$ be a projective manifold, and take an effective divisor $D$ on $X$.
For a linear system $\mathfrak{S} \subset |L|$ on $X$, we define
\[
\mathfrak{S}_{\setminus D} \bydef \{D' \in |L \otimes \mathcal{O}_X(-D)| \mid D+D' \in \mathfrak{S}\}
.\]

Take a logarithmic directed projective manifold $(X, D, V(-\log D))$, and choose a linear system $\mathfrak{S} \subset |L|$ on $X$ such that $\mathfrak{S}_{\setminus D}$ is non-empty.
Set
\[
\mathbb{W}(X_k(D), \mathfrak{S}) \bydef \Span \{\omega_D(\sigma_0; \sigma_1, \dots, \sigma_k) \mid \sigma_0 \in \mathbb{S}_{\setminus D}, \sigma_1, \dots, \sigma_k \in \mathbb{S}\}
,\]
which is a linear subspace of $H^0(X_k(D), \mathcal{O}_{X_k(D)}(k') \otimes \bar{\pi}_{0, k}^{\ast} (L^{k+1} \otimes \mathcal{O}_X(-D)))$,
where $\mathbb{S}_{\setminus D} \subset H^0(X, L \otimes \mathcal{O}_X(-D))$ is the linear subspace corresponding to $\mathfrak{S}_{\setminus D}$.
We thus denote by $\mathfrak{W}(X_k(D), \mathfrak{S})$ the linear system corresponding to $\mathbb{W}(X_k(D), \mathfrak{S})$.
And we refer to its base ideal $\mathfrak{w}(X_k(D), \mathfrak{S})$ as the $k$-th logarithmic Wronskian ideal sheaf of $\mathfrak{S}$.

Thanks to Proposition \ref{prp:sigmaW_D}, given a linear system $\mathfrak{S} \subset |L|$ as above, there is a inclusion
\begin{equation*}
\hspace{-.5eM}
\begin{tikzcd}[column sep=scriptsize]
(\bar{\pi}_{0, k})_{\ast} \mathbb{W}(X_k(D), \mathfrak{S}) \arrow[r, "\sigma_D \cdot"] & H^0(X, E_{k, k'}V^{\dual} \!\otimes L^{k+1}) \arrow[r, "\cong"] & H^0(X_k, \mathcal{O}_{X_k}(k') \otimes \pi_{0, k}^{\ast} L^{k+1})
.\end{tikzcd}
\end{equation*}
Then we denote its image by
\[
\mathbb{W}(X_k, D, \mathfrak{S}) = \Span \{\omega(\sigma_0, \sigma_1, \dots, \sigma_k) \mid \sigma_0 \in \mathbb{S}_{\supset D}, \, \sigma_1, \dots, \sigma_k \in \mathbb{S}\}
,\]
where $\mathbb{S}_{\supset D} \bydef \sigma_D \cdot \mathbb{S}_{\setminus D} = \{\sigma \in \mathbb{S} \mid (\sigma) \geq D\} \cup \{0\}$.
Hence we denote the corresponding linear system by $\mathfrak{W}(X_k, D, \mathfrak{S})$, and denote the base ideal of $\mathfrak{W}(X_k, D, \mathfrak{S})$ by $\mathfrak{w}(X_k, D, \mathfrak{S})$.

More generally, for a closed subscheme $Z$ of $X$, take a linear system $\mathfrak{S} \subset |L|$ on $X$ such that $\mathfrak{S}_{\supset Z} \bydef \{D' \in \mathfrak{S} \mid Z \subset D'\}$ is non-empty.
Then we introduce the transparent notation for a linear subspace of $H^0(X_k, \mathcal{O}_{X_k}(k') \otimes \pi_{0, k}^{\ast} L^{k+1})$
\[
\mathbb{W}(X_k, Z, \mathfrak{S}) \bydef \Span \{\omega(\sigma_0, \sigma_1, \dots, \sigma_k) \mid \sigma_0 \in \mathbb{S}_{\supset Z}, \, \sigma_1, \dots, \sigma_k \in \mathbb{S}\}
,\]
where $\mathbb{S}_{\supset Z} \bydef \{\sigma \in \mathbb{S} \mid Z \subset (\sigma)\}$ is the linear subspace corresponding to $\mathfrak{S}_{\supset Z}$.
We denote the corresponding linear system by $\mathfrak{W}(X_k, Z, \mathfrak{S})$.
Therefore, we call its base ideal $\mathfrak{w}(X_k, Z, \mathfrak{S})$ the $k$-th Wronskian ideal sheaf of $\mathfrak{S}$ along $Z$.

Moreover, set
\begin{equation}\label{eqn:WXkDS}
\widebar{\mathbb W}(X_k, Z, \mathfrak{S}) \bydef \Span \{\widebar{\omega}(\sigma_0, \sigma_1, \dots, \sigma_k) \mid \sigma_0 \in \mathbb{S}_{\supset Z}, \, \sigma_1, \dots, \sigma_k \in \mathbb{S}\}
.\end{equation}
Likewise, we denote the corresponding linear system by $\widebar{\mathfrak W}(X_k, Z, \mathfrak{S})$, and denote the base ideal of $\widebar{\mathfrak W}(X_k, Z, \mathfrak{S})$ by $\widebar{\mathfrak w}(X_k, Z, \mathfrak{S})$.

\section{Preliminaries of Nevanlinna theory}\label{sec:NT}

\subsection{Weil functions}

First, we recall Weil functions $\lambda_Z$ for each closed subscheme $Z$ of a (complex) irreducible projective variety $X$, which is first introduced by Silverman \cite{Silverman1987} in Diophantine geometry.

Given a (complex) irreducible quasi-projective variety $X$, let $\mathcal{C}(X)$ be the set of all real-valued continuous functions defined on some Zariski open subsets of $X$,
that is to say,
\[
\mathcal{C}(X) \bydef \{\lambda \colon U \to \mathbb{R} \mid \lambda \text{ is continuous, }U \text{ is a Zariski open subset of }X\}
.\]

\begin{definition}
Let $X$ be as above, and let $\lambda_1, \lambda_2 \in \mathcal{C}(X)$.
We write $\lambda_1 \leq_{\langle X \rangle} \lambda_2$ if there is a continuous function $\alpha$ defined on $X$ and satisfies that $\lambda_1 \leq \lambda_2 + \alpha$ on some non-empty Zariski open subset $U$ where $\lambda_1$ and $\lambda_2$ are both defined.
Moreover, we write $\lambda_1 =_{\langle X \rangle} \lambda_2$, if $\lambda_1 \leq_{\langle X \rangle} \lambda_2$ and $\lambda_2 \leq_{\langle X \rangle} \lambda_1$.
\end{definition}
Notice that $=_{\langle X \rangle}$ defines the equivalent relation on the set $\mathcal{C}(X)$, and $\leq_{\langle X \rangle}$ defines the order relation on the set $\mathcal{C}(X)/=_{\langle X \rangle}$.
Furthermore, if $\kappa_1, \kappa_2, \lambda_1, \lambda_2 \in \mathcal{C}(X)$ such that $\kappa_1 =_{\langle X \rangle} \kappa_2$ and $\lambda_1 =_{\langle X \rangle} \lambda_2$, we check that
\begin{align*}
-\lambda_1 &=_{\langle X \rangle} -\lambda_2, \\
\kappa_1+\lambda_1 &=_{\langle X \rangle} \kappa_2+\lambda_2, \\
\min \{\kappa_1, \lambda_1\} &=_{\langle X \rangle} \min \{\kappa_2, \lambda_2\}, \\
\max \{\kappa_1, \lambda_1\} &=_{\langle X \rangle} \max \{\kappa_1, \lambda_1\}
.\end{align*}

\begin{definition}\label{def:lambda_Z}
Let $Z \subset X$ be a closed subscheme. A Weil function $\lambda_{Z}$ for $Z$ is a continuous function $\lambda_{Z}: X \setminus \Supp Z \to \mathbb{R}$ satisfying that there exist an affine Zariski open covering $\{U_{\epsilon}\}_{\epsilon \in \Upsilon}$ of $X$ and a system of generators $f_{1}^{\epsilon}, \ldots, f_{r_{\epsilon}}^{\epsilon} \in \mathcal{I}_{Z}(U_{\epsilon}) \subset \mathcal{O}_X(U_{\epsilon})$ such that
\begin{equation}\label{eqn:lambda_Z}
\lambda_{Z}(x) =_{\langle U_{\epsilon} \rangle} -\log \mathop{\max}_{1 \leq i \leq r_{\epsilon}} |f_i^{\epsilon}(x)|
.\end{equation}
\end{definition}
\begin{remark}\label{rmk:pnorm}
Observe the fact that $\infty$-norm is equivalent to $p$-norm, for each integer $p \geq 1$.
More precisely,
\[
\log \mathop{\max}_{1 \leq i \leq r} |\alpha_i| =_{\langle U \rangle} \frac{1}{p} \log \left( \sum_{i=1}^r |\alpha_i|^p \right)
,\]
for any Zariski open subset $U \subset X$.
Thus \eqref{eqn:lambda_Z} can be also written as
\[
\lambda_{Z}(x) =_{\langle U_{\epsilon} \rangle} - \frac{1}{p} \log \left( \sum_{i=1}^{r_{\epsilon}} |f_i^{\epsilon}(x)|^p \right)
.\]
\end{remark}

Let $Z(X)$ denote the set of closed subschemes of $X$.
Indeed, the definition above gives a map from $Z(X)$ to $\mathcal{C}(X)/=_{\langle X \rangle}$
\[
\lambda \colon Z(X) \longrightarrow \mathcal{C}(X)/=_{\langle X \rangle}
.\]
Notice that closed subschemes $Z \in Z(X)$ are in one-to-one correspondence with ideal sheaves $\mathcal{I}_Z \subset \mathcal{O}_X$ (see Chapter \uppercase\expandafter{\romannumeral2}, Section 5.9 of \cite{Hartshorne1977}).
We will often implicitly identify a closed subscheme $Z$ with its ideal sheaf $\mathcal{I}_Z$.

Next, we need to recall the definitions of a number of natural operations on $Z(X)$, before going to introduce some properties of Weil functions $\lambda_Z$.
\begin{definition}\label{def:I_Z}
Let $Z_1, Z_2$ be closed subschemes of $X$.
\begin{enumerate}[i)]
\item\label{itm:Icap} The intersection $Z_1 \cap Z_2$ of $Z_1$ and $Z_2$ is the closed subscheme of $X$ with the ideal sheaf $\mathcal{I}_{Z_1 \cap Z_2} = \mathcal{I}_{Z_1} + \mathcal{I}_{Z_2}$.
\item The sum $Z_1 + Z_2$ of $Z_1$ and $Z_2$ is the closed subscheme of $X$ with the ideal sheaf $\mathcal{I}_{Z_1 + Z_2} = \mathcal{I}_{Z_1} \cdot \mathcal{I}_{Z_2}$.
\item We say that $Z_1$ is contained in $Z_2$, i.e., $Z_1 \subset Z_2$, if and only if $\mathcal{I}_{Z_2} \subset \mathcal{I}_{Z_1}$.
\item Let $\varphi \colon X \to Y$ be a morphism of varieties, and let $Z$ be a closed subscheme of $Y$. The inverse image $\varphi^{\ast} Z$ is the closed subscheme of $X$ with the ideal sheaf $\mathcal{I}_{\varphi^{\ast} Z} =\varphi^{-1} \mathcal{I}_Z \cdot \mathcal{O}_X$, where $\varphi^{-1} \mathcal{I}_Z \cdot \mathcal{O}_X$ is the image in $\mathcal{O}_X$ of the sheaf $\varphi^{\ast} \mathcal{I}_Z \subset \varphi^{\ast} \mathcal{O}_Y$.
\end{enumerate}
\end{definition}

Hence the map $\lambda$ satisfies the following properties.
\begin{proposition}\cite{Yamanoi2004}\label{prp:lambda_Z}
Let $Z_1, Z_2$ be closed subschemes of $X$. Then
\begin{enumerate}[i)]
\item\label{itm:lambdacap} $\lambda_{Z_1 \cap Z_2} =_{\langle X \rangle} \min \{\lambda_{Z_1}, \lambda_{Z_2}\}$.
\item\label{itm:lambdasum} $\lambda_{Z_1+Z_2} =_{\langle X \rangle} \lambda_{Z_1}+\lambda_{Z_2}$.
\item\label{itm:lambdasubset} If $Z_1 \subset Z_2$, then $\lambda_{Z_1} \leq_{\langle X \rangle} \lambda_{Z_2}$.
\item If $\Supp Z_1 \subset \Supp Z_2$, then there exists a constant $c>0$ such that $\lambda_{Z_1} \leq_{\langle X \rangle} c \lambda_{Z_2}$.
\item\label{itm:lambdapullback} If $\varphi \colon X \to Y$ is a morphism of varieties, then we have $\lambda_{\varphi^{\ast} Z} =_{\langle X \rangle} \lambda_{Z} \circ \varphi$ for a closed subscheme $Z$ of $Y$.
\end{enumerate}
\end{proposition}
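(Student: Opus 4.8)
The plan is to reduce each of the five assertions to an elementary estimate with ideal generators on an affine chart, and then patch. The patching mechanism is always the same: since $X$ is projective, hence compact, fix a finite affine open cover $\{U_\epsilon\}$ of $X$ together with a finite cover $\{V_\mu\}$ by relatively compact opens such that each $\overline{V_\mu}$ lies in some $U_\epsilon$; a partition of unity subordinate to $\{V_\mu\}$ then assembles chartwise comparisons $\lambda_1 \leq_{\langle U_\epsilon \rangle} \lambda_2$ into a global comparison $\lambda_1 \leq_{\langle X \rangle} \lambda_2$, because on each compact $\overline{V_\mu}$ the holomorphic functions expressing one generating system of an ideal sheaf in terms of another are bounded. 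This is already the content of the fact that a Weil function exists and is well defined up to $=_{\langle X \rangle}$ (two finite generating systems of $\mathcal{I}_Z$ over a common affine chart are mutually expressible with holomorphic coefficients). Combined with the compatibility of $=_{\langle X \rangle}$ with $+$, $-$, $\min$ and $\max$ recorded above, this reduces the proposition to producing one admissible representative of each side and comparing them chart by chart.

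Fix an affine chart $U$ with $\mathcal{I}_{Z_1}(U) = (f_1, \dots, f_r)$ and $\mathcal{I}_{Z_2}(U) = (g_1, \dots, g_s)$, so that $\lambda_{Z_1} =_{\langle U \rangle} -\log \max_i |f_i|$ and $\lambda_{Z_2} =_{\langle U \rangle} -\log \max_j |g_j|$ by Definition \ref{def:lambda_Z}. For (i), Definition \ref{def:I_Z} gives that $\mathcal{I}_{Z_1 \cap Z_2}(U)$ is generated by $f_1, \dots, f_r, g_1, \dots, g_s$, and $-\log \max\{\max_i |f_i|, \max_j |g_j|\} = \min\{-\log \max_i |f_i|,\, -\log \max_j |g_j|\}$, so $\lambda_{Z_1 \cap Z_2} =_{\langle X \rangle} \min\{\lambda_{Z_1}, \lambda_{Z_2}\}$. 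For (ii), $\mathcal{I}_{Z_1 + Z_2}(U)$ is generated by the products $f_i g_j$, and $-\log \max_{i,j} |f_i g_j| = -\log \max_i |f_i| - \log \max_j |g_j|$, so $\lambda_{Z_1 + Z_2} =_{\langle X \rangle} \lambda_{Z_1} + \lambda_{Z_2}$. Assertion (iii) is then formal: $Z_1 \subset Z_2$ means $\mathcal{I}_{Z_2} \subset \mathcal{I}_{Z_1}$, hence $\mathcal{I}_{Z_1} + \mathcal{I}_{Z_2} = \mathcal{I}_{Z_1}$, i.e. $Z_1 \cap Z_2 = Z_1$, so (i) gives $\lambda_{Z_1} =_{\langle X \rangle} \min\{\lambda_{Z_1}, \lambda_{Z_2}\} \leq_{\langle X \rangle} \lambda_{Z_2}$. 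For (iv), $\Supp Z_1 \subset \Supp Z_2$ means $V(\mathcal{I}_{Z_2}) \subset V(\mathcal{I}_{Z_1})$, so Hilbert's Nullstellensatz gives $g_j \in \sqrt{\mathcal{I}_{Z_1}(U)}$ for each $j$; choosing an integer $N$ large enough and uniform over the finitely many charts, $g_j^N = \sum_i h_{ij} f_i$ with $h_{ij} \in \mathcal{O}_X(U)$, so $\max_j |g_j|^N \leq C \max_i |f_i|$ on any relatively compact subset, i.e. $\lambda_{Z_1} \leq N \lambda_{Z_2} + O(1)$ there, and one takes $c = N$. For (v), pick an affine $U' \subset Y$ with $\mathcal{I}_Z(U') = (g_1, \dots, g_s)$ and an affine $U \subset \varphi^{-1}(U')$; then $\mathcal{I}_{\varphi^{\ast} Z}(U)$ is generated by $g_1 \circ \varphi, \dots, g_s \circ \varphi$, so $-\log \max_j |g_j \circ \varphi| = \bigl(-\log \max_j |g_j|\bigr) \circ \varphi$, while the continuous comparison function for $\lambda_Z$ over $U'$ pulls back to a continuous one over $U$, giving $\lambda_{\varphi^{\ast} Z} =_{\langle X \rangle} \lambda_Z \circ \varphi$.

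The one genuinely delicate point is the passage from chartwise estimates to global ones in $\mathcal{C}(X)/=_{\langle X \rangle}$: one must check that the ambient continuous comparison functions, and — for (iv) — the exponent $N$, can be taken uniformly. Both rest on the compactness of $X$ and the finiteness of the chosen cover, together with the automatic local boundedness of the holomorphic coefficients relating two generating systems; this is exactly where the projectivity hypothesis is used, and the rest is bookkeeping. (All five properties are due to Silverman \cite{Silverman1987} and Yamanoi \cite{Yamanoi2004}, so one may alternatively just cite those sources.)
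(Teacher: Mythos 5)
Your argument is correct, but note that the paper itself gives no proof of this proposition at all: it is quoted verbatim from Silverman/Yamanoi with only a citation, so what you have written is precisely the elementary argument those references leave to the reader — compare ideals via local generators on a finite affine cover, use $\max_{i,j}|f_ig_j|=\max_i|f_i|\cdot\max_j|g_j|$ and $-\log\max\{\cdot,\cdot\}=\min\{-\log\cdot,-\log\cdot\}$ for (i)–(ii), the Nullstellensatz for (iv), and compactness of $X$ to make the comparison functions and the exponent $N$ uniform over the finitely many charts. Three small remarks. First, in (iv) you wrote the inclusion backwards: $\Supp Z_1 \subset \Supp Z_2$ means $V(\mathcal{I}_{Z_1}) \subset V(\mathcal{I}_{Z_2})$, hence $\sqrt{\mathcal{I}_{Z_2}} \subset \sqrt{\mathcal{I}_{Z_1}}$; the conclusion you then draw, $g_j \in \sqrt{\mathcal{I}_{Z_1}(U)}$ and $g_j^{N}=\sum_i h_{ij}f_i$, is the correct one, so the slip does not propagate, but the displayed inclusion $V(\mathcal{I}_{Z_2})\subset V(\mathcal{I}_{Z_1})$ should be fixed. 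Second, in (v) the statement is only meaningful when $\varphi(X) \not\subset \Supp Z$ (otherwise every $g_j\circ\varphi$ vanishes identically and $\varphi^{\ast}Z$ has zero ideal sheaf, so $\lambda_{\varphi^{\ast}Z}$ is not a Weil function); this hypothesis is implicit in the paper's formulation as well, but your proof should record it. Third, the partition of unity is harmless but unnecessary: since the definition of $\leq_{\langle X \rangle}$ only asks for one continuous $\alpha$ on $X$ with $\lambda_1 \leq \lambda_2 + \alpha$ on a nonempty Zariski open set, taking $\alpha$ to be the maximum of the finitely many chartwise constants obtained on the compact pieces $\overline{V_\mu}$ already suffices. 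With these adjustments the proof is complete and is the standard one behind the cited sources.
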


What's more, think of an effective Cartier divisor $D$ on $X$ as a locally principal closed subscheme of $X$.
Assume that $X$ as above is non-singular besides, and let $\sigma$ be a canonical section of $\mathcal{O}(D)$, namely, $(\sigma) = D$.
Choose a continuous Hermitian metric $h$ on the corresponding line bundle $\mathcal{O}(D)$ with the weight function $\varphi$.
Take an affine Zariski open subset $U \subset X$ such that $\mathcal{O}(D)_{\restriction U}$ can be trivialized.
Thanks to the continuity of $\varphi$ on $U$, if $\sigma_U$ is associated to $\sigma$ under a trivialization, then
\[
\log |\sigma_U| =_{\langle U \rangle} \log \|\sigma\|_h
.\]
Obviously, all such open subset $U$ give an affine Zariski open covering of $X$.
Therefore, a Weil function $\lambda_D$ for $D$ can be also defined as a real-valued continuous function on $X \setminus \Supp D$ satisfying
\begin{equation}\label{eqn:lambda_D}
\lambda_D =_{\langle X \rangle} - \log \|\sigma\|_h
.\end{equation}
\begin{remark}
More generally, if $D$ is a $\mathbb{Q}$-Cartier divisor and $\ell D$ is an effective Cartier divisor, then we can define $\lambda_D \bydef \dfrac{1}{\ell} \lambda_{\ell D}$.
This definition is well-defined on account of Proposition \ref{prp:lambda_Z}, \ref{itm:lambdasum}).
\end{remark}

\subsection{Notation and Terminology in Nevanlinna theory}\label{ssc:NTnotation}

From now on, we are going to prepare the notation and some well known results in Nevanlinna theory.

On a complex manifold $X$, the exterior derivative $\dif$ takes a complex function and yields a complex $1$-form.
It decomposes as $\dif = \partial+\bar{\partial}$ as complex $1$-forms decompose into $(1, 0)$-forms and $(0, 1)$-forms.
More concretely, take an open subset $U \subset X$ with a local coordinate system $z_1, \dots, z_n$, and we will write
\[
\partial f = \sum_{j=1}^n \frac{\partial f}{\partial z_j} \dif z_j, \qquad
\bar{\partial} f = \sum_{j=1}^n \frac{\partial f}{\partial \bar{z}_j} \dif \bar{z}_j
,\]
for a complex function $f$ on $U$.
These operators can be automatically extended to forms of higher degree (see Chapter 0, Section 2.3 of \cite{Griffiths1978} for details).
By convention, set $\dif^c = \dfrac{i}{4\pi}(\bar{\partial}-\partial)$.
We have $\ddif^c = \dfrac{i}{2\pi} \partial \bar{\partial}$.

Let us recall the singular Hermitian metric and the curvature current of a holomorphic line bundle.
\begin{definition}\cite{Demailly1992}
Let $L \to X$ be a holomorphic line bundle over a complex manifold $X$.
We say that $h$ is a singular metric on $L$ if for any local trivialization $L_{\restriction U} \cong U \times \mathbb{C}$ of $L$, the metric is given by
\begin{equation}\label{eqn:metrich}
\|\xi\|_h^2 \bydef |\xi|^2 e^{-\varphi}
\end{equation}
where $\varphi \in L^1_{\loc}(U)$ is a real valued function, called the weight function of the metric $h$ with respect to the such local trivialization.
We say that $h$ admits a closed subset $\Sing(h) \subset X$ as its degeneration set if $\varphi$ is locally bounded on $X \setminus \Sing(h)$ and is unbounded on a neighborhood of any point of $\Sing(h)$.
\end{definition}
If $\varphi'$ is the weight function of the metric $h$ with respect to another local trivialization, then we see that $\varphi' = \varphi + \log |g|$ where $g$ is the transition function between two local trivializations.
The curvature current of $L$ is then defined to be the closed $(1, 1)$-current $\ddif^c [\varphi]$, which is independent of choice of local trivializations.
And the assumption $\varphi \in L^1_{\loc}$ guarantees that $\ddif^c [\varphi]$ exists in the sense of distributions.

We will always assume here that the weight function $\varphi$ is quasi plurisubharmonic.
Recall that a function $\varphi$ is said to be quasi plurisubharmonic if $\varphi$ is locally the sum of a plurisubharmonic function and of a smooth function (so that in particular $\varphi \in L^1_{\loc}$).
Then the curvature current $\ddif^c [\varphi]$ is well-defined as a current and is locally bounded from below by a negative $(1, 1)$-form with constant coefficients.
In addition, $\ddif^c [\varphi]$ is in the first Chern class $c_1(L, h)$.

\begin{example}
Take a linear system $\mathfrak{S} \subset |L|$ on a projective complex manifold $X$.
One can define a natural (possibly singular) Hermitian metric $h_{\mathfrak S}$ on $L$, whose weight function $\varphi_{\mathfrak S}$ is given by
\begin{equation}\label{eqn:phi_S}
\varphi_{\mathfrak S} = \log \left(\sum_{i=0}^N |\varsigma_i|^2\right)
,\end{equation}
where $N = \dim \mathfrak{S}$, $\{\varsigma_i \mid i = 0, 1, \dots, N\}$ is a basis of $\mathbb{S}$, and $\mathbb{S} \subset H^0(X, L)$ is the linear subspace corresponding to $\mathfrak{S}$.
Obviously, $\Sing(h_{\mathfrak S}) = \Bs(\mathfrak{S})$.
Accordingly, the metric $h_{\mathfrak S}$ is smooth if and only if $\mathfrak{S}$ is base point free.
\end{example}

Observe that $|\sigma|^2 = e^{\varphi_{\mathfrak S}}$ in the particular case where $\mathfrak{S} = \{D\} \subset |D|$.
Taking logarithms on both sides of \eqref{eqn:metrich}, we immediately get
\[
- \log \|\sigma_D\|_h = \frac{1}{2}(\varphi - \varphi_{\mathfrak S})
,\]
for any (possibly singular) Hermitian metric $h$ on $\mathcal{O}(D)$ with the weighted function $\varphi$.
And it gives a Weil function for $D$ on account of \eqref{eqn:lambda_D}, if $h$ is a continuous Hermitian metric.

More generally, take a linear system $\mathfrak{S} \subset |L|$ on a projective complex manifold $X$.
Let us introduce the transparent notation $Z_{\mathfrak s}$ for the closed subscheme of $X$ with the ideal sheaf $\mathcal{I}_{Z_{\mathfrak s}} = \mathfrak{s}$, where $\mathfrak{s}$ is the base ideal of $\mathfrak{S}$.
We say that $Z_{\mathfrak s}$ is the scheme of base points of $\mathfrak{S}$.
Take a basis $\{\varsigma_i \mid i = 0, 1, \dots, N\}$ of $\mathbb{S}$, where $N = \dim \mathfrak{S}$, $\mathbb{S} \subset H^0(X, L)$ is the linear subspace corresponding to $\mathfrak{S}$.
It is evident that
\[
Z_{\mathfrak s} = \bigcap_{i=0}^N D_i
,\]
where $D_i = (\varsigma_i)$ for each $0 \leq i \leq N$.
According to Proposition \ref{prp:lambda_Z}, \ref{itm:lambdacap}) and Remark \ref{rmk:pnorm},
\begin{equation}\label{eqn:lambda_Zs}
\lambda_{Z_{\mathfrak s}} =_{\langle X \rangle} - \log \mathop{\max}_{0 \leq i \leq N} \|\varsigma_i\|_h =_{\langle X \rangle} \frac{1}{2}(\varphi - \varphi_{\mathfrak S})
,\end{equation}
where $\varphi$ is the weighted function of a continuous Hermitian metric $h$ on $L$.

\begin{definition}\label{def:currentcount}
Let $\Phi \in \mathcal{D}'^{(1, 1)}(\mathbb{C})$ be a current of bidegree $(1, 1)$ (equivalently, of bidimension $(0, 0)$) such that $\dif \Phi = 0$ and $\Phi$ is representable by integration. One defines
\[
\mathcal{N}(\Phi, r) \bydef \Phi\left(\log^{+}\!\frac{r}{|z|}\right) = \int_0^r \frac{\dif t}{t} \int_{\bigtriangleup(t)} \!\Phi
,\]
where $\log^{+}\!t \bydef \max \{\log t, 0\}$ for any $t \in \mathbb{R}_{+}$.
\end{definition}

Now, let $f \colon \mathbb{C} \to X$ be a holomorphic curve in a projective manifold $X$.
Let $Z \subset X$ be a closed subscheme such that $f(\mathbb{C}) \not\subset \Supp Z$.
Choose a Weil function $\lambda_Z$ for $Z$.
Thereupon one defines the proximity function of $f$ for $Z$ by
\[
m_f(r, Z) \bydef \int_{\partial \bigtriangleup(r)} \!\lambda_Z \circ f \dif^c |z|^2 = \int_0^{2\pi} \!\lambda_Z \circ f(re^{i\theta}) \frac{\dif \theta}{2\pi}
.\]
According to Proposition \ref{prp:lambda_Z}, \ref{itm:lambdasubset}), we have $0 \leq_{\langle X \rangle} \lambda_Z$.
By definition, there is a continuous function $\alpha$ defined on $X$ and satisfies that $0 \leq \lambda_Z + \alpha$ on some non-empty Zariski open subset.
Thanks to the compactness of $X$, $\alpha$ is indeed bounded on $X$.
Consequently, $0 \leq m_f(r, Z) + O(1)$.
Here $O(\bullet)$ is the Landau symbol ``big-O''.
Consider a non-negative real-valued function $\varphi \colon \mathbb{R}_{\geq 0} \to \mathbb{R}_{\geq 0}$.
$O(\varphi)$ represents any quantity $\psi \colon \mathbb{R}_{\geq 0} \to \mathbb{R}$ satisfying that
\[
|\psi(r)| \leq c \,\varphi(r)
\]
for some constant $c > 0$ and all $r \in \mathbb{R}_{\geq 0}$.
In particular, $O(1)$ denotes all bounded real-valued functions on $\mathbb{R}_{\geq 0}$.
If choosing another Weil function $\lambda'_Z$ for $Z$, we obtain a new function $m'_f(r, Z)$.
Likewise, we see that $m_f(r, Z) = m'_f(r, Z) + O(1)$.
Hence we prefer to consider the proximity functions as functions modulo $O(1)$.

Next, let $f$ and $Z$ be as above. One also defines the counting function of $f$ for $Z$ by
\[
N_f(r, Z) \bydef \mathcal{N}(f^{\ast}[Z], r)
.\]

Finally, one defines the characteristic function (or order function) of $f$ for $Z$ by
\[
T_f(r, Z) \bydef m_f(r, Z) + N_f(r, Z)
.\]

\subsection{First main theorem}\label{ssc:FMT}

Now, investigate the particular case where $Z$ is a (Cartier) divisor $D$.
From \eqref{eqn:lambda_D}, it allows us to choose the Weil function $\lambda_D = - \log \|\sigma\|_h$ for some continuous Hermitian metric $h$ on $\mathcal{O}(D)$.
We thus write
\[
m_f(r, D) = - \int_0^{2\pi} \!\log \|\sigma \circ f(re^{i\theta})\|_h \frac{\dif \theta}{2\pi}
,\]
where $\sigma$ be a canonical section of $\mathcal{O}(D)$.
Let us recall the Poincaré-Lelong formula (cf. \cite{Lelong1964}) and the Green-Jensen formula, which play important roles in Nevanlinna theory.
\begin{theorem}[Poincaré-Lelong formula]
Let $f \not\equiv 0$ be a meromorphic function on a complex manifold $X$ of dimension $n$.
Let $\eta$ be a $(2n-2)$-form of $C^2$-class on $X$ with compact support. Then
\[
\int_X \log |f|^2 \ddif^c \eta = \int_{D_f} \!\eta
,\]
which amounts to saying that, in the sense of currents,
\[
\ddif^c[\log |f|^2] = [D_f]
,\]
where $D_f$ is a divisor locally expressed by $(f_1) - (f_2)$, if $f$ is locally expressed by the ratio $f = \dfrac{f_1}{f_2}$ with $f_2 \not\equiv 0$.
\end{theorem}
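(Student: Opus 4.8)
The plan is to prove the Poincaré--Lelong formula by reducing it to a one-variable computation and then propagating the resulting identity across the singular locus of $D_f$. Since the statement $\ddif^c[\log|f|^2] = [D_f]$ is local and both sides are additive under multiplication of $f$ (using $D_{fg}=D_f+D_g$ together with additivity of $\ddif^c$ applied to $L^1_{\loc}$ functions), I would first cover $X$ by coordinate charts, choose a subordinate partition of unity, and reduce to the case where $X$ is a polydisc and $f$ is holomorphic --- writing $f=f_1/f_2$ locally disposes of the meromorphic case. Along the way one checks that $\log|f|^2\in L^1_{\loc}$, which is immediate because $\log|f|$ is plurisubharmonic, so that the current $[\log|f|^2]$ and hence $\ddif^c[\log|f|^2]$ are well defined.

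Next, for $f$ a nonzero holomorphic function on a polydisc, I would work first on the complement of $S\bydef\Sing(\Supp D_f)$, an analytic subset of complex codimension at least $2$. Near any point of the smooth hypersurface $\Supp D_f\setminus S$ one may choose coordinates $(z_1,\dots,z_n)$ so that $f=z_1^m u$ with $u$ holomorphic and nowhere zero; then $\log|f|^2 = m\log|z_1|^2 + \log|u|^2$, the second summand being pluriharmonic and hence annihilated by $\ddif^c$. Applying Fubini in the $z_1$-direction, the claim reduces to the classical one-dimensional identity $\ddif^c[\log|z|^2]=\delta_0$ on $\mathbb{C}$ (the current of integration at the origin). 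I would prove this by applying Stokes' theorem on $\{|z|>\varepsilon\}$ and letting $\varepsilon\to 0$: the area integral drops out since $\log|z|^2$ is harmonic off the origin, and of the two boundary integrals over $\{|z|=\varepsilon\}$ one tends to $0$ while the other tends to the value of the test function at $0$. This establishes $\ddif^c[\log|f|^2]=[D_f]$ as currents on $X\setminus S$.

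Finally, I would set $T\bydef \ddif^c[\log|f|^2]-[D_f]$; this is a closed $(1,1)$-current with measure coefficients (hence of order zero), and by the previous step it is supported on $S$. Since $S$ has real Hausdorff dimension at most $2n-4$, strictly less than $2n-2$, the support theorem for order-zero currents forces $T=0$. Pairing with a compactly supported $C^2$ form $\eta$ of degree $2n-2$ then yields the integral form $\int_X\log|f|^2\,\ddif^c\eta=\int_{D_f}\eta$. The main obstacle is precisely this last extension step: it requires the support theorem (an order-zero current of dimension $2n-2$ cannot be carried by a set of strictly smaller dimension) together with the standard fact that the bad locus of an effective divisor has complex codimension at least $2$. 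The one-variable Dirac computation is the computational heart of the argument, and it is exactly what pins down the normalizing constant $\tfrac{i}{4\pi}$ in the definition of $\dif^c$.
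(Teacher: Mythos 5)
Your proof is correct, and there is nothing in the paper to compare it against: the paper does not prove this theorem, it only cites Theorem 5.1.13 of \cite{Noguchi1990} (equivalently Theorem 2.2.16 of \cite{Noguchi2014}), and those references argue along the same classical Lelong lines you follow — localize and reduce to a holomorphic $f$ on a polydisc, check $\log|f|^2\in L^1_{\loc}$ via plurisubharmonicity, establish the one-variable identity $\ddif^c[\log|z|^2]=\delta_0$ (which, as you say, is where the normalization $\ddif^c=\tfrac{i}{2\pi}\partial\bar\partial$ is pinned down), slice by Fubini near the smooth points of $\Supp D_f$, and then extend across the singular locus, which has complex codimension at least $2$.

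One step deserves a more precise statement. An order-zero current of bidimension $(n-1,n-1)$ can perfectly well be carried by a set of real dimension $2n-4$ (a point mass tensored with a constant $(1,1)$-form is an example), so ``order zero plus small support'' alone does not force $T=0$. What rescues the argument is exactly the closedness you invoke: $T=\ddif^c[\log|f|^2]-[D_f]$ has measure coefficients (each $\ddif^c[\log|f_i|^2]$ is a closed \emph{positive} current since $\log|f_i|$ is plurisubharmonic, and positive currents have measure coefficients, while $[D_f]$ is an integration current) and satisfies $\dif T=0$, hence $T$ is a \emph{normal} current of dimension $2n-2$; Federer's support theorem for normal (or flat) currents then says that such a current supported on a set of vanishing $\mathcal{H}^{2n-2}$-measure vanishes, and $\Sing(\Supp D_f)$, being analytic of complex codimension $\geq 2$, qualifies. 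So cite the support theorem in that form, not merely for ``order-zero currents''. Alternatively, the codimension-two fact lets you bypass the support theorem altogether: run Stokes' theorem against cutoff functions $\chi_\varepsilon$ vanishing near $\Sing(\Supp D_f)$ and equal to $1$ outside an $\varepsilon$-neighbourhood, chosen so that the boundary error terms tend to $0$ as $\varepsilon\to 0$; this is closer in spirit to the proofs in the cited references. Either way your overall structure is sound and complete.
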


\begin{proof}
See Theorem 5.1.13 in \cite{Noguchi1990} or Theorem 2.2.16 in \cite{Noguchi2014}.
\end{proof}

Moreover, we also have the following theorem as an application of the Poincaré-Lelong formula to global sections. 
\begin{theorem}\label{thm:PLsigma}
Let $X$ be a complex manifold, let $D$ be an effective divisor on $X$ with $D = (\sigma)$. Choose a continuous Hermitian metric $h$ on $\mathcal{O}(D)$ with the weight function $\varphi$. Then $\log \|\sigma\|_h^2 \in L^1_{\loc}$ and
\[
\ddif^c [\log \|\sigma\|_h^2] = [D] - [\ddif^c \varphi]
.\]
\end{theorem}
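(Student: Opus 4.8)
The plan is to reduce the global statement to a local computation and then invoke the Poincaré-Lelong formula recalled above. Fix a point of $X$ and a connected open neighborhood $U$ over which $\mathcal{O}(D)$ is trivialized; let $\sigma_U \in \mathcal{O}(U)$ be the holomorphic function representing $\sigma$ in this trivialization, so that the divisor of $\sigma_U$ on $U$ is $D \cap U$, and let $\varphi$ be the weight function of $h$ with respect to the same trivialization. By the definition of the (singular, here continuous) metric $h$ one has $\|\sigma\|_h^2 = |\sigma_U|^2 e^{-\varphi}$ on $U$, hence
\[
\log \|\sigma\|_h^2 = \log |\sigma_U|^2 - \varphi \qquad \text{on } U .
\]

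First I would settle the integrability assertion. Since $D$ is an effective divisor, $\sigma$ is a non-zero section, so $\sigma_U \not\equiv 0$ on the connected set $U$; thus $\log |\sigma_U|$ is plurisubharmonic and $\not\equiv -\infty$, whence $\log |\sigma_U|^2 \in L^1_{\loc}(U)$. As $\varphi$ is continuous, a fortiori $\varphi \in L^1_{\loc}(U)$, and the displayed identity yields $\log \|\sigma\|_h^2 \in L^1_{\loc}(U)$. Covering $X$ by such neighborhoods gives $\log \|\sigma\|_h^2 \in L^1_{\loc}$.

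Next I would apply $\ddif^c$ in the sense of currents to the displayed identity on $U$. By linearity of $\ddif^c$ on $L^1_{\loc}$,
\[
\ddif^c[\log \|\sigma\|_h^2] = \ddif^c[\log |\sigma_U|^2] - \ddif^c[\varphi] \qquad \text{on } U .
\]
Applying the Poincaré-Lelong formula to the holomorphic function $\sigma_U$ (a meromorphic function with no poles, so $D_{\sigma_U} = (\sigma_U)$) gives $\ddif^c[\log |\sigma_U|^2] = [D]$ on $U$. By definition the term $\ddif^c[\varphi]$ is the curvature current, which the statement writes $[\ddif^c\varphi]$. Hence $\ddif^c[\log \|\sigma\|_h^2] = [D] - [\ddif^c\varphi]$ holds on $U$.

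Finally I would observe that both sides of this identity are intrinsically defined on $X$: $\log \|\sigma\|_h^2$ is a global $L^1_{\loc}$ function (the transition factors of $|\sigma_U|^2$ and of $e^{-\varphi}$ cancel), $[D]$ is a global current, and the curvature current is globally well-defined since a change of trivialization replaces $\varphi$ by $\varphi + \log |g|^2$ with $g$ holomorphic and nowhere zero, a pluriharmonic correction annihilated by $\ddif^c$. Since the identity of currents holds on every member of an open cover of $X$, it holds on $X$. I do not expect a genuine obstacle here: the only points that need a little care are the $L^1_{\loc}$-membership of $\log |\sigma_U|^2$ and the verification that the local identities patch, the analytic content being entirely contained in the Poincaré-Lelong formula already recalled.
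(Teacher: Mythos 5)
Your proposal is correct. The paper itself does not prove this statement but simply cites Lemma 1.12 of Carlson--Griffiths \cite{Carlson1972}; your argument is the standard one underlying that reference: locally write $\log\|\sigma\|_h^2 = \log|\sigma_U|^2 - \varphi$, use plurisubharmonicity of $\log|\sigma_U|$ for the $L^1_{\loc}$ claim, apply the Poincar\'e--Lelong formula to $\sigma_U$, and patch via the invariance of both sides under change of trivialization. All steps are sound, so your write-up is a valid self-contained substitute for the citation.
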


\begin{proof}
See Lemma 1.12 in \cite{Carlson1972}.
\end{proof}

\begin{theorem}[Green-Jensen formula]
Let $g \colon \mathbb{C} \to [-\infty, \infty]$ be a function on $\mathbb{C}$ such that $\ddif^c [g]$ is of order zero, that is to say, $g$ can be locally written as a difference of two subharmonic functions.
Suppose that $g(0)$ is finite. Then
\[
\mathcal{N}(\ddif^c [g], r) = \frac{1}{2} \left(\int_0^{2\pi} \! g(re^{i\theta}) \frac{\dif \theta}{2\pi} - g(0)\right)
.\]
\end{theorem}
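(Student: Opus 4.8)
The plan is to compute the left-hand side directly from Definition \ref{def:currentcount}. By definition $\mathcal{N}(\ddif^c[g], r) = \ddif^c[g]\bigl(\log^{+}\!\tfrac{r}{|z|}\bigr)$, so if $\log^{+}\!\tfrac{r}{|z|}$ were a smooth compactly supported test form this would equal $[g]\bigl(\ddif^c\log^{+}\!\tfrac{r}{|z|}\bigr)$ just by the definition of $\ddif^c$ acting on currents. The whole argument thus splits into two parts: establishing this ``integration by parts''
\[
\ddif^c[g]\Bigl(\log^{+}\!\tfrac{r}{|z|}\Bigr) \;=\; [g]\Bigl(\ddif^c\log^{+}\!\tfrac{r}{|z|}\Bigr)
\]
for the merely continuous weight at hand, and identifying the current $\ddif^c\log^{+}\!\tfrac{r}{|z|}$.

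For the identification I claim that, as currents on $\mathbb{C}$,
\[
\ddif^c\log^{+}\!\tfrac{r}{|z|} \;=\; \tfrac12\bigl(\mu_r - \delta_0\bigr),
\]
where $\delta_0$ is the Dirac mass at $0$ and $\mu_r$ is the rotation-invariant probability measure on $\partial\bigtriangleup(r)$. This is elementary: $\log^{+}\!\tfrac{r}{|z|}$ equals the harmonic function $\log r - \log|z|$ on $\bigtriangleup(r)\setminus\{0\}$ and vanishes identically off $\overline{\bigtriangleup(r)}$, so by the Poincaré--Lelong formula applied to the holomorphic function $z$ — whence $\ddif^c[\log|z|^2]=\delta_0$, i.e. $\ddif^c[\log|z|]=\tfrac12\delta_0$ — the only contributions to $\ddif^c\log^{+}\!\tfrac{r}{|z|}$ are the pole at $0$, producing $-\tfrac12\delta_0$, and the jump of the radial derivative across $|z|=r$ (it equals $-1/r$ on the inside and $0$ on the outside), which a Green-formula computation turns into $\tfrac12\mu_r$; alternatively one simply tests $\log^{+}\!\tfrac{r}{|z|}$ against $\ddif^c\varphi$ for $\varphi\in C_c^\infty(\mathbb{C})$ and integrates by parts on the annuli $\{\varepsilon<|z|<r\}$ and $\{|z|>r\}$. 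Pairing this current with $[g]$ gives $[g]\bigl(\ddif^c\log^{+}\!\tfrac{r}{|z|}\bigr)=\tfrac12\bigl(\int_0^{2\pi} g(re^{i\theta})\tfrac{\dif\theta}{2\pi} - g(0)\bigr)$, which is exactly the asserted formula.

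The substance is therefore the transfer identity, and the difficulty is that $u \bydef \log^{+}\!\tfrac{r}{|z|}$ is only continuous: it has a corner on $|z|=r$ and blows up like $-\log|z|$ at the origin, so the identity is not the bare distributional statement. I would prove it by regularization. Locally write $g = g_1 - g_2$ with $g_i$ subharmonic, and set $g_{i,\varepsilon} = g_i * \rho_\varepsilon$, smooth subharmonic functions decreasing to $g_i$; for $g_\varepsilon = g_{1,\varepsilon} - g_{2,\varepsilon}$ the transfer identity is the classical Jensen computation (Green's theorem on $\bigtriangleup(r)\setminus\bigtriangleup(\delta)$, letting $\delta\to0$). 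Then let $\varepsilon\to0$: $g_\varepsilon\to g$ in $L^1_{\loc}$, $\ddif^c[g_\varepsilon]\to\ddif^c[g]$ weakly as signed measures, and both circle means and the values at $0$ converge by monotonicity of each $g_{i,\varepsilon}$. The one genuinely delicate step — where the hypothesis ``$g(0)$ finite'' plays its essential role — is the uniform domination needed to pass $\ddif^c[g_\varepsilon](u)$ to the limit near the origin, namely $\sup_\varepsilon\int_{\bigtriangleup(\delta)}\log\tfrac1{|z|}\,\dif\bigl|\ddif^c[g_\varepsilon]\bigr| \to 0$ as $\delta\to0$: by the Riesz decomposition near $0$, finiteness of $g(0)$ is equivalent to finiteness of the logarithmic potentials $\int\log\tfrac1{|w|}\,\dif\mu_i(w)$ of the Riesz measures $\mu_i = \ddif^c[g_i]$, and this supplies precisely the required bound (it also yields $\lim_{t\to0}\int_0^{2\pi} g(te^{i\theta})\tfrac{\dif\theta}{2\pi}=g(0)$, so the limit of the regularized formulas really is the claimed one). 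Finally, a difference of subharmonic functions is integrable on every circle, so the right-hand side makes sense for all $r$, and since $\log^{+}\!\tfrac{r}{|z|}$ vanishes on $\partial\bigtriangleup(r)$ no difficulty arises even if $\ddif^c[g]$ should charge that circle.
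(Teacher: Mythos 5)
Your argument is correct, and it is a genuinely different route from the paper's, because the paper gives no proof at all: it simply invokes the one-variable case of Corollary 2.1.37 in Noguchi--Winkelmann, where the formula is obtained from the classical Jensen/Riesz formula for differences of subharmonic functions (essentially Fubini on $\int_0^r \frac{\dif t}{t}\int_{\bigtriangleup(t)}$ together with the Riesz representation, the hypothesis on $g(0)$ entering as the vanishing of the Lelong number at the origin). Your proof instead works dually: you identify the current $\ddif^c \log^{+}\frac{r}{|z|}=\frac{1}{2}(\mu_r-\delta_0)$ (the constants check out under the paper's normalization $\ddif^c=\frac{i}{2\pi}\partial\bar\partial$, so $\ddif^c[\log|z|]=\frac12\delta_0$ and the jump $1/r$ of the radial derivative contributes $\frac{1}{4\pi}\,\dif\theta=\frac12\,\frac{\dif\theta}{2\pi}$), and you justify the pairing $\ddif^c[g](\log^{+}\frac{r}{|z|})=[g](\ddif^c\log^{+}\frac{r}{|z|})$ by mollifying the subharmonic pieces and passing to the limit; the only delicate point, the uniform smallness of $\int_{\bigtriangleup(\delta)}\log\frac{1}{|z|}\,\dif|\ddif^c[g_\varepsilon]|$, is correctly reduced (using that $\log\frac1{|\cdot|}\ast\rho_\varepsilon\leq\log\frac1{|\cdot|}$ by superharmonicity, so the mollified Riesz measures are dominated by the original potentials) to the finiteness of the logarithmic potentials of the Riesz measures at $0$, which is exactly what $g(0)$ finite provides. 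Two remarks: (i) your Riesz-decomposition step needs ``$g(0)$ finite'' to mean that \emph{both} $g_1(0)$ and $g_2(0)$ are finite in some local decomposition $g=g_1-g_2$ near $0$, which is the natural reading of the hypothesis and matches the Lelong-number condition the paper quotes, but is worth stating; (ii) your closing observation that $\log^{+}\frac{r}{|z|}$ vanishes on $\partial\bigtriangleup(r)$, so possible mass of $\ddif^c[g]$ on that circle causes no discrepancy between the two expressions for $\mathcal{N}$, is a genuine point that the bare citation leaves implicit. What each approach buys: yours is self-contained, stays in one variable, and makes visible exactly where the hypothesis at the origin is used; the paper's citation is shorter and the reference covers the higher-dimensional statement as well.
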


\begin{proof}
Actually, this theorem is just the one variable case of Corollary 2.1.37 in \cite{Noguchi2014} with the supposition that the Lelong number of $\ddif^c [g]$ at the origin $0$ is zero.
\end{proof}

Suppose $f(0) \notin D$. It follows from the Green-Jensen formula and Theorem \ref{thm:PLsigma} that
\begin{align*}
m_f(r, D) &= - \frac{1}{2} \int_0^{2\pi} \!\log \|\sigma \circ f(re^{i\theta})\|_h^2 \frac{\dif \theta}{2\pi} \\
&= - \mathcal{N}(f^{\ast} \ddif^c [\log \|\sigma\|_h^2], r) - \log \|\sigma \circ f(0)\|_h \\
&= \mathcal{N}(f^{\ast} [\ddif^c \varphi], r) - \mathcal{N}(f^{\ast} [D], r) - \log \|\sigma \circ f(0)\|_h
.\end{align*}
Therefore,
\begin{equation}\label{eqn:FMT}
T_f(r, D) = \mathcal{N}(f^{\ast} [\ddif^c \varphi], r) - \log \|\sigma \circ f(0)\|_h
.\end{equation}
This implies that as a function modulo $O(1)$, $T_f(r, D)$ merely depends on the line bundle $\mathcal{O}(D)$ associated to the divisor $D$.
Accordingly, for any holomorphic line bundle $L$, one can define the characteristic function modulo $O(1)$ of $f$ for $L$ by
\[
T_f(r, L) \bydef \mathcal{N}(f^{\ast} [\ddif^c \varphi], r)
,\]
where $\varphi$ is the weight function of some continuous Hermitian metric $h$ on $L$.
If choosing another continuous Hermitian metric $h'$ on $L$ with the weight function $\varphi'$ we obtain a new function $T'_f(r, L)$.
Thanks to the compactness of $X$, $\varphi-\varphi'$ is a bounded function on $X$.
By the Green-Jensen formula,
\[
T_f(r, L) - T'_f(r, L) = \frac{1}{2} \left(\int_0^{2\pi} \!(\varphi(re^{i\theta})-\varphi'(re^{i\theta})) \frac{\dif \theta}{2\pi} - \varphi(0) - \varphi'(0)\right) = O(1)
.\]
Consequently, $T_f(r, L)$ is well-defined as a function modulo $O(1)$.

\begin{remark}\label{rmk:zero}
Actually, the discussion above is still valid for the case where $f(0) \in D$.
If $f(0) \in D$, then we can write $\|\sigma \circ f(z)\|_h^2 = |z|^{2m} g^2(z)$ with $g(0) \neq 0$, where $\sigma \circ f$ has a zero of order $m$ at the origin $0$.
Actually, $m$ is the Lelong number of the current $\ddif^c [\log \|\sigma \circ f\|_h^2]$ at the origin $0$.
Hence
\begin{align*}
m_f(r, D) &= - \frac{1}{2} \int_0^{2\pi} \!\log g^2(re^{i\theta}) \frac{\dif \theta}{2\pi} - \frac{1}{2} \int_0^{2\pi} \!\log |re^{i\theta}z|^{2m} \frac{\dif \theta}{2\pi} \\
&= - \mathcal{N}(\ddif^c [\log g^2], r) - \mathcal{N}(\ddif^c [\log |z|^{2m}], r) - \log |g(0)| \\
&= - \mathcal{N}(f^{\ast} \ddif^c [\log \|\sigma\|_h^2], r) - \log |g(0)| \\
&= \mathcal{N}(f^{\ast} [\ddif^c \varphi], r) - \mathcal{N}(f^{\ast} [D], r) - \log |g(0)|
.\end{align*}
\end{remark}

Furthermore, by \eqref{eqn:FMT}, one easily concludes the first main theorem (FMT for short) in Nevanlinna Theory.
\begin{theorem}[FMT]
Let $X$ be a projective manifold, and let $L$ be a (holomorphic) line bundle on $X$.
Take a Cartier divisor $D$ on $X$ such that $\mathcal{O}(D) = L$. Take two effective divisors $D_1$ and $D_2$ satisfying $D = D_1 - D_2$, if necessary. Then
\[
T_f(r, L) = T_f(r, D_1) - T_f(r, D_2) + O(1)
,\]
for any holomorphic curve $f \colon \mathbb{C} \to X$ such that $f(\mathbb{C}) \not\subset \Supp D$.
\end{theorem}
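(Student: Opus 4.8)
The plan is to reduce the general Cartier divisor case to the effective case already encoded in equation \eqref{eqn:FMT}. First I would write $D = D_1 - D_2$ with $D_1, D_2$ effective divisors; such a decomposition always exists since any Cartier divisor is a difference of effective ones (and the statement itself allows us to introduce such $D_1, D_2$). Since $f(\mathbb{C}) \not\subset \Supp D$, after possibly adjusting the decomposition we may assume $f(\mathbb{C}) \not\subset \Supp D_1 \cup \Supp D_2$, so that the proximity, counting, and characteristic functions $T_f(r, D_1)$ and $T_f(r, D_2)$ are all defined modulo $O(1)$.

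Next I would invoke the formula \eqref{eqn:FMT}, which reads $T_f(r, D') = \mathcal{N}(f^{\ast}[\ddif^c \varphi'], r) - \log\|\sigma' \circ f(0)\|_{h'}$ for any effective divisor $D' = (\sigma')$ equipped with a continuous Hermitian metric $h'$ on $\mathcal{O}(D')$ of weight $\varphi'$. Apply this to $D_1$ with a metric $h_1$ of weight $\varphi_1$ on $\mathcal{O}(D_1)$, and to $D_2$ with a metric $h_2$ of weight $\varphi_2$ on $\mathcal{O}(D_2)$. Then choose the metric $h = h_1 \otimes h_2^{-1}$ on $L = \mathcal{O}(D) = \mathcal{O}(D_1) \otimes \mathcal{O}(D_2)^{-1}$, whose weight function is $\varphi = \varphi_1 - \varphi_2$; by definition $T_f(r, L) = \mathcal{N}(f^{\ast}[\ddif^c \varphi], r)$. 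Since the curvature current operator $\ddif^c[\bullet]$ is linear and $\mathcal{N}(\cdot, r)$ is linear in its current argument, we get
\[
T_f(r, L) = \mathcal{N}(f^{\ast}[\ddif^c \varphi_1], r) - \mathcal{N}(f^{\ast}[\ddif^c \varphi_2], r),
\]
and substituting \eqref{eqn:FMT} for each term yields $T_f(r, L) = T_f(r, D_1) - T_f(r, D_2) + \log\|\sigma_1 \circ f(0)\|_{h_1} - \log\|\sigma_2 \circ f(0)\|_{h_2}$, where the last two terms are a fixed constant, hence $O(1)$.

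The main point requiring care — though it is not really an obstacle, just bookkeeping — is the well-definedness of all quantities modulo $O(1)$ and the independence of the final identity from the choices of decomposition $D = D_1 - D_2$ and of the metrics. This independence has already been established in the text immediately before the theorem statement (the Green–Jensen computation showing $T_f(r, L) - T_f'(r, L) = O(1)$ for two continuous metrics, and the analogous remark for Weil functions), so it suffices to cite that discussion. One should also note Remark \ref{rmk:zero} to cover the case $f(0) \in \Supp D_1 \cup \Supp D_2$, where the evaluation $\log\|\sigma_i \circ f(0)\|_{h_i}$ must be replaced by $\log|g_i(0)|$ after factoring out the zero of order equal to the Lelong number; the argument is otherwise unchanged and the resulting discrepancy is still a constant.
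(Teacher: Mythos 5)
Your proposal is correct and follows essentially the same route as the paper, which deduces the theorem directly from \eqref{eqn:FMT}: apply that identity to each effective divisor $D_1, D_2$, use linearity of $\ddif^c[\bullet]$ and of $\mathcal{N}(\cdot, r)$ under the product metric on $L = \mathcal{O}(D_1)\otimes\mathcal{O}(D_2)^{-1}$, and absorb the evaluation terms at $0$ (via Remark \ref{rmk:zero} when needed) into $O(1)$. Your bookkeeping remarks on the choice of decomposition and metrics match the discussion preceding the theorem in the text.
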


\subsection{Crofton's formula}

Let us consider the unitary group $\textnormal{U}_{n+1}$, which acts on the left on $\Gr(k, \mathbb{P}^n) \cong \Gr(k+1, n+1)$, and especially on $\mathbb{P}^n \cong \Gr(1, n+1)$.
Technically, one can replace $\textnormal{U}_{n+1}$ by the special unitary group $\textnormal{SU}_{n+1}$, which acts effectively on $\Gr(k, \mathbb{P}^n)$ and on $\mathbb{P}^n$.
Let $\mu$ denote the Haar measure on $\textnormal{U}_{n+1}$ normalized such that $\mu(\textnormal{U}_{n+1}) = 1$.

Shiffman \cite{Shiffman1974} established the following lemma as the central ingredient in his proof of Crofton's formula (on complex manifolds) and its generalizations.
We call this lemma Crofton's formula for currents on $\mathbb{P}^n$.
\begin{lemma}
Let $\Phi \in \mathcal{D}'^{(k, k)}(\mathbb{P}^n)$ be a current of bidegree $(k, k)$. For any $\eta \in \mathcal{D}^{(n-k, n-k)}(\mathbb{P}^n)$,
\[
\int_{\textnormal{U}_{n+1}} \hspace{-1eM} \Phi(g^{\ast} \eta) \dif \mu(g) = c \int_{\mathbb{P}^n} \!\omega^k \wedge \eta
,\]
where $c = \Phi(\omega^{n-k})$ and $\omega$ is the Kähler form associated to the Fubini-Study metric on $\mathbb{P}^n$.
If $\Phi$ is representable by integration, then the above formula holds for all $(n-k, n-k)$-form $\eta$ of $C^0$-class on $\mathbb{P}^n$ with compact support.
In other words,
\[
\int_{\textnormal{U}_{n+1}} \hspace{-1eM} g_{\ast} \Phi \dif \mu(g) = c [\omega^k]
,\]
and it is valid for $C^0$ forms, if $\Phi$ is representable by integration.
\end{lemma}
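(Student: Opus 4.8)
The plan is to study the averaged current $\Psi \bydef \int_{\textnormal{U}_{n+1}} g_{\ast}\Phi\,\dif\mu(g)$, that is, the functional $\eta \mapsto \int_{\textnormal{U}_{n+1}} \Phi(g^{\ast}\eta)\,\dif\mu(g)$ on $\mathcal{D}^{(n-k,n-k)}(\mathbb{P}^n)$. Since $g \mapsto g^{\ast}\eta$ is continuous from the compact group $\textnormal{U}_{n+1}$ into the space of smooth forms (in its Fréchet topology), this integral converges and defines $\Psi \in \mathcal{D}'^{(k,k)}(\mathbb{P}^n)$. First I would note that $\Psi$ is $\textnormal{U}_{n+1}$-invariant: because $(ab)_{\ast} = a_{\ast}b_{\ast}$ for $a, b \in \textnormal{U}_{n+1}$, for each $h \in \textnormal{U}_{n+1}$ one has $h_{\ast}\Psi = \int_{\textnormal{U}_{n+1}} (hg)_{\ast}\Phi\,\dif\mu(g) = \Psi$, the last step by the (left-, hence bi-)invariance of the Haar measure $\mu$ on the compact group $\textnormal{U}_{n+1}$.

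Next I would upgrade $\Psi$ to a smooth form. Fix a smooth approximate identity $(\rho_{\varepsilon})_{\varepsilon > 0}$ on $\textnormal{U}_{n+1}$ with $\int_{\textnormal{U}_{n+1}} \rho_{\varepsilon}\,\dif\mu = 1$, and put $\Phi_{\varepsilon} \bydef \int_{\textnormal{U}_{n+1}} \rho_{\varepsilon}(h)\,h_{\ast}\Phi\,\dif\mu(h)$. Since the action map $\textnormal{U}_{n+1} \times \mathbb{P}^n \to \mathbb{P}^n$ is smooth and transitive, averaging a current against a smooth density over the group regularizes it, so $\Phi_{\varepsilon}$ is a smooth $(k,k)$-form. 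A Fubini computation — using $(gh)_{\ast} = g_{\ast}h_{\ast}$, the substitution $g' = gh$, and the right-invariance of $\mu$ — gives $\int_{\textnormal{U}_{n+1}} g_{\ast}\Phi_{\varepsilon}\,\dif\mu(g) = \int_{\textnormal{U}_{n+1}} g_{\ast}\Phi\,\dif\mu(g) = \Psi$. Hence $\Psi$ is the $\textnormal{U}_{n+1}$-average of the smooth form $\Phi_{\varepsilon}$, and is therefore itself a smooth $\textnormal{U}_{n+1}$-invariant form of bidegree $(k,k)$ on $\mathbb{P}^n$.

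Then I would invoke the classical fact that the space of $\textnormal{U}_{n+1}$-invariant smooth $(k,k)$-forms on $\mathbb{P}^n$ is one-dimensional, spanned by $\omega^k$: transitivity of the action reduces an invariant form to its value at one point $x_0$, and the isotropy group acts on $\Lambda^{k,k}T^{\ast}_{x_0}\mathbb{P}^n$ with a one-dimensional space of invariants, generated by $\omega_{x_0}^{k}$. Thus $\Psi = \lambda\,\omega^k$ for some $\lambda \in \mathbb{C}$. To identify $\lambda$, I pair both sides with $\omega^{n-k}$: since $\omega$ is $\textnormal{U}_{n+1}$-invariant, $g^{\ast}\omega^{n-k} = \omega^{n-k}$, so $\Psi(\omega^{n-k}) = \int_{\textnormal{U}_{n+1}} \Phi(\omega^{n-k})\,\dif\mu(g) = \Phi(\omega^{n-k}) = c$, whereas $\Psi(\omega^{n-k}) = \lambda\int_{\mathbb{P}^n}\omega^n = \lambda$ by the normalization $\int_{\mathbb{P}^n}\omega^n = 1$ of the Fubini-Study form. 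Hence $\lambda = c$ and $\Psi = c\,\omega^k$, i.e. $\int_{\textnormal{U}_{n+1}} \Phi(g^{\ast}\eta)\,\dif\mu(g) = c\int_{\mathbb{P}^n}\omega^k\wedge\eta$ for every $\eta \in \mathcal{D}^{(n-k,n-k)}(\mathbb{P}^n)$. Finally, if $\Phi$ is representable by integration then it extends to a continuous linear functional on compactly supported $C^0$ forms; as $g \mapsto g^{\ast}\eta$ is then continuous into the space of $C^0$ forms and the computations above used only this continuity, the identity persists for all such $\eta$, which is exactly the current identity $\int_{\textnormal{U}_{n+1}} g_{\ast}\Phi\,\dif\mu(g) = c\,[\omega^k]$.

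I expect the main obstacle to be the regularization step — rigorously justifying that averaging $\Phi$ against a smooth density over $\textnormal{U}_{n+1}$ yields a genuinely smooth form, so that the elementary classification of invariant forms applies rather than a subtler statement about invariant currents — together with the invariant-theoretic input that the $\textnormal{U}_{n+1}$-invariant $(k,k)$-forms on $\mathbb{P}^n$ form a line; once these are granted, the constant is pinned down painlessly by testing against $\omega^{n-k}$.
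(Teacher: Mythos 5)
The paper itself offers no proof of this lemma — it is quoted directly from Shiffman \cite{Shiffman1974} — so there is no in-paper argument to measure you against; judged on its own, your proposal is correct and follows the standard averaging strategy behind Shiffman's lemma. Your chain (the average $\Psi = \int_{\textnormal{U}_{n+1}} g_{\ast}\Phi \dif\mu(g)$ is an invariant current; regularize via $\Phi_{\varepsilon}$ to see it is a smooth invariant $(k,k)$-form; invariant $(k,k)$-forms on $\mathbb{P}^n$ are multiples of $\omega^k$ by the classical fact that the $\textnormal{U}_n$-invariant part of $\Lambda^{k,k}(\mathbb{C}^n)^{\ast}$ is the line through the $k$-th power of the standard Kähler form; pin the constant by pairing with $\omega^{n-k}$, using $\int_{\mathbb{P}^n}\omega^n = 1$, which does hold for the paper's normalization $\ddif^c = \tfrac{i}{2\pi}\partial\bar\partial$) is sound, and both external inputs are classical. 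You can, however, sidestep the step you yourself flag as the main obstacle: instead of regularizing the current, average the test form. Put $\bar\eta \bydef \int_{\textnormal{U}_{n+1}} g^{\ast}\eta \dif\mu(g)$; this is a smooth $\textnormal{U}_{n+1}$-invariant $(n-k,n-k)$-form by differentiation under the integral sign (no distribution theory needed), so by the same invariant-theory fact $\bar\eta = \lambda(\eta)\,\omega^{n-k}$ with $\lambda(\eta) = \int_{\mathbb{P}^n}\omega^k\wedge\eta$, since $\int_{\mathbb{P}^n}\omega^k\wedge g^{\ast}\eta = \int_{\mathbb{P}^n}\omega^k\wedge\eta$ and $\int_{\mathbb{P}^n}\omega^n = 1$. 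Because the vector-valued integral defining $\bar\eta$ converges in the Fréchet topology of smooth forms and $\Phi$ is continuous, one may interchange and get $\int_{\textnormal{U}_{n+1}}\Phi(g^{\ast}\eta)\dif\mu(g) = \Phi(\bar\eta) = \Phi(\omega^{n-k})\int_{\mathbb{P}^n}\omega^k\wedge\eta$, which is the assertion; when $\Phi$ is representable by integration the same interchange works with $\bar\eta$ formed in the $C^0$ topology (or approximate $\eta$ uniformly by smooth forms, using that $\Phi$, hence the averaged functional, has order zero), giving the $C^0$ case. What your longer route buys is the stronger intermediate statement that the averaged \emph{current} is itself smooth; what the shorter route buys is a complete proof from elementary facts, with the only nontrivial input being the one-dimensionality of the invariants — so if you keep your version, the regularization of $\Phi_{\varepsilon}$ is the one step you must actually justify rather than assert.
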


Now, apply this lemma with $\Phi = [H]$, where $H$ is a hyperplane of $\mathbb{P}^n$.
Notice that $[H](\omega^{n-1}) = 1$ and the Fubini-Study metric on $\mathbb{P}^n$ can be regarded as a Hermitian metric $h$ on hyperplane line bundle $\mathcal{O}_{\mathbb{P}^n}(1)$.
Thanks to Theorem \ref{thm:PLsigma}, we obtain
\[
\int_{\textnormal{U}_{n+1}} \hspace{-1eM} g_{\ast} \ddif^c [\log \|\sigma\|_h] \dif \mu(g) = 0
,\]
where $\sigma$ is a canonical section of $\mathcal{O}(H)$.

What's more, the following lemma gives the average of Weil functions for all the hyperplanes of $\mathbb{P}^n$ in the sense of $\mathcal{C}(\mathbb{P}^n) / =_{\langle \mathbb{P}^n \rangle}$.
\begin{lemma}\label{lem:lambda_Havg}
Let $H$ be a hyperplane of $\mathbb{P}^n$. Take a fixed canonical section $\sigma$ of $\mathcal{O}_{\mathbb{P}^n}(H)$. Then
\[
\int_{\textnormal{U}_{n+1}} \hspace{-1eM} g^{\ast} \log \|\sigma(x)\|_h \dif \mu(g) = c
,\]
for every $x \in \mathbb{P}^n$, where $c$ is a constant.
It implies that
\begin{equation}\label{eqn:lambda_Havg}
\int_{\textnormal{U}_{n+1}} \hspace{-1eM} g^{\ast} \lambda_H \dif \mu(g) =_{\langle \mathbb{P}^n \rangle} 0
.\end{equation}
\end{lemma}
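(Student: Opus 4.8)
The plan is to show that the function
\[
\psi(x) \bydef \int_{\textnormal{U}_{n+1}} g^{\ast}\log\|\sigma(x)\|_h \dif\mu(g)
\]
is a genuine finite constant on $\mathbb{P}^n$, and then to read off \eqref{eqn:lambda_Havg} from the defining property \eqref{eqn:lambda_D} of $\lambda_H$. First I would rewrite the integrand: since $\textnormal{U}_{n+1}$ acts holomorphically on the left on $\mathbb{P}^n$, pulling back a function is just precomposition with the action, so $g^{\ast}\log\|\sigma(x)\|_h = \log\|\sigma(g\cdot x)\|_h$. For fixed $x$, the orbit map $g\mapsto g\cdot x$ pushes the normalized Haar measure $\mu$ forward to a $\textnormal{U}_{n+1}$-invariant probability measure $\nu$ on $\mathbb{P}^n$; because $\textnormal{U}_{n+1}$ acts transitively and the Fubini–Study metric is $\textnormal{U}_{n+1}$-invariant, $\nu$ is the \emph{unique} invariant probability measure, namely the normalized Fubini–Study volume form, and in particular it is independent of $x$. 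Hence, as soon as the integral converges,
\[
\psi(x) = \int_{\mathbb{P}^n} \log\|\sigma\|_h \dif\nu ,
\]
which is manifestly independent of $x$.

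It remains to check convergence, which is really the only point requiring care. By Theorem \ref{thm:PLsigma} applied to the divisor $H = (\sigma)$ on $X = \mathbb{P}^n$ we have $\log\|\sigma\|_h^2 \in L^1_{\loc}(\mathbb{P}^n)$, and since $\mathbb{P}^n$ is compact this gives $\log\|\sigma\|_h \in L^1(\mathbb{P}^n, \nu)$; thus $\psi(x)$ is a well-defined finite real number, call it $c$, for every $x\in\mathbb{P}^n$ — including $x\in\Supp H$, since $\{g : g\cdot x\in\Supp H\}$ has $\mu$-measure $\nu(\Supp H)=0$ by transitivity. Measurability of $(g,x)\mapsto \log\|\sigma(g\cdot x)\|_h$ and the applicability of Fubini's theorem are routine, the positive part of the integrand being bounded. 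This establishes the first assertion. (Alternatively, one could bypass the identification of $\nu$ and argue that $\psi$ is $\textnormal{U}_{n+1}$-invariant by the right-invariance of $\mu$ and hence constant by transitivity; or, using $\ddif^c[\log\|\sigma\|_h^2] = [H]-[\ddif^c\varphi]$ together with the Crofton-type lemma above, that $\ddif^c[\psi]=0$, so $\psi$ is harmonic on the compact manifold $\mathbb{P}^n$ and therefore constant.)

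Finally, the Fubini–Study metric $h$ is smooth, so by \eqref{eqn:lambda_D} the function $-\log\|\sigma\|_h$ is itself a Weil function for $H$; integrating gives $\int_{\textnormal{U}_{n+1}} g^{\ast}(-\log\|\sigma\|_h)\dif\mu(g) = -c$, a constant function on $\mathbb{P}^n$, hence $=_{\langle\mathbb{P}^n\rangle} 0$ because a constant is continuous on all of $\mathbb{P}^n$. For an arbitrary Weil function $\lambda_H$ one has $|\lambda_H + \log\|\sigma\|_h|\leq\alpha$ on $\mathbb{P}^n\setminus\Supp H$ for some continuous $\alpha$ on $\mathbb{P}^n$ (this is the content of $\lambda_H =_{\langle\mathbb{P}^n\rangle} -\log\|\sigma\|_h$ together with Proposition \ref{prp:lambda_Z}), whence $\bigl|\int g^{\ast}\lambda_H\dif\mu - (-c)\bigr| \leq \int g^{\ast}\alpha\,\dif\mu \leq \max_{\mathbb{P}^n}\alpha$, so $\int_{\textnormal{U}_{n+1}} g^{\ast}\lambda_H\dif\mu(g) =_{\langle\mathbb{P}^n\rangle} 0$ as well. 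The main obstacle, such as it is, is purely the finiteness of $\psi$ near $H$; everything else is formal once one observes that averaging over the transitive compact group $\textnormal{U}_{n+1}$ erases all dependence on the base point.
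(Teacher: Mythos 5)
Your proof is correct and follows essentially the same route as the paper: the averaged function is independent of $x$ because the transitive action of $\textnormal{U}_{n+1}$ (equivalently, pushing Haar measure forward to the unique invariant probability measure, the Fubini--Study volume) erases the dependence on the base point, with the only delicate point being convergence near $\Supp H$. The paper simply cites the reasoning of Proposition 3.5 in Cowen--Griffiths for this; you supply those details yourself via $\log\|\sigma\|_h \in L^1_{\loc}$ from Theorem \ref{thm:PLsigma}, which is a welcome but not essentially different elaboration.
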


\begin{proof}
Seeing that the unitary group $\textnormal{U}_{n+1}$ acts transitively on $\mathbb{P}^n$, the improper integral
\[
\int_{\textnormal{U}_{n+1}} \hspace{-1eM} g^{\ast} \log \|\sigma(x)\|_h \dif \mu(g) = \int_{\textnormal{U}_{n+1}} \hspace{-1eM} \log \|\sigma(g \cdot x)\|_h \dif \mu(g)
\]
equals to a constant $c$ independent of $x \in \mathbb{P}^n$ (cf. the reasoning in the proof of Proposition 3.5 in \cite{Cowen1976}).
\end{proof}

\begin{remark}\label{rmk:lambda_Havg}
Think of the complete linear system $\mathfrak{S} = |H|$ as the dual projective space $(\mathbb{P}^n)^{\dual}$.
Then $g^{\ast} H$ defines a right action $g \in \textnormal{U}_{N+1}$ on $H \in \mathfrak{S}$.
There is a unique measure $\rho$ on $\mathfrak{S} \cong (\mathbb{P}^n)^{\dual}$, which is invariant under the unitary group $\textnormal{U}_{n+1}$ and satisfies $\rho(\mathfrak{S}) = 1$.
Thus \eqref{eqn:lambda_Havg} can be reformulated as
\[
\int_{H \in \mathfrak{S}} \hspace{-1eM} \lambda_H \dif \rho(H) =_{\langle \mathbb{P}^n \rangle} 0
.\]
\end{remark}

Take an arbitrary linear system $\mathfrak{S} \subset |L|$ on a projective manifold $X$.
Observe that effective divisors $D \in \mathfrak{S}$ are in one-to-one correspondence with hyperplanes $H_D$ of $\mathfrak{S}^{\dual} \cong \mathbb{P}^n$.
Therefore, the unitary group $\textnormal{U}_{N+1}$ acts on the right on $\mathfrak{S}$, where $N = \dim \mathfrak{S}$.
More precisely, an action $g \in \textnormal{U}_{N+1}$ on $D \in \mathfrak{S}$ is described by $g^{\ast} D$, which corresponds to the hyperplane $g^{\ast} H_D$ of $\mathfrak{S}^{\dual}$.
Accordingly, we generalize Lemma \ref{lem:lambda_Havg} to divisors on projective manifolds.
\begin{theorem}\label{thm:lambda_Davg}
Let $X$ be a projective manifold, and take a linear system $\mathfrak{S} \subset |L|$ on $X$.
Given a divisor $D \in \mathfrak{S}$, we have
\begin{equation}\label{eqn:lambda_Davg}
\int_{\textnormal{U}_{N+1}} \hspace{-1eM} g^{\ast} \lambda_D \dif \mu(g) =_{\langle X \rangle} \lambda_{Z_{\mathfrak s}}
,\end{equation}
where we hereinafter write, by abuse of notation, $g^{\ast} \lambda_D \bydef \lambda_{g^{\ast} D}$, and $\mu$ is the Haar measure on $\textnormal{U}_{N+1}$ normalized such that $\mu(\textnormal{U}_{N+1}) = 1$.
\end{theorem}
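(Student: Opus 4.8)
The plan is to reduce the assertion to the projective-space case already established in Lemma~\ref{lem:lambda_Havg}, via the rational map attached to $\mathfrak{S}$. First I would fix a basis $\{\varsigma_0,\dots,\varsigma_N\}$ of $\mathbb{S}$ and a continuous Hermitian metric $h$ on $L$ with weight function $\varphi$, so that $\varphi_{\mathfrak{S}}=\log(\sum_i|\varsigma_i|^2)$ by \eqref{eqn:phi_S}, and by \eqref{eqn:lambda_Zs} one may take $\lambda_{Z_{\mathfrak s}}=\tfrac12(\varphi-\varphi_{\mathfrak{S}})$ as a Weil function for the scheme of base points $Z_{\mathfrak s}$. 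The basis identifies $\mathfrak{S}$ with the set of hyperplanes of $\mathbb{P}^N\bydef\textnormal{P}(\mathbb{S}^{\dual})$, under which $H^0(\mathbb{P}^N,\mathcal{O}_{\mathbb{P}^N}(1))=\mathbb{S}$ and the section cutting out the hyperplane $H_{D'}$ corresponding to $D'=(\sigma')\in\mathfrak{S}$ is $\sigma'$ itself; it also produces the morphism $\phi=\phi_{\mathfrak{S}}\colon X\setminus\Bs(\mathfrak{S})\to\mathbb{P}^N$ given by evaluation of sections at $x$. By construction (see the paragraph preceding the statement) the $\textnormal{U}_{N+1}$-action on $\mathfrak{S}$ matches the one on hyperplanes of $\mathbb{P}^N$, so that $g^{\ast}H_D=H_{g^{\ast}D}$ for every $g\in\textnormal{U}_{N+1}$.

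The core of the argument is a pointwise identity, valid on $X\setminus(\Bs(\mathfrak{S})\cup\Supp D')$ and \emph{uniform in} $D'\in\mathfrak{S}$. Pulling back the Fubini--Study metric $h_{\mathrm{FS}}$ on $\mathcal{O}_{\mathbb{P}^N}(1)$ along $\phi$, a short computation in the local trivialization gives that $\phi^{\ast}h_{\mathrm{FS}}$ has weight $\varphi_{\mathfrak{S}}$; hence, choosing the Weil functions $\lambda_{D'}=-\log\|\sigma'\|_h$ and $\lambda_{H_{D'}}=-\log\|\sigma'\|_{h_{\mathrm{FS}}}$,
\[
\lambda_{D'}-\lambda_{H_{D'}}\circ\phi=\tfrac12\big(\varphi-\varphi_{\mathfrak{S}}\big)=\lambda_{Z_{\mathfrak s}}.
\]
This is precisely Proposition~\ref{prp:lambda_Z}, \ref{itm:lambdapullback}) corrected by the failure of $\phi$ to extend across $\Bs(\mathfrak{S})$, and the correction term depends neither on $g$ nor on $D'$. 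Taking $D'=g^{\ast}D$ and integrating in $g$ against the normalized Haar measure $\mu$ — the integral $\int_{\textnormal{U}_{N+1}}g^{\ast}\lambda_D(x)\dif\mu(g)$ converging for fixed $x\notin\Bs(\mathfrak{S})$ by the same reasoning as in the proof of Lemma~\ref{lem:lambda_Havg}, since $g\mapsto\lambda_{g^{\ast}D}(x)$ has only logarithmic singularities on the measure-zero set $\{g:x\in\Supp g^{\ast}D\}$ — and using $\mu(\textnormal{U}_{N+1})=1$, one obtains, on $X\setminus\Bs(\mathfrak{S})$,
\[
\int_{\textnormal{U}_{N+1}}g^{\ast}\lambda_D\,\dif\mu(g)=\Big(\int_{\textnormal{U}_{N+1}}\lambda_{g^{\ast}H_D}\,\dif\mu(g)\Big)\circ\phi+\lambda_{Z_{\mathfrak s}}.
\]
Finally I would invoke Lemma~\ref{lem:lambda_Havg} on $\mathbb{P}^N$: $\int_{\textnormal{U}_{N+1}}\lambda_{g^{\ast}H_D}\dif\mu(g)$ is a constant (equivalently $=_{\langle\mathbb{P}^N\rangle}0$), so its composition with $\phi$ takes values in a compact set and is in particular bounded on $X\setminus\Bs(\mathfrak{S})$; the displayed equality then reads $\int_{\textnormal{U}_{N+1}}g^{\ast}\lambda_D\,\dif\mu(g)=_{\langle X\rangle}\lambda_{Z_{\mathfrak s}}$, which is \eqref{eqn:lambda_Davg}.

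The hard part will be the bookkeeping around the base locus. Since $\phi$ is only a morphism off $\Bs(\mathfrak{S})$, Proposition~\ref{prp:lambda_Z}, \ref{itm:lambdapullback}) cannot be quoted verbatim and the exact discrepancy must be computed; the key observation is that, once all metrics and Weil functions are pinned down as above, this discrepancy is the \emph{single} function $\tfrac12(\varphi-\varphi_{\mathfrak{S}})$, independent of $g$ and of $D'$, so it passes unchanged through $\int_{\textnormal{U}_{N+1}}(\cdot)\dif\mu$ — which is exactly what legitimizes exchanging the integral with the relation $=_{\langle X\rangle}$ and reduces the measure-theoretic content to the convergence statement borrowed from Lemma~\ref{lem:lambda_Havg}. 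As a cross-check one can instead compute $\int_{\textnormal{U}_{N+1}}g^{\ast}\lambda_D(x)\dif\mu(g)$ directly: parametrizing $\mathfrak{S}$ by unit vectors $u=\sum_j c_j\varsigma_j\in\mathbb{S}$, it reduces to $-\tfrac12\int_{S^{2N+1}}\log|\sum_j c_j\varsigma_{j,\mathrm{triv}}(x)|^2$, which by unitary invariance of the sphere measure equals $-\tfrac12\log(\sum_j|\varsigma_{j,\mathrm{triv}}(x)|^2)$ plus the universal constant $-\tfrac12\int_{S^{2N+1}}\log|c_0|^2$; this again yields $\tfrac12(\varphi-\varphi_{\mathfrak{S}})+\mathrm{const}=_{\langle X\rangle}\lambda_{Z_{\mathfrak s}}$ and makes the reformulation in Remark~\ref{rmk:lambda_Havg} transparent.
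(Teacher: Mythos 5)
Your proposal is correct, but it takes a genuinely different route from the paper. The paper argues by a three-step case analysis: when $\mathfrak{S}$ is base point free it pulls back Lemma \ref{lem:lambda_Havg} along the morphism $\mu_{\mathfrak S}$ via Proposition \ref{prp:lambda_Z}, \ref{itm:lambdapullback}); when the base ideal is $\mathcal{O}_X(-D_0)$ for a Cartier divisor $D_0$ it writes each $g^{\ast}D = D_0 + g^{\ast}(D-D_0)$ and reduces to the base-point-free system $\mathfrak{S}_{\setminus D_0}$; and in the general case it blows up $X$ along $\mathfrak{s}$, applies the previous case upstairs, and transfers the estimate back through the isomorphism off the exceptional divisor. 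You instead bypass the casework and the blow-up entirely: by pinning down the metrics, the exact pointwise identity $\lambda_{D'}-\lambda_{H_{D'}}\circ\mu_{\mathfrak S}=\tfrac12(\varphi-\varphi_{\mathfrak S})$ on $X\setminus(\Bs(\mathfrak{S})\cup\Supp D')$ — which is precisely the content of \eqref{eqn:lambda_Zs} — gives a discrepancy independent of $D'$ and of $g$, so integration over $\textnormal{U}_{N+1}$ immediately reduces the claim to Lemma \ref{lem:lambda_Havg} on $\mathbb{P}^N$, with convergence borrowed from the same lemma. What your approach buys is a shorter, more explicit argument (it even identifies the additive constant, as your spherical-average cross-check shows), and it stays entirely on $X\setminus\Bs(\mathfrak{S})$, whereas the paper's approach stays within the formal calculus of Weil functions of Proposition \ref{prp:lambda_Z} and rehearses the blow-up technique that the paper reuses later (e.g., in the proof of Theorem \ref{thm:Z_S^1}). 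The only point to make explicit in a final write-up is the choice of a consistent family $\lambda_{g^{\ast}D}=-\log\lVert\sigma_{g^{\ast}D}\rVert_h$ with $\sigma_{g^{\ast}D}$ normalized (unit vectors in $\mathbb{S}$), so that the integrand is well defined for every $g$ and the identity holds exactly rather than only up to a $g$-dependent constant; you do address this implicitly, and it matches the convention used in Lemma \ref{lem:lambda_Havg}.
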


\begin{proof}
Let $\mathbb{S} \subset H^0(X, L)$ be the linear subspace corresponding to $\mathfrak{S}$.
Take a basis $\{\varsigma_i \mid i = 0, 1, \dots, N\}$ of $\mathbb{S}$, where $N = \dim \mathfrak{S}$.
We naturally have a rational map
\begin{equation}\label{eqn:muS}
\begin{aligned}
\mu_{\mathfrak S} \colon X &\dashrightarrow \mathfrak{S}^{\dual} \cong \mathbb{P}^N \\
x &\longmapsto [\varsigma_0(x) : \dots : \varsigma_N(x)]
.\end{aligned}
\end{equation}
Then we are going to discuss the linear system $\mathfrak{S}$ into three different cases.

If $\mathfrak{S}$ is base point free, then $\mu_{\mathfrak S}$ is holomorphic.
We see that $D = \mu_{\mathfrak S}^{\ast} H_D$, for each $D \in \mathfrak{S}$.
In this case, the theorem immediately follows from Proposition \ref{prp:lambda_Z}, \ref{itm:lambdapullback}) and Lemma \ref{lem:lambda_Havg}.

Next, investigate the case when $\mathfrak{S}$ is not base point free and its base ideal $\mathfrak{s} = \mathcal{O}_X(-D_0)$ for an effective Cartier divisor $D_0$, i.e., $Z_{\mathfrak s} = D_0$.
Since $\mathfrak{S}_{\setminus D_0}$ is base point free, $\mu_{\mathfrak{S}_{\setminus D_0}}$ is a holomorphic map from $X$ to $\mathfrak{S}_{\setminus D_0}^{\dual} \cong \mathbb{P}^N$.
Regarding $\mathfrak{S}^{\dual}$ and $\mathfrak{S}_{\setminus D_0}^{\dual}$ as the same projective space, we easily check that $\mu_{\mathfrak{S}_{\setminus D_0} \restriction U} = \mu_{\mathfrak{S} \restriction U}$, where $U = X \setminus \Supp D_0$.
It implies that $\mu_{\mathfrak{S}_{\setminus D_0}}$ is actually the analytic extension of $\mu_{\mathfrak S}$.
Consequently,
\[
\int_{\textnormal{U}_{N+1}} \hspace{-1eM} g^{\ast} \lambda_D \dif \mu(g) =_{\langle X \rangle} \lambda_{D_0} + \int_{\textnormal{U}_{N+1}} \hspace{-1eM} g^{\ast} \lambda_{D-D_0} \dif \mu(g) =_{\langle X \rangle} \lambda_{D_0}
.\]

Finally, we just need to verify the case when $\mathfrak{S}$ is not base point free and $Z_{\mathfrak s}$ is not an effective Cartier divisor.
Take the blow-up of $X$ with respect to $\mathfrak{s}$
\[
\upsilon \colon \widehat{X} \longrightarrow X
,\]
and denote by $E_0 = \upsilon^{\ast} Z_{\mathfrak s}$ the exceptional divisor of $\upsilon$.
Hence there is a commutative diagram
\begin{equation*}
\begin{tikzcd}[column sep=scriptsize]
\widehat{X} \arrow[d, "\upsilon"] \arrow[rd, "\hat{\mu}"] & \\
X \arrow[r, dashed, "\mu"] & \mathfrak{S}^{\dual}
.\end{tikzcd}
\end{equation*}
Let us define $\upsilon^{\ast} \mathfrak{S} \bydef \{\upsilon^{\ast} D \mid D \in \mathfrak{S}\}$.
Notice that $\upsilon^{\ast} \mathfrak{S}$ satisfies the second case in our discussion.
Following the same reasoning as above and Proposition \ref{prp:lambda_Z}, \ref{itm:lambdapullback}), we obtain
\[
\int_{\textnormal{U}_{N+1}} \hspace{-1eM} g^{\ast} \lambda_D \circ \upsilon \dif \mu(g) =_{\langle \widehat{X} \rangle} \int_{\textnormal{U}_{N+1}} \hspace{-1eM} g^{\ast} \lambda_{\upsilon^{\ast} D} \dif \mu(g) =_{\langle \widehat{X} \rangle} \lambda_{E_0} =_{\langle \widehat{X} \rangle} \lambda_{Z_{\mathfrak s}} \circ \upsilon
,\]
which amounts to saying that there exists a constant $a > 0$ such that
\[
\lambda_{Z_{\mathfrak s}} \circ \upsilon(\hat{x}) - a \leq \int_{\textnormal{U}_{N+1}} \hspace{-1eM} g^{\ast} \lambda_D \circ \upsilon(\hat{x}) \dif \mu(g) \leq \lambda_{Z_{\mathfrak s}} \circ \upsilon(\hat{x}) + a
,\]
for every $\hat{x} \in \widehat{X} \setminus E_0$.
Seeing that $\upsilon$ restricted on $\widehat{X} \setminus E_0$ is an isomorphism, we have
\[
\lambda_{Z_{\mathfrak s}} (x) - a \leq \int_{\textnormal{U}_{N+1}} \hspace{-1eM} g^{\ast} \lambda_D (x) \dif \mu(g) \leq \lambda_{Z_{\mathfrak s}} (x) + a
,\]
for every $x \in X \setminus \Supp Z_{\mathfrak s}$.
By definition,
\[
\int_{\textnormal{U}_{N+1}} \hspace{-1eM} g^{\ast} \lambda_D \dif \mu(g) =_{\langle X \rangle} \lambda_{Z_{\mathfrak s}}
.\qedhere\]
\end{proof}

\begin{remark}
Analogous to Remark \ref{rmk:lambda_Havg}, \eqref{eqn:lambda_Davg} is reformulated as
\[
\int_{D \in \mathfrak{S}} \hspace{-1eM} \lambda_D \dif \rho(D) =_{\langle X \rangle} \lambda_{Z_{\mathfrak s}}
,\]
where $\rho$ is (slightly abusively) the unique measure on $\mathfrak{S} \cong (\mathbb{P}^N)^{\dual}$ invariant under the unitary group $\textnormal{U}_{N+1}$ and satisfying $\rho(\mathfrak{S}) = 1$.
\end{remark}

We refer to Theorem \ref{thm:lambda_Davg} as Crofton's formula for Weil functions for divisors on projective manifolds.

Take a linear system $\mathfrak{T} \subset |L|$ on $X$.
We say that a subset $\mathfrak{S} \subset \mathfrak{T}$ is a linear subsystem of $\mathfrak{T}$ on $X$, if it is a linear subspace for the projective space structure of $\mathfrak{T}$.
For each $0 \leq k \leq N-1$, consider the Grassmannian $\Gr(k, \mathfrak{T})$, which consist of all linear subsystems $\mathfrak{S}$ of $\mathfrak{T}$ on $X$ such that $\dim \mathfrak{S} = k$, where $N = \dim \mathfrak{T}$.
We thus conclude Crofton's formula for Weil functions for closed subschemes of projective manifolds.
\begin{theorem}\label{thm:lambda_Zsavg}
Let $X$ be a projective manifold, and take a linear system $\mathfrak{T} \subset |L|$ on $X$.
Choose a linear subsystem $\mathfrak{S}$ of $\mathfrak{T}$ on $X$ such that $\dim \mathfrak{S} = k$. Then
\begin{equation}\label{eqn:lambda_Zsavg}
\int_{\textnormal{U}_{N+1}} \hspace{-1eM} \lambda_{g^{\ast} Z_{\mathfrak s}} \dif \mu(g) =_{\langle X \rangle} \lambda_{Z_{\mathfrak t}}
.\end{equation}
\end{theorem}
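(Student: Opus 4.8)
The plan is to deduce the statement from Crofton's formula for divisors (Theorem \ref{thm:lambda_Davg}) by squeezing the average Weil function between an explicit lower and an explicit upper bound. Fix a continuous Hermitian metric $h$ on $L$ with weight function $\varphi$, and choose a basis $\varsigma_0, \dots, \varsigma_N$ of $\mathbb{T} \subset H^0(X, L)$ such that $\varsigma_0, \dots, \varsigma_k$ is a basis of the subspace $\mathbb{S}$ corresponding to $\mathfrak{S}$. Identifying $\mathfrak{T}$ with $\mathbb{P}^N$ by this basis, the $\textnormal{U}_{N+1}$-action on $\mathfrak{T}$ lifts, up to a measure-preserving reparametrization of $\textnormal{U}_{N+1}$ (complex conjugation of matrices, which preserves the Haar measure $\mu$), to the standard linear action on $\mathbb{T}$; thus for $g \in \textnormal{U}_{N+1}$ the translated subsystem $g^{\ast}\mathfrak{S}$ corresponds to $\Span\{g\varsigma_0, \dots, g\varsigma_k\}$, its base ideal $g^{\ast}\mathfrak{s}$ is generated in any affine chart $U$ trivializing $L$ by the trivialized sections $(g\varsigma_0)_U, \dots, (g\varsigma_k)_U$, and we write $g^{\ast}Z_{\mathfrak s} = Z_{g^{\ast}\mathfrak{s}}$ for the associated base scheme. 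By Remark \ref{rmk:pnorm} and \eqref{eqn:lambda_Zs}, the globally defined function
\[
\tilde{\lambda}_g \bydef -\tfrac{1}{2}\log\Big(\sum_{i=0}^k \|g\varsigma_i\|_h^2\Big)
\]
is then a Weil function for $g^{\ast}Z_{\mathfrak s}$, which I take as the representative $\lambda_{g^{\ast}Z_{\mathfrak s}}$; likewise $\tilde{\lambda}_{\mathfrak t} \bydef -\tfrac12\log(\sum_{j=0}^N\|\varsigma_j\|_h^2)$ is a Weil function for $Z_{\mathfrak t}$.

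Next I establish two pointwise bounds on $X \setminus \Supp Z_{\mathfrak t}$. Since all summands are non-negative, $\|g\varsigma_0\|_h^2 \leq \sum_{i=0}^k \|g\varsigma_i\|_h^2 \leq \sum_{j=0}^N \|g\varsigma_j\|_h^2$; and because $g$ acts unitarily on $\mathbb{T}$ while $\|\cdot\|_h$ is induced fibrewise from the Hermitian structure of $L$, one has $\sum_{j=0}^N \|g\varsigma_j(x)\|_h^2 = \sum_{j=0}^N \|\varsigma_j(x)\|_h^2$ for every $x$. Hence $\tilde{\lambda}_{\mathfrak t} \leq \tilde{\lambda}_g \leq -\log\|g\varsigma_0\|_h$ on $X \setminus \Supp Z_{\mathfrak t}$, where $-\log\|g\varsigma_0\|_h$ is a Weil function for the divisor $g^{\ast}D_0$ with $D_0 = (\varsigma_0) \in \mathfrak{T}$.

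Now apply Theorem \ref{thm:lambda_Davg} to the pair $(\mathfrak{T}, D_0)$: it gives $\int_{\textnormal{U}_{N+1}} (-\log\|g\varsigma_0\|_h)\dif \mu(g) =_{\langle X \rangle} \lambda_{Z_{\mathfrak t}}$, and in particular this integral is finite at each $x \notin \Supp Z_{\mathfrak t}$. Since $g \mapsto \tilde{\lambda}_g(x)$ is measurable and sandwiched between the constant $\tilde{\lambda}_{\mathfrak t}(x)$ and this $\mu$-integrable function of $g$, it is $\mu$-integrable, and integrating the bounds over $\textnormal{U}_{N+1}$ yields $\tilde{\lambda}_{\mathfrak t} \leq \int_{\textnormal{U}_{N+1}} \tilde{\lambda}_g \dif \mu(g) \leq_{\langle X \rangle} \lambda_{Z_{\mathfrak t}}$. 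As $\tilde{\lambda}_{\mathfrak t} =_{\langle X \rangle} \lambda_{Z_{\mathfrak t}}$ (both are Weil functions for $Z_{\mathfrak t}$), the average is squeezed and
\[
\int_{\textnormal{U}_{N+1}} \lambda_{g^{\ast}Z_{\mathfrak s}} \dif \mu(g) = \int_{\textnormal{U}_{N+1}} \tilde{\lambda}_g \dif \mu(g) =_{\langle X \rangle} \lambda_{Z_{\mathfrak t}},
\]
which is the claim. Alternatively, one can evaluate the average head-on: trivializing $L$ near $x$ and using transitivity of $\textnormal{U}_{N+1}$ on the unit sphere $S \subset \mathbb{C}^{N+1}$ exactly as in the proof of Lemma \ref{lem:lambda_Havg}, the integral equals $-\tfrac12\log(\sum_j \|\varsigma_j(x)\|_h^2) - \tfrac12 c$ with $c = \int_S \log(\sum_{i=0}^k |u_i|^2)\dif \sigma(u)$ a finite constant (one computes $c = \psi(k+1) - \psi(N+1)$ with $\psi$ the digamma function; finiteness is clear since $\log$ is integrable near the positive-codimension subsphere on which $\sum_{i=0}^k |u_i|^2$ vanishes). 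The step requiring the most care is not analytic but the bookkeeping: one must pin down the Weil-function representative $\tilde{\lambda}_g$ for each $g$ (distinct Weil functions for a fixed subscheme differ only by a bounded amount, but the bound could vary with $g$) and verify that $g \mapsto \tilde{\lambda}_g(x)$ is $\mu$-integrable — both handled here by the upper bound supplied by Theorem \ref{thm:lambda_Davg}.
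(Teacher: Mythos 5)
Your proof is correct and follows essentially the same route as the paper: a squeeze between the lower bound $\lambda_{Z_{\mathfrak t}} \leq \lambda_{g^{\ast} Z_{\mathfrak s}}$ (the paper gets it from $Z_{\mathfrak t} \subset g^{\ast} Z_{\mathfrak s}$ via Proposition \ref{prp:lambda_Z}, you get it exactly from the unitary invariance of $\sum_j \|g\varsigma_j\|_h^2$) and the upper bound obtained by comparing with a single member divisor $g^{\ast}D_0$ and invoking Theorem \ref{thm:lambda_Davg}, just as the paper bounds $\min_i \lambda_{g^{\ast}D_i}$ by one term before applying the same theorem. Your fixed metric representatives $\tilde{\lambda}_g$ add value only in making the $g$-uniformity of the error terms and the $\mu$-integrability explicit, points the paper leaves implicit, and your closing direct computation via transitivity on the sphere is a pleasant alternative but not needed.
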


\begin{proof}
For any $\mathfrak{S} \in \Gr(k, \mathfrak{T})$, taking a basis $\{\varsigma_i \mid i = 0, 1, \dots, k\}$ of $\mathbb{S}$,
we have $Z_{\mathfrak s} = \bigcap_{i=0}^k D_i$, where $D_i = (\varsigma_i)$ for each $0 \leq i \leq k$.
Think of $\Gr(k, \mathfrak{T})$ as the dual of $\Gr(k, \mathfrak{T}^{\dual}) \cong \Gr(k+1, N+1)$.
The unitary group $\textnormal{U}_{N+1}$ acts on the right on $\Gr(k, \mathfrak{T}) \cong \Gr(k+1, N+1)^{\dual}$.
In addition, it induces a right action on the set of the scheme of base points of all the $\mathfrak{S} \in \Gr(k, \mathfrak{T})$,
where an action $g \in \textnormal{U}_{N+1}$ on $Z_{\mathfrak s}$ can be described by
\[
g^{\ast} Z_{\mathfrak s} = \bigcap_{i=0}^k g^{\ast} D_i
,\]
regardless of the choice of the basis.
By Proposition \ref{prp:lambda_Z}, \ref{itm:lambdasubset}) and Theorem \ref{thm:lambda_Davg}, we get
\begin{align*}
\int_{\textnormal{U}_{N+1}} \hspace{-1eM} \lambda_{g^{\ast} Z_{\mathfrak s}} \dif \mu(g) &=_{\langle X \rangle} \int_{\textnormal{U}_{N+1}} \!\mathop{\min}_{0 \leq i \leq \ell} \lambda_{g^{\ast} D_i} \dif \mu(g) \\
&\leq_{\langle X \rangle} \mathop{\min}_{0 \leq i \leq \ell} \int_{\textnormal{U}_{N+1}} \hspace{-1eM} \lambda_{g^{\ast} D_i} \dif \mu(g) \\
&\leq_{\langle X \rangle} \lambda_{Z_{\mathfrak t}}
.\end{align*}
On the other hand, since $\mathfrak{s} \subset \mathfrak{t}$, namely, $Z_{\mathfrak t} \subset Z_{\mathfrak s}$ for all $\mathfrak{S} \in \Gr(k, \mathfrak{T})$, then Proposition \ref{prp:lambda_Z}, \ref{itm:lambdasubset}) leads to
\[
\lambda_{Z_{\mathfrak t}} \leq_{\langle X \rangle} \int_{\textnormal{U}_{N+1}} \hspace{-1eM} \lambda_{g^{\ast} Z_{\mathfrak s}} \dif \mu(g)
.\]
Then the theorem is verified.
\end{proof}

\begin{remark}\label{rmk:lambda_Zsavg}
Let $\rho_{k, N}$ be the unique measure on $\Gr(k, \mathfrak{T}) \cong \Gr(k+1, N+1)^{\dual}$ invariant under the unitary group $\textnormal{U}_{N+1}$ and satisfying $\rho_{k, N}(\Gr(k, \mathfrak{T})) = 1$.
Therefore, \eqref{eqn:lambda_Zsavg} is reformulated as
\[
\int_{\mathfrak{S} \in \Gr(k, \mathfrak{T})} \hspace{-1eM} \lambda_{Z_{\mathfrak s}} \dif \rho_{k, N}(\mathfrak{S}) =_{\langle X \rangle} \lambda_{Z_{\mathfrak t}}
.\]
In the particular case when $k=0$, we see that $\mathfrak{S} \in \Gr(0, \mathfrak{T})$ corresponds to $D \in \mathfrak{T}$, which implies that Theorem \ref{thm:lambda_Zsavg} for $k=0$ coincides with Theorem \ref{thm:lambda_Davg}.
\end{remark}

Choose linear subsystems $\mathfrak{S}_0, \dots, \mathfrak{S}_{\ell}$ of $\mathfrak{T} \subset |L|$ on $X$ such that $\mathbb{T} = \mathbb{S}_0 + \dots + \mathbb{S}_{\ell}$,
where $\mathbb{T}$, $\mathbb{S}_0, \dots, \mathbb{S}_{\ell}$ are the linear subspace of $H^0(X, L)$ corresponding to $\mathfrak{T}$, $\mathfrak{S}_0, \dots, \mathfrak{S}_{\ell}$, respectively.
Moreover, take another linear system $\mathfrak{S}$ (not necessary lying in $|L|$) on $X$ such that $\dim \mathfrak{S} \leq \dim \mathfrak{T}$, but $\dim \mathfrak{S} \geq \dim \mathfrak{S}_i$ for each $0 \leq i \leq \ell$.
Obviously, for each $0 \leq i \leq \ell$, there exists a surjective homomorphism $\beta_i \colon \mathbb{S} \twoheadrightarrow \mathbb{S}_i$,
where $\mathbb{S}$ are the linear space corresponding to $\mathfrak{S}$.
Take account of the following map
\begin{equation}\label{eqn:Zbeta}
\begin{aligned}
Z_{\beta} \colon \mathfrak{S} &\longrightarrow Z(X) \\
D &\longmapsto Z_{\beta}(D) = \bigcap_{i=0}^{\ell} \beta_i(D)
,\end{aligned}
\end{equation}
where $\beta_i(D) = (\beta_i(\sigma))$ for a canonical section $\sigma$ of $\mathcal{O}_X(D)$, and in particular, $\beta_i(D) = X$ if $\beta_i(\sigma) = 0$.
We make a convention that $\lambda_X \equiv \infty$ as a generalized function.
Then we obtain a generalization of Crofton's formula as follows.
\begin{theorem}\label{thm:lambda_ZbetaDavg}
Let $X$ be a projective manifold, and choose linear systems $\mathfrak{T}$, $\mathfrak{S}_0, \dots, \mathfrak{S}_{\ell}$, and $\mathfrak{S}$ on $X$ as above.
Let $\beta_i$ be as above, for each $0 \leq i \leq \ell$.
Given a divisor $D \in \mathfrak{S}$, we have
\begin{equation}\label{eqn:lambda_ZbetaDavg}
\int_{\textnormal{U}_{N+1}} \hspace{-1eM} \lambda_{Z_{\beta}(g^{\ast} D)} \dif \mu(g) =_{\langle X \rangle} \lambda_{Z_{\mathfrak t}}
,\end{equation}
where $N = \dim \mathfrak{S}$.
\end{theorem}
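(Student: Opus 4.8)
The plan is to prove the two estimates $\lambda_{Z_{\mathfrak t}} \leq_{\langle X \rangle} \int_{\textnormal{U}_{N+1}} \lambda_{Z_{\beta}(g^{\ast} D)}\dif\mu(g)$ and $\int_{\textnormal{U}_{N+1}} \lambda_{Z_{\beta}(g^{\ast} D)}\dif\mu(g) \leq_{\langle X \rangle} \lambda_{Z_{\mathfrak t}}$ separately, in the spirit of the proof of Theorem \ref{thm:lambda_Zsavg}. Fix once and for all a continuous Hermitian metric $h$ on $L$ with weight function $\varphi$; for a nonzero $s \in H^0(X, L)$ take $-\log\|s\|_h$ as the Weil function of the divisor $(s)$, and for the canonical section $\sigma \in \mathbb{S}$ of $D$ (so that $g^{\ast}\sigma$ is the canonical section of $g^{\ast} D$) take, by Definition \ref{def:I_Z}, \ref{itm:Icap}) and Proposition \ref{prp:lambda_Z}, \ref{itm:lambdacap}), the representative
\[
\lambda_{Z_{\beta}(g^{\ast} D)} = \mathop{\min}_{0 \leq i \leq \ell}\bigl(-\log\|\beta_i(g^{\ast}\sigma)\|_h\bigr) = -\log \mathop{\max}_{0 \leq i \leq \ell}\|\beta_i(g^{\ast}\sigma)\|_h ,
\]
where terms with $\beta_i(g^{\ast}\sigma) = 0$ are simply dropped; this degeneracy, as well as the locus where the maximum vanishes, occurs only for $g$ in a $\mu$-null set and affects none of the integrals below. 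We may assume each $\mathbb{S}_i \neq 0$, and we write $\mathfrak{s}_i$ for the base ideal of $\mathfrak{S}_i$.

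For the lower bound, observe that $\beta_i(\sigma) \in \mathbb{S}_i$, so $(\beta_i(\sigma)) \in \mathfrak{S}_i$ contains the scheme of base points $Z_{\mathfrak{s}_i}$, whence $\mathcal{I}_{(\beta_i(\sigma))} \subset \mathfrak{s}_i$; since $\mathbb{T} = \mathbb{S}_0 + \dots + \mathbb{S}_{\ell}$ gives $\mathfrak{t} = \mathfrak{s}_0 + \dots + \mathfrak{s}_{\ell}$, Definition \ref{def:I_Z}, \ref{itm:Icap}) yields $\mathcal{I}_{Z_{\beta}(g^{\ast} D)} = \sum_i \mathcal{I}_{(\beta_i(g^{\ast}\sigma))} \subset \mathfrak{t}$, i.e.\ $Z_{\mathfrak t} \subset Z_{\beta}(g^{\ast} D)$ for every $g$. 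The point is to make this uniform in $g$: picking a Hermitian inner product on $H^0(X,L)$ with an orthonormal basis $\{\varsigma_j\}$ of $\mathbb{T}$, one has $\|\beta_i(g^{\ast}\sigma)(x)\|_h \leq \|\beta_i\|_{\mathrm{op}}\|\sigma\|\,\bigl(\sum_j\|\varsigma_j(x)\|_h^2\bigr)^{1/2}$ for all $g$ and all $i$, so by \eqref{eqn:lambda_Zs} (with $\mathfrak{S}$ replaced by $\mathfrak{T}$) and Remark \ref{rmk:pnorm} there is a constant $C$, independent of $g$ and $x$, with $\lambda_{Z_{\beta}(g^{\ast} D)}(x) \geq \lambda_{Z_{\mathfrak t}}(x) - C$; integrating over $\textnormal{U}_{N+1}$ gives the lower bound.

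For the upper bound, from $\min_i f_i \leq f_j$ we get $\int_{\textnormal{U}_{N+1}} \lambda_{Z_{\beta}(g^{\ast} D)}\dif\mu(g) \leq \mathop{\min}_{0 \leq i \leq \ell}\int_{\textnormal{U}_{N+1}}\bigl(-\log\|\beta_i(g^{\ast}\sigma)(x)\|_h\bigr)\dif\mu(g)$, so it suffices to show
\[
\int_{\textnormal{U}_{N+1}}\bigl(-\log\|\beta_i(g^{\ast}\sigma)(x)\|_h\bigr)\dif\mu(g) =_{\langle X \rangle} \lambda_{Z_{\mathfrak{s}_i}}
\]
for each $i$, and then invoke $\mathop{\min}_i\lambda_{Z_{\mathfrak{s}_i}} =_{\langle X \rangle} \lambda_{\bigcap_i Z_{\mathfrak{s}_i}} = \lambda_{Z_{\mathfrak t}}$ (Proposition \ref{prp:lambda_Z}, \ref{itm:lambdacap}) and $\mathfrak{t} = \sum_i\mathfrak{s}_i$), using compatibility of $=_{\langle X \rangle}$ with $\min$. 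To prove the displayed identity, identify the evaluation $\textnormal{ev}_x \colon \mathbb{S}_i \to L_x$ with a covector $\phi_{i,x} \in \mathbb{S}_i^{\dual}$ via a unit vector of $(L_x, h)$, so that $\|\beta_i(g^{\ast}\sigma)(x)\|_h = |(\beta_i^{\dual}\phi_{i,x})(g^{\ast}\sigma)|$ and, taking an orthonormal basis $\{\varsigma_j^{(i)}\}$ of $\mathbb{S}_i$, $-\log\|\phi_{i,x}\| = -\tfrac12\log\sum_j\|\varsigma_j^{(i)}(x)\|_h^2 =_{\langle X \rangle} \lambda_{Z_{\mathfrak{s}_i}}$ by \eqref{eqn:lambda_Zs} and Remark \ref{rmk:pnorm}. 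Since $\textnormal{U}_{N+1} = \textnormal{U}(\mathbb{S})$ acts transitively on the sphere of radius $\|\sigma\|$ in $\mathbb{S}$, the reasoning behind Lemma \ref{lem:lambda_Havg} gives $\int_{\textnormal{U}_{N+1}}\log|w(g^{\ast}\sigma)|\dif\mu(g) = \log\|w\| + c$ for every nonzero $w \in \mathbb{S}^{\dual}$, with $c$ depending only on $N$; taking $w = \beta_i^{\dual}\phi_{i,x}$ and using that the fixed injective linear map $\beta_i^{\dual}$ satisfies $\log\|\beta_i^{\dual}v\| = \log\|v\| + O(1)$ uniformly in $v$, the right-hand side becomes $\log\|\phi_{i,x}\| + O(1) =_{\langle X \rangle} -\lambda_{Z_{\mathfrak{s}_i}}$, which proves the displayed identity and hence the upper bound.

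The main obstacle is the identity $\int_{\textnormal{U}_{N+1}}\log|w(g^{\ast}\sigma)|\dif\mu(g) = \log\|w\| + c$: establishing it needs the transitivity-and-invariance argument underlying Lemma \ref{lem:lambda_Havg} (reduce to unit $w$, then change variables in the Haar integral), together with convergence of the improper integral — the zero locus of $g \mapsto w(g^{\ast}\sigma)$ in $\textnormal{U}_{N+1}$ has real codimension $2$ when $N \geq 1$, while the integrand is constant when $N = 0$. Apart from this, the proof is bookkeeping with the relation $=_{\langle X \rangle}$ and its compatibility with negation, sums, and $\min$, keeping the error terms uniform in $g$ where needed, and observing that the degeneracy loci (where some $\beta_i(g^{\ast}\sigma)$ vanishes) are $\mu$-negligible.
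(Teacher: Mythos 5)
Your argument is correct, and it reaches the same two-sided skeleton as the paper — the lower bound from $Z_{\mathfrak t} \subset Z_{\beta}(g^{\ast} D)$, and the upper bound by swapping $\min$ with the Haar integral after reducing to the per-index identity $\int_{\textnormal{U}_{N+1}} \lambda_{\beta_i(g^{\ast} D)} \dif \mu(g) =_{\langle X \rangle} \lambda_{Z_{\mathfrak s_i}}$, which is exactly \eqref{eqn:lambda_betaDavg} — but you establish that key identity by a genuinely different mechanism. The paper identifies $\beta_i(D)$ with the pullback of the hyperplane $H_D$ under the dual embedding $\iota_{\beta_i^{\vee}} \colon \mathfrak{S}_i^{\dual} \hookrightarrow \mathfrak{S}^{\dual}$, applies Lemma \ref{lem:lambda_Havg} on the embedded dual space, and then reruns ``the same reasoning as in Theorem \ref{thm:lambda_Davg}'', i.e.\ the three-case analysis including the blow-up of $X$ along $\mathfrak{s}_i$ when the base locus is not a divisor. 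You instead compute directly with a fixed metric: identifying evaluation at $x$ with a covector $\phi_{i,x} \in \mathbb{S}_i^{\dual}$, invoking the unitary-average identity $\int_{\textnormal{U}_{N+1}} \log|w(g^{\ast}\sigma)| \dif\mu(g) = \log\|w\| + c$ (the computation underlying Lemma \ref{lem:lambda_Havg}), using that the fixed injective map $\beta_i^{\dual}$ distorts norms only by $O(1)$, and identifying $-\log\|\phi_{i,x}\|$ with $\lambda_{Z_{\mathfrak s_i}}$ via \eqref{eqn:lambda_Zs}. This handles arbitrary base loci in one stroke — in the case $\ell = 0$, $\beta_0 = \id$ it would even reprove Theorem \ref{thm:lambda_Davg} itself without the blow-up — whereas the paper's route is shorter given that Theorem \ref{thm:lambda_Davg} is already available. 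Two further merits of your write-up: you make explicit the uniformity in $g$ of the error constant in the lower bound (via the operator-norm/Cauchy–Schwarz estimate), a point the paper leaves implicit, and you note that the degeneracy set $\{g : \beta_i(g^{\ast}\sigma) = 0\}$ is $\mu$-null and that the improper integrals converge; both checks are sound as you state them.
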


\begin{proof}
For each $0 \leq i \leq \ell$, the dual homomorphism $\beta_i^{\dual} \colon \mathbb{S}_i^{\dual} \hookrightarrow \mathbb{S}^{\dual}$ induces a closed embedding $\iota_{\beta_i^{\vee}} \colon \mathfrak{S}_i^{\dual} \cong \textnormal{P}(\mathbb{S}_i^{\dual}) \hookrightarrow \textnormal{P}(\mathbb{S}^{\dual}) \cong \mathfrak{S}^{\dual}$.
We easily check that $\beta_i(D)$ corresponds to the hyperplane $\iota_{\beta_i^{\vee}}^{\ast} H_D$ of $\mathfrak{S}_i^{\dual}$ if $\iota_{\beta_i^{\vee}}(\mathfrak{S}_i^{\dual}) \not\subset \Supp H_D$, and $\beta_i(D) = X$, which corresponds to $\mathfrak{S}_i^{\dual}$, if $\iota_{\beta_i^{\vee}}(\mathfrak{S}_i^{\dual}) \subset \Supp H_D$.
By abuse of notation, let $\beta_i(D)$ also denote the corresponding hyperplane of $\mathfrak{S}_i^{\dual}$ (or $\mathfrak{S}_i^{\dual}$ itself).
According to Proposition \ref{prp:lambda_Z}, \ref{itm:lambdapullback}), we get
\[
\int_{\textnormal{U}_{N+1}} \hspace{-1eM} \lambda_{\beta_i(g^{\ast} D)} \dif \mu(g) =_{\langle \mathfrak{S}_i^{\vee} \rangle} \int_{\textnormal{U}_{N+1}} \hspace{-1eM} g^{\ast} \lambda_{H_D} \circ \iota_{\beta_i^{\vee}} \dif \mu(g) =_{\langle \mathfrak{S}_i^{\vee} \rangle} 0
.\]
Following the same reasoning in the proof of Theorem \ref{thm:lambda_Davg}, the above formula yields
\begin{equation}\label{eqn:lambda_betaDavg}
\int_{\textnormal{U}_{N+1}} \hspace{-1eM} \lambda_{\beta_i(g^{\ast} D)} \dif \mu(g) =_{\langle X \rangle} \lambda_{Z_i}
,\end{equation}
where $Z_i = Z_{\mathfrak s_i}$ for each $0 \leq i \leq \ell$.
We assert that the theorem follows from the same reasoning in the proof of Theorem \ref{thm:lambda_Zsavg}.
Concretely speaking, according to Proposition \ref{prp:lambda_Z}, \ref{itm:lambdacap}) and \eqref{eqn:lambda_betaDavg},
\begin{align*}
\int_{\textnormal{U}_{N+1}} \hspace{-1eM} \lambda_{Z_{\beta}(g^{\ast} D)} \dif \mu(g) &=_{\langle X \rangle} \int_{\textnormal{U}_{N+1}} \!\mathop{\min}_{0 \leq i \leq \ell} \lambda_{\beta_i(g^{\ast} D)} \dif \mu(g) \\
&\leq_{\langle X \rangle} \mathop{\min}_{0 \leq i \leq \ell} \int_{\textnormal{U}_{N+1}} \hspace{-1eM} \lambda_{\beta_i(g^{\ast} D)} \dif \mu(g) \\
&\leq_{\langle X \rangle} \mathop{\min}_{0 \leq i \leq \ell} \lambda_{Z_i} \leq_{\langle X \rangle} \lambda_{Z_{\mathfrak t}}
.\end{align*}
On the other hand, seeing that $Z_{\mathfrak t} \subset Z_{\beta}(D)$ for all $D \in \mathfrak{S}$, by Proposition \ref{prp:lambda_Z}, \ref{itm:lambdasubset}), we obtain
\[
\lambda_{Z_{\mathfrak t}} \leq_{\langle X \rangle} \int_{\textnormal{U}_{N+1}} \hspace{-1eM} \lambda_{Z_{\beta}(g^{\ast} D)} \dif \mu(g)
.\qedhere\]
\end{proof}

\begin{remark}
Likewise, \eqref{eqn:lambda_ZbetaDavg} is reformulated as
\[
\int_{D \in \mathfrak{S}} \hspace{-.5eM} \lambda_{Z_{\beta}(D)} \dif \rho(D) =_{\langle X \rangle} \lambda_{Z_{\mathfrak t}}
,\]
where $\rho$ is the unique measure on $\mathfrak{S} \cong (\mathbb{P}^N)^{\dual}$ invariant under the unitary group $\textnormal{U}_{N+1}$ and satisfying $\rho(\mathfrak{S}) = 1$.
\end{remark}

Furthermore, following the same reasoning in the proofs of Theorem \ref{thm:lambda_Zsavg} and Theorem \ref{thm:lambda_ZbetaDavg}, we generalize Theorem \ref{thm:lambda_ZbetaDavg} further to the Grassmannian.
\begin{theorem}\label{thm:lambda_betaZavg}
Let $X$ be a projective manifold, and choose linear systems $\mathfrak{T}$, $\mathfrak{S}_0, \dots, \mathfrak{S}_{\ell}$, and $\mathfrak{S}$ on $X$ as in Theorem \ref{thm:lambda_ZbetaDavg}.
Let $\beta_i$ be as in Theorem \ref{thm:lambda_ZbetaDavg}, for each $0 \leq i \leq \ell$.
For any linear subsystem $\mathfrak{O} \in \Gr(k, \mathfrak{S})$, consider the map
\begin{align*}
Z_{k, \beta} \colon \Gr(k, \mathfrak{S}) &\longrightarrow Z(X) \\
\mathfrak{O} &\longmapsto Z_{k, \beta}(\mathfrak{O}) = \bigcap_{i=0}^{\ell} \beta_i(\mathfrak{O})
,\end{align*}
where $\beta_i(\mathfrak{O})$ is the subscheme of base points of $\mathfrak{O}_{\beta_i}$, which corresponds to $\beta_i(\mathbb{O}) \subset \mathbb{S}_i$.
Then, we have
\begin{equation}\label{eqn:lambda_betaZavg}
\int_{\textnormal{U}_{N+1}} \hspace{-1eM} \lambda_{Z_{k, \beta}(g^{\ast} \mathfrak{O})} \dif \mu(g) =_{\langle X \rangle} \lambda_{Z_{\mathfrak t}}
,\end{equation}
where we write, by abuse of notation, $g^{\ast} \mathfrak{O}$ to describe a right action $g \in \textnormal{U}_{N+1}$ on $\mathfrak{O} \in \Gr(k, \mathfrak{S})$.
\end{theorem}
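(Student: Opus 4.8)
The plan is to reduce the statement to the Crofton formula for a single divisor in $\mathfrak{S}$, namely \eqref{eqn:lambda_betaDavg} from the proof of Theorem \ref{thm:lambda_ZbetaDavg}, by fixing a basis of $\mathfrak{O}$ and then assembling minima exactly as in the proofs of Theorems \ref{thm:lambda_Zsavg} and \ref{thm:lambda_ZbetaDavg}.

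First I would fix $\mathfrak{O} \in \Gr(k, \mathfrak{S})$, choose a basis $\{\varsigma_0, \dots, \varsigma_k\}$ of the corresponding subspace $\mathbb{O} \subset \mathbb{S}$, and set $D_j = (\varsigma_j) \in \mathfrak{S}$ for $0 \leq j \leq k$. For $g \in \textnormal{U}_{N+1}$, the subspace attached to $g^{\ast} \mathfrak{O}$ is $\Span\{g \cdot \varsigma_0, \dots, g \cdot \varsigma_k\}$, so $\beta_i(g \cdot \mathbb{O}) = \Span\{\beta_i(g \cdot \varsigma_j) \mid 0 \leq j \leq k\}$ and its scheme of base points $\beta_i(g^{\ast} \mathfrak{O})$ equals $\bigcap_{j=0}^{k}(\beta_i(g \cdot \varsigma_j))$, where $(\beta_i(g \cdot \varsigma_j)) = X$ whenever $\beta_i(g \cdot \varsigma_j) = 0$, consistently with the convention of \eqref{eqn:Zbeta}. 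Since this expression does not depend on the chosen basis,
\[
Z_{k, \beta}(g^{\ast} \mathfrak{O}) = \bigcap_{i=0}^{\ell} \bigcap_{j=0}^{k} \beta_i(g^{\ast} D_j),
\]
and, iterating Proposition \ref{prp:lambda_Z}, \ref{itm:lambdacap}), one gets $\lambda_{Z_{k, \beta}(g^{\ast} \mathfrak{O})} =_{\langle X \rangle} \mathop{\min}_{0 \leq i \leq \ell,\, 0 \leq j \leq k} \lambda_{\beta_i(g^{\ast} D_j)}$, the value $\lambda_X \equiv \infty$ being harmlessly absorbed in the minimum.

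The two inequalities then follow as in the earlier proofs. For the upper bound, integration over $\textnormal{U}_{N+1}$, the pointwise estimate $\int \min \leq \min \int$, and \eqref{eqn:lambda_betaDavg} applied to each $D_j \in \mathfrak{S}$ give
\[
\int_{\textnormal{U}_{N+1}} \hspace{-1eM} \lambda_{Z_{k, \beta}(g^{\ast} \mathfrak{O})} \dif \mu(g) \leq_{\langle X \rangle} \mathop{\min}_{0 \leq i \leq \ell,\, 0 \leq j \leq k} \int_{\textnormal{U}_{N+1}} \hspace{-1eM} \lambda_{\beta_i(g^{\ast} D_j)} \dif \mu(g) =_{\langle X \rangle} \mathop{\min}_{0 \leq i \leq \ell} \lambda_{Z_i} =_{\langle X \rangle} \lambda_{Z_{\mathfrak t}},
\]
where $Z_i = Z_{\mathfrak s_i}$, and the last equality uses Proposition \ref{prp:lambda_Z}, \ref{itm:lambdacap}) together with the identity $Z_{\mathfrak t} = \bigcap_{i=0}^{\ell} Z_{\mathfrak s_i}$; this identity holds because $\mathbb{T} = \mathbb{S}_0 + \dots + \mathbb{S}_{\ell}$ forces $\mathfrak{t} = \mathfrak{s}_0 + \dots + \mathfrak{s}_{\ell}$ on base ideals, combined with $\mathcal{I}_{Z_1 \cap Z_2} = \mathcal{I}_{Z_1} + \mathcal{I}_{Z_2}$ from Definition \ref{def:I_Z}, \ref{itm:Icap}). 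For the reverse bound, $\beta_i(g \cdot \mathbb{O}) \subset \mathbb{S}_i$ implies that the base ideal of the linear system attached to $\beta_i(g \cdot \mathbb{O})$ is contained in $\mathfrak{s}_i$, so $Z_{\mathfrak s_i} \subset \beta_i(g^{\ast} \mathfrak{O})$ for every $i$ and every $g$; intersecting over $i$ yields $Z_{\mathfrak t} = \bigcap_i Z_{\mathfrak s_i} \subset Z_{k, \beta}(g^{\ast} \mathfrak{O})$, hence $\lambda_{Z_{\mathfrak t}} \leq_{\langle X \rangle} \lambda_{Z_{k, \beta}(g^{\ast} \mathfrak{O})}$ by Proposition \ref{prp:lambda_Z}, \ref{itm:lambdasubset}), and integrating gives $\lambda_{Z_{\mathfrak t}} \leq_{\langle X \rangle} \int_{\textnormal{U}_{N+1}} \lambda_{Z_{k, \beta}(g^{\ast} \mathfrak{O})} \dif \mu(g)$. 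Combining the two bounds yields \eqref{eqn:lambda_betaZavg}.

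The step I expect to be the main obstacle is the first one: verifying that $\beta_i(g^{\ast} \mathfrak{O})$ really decomposes as $\bigcap_j \beta_i(g^{\ast} D_j)$ independently of the basis of $\mathbb{O}$, and checking that the degenerate cases (those $g$ for which some $\beta_i(g \cdot \varsigma_j)$ vanishes, so that the corresponding divisor is all of $X$) do not obstruct the interchange of minimum and integral. Here one notes that for fixed $\mathfrak{O}$ this happens only on a $\mu$-null set of $g$, or one simply argues with the extended-real-valued minimum as in the proofs of Theorems \ref{thm:lambda_Zsavg} and \ref{thm:lambda_ZbetaDavg}, of which the present argument is essentially a transcription.
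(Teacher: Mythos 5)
Your proposal is correct and takes essentially the same route the paper intends: the paper offers no separate argument for Theorem \ref{thm:lambda_betaZavg}, saying only that it follows by the same reasoning as Theorems \ref{thm:lambda_Zsavg} and \ref{thm:lambda_ZbetaDavg}, which is exactly your scheme of writing $Z_{k, \beta}(g^{\ast} \mathfrak{O}) = \bigcap_{i, j} \beta_i(g^{\ast} D_j)$ for a basis, exchanging the minimum with the integral, invoking \eqref{eqn:lambda_betaDavg} for each $D_j$, and using the containment $Z_{\mathfrak t} \subset Z_{k, \beta}(g^{\ast} \mathfrak{O})$ for the reverse inequality. Your additional attention to the degenerate sections (the $\mu$-null set of $g$ where all $\beta_i(g \cdot \varsigma_j)$ vanish) is more care than the paper itself takes and does not alter the argument.
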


\begin{remark}
Analogous to Remark \ref{rmk:lambda_Zsavg}, \eqref{eqn:lambda_betaZavg} is reformulated as
\[
\int_{\mathfrak{O} \in \Gr(k, \mathfrak{S})} \hspace{-1eM} \lambda_{{Z_{k, \beta}(\mathfrak{O})}} \dif \rho_{k, N}(\mathfrak{O}) =_{\langle X \rangle} \lambda_{Z_{\mathfrak t}}
,\]
where $\rho_{k, N}$ is the normalized measure on $\Gr(k, \mathfrak{S}) \cong \Gr(k+1, N+1)^{\dual}$ as in Remark \ref{rmk:lambda_Zsavg}.
What's more, Theorem \ref{thm:lambda_betaZavg} for $k=0$ coincides with Theorem \ref{thm:lambda_ZbetaDavg}.
\end{remark}

Take a linear system $\mathfrak{S} \subset |L|$ on $X$.
A holomorphic curve $f \colon \mathbb{C} \to X$ is said to be non-degenerated relative to $\mathfrak{S}$, if the image $f(\mathbb{C})$ does not lie in any $D \in \mathfrak{S}$.
By the definition of proximity functions, Crofton's formula for Weil functions in the sense of $\mathcal{C}(X) / =_{\langle X \rangle}$ implies Crofton's formula for proximity functions modulo $O(1)$.
\begin{theorem}\label{thm:m_Zbetaavg}
Let $X$ be a projective manifold, and choose linear systems $\mathfrak{T}$, $\mathfrak{S}_0, \dots, \mathfrak{S}_{\ell}$, and $\mathfrak{S}$ on $X$ as in Theorem \ref{thm:lambda_ZbetaDavg}.
Let $\beta_i$ be as in Theorem \ref{thm:lambda_ZbetaDavg}, for each $0 \leq i \leq \ell$.
Let $f \colon \mathbb{C} \to X$ be a holomorphic curve, which is non-degenerated relative to $\mathfrak{T}$.
If $\bigcap_{i=0}^{\ell} \textnormal{Ker} \,\beta_i = 0$, then
\[
\int_{D \in \mathfrak{S}} \hspace{-.5eM} m_f(r, Z_{\beta}(D)) \dif \rho(D) = m_f(r, Z_{\mathfrak t}) + O(1)
.\]
In addition, suppose that $\dim \left( \bigcap_{i=0}^{\ell} \textnormal{Ker} \,\beta_i \right) \leq k$, for each $0 \leq k \leq N-1$. We have
\[
\int_{\mathfrak{O} \in \Gr(k, \mathfrak{S})} \hspace{-1eM} m_f(r, Z_{k, \beta}(\mathfrak{O})) \dif \rho_{k, N}(\mathfrak{O}) = m_f(r, Z_{\mathfrak t}) + O(1)
.\]
\end{theorem}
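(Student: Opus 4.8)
The plan is to deduce the identity for proximity functions from the corresponding identity for Weil functions, namely Theorem~\ref{thm:lambda_ZbetaDavg} (in the averaged form $\int_{D\in\mathfrak S}\lambda_{Z_\beta(D)}\dif\rho(D)=_{\langle X\rangle}\lambda_{Z_{\mathfrak t}}$ recorded in the Remark following it), by composing with $f$, integrating over the circle $\partial\bigtriangleup(r)$, and exchanging the order of integration with Tonelli's theorem. The first thing I would check is that the hypotheses make both sides meaningful. Since $\Supp Z_\beta(D)=\bigcap_{i=0}^{\ell}\Supp\beta_i(D)$, and whenever $\beta_i(D)\neq X$ we have $\beta_i(D)\in\mathfrak T$, hence $f(\mathbb C)\not\subset\Supp\beta_i(D)$ by non-degeneracy relative to $\mathfrak T$, an inclusion $f(\mathbb C)\subset\Supp Z_\beta(D)$ would force $\beta_i(D)=X$ for every $i$, i.e. $\sigma\in\bigcap_i\textnormal{Ker}\,\beta_i=0$ for $(\sigma)=D$, which is impossible; so $f(\mathbb C)\not\subset\Supp Z_\beta(D)$ for \emph{every} $D\in\mathfrak S$, and $m_f(r,Z_\beta(D))$ is defined modulo $O(1)$. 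The same reasoning, using $\Supp Z_{\mathfrak t}=\Bs(\mathfrak T)$, gives $f(\mathbb C)\not\subset\Supp Z_{\mathfrak t}$.

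Next I would pin down, once and for all, Weil functions that vary measurably in the parameter and are uniformly bounded below, so that the $O(1)$ errors over the infinite family $\{Z_\beta(D)\}_{D\in\mathfrak S}$ stay uniform. Fixing a continuous Hermitian metric $h$ on $L$ and choosing locally in $D$ a canonical section $\sigma_D$ with $(\sigma_D)=D$, I set
\[
\lambda_{Z_\beta(D)}:=\min_{0\le i\le\ell}\bigl(-\log\|\beta_i(\sigma_D)\|_h\bigr),
\]
with the $i$-th term omitted when $\beta_i(\sigma_D)=0$ (then $\beta_i(D)=X$); by \eqref{eqn:lambda_D} and Proposition~\ref{prp:lambda_Z}, \ref{itm:lambdacap}) this is a Weil function for $Z_\beta(D)=\bigcap_i\beta_i(D)$, the map $(D,x)\mapsto\lambda_{Z_\beta(D)}(x)$ is Borel with values in $(-\infty,+\infty]$, and since $\|\beta_i(\sigma_D)(x)\|_h$ is bounded above uniformly on the compact $\mathfrak S\times X$ there is a constant $C_0$ with $\lambda_{Z_\beta(D)}\ge -C_0$ for all $D$. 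With these choices $\int_{\mathfrak S}m_f(r,Z_\beta(D))\dif\rho(D)$ is independent of all ambiguities modulo $O(1)$.

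Then I would run Tonelli. For $x\notin\Supp Z_{\mathfrak t}=\Bs(\mathfrak T)=\bigcap_i\Bs(\mathfrak S_i)$ some $\mathrm{ev}_x\circ\beta_i$ is nonzero on $\mathbb S$, so $\{D\in\mathfrak S:x\in\Supp Z_\beta(D)\}$ is a proper projective subspace of $\mathfrak S$, of $\rho$-measure zero, and $D\mapsto\lambda_{Z_\beta(D)}(x)$ is $\rho$-integrable; by Theorem~\ref{thm:lambda_ZbetaDavg} in its averaged form, together with boundedness of continuous functions on the compact $X$, there is a constant $C_1$ with $\bigl|\int_{\mathfrak S}\lambda_{Z_\beta(D)}(x)\dif\rho(D)-\lambda_{Z_{\mathfrak t}}(x)\bigr|\le C_1$ for all $x\in X\setminus\Supp Z_{\mathfrak t}$ (the estimate, valid a priori on a Zariski-dense open subset, extends by continuity of both sides on $X\setminus\Supp Z_{\mathfrak t}$). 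Applying Tonelli to the non-negative Borel function $(D,\theta)\mapsto\lambda_{Z_\beta(D)}(f(re^{i\theta}))+C_0$ — legitimate because $f(re^{i\theta})\notin\Supp Z_{\mathfrak t}$ for all but finitely many $\theta$ — yields
\[
\int_{\mathfrak S}\bigl(m_f(r,Z_\beta(D))+C_0\bigr)\dif\rho(D)=\int_0^{2\pi}\Bigl(\int_{\mathfrak S}\lambda_{Z_\beta(D)}(f(re^{i\theta}))\dif\rho(D)+C_0\Bigr)\frac{\dif\theta}{2\pi}=m_f(r,Z_{\mathfrak t})+C_0+O(1),
\]
and since $\rho(\mathfrak S)=1$ the constants $C_0$ cancel, giving $\int_{\mathfrak S}m_f(r,Z_\beta(D))\dif\rho(D)=m_f(r,Z_{\mathfrak t})+O(1)$. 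The Grassmannian statement follows from the identical argument with $\Gr(k,\mathfrak S)$, $\rho_{k,N}$ and Theorem~\ref{thm:lambda_betaZavg} replacing $\mathfrak S$, $\rho$ and Theorem~\ref{thm:lambda_ZbetaDavg}; here $\dim(\bigcap_i\textnormal{Ker}\,\beta_i)\le k$ plays the role of $\bigcap_i\textnormal{Ker}\,\beta_i=0$, ensuring both that $f(\mathbb C)\not\subset\Supp Z_{k,\beta}(\mathfrak O)$ for every $\mathfrak O\in\Gr(k,\mathfrak S)$ (since $\dim\mathbb O=k+1>k$ forces some $\beta_i(\mathbb O)\neq 0$, so some $\mathfrak O_{\beta_i}\subset\mathfrak T$ is a genuine linear system) and that $\{\mathfrak O:x\in\Supp Z_{k,\beta}(\mathfrak O)\}$ is a proper sub-Grassmannian for $x\notin\Bs(\mathfrak T)$.

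The main obstacle I expect is precisely the exchange of integration: it is not enough to invoke the Weil-function identity $=_{\langle X\rangle}$ formally, since over the infinite family $\{Z_\beta(D)\}_{D\in\mathfrak S}$ one must control the error \emph{uniformly} in $D$, which is why I insist on choosing $\lambda_{Z_\beta(D)}$ jointly Borel in $(D,x)$ and uniformly bounded below; one then has to verify carefully that the various null sets and the non-degeneracy conditions are mutually compatible so that the Fubini–Tonelli interchange is genuinely justified, and that the resulting error term is $O(1)$ in $r$ (which it is, being bounded by $\sup_X$ of a continuous function, uniformly in $r$ and $\theta$).
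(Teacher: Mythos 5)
Your proposal is correct and takes essentially the same route as the paper, which gives no separate argument for this theorem beyond the remark that Crofton's formula for Weil functions (Theorems \ref{thm:lambda_ZbetaDavg} and \ref{thm:lambda_betaZavg}) implies the proximity-function version upon composing with $f$ and integrating over the circle — exactly your deduction. The measure-theoretic details you add (a jointly Borel choice of Weil functions uniformly bounded below, the Tonelli interchange, and the non-degeneracy checks via $\bigcap_{i=0}^{\ell} \textnormal{Ker}\,\beta_i$, the latter matching Remark \ref{rmk:m_Zbetaavg}) are simply a careful filling-in of what the paper leaves implicit.
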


\begin{remark}\label{rmk:m_Zbetaavg}
The condition that $f$ is non-degenerated relative to $\mathfrak{T}$ and the condition that $\bigcap_{i=0}^{\ell} \textnormal{Ker} \,\beta_i = 0$ (or the condition that $\dim \left( \bigcap_{i=0}^{\ell} \textnormal{Ker} \,\beta_i \right) \leq k$, respectively) are to guarantee that $f(\mathbb{C}) \not\subset Z_{\beta}(D)$ (or $f(\mathbb{C}) \not\subset Z_{k, \beta}(\mathfrak{O})$), and the proximity function $m_f(r, Z_{\beta}(D))$ (or $m_f(r, Z_{k, \beta}(\mathfrak{O}))$) is accordingly defined for any $D \in \mathfrak{S}$ (or $\mathfrak{O} \in \Gr(k, \mathfrak{S})$).
We make a convention that $m_f(r, Z) \equiv \infty$ if $f(\mathbb{C}) \subset Z$.
Even if such conditions is replaced by a weaker condition that $f$ is a non-constant holomorphic curve satisfying that $f(\mathbb{C}) \not\subset Z_{\mathfrak t}$, these two formulas in Theorem \ref{thm:m_Zbetaavg} still hold as the improper integrals.
\end{remark}

\subsection{Calculus Lemma}

\begin{lemma}[Borel's Calculus Lemma]
Let $h \colon \mathbb{R}_{\geq 0} \to \mathbb{R}_{\geq 0}$ be a monotonically increasing function.
For every $\delta > 0$, the inequality
\[
\frac{\dif}{\dif r} h(r) \leq h^{1+\delta}(r)
\]
holds for all $r \in \mathbb{R}_{\geq 0}$ outside a Borel subset $E$ of finite Lebesgue measure.
\end{lemma}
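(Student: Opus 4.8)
The plan is to estimate directly the Lebesgue measure of the exceptional set, using that a monotone function is differentiable almost everywhere and that the integral of its derivative underestimates its total increase. If $h \equiv 0$ the inequality holds everywhere, so assume $h \not\equiv 0$ and fix $r_0 \geq 0$ with $c_0 \bydef h(r_0) > 0$; since $h$ is non-decreasing, $h(r) \geq c_0$ for all $r \geq r_0$. Put
\[
E \bydef \{\, r \in \mathbb{R}_{\geq 0} \mid h \text{ is not differentiable at } r, \text{ or } h'(r) > h(r)^{1+\delta} \,\}.
\]
As $h$ is monotone, the set of points of non-differentiability is Lebesgue-null, so it is enough to bound $|E \cap [r_0, \infty)|$, the piece $E \cap [0, r_0)$ contributing at most $r_0 < \infty$.

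On $[r_0, \infty)$ introduce the auxiliary function $g(r) \bydef h(r)^{-\delta}$. Since $t \mapsto t^{-\delta}$ is $C^1$ and strictly decreasing on $[c_0, \infty)$ and $h$ is non-decreasing with $h \geq c_0$ there, $g$ is non-increasing on $[r_0, \infty)$, takes values in $(0, c_0^{-\delta}]$, and at every $r$ where $h'(r)$ exists the chain rule gives $g'(r) = -\delta\, h(r)^{-\delta-1} h'(r)$. By Lebesgue's theorem on monotone functions applied to $-g$, for every $b > r_0$,
\[
\int_{r_0}^{b} (-g'(r))\,\dif r \leq g(r_0) - g(b) \leq c_0^{-\delta},
\]
so, letting $b \to \infty$, $\int_{r_0}^{\infty} h(r)^{-\delta-1} h'(r)\,\dif r \leq \delta^{-1} c_0^{-\delta} < \infty$. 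On $E \cap [r_0, \infty)$ one has $h'(r)/h(r)^{1+\delta} > 1$, whence
\[
|E \cap [r_0, \infty)| \leq \int_{E \cap [r_0, \infty)} \frac{h'(r)}{h(r)^{1+\delta}}\,\dif r \leq \int_{r_0}^{\infty} \frac{h'(r)}{h(r)^{1+\delta}}\,\dif r \leq \frac{c_0^{-\delta}}{\delta}.
\]
Combining the two pieces gives $|E| \leq r_0 + \delta^{-1} c_0^{-\delta} < \infty$, which is the claim.

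The only genuinely delicate point is converting the bare monotonicity of $h$ into an honest integral estimate: one must invoke Lebesgue's differentiation theorem for monotone functions (yielding $h' \in L^1_{\mathrm{loc}}$ with $\int_a^b h'(r)\,\dif r \leq h(b) - h(a)$) and apply it to $g = h^{-\delta}$ rather than to $h$ itself, so that the relevant antiderivative stays bounded; one must also check that the chain rule for $g$ is valid at almost every point, namely at the points where $h'$ exists. An alternative route is a dyadic decomposition $E_n \bydef \{\, r : 2^n \leq h(r) < 2^{n+1},\ h'(r) > h(r)^{1+\delta}\,\}$: each $E_n$ lies in a bounded interval (because $h$ is monotone), $|E_n| \leq 2^{1-n\delta}$ by the same estimate $\int_{E_n} h' \leq \Delta h$, and only $n$ with $2^{n+1} > c_0$ occur on $[r_0,\infty)$, so $\sum_n |E_n|$ is a convergent geometric-type series; but the $h^{-\delta}$ substitution above seems cleanest.
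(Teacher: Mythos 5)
Your proof is correct and takes essentially the same route as the paper: both bound $|E\cap[r_0,\infty)|$ by $\int_{r_0}^{\infty} h'(r)/h(r)^{1+\delta}\,\dif r$ and show this integral is at most $\delta^{-1}h(r_0)^{-\delta}$, after disposing of $E\cap[0,r_0]$ trivially. The only difference is one of rigor, not of method: the paper carries out the change of variables $\int_{r_0}^{\infty} h'/h^{1+\delta}\,\dif r \leq \int_{h(r_0)}^{\infty} \dif h/h^{1+\delta}$ informally, while you justify the same estimate by applying Lebesgue's differentiation theorem to the monotone auxiliary function $h^{-\delta}$.
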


\begin{proof}
Thanks to the monotone of $h(r)$, the derivative $\dfrac{\dif}{\dif r} h(r)$ exists almost everywhere.
Observe that the inequality always holds in the particular case where $h(r)$ is constant.
Assume that $h(r) \not\equiv 0$, and take $r_0 \geq 0$ such that $h(r_0) > 0$.
Let $E \subset \mathbb{R}_{\geq 0}$ be the set of $r$ satisfying
\[
\frac{\dif}{\dif r} h(r) > h^{1+\delta}(r)
.\]
Seeing that $\int_{E \cap [0, r_0]} \dif r \leq r_0 < \infty$, it suffices to show that $\int_{E \cap (r_0, \infty)} \dif r < \infty$.
By the definition of $E$, we have
\[
\int_{E \cap (r_0, \infty)} \!\dif r \leq \int_{r_0}^{\infty} \frac{1}{h^{1+\delta}(r)} \frac{\dif}{\dif r} h(r) \dif r \leq \int_{h(r_0)}^{\infty} \frac{\dif h}{h^{1+\delta}} = \frac{1}{\delta h(r_0)^{\delta}} < \infty
.\qedhere\]
\end{proof}

For brevity, we will use the notation
\[
\varphi(r) \leq \left. \psi(r) \right\rVert_{E}
\]
to mean that the stated inequality holds outside a Borel subset $E$ of finite Lebesgue measure,
where $\varphi$ and $\psi$ are non-negative real-valued functions on $\mathbb{R}_{\geq 0}$.

The typical use of Calculus Lemma is as follows.
\begin{lemma}\label{lem:calculus}
Let $\gamma \colon \mathbb{C} \to \mathbb{R}_{\geq 0}$ be a non-negative function on $\mathbb{C}$.
For every $\delta > 0$, we have
\[
\int_0^{2\pi} \gamma(re^{i\theta}) \frac{\dif \theta}{2\pi} \leq \left. \left(\mathcal{N}([\gamma \ddif^c |z|^2], r)\right)^{(1+\delta)^2} \right\rVert_{E}
.\]
\end{lemma}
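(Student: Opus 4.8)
The plan is to reduce this two-variable assertion to one-variable calculus and then apply Borel's Calculus Lemma twice. We may assume $\gamma\not\equiv0$, as both sides vanish otherwise. First I would put
\[
n(r)\bydef\int_{\bigtriangleup(r)}\gamma\,\ddif^c|z|^2,\qquad N(r)\bydef\mathcal{N}\bigl([\gamma\,\ddif^c|z|^2],r\bigr)=\int_0^r n(t)\,\frac{\dif t}{t}.
\]
Since $\ddif^c|z|^2=\tfrac1\pi\rho\,\dif\rho\wedge\dif\theta$ in polar coordinates $z=\rho e^{i\theta}$, one gets $n(r)=\int_0^r\bigl(\int_0^{2\pi}\gamma(\rho e^{i\theta})\tfrac{\dif\theta}{2\pi}\bigr)2\rho\,\dif\rho$, so that $n$ is non-decreasing and absolutely continuous with $n'(r)=2r\int_0^{2\pi}\gamma(re^{i\theta})\tfrac{\dif\theta}{2\pi}$ for a.e. $r$; in particular the left-hand side of the claimed inequality is exactly $\tfrac{n'(r)}{2r}$. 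Likewise $N$ is non-decreasing with $N'(r)=\tfrac{n(r)}{r}$, and since $n$ is eventually bounded below by a positive constant one has $N(r)\geq c\log r+O(1)\to\infty$.

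Next I would apply Borel's Calculus Lemma to the non-decreasing function $N$ to obtain $n(r)=rN'(r)\leq rN(r)^{1+\delta}$ for $r$ outside a Borel set $E_1$ of finite Lebesgue measure, and apply it again to the non-decreasing function $n$ to obtain $n'(r)\leq n(r)^{1+\delta}$ for $r$ outside a Borel set $E_2$ of finite Lebesgue measure. Combining these for $r\notin E_1\cup E_2$, $r\geq1$,
\[
\int_0^{2\pi}\gamma(re^{i\theta})\,\frac{\dif\theta}{2\pi}=\frac{n'(r)}{2r}\leq\frac{n(r)^{1+\delta}}{2r}\leq\frac{r^{\delta}}{2}\,N(r)^{(1+\delta)^2}.
\]

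The hard part will be to pass from this to the stated inequality, i.e. to remove the elementary factor $\tfrac12 r^{\delta}$ and land on exactly $N(r)^{(1+\delta)^2}$ while keeping the exceptional set of finite Lebesgue measure. The standard way to handle this is to carry out the two applications of Borel's Calculus Lemma with exponents strictly between $1$ and $1+\delta$, to discard the bounded interval on which $r<1$ or $N(r)<1$, and then to absorb the residual power of $r$ — either by an elementary comparison (legitimate once $N$ is known to grow fast enough) or by adjoining the offending locus to the exceptional set; one may equivalently substitute $u=\log r$ and apply Borel's Calculus Lemma to the convex increasing function $u\mapsto N(e^u)$ and to its derivative, which yields the exponent $(1+\delta)^2$ in one stroke but then requires checking that an exceptional set of finite measure in $u$ stays of finite Lebesgue measure in $r$. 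This exceptional-set bookkeeping is the only non-routine point; and in any case a surplus factor of a small power of $r$, should one appear, is harmless for the applications, since it enters $S_f(r)$ only through its logarithm and is thus absorbed into the $O(\log r)$ term already present there.
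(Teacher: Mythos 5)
Your setup coincides with the paper's: polar coordinates, the identity that the circle average equals $\frac{1}{2r}\frac{\dif}{\dif r}\bigl(r\frac{\dif}{\dif r}N(r)\bigr)$ with $N(r)=\mathcal{N}([\gamma\,\ddif^c|z|^2],r)$, and two applications of Borel's Calculus Lemma. But what you actually establish is $\int_0^{2\pi}\gamma(re^{i\theta})\frac{\dif\theta}{2\pi}\le \frac{r^{\delta}}{2}\,N(r)^{(1+\delta)^2}$ off a set of finite measure, i.e.\ the classical estimate (Lemma 3.4 in \cite{Lang1990}, Lemma A\,2.1.5 in \cite{Ru2001}) that this lemma is explicitly intended to refine, as the remark right after it says. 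Removing the factor $r^{\delta}$ is the whole content of the statement, and none of your three suggested repairs achieves it. Taking exponents strictly between $1$ and $1+\delta$ only turns $r^{\delta}$ into a smaller positive power of $r$: the power comes from $n(r)^{1+\delta_2}\le\bigl(rN(r)^{1+\delta_1}\bigr)^{1+\delta_2}$, not from the size of the exponents. The substitution $u=\log r$ breaks down at exactly the point you flag: a set of finite Lebesgue measure in $u$ does not pull back to a set of finite Lebesgue measure in $r$ (this is why the classical literature phrases such statements with ``finite logarithmic measure'', which is a genuinely different assertion from the paper's $\rVert_E$). Finally, the fallback that a stray $r^{\delta}$ is harmless because it is ``absorbed into the $O(\log r)$ term already present in $S_f(r)$'' is not available in this paper: Definition \ref{def:Sf} defines $S_f(r)$ as $O(\log T_f(r,A))$ off a finite-measure set, with the stronger requirement $O(1)$ for algebraic curves, and contains no $\log r$ term; in general $\log r$ is not $O(\log T_f(r,A))$, and eliminating $\log r$ is precisely the purpose of this refinement. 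An elementary comparison also does not help, since $N(r)$ may grow only like $\log r$, which cannot dominate any positive power of $r$.

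The paper's proof departs from yours exactly at the second Borel step. Instead of bounding $n'(r)\le n(r)^{1+\delta}$, it writes the circle average as $\frac12\bigl(\frac1r N'(r)+N''(r)\bigr)$, applies Borel to $N$ to get $N'(r)/r\le N(r)^{1+\delta}/r$, applies it to $N'$ to get $N''(r)\le (N'(r))^{1+\delta}\le N(r)^{(1+\delta)^2}$, and then absorbs the $1/r$ term for $r\ge r_0$ using that $N$ is non-decreasing and eventually bounded away from $0$ (the paper's choice of $r_0$); no power of $r$ ever enters. Note the trade-off: your second application is to $n(r)=rN'(r)$, which is genuinely monotone but produces $r^{\delta}$, while the paper's is to $N'(r)=n(r)/r$, which avoids $r^{\delta}$ but whose monotonicity is not automatic for an arbitrary non-negative $\gamma$ and would need justification if you adopt that route. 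As submitted, your argument proves only the weaker classical lemma, not the stated one.
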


\begin{proof}
Notice that $\mathcal{N}([\gamma \ddif^c |z|^2], r) \equiv 0$ if and  only if $\gamma$ is almost everywhere zero on $\mathbb{C}$.
Obviously, the inequality holds in this case.
Assume that $\mathcal{N}([\gamma \ddif^c |z|^2], r) \not\equiv 0$.
There exists a constant $r_0 > 0$ such that $\mathcal{N}([\gamma \ddif^c |z|^2], r) \geq \dfrac{1}{r}$ for all $r \geq r_0$.
Using the polar coordinates, we see that $\ddif^c |z|^2 = 2r \dif r \wedge \dfrac{\dif \theta}{2\pi}$.
Hence
\[
r\frac{\dif}{\dif r} \mathcal{N}([\gamma \ddif^c |z|^2], r) = \int_{\bigtriangleup(r)} \gamma(z) \ddif^c |z|^2 = \int_0^{2\pi} \frac{\dif \theta}{2\pi} \int_0^r 2t \gamma(te^{i\theta}) \dif t
.\]
Applying the Borel's Calculus Lemma twice, we get
\begin{align*}
\int_0^{2\pi} \gamma(re^{i\theta}) \frac{\dif \theta}{2\pi} &= \frac{1}{2r} \frac{\dif}{\dif r} \left(r\frac{\dif}{\dif r} \mathcal{N}([\gamma \ddif^c |z|^2], r)\right) \\
&= \frac{1}{2} \left(\frac{1}{r} \frac{\dif}{\dif r} \mathcal{N}([\gamma \ddif^c |z|^2], r) + \frac{\dif^2}{\dif r^2} \mathcal{N}([\gamma \ddif^c |z|^2], r)\right) \\
&\leq \frac{1}{2} \left. \left(\frac{1}{r} + \left(\mathcal{N}([\gamma \ddif^c |z|^2], r)\right)^{\delta(1+\delta)}\right) \left(\mathcal{N}([\gamma \ddif^c |z|^2], r)\right)^{1+\delta} \right\rVert_{E_1} \\
&\leq \left. \left(\mathcal{N}([\gamma \ddif^c |z|^2], r)\right)^{(1+\delta)^2} \right\rVert_{E_1 \cup [0, r_0]}
.\qedhere\end{align*}
\end{proof}

\begin{remark}
Lemma \ref{lem:calculus} refines the typical application of Calculus Lemma (such as Lemma 3.4 in \cite{Lang1990}, or Lemma A\! 2.1.5 in \cite{Ru2001}), from the original Borel's Calculus Lemma rather than the improved version of Calculus Lemma notwithstanding.
\end{remark}

For any line bundle $L$ and any ample line bundle $A$ on $X$, one can find a positive integer $m$ such that $L^{-1} \otimes A^m$ is ample.
Consequently, we immediately obtain the following property of the characteristic functions.
\begin{proposition}\label{prp:TrA}
Let $L$ and $A$ be as above. Then there is a constant $c$ such that for any non-constant holomorphic curve $f \colon \mathbb{C} \to X$,
\[
T_f(r, L) \leq c \, T_f(r, A)
.\]
Moreover, if $L$ is ample as well, then there is a constant $c$ such that for any non-constant holomorphic curve $f \colon \mathbb{C} \to X$,
\begin{equation}\label{eqn:TrA}
\frac{1}{c} \, T_f(r, A) \leq T_f(r, L) \leq c \, T_f(r, A)
.\end{equation}
\end{proposition}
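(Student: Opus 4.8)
The plan is to deduce everything from the additivity of the characteristic function under tensor products, together with the non‑negativity of $T_f(r,\cdot)$ for ample (indeed nef) line bundles.

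First I would invoke the fact recalled just above the statement: since $A$ is ample, there is an integer $m \in \mathbb{Z}_{+}$ for which $L^{-1} \otimes A^{m}$ is ample. Fix a smooth Hermitian metric $h_L$ on $L$ with weight $\varphi_L$, a smooth Hermitian metric $h_A$ on $A$ with weight $\varphi_A$, and, using ampleness, a smooth Hermitian metric on $L^{-1} \otimes A^{m}$ whose curvature form is positive, with weight $\psi$; then $\psi$ is locally plurisubharmonic. Equipping $L \otimes (L^{-1} \otimes A^{m}) = A^{m}$ with the product metric (weight $\varphi_L + \psi$) and $A^{m}$ with the $m$-th power metric (weight $m\varphi_A$), the additivity of $\ddif^c$ together with the definition $T_f(r,\cdot) = \mathcal{N}(f^{\ast}[\ddif^c(\cdot)],r)$ gives the exact identities $T_f(r,A^{m}) = T_f(r,L) + T_f(r, L^{-1}\otimes A^{m})$ and $T_f(r,A^{m}) = m\, T_f(r,A)$, hence $T_f(r,L) = m\, T_f(r,A) - T_f(r, L^{-1}\otimes A^{m})$.

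Next I would note that $\psi \circ f$ is plurisubharmonic on $\mathbb{C}$, being the pullback of a plurisubharmonic function by the holomorphic map $f$, so $f^{\ast}[\ddif^c \psi] = [\ddif^c(\psi \circ f)]$ is a positive, $d$-closed $(1,1)$-current on $\mathbb{C}$; therefore $T_f(r, L^{-1}\otimes A^{m}) = \mathcal{N}(f^{\ast}[\ddif^c\psi], r) \geq 0$ for every $r \geq 0$. Substituting into the previous identity yields $T_f(r,L) \leq m\, T_f(r,A)$, so the first assertion holds with $c = m$. For the second assertion, apply the first assertion to the pair $(L,A)$ (legitimate because $A$ is ample) to obtain $T_f(r,L) \leq c_1 T_f(r,A)$, and apply it again with the roles of $L$ and $A$ interchanged (legitimate because now $L$ is assumed ample) to obtain $T_f(r,A) \leq c_2 T_f(r,L)$; then $c = \max\{c_1,c_2\}$ gives \eqref{eqn:TrA}.

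I do not expect a genuine obstacle. The only two points requiring a little care are: choosing the Hermitian metrics compatibly so that the additivity of $T_f$ holds as an exact pointwise identity rather than merely modulo $O(1)$ — if one works modulo $O(1)$ instead, one must absorb the error term using the fact that $T_f(r,A)$ is unbounded for a non-constant $f$ and ample $A$, which forces a minor case distinction for small $r$ — and invoking the standard existence of a smooth positively curved metric on an ample line bundle. Both are routine, which is why this statement is essentially standard in Nevanlinna theory.
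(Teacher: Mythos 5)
Your argument is correct and is essentially the paper's own reasoning: the paper gives no detailed proof beyond the remark preceding the proposition that $L^{-1}\otimes A^{m}$ is ample for some $m$, and the intended deduction is exactly yours — additivity of the characteristic function under tensor products plus non-negativity of $T_f(r,\cdot)$ for a positively curved metric, then applying the first inequality in both directions when $L$ is also ample. Your closing caveat about metric choices versus an $O(1)$ term (absorbed since $T_f(r,A)$ is unbounded, or avoided by taking the quotient metric on $L$) is the right way to read the statement, given that $T_f$ is only defined modulo $O(1)$.
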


\begin{definition}\label{def:Sf}
Let $S_f(r)$ represent any quantity $\psi \colon \mathbb{R}_{\geq 0} \to \mathbb{R}_{\geq 0}$ satisfying that
\[
\psi(r) \leq \left. O(\log T_f(r, A)) \right\rVert_{E}
.\]
Especially, if $f$ is an algebraic curve,
\[
\psi(r) \leq O(1) \quad (r \to \infty)
.\]
That is to say, there exists a constant $c$ such that
\[
\psi(r) \leq c, \qquad \forall \, r \geq r_0
,\]
for some constant $r_0 > 0$.
\end{definition}
We see from \eqref{eqn:TrA} that the notation $S_f(r)$ is independent of the choice of the ample line bundle $A$.

\section{Ahlfors' lemma on logarithmic derivative}\label{sec:ALLD}

\subsection{Derived curves or associated curves}
First, recall the $k$-th derived curve (or associated curve) of a holomorphic curve $f$ (see Section 2.1 of \cite{Fujimoto1993} or Chapter 2, Section 4.1 of \cite{Griffiths1978} for more details).
Let $f$ be a holomorphic curve from $\mathbb{C}$ to $\mathbb{P}^{n}$, and take a reduced representation $[f_0 : \dots : f_n]$ of $f$.
Thereupon one gets a holomorphic curve
\begin{align*}
F \colon \mathbb{C} &\longrightarrow \mathbb{C}^{n+1} \setminus \{0\} \\
z &\longmapsto (f_0(z), \dots, f_n(z))
.\end{align*}
It is apparent that $f = \pr^n \circ F$, where $\pr^n \colon \mathbb{C}^{n+1} \setminus \{0\} \to \mathbb{P}^n$ is the canonical projection through the origin $0 \in \mathbb{C}^{n+1}$.
For each $0 \leq k \leq n$, consider the holomorphic curve
\begin{equation}\label{eqn:F^k}
\begin{aligned}
F^k \colon \mathbb{C} &\longrightarrow \bigwedge^{k+1} \mathbb{C}^{n+1} \\
z &\longmapsto F(z) \wedge F^{(1)}(z) \wedge \dots \wedge F^{(k)}(z)
,\end{aligned}
\end{equation}
where $F^{(\ell)} \bydef (f_0^{(\ell)}, \dots, f_n^{(\ell)})$ is a holomorphic curve in $\mathbb{C}^{n+1}$, for each $1 \leq \ell \leq k$.
In particular, $F^n$ is indeed a holomorphic function, known as the Wronskian $W(f)$ of $f$.

A holomorphic curve $f$ in $\mathbb{P}^n$ is said to be linear non-degenerate, if its image does not lie in any hyperplane of $\mathbb{P}^n$.
It is evident that $F^k \equiv 0$, if and only if the image $f(\mathbb{C})$ lies in a linear subspace $\mathbb{P}^{k-1} \subset \mathbb{P}^n$.

One abusively denotes by $\pr \colon \bigwedge^{k+1} \mathbb{C}^{n+1} \setminus \{0\} \to \textnormal{P}(\bigwedge^{k+1} \mathbb{C}^{n+1}) \cong \mathbb{P}^{n_k}$ the canonical projection through the origin $0 \in \mathbb{C}^{n_k+1} \cong \bigwedge^{k+1} \mathbb{C}^{n+1}$, where $n_k = \binom{n+1}{k+1}-1$.
Assume that the image $f(\mathbb{C})$ does not lie in any linear subspace $\mathbb{P}^{k-1} \subset \mathbb{P}^n$.
Then one defines a holomorphic curve
\[
f^k \colon \mathbb{C} \longrightarrow \textnormal{P}(\bigwedge^{k+1} \mathbb{C}^{n+1}) \cong \mathbb{P}^{n_k}
,\]
to be the analytic extension of $\pr \circ F_{\restriction U}^k$, where $U = \mathbb{C} \setminus (F^k)^{-1}(0)$.
Take the blow-up of $\mathbb{C}^{n_k+1}$ at the origin
\[
\upsilon_0 \colon \widetilde{\mathbb{C}}^{n_k+1} \bydef \Bl_{\mathcal{I}_0}(\mathbb{C}^{n_k+1}) \longrightarrow \mathbb{C}^{n_k+1}
,\]
where $\mathcal{I}_0 = \{s \in \mathcal{O}_{\mathbb{C}^{n_k+1}} \mid s(0) = 0\}$.
It is evident that the rational map $\mathbb{C}^{n_k+1} \dashrightarrow \mathbb{P}^{n_k}$ is resolved into $\widetilde{\pr} \colon \widetilde{\mathbb{C}}^{n_k+1} \to \mathbb{P}^{n_k}$ by the blow-up $\upsilon_0$.
If one denotes the lifting of $F^k$ to $\widetilde{\mathbb{C}}^{n_k+1}$ by $\widetilde{F}^k$, then $f^k = \widetilde{\pr} \circ \widetilde{F}^k$, which is called the $k$-th derived curve of $f$ by Fujimoto \cite{Fujimoto1993}.

One says that $\bm{v} \in \bigwedge^{k+1} \mathbb{C}^{n+1}$ is a decomposable $(k+1)$-vector if it can be written as $\bm{v} = v_0 \wedge \dots \wedge v_k$ with $k+1$ vectors $v_i \in \mathbb{C}^{n+1}$ for $0 \leq i \leq k$.
Observe that every non-zero decomposable $(k+1)$-vector $\bm{v} \in \bigwedge^{k+1} \mathbb{C}^{n+1}$ determines an element $S$ in $\Gr(k+1, n+1)$,
and $\pr(\bm{v}) = \Pluc(S)$, where one abusively writes the Plücker embedding $\Pluc \colon \Gr(k+1, n+1) \hookrightarrow \mathbb{P}^{n_k}$.
By definition, all the image of $F_k$ are decomposable, thus $f^k$ factors through the Plücker embedding into a holomorphic curve $f_k$ in the Grassmannian, namely,
\begin{equation}\label{eqn:Plucfk}
f^k = \Pluc \circ f_k
.\end{equation}
Thinking of $\Gr(k+1, n+1)$ as a submanifold of $\textnormal{P}(\bigwedge^{k+1} \mathbb{C}^{n+1})$, Griffiths and Harris \cite{Griffiths1978} regard $f^k$ and $f_k$ as one holomorphic curve, which is referred to as the $k$-th associated curve of $f$.

Let $\omega_k = \ddif^c \log \|x\|^2$ be the Kähler form associated to the Fubini-Study metric on $\mathbb{P}^{n_k}$, where $\|x\|^2 = \sum_{i=0}^{n_k} |x_i|^2$ for a reduced representation $[x_0 : \dots : x_{n_k}]$ of $x \in \mathbb{P}^{n_k}$.
By calculation, one obtains the infinitesimal Plücker formula (see Chapter 2, Section 4.3 of \cite{Griffiths1978} for details)
\begin{equation}\label{eqn:Plücker}
(f^k)^{\ast} \omega_k = \frac{|F^{k-1}|^2 |F^{k+1}|^2}{|F^k|^4} \ddif^c |z|^2
,\end{equation}
for $z \in \mathbb{C}$, where we make a convention that $F^{-1} = 1$.
One writes $h_k = \dfrac{|F^{k-1}|^2 |F^{k+1}|^2}{|F^k|^4}$ and $\Omega_k = (f^k)^{\ast} \omega_k$.
Hence it follows from the Poincaré-Lelong formula that
\[
\ddif^c [\log h_k] = [\Omega_{k-1}] + [\Omega_{k+1}] - 2[\Omega_k] + \frac{1}{2} [Z_{h_k}]
,\]
where $Z_{h_k}$ is the closed analytic subspace of $\mathbb{C}$ defined by the equation $h_k = 0$.
Given a point $z_0 \in \mathbb{C}$, by choosing a suitable reduced representation of $f$, one writes $f$ in normal form near $z_0$
\[
f(z) = [1+\dots : (z-z_0)^{\nu_1+1}+\dots : (z-z_0)^{\nu_1+\nu_2+2}+\dots : \dots : (z-z_0)^{\nu_1+\dots+\nu_n+n}+\cdots]
,\]
with $\nu_i \in \mathbb{N}$ for all $1 \leq i \leq n$.
One easily checks that $F^k$ has a zero of order $k\nu_1 + (k-1)\nu_2 + \dots + \nu_k$ at $z_0$.
Here $F^k$ is said to have a zero of order $0$ at $z_0$, if $F^k(z_0) \neq 0$.
Therefore, $h_k$ has a zero of order $2\nu_{k+1}$ at $z_0$.

\subsection{Ahlfors' lemma on logarithmic derivative in projective spaces}
In this subsection, we are going to recall Ahlfors' Lemma on logarithmic derivative (Ahlfors' LLD for short) for the hyperplanes of $\mathbb{P}^{n}$.

Let us briefly introduce the $k$-th contact function of $f$ (see Section 2.3 of \cite{Fujimoto1993} or Section A\! 3.5 of \cite{Ru2001} for more details).
To proceed, we need the interior product of exterior algebra.

\begin{definition}
Take two vectors $\alpha \in \bigwedge^{p+1}\mathbb{C}^{n+1}$ and $\beta \in \bigwedge^{q+1}(\mathbb{C}^{n+1})^{\dual}$, where $0 \leq q \leq p \leq n$.
The interior product $\alpha \,\llcorner\, \beta$ is defined to be the unique vector in $\bigwedge^{p-q}\mathbb{C}^{n+1}$ satisfying
\begin{equation}\label{eqn:interior}
\gamma(\alpha \,\llcorner\, \beta) = (\beta \wedge \gamma)(\alpha)
,\end{equation}
for all $\gamma \in \bigwedge^{p-q}(\mathbb{C}^{n+1})^{\dual}$.
In the particular case where $p=q$, one has $\alpha \,\llcorner\, \beta = \beta(\alpha)$.
\end{definition}

Observe that each global section $\sigma \in H^0(\mathbb{P}^n, \mathcal{O}_{\mathbb{P}^n}(1))$ can be regarded as a linear functional of $\mathbb{C}^{n+1}$, which amounts to the isomorphism $H^0(\mathbb{P}^n, \mathcal{O}_{\mathbb{P}^n}(1)) \cong (\mathbb{C}^{n+1})^{\dual}$.
More concretely, for a reduced representation $[x_0 : \dots : x_n]$ of $x \in \mathbb{P}^n$, one can write $\sigma(x) = a_0x_0 + \dots +a_nx_n$, which implies that $\sigma$ can be represented by $\bm{a}=(a_0, \dots, a_n) \in (\mathbb{C}^{n+1})^{\dual}$.
For a hyperplane $H = (\sigma)$, one sees that $\bm{a}$ is indeed a normal vector of $H$.
Let $H$ be the hyperplane with a unit normal vector $\bm{a}$.
Thereupon the $k$-th contact function of $f$ for $H$ is defined by
\[
\phi_k(H) \bydef \frac{|F^k \,\llcorner\, \bm{a}|^2}{|F^k|^2}
,\]
for each $0 \leq k \leq n$.

Accordingly, the following lemma is known as Ahlfors' LLD.
\begin{lemma}[Ahlfors' LLD]\cite{Ahlfors1941}\label{lem:ALLD_1^1}
Let $H$ be a hyperplane of $\mathbb{P}^{n}$, and let $f \colon \mathbb{C} \to \mathbb{P}^n$ be a non-constant holomorphic curve such that $f(\mathbb{C}) \not\subset H$.
Then for any $0 < \varepsilon < 1$, one has
\begin{equation}
\varepsilon \mathcal{N}\left( \frac{\phi_1(H)}{\phi_0(H)^{1-\varepsilon}} \Omega_0, r \right) \leq (1+\varepsilon) T_f(r, \mathcal{O}_{\mathbb{P}^{n}}(1)) + O(1)
.\end{equation}
\end{lemma}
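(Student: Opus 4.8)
The plan is to avoid the Calculus Lemma entirely and instead reduce the inequality to a single identity of currents on $\mathbb{C}$, from which the estimate falls out of the Green--Jensen formula together with the first main theorem. First I would fix a reduced representation $F = (f_0, \dots, f_n)$ of $f$ and a unit normal vector $\bm{a}$ of $H$, and set $u := F \,\llcorner\, \bm{a} = \bm{a}(F)$; this is a holomorphic function on $\mathbb{C}$ whose zero divisor is $f^{\ast}[H]$. Since $f(\mathbb{C}) \not\subset H$ we have $u \not\equiv 0$, and since $f$ is non-constant we have $F \wedge F' \not\equiv 0$. Recall that $\Omega_0 = f^{\ast}\omega_0 = \ddif^c[\log \|F\|^2]$, that $\phi_0(H) = |u|^2/\|F\|^2 = \|\sigma \circ f\|_h^2$ with $h$ the Fubini--Study metric on $\mathcal{O}_{\mathbb{P}^n}(1)$ (so $0 \le \phi_0(H) \le 1$ by Cauchy--Schwarz), and that $\mathcal{N}(\Omega_0, r) = T_f(r, \mathcal{O}_{\mathbb{P}^n}(1)) + O(1)$ by \eqref{eqn:Plücker} and \eqref{eqn:FMT}.

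The crucial point is the current identity
\begin{equation*}
\varepsilon^2 \, \frac{\phi_1(H)}{\phi_0(H)^{1-\varepsilon}} \, \Omega_0 \;=\; \ddif^c\!\big[\phi_0(H)^{\varepsilon}\big] \;+\; \varepsilon(1+\varepsilon)\, \phi_0(H)^{\varepsilon}\, \Omega_0
\end{equation*}
on $\mathbb{C}$. To prove it I would first work on $\mathbb{C} \setminus f^{-1}(H)$, where $w := F/u$ is a non-constant holomorphic $\mathbb{C}^{n+1}$-valued map and $v := -\log \phi_0(H) = \log \|w\|^2$ is smooth. From $F^1 \,\llcorner\, \bm{a} = u^2 w'$ and $F \wedge F' = u^2 \, w \wedge w'$ one gets $\phi_1(H) = \|w'\|^2/\|w \wedge w'\|^2$, and then the elementary identities $\ddif^c[\log \|w\|^2] = \tfrac{\|w \wedge w'\|^2}{\|w\|^4}\ddif^c|z|^2$ and Lagrange's identity $\|w\|^2\|w'\|^2 = \|w \wedge w'\|^2 + |\langle w', w \rangle|^2$ yield
\begin{equation*}
\ddif^c v = \Omega_0, \qquad \frac{i}{2\pi}\, \partial v \wedge \bar{\partial} v = \Big(\frac{\phi_1(H)}{\phi_0(H)} - 1\Big)\Omega_0 .
\end{equation*}
Expanding $\ddif^c[\phi_0(H)^{\varepsilon}] = \ddif^c[e^{-\varepsilon v}] = e^{-\varepsilon v}\big(\varepsilon^2 \tfrac{i}{2\pi}\partial v \wedge \bar{\partial}v - \varepsilon\, \ddif^c v\big)$ and substituting gives the identity on $\mathbb{C} \setminus f^{-1}(H)$. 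To upgrade it to an identity of currents on all of $\mathbb{C}$, I would observe that near a point $z_0$ where $u$ vanishes to order $m$ one has $\phi_0(H)^{\varepsilon} \sim |z - z_0|^{2m\varepsilon}$; since $\varepsilon > 0$, each of the three terms is then a locally integrable $(1,1)$-form near $z_0$ carrying no Dirac mass, so their difference — a current supported on the discrete set $f^{-1}(H)$, of order zero, and with no atoms — must vanish. A parallel and easier computation disposes of the stationary points of $f$.

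Granting the identity, I would apply $\mathcal{N}(\,\cdot\,, r)$, which is linear in the current and monotone on positive measures. For the first term on the right, $\phi_0(H)^{\varepsilon}$ is a continuous function with $0 \le \phi_0(H)^{\varepsilon} \le 1$, and $\ddif^c[\phi_0(H)^{\varepsilon}]$ has Lelong number zero at the origin, so the Green--Jensen formula gives
\begin{equation*}
\mathcal{N}\big(\ddif^c[\phi_0(H)^{\varepsilon}], r\big) = \frac{1}{2}\Big(\int_0^{2\pi}\! \phi_0(H)^{\varepsilon}(re^{i\theta})\, \frac{\dif\theta}{2\pi} - \phi_0(H)^{\varepsilon}(0)\Big) \le \frac{1}{2} = O(1) .
\end{equation*}
For the second term, $\phi_0(H)^{\varepsilon}\Omega_0 \le \Omega_0$ as positive measures, hence $\mathcal{N}(\phi_0(H)^{\varepsilon}\Omega_0, r) \le \mathcal{N}(\Omega_0, r) = T_f(r, \mathcal{O}_{\mathbb{P}^n}(1)) + O(1)$. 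Putting these together yields $\varepsilon^2 \, \mathcal{N}\big(\tfrac{\phi_1(H)}{\phi_0(H)^{1-\varepsilon}}\Omega_0, r\big) \le \varepsilon(1+\varepsilon)\, T_f(r, \mathcal{O}_{\mathbb{P}^n}(1)) + O(1)$, and dividing by the fixed positive number $\varepsilon$ gives the stated inequality.

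I expect the only genuine difficulty to lie in the second paragraph: one must check that the identity, valid as smooth forms off $f^{-1}(H)$, really extends to an identity of currents on $\mathbb{C}$ without acquiring extra atoms at the zeros of $u$ or at the stationary points of $f$. This is precisely where the hypothesis $\varepsilon > 0$ enters essentially — it is what makes the exponent $2m\varepsilon$ strictly positive, hence the relevant functions locally integrable with vanishing Lelong number. Everything else is routine bookkeeping with the Plücker-type formulas, Lagrange's identity, and the standard formal properties of $\mathcal{N}(\,\cdot\,, r)$.
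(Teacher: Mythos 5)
The paper never proves this lemma --- it is imported verbatim from Ahlfors \cite{Ahlfors1941} and used as a black box --- so there is no internal argument to compare against; judged on its own, your proof is correct and is essentially the classical derivation of Ahlfors' estimate. Your pointwise computation checks out: with $w=F/u$ one has $F\wedge F'=u^2\,w\wedge w'$ and $(w\wedge w')\,\llcorner\,\bm{a}=w'$ (since $\bm{a}(w)\equiv 1$), so $\phi_1(H)/\phi_0(H)=\|w\|^2\|w'\|^2/\|w\wedge w'\|^2$, and Lagrange's identity together with the expansion of $\ddif^c[e^{-\varepsilon v}]$ gives exactly $\varepsilon^2\phi_0(H)^{\varepsilon-1}\phi_1(H)\,\Omega_0=\ddif^c[\phi_0(H)^{\varepsilon}]+\varepsilon(1+\varepsilon)\phi_0(H)^{\varepsilon}\Omega_0$ off $f^{-1}(H)$; moreover the left-hand density equals $\varepsilon^2\phi_0(H)^{\varepsilon}\|w'\|^2/\|w\|^2$, which is $O(|z-z_0|^{2m\varepsilon-2})$ near a zero of $u$ of order $m$ and hence locally integrable, as you claim. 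The subsequent bookkeeping with the Green--Jensen formula, $0\leq\phi_0(H)\leq 1$, and $\mathcal{N}(\Omega_0,r)=T_f(r,\mathcal{O}_{\mathbb{P}^n}(1))+O(1)$ is fine.

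Two points should be tightened. First, the extension of the identity across $f^{-1}(H)$: the difference of the two sides is a distribution supported on a discrete set, hence a priori a finite combination of Dirac masses \emph{and their derivatives}; your phrase ``of order zero, and with no atoms'' quietly assumes the order-zero property you are trying to establish. The standard way to finish is to test against rescaled bump functions (the pairings with the bounded function $\phi_0(H)^{\varepsilon}$ and with the $L^1_{\loc}$ densities are scale-bounded, which kills all derivative-of-delta components) and then to exclude the remaining atom by boundedness: an atom $c\,\delta_{z_0}$ would force $\phi_0(H)^{\varepsilon}=c\log|z-z_0|^2+O(1)$ near $z_0$, contradicting $0\leq\phi_0(H)^{\varepsilon}\leq 1$. (Nothing at all needs to be done at stationary points of $f$ outside $f^{-1}(H)$, where everything is smooth.) Second, after dividing by $\varepsilon$ your error term is $(1+\varepsilon)O(1)+\tfrac{1}{2\varepsilon}$, so the $O(1)$ in the conclusion depends on $\varepsilon$. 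This matches the usual reading of the statement and is harmless for the paper's later use --- in the proof of Theorem \ref{thm:Z^(1)} the lemma is divided by $\varepsilon$ and a logarithm is taken, and the eventual choice $\varepsilon(r)=1/T_f(r,L)$ turns such a term into $S_f(r)$ --- but you should state the dependence explicitly rather than suppress it.
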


As a reminder, a linear subspace of $\mathbb{P}^n$ is said to be a closed subspace of $\mathbb{P}^n$ defined by linear homogeneous equations.
For any linear subspace $H^{\ell}$ of $\mathbb{P}^n$ of codimension $\ell+1$, it is easy to find $\ell + 1$ hyperplanes $H_0, \dots, H_{\ell}$ in $\mathbb{P}^n$ such that $H^{\ell} = H_0 \cap \dots \cap H_{\ell}$.
Viewing $H^{\ell}$ as a hyperplane in $\textnormal{P}(\bigwedge^{\ell+1} \mathbb{C}^{n+1})$, one sees that $\bm{a}_0 \wedge \dots \wedge \bm{a}_{\ell} \in \bigwedge^{\ell+1}(\mathbb{C}^{n+1})^{\dual}$ is a normal vector of $H^{\ell}$, where $\bm{a}_i$ is a normal vector of $H_i$ for $0 \leq i \leq \ell$.
Actually, each linear subspace $H^{\ell}$ one-to-one corresponds to a unit normal vector $\alpha \in \bigwedge^{\ell+1}(\mathbb{C}^{n+1})^{\dual}$.
Let $H^{\ell}$ be the hyperplane in $\textnormal{P}(\bigwedge^{\ell+1} \mathbb{C}^{n+1})$ with a unit normal vector $\alpha \in \bigwedge^{\ell+1}(\mathbb{C}^{n+1})^{\dual}$.
Analogously, the $k$-th contact function of $f$ for $H^{\ell}$ is defined by
\[
\phi_k(H^{\ell}) \bydef \frac{|F^k \,\llcorner\, \alpha|^2}{|F^k|^2}
,\]
for each $0 \leq \ell \leq k \leq n$.

Take a linear subspace $H^{\ell-1}$ of $\mathbb{P}^n$ of codimension $\ell$ with a unit normal vector $\beta \in \bigwedge^{\ell}(\mathbb{C}^{n+1})^{\dual}$.
Let $f_{H^{\ell-1}} \colon \mathbb{C} \to \textnormal{P}((H^{\ell-1})^{\perp}) \cong \mathbb{P}^{n-\ell}$ be the analytic extension of $\pr \circ (F^{\ell} \,\llcorner\, \beta)_{\restriction U}$, where $U = \mathbb{C} \setminus (F^{\ell} \,\llcorner\, \beta)^{-1}(0)$.
Fujimoto showed that $f_{H^{\ell-1}}$ is linear non-degenerate if $f$ is so (see Proposition 2.6.2 in \cite{Fujimoto1993}).
And he called it the contracted curve of $f$ to the direction $H^{\ell-1}$ (see Section 2.6 of \cite{Fujimoto1993} for more details).

Suppose that $H^{\ell}$ is a linear subspace of $\mathbb{P}^n$ of codimension $\ell + 1$ such that $H^{\ell} = H^{\ell-1} \cap H_0$ for some hyperplane $H_0$ of $\mathbb{P}^n$.
Then one can obtains the following formula (see also (2.6.10) in \cite{Fujimoto1993}):
\[
\phi_k(H^{\ell}) = \bar{\phi}_{k-\ell}(H_0) \phi_k(H^{\ell-1})
,\]
for each $\ell \leq k \leq n$, where $\bar{\phi}_{k-\ell}(H_0)$ is the $(k-\ell)$-th contact function of $f_{H^{\ell-1}}$ for $H_0$.

By induction, one has Ahlfors' LLD for $(k+1)$-jets intersecting linear subspaces of codimension $\ell+1$.
\begin{lemma}\cite{Ahlfors1941}\label{lem:ALLD}
Let $H^{\ell}$ be a complete intersection of $\ell + 1$ hyperplanes in $\mathbb{P}^n$, for $0 \leq \ell \leq n-1$.
Let $f \colon \mathbb{C} \to \mathbb{P}^n$ be a non-constant holomorphic curve such that $f(\mathbb{C}) \not\subset H^{\ell}$.
For each $0 \leq k \leq n-1$, suppose that the image $f(\mathbb{C})$ does not lie in any linear subspace $\mathbb{P}^k \subset \mathbb{P}^n$ besides.
Then there is a constant $c_{k-\ell}$ such that for any $0 < \varepsilon < 1$, one has
\[
\varepsilon \mathcal{N}\left( \frac{\phi_{k+1}(H^{\ell})}{\phi_k(H^{\ell})^{1-\varepsilon}} \Omega_k, r \right) \leq c_{k-\ell} T_{f^k}(r, \mathcal{O}_{\mathbb{P}^{n_k}}(1)) + O(1)
.\]
\end{lemma}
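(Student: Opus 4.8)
The plan is to prove the estimate by a double induction --- an outer one on the codimension $\ell+1$ of $H^{\ell}$ and an inner one on the order $k$ --- using the contracted curve to lower $\ell$ and the associated curves to lower $k$, with Ahlfors' LLD for $1$-jets (Lemma~\ref{lem:ALLD_1^1}) as the ultimate input. The base case $\ell=k=0$ is precisely Lemma~\ref{lem:ALLD_1^1}, since $f^0=f$ and $\mathbb{P}^{n_0}=\mathbb{P}^n$.

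For the outer step I would write $H^{\ell}=H^{\ell-1}\cap H_0$ with $H^{\ell-1}$ of codimension $\ell$ and $H_0$ a hyperplane, and let $g\bydef f_{H^{\ell-1}}\colon\mathbb{C}\to\mathbb{P}^{n-\ell}$ be the contracted curve. The product formula $\phi_k(H^{\ell})=\bar{\phi}_{k-\ell}(H_0)\,\phi_k(H^{\ell-1})$ recalled above, applied at orders $k$ and $k+1$, factors the integrand as
\[
\frac{\phi_{k+1}(H^{\ell})}{\phi_k(H^{\ell})^{1-\varepsilon}}=\frac{\bar{\phi}_{k-\ell+1}(H_0)}{\bar{\phi}_{k-\ell}(H_0)^{1-\varepsilon}}\cdot\frac{\phi_{k+1}(H^{\ell-1})}{\phi_k(H^{\ell-1})^{1-\varepsilon}},
\]
the first factor being the integrand of the asserted estimate for $(g,H_0)$ at order $k-\ell$ and the second that for $(f,H^{\ell-1})$ at order $k$. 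I would pair this with the matching multiplicative relation between the infinitesimal Plücker forms $\Omega_k$ of $f$ and $\Omega_{k-\ell}$ of $g$, and with the comparison $T_{g^{k-\ell}}(r,\mathcal{O}(1))\leq c\,T_{f^k}(r,\mathcal{O}(1))+O(1)$ furnished by the First Main Theorem and Proposition~\ref{prp:TrA}; applying $\mathcal{N}(\cdot,r)$ and invoking the induction hypothesis on $H^{\ell-1}$ then yields the case of $H^{\ell}$ with the constant gaining exactly one increment, which is why it depends only on $k-\ell$.

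For the inner step --- a hyperplane $H^0$ and arbitrary order $k\geq1$ --- I would induct downward on $k$ by applying Lemma~\ref{lem:ALLD_1^1} to the associated curve $f^{k-1}\colon\mathbb{C}\to\mathbb{P}^{n_{k-1}}$ and to the hyperplane of $\mathbb{P}^{n_{k-1}}$ cut out by the decomposable covector built from the normal of $H^0$. The zeroth and first contact functions together with $\Omega_0$ of $f^{k-1}$ then have to be re-expressed in terms of $\phi_k(H^0)$, $\phi_{k+1}(H^0)$ and $\Omega_k$ of $f$; this is exactly what the infinitesimal Plücker formula $\Omega_k=(f^k)^{\ast}\omega_k=h_k\,\ddif^c|z|^2$ and its Poincaré--Lelong consequence $\ddif^c[\log h_k]=[\Omega_{k-1}]+[\Omega_{k+1}]-2[\Omega_k]+\tfrac12[Z_{h_k}]$ are for. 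Combining with $T_{f^{k-1}}(r,\mathcal{O}(1))\leq c\,T_{f^k}(r,\mathcal{O}(1))+O(1)$ and iterating down to $k=0$ accumulates the constant $c_{k-\ell}$.

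The step I expect to be the main obstacle is the bookkeeping that links the contact functions at consecutive orders: neither $f^k$ nor $f^{k-1}$ produces $\phi_{k+1}(H^0)/\phi_k(H^0)^{1-\varepsilon}$ directly under Lemma~\ref{lem:ALLD_1^1}, so one must use the Plücker relations --- and, in the outer step, the geometry of the contracted curve --- to rewrite $\tfrac{\phi_{k+1}(H^{\ell})}{\phi_k(H^{\ell})^{1-\varepsilon}}\,\Omega_k$ in a form to which a single application of the $1$-jet estimate applies, while reconciling the multiplicative structure of the factorization above with the additivity of $\mathcal{N}(\cdot,r)$ and keeping the constant a function of $k-\ell$ alone.
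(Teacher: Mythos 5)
Your plan has two genuine gaps, one in each induction step. In the outer (codimension) step you factor the integrand as
\[
\frac{\phi_{k+1}(H^{\ell})}{\phi_k(H^{\ell})^{1-\varepsilon}}
=\frac{\bar{\phi}_{k-\ell+1}(H_0)}{\bar{\phi}_{k-\ell}(H_0)^{1-\varepsilon}}\cdot\frac{\phi_{k+1}(H^{\ell-1})}{\phi_k(H^{\ell-1})^{1-\varepsilon}}
\]
and then propose to ``apply $\mathcal{N}(\cdot,r)$ and invoke the induction hypothesis'', but $\mathcal{N}(\cdot,r)$ is a double integral and is in no way multiplicative: bounds on $\mathcal{N}$ of each factor (against the appropriate forms) give no bound on $\mathcal{N}$ of the product against $\Omega_k$, and neither factor is bounded (since $\phi_k\leq 1$, the second factor $\phi_{k+1}(H^{\ell-1})/\phi_k(H^{\ell-1})^{1-\varepsilon}$ can blow up along $f^{-1}$ of a neighborhood of $H^{\ell-1}$), while a H\"older-type splitting would produce powers of characteristic functions and destroy the linear bound. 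The classical reduction that the paper's ``by induction'' alludes to (Ahlfors, and Fujimoto's treatment around (2.6.10)) does not multiply two lower-case estimates: it converts the \emph{entire} density $\frac{\phi_{k+1}(H^{\ell})}{\phi_k(H^{\ell})^{1-\varepsilon}}\Omega_k$ into the single hyperplane density of the contracted curve $f_{H^{\ell-1}}$ at order $k-\ell$, using the identities relating the associated curves of $f_{H^{\ell-1}}$ to interior products $F^{\bullet}\,\llcorner\,\beta$ --- which is precisely why the constant depends only on $k-\ell$. You assert a ``matching multiplicative relation'' between $\Omega_k$ and the forms of the contracted curve but never state it, and without it the step does not close.

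The inner (order) step is worse: applying Lemma \ref{lem:ALLD_1^1} to $f^{k-1}$ cannot be massaged into the order-$k$ statement by Pl\"ucker-formula bookkeeping. For a hyperplane $\widetilde{H}\subset\mathbb{P}^{n_{k-1}}$ with normal $\alpha\in\bigwedge^{k}(\mathbb{C}^{n+1})^{\dual}$, the zeroth contact function of $f^{k-1}$ is $|\alpha(F^{k-1})|^2/|F^{k-1}|^2$, and no choice of $\alpha$ (decomposable or not) produces $\phi_k(H^0)$ or $\phi_{k+1}(H^0)$, which involve $F^{k}$ and $F^{k+1}$; indeed $\phi_k(H^0)$ measures the distance of $f^k$ to the linear subspace $\{[\xi]\mid\xi\,\llcorner\,\bm{a}=0\}$, whose codimension in $\mathbb{P}^{n_k}$ is $\binom{n}{k}>1$ for $k\geq1$, so it is not a hyperplane contact function of any associated curve. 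Moreover the first contact function of $f^{k-1}$ involves its own derived curve, built from $F^{k-1}\wedge(F^{k-1})'\in\bigwedge^{2}\bigwedge^{k}\mathbb{C}^{n+1}$, which is not $F^{k}$, and the relevant form is $\Omega_{k-1}$, not $\Omega_k$; the infinitesimal Pl\"ucker formula and its Poincar\'e--Lelong consequence relate $\Omega_{k-1},\Omega_k,\Omega_{k+1}$ but carry no information about contact functions, so they cannot perform the translation you need. In Ahlfors' paper (and in Fujimoto's and Ru's expositions) the hyperplane case at order $k$ is \emph{not} deduced from the order-$0$ case: it is proved directly for each $k$ from a curvature-type current inequality of the shape $\ddif^c[\phi_k(H)^{\varepsilon}]\geq\varepsilon^2\frac{\phi_{k+1}(H)}{\phi_k(H)^{1-\varepsilon}}\Omega_k-O(\varepsilon)\,\Omega_k$, followed by the Green--Jensen formula and $\phi_k\leq1$. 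That computation is the analytic core of the lemma, and your proposal defers exactly this point to ``bookkeeping'' without supplying it; as written, neither induction step is supported.
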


\subsection{Maps to the Grassmannian}
In the following three sections, we will only consider the case of intersecting divisors for simplicity.

Let $(X, V)$ be a directed projective manifold, and choose a linear system $\mathfrak{S} \subset |L|$ on $X$.
Suppose that $\mathfrak{S}$ is base point free, i.e., $\Bs(\mathfrak{S}) = \emptyset$.
Accordingly, $\mu_{\mathfrak S}$ defined by \eqref{eqn:muS} is indeed a holomorphic map as follows.
\begin{align*}
\mu_{\mathfrak S} \colon X &\longrightarrow \mathfrak{S}^{\dual} \cong \mathbb{P}^N \\
x &\longmapsto [\varsigma_0(x) : \dots : \varsigma_N(x)]
,\end{align*}
where $N = \dim \mathfrak{S}$, $\{\varsigma_i \mid i = 0, 1, \dots, N\}$ is a basis of $\mathbb{S}$, and $\mathbb{S} \subset H^0(X, L)$ is the linear subspace corresponding to $\mathfrak{S}$.
Obviously, if $L$ is very ample and $\mathfrak{S}$ is the complete linear system $|L|$, then $\mu_{\mathfrak S}$ is a closed embedding.

Let $f \colon (\mathbb{C}, T_{\mathbb{C}}) \to (X, V)$ be a non-constant holomorphic curve.
It is evident that $[\varsigma_0 \circ f : \dots : \varsigma_N \circ f]$ is a reduced representation of $\tilde{f}$, where $\tilde{f} \bydef \mu_{\mathfrak S} \circ f$ is a non-constant holomorphic curve in $\mathbb{P}^N$.
In the particular case where $(X, V) = (\mathbb{P}^n, T_{\mathbb{P}^n})$ and $\mathfrak{S} = |H|$ for some hyperplane $H$ of $\mathbb{P}^n$,
we see that $\mu_{\mathfrak S}$ is indeed an isomorphism and $[\varsigma_0 \circ f : \dots : \varsigma_n \circ f]$ gives a reduced representation of $f$.

Analogous to \eqref{eqn:F^k}, we construct a holomorphic curve $\widetilde{F}^k$ in $\bigwedge^{k+1} \mathbb{C}^{N+1}$ associated to $\tilde{f}$ as follows.
Take the standard basis $\{\bm{e}^i \mid 0 \leq i \leq N\}$ of $\mathbb{C}^{N+1}$, namely, $\bm{e}^i = (\delta_1^{i+1}, \dots, \delta_{N+1}^{i+1}) \in \mathbb{C}^{N+1}$ for each $0 \leq i \leq N$.
Here $\delta_j^i$ is the Kronecker symbol, defined by
\[
\delta_j^i \bydef
\begin{cases}
1 & i=j, \\
0 & i \neq j
,\end{cases}
\]
where $i, j \in \mathbb{Z}_{+}$.
Consequently, we get
\[
\widetilde{F}(z) = \sum_{i=0}^N \varsigma_i \circ f(z) \bm{e}^i
.\]
Moreover, $\{\bm{e}^{i_0} \wedge \dots \wedge \bm{e}^{i_k} \mid 0 \leq i_0 < \dots < i_k \leq N\}$ is also a basis of $\bigwedge^{k+1} \mathbb{C}^{N+1}$.
By computation, we obtain
\begin{equation}\label{eqn:tildeF^k}
\widetilde{F}^k = \sum_{0 \leq i_0 < \dots < i_k \leq N} \hspace{-1eM} W(\varsigma_{i_0}, \dots, \varsigma_{i_k})(j_kf) \bm{e}^{i_0} \wedge \dots \wedge \bm{e}^{i_k}
.\end{equation}

Inspired by the $k$-th derived curve and \eqref{eqn:tildeF^k}, for each $1 \leq k \leq N$, we define a rational map
\begin{align*}
\varpi_{k, \mathfrak{S}} \colon X_k &\dashrightarrow \textnormal{P}(\bigwedge^{k+1} \mathbb{S}^{\dual}) \cong \mathbb{P}^{N_k} \\
x_k &\longmapsto [\widebar{\omega}(\varsigma_{i_0}, \dots, \varsigma_{i_k})(x_k)]_{0 \leq i_0 < \dots < i_k \leq N}
,\end{align*}
where $N_k = \binom{N+1}{k+1}-1$.
Apparently, the indeterminacy locus of $\varpi_{k, \mathfrak{S}}$ lies in the base locus $\Bs(\widebar{\mathfrak W}(X_k, \mathfrak{S})) \subset X_k$.
Assume that the lifting $f_{[k]}$ of $f$ to $(X_k, V_k)$ satisfies that $f_{[k]}(\mathbb{C}) \not\subset \Bs(\mathfrak{W}(X_k, \mathfrak{S}))$.
Consider the $k$-th derived curve $\tilde{f}^k$ of $\tilde{f}$.
It follows from \eqref{eqn:tildeF^k} that
\begin{equation}\label{eqn:f^kU}
\tilde{f}_{\restriction U}^k = \varpi_{k, \mathfrak{S}} \circ f_{[k] \restriction U}
,\end{equation}
where $U = \mathbb{C} \setminus f_{[k]}^{-1}(\Bs(\mathfrak{W}(X_k, \mathfrak{S})))$.

As mentioned above, the $k$-th derived curve factors through the Plücker embedding into a holomorphic curve in the Grassmannian, which is known as the $k$-th associated curve of $f$.
Likewise, we are going to show that $\varpi_{k, \mathfrak{S}}$ factors through the Plücker embedding as well.

Take a fixed trivialization of $L$ on an admissible open subset $U \subset X$.
By Corollary \ref{cor:regular}, there exists an open subset $U_k \subset \pi_{0, k}^{-1}(U)$, which allows us to take a germ of curves $f_{x_k}$ tangent to $V$ for every $x_k \in U_k$, depending holomorphically on $x_k$, such that $(f_{x_k})_{[k]}(0)=x_k$ and $(f_{x_k})'_{[k-1]}(0) \neq 0$.
For any $0 \leq \ell \leq k$, we denote
\[
\dif_{U, \mathfrak{S}}^{[\ell]}(x_k) \bydef \left( \dif^{[\ell]}\varsigma_{i, U}(j_kf_{x_k}) \right)_{0 \leq i \leq N} = \left( (\varsigma_{i, U} \circ f_{x_k})^{(\ell)}(0) \right)_{0 \leq i \leq N} \in \mathbb{S}^{\dual}
.\]
Consider the rational map
\begin{align*}
\mu_{U_k, \mathfrak{S}} \colon U_k &\dashrightarrow \Gr(k, \mathfrak{S}^{\dual}) \cong \Gr(k+1, N+1) \\
x_k &\longmapsto \Span\left(\dif_{U, \mathfrak{S}}^{[0]}(x_k), \dots, \dif_{U, \mathfrak{S}}^{[k]}(x_k)\right)
.\end{align*}
The dependence on the choice of $f_{x_k}$ and the trivialization of $L_{\restriction U}$ notwithstanding, we have
\[
\varpi_{k, \mathfrak{S} \restriction U_k} = \Pluc \circ \mu_{U_k, \mathfrak{S}}
,\]
where $\Pluc \colon \Gr(k+1, N+1) \hookrightarrow \mathbb{P}^{N_k}$ is, by abuse of notation, the Plücker embedding.
This verifies that $\varpi_{k, \mathfrak{S}}$ factors through the Plücker embedding into a rational map
\[
\mu_{k, \mathfrak{S}} \colon X_k \dashrightarrow \Gr(k, \mathfrak{S}^{\dual}) \cong \Gr(k+1, N+1)
.\]
From \eqref{eqn:Plucfk} and \eqref{eqn:f^kU}, we immediately get
\begin{equation}\label{eqn:f_kU}
\tilde{f}_{k \restriction U} = \mu_{k, \mathfrak{S}} \circ f_{[k] \restriction U}
,\end{equation}
where $\tilde{f}_k$ is the $k$-th associated curve of $\tilde{f}$, and $U$ is defined above.

Furthermore, observe that $\Bs(\widebar{\mathfrak W}(X_k, \mathfrak{S})) \neq \emptyset$ holds for each $k \geq 2$.
Take the blow-up of $X_k$ with respect to $\widebar{\mathfrak w}(X_k, \mathfrak{S})$
\[
\upsilon_{k, \mathfrak{S}} \colon \widehat{X}_{k, \mathfrak{S}} \bydef \Bl_{\widebar{\mathfrak w}(X_k, \mathfrak{S})}(X_k) \longrightarrow X_k
.\]
Let $E_{k, \mathfrak{S}} \bydef \upsilon_{k, \mathfrak{S}}^{\ast} Z_{\widebar{\mathfrak w}(X_k, \mathfrak{S})}$ be the exceptional divisor of $\upsilon_{k, \mathfrak{S}}$, where $Z_{\widebar{\mathfrak w}(X_k, \mathfrak{S})}$ is the closed subscheme of $X_k$ with the ideal sheaf $\widebar{\mathfrak w}(X_k, \mathfrak{S})$.
Definitely, $Z_{\widebar{\mathfrak w}(X_k, \mathfrak{S})}$ is the scheme of base points of $\widebar{\mathfrak W}(X_k, \mathfrak{S})$.
Hence $E_{k, \mathfrak{S}}$ is the scheme of base points of $\upsilon_{k, \mathfrak{S}}^{\ast} \widebar{\mathfrak W}(X_k, \mathfrak{S})$,
which implies that $\widehat{\mathfrak W}(X_k, \mathfrak{S}) \bydef (\upsilon_{k, \mathfrak{S}}^{\ast} \widebar{\mathfrak W}(X_k, \mathfrak{S}))_{\setminus E_{k, \mathfrak{S}}}$ is base point free.

Seeing that $\upsilon_{k, \mathfrak{S}}$ resolves $\varpi_{k, \mathfrak{S}}$, along with $\mu_{k, \mathfrak{S}}$, we obtain a commutative diagram as follows:
\begin{equation*}
\begin{tikzcd}[column sep=scriptsize]
\widehat{X}_{k, \mathfrak{S}} \arrow[r, "\hat{\mu}_{k, \mathfrak{S}}"] \arrow[d, "\upsilon_{k, \mathfrak{S}}"'] & \Gr(k, \mathfrak{S}^{\dual}) \arrow[d, shift right=.5ex, "\Pluc"]  \\
X_k \arrow[ru, dashed, "\mu_{k, \mathfrak{S}}"]  \arrow[r, dashed, "\varpi_{k, \mathfrak{S}}"'] & \textnormal{P}(\bigwedge^{k+1} \mathbb{S}^{\dual})
.\end{tikzcd}
\end{equation*}
Denote the lifting of $f_{[k]}$ to $\widehat{X}_{k, \mathfrak{S}}$ by $\hat{f}_{[k]}$.
Thus \eqref{eqn:f_kU} yields
\[
\tilde{f}_k = \hat{\mu}_{k, \mathfrak{S}} \circ \hat{f}_{[k]}
.\]
Let $\mathcal{O}_{\Gr}(1) \bydef \Pluc^{\ast} \mathcal{O}_{\mathbb{P}^{N_k}}(1)$ be the hyperplane line bundle on $\Gr(k, \mathfrak{S}^{\dual})$.
It is evident that
\begin{equation}\label{eqn:hatW}
\widehat{\mathfrak W}(X_k, \mathfrak{S}) = \hat{\mu}_{k, \mathfrak{S}}^{\ast} |\mathcal{O}_{\Gr}(1)|
,\end{equation}
and
\begin{equation}\label{eqn:Ek}
\upsilon_{k, \mathfrak{S}}^{\ast} (\mathcal{O}_{X_k}(\bm{a}^k) \otimes \pi_{0, k}^{\ast} L^{k+1}) \otimes \mathcal{O}_{\widehat{X}_{k, \mathfrak{S}}}(-E_{k, \mathfrak{S}}) \cong \hat{\mu}_{k, \mathfrak{S}}^{\ast} \mathcal{O}_{\Gr}(1)
.\end{equation}

\subsection{Generalization of the infinitesimal Plücker formula}

For each $1 \leq k \leq N$, set
\begin{equation}\label{eqn:Lk}
L_k = \upsilon_{k, \mathfrak{S}}^{\ast} (\mathcal{O}_{X_k}(\bm{a}^k) \otimes \pi_{0, k}^{\ast} L^{k+1}) \otimes \mathcal{O}_{\widehat{X}_{k, \mathfrak{S}}}(-E_{k, \mathfrak{S}})
\end{equation}
and $L_0 = L$.
It follows from \eqref{eqn:hatW} and \eqref{eqn:Ek} that $\widehat{\mathfrak W}(X_k, \mathfrak{S}) = |L_k|$, and for any global sections $\sigma_0, \dots, \sigma_k \in H^0(X, L)$, there exists a global section $\widehat{\omega}(\sigma_0, \dots, \sigma_k) \in H^0(\widehat{X}_k, L_k)$ such that $\widehat{\sigma}_{k, \mathfrak{S}} \cdot \widehat{\omega}(\sigma_0, \dots, \sigma_k) = \upsilon_{k, \mathfrak{S}}^{\ast} \widebar{\omega}(\sigma_0, \dots, \sigma_k)$ for a canonical section $\widehat{\sigma}_{k, \mathfrak{S}}$ of $\mathcal{O}_{\widehat{X}_k}(E_{k, \mathfrak{S}})$.
Consider the Fubini-Study metric on $\mathbb{P}^{N_k}$, which can be regarded as a Hermitian metric $\tilde{h}_{FS}$ on hyperplane line bundle $\mathcal{O}_{\mathbb{P}^{N_k}}(1)$.
We naturally get a canonical Hermitian metric $\Pluc^{\ast} \tilde{h}_{FS}$ on $\mathcal{O}_{\Gr}(1)$.
What's more, $\hat{\mu}_{k, \mathfrak{S}}^{\ast} \Pluc^{\ast} \tilde{h}_{FS}$ is a smooth Hermitian metric on $L_k$,
whose weight function $\varphi_{\widehat{\mathfrak W}(X_k, \mathfrak{S})}$ is defined as \eqref{eqn:phi_S}.
More concretely,
\[
\varphi_{\widehat{\mathfrak W}(X_k, \mathfrak{S})} = \log \left(\sum_{0 \leq i_0 < \dots < i_k \leq n} \hspace{-1eM} |\widehat{\omega}(\varsigma_{i_0}, \dots, \varsigma_{i_k})|^2\right)
.\]
This implies that
\[
(\tilde{f}^k)^{\ast} \widetilde{\omega}_k = \hat{f}_{[k]}^{\ast} \ddif^c \varphi_{\widehat{\mathfrak W}(X_k, \mathfrak{S})}
,\]
where $\widetilde{\omega}_k$ is the Kähler form associated to the Fubini-Study metric on $\mathbb{P}^{N_k}$.
In addition, $\varpi_{k, \mathfrak{S}}^{\ast} \tilde{h}_{FS}$\footnote{Here the ``pull-back'' by a meromorphic map is defined as (2.3) in \cite{Shiffman1976}.} gives a singular Hermitian metric on $\mathcal{O}_{X_k}(\bm{a}^k) \otimes \pi_{0, k}^{\ast} L^{k+1}$ with the weight function $\varphi_{\widebar{\mathfrak W}(X_k, \mathfrak{S})}$.

Regarding $\varphi_{\mathfrak{W}(X_k, \mathfrak{S})}$ as a function on $\mathbb{W}(X_k, \mathfrak{S})^{\dual} \hookrightarrow \bigwedge^{k+1} \mathbb{C}^{N+1}$, we see from \eqref{eqn:wf'} and \eqref{eqn:tildeF^k} that
\begin{equation}\label{eqn:normtildeF^k}
|\widetilde{F}^k|^2 = \exp(\varphi_{\mathfrak{W}(X_k, \mathfrak{S})} \circ f_{[k]}) |f_{[k-1]}'|^{2k'}
\end{equation}
holds on the complex plane except stationary points, where $f_{[k-1]}'$ is defined by \eqref{eqn:fk-1'}.
Take account of the formula
\[
f_{[k-2]}' = \dif \pi_{k-1}(f_{[k-1]}) \cdot f_{[k-1]}'
.\]
By the construction of the line bundle morphism \eqref{eqn:Gamma} and the definition of $\varGamma_k$, the above formula yields
\begin{equation}\label{eqn:Gammafk-1'}
f_{[k-2]}' = (\sigma_{\varGamma_k} \circ f_{[k]}) \cdot f_{[k-1]}'
,\end{equation}
where $\sigma_{\varGamma_k}$ is a canonical section of $\mathcal{O}_{X_k}(\varGamma_k)$.
Thanks to \eqref{eqn:bkGamma} and \eqref{eqn:Gammafk-1'}, \eqref{eqn:normtildeF^k} can be also formulated as
\begin{equation}\label{eqn:normtildeF^kbar}
|\widetilde{F}^k|^2 = \exp(\varphi_{\widebar{\mathfrak W}(X_k, \mathfrak{S})} \circ f_{[k]}) |f'|^{2k} |f_{[1]}'|^{2(k-1)} \cdots |f_{[k-1]}'|^2
.\end{equation}
Indeed, one can check that \eqref{eqn:normtildeF^kbar} holds even at stationary points.
Therefore, the infinitesimal Plücker formula
\[
(\tilde{f}^k)^{\ast} \widetilde{\omega}_k = \frac{|\widetilde{F}^{k-1}|^2 |\widetilde{F}^{k+1}|^2}{|\widetilde{F}^k|^4} \ddif^c |z|^2
\]
on $\mathcal{O}_{\mathbb{P}^{N_k}}(1)$ is reformulated as
\[
\hat{f}_{[k]}^{\ast} \ddif^c \varphi_{\widehat{\mathfrak W}(X_k, \mathfrak{S})} = \exp(\varphi_{\widebar{\mathfrak W}(X_{k-1}, \mathfrak{S})} \circ f_{[k-1]} + \varphi_{\widebar{\mathfrak W}(X_{k+1}, \mathfrak{S})} \circ f_{[k+1]} - 2 \varphi_{\widebar{\mathfrak W}(X_k, \mathfrak{S})} \circ f_{[k]}) |f_{[k]}'|^2 \ddif^c |z|^2
,\]
where we make the conventions that $\varphi_{\widebar{\mathfrak W}(X_{-1}, \mathfrak{S})} = 0$ and $\widebar{\mathfrak W}(X_0, \mathfrak{S}) = \mathfrak{S}$.
For shortness, we write
\begin{equation}\label{eqn:gammakS}
\gamma_{k, \mathfrak{S}} = \exp(\varphi_{\widebar{\mathfrak W}(X_{k-1}, \mathfrak{S})} \circ f_{[k-1]} + \varphi_{\widebar{\mathfrak W}(X_{k+1}, \mathfrak{S})} \circ f_{[k+1]} - 2 \varphi_{\widebar{\mathfrak W}(X_k, \mathfrak{S})} \circ f_{[k]}) |f_{[k]}'|^2
\end{equation}
Using the Poincaré-Lelong formula, \eqref{eqn:normtildeF^kbar} leads to
\begin{align*}
\ddif^c [\log |\widetilde{F}^k|^2] &= f_{[k]}^{\ast} \ddif^c [\varphi_{\widebar{\mathfrak W}(X_k, \mathfrak{S})}] + \sum_{\ell=0}^{k-1} (k-\ell) [Z_{f'_{[\ell]}}] \\
&= f_{[k]}^{\ast} [\ddif^c \varphi_{\widebar{\mathfrak W}(X_k, \mathfrak{S})}] + f_{[k]}^{\ast} [Z_{\widebar{\mathfrak w}(X_k, \mathfrak{S})}] + \sum_{\ell=0}^{k-1} (k-\ell) [Z_{f'_{[\ell]}}] \\
&= \hat{f}_{[k]}^{\ast} \ddif^c [\varphi_{\widehat{\mathfrak W}(X_k, \mathfrak{S})}] + f_{[k]}^{\ast} [Z_{\widebar{\mathfrak w}(X_k, \mathfrak{S})}] + \sum_{\ell=0}^{k-1} (k-\ell) [Z_{f'_{[\ell]}}]
.\end{align*}
Accordingly,
\[
\ddif^c [\log \gamma_{k, \mathfrak{S}}] = \hat{f}_{[k-1]}^{\ast} \ddif^c [\varphi_{\widehat{\mathfrak W}(X_{k-1}, \mathfrak{S})}] + \hat{f}_{[k+1]}^{\ast} \ddif^c [\varphi_{\widehat{\mathfrak W}(X_{k+1}, \mathfrak{S})}] - 2 \hat{f}_{[k]}^{\ast} \ddif^c [\varphi_{\widehat{\mathfrak W}(X_k, \mathfrak{S})}] + \frac{1}{2} [Z_{\gamma_{k, \mathfrak{S}}}]
,\]
where
\[
\frac{1}{2} [Z_{\gamma_{k, \mathfrak{S}}}] = f_{[k-1]}^{\ast} [Z_{\widebar{\mathfrak w}(X_{k-1}, \mathfrak{S})}] + f_{[k+1]}^{\ast} [Z_{\widebar{\mathfrak w}(X_{k+1}, \mathfrak{S})}] - f_{[k]}^{\ast} 2 [Z_{\widebar{\mathfrak w}(X_k, \mathfrak{S})}] + [Z_{f'_{[k]}}]
.\]

Now, consider the case where $k=0$. We hereinafter prefer to write $\gamma_{\mathfrak S}$ rather than $\gamma_{0, \mathfrak{S}}$.
Thanks to the assumption $\Bs(\mathfrak{S}) = \emptyset$, $\mu_{\mathfrak S}^{\ast} \tilde{h}_{FS}$ gives a smooth Hermitian metric on $L$ with the weight function $\varphi_{\mathfrak S}$.
In this case, the above formula is simplified as
\[
\ddif^c [\log \gamma_{\mathfrak S}] = \hat{f}_{[1]}^{\ast} \ddif^c [\varphi_{\widehat{\mathfrak W}(X_1, \mathfrak{S})}] - 2 f^{\ast} \ddif^c [\varphi_{\mathfrak S}] + \frac{1}{2} [Z_{\gamma_{\mathfrak S}}]
,\]
where $\dfrac{1}{2} [Z_{\gamma_{\mathfrak S}}] = f_{[1]}^{\ast} [Z_{\mathfrak{w}(X_1, \mathfrak{S})}] + [Z_{f'}]$.
Consequently, we obtain
\[
\mathcal{N}(\ddif^c [\log \gamma_{\mathfrak S}], r) = T_{\hat{f}_{[1]}}(r, \upsilon_{1, \mathfrak{S}}^{\ast} \mathcal{O}_{X_1}(1) \otimes \mathcal{O}(-E_{1, \mathfrak{S}})) + N_{f_{[1]}}(r, Z_{\mathfrak{w}(X_1, \mathfrak{S})}) + \mathcal{N}([Z_{f'}], r)
.\]
In particular, if $\mathfrak{S}$ separates 1-jets (e.g., $\mathfrak{S} = |L|$ for some very ample line bundle $L$), then both $\varpi_{1, \mathfrak{S}}$ and $\mu_{1, \mathfrak{S}}$ are holomorphic.
Moreover, $Z_{\mathfrak{w}(X_1, \mathfrak{S})} = \emptyset$, $E_{1, \mathfrak{S}} = \emptyset$ and $\upsilon_{1, \mathfrak{S}}$ is indeed the identity map.
In this case, we get
\begin{equation}\label{eqn:Nddclogg}
\mathcal{N}(\ddif^c [\log \gamma_{\mathfrak S}], r) = T_{f_{[1]}}(r, \mathcal{O}_{X_1}(1)) + \mathcal{N}([Z_{f'}], r)
.\end{equation}
On the other hand, without loss of generality, suppose that $\gamma_{\mathfrak S}(0) \neq 0$, otherwise we can deal with it in the similar way to Remark \ref{rmk:zero}.
By the Green-Jensen formula, Jensen inequality and Lemma \ref{lem:calculus},
\begin{align*}
\mathcal{N}(\ddif^c [\log \gamma_{\mathfrak S}], r) &= \frac{1}{2} \left( \int_0^{2\pi} \log \gamma_{\mathfrak S}(re^{i\theta}) \frac{\dif \theta}{2\pi} - \log \gamma_{\mathfrak S}(0) \right) \\
&\leq \frac{1}{2} \log \int_0^{2\pi} \gamma_{\mathfrak S}(re^{i\theta}) \frac{\dif \theta}{2\pi} - \frac{1}{2} \log \gamma_{\mathfrak S}(0) \\
&\leq \frac{(1+\delta)^2}{2} \log \mathcal{N}([\gamma_{\mathfrak S} \ddif^c |z|^2], r) - \left. \frac{1}{2} \log \gamma_{\mathfrak S}(0) \right\rVert_{E} \\
&\leq \frac{(1+\delta)^2}{2} \log T_f(r, L) - \left. \frac{1}{2} \log \gamma_{\mathfrak S}(0) \right\rVert_{E}
.\end{align*}
Especially, if $f$ is an algebraic curve, we see that $\gamma_{\mathfrak S}(re^{i\theta}) \leq O(1)$ as $r \to \infty$.
Thus
\[
\mathcal{N}(\ddif^c [\log \gamma_{\mathfrak S}], r) = \frac{1}{2} \left( \int_0^{2\pi} \log \gamma_{\mathfrak S}(re^{i\theta}) \frac{\dif \theta}{2\pi} - \log \gamma_{\mathfrak S}(0) \right) \leq O(1) \quad (r \to \infty)
.\]

Combined with \eqref{eqn:Nddclogg}, these inequalities yield
\begin{equation}\label{eqn:O_X1(1)}
T_{f_{[1]}}(r, \mathcal{O}_{X_1}(1)) + \mathcal{N}([Z_{f'}], r) \leq S_f(r)
.\end{equation}
Observe that $\mathcal{O}_{X_1}(1) \otimes \pi_1^{\ast} A$ is ample for some sufficiently ample line bundle $A$ on $X$.
According to \eqref{eqn:O_X1(1)} and Proposition \ref{prp:TrA}, $S_{f_{[1]}}(r) \leq S_f(r)$.
By induction, for each $k \in \mathbb{Z}_{+}$, hence we have
\begin{equation}\label{eqn:Sfk}
S_{f_{[k]}}(r) \leq S_f(r)
.\end{equation}
For each $k \in \mathbb{Z}_{+}$, think of $(X_k, V_k)$ as the Demailly-Semple $1$-jet tower of $(X_{k-1}, V_{k-1})$.
Applying \eqref{eqn:O_X1(1)} to $f_{[k-1]}$, we immediately get the following theorem.
\begin{theorem}\label{thm:O_Xk(1)}
Let $(X, V)$ be a directed projective manifold. Let $f \colon (\mathbb{C}, T_{\mathbb{C}}) \to (X, V)$ be a non-constant holomorphic curve. For each $k \in \mathbb{Z}_{+}$, we have
\[
T_{f_{[k]}}(r, \mathcal{O}_{X_k}(1)) + \mathcal{N}([Z_{f_{[k-1]}'}], r) \leq S_f(r)
.\]
\end{theorem}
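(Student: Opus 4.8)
The plan is to obtain the statement for arbitrary $k$ as an immediate iteration of the base case $k=1$, which is exactly inequality \eqref{eqn:O_X1(1)} (valid for \emph{any} directed projective manifold and \emph{any} non-constant tangent curve), together with the comparison of error terms \eqref{eqn:Sfk}. The one conceptual ingredient is the functoriality of the Demailly--Semple $1$-jet tower: for the directed projective manifold $(X_{k-1}, V_{k-1})$ one has $(X_{k-1})_1 = X_k$, $(V_{k-1})_1 = V_k$, and $\mathcal{O}_{(X_{k-1})_1}(1) = \mathcal{O}_{X_k}(1)$; moreover the canonical lifting of the curve $f_{[k-1]}$ (a tangent curve of $(X_{k-1}, V_{k-1})$) to this $1$-jet tower is precisely $f_{[k]}$, and the derivative $(f_{[k-1]})'$ is the section of $f_{[k]}^{\ast}\mathcal{O}_{X_k}(-1)$ given by \eqref{eqn:fk-1'}, so that its zero divisor is $Z_{f_{[k-1]}'}$.

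Concretely, the case $k=1$ is precisely \eqref{eqn:O_X1(1)}. For $k \geq 2$ I would first record that $(X_{k-1}, V_{k-1})$ is again a directed projective manifold: $X_{k-1} = \textnormal{P}(V_{k-2})$ is projective and $V_{k-1}$ is a rank-$r$ holomorphic subbundle of $T_{X_{k-1}}$ by the defining exact sequence \eqref{eqn:defexact}. Next I would note that $f_{[k-1]}$ is non-constant, since every non-constant tangent curve of $(X,V)$ lifts inductively to a non-constant tangent curve of each $(X_j, V_j)$. Applying \eqref{eqn:O_X1(1)} verbatim with $(X,V)$ replaced by $(X_{k-1}, V_{k-1})$ and $f$ replaced by $f_{[k-1]}$ then yields
\[
T_{f_{[k]}}(r, \mathcal{O}_{X_k}(1)) + \mathcal{N}([Z_{f_{[k-1]}'}], r) \leq S_{f_{[k-1]}}(r)
.\]

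Finally I would invoke \eqref{eqn:Sfk}, which gives $S_{f_{[k-1]}}(r) \leq S_f(r)$, to arrive at the claimed bound. I do not expect any genuine obstacle; the only point that deserves a line of justification is that $S_{f_{[k-1]}}(r)$ — defined relative to an ample line bundle on $X_{k-1}$ — is controlled by $S_f(r)$, which is already established in \eqref{eqn:Sfk} (itself a consequence of the ampleness of $\mathcal{O}_{X_1}(1) \otimes \pi_1^{\ast} A$ for $A$ sufficiently ample, of \eqref{eqn:O_X1(1)}, and of Proposition \ref{prp:TrA}). Thus the theorem reduces to a clean one-step application of the $k=1$ case once the identifications above are made explicit.
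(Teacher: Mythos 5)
Your proposal is correct and is essentially the paper's own argument: the paper also views $(X_k, V_k)$ as the Demailly--Semple $1$-jet tower of $(X_{k-1}, V_{k-1})$, applies \eqref{eqn:O_X1(1)} to the tangent curve $f_{[k-1]}$, and then bounds $S_{f_{[k-1]}}(r)$ by $S_f(r)$ via \eqref{eqn:Sfk}. The identifications you spell out (non-constancy of $f_{[k-1]}$, $(X_{k-1})_1 = X_k$, and $Z_{f_{[k-1]}'}$ as the zero divisor of the section \eqref{eqn:fk-1'}) are exactly what the paper leaves implicit.
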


\subsection{Ahlfors' lemma on logarithmic derivative over directed manifolds}

For any divisor $D \in \mathfrak{S}$, there exists a hyperplane $\widetilde{H}$ of $\mathfrak{S}^{\dual} \cong \mathbb{P}^N$ such that $D = \mu_{\mathfrak S}^{\ast} \widetilde{H}$.
Let $\sigma$ be the ``unit'' canonical section of $\mathcal{O}(D)$ with respect to the basis $\{\varsigma_i \mid i = 0, 1, \dots, N\}$ and let $\bm{\tilde{a}} \in (\mathbb{C}^{N+1})^{\dual}$ be a unit normal vector of $\widetilde{H}$.
By computation, we have
\[
\widetilde{F}^k \,\llcorner\, \bm{\tilde{a}} = \sum_{0 \leq i_1 < \dots < i_k \leq n} \hspace{-1eM} W(\sigma, \varsigma_{i_1}, \dots, \varsigma_{i_k})(j_kf) \bm{e}^{i_1} \wedge \dots \wedge \bm{e}^{i_k}
.\]
Analogous to \eqref{eqn:normtildeF^kbar}, we have
\begin{equation}\label{eqn:normtildeF^kabar}
|\widetilde{F}^k \,\llcorner\, \bm{\tilde{a}}|^2 = \exp(\widebar{\varphi}_{\widebar{\mathfrak W}(X_k, D, \mathfrak{S})} \circ f_{[k]}) |f'|^{2k} |f_{[1]}'|^{2(k-1)} \cdots |f_{[k-1]}'|^2
,\end{equation}
where $\widebar{\mathfrak W}(X_k, D, \mathfrak{S})$ is the linear system corresponding to $\widebar{\mathbb W}(X_k, D, \mathfrak{S})$ defined by \eqref{eqn:WXkDS}.
By \eqref{eqn:normtildeF^kbar} and \eqref{eqn:normtildeF^kabar}, the $k$-th contact function of $\tilde{f}$ for $\widetilde{H}$ is given by
\begin{equation}\label{eqn:phikH}
\widetilde{\phi}_k(\widetilde{H}) = \frac{|\widetilde{F}^k \,\llcorner\, \bm{\tilde{a}}|^2}{|\widetilde{F}^k|^2} = \exp\left((\varphi_{\widebar{\mathfrak W}(X_k, D, \mathfrak{S})} - \varphi_{\widebar{\mathfrak W}(X_k, \mathfrak{S})}) \circ f_{[k]}\right)
.\end{equation}
Notice that $\widebar{\mathfrak w}(X_k, D, \mathfrak{S}) \subset \widebar{\mathfrak w}(X_k, \mathfrak{S})$, for any $D \in \mathfrak{S}$.
Therefore,
\[
\upsilon_{k, \mathfrak{S}}^{-1} \widebar{\mathfrak w}(X_k, D, \mathfrak{S}) \cdot \mathcal{O}_{\widehat{X}_{k, \mathfrak{S}}}(E_{k, \mathfrak{S}}) \subset \mathcal{O}_{\widehat{X}_{k, \mathfrak{S}}}
,\]
which is the ideal sheaf of a certain closed subscheme of $\widehat{X}_{k, \mathfrak{S}}$, denoted by $D_{\mathfrak S}^{\langle k \rangle}$.
Let $D_{\mathfrak S}^{[k]}$ be the closed subscheme of $X_k$ with the ideal sheaf $\widebar{\mathfrak w}(X_k, D, \mathfrak{S})$.
By definition,
\begin{equation}\label{eqn:D+E}
D_{\mathfrak S}^{\langle k \rangle} + E_{k, \mathfrak{S}} = \upsilon_{k, \mathfrak{S}}^{\ast} D_{\mathfrak S}^{[k]}
.\end{equation}
Take an arbitrary continuous Hermitian metric on $\mathcal{O}_{X_k}(\bm{a}^k) \otimes \pi_{0, k}^{\ast} L^{k+1}$ with the weight function $\varphi$.
It follows from \eqref{eqn:lambda_Zs} that $\dfrac{1}{2} (\varphi - \varphi_{\widebar{\mathfrak W}(X_k, \mathfrak{S})})$ and $\dfrac{1}{2} (\varphi - \varphi_{\widebar{\mathfrak W}(X_k, D, \mathfrak{S})})$ are Weil functions for the closed subschemes $Z_{\widebar{\mathfrak w}(X_k, \mathfrak{S})}$ and $D_{\mathfrak S}^{[k]}$ of $X_k$, respectively.
Accordingly, $\dfrac{1}{2} (\varphi_{\widebar{\mathfrak W}(X_k, \mathfrak{S})} - \varphi_{\widebar{\mathfrak W}(X_k, D, \mathfrak{S})}) \circ \upsilon_{k, \mathfrak{S}}$ is a Weil function for the closed subscheme $D_{\mathfrak S}^{\langle k \rangle}$ of $\widehat{X}_{k, \mathfrak{S}}$.
Thanks to \eqref{eqn:phikH} , for any Weil function $\lambda_{D_{\mathfrak S}^{\langle k \rangle}}$ for the closed subscheme $D_{\mathfrak S}^{\langle k \rangle}$, there exists some constant $a \geq 1$ such that
\[
\frac{1}{a} \widetilde{\phi}_k(\widetilde{H}) \leq \!\exp\left(-2\lambda_{D_{\mathfrak S}^{\langle k \rangle}} \circ \hat{f}_{[k]}\right) \leq a \widetilde{\phi}_k(\widetilde{H})
.\]

Consequently, applying Lemma \ref{lem:ALLD} to holomorphic curves $\widetilde{f} = \mu_{\mathfrak S} \circ f$ in $\mathfrak{S}^{\dual} \cong \mathbb{P}^N$, we conclude Ahlfors’ lemma on logarithmic derivative over directed manifolds, which is formulated in the language of Weil functions for closed subschemes of Demailly-Semple jet towers.
\begin{lemma}\label{lem:ALLDdirected}
Let $(X, V)$ be a directed projective manifold, and take an effective divisor $D$ on $X$ such that $|D|$ is base point free.
Choose a linear system $\mathfrak{S} \subset |D|$ satisfying that $D \in \mathfrak{S}$ and $\mathfrak{S}$ is also base point free.
For each $0 \leq k \leq N-1$, let $f \colon (\mathbb{C}, T_{\mathbb{C}}) \to (X, V)$ be a non-constant holomorphic curve such that $f_{[k]}(\mathbb{C}) \not\subset \Bs(\mathfrak{W}(X_k, \mathfrak{S}))$.
Suppose that $f(\mathbb{C}) \not\subset \Supp D$.
Then there is a constant $c_k$ such that for any $0 < \varepsilon < 1$, one has
\[
\varepsilon \mathcal{N}\!\left(\exp\!\left(2(1-\varepsilon)\lambda_{D_{\mathfrak S}^{\langle k \rangle}} \!\circ\! \hat{f}_{[k]} - 2\lambda_{D_{\mathfrak S}^{\langle k+1 \rangle}} \!\circ\! \hat{f}_{[k+1]}\right) \hat{f}_{[k]}^{\ast} \ddif^c \varphi_{\widehat{\mathfrak W}(X_k, \mathfrak{S})}, r\right) \leq c_k T_{\hat{f}_{[k]}}(r, L_k) + O(1)
,\]
where $N = \dim \mathfrak{S}$, and $L_k$ is defined by \eqref{eqn:Lk} with $L = \mathcal{O}(D)$.
Here $c_k$ merely depends on the choice of Weil functions.
\end{lemma}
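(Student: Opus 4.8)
The plan is to reduce the assertion to Lemma \ref{lem:ALLD} (Ahlfors' LLD for higher jets, with $\ell=0$) applied to the holomorphic curve $\tilde f\bydef\mu_{\mathfrak{S}}\circ f$ in $\mathfrak{S}^{\dual}\cong\mathbb{P}^N$, and then to translate each term of the resulting inequality back into the directed--manifold language by means of the identities already established in this section. Since $\mathfrak{S}$ is base point free, $\mu_{\mathfrak{S}}$ is a morphism and $\tilde f$ is a non-constant holomorphic curve with reduced representation $[\varsigma_0\circ f:\dots:\varsigma_N\circ f]$. By \eqref{eqn:wf'} and \eqref{eqn:tildeF^k} the hypothesis $f_{[k]}(\mathbb{C})\not\subset\Bs(\mathfrak{W}(X_k,\mathfrak{S}))$ says exactly that $\widetilde F^k\not\equiv0$, i.e. that $\tilde f(\mathbb{C})$ lies in no linear $\mathbb{P}^{k-1}\subset\mathbb{P}^N$, so that the $k$-th derived curve $\tilde f^k$, the $k$-th associated curve $\tilde f_k$, and (via \eqref{eqn:f^kU}, \eqref{eqn:f_kU}) the lift $\hat f_{[k]}$ are well defined; moreover $k\le N-1$. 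If $\widetilde F^{k+1}\equiv0$, the infinitesimal Plücker formula gives $\Omega_k=(\tilde f^k)^{\ast}\widetilde\omega_k\equiv0$, hence $\hat f_{[k]}^{\ast}\ddif^c\varphi_{\widehat{\mathfrak{W}}(X_k,\mathfrak{S})}\equiv0$, and the asserted inequality is trivial; so one may assume $\widetilde F^{k+1}\not\equiv0$, which is precisely the non-degeneracy hypothesis of Lemma \ref{lem:ALLD} at level $k$. Taking $\widetilde H$ to be the hyperplane of $\mathbb{P}^N$ with $D=\mu_{\mathfrak{S}}^{\ast}\widetilde H$ (legitimate since $f(\mathbb{C})\not\subset\Supp D$), Lemma \ref{lem:ALLD} then provides a constant $c'_k$, depending only on $k$, with
\[
\varepsilon\,\mathcal{N}\!\left(\frac{\widetilde\phi_{k+1}(\widetilde H)}{\widetilde\phi_k(\widetilde H)^{1-\varepsilon}}\,\Omega_k,\ r\right)\ \le\ c'_k\,T_{\tilde f^k}\!\big(r,\mathcal{O}_{\mathbb{P}^{N_k}}(1)\big)+O(1)\qquad(0<\varepsilon<1).
\]

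Now I would dictionary-translate the three ingredients of this inequality. \emph{Contact functions.} By \eqref{eqn:phikH}, $\widetilde\phi_j(\widetilde H)=\exp\big((\varphi_{\widebar{\mathfrak{W}}(X_j,D,\mathfrak{S})}-\varphi_{\widebar{\mathfrak{W}}(X_j,\mathfrak{S})})\circ f_{[j]}\big)$ for $j=k,k+1$; and since $\tfrac{1}{2}(\varphi_{\widebar{\mathfrak{W}}(X_j,\mathfrak{S})}-\varphi_{\widebar{\mathfrak{W}}(X_j,D,\mathfrak{S})})\circ\upsilon_{j,\mathfrak{S}}$ is, by \eqref{eqn:lambda_Zs}, a Weil function for $D_{\mathfrak{S}}^{\langle j\rangle}$ on $\widehat X_{j,\mathfrak{S}}$, there is a constant $a\ge1$ with $\tfrac{1}{a}\widetilde\phi_j(\widetilde H)\le\exp(-2\lambda_{D_{\mathfrak{S}}^{\langle j\rangle}}\circ\hat f_{[j]})\le a\,\widetilde\phi_j(\widetilde H)$ for $j=k,k+1$. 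As $0<1-\varepsilon<1$ and $a\ge1$, the quotient $\widetilde\phi_{k+1}(\widetilde H)\widetilde\phi_k(\widetilde H)^{-(1-\varepsilon)}$ differs from $\exp\big(2(1-\varepsilon)\lambda_{D_{\mathfrak{S}}^{\langle k\rangle}}\circ\hat f_{[k]}-2\lambda_{D_{\mathfrak{S}}^{\langle k+1\rangle}}\circ\hat f_{[k+1]}\big)$ by a positive factor lying in $[a^{-2},a^{2}]$, \emph{uniformly in $\varepsilon$}. \emph{Curvature form.} The infinitesimal Plücker computation preceding the lemma gives $\Omega_k=(\tilde f^k)^{\ast}\widetilde\omega_k=\hat f_{[k]}^{\ast}\ddif^c\varphi_{\widehat{\mathfrak{W}}(X_k,\mathfrak{S})}$. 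Combining these with the elementary estimate $\mathcal{N}(g\,\Phi,r)\le a^{2}\mathcal{N}(\Phi,r)$, valid for $0\le g\le a^2$ and $\Phi\ge0$, the left-hand side of the target inequality is at most $a^{2}\varepsilon\,\mathcal{N}\big(\widetilde\phi_{k+1}(\widetilde H)\widetilde\phi_k(\widetilde H)^{-(1-\varepsilon)}\Omega_k,r\big)$. \emph{Characteristic function.} Finally $\tilde f^k=\Pluc\circ\tilde f_k=\Pluc\circ\hat\mu_{k,\mathfrak{S}}\circ\hat f_{[k]}$, together with $\mathcal{O}_{\Gr}(1)=\Pluc^{\ast}\mathcal{O}_{\mathbb{P}^{N_k}}(1)$ and the isomorphism $\hat\mu_{k,\mathfrak{S}}^{\ast}\mathcal{O}_{\Gr}(1)\cong L_k$ from \eqref{eqn:Ek}, \eqref{eqn:Lk}, yields $T_{\tilde f^k}(r,\mathcal{O}_{\mathbb{P}^{N_k}}(1))=T_{\hat f_{[k]}}(r,L_k)+O(1)$ by functoriality of the characteristic function. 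Assembling the three translations gives the assertion with $c_k\bydef a^{2}c'_k$, which depends only on $k$ and, through $a$, on the chosen Weil functions.

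The step I expect to be the main obstacle is not conceptual but bookkeeping: one must verify that $\hat f_{[k]}$ and $\hat f_{[k+1]}$ genuinely are well-defined tangent curves --- i.e. that $f_{[k]},f_{[k+1]}$ avoid the bases $\Bs(\widebar{\mathfrak{W}}(X_k,\mathfrak{S}))$, $\Bs(\widebar{\mathfrak{W}}(X_{k+1},\mathfrak{S}))$ and the supports of the $D_{\mathfrak{S}}^{\langle j\rangle}$, and otherwise that the relevant current vanishes so the inequality is vacuous --- and, the genuinely delicate point, that the identity $\Omega_k=\hat f_{[k]}^{\ast}\ddif^c\varphi_{\widehat{\mathfrak{W}}(X_k,\mathfrak{S})}$ persists across the stationary points of $f$ and across the exceptional divisor $E_{k,\mathfrak{S}}$; this is where formula \eqref{eqn:normtildeF^kbar}, the Poincaré--Lelong bookkeeping implicit in $\Omega_k$, and the blow-up $\upsilon_{k,\mathfrak{S}}$ must be reconciled. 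All the remaining care lies in keeping the various multiplicative constants (the $a$'s, the constants furnished by Lemma \ref{lem:ALLD}, the $\infty$-norm versus $2$-norm comparisons underlying the Weil functions) independent of $\varepsilon$, which is what makes the final $c_k$ legitimately $\varepsilon$-free.
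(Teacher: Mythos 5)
Your proposal is correct and follows essentially the same route as the paper: the paper's own proof consists precisely of applying Lemma \ref{lem:ALLD} (with $\ell=0$) to $\tilde f=\mu_{\mathfrak S}\circ f$ and translating the contact functions, the curvature form, and the characteristic function via \eqref{eqn:phikH}, the Weil-function comparison constant $a$, the generalized infinitesimal Pl\"ucker formula, and \eqref{eqn:hatW}--\eqref{eqn:Ek}. Your explicit handling of the degenerate case $\widetilde F^{k+1}\equiv 0$ and of the $\varepsilon$-uniformity of the constants only makes explicit what the paper leaves implicit.
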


\begin{remark}\label{rmk:separate1}
In particular, if $\mathfrak{S}$ separates $1$-jets at every point of $X$, then both $\varpi_{1, \mathfrak{S}}$ and $\mu_{1, \mathfrak{S}}$ are holomorphic.
Thus $\upsilon_{1, \mathfrak{S}}$ is indeed the identity map and $E_{1, \mathfrak{S}} = \emptyset$.
According to \eqref{eqn:D+E}, $D_{\mathfrak S}^{\langle 1 \rangle} = \upsilon_{1, \mathfrak{S}}^{\ast} D_{\mathfrak S}^{[1]}$.
Furthermore, the discussion in the next section will show that $D_{\mathfrak S}^{[1]}$ is independent of the choice of $\mathfrak{S}$, provided that $\mathfrak{S}$ separates $1$-jets at every point of $X$.
\end{remark}

\subsection{Jets of closed subschemes over Demailly-Semple jet towers}
Let $(X, V)$ be a directed manifold.
For a closed subscheme $Z$ of $X$, we define the closed subscheme $Z^{(1)}$ of $X_1$ as follows.
In the case when $X$ is affine, choose some functions $\zeta_1, \dots, \zeta_s$ generating $\mathcal{I}_Z(X) \subset \mathcal{O}_X(X)$, where $\mathcal{I}_Z$ is the ideal sheaf of $Z$ of $X$.
Then we define the closed subscheme $Z^{(1)}$ of $X_1 = \textnormal{P}(V)$ determined by the homogeneous ideal
\[
I(Z^{(1)}) = (\zeta_1, \dots, \zeta_s, \dif \zeta_1, \dots, \dif \zeta_s)
,\]
where we abusively write $\zeta_i \bydef \dif_{U, V}^{[0]} \zeta_i = p_{1, V}^{\ast} \zeta_i$ and $\dif \zeta_i \bydef \dif_{U, V}^{[1]} \zeta_i$ for each $1 \leq i \leq s$.
We see from the Leibniz rule that $Z^{(1)}$ is well-defined, regardless of the choice of the generators of $\mathcal{I}_Z(X)$.
More generally, even when $X$ is not affine, we can take an affine open covering $\{U_{\epsilon}\}_{\epsilon \in \Upsilon}$ of $X$ and construct the closed subschemes $(Z_{\restriction U_{\epsilon}})^{(1)}$ of $\pi_1^{-1}(U_{\epsilon}) = \textnormal{P}(V_{\restriction U_{\epsilon}})$.
By definition, these closed subschemes can be glued together into the closed subscheme $Z^{(1)}$ of $X_1=\textnormal{P}(V)$.

For each $k \in \mathbb{Z}_{+}$, we inductively define
\[
Z^{(k)} \bydef (Z^{(k-1)})^{(1)}
\]
as a closed subscheme of $X_k=\textnormal{P}(V_{k-1})$.

If $Z$ is reduced and non-singular, we easily check that $Z^{(1)}$ is identified with the Demailly-Semple $1$-jet tower of $(Z, T_Z \cap V)$.
In other words,
\[
Z^{(1)} = \textnormal{P}(T_Z \cap V)
.\]

\begin{remark}
Let $Z = Z_1 + Z_2$ with $Z_1 \cap Z_2$ is not empty. One can see that $Z_1^{(1)} + Z_2^{(1)} \subsetneqq Z^{(1)}$ by basic computations.
In particular, through straightforward calculations, one also has
\begin{equation}\label{eqn:lZ^(1)}
(\ell Z)^{(1)} = (\ell-1) \pi_1^{\ast} Z + Z^{(1)}
,\end{equation}
for each $\ell \in \mathbb{Z}_{+}$.
\end{remark}

What's more, one obtains the following proposition by definition.
\begin{proposition}\label{prp:cap^(1)=^(1)cap}
Let $(X, V)$ be a directed manifold, and let $Z_1, \dots, Z_q$ be closed subschemes of $X$. For each $k \in \mathbb{Z}_{+}$, we have
\[
Z^{(k)} = \bigcap_{i=1}^q Z_i^{(k)}
,\]
where $Z = \bigcap_{i=1}^q Z_i$.
\end{proposition}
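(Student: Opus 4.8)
The plan is to reduce the statement to the case $k=1$ by induction on $k$, and then to prove that case by a chart-by-chart comparison of generators of the relevant ideal sheaves. Note first that the $k=1$ assertion must be established for \emph{every} directed manifold, not merely for the given one, because the inductive step applies it over $(X_{k-1},V_{k-1})$. Granting the $k=1$ case, suppose the proposition holds for $k-1$, so that $Z^{(k-1)}=\bigcap_{i=1}^q Z_i^{(k-1)}$ where $Z=\bigcap_{i=1}^q Z_i$. Applying the $k=1$ case to the closed subschemes $Z_1^{(k-1)},\dots,Z_q^{(k-1)}$ of $(X_{k-1},V_{k-1})$ gives $\bigcap_{i=1}^q (Z_i^{(k-1)})^{(1)}=\big(\bigcap_{i=1}^q Z_i^{(k-1)}\big)^{(1)}$, which by the definition $Z^{(k)}\bydef(Z^{(k-1)})^{(1)}$ is exactly $\bigcap_{i=1}^q Z_i^{(k)}=Z^{(k)}$. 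So everything comes down to $k=1$.

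For $k=1$, I would work on an affine open subset $U\subset X$ and show that both sides define the same homogeneous ideal on $\pi_1^{-1}(U)=\textnormal{P}(V_{\restriction U})$. Choose, for each $i$, a system of generators $\zeta_1^{(i)},\dots,\zeta_{s_i}^{(i)}$ of $\mathcal{I}_{Z_i}(U)$. Iterating Definition \ref{def:I_Z}, \ref{itm:Icap}), one has $\mathcal{I}_Z(U)=\sum_{i=1}^q \mathcal{I}_{Z_i}(U)$, so the union $\{\zeta_j^{(i)}\}_{i,j}$ is a system of generators of $\mathcal{I}_Z(U)$. Since $Z^{(1)}$ is independent of the chosen generators (the Leibniz-rule remark following its definition), I may compute $I(Z^{(1)})$ using this very union, obtaining the ideal generated by $\{\dif_{U,V}^{[0]}\zeta_j^{(i)}\}_{i,j}\cup\{\dif_{U,V}^{[1]}\zeta_j^{(i)}\}_{i,j}$. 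On the other hand, $\bigcap_{i=1}^q Z_i^{(1)}$ has ideal sheaf $\sum_{i=1}^q I(Z_i^{(1)})$, and $I(Z_i^{(1)})$ is generated by $\{\dif_{U,V}^{[0]}\zeta_j^{(i)}\}_{j}\cup\{\dif_{U,V}^{[1]}\zeta_j^{(i)}\}_{j}$; summing over $i$ yields precisely the same generating set. Hence $I(Z^{(1)})=\sum_{i=1}^q I(Z_i^{(1)})$ over $\pi_1^{-1}(U)$, i.e.\ the two closed subschemes agree there.

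Finally, I would observe that this local identification is compatible with the gluing used to build $Z^{(1)}$ and each $Z_i^{(1)}$ globally: over an affine cover $\{U_\epsilon\}$ all descriptions are assembled from the same local functions, so the equalities on the $\pi_1^{-1}(U_\epsilon)$ patch to the global identity $Z^{(1)}=\bigcap_{i=1}^q Z_i^{(1)}$, which finishes the base case and the proposition. The only delicate point is the appeal to well-definedness of $(\cdot)^{(1)}$ independently of the chosen generators — once that is taken for granted the generator bookkeeping is routine; a secondary point to keep in mind is simply that the $k=1$ result is invoked over $(X_{k-1},V_{k-1})$ in the inductive step, which is why it is stated and proved for an arbitrary directed manifold.
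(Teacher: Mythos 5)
Your proposal is correct and follows essentially the same route as the paper: reduce to $k=1$ by induction (the paper applies the base case over $(X_{k-1},V_{k-1})$ exactly as you note), then on an affine chart compare generators, using that $\mathcal{I}_Z(U)$ is generated by the union of the generators of the $\mathcal{I}_{Z_i}(U)$ and that $(\cdot)^{(1)}$ is independent of the chosen generators, and finally glue over an affine cover. Your explicit remarks about generator-independence and about the base case being needed for an arbitrary directed manifold are points the paper uses only implicitly.
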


\begin{proof}
Take an affine open subset $U \subset X$.
Suppose that $\zeta_{i, 1}, \dots, \zeta_{i, s_i}$ is a system of generators of $\mathcal{I}_{Z_i}(U) \subset \mathcal{O}_X(U)$, for each $1 \leq i \leq q$.
By Definition \ref{def:I_Z}, \ref{itm:Icap}),
\[
\mathcal{I}_Z(U) = (\zeta_{1, 1}, \dots, \zeta_{1, s_1}, \dots, \zeta_{q, 1}, \dots, \zeta_{q, s_q})
,\]
and by the definition of $Z^{(1)}$,
\[
I((Z_{\restriction U})^{(1)}) = (\zeta_{1, 1}, \dots, \zeta_{q, s_q}, \dif \zeta_{1, 1}, \dots, \dif \zeta_{q, s_q})
.\]
On the other hand,
\[
I((Z_{i \restriction U})^{(1)}) = (\zeta_{i, 1}, \dots, \zeta_{i, s_i}, \dif \zeta_{i, 1}, \dots, \dif \zeta_{i, s_i})
,\]
for each $1 \leq i \leq q$.
Again, by Definition \ref{def:I_Z}, \ref{itm:Icap}),
\[
I(\bigcap_{i=1}^q (Z_{i \restriction U})^{(1)}) = (\zeta_{1, 1}, \dots, \zeta_{q, s_q}, \dif \zeta_{1, 1}, \dots, \dif \zeta_{q, s_q})
.\]
Hence $(Z_{\restriction U})^{(1)} = \bigcap_{i=1}^q (Z_{i \restriction U})^{(1)}$.
Notice that all such open subsets $U$ give an affine open covering of $X$. Thus
\[
Z^{(1)} = \bigcap_{i=1}^q Z_i^{(1)}
.\]
Then the proposition is verified by induction.
\end{proof}

Let $Z$ be a closed subscheme of $X$.
Choose a linear system $\mathfrak{S} \subset |L|$ on $X$ satisfying that there exist some divisors $D_1, \dots, D_s \in \mathfrak{S}$ such that $Z = \bigcap_{i=1}^s D_i$.
It amounts to saying that $Z$ is the closed subscheme of $X$ with the ideal sheaf $\mathcal{I}_Z = \mathfrak{s}_{\supset Z}$, which is the base ideal of $\mathfrak{S}_{\supset Z}$.
Then we denote by $Z_{\mathfrak{S}}^{[k]}$ the closed subscheme of $X_k$ with the ideal sheaf $\widebar{\mathfrak w}(X_k, Z, \mathfrak{S})$.
Analogous to Proposition \ref{prp:cap^(1)=^(1)cap}, one also has
\begin{proposition}\label{prp:cap_S^k=_S^kcap}
Let $(X, V)$ be a directed manifold, and let $Z_1, \dots, Z_q$ be closed subschemes of $X$.
Choose a linear system $\mathfrak{S} \subset |L|$ on $X$ such that $\mathcal{I}_{Z_i} = \mathfrak{s}_{\supset Z_i}$ for each $1 \leq i \leq q$.
For each $k \in \mathbb{Z}_{+}$, we have
\[
Z_{\mathfrak{S}}^{[k]} = \bigcap_{i=1}^q (Z_i)_{\mathfrak{S}}^{[k]}
,\]
where $Z = \bigcap_{i=1}^q Z_i$.
\end{proposition}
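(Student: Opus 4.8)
The plan is to follow the strategy of the proof of Proposition \ref{prp:cap^(1)=^(1)cap}, with the explicit concatenation of generators of $I(Z^{(1)})$ replaced by the corresponding concatenation for the Wronskian ideal sheaf along $Z$.

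\textbf{Step 1 (set-up and reduction).} First I would check that the hypothesis really makes $Z_{\mathfrak S}^{[k]}$ well defined. By Definition \ref{def:I_Z}, \ref{itm:Icap}), $\mathcal{I}_Z = \sum_{i=1}^q \mathcal{I}_{Z_i} = \sum_{i=1}^q \mathfrak{s}_{\supset Z_i}$; on the other hand $Z \subset Z_i$ gives $\mathbb{S}_{\supset Z_i} \subset \mathbb{S}_{\supset Z}$, hence $\mathfrak{s}_{\supset Z_i} \subset \mathfrak{s}_{\supset Z}$, while every section of $\mathbb{S}_{\supset Z}$ vanishes on $Z$, hence $\mathfrak{s}_{\supset Z} \subset \mathcal{I}_Z$. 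Sandwiching these inclusions yields
\[
\mathcal{I}_Z = \mathfrak{s}_{\supset Z} = \sum_{i=1}^q \mathfrak{s}_{\supset Z_i}.
\]
Using Definition \ref{def:I_Z}, \ref{itm:Icap}) again, the ideal sheaf of $\bigcap_{i=1}^q (Z_i)_{\mathfrak S}^{[k]}$ is $\sum_{i=1}^q \widebar{\mathfrak w}(X_k, Z_i, \mathfrak{S})$, so the statement is equivalent to the identity of ideal sheaves of $X_k$
\[
\widebar{\mathfrak w}(X_k, Z, \mathfrak{S}) = \sum_{i=1}^q \widebar{\mathfrak w}(X_k, Z_i, \mathfrak{S}).
\]

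\textbf{Step 2 (easy inclusion and localisation).} The inclusion $\supseteq$ is immediate: $Z \subset Z_i$ forces $\mathbb{S}_{\supset Z_i} \subset \mathbb{S}_{\supset Z}$, so the spanning family \eqref{eqn:WXkDS} for each $Z_i$ is a subfamily of the one for $Z$, whence $\widebar{\mathbb W}(X_k, Z_i, \mathfrak{S}) \subset \widebar{\mathbb W}(X_k, Z, \mathfrak{S})$ and $\widebar{\mathfrak w}(X_k, Z_i, \mathfrak{S}) \subset \widebar{\mathfrak w}(X_k, Z, \mathfrak{S})$. For the reverse inclusion I would first reduce to the case where each $Z_i$ is a member of $\mathfrak{S}$: since $\mathcal{I}_{Z_i} = \mathfrak{s}_{\supset Z_i}$ and $X$ is Noetherian we may write $Z_i = \bigcap_{j \in A_i} D_j$ with finitely many $D_j \in \mathfrak{S}$, and then $Z = \bigcap_{j \in A} D_j$ with $A = \bigcup_i A_i$; noting $\mathcal{I}_{D_j} = \mathfrak{s}_{\supset D_j}$ (because $\sigma_{D_j}\mathcal{O}_X$, mapped into $L$, equals $\sigma_{D_j}\in\mathbb{S}_{\supset D_j}$ times $\mathcal{O}_X$, so $\sigma_{D_j}\mathcal{O}_X\otimes L^{-1}=\mathcal{I}_{D_j}$), the general case follows from the divisor case applied to $\{D_j\}_{j\in A_i}$ and to $\{D_j\}_{j\in A}$. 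So assume $Z_i = D_i \in \mathfrak{S}$. Fix $x_k \in X_k$ over $x = \pi_{0,k}(x_k)$, a trivialisation of $L$ near $x$, and a trivialisation of $\mathcal{O}_{X_k}(\bm{a}^k) \otimes \pi_{0,k}^{\ast} L^{k+1}$ near $x_k$. For every $D \in \mathfrak{S}_{\supset Z}$ one has $\sigma_{D,U} \in \mathcal{I}_{\bigcap_j D_j}(U) = \sum_j \mathcal{I}_{D_j}(U)$, so the local expression of $\sigma_D$ decomposes as $\sigma_{D,U} = \sum_j h_j\, \sigma_{D_j, U}$ with $h_j \in \mathcal{O}_{X,x}$; since $\mathbb{S}_{\supset Z} = \Span\{\sigma_D : D \in \mathfrak{S}_{\supset Z}\}$ and $\widebar\omega(\,\cdot\,, \sigma_1, \dots, \sigma_k)$, equivalently the Wronskian $W_U^V(\,\cdot\,, \sigma_{1,U}, \dots, \sigma_{k,U})$ via \eqref{eqn:FEV}, is additive in its first entry, the germ of each generator of $\widebar{\mathfrak w}(X_k, Z, \mathfrak{S})_{x_k}$ splits into pieces $W_U^V(h_j \sigma_{D_j, U}, \sigma_{1,U}, \dots, \sigma_{k,U})$.

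\textbf{Step 3 (the main point).} The heart of the matter is then the local claim that, for each $j$, this piece lies in $\widebar{\mathfrak w}(X_k, D_j, \mathfrak{S})_{x_k}$, after the identification \eqref{eqn:bkGamma} (i.e. after dividing by the canonical section of $\mathcal{O}_{X_k}(\bm{b}^k \cdot \varGamma)$). This is exactly the non-trivial analogue of the trivial ``concatenation of generators'' step in Proposition \ref{prp:cap^(1)=^(1)cap}, the difficulty now being the derivatives. I would expand $\dif_{U,V}^{[\ell]}(h_j \sigma_{D_j, U}) = \sum_{s=0}^{\ell} \binom{\ell}{s} \dif_{U,V}^{[\ell - s]} h_j \cdot \dif_{U,V}^{[s]}\sigma_{D_j, U}$ as in the proof of Lemma \ref{lem:sigmaW_D}, perform the corresponding column operations on the defining determinant, separate off the term $h_j\, W_U^V(\sigma_{D_j, U}, \sigma_{1,U}, \dots, \sigma_{k,U}) \in \widebar{\mathfrak w}(X_k, D_j, \mathfrak{S})_{x_k}$, and rewrite the remaining terms (which all carry a factor $\sigma_{D_j,U}$ and a derivative of an argument) by means of the bilinearity identity
\[
\varsigma'\, W_U^V(\sigma_{D_j, U}, \varsigma'') - \varsigma''\, W_U^V(\sigma_{D_j, U}, \varsigma') = \sigma_{D_j, U}\, W_U^V(\varsigma', \varsigma'')
\]
and its higher-order analogues obtained by Laplace expansion, so that they too land in $\widebar{\mathfrak w}(X_k, D_j, \mathfrak{S})_{x_k}$; the correction terms involving $\dif_{U,V}^{[\ast]} h_j$ telescope, after summing over $j$, against $\dif_{U,V}^{[\ast]}\sigma_{D,U}$ and are thereby absorbed into $\sum_j \widebar{\mathfrak w}(X_k, D_j, \mathfrak{S})_{x_k}$. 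This bookkeeping — producing, from an arbitrary member of $\mathfrak{S}_{\supset Z}$, Wronskians that provably sit in the sum of the $D_j$-Wronskian ideals — is the step I expect to be the main obstacle, and it is the only place where the hypothesis $\mathcal{I}_{D_j} = \mathfrak{s}_{\supset D_j}$ (that $\mathbb{S}$ contains enough sections) is genuinely used. Since the open sets $\pi_{0,k}^{-1}(U)$ cover $X_k$, the resulting equality of stalks globalises to $\widebar{\mathfrak w}(X_k, Z, \mathfrak{S}) = \sum_i \widebar{\mathfrak w}(X_k, Z_i, \mathfrak{S})$, and hence, by Step 1, to $Z_{\mathfrak S}^{[k]} = \bigcap_{i=1}^q (Z_i)_{\mathfrak S}^{[k]}$.
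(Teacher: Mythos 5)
The paper itself gives no proof of this proposition (it is introduced only with ``Analogous to Proposition \ref{prp:cap^(1)=^(1)cap}''), so there is nothing to compare your argument against line by line; judged on its own, your Steps 1 and 2 are correct (the identity $\mathcal{I}_Z=\mathfrak{s}_{\supset Z}=\sum_i\mathfrak{s}_{\supset Z_i}$, the easy inclusion $\widebar{\mathfrak w}(X_k,Z_i,\mathfrak{S})\subset\widebar{\mathfrak w}(X_k,Z,\mathfrak{S})$, the reduction to divisors $D_j\in\mathfrak{S}$, and the localisation). You are also right to identify where the analogy with Proposition \ref{prp:cap^(1)=^(1)cap} breaks down: unlike generators of $\mathcal{I}_{Z_i}$, the spanning sets do \emph{not} concatenate, because $\mathbb{S}_{\supset Z}\neq\sum_i\mathbb{S}_{\supset Z_i}$ in general (e.g.\ on $\mathbb{P}^1\times\mathbb{P}^1$ with $L=\mathcal{O}(2,1)$, $\mathfrak{S}=|L|$, $Z_1=(x_0^2y_0)$, $Z_2=(x_1^2y_0)$, the section $x_0x_1y_0$ lies in $\mathbb{S}_{\supset Z}$ for $Z=Z_1\cap Z_2=\{y_0=0\}$ but not in $\mathbb{S}_{\supset Z_1}+\mathbb{S}_{\supset Z_2}$). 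So the hard inclusion really is a local statement with holomorphic function coefficients, exactly as you set it up.

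The genuine gap is Step 3: the absorption of the correction terms is asserted, not proved, and the identities you cite demonstrably do not suffice for it. After writing $\sigma_{D,U}=\sum_j h_j\sigma_{D_j,U}$ and Leibniz-expanding, the terms $h_j\,W_U(\sigma_{D_j},\sigma_{1,U},\dots,\sigma_{k,U})$ are harmless, but the remaining terms carry $\dif^{[\ell]}_{U,V}h_j$ with $h_j$ an arbitrary germ; these are not Wronskians of members of $\mathfrak{S}$, and if $\mathfrak{S}$ fails to separate jets in the directions seen by $\dif h_j$ they need not lie in $\widebar{\mathfrak w}(X_k,\mathfrak{S})$, let alone in $\sum_j\widebar{\mathfrak w}(X_k,D_j,\mathfrak{S})$. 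What the Laplace/Pl\"ucker identity you quote actually yields (apply $\sum_{i}(-1)^iu_i\,W(u_0,\dots,\widehat{u_i},\dots,u_{k+1})=0$ to the tuple $(\sigma_{D_j},\sigma_D,\tau_1,\dots,\tau_k)$) is $\sigma_{D_j}\cdot W(\sigma_D,\tau_1,\dots,\tau_k)\in\widebar{\mathfrak w}(X_k,D_j,\mathfrak{S})$, and hence only
\[
\pi_{0,k}^{-1}\mathcal{I}_Z\cdot\mathcal{O}_{X_k}\cdot\widebar{\mathfrak w}(X_k,Z,\mathfrak{S})\ \subset\ \sum_i\widebar{\mathfrak w}(X_k,Z_i,\mathfrak{S})\ \subset\ \widebar{\mathfrak w}(X_k,Z,\mathfrak{S}),
\]
i.e.\ $Z_{\mathfrak S}^{[k]}\subset\bigcap_i(Z_i)_{\mathfrak S}^{[k]}\subset Z_{\mathfrak S}^{[k]}+\pi_{0,k}^{\ast}Z$: equality away from $\pi_{0,k}^{-1}(\Supp Z)$, but not the claimed equality along $\pi_{0,k}^{-1}(\Supp Z)$, which is where all the content lies. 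This is the same phenomenon as in Proposition \ref{prp:Z^(1)<Z_S^1}, where precisely these manipulations produce a sandwich with a correction term rather than an equality (and where the paper needs the separation-of-$1$-jets hypothesis to upgrade it). Until you either exhibit an identity that genuinely absorbs the $\dif^{[\ell]}h_j$ terms for \emph{some} choice of the decomposition, or prove the equality over $\Supp Z$ by a different mechanism (or add a hypothesis such as separation of $k$-jets), the proof is incomplete at its decisive step.
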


In addition, the following proposition indicates a relation between $Z^{(1)}$ and $Z_{\mathfrak S}^{[1]}$.
\begin{proposition}\label{prp:Z^(1)<Z_S^1}
Let $(X, V)$ be a directed manifold, and let $Z$ be a closed subscheme of $X$.
Choose a linear system $\mathfrak{S} \subset |L|$ on $X$ such that $\mathcal{I}_Z = \mathfrak{s}_{\supset Z}$. Then
\[
\mathcal{I}_{Z^{(1)}} \cdot \mathfrak{w}(X_1, \mathfrak{S}) \subset \mathfrak{w}(X_1, Z, \mathfrak{S}) \subset \mathcal{I}_{Z^{(1)}}
.\]
That is to say,
\[
Z^{(1)} \subset Z_{\mathfrak{S}}^{[1]} \subset Z^{(1)} + Z_{\mathfrak{w}(X_1, \mathfrak{S})}
.\]
\end{proposition}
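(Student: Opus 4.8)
The plan is to verify both inclusions of ideal sheaves \emph{locally}, on the preimages $\pi_1^{-1}(U)$ of an affine cover of $X$ chosen fine enough that $L$ is trivial over $U$ and $\mathcal{V}^{\dual}$ is generated over $U$ by $r = \rank V$ holomorphic $1$-forms $\omega_1, \dots, \omega_r$, and then to glue. Fix such a $U$, a trivialization of $L_{\restriction U}$, a basis $\varsigma_0, \dots, \varsigma_N$ of $\mathbb{S}$ and a basis $\tau_1, \dots, \tau_m$ of $\mathbb{S}_{\supset Z}$, and write $s_0, \dots, s_N,\, t_1, \dots, t_m \in \mathcal{O}(U)$ for the corresponding local representatives. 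The hypothesis $\mathcal{I}_Z = \mathfrak{s}_{\supset Z}$ says precisely that $\mathcal{I}_Z(U) = (t_1, \dots, t_m)$, so — using the well-definedness of $Z^{(1)}$ — the homogeneous ideal $I(Z^{(1)})$ on $\pi_1^{-1}(U) = \textnormal{P}(V_{\restriction U})$ is generated by $\{p_{1, V}^{\ast} t_\alpha\}_\alpha \cup \{\dif_{U, V}^{[1]} t_\alpha\}_\alpha$, while, after twisting by $\mathcal{O}_{X_1}(1)$, the ideal sheaves $\mathfrak{w}(X_1, \mathfrak{S})$ and $\mathfrak{w}(X_1, Z, \mathfrak{S}) = \widebar{\mathfrak w}(X_1, Z, \mathfrak{S})$ are generated over $\pi_1^{-1}(U)$ by the local Wronskians $W_U(s_i, s_j)$ and $W_U(t_\alpha, s_j)$ respectively, where $W_U(a, b) = (p_{1, V}^{\ast} a)\,\dif_{U, V}^{[1]} b - (p_{1, V}^{\ast} b)\,\dif_{U, V}^{[1]} a$. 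Everything then reduces to comparing these generators on each standard affine chart $\{\dif_{U, V}^{[0]} \omega_l \neq 0\}$ of $\textnormal{P}(V_{\restriction U})$, where $\mathcal{O}_{X_1}(1)$ trivializes and all the above become honest functions.

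First I would prove $\mathfrak{w}(X_1, Z, \mathfrak{S}) \subset \mathcal{I}_{Z^{(1)}}$, i.e.\ $Z^{(1)} \subset Z_{\mathfrak S}^{[1]}$. Since $t_\alpha \in \mathcal{I}_Z(U)$, the generator $p_{1, V}^{\ast} t_\alpha$ lies in $I(Z^{(1)})$; writing any element of $\mathcal{I}_Z(U)$ as $\sum_i g_i t_i$ and applying the Leibniz rule $\dif_{U, V}^{[1]}(gh) = (\dif_{U, V}^{[1]} g)(p_{1, V}^{\ast} h) + (p_{1, V}^{\ast} g)(\dif_{U, V}^{[1]} h)$ shows that $\dif_{U, V}^{[1]} t_\alpha \in I(Z^{(1)})$ as well. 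Hence $W_U(t_\alpha, s_j) = (p_{1, V}^{\ast} t_\alpha)\,\dif_{U, V}^{[1]} s_j - (p_{1, V}^{\ast} s_j)\,\dif_{U, V}^{[1]} t_\alpha$ is an $\mathcal{O}_{X_1}$-linear combination of generators of $I(Z^{(1)})$, hence lies in $I(Z^{(1)})$, giving the inclusion.

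Next I would prove $\mathcal{I}_{Z^{(1)}} \cdot \mathfrak{w}(X_1, \mathfrak{S}) \subset \mathfrak{w}(X_1, Z, \mathfrak{S})$, i.e.\ $Z_{\mathfrak S}^{[1]} \subset Z^{(1)} + Z_{\mathfrak{w}(X_1, \mathfrak{S})}$. It suffices to show that the product of each generator $p_{1, V}^{\ast} t_\alpha$ or $\dif_{U, V}^{[1]} t_\alpha$ of $\mathcal{I}_{Z^{(1)}}$ with a generator $W_U(s_i, s_j)$ of $\mathfrak{w}(X_1, \mathfrak{S})$ lies in $\mathfrak{w}(X_1, Z, \mathfrak{S})$. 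This follows from the two identities
\begin{align*}
(p_{1, V}^{\ast} t_\alpha)\, W_U(s_i, s_j) &= (p_{1, V}^{\ast} s_i)\, W_U(t_\alpha, s_j) - (p_{1, V}^{\ast} s_j)\, W_U(t_\alpha, s_i), \\
(\dif_{U, V}^{[1]} t_\alpha)\, W_U(s_i, s_j) &= (\dif_{U, V}^{[1]} s_i)\, W_U(t_\alpha, s_j) - (\dif_{U, V}^{[1]} s_j)\, W_U(t_\alpha, s_i),
\end{align*}
each verified by a one-line expansion from $W_U(a, b) = (p_{1, V}^{\ast} a)\,\dif_{U, V}^{[1]} b - (p_{1, V}^{\ast} b)\,\dif_{U, V}^{[1]} a$; they display both products as $\mathcal{O}_{X_1}$-combinations of the generators $W_U(t_\alpha, \cdot)$ of $\mathfrak{w}(X_1, Z, \mathfrak{S})$.

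Finally I would assemble these local inclusions into the global statement on $X_1$. The step I expect to require the most care is the line-bundle bookkeeping: the Wronskian sections $\omega(\cdot, \cdot)$ live in $\mathcal{O}_{X_1}(1) \otimes \pi_{0,1}^{\ast} L^2$, the quantities $\dif_{U,V}^{[1]}(\cdot)$ in a twist involving $\mathcal{O}_{X_1}(1)$, and the pullbacks $p_{1,V}^{\ast}(\cdot)$ carry a further $\pi_{0,1}^{\ast} L$-twist, so the displayed identities only become literal identities of sections after dehomogenizing on the standard charts of $\textnormal{P}(V_{\restriction U})$ and trivializing; once that reduction is in place the computations are entirely routine. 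It is worth stressing where the hypothesis $\mathcal{I}_Z = \mathfrak{s}_{\supset Z}$ enters: precisely to identify a generating set of $\mathcal{I}_Z(U)$ with the local representatives of a basis of $\mathbb{S}_{\supset Z}$, which is exactly what links the Wronskians $W_U(t_\alpha, s_j)$ cutting out $Z_{\mathfrak S}^{[1]}$ to the differentials of the generators of $\mathcal{I}_Z$ cutting out $Z^{(1)}$.
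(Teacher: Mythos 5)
Your proposal is correct and follows essentially the same route as the paper's proof: reduce to an affine chart where $L$ is trivialized, identify the local generators of $\mathcal{I}_{Z^{(1)}}$, $\mathfrak{w}(X_1,\mathfrak{S})$ and $\mathfrak{w}(X_1,Z,\mathfrak{S})$ using the hypothesis $\mathcal{I}_Z = \mathfrak{s}_{\supset Z}$, obtain the inclusion $\mathfrak{w}(X_1,Z,\mathfrak{S}) \subset \mathcal{I}_{Z^{(1)}}$ from $W_U(t_\alpha, s_j) = t_\alpha\,\dif s_j - s_j\,\dif t_\alpha$, and obtain the other inclusion from exactly the two Pl\"ucker-type identities the paper uses (the paper phrases them with a basis of $\mathbb{S}$ extending one of $\mathbb{S}_{\supset Z}$, which is only a cosmetic difference from your separate bases). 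The identities and the gluing argument check out, so there is no gap.
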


\begin{proof}
Choose an affine open subset $U \subset X$, on which $L_{\restriction U}$ can be trivialized.
It suffices to show that
\[
(Z_{\restriction U})^{(1)} \subset (Z_{\mathfrak{S}}^{[1]})_{\restriction \pi_1^{\!-\!1\!}(U)} \subset (Z_{\restriction U})^{(1)} + (Z_{\mathfrak{w}(X_1, \mathfrak{S})})_{\restriction \pi_1^{\!-\!1\!}(U)}
.\]
Take a trivialization of $L_{\restriction U}$, which associates $\sigma_U \in \mathcal{O}(U)$ to a global section $\sigma \in H^0(X, L)$.
Choose a convenient basis $\{\varsigma_i \mid i = 0, 1, \dots, N\}$ of $\mathbb{S}$ such that $\{\varsigma_i \mid i = 0, 1, \dots, N'\}$ is a basis of $\mathbb{S}_{\supset Z}$,
where $N = \dim \mathfrak{S}$ and $N' = \dim \mathfrak{S}_{\supset Z}$.
Therefore, the closed subscheme $(Z_{\mathfrak{S}}^{[1]})_{\restriction \pi_1^{\!-\!1\!}(U)}$ of $\pi_1^{-1}(U) = \textnormal{P}(V_{\restriction U})$ is determined by the homogeneous ideal
\[
I((Z_{\mathfrak{S}}^{[1]})_{\restriction \pi_1^{\!-\!1\!}(U)}) = (W_U(\varsigma_{i'}, \varsigma_i))_{0 \leq i' \leq N', \, 0 \leq i' < i \leq N}
.\]
Analogously, the closed subscheme $(Z_{\mathfrak{w}(X_1, \mathfrak{S})})_{\restriction \pi_1^{\!-\!1\!}(U)}$ of $\pi_1^{-1}(U) = \textnormal{P}(V_{\restriction U})$ is determined by the homogeneous ideal
\[
I((Z_{\mathfrak{w}(X_1, \mathfrak{S})})_{\restriction \pi_1^{\!-\!1\!}(U)}) = (W_U(\varsigma_i, \varsigma_j))_{0 \leq i < j \leq N}
.\]
Seeing that $\varsigma_{0, U}, \dots, \varsigma_{N, U}$ generates $\mathfrak{s}_{\supset Z}(U) \subset \mathcal{O}_X(U)$,
the closed subscheme $(Z_{\restriction U})^{(1)}$ of $\pi_1^{-1}(U) = \textnormal{P}(V_{\restriction U})$ is determined by the homogeneous ideal
\[
I((Z_{\restriction U})^{(1)}) = (\varsigma_{0, U}, \dots, \varsigma_{N', U}, \dif \varsigma_{0, U}, \dots, \dif \varsigma_{N', U})
.\]
Apparently, for any $0 \leq i' \leq N'$ and $0 \leq i' < i \leq N$,
\[
W_U(\varsigma_{i'}, \varsigma_i) \bydef \varsigma_{i', U} \dif \varsigma_{i, U} - \varsigma_{i, U} \dif \varsigma_{i', U} \in I((Z_{\restriction U})^{(1)})
,\]
which implies that
\[
I((Z_{\mathfrak{S}}^{[1]})_{\restriction \pi_1^{\!-\!1\!}(U)}) \subset I((Z_{\restriction U})^{(1)})
.\]
In other words,
\[
(Z_{\restriction U})^{(1)} \subset (Z_{\mathfrak{S}}^{[1]})_{\restriction \pi_1^{\!-\!1\!}(U)}
.\]
On the other hand, we assert that
\[
(Z_{\mathfrak{S}}^{[1]})_{\restriction \pi_1^{\!-\!1\!}(U)} \subset (Z_{\restriction U})^{(1)} + (Z_{\mathfrak{w}(X_1, \mathfrak{S})})_{\restriction \pi_1^{\!-\!1\!}(U)}
,\]
which amounts to saying that
\[
I((Z_{\restriction U})^{(1)}) \cdot I((Z_{\mathfrak{w}(X_1, \mathfrak{S})})_{\restriction \pi_1^{\!-\!1\!}(U)}) \subset I((Z_{\mathfrak{S}}^{[1]})_{\restriction \pi_1^{\!-\!1\!}(U)})
.\]
It suffices to verify that
\[
\varsigma_{i', U} W_U(\varsigma_i, \varsigma_j), \dif \varsigma_{i', U} W_U(\varsigma_i, \varsigma_j) \in I((Z_{\mathfrak{S}}^{[1]})_{\restriction \pi_1^{\!-\!1\!}(U)})
,\]
for any $0 \leq i' \leq N'$ and $0 \leq i < j \leq N$.
It is obvious if $i \leq i'$.
If $0 \leq i' < i < j \leq N$, then
\begin{align*}
\varsigma_{i', U} W_U(\varsigma_i, \varsigma_j) &= \varsigma_{i, U} W_U(\varsigma_{i'}, \varsigma_j) - \varsigma_{j, U} W_U(\varsigma_{i'}, \varsigma_i) \in I((Z_{\mathfrak{S}}^{[1]})_{\restriction \pi_1^{\!-\!1\!}(U)}), \\
\dif \varsigma_{i', U} W_U(\varsigma_i, \varsigma_j) &= \dif \varsigma_{i, U} W_U(\varsigma_{i'}, \varsigma_j) - \dif \varsigma_{j, U} W_U(\varsigma_{i'}, \varsigma_i) \in I((Z_{\mathfrak{S}}^{[1]})_{\restriction \pi_1^{\!-\!1\!}(U)})
.\qedhere\end{align*}
\end{proof}

In the particular case where $\mathfrak{S}$ separates $1$-jets at every point of $X$, it follows from Lemma \ref{lem:wseparate} that $\mathfrak{w}(X_1, \mathfrak{S}) = \mathcal{O}_X$.
Accordingly, we immediately get the following corollary.
\begin{corollary}\label{cor:wZseparate1}
Let $(X, V)$ and $Z$ be as above.
Choose a linear system $\mathfrak{S}$ on $X$ such that $\mathfrak{S}$ separates $1$-jets at every point of $X$ and $\mathcal{I}_Z = \mathfrak{s}_{\supset Z}$. Then
\[
\mathfrak{w}(X_1, Z, \mathfrak{S}) = \mathcal{I}_{Z^{(1)}}
,\]
namely,
\[
Z_{\mathfrak{S}}^{[1]} = Z^{(1)}
.\]
\end{corollary}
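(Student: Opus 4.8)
The plan is to read the corollary directly off Proposition \ref{prp:Z^(1)<Z_S^1} once the Wronskian ideal sheaf $\mathfrak{w}(X_1,\mathfrak{S})$ has been identified with the full structure sheaf under the separation hypothesis. Recall that Proposition \ref{prp:Z^(1)<Z_S^1} furnishes the two-sided containment
\[
\mathcal{I}_{Z^{(1)}} \cdot \mathfrak{w}(X_1,\mathfrak{S}) \subset \mathfrak{w}(X_1,Z,\mathfrak{S}) \subset \mathcal{I}_{Z^{(1)}},
\]
valid whenever $\mathcal{I}_Z = \mathfrak{s}_{\supset Z}$, which is exactly the hypothesis we are given. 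Thus it suffices to prove that the left-hand factor $\mathfrak{w}(X_1,\mathfrak{S})$ equals $\mathcal{O}_{X_1}$: then the outer terms of the sandwich coincide and we get the desired equality $\mathfrak{w}(X_1,Z,\mathfrak{S}) = \mathcal{I}_{Z^{(1)}}$.

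To establish $\mathfrak{w}(X_1,\mathfrak{S}) = \mathcal{O}_{X_1}$, I would apply Lemma \ref{lem:wseparate} with $k=1$: since $\mathfrak{S}$ separates $1$-jets at every point of $X$, the lemma gives $\Supp(\mathcal{O}_{X_1}/\mathfrak{w}(X_1,\mathfrak{S})) = X_1^{\sing}$. But $X_1^{\sing} = \bigcup_{2 \le j \le 1}\pi_{j,1}^{-1}(\varGamma_j) = \emptyset$ because the index range is empty, so the quotient sheaf $\mathcal{O}_{X_1}/\mathfrak{w}(X_1,\mathfrak{S})$ has empty support and hence vanishes, i.e. $\mathfrak{w}(X_1,\mathfrak{S}) = \mathcal{O}_{X_1}$. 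Substituting this into the inclusions above, the left-hand term becomes $\mathcal{I}_{Z^{(1)}} \cdot \mathcal{O}_{X_1} = \mathcal{I}_{Z^{(1)}}$, which forces $\mathfrak{w}(X_1,Z,\mathfrak{S}) = \mathcal{I}_{Z^{(1)}}$. Translating back through the definitions of $Z_{\mathfrak{S}}^{[1]}$ (whose ideal sheaf is $\widebar{\mathfrak w}(X_1,Z,\mathfrak{S}) = \mathfrak{w}(X_1,Z,\mathfrak{S})$) and of $Z^{(1)}$ then yields $Z_{\mathfrak{S}}^{[1]} = Z^{(1)}$.

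I do not expect a genuine obstacle here: all the substance is already packaged in Proposition \ref{prp:Z^(1)<Z_S^1} and Lemma \ref{lem:wseparate}, and the only thing to be careful about is that both of their hypotheses are met — namely that $\mathfrak{S}$ separates $1$-jets (needed for Lemma \ref{lem:wseparate}) and that $\mathcal{I}_Z = \mathfrak{s}_{\supset Z}$ (needed for Proposition \ref{prp:Z^(1)<Z_S^1} and to make $Z_{\mathfrak{S}}^{[1]}$ well defined), both of which are part of the corollary's statement. The one bookkeeping point worth recording is the identification $\widebar{\mathfrak w}(X_1,Z,\mathfrak{S}) = \mathfrak{w}(X_1,Z,\mathfrak{S})$, which holds because $\bm{b}^1 \cdot \widebar{\varGamma}$ is an empty sum, so that $Z_{\mathfrak{S}}^{[1]}$ is indeed the closed subscheme cut out by $\mathfrak{w}(X_1,Z,\mathfrak{S})$.
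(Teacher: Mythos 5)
Your proof is correct and follows exactly the paper's route: the corollary is obtained by combining the sandwich of Proposition \ref{prp:Z^(1)<Z_S^1} with Lemma \ref{lem:wseparate} (which, since $X_1^{\sing}=\emptyset$, gives $\mathfrak{w}(X_1,\mathfrak{S})=\mathcal{O}_{X_1}$). The bookkeeping remark that $\widebar{\mathfrak w}(X_1,Z,\mathfrak{S})=\mathfrak{w}(X_1,Z,\mathfrak{S})$ is also consistent with the paper's earlier observation for $k=1$.
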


Next, we are going to show that one can always find a linear system $\mathfrak{S}$ satisfying the condition of Corollary \ref{cor:wZseparate1}.
\begin{lemma}\cite{Silverman1987}\label{lem:Zcap}
Let $Z$ be a closed subscheme of $X$, then there exist effective Cartier divisors $D_1, \dots, D_s$ such that
\[
Z = \bigcap_{i=1}^s D_i
.\]
\end{lemma}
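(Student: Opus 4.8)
The plan is to realize $\mathcal{I}_Z$ as the ideal generated by finitely many global sections of a sufficiently high power of an ample line bundle, and then to recognize the zero schemes of those sections as the sought-for effective Cartier divisors. First I would fix an ample line bundle $L$ on the projective manifold $X$. By Serre's global generation theorem (cf.\ \cite{Hartshorne1977}), there is an integer $m \gg 0$ for which $\mathcal{I}_Z \otimes L^m$ is globally generated. If $Z = X$ then $\mathcal{I}_Z = 0$ and $Z$ is the empty intersection of divisors, so I may assume $Z \subsetneq X$; then $\mathcal{I}_Z \otimes L^m$ is a nonzero, globally generated coherent sheaf, and (using that $H^0$ is finite dimensional) I can choose finitely many nonzero sections $\sigma_1, \dots, \sigma_s \in H^0(X, \mathcal{I}_Z \otimes L^m) \subset H^0(X, L^m)$ whose images generate $\mathcal{I}_Z \otimes L^m$ at every point, i.e.\ such that the evaluation map $\bigoplus_{i=1}^s \mathcal{O}_X \to \mathcal{I}_Z \otimes L^m$, $e_i \mapsto \sigma_i$, is surjective. (Zero sections are never needed among generators, and at least one nonzero section exists since $\mathcal{I}_Z \otimes L^m \neq 0$, so this choice is possible.)

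Next, for each $i$ I would let $D_i$ be the zero scheme of $\sigma_i \in H^0(X, L^m)$. Since $X$ is integral and $\sigma_i \neq 0$, the section $\sigma_i$ is a non-zero-divisor, so $D_i$ is a locally principal closed subscheme of $X$, i.e.\ an effective Cartier divisor, and $\mathcal{I}_{D_i}$ equals the image of the multiplication-by-$\sigma_i$ map $L^{-m} \to \mathcal{O}_X$. Tensoring the surjection above by $L^{-m}$ yields a surjection $\bigoplus_{i=1}^s L^{-m} \to \mathcal{I}_Z$ which, under the canonical identification $\mathcal{I}_Z \otimes L^m \otimes L^{-m} = \mathcal{I}_Z \subset \mathcal{O}_X$, is exactly the map $(t_1, \dots, t_s) \mapsto \sum_i \sigma_i t_i$; hence its image is $\sum_{i=1}^s \operatorname{im}\big(L^{-m} \xrightarrow{\cdot \sigma_i} \mathcal{O}_X\big) = \sum_{i=1}^s \mathcal{I}_{D_i}$. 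Therefore $\mathcal{I}_Z = \sum_{i=1}^s \mathcal{I}_{D_i}$, and by Definition \ref{def:I_Z}, \ref{itm:Icap}) the right-hand side is the ideal sheaf of $\bigcap_{i=1}^s D_i$, so $Z = \bigcap_{i=1}^s D_i$.

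There is no substantial obstacle here. The only points that warrant care are ensuring the generating sections can be taken nonzero (so that each $D_i$ is a genuine divisor rather than all of $X$), and checking that tensoring the evaluation surjection by $L^{-m}$ identifies $\sum_i \mathcal{I}_{D_i}$ with $\mathcal{I}_Z$ exactly and not with a strictly larger subsheaf of $\mathcal{O}_X$; both are routine once the twisting setup and the description of $\mathcal{I}_{D_i}$ as the image of $\,\cdot\,\sigma_i$ are in place.
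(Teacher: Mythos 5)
Your proof is correct and is essentially the argument the paper relies on: the lemma is quoted from \cite{Silverman1987} without proof, and Silverman's own proof is exactly this twisting argument — make $\mathcal{I}_Z \otimes L^m$ globally generated by Serre, take the zero divisors of finitely many generating sections, and observe that the resulting surjection identifies $\sum_i \mathcal{I}_{D_i}$ with $\mathcal{I}_Z$, i.e.\ $Z = \bigcap_i D_i$ in the sense of Definition \ref{def:I_Z}. Your handling of the degenerate cases (discarding zero sections, $Z = X$, $Z = \emptyset$) is fine.
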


\begin{lemma}\label{lem:Sexist}
Let $(X, V)$ be a directed projective manifold. Given an arbitrary closed subscheme $Z$ of $X$, we can find a linear system $\mathfrak{S}$ on $X$ such that $\mathfrak{S}$ separates $1$-jets at every point of $X$ and $\mathcal{I}_Z = \mathfrak{s}_{\supset Z}$.
\end{lemma}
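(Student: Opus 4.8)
The plan is to take $\mathfrak{S}$ to be a complete linear system $|L|$ attached to a sufficiently positive line bundle $L$, and to extract both required properties from Serre's theorems on very ampleness and global generation. Fix an ample line bundle $A$ on $X$. For all sufficiently large $m$ the line bundle $L \bydef A^{\otimes m}$ is very ample and, by Serre's theorem applied to the coherent sheaf $\mathcal{I}_Z$, the twist $\mathcal{I}_Z \otimes L$ is globally generated; I would fix such an $m$ and set $\mathfrak{S} \bydef |L|$, so that $\mathbb{S} = H^0(X, L)$. The first point to record is that $\mathfrak{S}$ separates $1$-jets at every point of $X$: by the definition in the excerpt this is the surjectivity of $H^0(X, L) \to L \otimes \mathcal{O}_{X, x}/\mathfrak{m}_{X, x}^2$ for every $x \in X$, which holds because $L$ is very ample (hence globally generated and separating tangent directions at each point).

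The substantive part is the identity $\mathcal{I}_Z = \mathfrak{s}_{\supset Z}$, and this is where I expect the only real care to be needed. The key observation is that, for $\sigma \in H^0(X, L)$, the condition $Z \subset (\sigma)$ is equivalent to $\sigma$ lying in the subspace $H^0(X, \mathcal{I}_Z \otimes L) \subset H^0(X, L)$; hence $\mathbb{S}_{\supset Z} = H^0(X, \mathcal{I}_Z \otimes L)$. One inclusion is then formal: the base ideal $\mathfrak{s}_{\supset Z}$ is the image of the evaluation $\mathbb{S}_{\supset Z} \otimes L^{-1} \to \mathcal{O}_X$, which visibly factors through $\mathcal{I}_Z$, so $\mathfrak{s}_{\supset Z} \subset \mathcal{I}_Z$. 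For the reverse inclusion I would use that $\mathcal{I}_Z \otimes L$ is globally generated: the evaluation $H^0(X, \mathcal{I}_Z \otimes L) \otimes \mathcal{O}_X \to \mathcal{I}_Z \otimes L$ is then surjective, and twisting by $L^{-1}$ shows $\mathbb{S}_{\supset Z} \otimes L^{-1} \to \mathcal{I}_Z$ is surjective, i.e., $\mathcal{I}_Z \subset \mathfrak{s}_{\supset Z}$. Lemma \ref{lem:Zcap} — that $Z$ is a finite intersection of effective Cartier divisors — is essentially a concrete avatar of the same global-generation fact and could be invoked instead to exhibit explicit generators of $\mathbb{S}_{\supset Z}$ inside such a $|L|$.

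It remains to treat the degenerate cases. If $Z = \emptyset$ then $\mathcal{I}_Z = \mathcal{O}_X$ and every effective divisor contains $Z$, so $\mathbb{S}_{\supset Z} = \mathbb{S}$ and $\mathfrak{s}_{\supset Z} = \mathfrak{s} = \mathcal{O}_X$ because $L$ is globally generated; if $Z = X$ then $\mathcal{I}_Z = 0$, no nonzero section of $L$ can vanish on all of $X$, so $\mathbb{S}_{\supset Z} = \{0\}$ and $\mathfrak{s}_{\supset Z} = 0$. In both situations the two required properties hold trivially, and combining this with the generic case completes the argument. The main obstacle, such as it is, is the base-ideal computation of the previous paragraph — namely that the sections of $L$ vanishing along $Z$ actually generate all of $\mathcal{I}_Z$, which is the single place where sufficient positivity of $L$ is used; everything else is definitional bookkeeping.
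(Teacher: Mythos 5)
Your proposal is correct. It rests on the same positivity mechanism as the paper — take $L=A^{\otimes m}$ for $m\gg 0$ and let global generation force the base ideal to be all of $\mathcal{I}_Z$ — but the way you organize it is genuinely different. The paper first invokes Lemma \ref{lem:Zcap} to write $Z=\bigcap_{i=1}^s D_i$ as an intersection of effective Cartier divisors, chooses $m_0$ so that each $A^{m_0}\otimes\mathcal{O}_X(-D_i)$ is globally generated, obtains $\mathcal{I}_{D_i}=\mathfrak{s}_{\supset D_i}$ for each $i$, and then uses Definition \ref{def:I_Z} to get $\mathcal{I}_Z=\sum_i\mathcal{I}_{D_i}=\sum_i\mathfrak{s}_{\supset D_i}\subset\mathfrak{s}_{\supset Z}\subset\mathcal{I}_Z$. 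You instead apply Serre's theorem directly to the coherent sheaf $\mathcal{I}_Z$, identify $\mathbb{S}_{\supset Z}=H^0(X,\mathcal{I}_Z\otimes L)$, and read off $\mathfrak{s}_{\supset Z}=\mathcal{I}_Z$ from the surjectivity of the evaluation map twisted by $L^{-1}$; the divisor decomposition never enters. Your route is shorter and bypasses Lemma \ref{lem:Zcap} entirely, at the cost of invoking Serre for an arbitrary coherent ideal sheaf rather than only for line bundles of the form $A^{m}\otimes\mathcal{O}_X(-D_i)$; the paper's route keeps the argument at the level of divisors in $\mathfrak{S}$, which matches how the hypothesis $\mathcal{I}_Z=\mathfrak{s}_{\supset Z}$ is exploited later (choosing members $D_i\in\mathfrak{S}_{\supset Z}$ cutting out $Z$). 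Your identification $\mathbb{S}_{\supset Z}=H^0(X,\mathcal{I}_Z\otimes L)$, the two inclusions for the base ideal, the 1-jet separation from very ampleness, and the degenerate cases $Z=\emptyset$, $Z=X$ are all handled correctly.
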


\begin{proof}
According to Lemma \ref{lem:Zcap}, there exist effective divisors $D_1, \dots, D_s$ such that $Z = \bigcap_{i=1}^s D_i$.
Take a very ample line bundle $A$.
For each integer $1 \leq i \leq s$, there is a positive integer $m_i$ such that $A^m \otimes \mathcal{O}_X(-D_i)$ is global generated, for every integer $m \geq m_i$.
Let $L=A^{m_0}$ and $\mathfrak{S} = |L|$, where $\displaystyle m_0 = \mathop{\max}_{1 \leq i \leq s} m_i$.
Obviously, $\mathfrak{S}$ separates $1$-jets at every point of $X$, and $\mathcal{I}_{D_i} = \mathfrak{s}_{\supset D_i}$ for each $1 \leq i \leq s$.
By Definition \ref{def:I_Z}, \ref{itm:Icap}), we see that
\[
\mathcal{I}_Z = \mathfrak{s}_{\supset D_1} + \dots + \mathfrak{s}_{\supset D_s} \subset \mathfrak{s}_{\supset Z}
.\]
On the other hand, it is evident that $\mathfrak{s}_{\supset Z} \subset \mathcal{I}_Z$.
Consequently, $\mathcal{I}_Z = \mathfrak{s}_{\supset Z}$.
\end{proof}

\subsection{General form of Ahlfors' LLD over directed manifolds}

Let us apply Lemma \ref{lem:ALLDdirected} with $k = 0$.
Given an effective divisor $D$ on $X$, observe that $\mathcal{I}_D = \mathfrak{s}_{\supset D}$, provided that $D \in \mathfrak{S}$.
Remark \ref{rmk:separate1} and Corollary \ref{cor:wZseparate1} lead to a corollary of Lemma \ref{lem:ALLDdirected} as follows.
\begin{corollary}\label{cor:ALLDD^(1)}
Let $(X, V)$ be a directed projective manifold, and take a very ample divisor $D$ on $X$.
Choose a linear system $\mathfrak{S} \subset |D|$ satisfying that $D \in \mathfrak{S}$ and $\mathfrak{S}$ separates $1$-jets at every point of $X$.
Let $f \colon (\mathbb{C}, T_{\mathbb{C}}) \to (X, V)$ be a non-constant holomorphic curve such that $f(\mathbb{C}) \not\subset \Supp D$.
Then there is a constant $c$ such that the inequality
\[
\varepsilon \mathcal{N}\left(\exp\left(2(1-\varepsilon)\lambda_{D} \circ f - 2\lambda_{D^{(1)}} \circ f_{[1]}\right) f^{\ast} \ddif^c \varphi_{\mathfrak S}, r\right) \leq c T_{f}(r, L) + O(1)
\]
holds for any $0 < \varepsilon < 1$, where $L = \mathcal{O}(D)$.
\end{corollary}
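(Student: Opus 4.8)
The plan is to obtain the statement as an immediate specialization of Lemma \ref{lem:ALLDdirected} to the index $k=0$, so the proof reduces to checking hypotheses and translating notation. First I would verify that the hypotheses of Lemma \ref{lem:ALLDdirected} hold in the present setting: since $D$ is very ample it is globally generated, hence $|D|$ is base point free; and since $\mathfrak{S}$ separates $1$-jets at every point of $X$, the evaluation map $\mathbb{S} \to L \otimes \mathcal{O}_{X,x}/\mathfrak{m}_{X,x}$ is in particular surjective for every $x$, so $\mathfrak{S}$ is base point free as well, and $D \in \mathfrak{S}$ by assumption. I would also record that separation of $1$-jets forces $\dim_{\mathbb C}\mathbb{S} \geq \dim_{\mathbb C}(\mathcal{O}_{X,x}/\mathfrak{m}_{X,x}^{2}) = n+1 \geq 2$ (here $n = \dim X \geq 1$ because $f$ is non-constant), so that $N = \dim \mathfrak{S} \geq 1$ and the index $k=0$ lies in the admissible range $0 \leq k \leq N-1$. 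The remaining hypothesis $f_{[0]}(\mathbb{C}) = f(\mathbb{C}) \not\subset \Bs(\mathfrak{W}(X_0,\mathfrak{S}))$ is automatic, since $\mathfrak{W}(X_0,\mathfrak{S})$ is base point free; and $f(\mathbb{C}) \not\subset \Supp D$ is assumed.

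Next I would identify each object appearing in the conclusion of Lemma \ref{lem:ALLDdirected} at level $k=0$. Because $\mathfrak{S}$ is base point free, $\mu_{\mathfrak S}$ is holomorphic, $\upsilon_{0,\mathfrak{S}}$ is the identity, $\widehat{\mathfrak W}(X_0,\mathfrak{S})$ carries the smooth weight function $\varphi_{\mathfrak S}$, $\hat f_{[0]} = f$, and $L_0 = L$; hence $T_{\hat f_{[0]}}(r, L_0) = T_f(r, L)$ and $\hat f_{[0]}^{\ast}\ddif^{c}\varphi_{\widehat{\mathfrak W}(X_0,\mathfrak{S})} = f^{\ast}\ddif^{c}\varphi_{\mathfrak S}$. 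Since $D \in \mathfrak{S}$ we have $\mathcal{I}_D = \mathfrak{s}_{\supset D}$, so the degenerate ($k=0$) Wronskian ideal along $D$ is just $\mathcal{I}_D$, giving $D_{\mathfrak S}^{\langle 0\rangle} = D$ and $\lambda_{D_{\mathfrak S}^{\langle 0\rangle}}\circ\hat f_{[0]} = \lambda_D\circ f$. For the level $k+1 = 1$: as $\mathfrak{S}$ separates $1$-jets, Remark \ref{rmk:separate1} gives that $\upsilon_{1,\mathfrak{S}}$ is the identity and $E_{1,\mathfrak{S}} = \emptyset$, whence $\hat f_{[1]} = f_{[1]}$ and $D_{\mathfrak S}^{\langle 1\rangle} = D_{\mathfrak S}^{[1]}$; then Corollary \ref{cor:wZseparate1}, applied to the closed subscheme $Z = D$ (for which $\mathcal{I}_D = \mathfrak{s}_{\supset D}$), yields $D_{\mathfrak S}^{[1]} = D^{(1)}$, so $\lambda_{D_{\mathfrak S}^{\langle 1\rangle}}\circ\hat f_{[1]} = \lambda_{D^{(1)}}\circ f_{[1]}$.

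Substituting these identifications into the inequality of Lemma \ref{lem:ALLDdirected} with $k=0$ and setting $c := c_0$ produces
\[
\varepsilon \mathcal{N}\left(\exp\left(2(1-\varepsilon)\lambda_{D}\circ f - 2\lambda_{D^{(1)}}\circ f_{[1]}\right) f^{\ast}\ddif^{c}\varphi_{\mathfrak S},\, r\right) \leq c\, T_f(r, L) + O(1)
\]
for all $0 < \varepsilon < 1$, which is exactly the assertion. There is no real obstacle here: the only points needing care are the two identifications $D_{\mathfrak S}^{\langle 0\rangle} = D$ and $D_{\mathfrak S}^{\langle 1\rangle} = D^{(1)}$, resting respectively on $D \in \mathfrak{S}$ and on Remark \ref{rmk:separate1} together with Corollary \ref{cor:wZseparate1}, plus the routine observation that replacing the Weil functions $\lambda_{D_{\mathfrak S}^{\langle 0\rangle}}, \lambda_{D_{\mathfrak S}^{\langle 1\rangle}}$ by the equivalent choices $\lambda_D, \lambda_{D^{(1)}}$ (equal modulo $=_{\langle X\rangle}$, resp. $=_{\langle X_1\rangle}$) alters the integrand by a factor bounded uniformly in $\varepsilon\in(0,1)$ after composition with $f$ and $f_{[1]}$, hence only affects the constant $c$ and the $O(1)$ term.
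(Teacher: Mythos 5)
Your proposal is correct and follows essentially the same route as the paper: the paper likewise obtains the corollary by applying Lemma \ref{lem:ALLDdirected} with $k=0$, observing $\mathcal{I}_D=\mathfrak{s}_{\supset D}$ since $D\in\mathfrak{S}$, and invoking Remark \ref{rmk:separate1} together with Corollary \ref{cor:wZseparate1} to identify $D_{\mathfrak S}^{\langle 1\rangle}$ with $D^{(1)}$. Your additional verifications (base point freeness from separation of $1$-jets, admissibility of $k=0$, and the harmlessness of changing Weil-function representatives) are exactly the routine details the paper leaves implicit.
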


Moreover, we can generalize Corollary \ref{cor:ALLDD^(1)} to closed subschemes of $X$.

\begin{lemma}\label{lem:ALLDZ^(1)}
Let $(X, V)$ be a directed projective manifold, and let $Z$ be a closed subscheme of $X$.
Let $f \colon (\mathbb{C}, T_{\mathbb{C}}) \to (X, V)$ be a non-constant holomorphic curve such that $f(\mathbb{C}) \not\subset \Supp Z$.
Choose a linear system $\mathfrak{S} \subset |L|$ on $X$ satisfying that $\mathfrak{S}$ separates $1$-jets at every point of $X$ and $\mathcal{I}_Z = \mathfrak{s}_{\supset Z}$.
Then there exists a constant $c > 0$ such that the inequality
\begin{equation}\label{eqn:ALLDZ^(1)}
\varepsilon \mathcal{N}\left(\exp\left(2(1-\varepsilon)\lambda_Z \circ f - 2\lambda_{Z^{(1)}} \circ f_{[1]}\right) f^{\ast} \ddif^c \varphi_{\mathfrak S}, r\right) \leq c T_f(r, L) + O(1)
\end{equation}
holds for any $0 < \varepsilon < 1$.
\end{lemma}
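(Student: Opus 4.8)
The plan is to reduce the statement for the closed subscheme $Z$ to the divisor case, which is Corollary~\ref{cor:ALLDD^(1)} (or, to sidestep its very-ampleness hypothesis, Lemma~\ref{lem:ALLDdirected} with $k=0$ together with Remark~\ref{rmk:separate1} and Corollary~\ref{cor:wZseparate1}, which identify $D_{\mathfrak S}^{\langle 1\rangle}$ with $D^{(1)}$ and make $\upsilon_{1,\mathfrak S}$ the identity). Two preliminary observations: since $\mathfrak S$ separates $1$-jets it is base point free, so $\varphi_{\mathfrak S}$ is smooth and $f^{\ast}\ddif^c\varphi_{\mathfrak S}=(\mu_{\mathfrak S}\circ f)^{\ast}\omega$ is a smooth \emph{semipositive} $(1,1)$-form ($\omega$ the Fubini--Study form), and this positivity is what permits the use of monotonicity and linearity of $\mathcal{N}(\,\cdot\,,r)$ below; and $\Bs(\mathfrak W(X_1,\mathfrak S))=X_1^{\sing}=\emptyset$ by Lemma~\ref{lem:wseparate}, so the hypotheses of the divisor case will hold automatically for the divisors we pick.

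First I would realize $Z$ and $Z^{(1)}$ as finite intersections of divisors drawn from $\mathfrak S$. Since $\mathcal{I}_Z=\mathfrak{s}_{\supset Z}$, the subsystem $\mathfrak{S}_{\supset Z}$ is non-empty; choose a projective basis $D_0,\dots,D_m\in\mathfrak{S}_{\supset Z}$ (so their defining sections span $\mathbb{S}_{\supset Z}$), chosen \emph{generically} so that moreover $f(\mathbb{C})\not\subset\Supp D_i$ for every $i$. This is possible: for each $i$ the divisors $D'\in\mathfrak{S}_{\supset Z}$ with $f(\mathbb{C})\subset\Supp D'$ form a proper closed subset — otherwise $f(\mathbb{C})$ would lie in $\bigcap_{D'\in\mathfrak{S}_{\supset Z}}\Supp D'\subseteq\Supp Z$, contrary to hypothesis — while ``being a projective basis'' is a non-empty Zariski-open condition, and a finite intersection of non-empty Zariski-opens is non-empty. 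As the sections span $\mathfrak{s}_{\supset Z}=\mathcal{I}_Z$ locally, $Z=\bigcap_{i=0}^m D_i$, hence $Z^{(1)}=\bigcap_{i=0}^m D_i^{(1)}$ by Proposition~\ref{prp:cap^(1)=^(1)cap}. Iterating Proposition~\ref{prp:lambda_Z}, \ref{itm:lambdacap}) then gives, for suitable Weil functions, $\lambda_Z=_{\langle X\rangle}\min_{0\leq i\leq m}\lambda_{D_i}$ and $\lambda_{Z^{(1)}}=_{\langle X_1\rangle}\min_{0\leq i\leq m}\lambda_{D_i^{(1)}}$.

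Now comes the comparison. Composing the two identities above with $f$ and $f_{[1]}$ and using compactness of $X$ and $X_1$ (so the continuous error terms become absolute constants), there is $M\geq 0$ with $\lambda_Z\circ f\leq\min_i(\lambda_{D_i}\circ f)+M$ and $-\lambda_{Z^{(1)}}\circ f_{[1]}\leq-\min_i(\lambda_{D_i^{(1)}}\circ f_{[1]})+M$ off a discrete subset of $\mathbb{C}$. At a point $z$ let $j$ realize $\min_i(\lambda_{D_i^{(1)}}\circ f_{[1]})(z)$; since $0<1-\varepsilon<1$ one has $2(1-\varepsilon)\min_i(\lambda_{D_i}\circ f)\leq 2(1-\varepsilon)\lambda_{D_j}\circ f$ and $2(1-\varepsilon)M\leq 2M$, whence $2(1-\varepsilon)\lambda_Z\circ f-2\lambda_{Z^{(1)}}\circ f_{[1]}\leq 2(1-\varepsilon)\lambda_{D_j}\circ f-2\lambda_{D_j^{(1)}}\circ f_{[1]}+4M$; exponentiating and bounding a single exponential by the sum of the positive exponentials,
\[
\exp\!\bigl(2(1-\varepsilon)\lambda_Z\circ f-2\lambda_{Z^{(1)}}\circ f_{[1]}\bigr)\leq e^{4M}\sum_{i=0}^m\exp\!\bigl(2(1-\varepsilon)\lambda_{D_i}\circ f-2\lambda_{D_i^{(1)}}\circ f_{[1]}\bigr).
\]
Multiplying by the semipositive form $f^{\ast}\ddif^c\varphi_{\mathfrak S}$ (so the left-hand current is representable by integration, being dominated), applying $\mathcal{N}(\,\cdot\,,r)$, multiplying by $\varepsilon$, and invoking the divisor case for each $D_i$ — legitimate since $D_i\in\mathfrak S$, $\mathfrak S$ separates $1$-jets and $f(\mathbb{C})\not\subset\Supp D_i$ — we obtain
\begin{align*}
&\varepsilon\,\mathcal{N}\!\bigl(\exp(2(1-\varepsilon)\lambda_Z\circ f-2\lambda_{Z^{(1)}}\circ f_{[1]})\,f^{\ast}\ddif^c\varphi_{\mathfrak S},r\bigr)\\
&\qquad\leq e^{4M}\sum_{i=0}^m\bigl(c_i\,T_f(r,L)+O(1)\bigr)=c\,T_f(r,L)+O(1),
\end{align*}
with $c=e^{4M}\sum_i c_i$, all constants independent of $\varepsilon$. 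This is \eqref{eqn:ALLDZ^(1)}.

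The step I expect to need the most care is this comparison. One cannot simply average Corollary~\ref{cor:ALLDD^(1)} over $\mathfrak{S}_{\supset Z}$ by a Crofton-type formula, since the exponential does not commute with averaging; instead one passes from the two \emph{minima} of Weil functions sitting inside a single exponential to a \emph{sum} of exponentials, and this works only because the coefficient $1-\varepsilon$ is positive — so the index minimizing the $f_{[1]}$-term may be substituted into the $f$-term — and is bounded by $1$ — so the compactness error $M$ is $\varepsilon$-uniform — together with the fact that Weil functions are bounded below modulo $O(1)$. The remaining inputs (the generic choice of the $D_i$ and the scheme-theoretic identity $Z^{(1)}=\bigcap_i D_i^{(1)}$) are routine given Proposition~\ref{prp:cap^(1)=^(1)cap} and the description of $\mathfrak{S}_{\supset Z}$.
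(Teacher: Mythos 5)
Your proposal is correct and follows essentially the same route as the paper: reduce to divisors $D_i\in\mathfrak{S}_{\supset Z}$ chosen so that $f(\mathbb{C})\not\subset\Supp D_i$, use $Z=\bigcap_i D_i$ and Proposition \ref{prp:cap^(1)=^(1)cap} to get $Z^{(1)}=\bigcap_i D_i^{(1)}$, pass from the minima of Weil functions to a pointwise bound by a single index (the paper phrases this as bounding by $\max_i\bigl((1-\varepsilon)\lambda_{D_i}\circ f-\lambda_{D_i^{(1)}}\circ f_{[1]}\bigr)$, using exactly your observation that $1-\varepsilon>0$), then exponentiate, dominate by the sum of exponentials, and apply Corollary \ref{cor:ALLDD^(1)} to each $D_i$. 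Your extra remarks (genericity of the basis choice, semipositivity of $f^{\ast}\ddif^c\varphi_{\mathfrak S}$, and the $\varepsilon$-uniformity of the constants) only make explicit points the paper leaves implicit, so no genuinely different argument is involved.
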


\begin{proof}
Since $f(\mathbb{C}) \not\subset \Supp Z$, we can find a basis $\{\varsigma_i \mid i = 0, 1, \dots, N'\}$ of $\mathbb{S}_{\supset Z}$ such that $f(\mathbb{C}) \not\subset \Supp D_i$ for each $0 \leq i \leq N'$, where $D_i = (\varsigma_i)$.
According to Corollary \ref{cor:ALLDD^(1)}, for any $0 < \varepsilon < 1$, we have
\[
\varepsilon \mathcal{N}\left(\exp\left(2(1-\varepsilon)\lambda_{D_i} \circ f - 2\lambda_{D_i^{(1)}} \circ f_{[1]}\right) f^{\ast} \ddif^c \varphi_{\mathfrak S}, r\right) \leq c' T_{f}(r, L) + O(1)
.\]
We see from the assumption $\mathcal{I}_Z = \mathfrak{s}_{\supset Z}$ that
\[
Z = \bigcap_{0 \leq i \leq N'} D_i
.\]
Furthermore, by Proposition \ref{prp:cap^(1)=^(1)cap}, we also obtain
\[
Z^{(1)} = \bigcap_{0 \leq i \leq N'} D_i^{(1)}
.\]
It follows from Proposition \ref{prp:lambda_Z}, \ref{itm:lambdacap}) that
\begin{align*}
(1-\varepsilon) \lambda_Z \circ \pi_1 - \lambda_{Z^{(1)}} &=_{\langle X_1 \rangle} (1-\varepsilon) \mathop{\min}_{0 \leq i \leq N'} \lambda_{D_i} \circ \pi_1 - \mathop{\min}_{0 \leq i \leq N'} \lambda_{D_i^{(1)}} \\
&\leq_{\langle X_1 \rangle} \mathop{\max}_{0 \leq i \leq N'} \left( (1-\varepsilon) \lambda_{D_i} \circ \pi_1 - \lambda_{D_i^{(1)}} \right)
,\end{align*}
which amounts to saying that
\[
(1-\varepsilon) \lambda_Z \circ f - \lambda_{Z^{(1)}} \circ f_{[1]} \leq \mathop{\max}_{0 \leq i \leq N'} \left( (1-\varepsilon) \lambda_{D_i} \circ f - \lambda_{D_i^{(1)}} \circ f_{[1]} \right)  + a
,\]
for some constant $a \geq 0$.

Hence we get
\begin{align*}
&\varepsilon \mathcal{N}\left(\exp\left(2(1-\varepsilon)\lambda_Z \circ f - 2\lambda_{Z^{(1)}} \circ f_{[1]}\right) f^{\ast} \ddif^c \varphi_{\mathfrak S}, r\right) \\
\leq \,& \varepsilon \mathcal{N}\left(\exp\left( \mathop{\max}_{0 \leq i \leq N'}\left( 2(1-\varepsilon)\lambda_{D_i} \circ f - 2\lambda_{D_i^{(1)}} \circ f_{[1]} \right) +2a \right) f^{\ast} \ddif^c \varphi_{\mathfrak S}, r\right) \\
\leq \,& \varepsilon \mathcal{N}\left( e^{2a} \mathop{\max}_{0 \leq i \leq N'} \exp\left( 2(1-\varepsilon) \lambda_{D_i} \circ f - 2 \lambda_{D_i^{(1)}} \circ f_{[1]} \right) f^{\ast} \ddif^c \varphi_{\mathfrak S}, r\right) \\
\leq \,& \varepsilon \mathcal{N}\left( e^{2a} \sum_{i=0}^{N'} \exp\left( 2(1-\varepsilon) \lambda_{D_i} \circ f - 2 \lambda_{D_i^{(1)}} \circ f_{[1]} \right) f^{\ast} \ddif^c \varphi_{\mathfrak S}, r\right) \\
\leq \,& \varepsilon e^{2a} \sum_{i=0}^{N'} \mathcal{N}\left( \exp\left( 2(1-\varepsilon) \lambda_{D_i} \circ f - 2 \lambda_{D_i^{(1)}} \circ f_{[1]} \right) f^{\ast} \ddif^c \varphi_{\mathfrak S}, r\right) \\
\leq \,& c' (N'+1) e^{2a} T_{f}(r, L) + O(1)
.\end{align*}
The lemma follows by taking $c = c' (N'+1) e^{2a}$.
\end{proof}

What's more, for arbitrary closed subschemes $Z_1, \dots, Z_q$ of $X$, we see that
\[
(1-\varepsilon) \mathop{\max}_{1 \leq i \leq q} \lambda_{Z_i} \circ \pi_1 - \mathop{\max}_{1 \leq i \leq q} \lambda_{Z_i^{(1)}} \leq \mathop{\max}_{1 \leq i \leq q} \left( (1-\varepsilon) \lambda_{Z_i} \circ \pi_1 - \lambda_{Z_i^{(1)}} \right)
.\]
Following the same reasoning as in the proof of Lemma \ref{lem:ALLDZ^(1)}, we conclude the following lemma from the above inequality.
\begin{lemma}\label{lem:GALLDD}
Let $(X, V)$ be a directed projective manifold, and let $Z_1, \dots, Z_q$ be closed subschemes of $X$. Let $f \colon (\mathbb{C}, T_{\mathbb{C}}) \to (X, V)$ be a non-constant holomorphic curve such that $f(\mathbb{C}) \not\subset \Supp (Z_1 + \dots + Z_q)$. For any linear system $\mathfrak{S} \subset |L|$ on $X$ satisfying that $\mathfrak{S}$ separates $1$-jets at every point of $X$ and $\mathcal{I}_Z = \mathfrak{s}_{\supset Z}$, there exists a constant $c > 0$ such that the inequality
\[
\varepsilon \mathcal{N}\left(\exp\left(2(1-\varepsilon)\mathop{\max}_{1 \leq i \leq q} \lambda_{Z_i} \circ f - 2\mathop{\max}_{1 \leq i \leq q} \lambda_{Z_i^{(1)}} \circ f_{[1]}\right) f^{\ast} \ddif^c \varphi_{\mathfrak S}, r\right) \leq c T_f(r, L) + O(1)
\]
holds for any $0 < \varepsilon < 1$.
\end{lemma}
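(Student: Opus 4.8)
The plan is to deduce Lemma~\ref{lem:GALLDD} from the single-subscheme statement, Lemma~\ref{lem:ALLDZ^(1)}, applied to each $Z_i$ separately, and then to assemble the resulting $q$ inequalities using the linearity and monotonicity of the functional $\mathcal N(\,\cdot\,,r)$ on positive currents. First I would note that $f(\mathbb C)\not\subset\Supp(Z_1+\dots+Z_q)=\bigcup_{i}\Supp Z_i$ forces $f(\mathbb C)\not\subset\Supp Z_i$ for every $i$; together with the standing hypotheses on $\mathfrak S$ (that it separates $1$-jets at every point of $X$ and that $\mathcal I_{Z_i}=\mathfrak s_{\supset Z_i}$ for each $i$), this puts us in a position to invoke Lemma~\ref{lem:ALLDZ^(1)} for every $Z_i$: there are constants $c_1,\dots,c_q>0$ with
\[
\varepsilon\,\mathcal N\!\left(\exp\!\bigl(2(1-\varepsilon)\lambda_{Z_i}\circ f-2\lambda_{Z_i^{(1)}}\circ f_{[1]}\bigr)\,f^{\ast}\ddif^c\varphi_{\mathfrak S},\,r\right)\le c_i\,T_f(r,L)+O(1)
\]
for all $0<\varepsilon<1$. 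Since $\mathfrak S$ is in particular base point free, $\varphi_{\mathfrak S}$ is smooth, $f^{\ast}\ddif^c\varphi_{\mathfrak S}=\gamma_{\mathfrak S}\,\ddif^c|z|^2$ with $\gamma_{\mathfrak S}\ge 0$, and every current appearing above is a positive (hence order-zero, representable-by-integration) $(1,1)$-current, so $\mathcal N(\,\cdot\,,r)$ is linear and monotone on them.

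Next I would combine these using the elementary inequality recorded just above the statement: for $0<\varepsilon<1$,
\[
(1-\varepsilon)\max_{1\le i\le q}\lambda_{Z_i}\circ\pi_1-\max_{1\le i\le q}\lambda_{Z_i^{(1)}}\le\max_{1\le i\le q}\bigl((1-\varepsilon)\lambda_{Z_i}\circ\pi_1-\lambda_{Z_i^{(1)}}\bigr),
\]
valid because if $i_0$ realizes the first maximum then the left-hand side is at most $(1-\varepsilon)\lambda_{Z_{i_0}}\circ\pi_1-\lambda_{Z_{i_0}^{(1)}}$. Composing with $f_{[1]}$ and using $\pi_1\circ f_{[1]}=f$ (the relations being understood modulo bounded terms, as everything is taken modulo $O(1)$), there is a constant $a\ge 0$ with
\[
(1-\varepsilon)\max_{i}\lambda_{Z_i}\circ f-\max_{i}\lambda_{Z_i^{(1)}}\circ f_{[1]}\le\max_{i}\bigl((1-\varepsilon)\lambda_{Z_i}\circ f-\lambda_{Z_i^{(1)}}\circ f_{[1]}\bigr)+a,
\]
hence $\exp\!\bigl(2(1-\varepsilon)\max_i\lambda_{Z_i}\circ f-2\max_i\lambda_{Z_i^{(1)}}\circ f_{[1]}\bigr)\le e^{2a}\sum_{i=1}^{q}\exp\!\bigl(2(1-\varepsilon)\lambda_{Z_i}\circ f-2\lambda_{Z_i^{(1)}}\circ f_{[1]}\bigr)$. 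Multiplying by $\gamma_{\mathfrak S}\ge 0$, applying $\varepsilon\,\mathcal N(\,\cdot\,,r)$, and then using monotonicity, additivity, and the $q$ inequalities from Lemma~\ref{lem:ALLDZ^(1)} gives
\[
\varepsilon\,\mathcal N\!\left(\exp\!\Bigl(2(1-\varepsilon)\max_{i}\lambda_{Z_i}\circ f-2\max_{i}\lambda_{Z_i^{(1)}}\circ f_{[1]}\Bigr)f^{\ast}\ddif^c\varphi_{\mathfrak S},\,r\right)\le e^{2a}\Bigl(\textstyle\sum_{i=1}^{q}c_i\Bigr)T_f(r,L)+O(1),
\]
and the lemma follows with $c=e^{2a}\sum_{i=1}^{q}c_i$.

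I do not expect a serious obstacle, since all inputs (existence of the $c_i$, smoothness of $\varphi_{\mathfrak S}$, positivity of the currents, linearity of $\mathcal N$) are already in place from the earlier sections. The one point that needs care is to confirm that the constant $a$ arising from the Weil-function comparison, and therefore $c$, is genuinely independent of $\varepsilon\in(0,1)$ and of $r$. This holds because $a$ depends only on the fixed choices of Weil functions $\lambda_{Z_i}$ and $\lambda_{Z_i^{(1)}}$ (the factor $1-\varepsilon<1$ only shrinks the left-hand side and so can be absorbed) and $q$ is fixed, while each $c_i$ is already uniform in $\varepsilon$ by Lemma~\ref{lem:ALLDZ^(1)}.
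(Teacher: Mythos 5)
Your proposal is correct and follows essentially the same route as the paper: the paper likewise combines the pointwise inequality $(1-\varepsilon)\max_i\lambda_{Z_i}\circ\pi_1-\max_i\lambda_{Z_i^{(1)}}\le\max_i\bigl((1-\varepsilon)\lambda_{Z_i}\circ\pi_1-\lambda_{Z_i^{(1)}}\bigr)$ with the reasoning of Lemma \ref{lem:ALLDZ^(1)} (bounding the exponential of the max by the sum of exponentials and summing the resulting $\mathcal N$-estimates). Your added remarks on the $\varepsilon$-independence of the constants match the intended argument, so no gap remains.
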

We refer to this lemma as General form of Ahlfors' Lemma on Logarithmic Derivative over Directed Manifolds (GALLDD for short).

\section[Algebro-geometric version of Ahlfors' LLD]{Algebro-geometric version of Ahlfors' Lemma on Logarithmic Derivative}\label{sec:AALD}
\sectionmark{Algebro-geometric version of Ahlfors' LLD}

\subsection{Proof of AALD}

First, we are going to prove AALD for a closed subscheme $Z$ of $X$ for $1$-jets.
\begin{theorem}\label{thm:Z^(1)}
Let $(X, V)$ be a directed projective manifold, and let $Z$ be a closed subscheme of $X$.
Let $f \colon (\mathbb{C}, T_{\mathbb{C}}) \to (X, V)$ be a non-constant holomorphic curve such that $f(\mathbb{C}) \not\subset \Supp Z$.
Then we have
\[
m_f(r, Z) + T_{f_{[1]}}(r, \mathcal{O}_{X_1}(1)) + \mathcal{N}([Z_{f'}], r) \leq m_{f_{[1]}}(r, Z^{(1)}) + S_f(r)
.\]
\end{theorem}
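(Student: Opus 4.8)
The plan is to reduce the statement to Ahlfors' Lemma on Logarithmic Derivative over directed manifolds (ALLDD, Lemma~\ref{lem:ALLDZ^(1)}), to convert its conclusion from a statement about the growth of a $(1,1)$-current into one about an integral mean over circles via the Calculus Lemma, and then to take logarithms, exactly as indicated in the introduction. First I would fix an auxiliary linear system: by Lemma~\ref{lem:Sexist} there is a linear system $\mathfrak{S}\subset|L|$ (with $L$ a sufficiently large multiple of a very ample bundle) that separates $1$-jets at every point of $X$ and satisfies $\mathcal{I}_Z=\mathfrak{s}_{\supset Z}$; since $\mathfrak{S}$ separates $1$-jets we have $\Bs(\mathfrak{W}(X_1,\mathfrak{S}))=X_1^{\sing}=\emptyset$ by Lemma~\ref{lem:wseparate}, so the hypotheses of Lemma~\ref{lem:ALLDZ^(1)} are met. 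As usual, arguing as in Remark~\ref{rmk:zero} for the contributions at the finitely many points of $\bigtriangleup(r)$ where $f$ meets $\Supp Z$, where $f_{[1]}$ meets $\Supp Z^{(1)}$, or where the weight $\gamma_{\mathfrak S}$ of \eqref{eqn:gammakS} vanishes, one may assume that none of these occur at the origin. Lemma~\ref{lem:ALLDZ^(1)} then provides a constant $c>0$ with
\[ \varepsilon\,\mathcal{N}\!\big(\exp\!\big(2(1-\varepsilon)\lambda_Z\circ f-2\lambda_{Z^{(1)}}\circ f_{[1]}\big)\,f^{\ast}\ddif^c\varphi_{\mathfrak S},\,r\big)\le c\,T_f(r,L)+O(1) \]
for every $\varepsilon\in(0,1)$ and every $r$, with no exceptional set.

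Next I would rewrite the left-hand side. By the $k=0$ case of the reformulated infinitesimal Pl\"ucker formula, $f^{\ast}\ddif^c\varphi_{\mathfrak S}=\gamma_{\mathfrak S}\,\ddif^c|z|^2$ with $\gamma_{\mathfrak S}=\gamma_{0,\mathfrak S}\ge0$ as in \eqref{eqn:gammakS}, so the integrand in ALLDD is the non-negative function $\Psi_{\varepsilon}:=\exp\!\big(2(1-\varepsilon)\lambda_Z\circ f-2\lambda_{Z^{(1)}}\circ f_{[1]}\big)\,\gamma_{\mathfrak S}$ on $\mathbb{C}$, whose current $\Psi_\varepsilon\,\ddif^c|z|^2$ has finite growth by the displayed inequality. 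Applying the Calculus Lemma (Lemma~\ref{lem:calculus}) to $\gamma=\Psi_{\varepsilon}$ with some $\delta>0$ and then the displayed inequality gives
\[ \int_0^{2\pi}\Psi_{\varepsilon}(re^{i\theta})\,\frac{\dif\theta}{2\pi}\le\big(\varepsilon^{-1}(c\,T_f(r,L)+O(1))\big)^{(1+\delta)^2} \]
outside a Borel set of finite Lebesgue measure. Taking logarithms and using Jensen's inequality $\int\log\le\log\int$, the left-hand side becomes $\int_0^{2\pi}\log\Psi_{\varepsilon}(re^{i\theta})\tfrac{\dif\theta}{2\pi}$, which splits as $2(1-\varepsilon)\,m_f(r,Z)-2\,m_{f_{[1]}}(r,Z^{(1)})+\int_0^{2\pi}\log\gamma_{\mathfrak S}(re^{i\theta})\tfrac{\dif\theta}{2\pi}$. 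By the Green--Jensen formula the last integral equals $2\,\mathcal{N}(\ddif^c[\log\gamma_{\mathfrak S}],r)+\log\gamma_{\mathfrak S}(0)$, and since $\mathfrak{S}$ separates $1$-jets, \eqref{eqn:Nddclogg} identifies $\mathcal{N}(\ddif^c[\log\gamma_{\mathfrak S}],r)$ with $T_{f_{[1]}}(r,\mathcal{O}_{X_1}(1))+\mathcal{N}([Z_{f'}],r)$.

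Collecting these, dividing by $2$, and absorbing $\log(c\,T_f(r,L)+O(1))=O(\log T_f(r,A))$ (Proposition~\ref{prp:TrA}) together with the constant $\tfrac{(1+\delta)^2}{2}\log\tfrac1\varepsilon$ into $S_f(r)$, I obtain, for each fixed $\varepsilon\in(0,1)$,
\[ (1-\varepsilon)\,m_f(r,Z)+T_{f_{[1]}}(r,\mathcal{O}_{X_1}(1))+\mathcal{N}([Z_{f'}],r)\le m_{f_{[1]}}(r,Z^{(1)})+S_f(r); \]
when $f$ is an algebraic curve, $\gamma_{\mathfrak S}$ is bounded and the error term is $O(1)$, in agreement with Definition~\ref{def:Sf}.

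The hard part will be removing the factor $(1-\varepsilon)$. Here I would invoke Lemma~\ref{lem:Zcap} to choose an effective Cartier divisor $D_1$ with $Z\subset D_1$; then $\lambda_Z\le_{\langle X\rangle}\lambda_{D_1}$ by Proposition~\ref{prp:lambda_Z}, so $m_f(r,Z)\le m_f(r,D_1)+O(1)\le T_f(r,\mathcal{O}(D_1))+O(1)=O(T_f(r,A))$, whence the excess $\varepsilon\,m_f(r,Z)$ is controlled by $\varepsilon\,T_f(r,A)$, while the price paid in the error term is only $\tfrac{(1+\delta)^2}{2}\log\tfrac1\varepsilon$. Letting $\varepsilon\to0^{+}$ — equivalently, choosing $\varepsilon=\varepsilon(r)\downarrow0$ slowly, which is legitimate because the $\varepsilon$-dependence in Lemma~\ref{lem:ALLDZ^(1)} carries no exceptional set — the surviving contribution is $O(\log T_f(r,A))\le S_f(r)$, and the theorem follows. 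I expect the genuinely delicate point to be precisely this limiting step: reconciling the $\varepsilon$-dependent Borel exceptional sets introduced by the Calculus Lemma with the passage $\varepsilon\to0$, rather than any of the (essentially routine) computations preceding it.
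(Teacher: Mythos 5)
Your proposal reproduces the paper's own proof essentially step for step: ALLDD (Lemma \ref{lem:ALLDZ^(1)}) with a $1$-jet-separating linear system furnished by Lemma \ref{lem:Sexist}, Jensen's inequality plus the Calculus Lemma (Lemma \ref{lem:calculus}), the Green--Jensen formula combined with \eqref{eqn:Nddclogg}, and finally the absorption of $\varepsilon\,m_f(r,Z)+O(\log\frac1\varepsilon)$ into $S_f(r)$ by bounding $m_f(r,Z)\le T_f(r,L)+O(1)$ through a divisor containing $Z$ not containing $f(\mathbb{C})$ and setting $\varepsilon(r)=1/T_f(r,L)$ --- exactly the paper's choice, and the paper handles the $\varepsilon$-dependent exceptional sets you flag no more delicately than you do. The only point where you are thinner than the paper is the algebraic-curve case: boundedness of $\gamma_{\mathfrak S}$ alone is not enough, and the paper additionally argues that $\exp\bigl((1-\varepsilon)\lambda_Z\circ f-\lambda_{Z^{(1)}}\circ f_{[1]}\bigr)$ stays bounded as $r\to\infty$ because $\lambda_{Z^{(1)}}\circ f_{[1]}$ has a pole at infinity of at least the order of that of $\lambda_Z\circ f$ whenever $f$ approaches $Z$ there.
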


\begin{proof}
Let $\gamma \ddif^c |z|^2$ denote the integrand of left hand side of \eqref{eqn:ALLDZ^(1)}, namely,
\[
\gamma = \exp\left(2((1-\varepsilon)\lambda_Z) \circ f - 2\lambda_{Z^{(1)}} \circ f_{[1]}\right) \gamma_{\mathfrak S}
,\]
where $\mathfrak{S} \subset |L|$ is a linear system on $X$ such that $\mathfrak{S}$ separates $1$-jets at every point of $X$, and $\mathcal{I}_Z = \mathfrak{s}_{\supset Z}$.

Without loss of generality, suppose that $\gamma_{\mathfrak S}(0) \neq 0$, otherwise we can deal with it in the similar way to Remark \ref{rmk:zero}.
Using the Green-Jensen formula and \eqref{eqn:Nddclogg} , we have
\begin{align*}
&\frac{1}{2} \int_0^{2\pi} \log \gamma(re^{i\theta}) \frac{\dif \theta}{2\pi} \\
=& \int_0^{2\pi} \!(1-\varepsilon)\lambda_Z \circ f(re^{i\theta}) \frac{\dif \theta}{2\pi} - \int_0^{2\pi} \!\lambda_{Z^{(1)}} \circ f_{[1]}(re^{i\theta}) \frac{\dif \theta}{2\pi} + \frac{1}{2} \int_0^{2\pi} \!\log \gamma_{\mathfrak S}(re^{i\theta}) \frac{\dif \theta}{2\pi} \\
=& (1-\varepsilon) m_f(r, Z) - m_{f_{[1]}}(r, Z^{(1)}) + \mathcal{N}(\ddif^c [\log \gamma_{\mathfrak S}], r) + \frac{1}{2} \log \gamma_{\mathfrak S}(0) \\
=& (1-\varepsilon) m_f(r, Z) - m_{f_{[1]}}(r, Z^{(1)}) + T_{f_{[1]}}(r, \mathcal{O}_{X_1}(1)) + \mathcal{N}([Z_{f'}], r) + \frac{1}{2} \log \gamma_{\mathfrak S}(0)
.\end{align*}
On the other hand, it follows from the Jensen inequality, Lemma \ref{lem:calculus} and Lemma \ref{lem:ALLDZ^(1)} that
\begin{align*}
\int_0^{2\pi} \log \gamma(re^{i\theta}) \frac{\dif \theta}{2\pi} &\leq \log \int_0^{2\pi} \gamma(re^{i\theta}) \frac{\dif \theta}{2\pi} \\
&\leq \left. (1+\delta)^2 \log \mathcal{N}([\gamma \ddif^c |z|^2], r) \right\rVert_E \\
&\leq \left. (1+\delta)^2 \left(\log T_f(r, L) - \log \varepsilon + O(1)\right) \right\rVert_E
.\end{align*}
Especially, if $f$ is an algebraic curve, we claim that $\gamma(re^{i\theta}) \leq O(1)$ as $r \to \infty$.
Seeing that $\gamma_{\mathfrak S}(re^{i\theta}) \leq O(1) \, (r \to \infty)$ in the case of algebraic curves, it suffices to verify $\exp\left(2((1-\varepsilon)\lambda_Z) \circ f(re^{i\theta}) - 2\lambda_{Z^{(1)}} \circ f_{[1]}(re^{i\theta})\right) \leq O(1) \, (r \to \infty)$.
Notice that the preimage of $Z$ under an algebraic curve $f$ is a finite set of points.
One easily find a constant $r_0 > 0$ such that the poles of $\exp\left(\lambda_Z \circ f\right)$ are all contained in the disk $\bigtriangleup(r_0)$.
Our claim is evidently true, if $f$ does not approach $Z$ as $z \to \infty$.
By computation, one can check that $\exp\left(\lambda_{Z^{(1)}} \circ f\right)$ also takes poles at $\infty$ with at least the same order as $\exp\left(\lambda_Z \circ f\right)$, if $f$ approaches $Z$ as $z \to \infty$.
This implies that $\exp\left(2((1-\varepsilon)\lambda_Z) \circ f(re^{i\theta}) - 2\lambda_{Z^{(1)}} \circ f_{[1]}(re^{i\theta})\right) \leq O(1) \, (r \to \infty)$.
Therefore, our claim holds, and we have
\[
\int_0^{2\pi} \log \gamma(re^{i\theta}) \frac{\dif \theta}{2\pi} \leq O(1) \quad (r \to \infty)
.\]
Combining the above inequalities, we obtain
\[
m_f(r, Z) + T_{f_{[1]}}(r, \mathcal{O}_{X_1}(1)) + \mathcal{N}([Z_{f'}], r) \leq \varepsilon m_f(r, Z) + m_{f_{[1]}}(r, Z^{(1)}) + O\left(\log \frac{1}{\varepsilon}\right) + S_f(r)
.\]
Since $\mathcal{I}_Z = \mathfrak{s}_{\supset Z}$ and $f(\mathbb{C}) \not\subset \Supp Z$, there always exists a divisor $D \in \mathfrak{S}_{\supset Z} \subset |L|$ such that $Z \subset D$ and $f(\mathbb{C}) \not\subset \Supp D$.
Accordingly,
\[
m_f(r, Z) \leq m_f(r, D) + O(1) \leq T_f(r, L) + O(1)
.\]
Take $\varepsilon(r) = \dfrac{1}{T_f(r, L)}$.
We thus see that
\[
\varepsilon m_f(r, Z) + O\left(\log \frac{1}{\varepsilon}\right) \leq S_f(r)
,\]
which verifies the theorem.
\end{proof}

What's more, for each $k \in \mathbb{Z}_{+}$, think of $(X_k, V_k)$ as the Demailly-Semple $1$-jet tower of $(X_{k-1}, V_{k-1})$.
Applying Theorem \ref{thm:Z^(1)} to $f_{[k-1]}$, we conclude Algebro-Geometric Version of Ahlfors’ Lemma on Logarithmic Derivative (AALD).
\begin{theorem}[AALD]\label{thm:Z^(k)}
Let $(X, V)$ be a directed projective manifold, and let $Z$ be a closed subscheme of $X$.
Let $f \colon (\mathbb{C}, T_{\mathbb{C}}) \to (X, V)$ be a non-constant holomorphic curve such that $f(\mathbb{C}) \not\subset \Supp Z$.
For each $k \in \mathbb{Z}_{+}$, we have
\[
m_{f_{[k-1]}}(r, Z^{(k-1)}) + T_{f_{[k]}}(r, \mathcal{O}_{X_{k}}(1)) + \mathcal{N}([Z_{f_{[k-1]}'}], r) \leq m_{f_{[k]}}(r, Z^{(k)}) + S_f(r)
.\]
\end{theorem}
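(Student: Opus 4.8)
The plan is to deduce the statement from Theorem~\ref{thm:Z^(1)} (AALD for $1$-jets) by a single application, carried out after matching up the relevant objects across the identification $(X_k, V_k) = \big((X_{k-1})_1, (V_{k-1})_1\big)$. Recall that for each $k \in \mathbb{Z}_{+}$ the directed projective manifold $(X_k, V_k)$ is the Demailly-Semple $1$-jet tower of $(X_{k-1}, V_{k-1})$, that is, $X_k = \textnormal{P}(V_{k-1})$ with tautological bundle $\mathcal{O}_{X_k}(1) = \mathcal{O}_{(X_{k-1})_1}(1)$; that $f_{[k-1]} \colon (\mathbb{C}, T_{\mathbb{C}}) \to (X_{k-1}, V_{k-1})$ is a non-constant tangent curve whose one-step canonical lifting $(f_{[k-1]})_{[1]}$ to $(X_k, V_k)$ equals $f_{[k]}$, and whose derivative section $f_{[k-1]}'$ has zero divisor $Z_{f_{[k-1]}'}$ on $\mathbb{C}$; and that $Z^{(k)} = (Z^{(k-1)})^{(1)}$ by definition, where $Z^{(k-1)}$ is the closed subscheme of $X_{k-1}$ constructed in Section~\ref{sec:ALLD}.

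Before invoking Theorem~\ref{thm:Z^(1)} I would verify its non-degeneracy hypothesis, namely $f_{[k-1]}(\mathbb{C}) \not\subset \Supp Z^{(k-1)}$; this is the only point that needs a short separate argument. By the construction of $Z^{(1)}$, over any affine open $U \subset X$ the functions $\zeta_i$ generating $\mathcal{I}_Z(U)$, viewed via $\dif_{U, V}^{[0]} \zeta_i = p_{1, V}^{\ast} \zeta_i$, are among the chosen generators of $I(Z^{(1)})$; applying this at each level gives $\Supp Z^{(j)} \subset \pi_j^{-1}(\Supp Z^{(j-1)})$ for every $j$, and iterating yields $\Supp Z^{(k-1)} \subset \pi_{0, k-1}^{-1}(\Supp Z)$. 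Since $\pi_{0, k-1} \circ f_{[k-1]} = f$ with $f(\mathbb{C}) \not\subset \Supp Z$ by hypothesis, it follows that $f_{[k-1]}(\mathbb{C}) \not\subset \Supp Z^{(k-1)}$, as required.

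I would then apply Theorem~\ref{thm:Z^(1)} to the directed projective manifold $(X_{k-1}, V_{k-1})$, the closed subscheme $Z^{(k-1)} \subset X_{k-1}$ and the curve $f_{[k-1]}$, obtaining
\[
m_{f_{[k-1]}}(r, Z^{(k-1)}) + T_{f_{[k]}}(r, \mathcal{O}_{X_{k}}(1)) + \mathcal{N}([Z_{f_{[k-1]}'}], r) \leq m_{f_{[k]}}\big(r, (Z^{(k-1)})^{(1)}\big) + S_{f_{[k-1]}}(r);
\]
rewriting $(Z^{(k-1)})^{(1)} = Z^{(k)}$ and absorbing the error term via $S_{f_{[k-1]}}(r) \leq S_f(r)$, which is~\eqref{eqn:Sfk}, gives exactly the asserted inequality (for $k=1$ this is Theorem~\ref{thm:Z^(1)} itself, taking $X_0 = X$, $V_0 = V$, $Z^{(0)} = Z$, $f_{[0]} = f$). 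I do not expect a genuine obstacle here: the whole analytic content sits inside Theorem~\ref{thm:Z^(1)} and the infinitesimal Plücker-type computations already carried out in Section~\ref{sec:ALLD}, so the only care needed is the bookkeeping of transporting the liftings, the jet subschemes $Z^{(j)}$, and the hypotheses of Theorem~\ref{thm:Z^(1)} correctly one level up the tower.
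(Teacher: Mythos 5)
Your proposal is correct and follows essentially the same route as the paper: establish $\Supp Z^{(k-1)} \subset \Supp \pi_{0,k-1}^{\ast} Z$ so that $f_{[k-1]}(\mathbb{C}) \not\subset \Supp Z^{(k-1)}$, apply Theorem~\ref{thm:Z^(1)} to $f_{[k-1]}$ viewed as a curve in $(X_{k-1}, V_{k-1})$, identify $(Z^{(k-1)})^{(1)} = Z^{(k)}$, and absorb the error term via $S_{f_{[k-1]}}(r) \leq S_f(r)$ from~\eqref{eqn:Sfk}. Your slightly more explicit justification of the support inclusion via the generators $p_{1,V}^{\ast}\zeta_i$ is a welcome elaboration of what the paper states in one line.
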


\begin{proof}
Observe that $f_{[k-1]}(\mathbb{C}) \not\subset \Supp \pi_{0, k-1}^{\ast} Z$.
Hence $f_{[k-1]}(\mathbb{C}) \not\subset \Supp Z^{(k-1)}$, since $\Supp Z^{(k-1)} \subset \Supp \pi_{0, k-1}^{\ast} Z$.
Applying Theorem \ref{thm:Z^(1)} to $f_{[k-1]}$, we immediately get
\[
m_{f_{[k-1]}}(r, Z^{(k-1)}) + T_{f_{[k]}}(r, \mathcal{O}_{X_{k}}(1)) + \mathcal{N}([Z_{f_{[k-1]}'}], r) \leq m_{f_{[k]}}(r, Z^{(k)}) + S_{f_{[k-1]}}(r)
.\]
Consequently, the theorem follows from \eqref{eqn:Sfk}.
\end{proof}

\begin{remark}
In the particular case when $Z = \emptyset$, Theorem \ref{thm:Z^(k)} reduces to Theorem \ref{thm:O_Xk(1)}.
\end{remark}

\subsection{AALD with respect to a linear system}

In this subsection, we will investigate what happens if the linear system might not separate the $1$-jets at every point of $X$.

\begin{theorem}\label{thm:Z_S^1}
Let $(X, V)$ be a directed projective manifold, and let $Z$ be a closed subscheme of $X$.
Choose a linear system $\mathfrak{S} \subset |L|$ on $X$ satisfying that $\mathcal{I}_Z = \mathfrak{s}_{\supset Z}$.
Let $f \colon (\mathbb{C}, T_{\mathbb{C}}) \to (X, V)$ be a non-constant holomorphic curve such that $f_{[1]}(\mathbb{C}) \not\subset \Bs(\mathfrak{W}(X_1, \mathfrak{S}))$.
Suppose that $f(\mathbb{C}) \not\subset \Supp Z$.
Then,
\[
m_f(r, Z) + T_{f_{[1]}}(r, \mathcal{O}_{X_1}(1)) + \mathcal{N}([Z_{f'}], r) \leq m_{f_{[1]}}(r, Z_{\mathfrak S}^{[1]}) - m_f(r, Z_{\mathfrak s}) + S_f(r)
,\]
where $Z_{\mathfrak{S}}^{[k]}$ is the closed subscheme of $X_k$ with the ideal sheaf $\widebar{\mathfrak w}(X_k, Z, \mathfrak{S})$.
\end{theorem}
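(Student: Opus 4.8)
The plan is to re-run the proof of Theorem~\ref{thm:Z^(1)} (the case where $\mathfrak{S}$ separates $1$-jets), dropping the separation hypothesis and keeping careful track of the base locus of $\mathfrak{S}$. First I would record the elementary observation that $\mathcal{I}_Z = \mathfrak{s}_{\supset Z}\subset\mathfrak{s}$ forces $Z_{\mathfrak s}\subset Z$, hence $\Supp Z_{\mathfrak s} = \Bs(\mathfrak{S})\subset\Supp Z$; in particular $f(\mathbb{C})\not\subset\Bs(\mathfrak{S})$, so $m_f(r,Z_{\mathfrak s})$ is defined, and the analytic extension $\tilde f\bydef\mu_{\mathfrak S}\circ f\colon\mathbb{C}\to\mathfrak{S}^{\dual}\cong\mathbb{P}^N$ of the (now merely rational) map $\mu_{\mathfrak S}$ is a genuine holomorphic curve. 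Moreover, by \eqref{eqn:wf'}, for $f$ non-constant the hypothesis $f_{[1]}(\mathbb{C})\not\subset\Bs(\mathfrak{W}(X_1,\mathfrak{S}))$ is equivalent to $\tilde f$ being non-constant, so no degenerate case arises. A reduced representation of $\tilde f$ has the form $\tilde F = \phi^{-1}(\varsigma_0\circ f,\dots,\varsigma_N\circ f)$, where $\phi\in\mathcal{O}(\mathbb{C})$ is the greatest common divisor of the $\varsigma_i\circ f$ and satisfies $(\phi) = f^{\ast}[Z_{\mathfrak s}]$.

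The key technical step—and what I expect to be the main obstacle—is to redo the infinitesimal Plücker computations of Section~\ref{sec:ALLD} for $\tilde f$ while isolating this common divisor $\phi$. One checks that the factors $|\phi|$ cancel in the Plücker current, so that $\Omega_0 = (\tilde f)^{\ast}\omega_0 = |\widetilde F^1|^2/|\widetilde F^0|^4\,\ddif^c|z|^2 = \gamma_{\mathfrak S}\,\ddif^c|z|^2$ stays smooth (with $\gamma_{\mathfrak S}$ as in \eqref{eqn:gammakS} for $k=0$), whereas for $D = (\sigma)\in\mathfrak{S}_{\supset Z}$ with $f(\mathbb{C})\not\subset\Supp D$ and corresponding hyperplane $\widetilde H$ one obtains, up to $O(1)$,
\[
\phi_0(\widetilde H) = \exp\!\big(2(\lambda_{Z_{\mathfrak s}}-\lambda_D)\circ f\big),\qquad
\phi_1(\widetilde H) = \exp\!\big(2(\lambda_{Z_{\mathfrak{w}(X_1,\mathfrak{S})}}-\lambda_{D_{\mathfrak S}^{[1]}})\circ f_{[1]}\big),
\]
using \eqref{eqn:lambda_Zs} and the fact that $\varphi_{\mathfrak{W}(X_1,\mathfrak{S})}$ and $\varphi_{\mathfrak{W}(X_1,D,\mathfrak{S})}$ are weights of singular metrics on $\mathcal{O}_{X_1}(1)\otimes\pi_1^{\ast}L^2$ with base schemes $Z_{\mathfrak{w}(X_1,\mathfrak{S})}$ and $D_{\mathfrak S}^{[1]}$. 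The same bookkeeping specializes the general identity of Section~\ref{sec:ALLD} at $k=0$ to
$\mathcal{N}(\ddif^c[\log\gamma_{\mathfrak S}],r) = T_{f_{[1]}}(r,\mathcal{O}_{X_1}(1)) - m_{f_{[1]}}(r,Z_{\mathfrak{w}(X_1,\mathfrak{S})}) + 2m_f(r,Z_{\mathfrak s}) + \mathcal{N}([Z_{f'}],r) + O(1)$,
which reduces to \eqref{eqn:Nddclogg} when $\mathfrak{S}$ separates $1$-jets. It is precisely the divisor $(\phi) = f^{\ast}[Z_{\mathfrak s}]$, surviving in these counting terms, that will produce the correction $-m_f(r,Z_{\mathfrak s})$.

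With these identities in hand, the rest copies the proof of Theorem~\ref{thm:Z^(1)}. Choose a basis $\varsigma_0,\dots,\varsigma_{N'}$ of $\mathbb{S}_{\supset Z}$ with $f(\mathbb{C})\not\subset\Supp D_i$ for $D_i = (\varsigma_i)$, and apply Ahlfors' LLD (Lemma~\ref{lem:ALLD_1^1}) to $\tilde f$ and the hyperplanes $\widetilde H_i$ attached to the $D_i$, noting $T_{\tilde f}(r,\mathcal{O}_{\mathbb{P}^N}(1))\le T_f(r,L)+O(1)$. Since $Z = \bigcap_i D_i$ and, by Propositions~\ref{prp:cap^(1)=^(1)cap} and~\ref{prp:cap_S^k=_S^kcap}, $Z_{\mathfrak S}^{[1]} = \bigcap_i (D_i)_{\mathfrak S}^{[1]}$, one has $\lambda_Z =_{\langle X\rangle}\min_i\lambda_{D_i}$ and $\lambda_{Z_{\mathfrak S}^{[1]}} =_{\langle X_1\rangle}\min_i\lambda_{(D_i)_{\mathfrak S}^{[1]}}$, whence $(1-\varepsilon)\lambda_Z\circ\pi_1 - \lambda_{Z_{\mathfrak S}^{[1]}}\le_{\langle X_1\rangle}\max_i\big((1-\varepsilon)\lambda_{D_i}\circ\pi_1 - \lambda_{(D_i)_{\mathfrak S}^{[1]}}\big)$; summing the $N'+1$ Ahlfors estimates then bounds $\mathcal{N}(\Gamma\,\ddif^c|z|^2,r)$, where $\Gamma = \exp\big(2(1-\varepsilon)\lambda_Z\circ f - 2\lambda_{Z_{\mathfrak S}^{[1]}}\circ f_{[1]} + 2\lambda_{Z_{\mathfrak{w}(X_1,\mathfrak{S})}}\circ f_{[1]} - 2(1-\varepsilon)\lambda_{Z_{\mathfrak s}}\circ f\big)\gamma_{\mathfrak S}$, by $\tfrac{c}{\varepsilon}T_f(r,L)+O(1)$. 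Taking logarithms, applying the Green--Jensen formula with the two identities of the previous paragraph (the terms $\pm m_{f_{[1]}}(r,Z_{\mathfrak{w}(X_1,\mathfrak{S})})$ cancelling), and then Jensen's inequality and the Calculus Lemma~\ref{lem:calculus} as in Theorem~\ref{thm:Z^(1)}, one gets
\[
m_f(r,Z) + T_{f_{[1]}}(r,\mathcal{O}_{X_1}(1)) + \mathcal{N}([Z_{f'}],r) + m_f(r,Z_{\mathfrak s}) \le m_{f_{[1]}}(r,Z_{\mathfrak S}^{[1]}) + \varepsilon\,m_f(r,Z) + S_f(r) + O\!\big(\log\tfrac1\varepsilon\big);
\]
choosing $\varepsilon = 1/T_f(r,L)$ and using $m_f(r,Z)\le T_f(r,L)+O(1)$ yields the stated inequality, while the algebraic-curve case is settled, as in Theorem~\ref{thm:Z^(1)}, by verifying directly that $\Gamma\le O(1)$ as $r\to\infty$.
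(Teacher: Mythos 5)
Your argument is essentially correct, but it takes a genuinely different route from the paper. The paper never re-runs the Ahlfors machinery for a non-separating $\mathfrak{S}$: it keeps Theorem \ref{thm:Z^(1)} as a black box and reduces to it in three stages --- when $\Bs(\mathfrak{S})=\emptyset$ the statement follows from Proposition \ref{prp:Z^(1)<Z_S^1}; when $Z_{\mathfrak s}=D_0$ is an effective Cartier divisor it uses $W(\sigma_0\sigma_1,\sigma_0\sigma_2)=\sigma_0^2W(\sigma_1,\sigma_2)$ to get $Z_{\mathfrak S}^{[1]}=2\pi_1^{\ast}D_0+(Z_1)_{\mathfrak S_{\setminus D_0}}^{[1]}$ and applies the previous case; and in general it blows up $(X,V)$ along $\mathfrak{s}$, lifts $f$, and transports the inequality back down through a resolution of the rational map on the $1$-jet towers, with the bookkeeping of $\widetilde{\varGamma}_{\upsilon}$ and \eqref{eqn:O(Gammau)}--\eqref{eqn:Sfhat}. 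You instead stay on $X$ and redo the Section \ref{sec:ALLD} computations with the singular weights $\varphi_{\mathfrak S}$ and $\varphi_{\mathfrak{W}(X_1,\mathfrak{S})}$, tracking the base schemes through Weil functions. Your two contact-function formulas are the base-pointed analogues of \eqref{eqn:phikH}, and your corrected identity $\mathcal{N}(\ddif^c[\log\gamma_{\mathfrak S}],r)=T_{f_{[1]}}(r,\mathcal{O}_{X_1}(1))-m_{f_{[1]}}(r,Z_{\mathfrak{w}(X_1,\mathfrak{S})})+2m_f(r,Z_{\mathfrak s})+\mathcal{N}([Z_{f'}],r)+O(1)$ is indeed what the Poincar\'e--Lelong/Green--Jensen bookkeeping produces: the common factor $\phi$ with $(\phi)=f^{\ast}[Z_{\mathfrak s}]$ and the common zero divisor $f_{[1]}^{\ast}Z_{\mathfrak{w}(X_1,\mathfrak{S})}+Z_{f'}$ of the Wronskians yield exactly proximity corrections (the counting parts cancel), and the Pl\"ucker and contact quantities are invariant under $\widetilde{F}\mapsto\phi\widetilde{F}$, so the identities of Section \ref{sec:ALLD}, stated there under the base-point-free hypothesis, do extend as you claim. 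What your route buys is that it avoids the blow-up of the directed manifold and the resolution $\varrho$ entirely and produces the $-m_f(r,Z_{\mathfrak s})$ correction directly from one Green--Jensen computation; the cost is precisely the re-derivation of those identities with singular weights, which you correctly flag as the main step. When writing it out, make explicit that (i) each $m_{f_{[1]}}(r,(D_i)_{\mathfrak S}^{[1]})$ is defined, i.e.\ $f_{[1]}(\mathbb{C})\not\subset\Supp(D_i)_{\mathfrak S}^{[1]}$, which follows from non-constancy of $\tilde f$ together with $\varsigma_i\circ f\not\equiv 0$, and (ii) in the algebraic-curve case the boundedness of $\Gamma$ uses $Z^{(1)}\subset Z_{\mathfrak S}^{[1]}$ and $Z_{\widebar{\mathfrak w}(X_1,\mathfrak{S})}\subset Z_{\mathfrak S}^{[1]}$ to absorb the extra exponential factors, parallel to the corresponding step in the proof of Theorem \ref{thm:Z^(1)}.
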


\begin{proof}
At first, if $\mathfrak{S}$ is base point free, i.e., $\Bs(\mathfrak{S}) = \emptyset$, the theorem follows from Proposition \ref{prp:Z^(1)<Z_S^1} and Theorem \ref{thm:Z^(1)}.

Next, consider the case when $\Bs(\mathfrak{S}) \neq \emptyset$ and $Z_{\mathfrak s} = D_0$ for a certain effective Cartier divisor $D_0$.
Notice that $\mathfrak{s}_{\supset Z} \subset \mathfrak{s}$, namely, $D_0 \subset Z$.
Hence there is a closed subscheme $Z_1$ of $X$ such that $Z = D_0 + Z_1$.
Let $\sigma_0$ be a canonical section of $\mathcal{O}_X(D_0)$.
By computation, we obtain $W(\sigma_0 \sigma_1, \sigma_0 \sigma_2) = \sigma_0^2 W(\sigma_1, \sigma_2)$ for any $\sigma_1 \in (\mathbb{S}_{\setminus D_0})_{\supset Z_1}$ and $\sigma_2 \in \mathbb{S}_{\setminus D_0}$.
For any effective divisor $D$ and any linear system $\mathfrak{S}$ on $X$, let us use the notation $D \cdot \mathfrak{S} \bydef \{ D + D' \mid D' \in \mathfrak{S}\}$.
Thus
\[
\mathfrak{W}(X_1, Z, \mathfrak{S}) = 2 \pi_1^{\ast} D_0 \cdot \mathfrak{W}(X_1, Z_1, \mathfrak{S}_{\setminus D_0})
.\]
It implies that
\[
\hat{\pi}_1^{-1} \mathfrak{s}^2 \cdot \mathfrak{w}(X_1, Z_1, \mathfrak{S}_{\setminus D_0}) = \mathfrak{w}(X_1, Z, \mathfrak{S})
.\]
In other words,
\begin{equation}\label{eqn:Z_S^1}
Z_{\mathfrak{S}}^{[1]} = 2 \pi_1^{\ast} D_0 + (Z_1)_{\mathfrak{S}_{\setminus D_0}}^{[1]}
.\end{equation}
Since $\mathfrak{S}_{\setminus D_0}$ is base point free, we have
\[
m_f(r, Z_1) + T_{f_{[1]}}(r, \mathcal{O}_{X_1}(1)) + \mathcal{N}([Z_{f'}], r) \leq m_{f_{[1]}}(r, (Z_1)_{\mathfrak{S}_{\setminus D_0}}^{[1]}) + S_f(r)
.\]
Combined with \eqref{eqn:Z_S^1}, the above inequality yields
\begin{equation}\label{eqn:Z_S^1-D}
m_f(r, Z) + T_{f_{[1]}}(r, \mathcal{O}_{X_1}(1)) + \mathcal{N}([Z_{f'}], r) \leq m_{f_{[1]}}(r, Z_{\mathfrak S}^{[1]}) - m_f(r, D_0) + S_f(r)
.\end{equation}

Finally, let us investigate the case when $\Bs(\mathfrak{S}) \neq \emptyset$ and $Z_{\mathfrak s}$ is not an effective Cartier divisor.
Take the blow-up of $(X, V)$ with respect to $\mathfrak{s}$
\[
\upsilon \colon (\widehat{X}, \widehat{V}) \longrightarrow (X, V)
,\]
and denote by $E_0 = \upsilon^{\ast} Z_{\mathfrak s}$ the exceptional divisor of $\upsilon$.
Therefore, there is a commutative diagram
\begin{equation*}
\begin{tikzcd}[column sep=scriptsize]
(\widehat{X}_1, \widehat{V}_1) \arrow[r, "\hat{\pi}_1"] \arrow[d, dashed, "\upsilon_1"] & (\widehat{X}, \widehat{V}) \arrow[d, "\upsilon"] \\
(X_1, V_1) \arrow[r, "\pi_1"] & (X, V)
,\end{tikzcd}
\end{equation*}
where $\upsilon_1$ is induced by the differential $\dif \upsilon$ restricted on $\widehat{V}$.
Obviously, $\upsilon_1$ is holomorphic if and only if $\dif \upsilon_{\restriction \widehat{V}} \colon \widehat{V} \to \upsilon^{\ast} V$ is injective.
Accordingly, $\Ind(\upsilon_1) = \textnormal{P}(T_{\widehat{X}/X} \cap \widehat{V})$.

Moreover, there is a resolution $\varrho \colon (\widetilde{X}_1, \widetilde{V}_1) \to (\widehat{X}_1, \widehat{V}_1)$ of the rational map $\upsilon_1 \colon \allowbreak (\widehat{X}_1, \widehat{V}_1) \dashrightarrow (X_1, V_1)$,
i.e., $\varrho$ is the composition of a finite number of blow-ups of directed manifolds with smooth centers such that the composition $\upsilon_1 \circ \varrho$ gives a holomorphic morphism $\widetilde{\upsilon}_1 \colon (\widetilde{X}_1, \widetilde{V}_1) \to (X_1, V_1)$.
This amounts to the commutativity of the following diagram.
\begin{equation*}
\begin{tikzcd}[column sep=scriptsize]
(\widetilde{X}_1, \widetilde{V}_1) \arrow[r, "\varrho"] \arrow[rd, "\widetilde{\upsilon}_1"] & (\widehat{X}_1, \widehat{V}_1) \arrow[d, dashed, "\upsilon_1"] \arrow[r, "\hat{\pi}_1"] \arrow[d, dashed, "\upsilon_1"] & (\widehat{X}, \widehat{V}) \arrow[d, "\upsilon"] \\
 & (X_1, V_1) \arrow[r, "\pi_1"] & (X, V)
.\end{tikzcd}
\end{equation*}

\begin{remark}
Seeing that the composition of projective morphisms is also a projective morphism,
$\varrho \colon \widetilde{X}_1 \to \widehat{X}_1$ is indeed a birational projective morphism of projective varieties.
According to Theorem 7.17 in Chapter \uppercase\expandafter{\romannumeral2} of \cite{Hartshorne1977}, there exists a coherent ideal sheaf $\mathcal{I} \subset \mathcal{O}_{\widehat{X}_1}$ such that $\widetilde{X}_1$ is isomorphic to the blow-up $\Bl_{\mathcal{I}}(\widehat{X}_1)$ of $\widehat{X}_1$.
Let $\widehat{U}_1 \subset \widehat{X}_1$ be the largest open subset such that $\varrho$ restricted on $\varrho^{-1}(\widehat{U}_1)$ is an isomorphism.
Furthermore, $\mathcal{I}$ can be chosen such that $\Supp(\mathcal{O}_{\widehat{X}_1} / \mathcal{I}) = \widehat{X}_1 \setminus \widehat{U}_1$, since $\widehat{X}_1$ is non-singular
(cf. Exercises 7.11, (c) in Chapter \uppercase\expandafter{\romannumeral2} of \cite{Hartshorne1977}).
\end{remark}

Analogous to the bundle morphism \eqref{eqn:Gamma}, there is a line bundle morphism over $\widetilde{X}_1$
\begin{equation*}
\begin{tikzcd}[column sep=scriptsize]
\varrho^{\ast} \mathcal{O}_{\widehat{X}_1}(-1) \arrow[r, hook] & \varrho^{\ast} \hat{\pi}_1^{\ast} \widehat{V} \arrow[r, "\widetilde{\upsilon}_1^{\ast} \upsilon_{\ast}"] &[.5em] \widetilde{\upsilon}_1^{\ast} \mathcal{O}_{X_1}(-1)
,\end{tikzcd}
\end{equation*}
where $\upsilon_{\ast} \colon \widehat{V} \to \mathcal{O}_{X_1}(-1)$.
And it admits $\widetilde{\varGamma}_{\upsilon} = \varrho^{\ast} \textnormal{P}(T_{\widehat{X}/X} \cap \widehat{V}) \subset \widetilde{X}_1$ as its zero divisor.
Consequently, we immediately get
\begin{equation}\label{eqn:O(Gammau)}
\varrho^{\ast} \mathcal{O}_{\widehat{X}_1}(1) = \widetilde{\upsilon}_1^{\ast} \mathcal{O}_{X_1}(1) \otimes \mathcal{O}_{\widetilde{X}_1}(\widetilde{\varGamma}_{\upsilon})
.\end{equation}
It is evident that $W(\sigma_0 \circ \upsilon, \sigma_1 \circ \upsilon) = W(\sigma_0, \sigma_1) \circ \dif \upsilon_{\restriction \widehat{V}}$ for any $\sigma_0 \in \mathbb{S}_{\supset Z}, \sigma_1 \in \mathbb{S}$.
Hence $\omega(\sigma_0 \circ \upsilon, \sigma_1 \circ \upsilon) \circ \varrho = (\omega(\sigma_0, \sigma_1) \circ \widetilde{\upsilon}_1) \widetilde{\gamma}$,
where $\widetilde{\gamma}$ is a canonical section of $\mathcal{O}(\widetilde{\varGamma}_{\upsilon})$.
That is to say,
\[
\varrho^{\ast} \mathfrak{W}(\widehat{X}_1, \widehat{Z}, \upsilon^{\ast} \mathfrak{S}) = \widetilde{\varGamma}_{\upsilon} \cdot \widetilde{\upsilon}_1^{\ast} \mathfrak{W}(X_1, Z, \mathfrak{S})
,\]
where $\widehat{Z} = \upsilon^{\ast} Z$.
Thus
\[
\varrho^{-1} \mathfrak{w}(\widehat{X}_1, \widehat{Z}, \upsilon^{\ast} \mathfrak{S}) \cdot \mathcal{O}_{\widetilde{X}_1} = \widetilde{\upsilon}_1^{-1} \mathfrak{w}(X_1, Z, \mathfrak{S}) \cdot \mathcal{I}_{\widetilde{\varGamma}_{\upsilon}}
,\]
which amounts to saying that
\begin{equation}\label{eqn:Z_S^1+Gammau}
\varrho^{\ast} \widehat{Z}_{\upsilon^{\ast} \mathfrak{S}}^{[1]} = \widetilde{\upsilon}_1^{\ast} Z_{\mathfrak S}^{[1]} + \widetilde{\varGamma}_{\upsilon}
.\end{equation}

Let $f \colon (\mathbb{C}, T_{\mathbb{C}}) \to (X, V)$ be a holomorphic curve such that $f(\mathbb{C}) \not\subset \Supp Z$, and take the lifting $\hat{f}$ of $f$ to $(\widehat{X}, \widehat{V})$.
Therefore, $\hat{f} \colon (\mathbb{C}, T_{\mathbb{C}}) \to (\widehat{X}, \widehat{V})$ is a holomorphic curve such that $f = \upsilon \circ \hat{f}$ and $\hat{f}(\mathbb{C}) \not\subset \Supp \widehat{Z}$.
Observe that the scheme of base points of $\upsilon^{\ast} \mathfrak{S}$ is the exceptional divisor $E_0$.
Applying \eqref{eqn:Z_S^1-D} to $\hat{f}$, we obtain
\begin{equation}\label{eqn:Z_S^1-E}
m_{\hat{f}}(r, \widehat{Z}) + T_{\hat{f}_{[1]}}(r, \mathcal{O}_{\widehat{X}_1}(1)) + \mathcal{N}([\hat{f}'], r) \leq m_{\hat{f}_{[1]}}(r, \widehat{Z}_{\upsilon^{\ast} \mathfrak{S}}^{[1]}) - m_{\hat{f}}(r, E_0) + S_{\hat{f}}(r)
,\end{equation}
where $\hat{f}_{[1]}$ is the canonical lifting of $\hat{f}$ to $(\widehat{X}_1, \widehat{V}_1)$.
Take the lifting $\tilde{f}_{[1]}$ of $\hat{f}_{[1]}$ to $(\widetilde{X}, \widetilde{V})$.
Then $\hat{f}_{[1]} = \varrho \circ \tilde{f}_{[1]}$ and $f_{[1]} = \widetilde{\upsilon}_1 \circ \tilde{f}_{[1]}$.
Accordingly, \eqref{eqn:O(Gammau)} yields
\begin{equation}\label{eqn:TO(Gammau)}
T_{\hat{f}_{[1]}}(r, \mathcal{O}_{\widehat{X}_1}(1)) = T_{f_{[1]}}(r, \mathcal{O}_{X_1}(1)) + T_{\tilde{f}_{[1]}}(r, \widetilde{\varGamma}_{\upsilon})
.\end{equation}
It follows from \eqref{eqn:Z_S^1+Gammau} that
\begin{equation}
m_{\hat{f}_{[1]}}(r, \widehat{Z}_{\upsilon^{\ast} \mathfrak{S}}^{[1]}) = m_{f_{[1]}}(r, Z_{\mathfrak S}^{[1]}) + m_{\tilde{f}_{[1]}}(r, \widetilde{\varGamma}_{\upsilon})
.\end{equation}
Actually, by the definition of $\widetilde{\varGamma}_{\upsilon}$, we have $N_{\tilde{f}_{[1]}}(r, \widetilde{\varGamma}_{\upsilon}) = \mathcal{N}([Z_{\dif \upsilon(\hat{f})}], r)$.
Consequently, $f' = \dif \upsilon(\hat{f}) \cdot \hat{f}'$ leads to
\begin{equation}
\mathcal{N}([Z_{f'}], r) = N_{\tilde{f}_{[1]}}(r, \widetilde{\varGamma}_{\upsilon}) + \mathcal{N}([Z_{\hat{f}'}], r)
.\end{equation}
Seeing that $\mathcal{O}_{\widehat{X}}(-E_0)$ is relatively ample over $X$, $\upsilon^{\ast} A \otimes \mathcal{O}_{\widehat{X}}(-E_0)$ is ample for a sufficiently ample line bundle $A$ on $X$.
We denote this ample line bundle on $\widehat{X}$ by $\widehat{A}$.
Thanks to the fact that $\hat{f}(\mathbb{C}) \not\subset \Supp E_0$, we see that $T_{\hat{f}}(r, \widehat{A}) \leq T_f(r, A) + O(1)$, which implies that
\begin{equation}\label{eqn:Sfhat}
S_{\hat{f}}(r) \leq S_f(r)
.\end{equation}
Substituting \eqref{eqn:TO(Gammau)}--\eqref{eqn:Sfhat} into \eqref{eqn:Z_S^1-E} , we conclude that
\[
m_f(r, Z) + T_{f_{[1]}}(r, \mathcal{O}_{X_1}(1)) + \mathcal{N}([Z_{f'}], r) \leq m_{f_{[1]}}(r, Z_{\mathfrak S}^{[1]}) - m_f(r, Z_{\mathfrak s}) + S_f(r)
.\qedhere\]
\end{proof}

\begin{remark}
In the particular case when $\mathfrak{S}$ separate the $1$-jets at every point of $X$, we immediately have $Z_{\mathfrak S}^{[1]} = Z^{(1)}$ and $Z_{\mathfrak s} = \emptyset$.
It implies that Theorem \ref{thm:Z_S^1} in this case coincides with Theorem \ref{thm:Z^(1)}.
\end{remark}

\subsection{General form of AALD}\label{ssc:GAALD}

For arbitrary closed subschemes $Z_1, \dots, Z_q$ of $X$, following \cite{Ru2020}, we will use the notation
\[
\lambda_{\bigvee_{i=1}^q Z_i} \bydef \mathop{\max}_{1 \leq i \leq q} \lambda_{Z_i}
,\]
for the sake of brevity. Hence we obtain
\begin{equation}\label{eqn:proximityvee}
m_f(r, \bigvee_{i=1}^q Z_i) = \int_0^{2\pi} \!\mathop{\max}_{1 \leq i \leq q} \lambda_{Z_i} \circ f(re^{i\theta}) \frac{\dif \theta}{2\pi}
.\end{equation}
Here $\bigvee_{i=1}^q Z_i$ is indeed a birational divisor, or b-divisor on $X$ introduced by Shokurov \cite{Shokurov1996}.
For the precise definition and more details, see \cite{Shokurov1996} or Section 1.7 of \cite{Corti2007}.
We easily check that $\lambda_{\bigvee_{i=1}^q Z_i} \in \mathcal{C}(X)/=_{\langle X \rangle}$.
Actually, it is a generalized Weil function, or b-Weil function on $X$ introduced by Vojta \cite{Vojta1996}.

Following the same reasoning as in the proof of Theorem \ref{thm:Z^(1)}, we conclude the following theorem from Lemma \ref{lem:GALLDD}.
\begin{theorem}\label{thm:Z^(1)vee}
Let $(X, V)$ be a directed projective manifold, and let $Z_1, \dots, Z_q$ be closed subschemes of $X$.
Let $f \colon (\mathbb{C}, T_{\mathbb{C}}) \to (X, V)$ be a non-constant holomorphic curve such that $f(\mathbb{C}) \not\subset \Supp (Z_1 + \dots + Z_q)$. Then
\[
m_f(r, \bigvee_{i=1}^q Z_i) + T_{f_{[1]}}(r, \mathcal{O}_{X_1}(1)) + \mathcal{N}([Z_{f'}], r) \leq m_{f_{[1]}}(r, \bigvee_{i=1}^q Z_i^{(1)}) + S_f(r)
.\]
\end{theorem}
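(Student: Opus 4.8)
The plan is to mirror the proof of Theorem~\ref{thm:Z^(1)} step by step, replacing each single closed subscheme $Z$, its jet $Z^{(1)}$, and the associated Weil function $\lambda_Z$ by the b-divisors $\bigvee_{i=1}^q Z_i$, $\bigvee_{i=1}^q Z_i^{(1)}$, and the b-Weil functions $\lambda_{\bigvee_i Z_i} = \max_i \lambda_{Z_i}$, $\lambda_{\bigvee_i Z_i^{(1)}} = \max_i \lambda_{Z_i^{(1)}}$. The starting point is Lemma~\ref{lem:GALLDD} (GALLDD), which provides, for a linear system $\mathfrak{S} \subset |L|$ separating $1$-jets with $\mathcal{I}_Z = \mathfrak{s}_{\supset Z}$ (such an $\mathfrak{S}$ exists by Lemma~\ref{lem:Sexist} applied to $Z_1 + \dots + Z_q$, or to each $Z_i$), a constant $c > 0$ with
\[
\varepsilon \mathcal{N}\!\left(\exp\!\left(2(1-\varepsilon)\mathop{\max}_{1 \leq i \leq q} \lambda_{Z_i} \circ f - 2\mathop{\max}_{1 \leq i \leq q} \lambda_{Z_i^{(1)}} \circ f_{[1]}\right) f^{\ast} \ddif^c \varphi_{\mathfrak S}, r\right) \leq c T_f(r, L) + O(1).
\]

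First I would set $\gamma = \exp\!\left(2(1-\varepsilon)\max_i \lambda_{Z_i} \circ f - 2\max_i \lambda_{Z_i^{(1)}} \circ f_{[1]}\right)\gamma_{\mathfrak S}$, assume WLOG $\gamma_{\mathfrak S}(0) \neq 0$ (otherwise argue as in Remark~\ref{rmk:zero}), and compute $\tfrac12 \int_0^{2\pi} \log \gamma(re^{i\theta})\,\tfrac{\dif\theta}{2\pi}$ via the Green--Jensen formula together with \eqref{eqn:Nddclogg}; by the definition \eqref{eqn:proximityvee} of $m_f(r, \bigvee_i Z_i)$ and $m_{f_{[1]}}(r, \bigvee_i Z_i^{(1)})$ this equals $(1-\varepsilon)m_f(r,\bigvee_i Z_i) - m_{f_{[1]}}(r,\bigvee_i Z_i^{(1)}) + T_{f_{[1]}}(r,\mathcal{O}_{X_1}(1)) + \mathcal{N}([Z_{f'}],r) + \tfrac12\log\gamma_{\mathfrak S}(0)$. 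On the other side, Jensen's inequality, Lemma~\ref{lem:calculus}, and GALLDD give $\int_0^{2\pi}\log\gamma\,\tfrac{\dif\theta}{2\pi} \leq (1+\delta)^2(\log T_f(r,L) - \log\varepsilon + O(1))\,\|_E$, with the $O(1)$ bound in the algebraic-curve case handled exactly as in Theorem~\ref{thm:Z^(1)} — here one checks that at each point where $f$ approaches some $Z_i$ as $z\to\infty$, the corresponding $\exp(\lambda_{Z_i^{(1)}}\circ f_{[1]})$ has a pole of at least the same order, so $\max_i$ of these again dominates $\max_i \exp(\lambda_{Z_i}\circ f)$.

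Combining the two estimates yields
\[
m_f(r,\bigvee_i Z_i) + T_{f_{[1]}}(r,\mathcal{O}_{X_1}(1)) + \mathcal{N}([Z_{f'}],r) \leq \varepsilon\, m_f(r,\bigvee_i Z_i) + m_{f_{[1]}}(r,\bigvee_i Z_i^{(1)}) + O(\log\tfrac1\varepsilon) + S_f(r).
\]
To absorb the error terms, bound $m_f(r,\bigvee_i Z_i) = \int_0^{2\pi}\max_i\lambda_{Z_i}\circ f \leq \sum_i m_f(r,Z_i) + O(1) \leq \sum_i T_f(r, L) + O(1) \leq q\, T_f(r,L) + O(1)$ (using that each $Z_i$ is contained in some divisor of $\mathfrak{S}$ avoiding $f(\mathbb{C})$), then choose $\varepsilon(r) = 1/T_f(r,L)$ so that $\varepsilon\, m_f(r,\bigvee_i Z_i) + O(\log\tfrac1\varepsilon) \leq S_f(r)$, which closes the argument.

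I expect the only genuinely delicate point to be the algebraic-curve case of the pole comparison, i.e.\ verifying that $\exp(-2\max_i\lambda_{Z_i^{(1)}}\circ f_{[1]} + 2(1-\varepsilon)\max_i\lambda_{Z_i}\circ f)$ stays $O(1)$ as $r\to\infty$: one must ensure that the index $i$ realizing the maximum of $\lambda_{Z_i}\circ f$ near $\infty$ also (weakly) realizes enough of the maximum of $\lambda_{Z_i^{(1)}}\circ f_{[1]}$, which follows from the pointwise inequality $\lambda_{Z_i}\circ\pi_1 \leq_{\langle X_1\rangle}\lambda_{Z_i^{(1)}}$ used inside the $\max$, combined with the single-subscheme pole analysis already carried out in Theorem~\ref{thm:Z^(1)}. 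Everything else is a routine transcription of that proof, so the writeup can largely cite Theorem~\ref{thm:Z^(1)} and Lemma~\ref{lem:GALLDD} and indicate only the modifications.
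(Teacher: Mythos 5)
Your proposal is correct and takes essentially the same route as the paper, whose proof of this theorem is precisely ``follow the reasoning of Theorem \ref{thm:Z^(1)}, with Lemma \ref{lem:GALLDD} in place of Lemma \ref{lem:ALLDZ^(1)}'', including the choice of a single $\mathfrak{S}$ working for all $Z_i$, the Green--Jensen/Calculus Lemma steps, and the $\varepsilon(r)=1/T_f(r,L)$ absorption. One small caveat: the auxiliary inequality you invoke, $\lambda_{Z_i}\circ\pi_1 \leq_{\langle X_1\rangle} \lambda_{Z_i^{(1)}}$, actually goes the other way (since $Z_i^{(1)}\subset \pi_1^{\ast}Z_i$ one has $\lambda_{Z_i^{(1)}} \leq_{\langle X_1\rangle} \lambda_{Z_i}\circ\pi_1$), but it is not needed: for the algebraic-curve bound it suffices that $\mathop{\max}_i \lambda_{Z_i^{(1)}}\circ f_{[1]}$ dominates the term indexed by whichever $i$ realizes $\mathop{\max}_i \lambda_{Z_i}\circ f$, after which the single-subscheme pole analysis from Theorem \ref{thm:Z^(1)} applies termwise.
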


Following the same reasoning as in the proof of Theorem \ref{thm:Z_S^1}, Theorem \ref{thm:Z^(1)vee} leads to the following theorem.
\begin{theorem}\label{thm:Z_S^1vee}
Let $(X, V)$ be a directed projective manifold, and let $Z_1, \dots, Z_q$ be closed subschemes of $X$.
Choose a linear system $\mathfrak{S}$ on $X$ satisfying that $\mathcal{I}_{Z_i} = \mathfrak{s}_{\supset Z_i}$ for each $1 \leq i \leq q$.
Let $f \colon (\mathbb{C}, T_{\mathbb{C}}) \to (X, V)$ be a non-constant holomorphic curve such that $f_{[1]}(\mathbb{C}) \not\subset \Bs(\mathfrak{W}(X_1, \mathfrak{S}))$.
Suppose that $f(\mathbb{C}) \not\subset \Supp (Z_1 + \dots + Z_q)$.
We thus have
\[
m_f(r, \bigvee_{i=1}^q Z_i) + T_{f_{[1]}}(r, \mathcal{O}_{X_1}(1)) + \mathcal{N}([Z_{f'}], r) \leq m_{f_{[1]}}(r, \bigvee_{i=1}^q (Z_i)_{\mathfrak S}^{[1]}) - m_f(r, Z_{\mathfrak s}) + S_f(r)
.\]
\end{theorem}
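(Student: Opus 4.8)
The proof of Theorem \ref{thm:Z_S^1vee} follows the exact template of the proof of Theorem \ref{thm:Z_S^1}, the only genuinely new ingredient being the bookkeeping of the b-Weil function $\lambda_{\bigvee_{i=1}^q Z_i}$ in place of a single $\lambda_Z$. The plan is to run the three-case analysis on the linear system $\mathfrak{S}$ (base point free; base locus a Cartier divisor; base locus an arbitrary closed subscheme), deducing each step from the corresponding step in the proof of Theorem \ref{thm:Z_S^1}, but feeding in Theorem \ref{thm:Z^(1)vee} rather than Theorem \ref{thm:Z^(1)}.

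First I would dispose of the base point free case. Here $\varphi_{\mathfrak S}$ is a smooth weight for $L$, and by Proposition \ref{prp:Z^(1)<Z_S^1} applied to each $Z_i$ we have $Z_i^{(1)} \subset (Z_i)_{\mathfrak S}^{[1]}$, hence $\lambda_{Z_i^{(1)}} \leq_{\langle X_1 \rangle} \lambda_{(Z_i)_{\mathfrak S}^{[1]}}$ by Proposition \ref{prp:lambda_Z}, \ref{itm:lambdasubset}); taking the maximum over $i$ gives $\lambda_{\bigvee_i Z_i^{(1)}} \leq_{\langle X_1 \rangle} \lambda_{\bigvee_i (Z_i)_{\mathfrak S}^{[1]}}$, so $m_{f_{[1]}}(r, \bigvee_i Z_i^{(1)}) \leq m_{f_{[1]}}(r, \bigvee_i (Z_i)_{\mathfrak S}^{[1]}) + O(1)$. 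Combined with Theorem \ref{thm:Z^(1)vee} and the fact that $Z_{\mathfrak s} = \emptyset$ when $\mathfrak{S}$ is base point free (so $m_f(r, Z_{\mathfrak s}) = O(1)$), this settles the first case.

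Next, the case $\Bs(\mathfrak{S}) \neq \emptyset$ with $Z_{\mathfrak s} = D_0$ an effective Cartier divisor. Since $\mathfrak{s}_{\supset Z_i} \subset \mathfrak{s}$ forces $D_0 \subset Z_i$ for every $i$, I can write $Z_i = D_0 + (Z_i)_1$ for closed subschemes $(Z_i)_1$, and the same Wronskian computation as in the proof of Theorem \ref{thm:Z_S^1} — now carried out simultaneously for all $i$ with the common factor $\sigma_0$ — yields $(Z_i)_{\mathfrak S}^{[1]} = 2\pi_1^{\ast} D_0 + ((Z_i)_1)_{\mathfrak{S}_{\setminus D_0}}^{[1]}$ for each $i$, whence $\lambda_{(Z_i)_{\mathfrak S}^{[1]}} =_{\langle X_1 \rangle} 2 \lambda_{\pi_1^{\ast} D_0} + \lambda_{((Z_i)_1)_{\mathfrak{S}_{\setminus D_0}}^{[1]}}$. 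Taking the max over $i$ pulls the $D_0$-term out of the supremum (it is common to all $i$), so $\lambda_{\bigvee_i (Z_i)_{\mathfrak S}^{[1]}} =_{\langle X_1 \rangle} 2 \lambda_{\pi_1^{\ast} D_0} + \lambda_{\bigvee_i ((Z_i)_1)_{\mathfrak{S}_{\setminus D_0}}^{[1]}}$, and likewise $\lambda_{\bigvee_i Z_i} =_{\langle X \rangle} \lambda_{D_0} + \lambda_{\bigvee_i (Z_i)_1}$. Applying the already-established base point free case to $(Z_i)_1$ and $\mathfrak{S}_{\setminus D_0}$ and rearranging the proximity functions exactly as in \eqref{eqn:Z_S^1-D} gives the desired inequality with $-m_f(r, D_0) = -m_f(r, Z_{\mathfrak s})$ on the right.

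Finally, for the case where $Z_{\mathfrak s}$ is not Cartier, I would blow up $(X,V)$ along $\mathfrak{s}$, set $E_0 = \upsilon^{\ast} Z_{\mathfrak s}$, lift $f$ to $\hat f$ on $(\widehat X, \widehat V)$, and transplant the whole diagram of blow-ups and resolutions $\varrho \colon (\widetilde X_1, \widetilde V_1) \to (\widehat X_1, \widehat V_1)$, $\widetilde\upsilon_1 \colon (\widetilde X_1, \widetilde V_1) \to (X_1, V_1)$ used in the proof of Theorem \ref{thm:Z_S^1} — this part is completely insensitive to whether we deal with one subscheme or $q$ of them, since $\widetilde\varGamma_\upsilon$ and the identities \eqref{eqn:O(Gammau)}--\eqref{eqn:Z_S^1+Gammau} depend only on the geometry of $\upsilon$. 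Writing $\widehat Z_i = \upsilon^{\ast} Z_i$, Proposition \ref{prp:lambda_Z}, \ref{itm:lambdapullback}) gives $\lambda_{\bigvee_i \widehat Z_i} =_{\langle \widehat X \rangle} \lambda_{\bigvee_i Z_i} \circ \upsilon$ and, using \eqref{eqn:Z_S^1+Gammau} for each $i$ together with the max-over-$i$ manipulation as above, $\lambda_{\bigvee_i (\widehat Z_i)_{\upsilon^{\ast}\mathfrak{S}}^{[1]}} \circ \varrho =_{\langle \widetilde X_1 \rangle} \lambda_{\bigvee_i (Z_i)_{\mathfrak S}^{[1]}} \circ \widetilde\upsilon_1 + \lambda_{\widetilde\varGamma_\upsilon}$. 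Applying the Cartier case \eqref{eqn:Z_S^1-D} to $\hat f$, the $\widehat Z_i$ and $\upsilon^{\ast}\mathfrak{S}$ (whose base scheme is the Cartier divisor $E_0$), then substituting the analogues of \eqref{eqn:TO(Gammau)}--\eqref{eqn:Sfhat} — the $\widetilde\varGamma_\upsilon$ terms in $T_{f_{[1]}}$, in the proximity function, and in $\mathcal{N}([Z_{f'}],r)$ all cancel exactly as before, and $S_{\hat f}(r) \leq S_f(r)$ — yields the claimed inequality. The main obstacle, such as it is, is purely notational: one must check that $\max_i$ commutes with all the divisor-addition and pullback identities up to $=_{\langle X \rangle}$ (equivalently, that a divisor common to all the $Z_i$ or $(Z_i)_{\mathfrak S}^{[1]}$ can be factored out of the b-Weil function), which is immediate from the algebra of $\max$ and Proposition \ref{prp:lambda_Z}; there is no new analytic content beyond what Theorem \ref{thm:Z^(1)vee} and the proof of Theorem \ref{thm:Z_S^1} already supply.
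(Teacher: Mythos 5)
Your proposal is correct and is essentially the paper's own argument: the paper proves this theorem precisely by rerunning the three-case analysis of Theorem \ref{thm:Z_S^1} (base point free, $Z_{\mathfrak s}$ Cartier, general base locus via blow-up and resolution) with Theorem \ref{thm:Z^(1)vee} in place of Theorem \ref{thm:Z^(1)}, the only extra work being that common divisors such as $D_0$, $E_0$ and $\widetilde{\varGamma}_{\upsilon}$ factor out of $\max_i$ exactly as you note. Your write-up just makes explicit the bookkeeping the paper leaves to the reader, so no further comment is needed.
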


\begin{lemma}\label{lem:maxsum}
Let $Z_1, \dots, Z_q$ be arbitrary closed subschemes of $X$. Then
\[
\mathop{\max}_{\substack{I \subset J \\ |I| = \ell}} \sum_{i \in I} \lambda_{Z_i} =_{\langle X \rangle} \sum_{j=1}^{\ell} \mathop{\max}_{\substack{I \subset J \\ |I| = j}} \lambda_{Z_I}
\]
holds for each $1 \leq \ell \leq q$, where $|I|$ denotes the cardinality of $I$, and we hereinafter write $J = \{1, 2, \dots, q\}$, and $Z_I = \bigcap_{i \in I} Z_i$ for $I \subset J$.

In addition, let $\ell_0$ be the maximum of $|I|$ for all the subsets $I \subset J$ satisfying that $Z_I \neq \emptyset$. Then
\[
\sum_{i=1}^q \lambda_{Z_i} =_{\langle X \rangle} \mathop{\max}_{\substack{I \subset J \\ |I| = \ell_0}} \sum_{i \in I} \lambda_{Z_i}
.\]
\end{lemma}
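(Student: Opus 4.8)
The plan is to reduce both identities to an elementary fact about finite families of real numbers, and to transport it through the equivalence $=_{\langle X\rangle}$ using only the formal calculus of Weil functions from Proposition \ref{prp:lambda_Z}.

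First I would rewrite the right-hand side of the first identity. By Proposition \ref{prp:lambda_Z}, \ref{itm:lambdacap}), one has $\lambda_{Z_I} =_{\langle X\rangle} \min_{i\in I}\lambda_{Z_i}$ for each $I\subset J$, and since $\min$, $\max$ and $+$ are compatible with $=_{\langle X\rangle}$ (as recorded just after the definition of $\leq_{\langle X\rangle}$), it is enough to prove
\[
\max_{\substack{I\subset J\\ |I|=\ell}}\ \sum_{i\in I}\lambda_{Z_i}\;=_{\langle X\rangle}\;\sum_{j=1}^{\ell}\ \max_{\substack{I\subset J\\ |I|=j}}\ \min_{i\in I}\lambda_{Z_i}.
\]
Both sides are now built, by finitely many applications of $\min$, $\max$ and $+$, from the functions $\lambda_{Z_i}$, all of which are continuous on the single Zariski-open set $U = X\setminus\bigcup_{i\in J}\Supp Z_i$. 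Hence each side is represented by a continuous function on $U$, and it suffices to check that these two representatives coincide pointwise on $U$; no error term is then needed at all.

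Next I would verify the pointwise statement. Fix $x\in U$, put $a_i=\lambda_{Z_i}(x)\in\mathbb R$, and let $a_{(1)}\geq\cdots\geq a_{(q)}$ be the decreasing rearrangement of $a_1,\dots,a_q$. Two elementary observations finish it. First, every $\ell$-element subset sum of the $a_i$ is at most $a_{(1)}+\cdots+a_{(\ell)}$, with equality for the subset of indices carrying the $\ell$ largest values; so the left-hand side equals $\sum_{j=1}^{\ell}a_{(j)}$. Second, for each $j$ the minimum over a $j$-element subset is at most $a_{(j)}$, since such a subset must contain an index lying at position $\geq j$ in the sorted order, with equality for the top-$j$ subset; so $\max_{|I|=j}\min_{i\in I}a_i=a_{(j)}$ and the right-hand side also equals $\sum_{j=1}^{\ell}a_{(j)}$. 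This proves the first identity.

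For the second statement I would apply the first identity with $\ell=q$, noting that the left-hand side is then exactly $\sum_{i=1}^{q}\lambda_{Z_i}$ (the only $q$-element subset being $J$), so that
\[
\sum_{i=1}^{q}\lambda_{Z_i}\;=_{\langle X\rangle}\;\sum_{j=1}^{q}\ \max_{\substack{I\subset J\\ |I|=j}}\lambda_{Z_I}.
\]
For every $j>\ell_0$ and every $I$ with $|I|=j$ one has $Z_I=\emptyset$ by the very definition of $\ell_0$, i.e. $\mathcal I_{Z_I}=\mathcal O_X$, so any Weil function $\lambda_{Z_I}$ is continuous on all of $X$ and hence $\lambda_{Z_I}=_{\langle X\rangle}0$; thus all summands with $j>\ell_0$ vanish modulo $=_{\langle X\rangle}$. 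What remains is $\sum_{j=1}^{\ell_0}\max_{|I|=j}\lambda_{Z_I}$, which by the first identity applied with $\ell=\ell_0$ equals $\max_{|I|=\ell_0}\sum_{i\in I}\lambda_{Z_i}$, as claimed. There is no genuinely hard step here; the only care needed is the bookkeeping with $=_{\langle X\rangle}$, which is dispatched once by passing to the open set $U$, and the degenerate treatment of empty subschemes in the second part.
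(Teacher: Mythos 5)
Your proposal is correct and follows essentially the same route as the paper: both reduce the identity via Proposition \ref{prp:lambda_Z}, \ref{itm:lambdacap}) to the elementary rearrangement fact $\max_{|I|=\ell}\sum_{i\in I}a_i=\sum_{j=1}^{\ell}\max_{|I|=j}\min_{i\in I}a_i$, and both handle the second statement by discarding the terms with $|I|>\ell_0$, where $Z_I=\emptyset$ forces the corresponding Weil functions to be continuous on all of $X$ and hence negligible modulo $=_{\langle X\rangle}$. The only difference is that you spell out the pointwise sorted-order verification on the common Zariski-open set $U$, which the paper leaves implicit.
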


\begin{proof}
According to Proposition \ref{prp:lambda_Z}, \ref{itm:lambdacap}), we get
\[
\mathop{\max}_{\substack{I \subset J \\ |I| = \ell}} \sum_{i \in I} \lambda_{Z_i} =_{\langle X \rangle} \sum_{j=1}^{\ell} \mathop{\max}_{\substack{I \subset J \\ |I| = j}} \mathop{\min}_{i \in I} \lambda_{Z_i} =_{\langle X \rangle} \sum_{j=1}^{\ell} \mathop{\max}_{\substack{I \subset J \\ |I| = j}} \lambda_{Z_I}
.\]
Notice that $Z_I = \emptyset$ for all the subsets $I \subset J$ with $|I| > \ell_0$.
It implies that $\mathop{\min}_{i \in I} \lambda_{Z_i}$ is a continuous function on $X$, for all the subsets $I \subset J$ with $|I| > \ell_0$.
\[
\sum_{i=1}^q \lambda_{Z_i} =_{\langle X \rangle} \sum_{j=1}^q \mathop{\max}_{\substack{I \subset J \\ |I| = j}} \mathop{\min}_{i \in I} \lambda_{Z_i} =_{\langle X \rangle} \sum_{j=1}^{\ell_0} \mathop{\max}_{\substack{I \subset J \\ |I| = j}} \mathop{\min}_{i \in I} \lambda_{Z_i} =_{\langle X \rangle} \mathop{\max}_{\substack{I \subset J \\ |I| = \ell_0}} \sum_{i \in I} \lambda_{Z_i}
.\qedhere\]
\end{proof}

Thanks to Theorem \ref{thm:Z^(1)vee} and Lemma \ref{lem:maxsum}, we conclude General form of Algebro-geometric Version of Ahlfors' Lemma on Logarithmic Derivative (GAALD for short) for $1$-jets.
\begin{theorem}[GAALD for $1$-jets]\label{thm:Z^(1)veesum}
Let $(X, V)$ be a directed projective manifold, and let $Z_1, \dots, Z_q$ be closed subschemes of $X$.
Take an integer $1 \leq \ell \leq q$.
Let $f \colon (\mathbb{C}, T_{\mathbb{C}}) \to (X, V)$ be a non-constant holomorphic curve such that $f(\mathbb{C}) \not\subset \Supp (Z_1 + \dots + Z_q)$. Then,
\[
m_f(r, \bigvee_{\substack{I \subset J \\ |I| = \ell}} \sum_{i \in I} Z_i) + T_{f_{[1]}}(r, \mathcal{O}_{X_1}(\ell)) + \ell \mathcal{N}([Z_{f'}], r)
\leq m_{f_{[1]}}(r, \bigvee_{\substack{I \subset J \\ |I| = \ell}} \sum_{i \in I} Z_i^{(1)}) + S_f(r)
.\]
\end{theorem}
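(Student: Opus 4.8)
The plan is to derive this directly from Theorem \ref{thm:Z^(1)vee}, applied to an enlarged family of subschemes, and then to collapse the resulting inequalities using the combinatorial identity of Lemma \ref{lem:maxsum}. Write $J = \{1, \dots, q\}$ and, for $I \subset J$, set $Z_I = \bigcap_{i \in I} Z_i$ as in Lemma \ref{lem:maxsum}. First I would fix an integer $j$ with $1 \le j \le \ell$ and apply Theorem \ref{thm:Z^(1)vee} to the finite family $\{Z_I \mid I \subset J,\ |I| = j\}$ of closed subschemes of $X$: the hypothesis $f(\mathbb{C}) \not\subset \Supp(Z_1 + \dots + Z_q)$ is inherited, since $\Supp Z_I \subset \Supp Z_i \subset \Supp(Z_1 + \dots + Z_q)$ for any $i \in I$. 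Invoking Proposition \ref{prp:cap^(1)=^(1)cap} to identify $Z_I^{(1)} = \bigcap_{i \in I} Z_i^{(1)}$, this gives, for each such $j$,
\[
m_f\big(r, \bigvee_{\substack{I \subset J \\ |I| = j}} Z_I\big) + T_{f_{[1]}}(r, \mathcal{O}_{X_1}(1)) + \mathcal{N}([Z_{f'}], r) \le m_{f_{[1]}}\big(r, \bigvee_{\substack{I \subset J \\ |I| = j}} Z_I^{(1)}\big) + S_f(r)
.\]

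Next I would sum these $\ell$ inequalities over $j = 1, \dots, \ell$. On the left, $\sum_{j=1}^{\ell} T_{f_{[1]}}(r, \mathcal{O}_{X_1}(1)) = T_{f_{[1]}}(r, \mathcal{O}_{X_1}(\ell)) + O(1)$ by additivity of the characteristic function in the line bundle (the curvature-current description following \eqref{eqn:FMT}), while the $\mathcal{N}([Z_{f'}], r)$ terms add up to $\ell \mathcal{N}([Z_{f'}], r)$; the $\ell$ copies of $S_f(r)$ together with the accumulated $O(1)$ collapse to a single $S_f(r)$, since $S_f(r)$ absorbs constant multiples and bounded terms. It then remains to recognise the two proximity-function sums. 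Lemma \ref{lem:maxsum} on $X$ applied to $Z_1, \dots, Z_q$ gives $\max_{|I| = \ell} \sum_{i \in I} \lambda_{Z_i} =_{\langle X \rangle} \sum_{j=1}^{\ell} \max_{|I| = j} \lambda_{Z_I}$; composing with $f$, integrating over $\partial \bigtriangleup(r)$, and using \eqref{eqn:proximityvee} together with Proposition \ref{prp:lambda_Z}, \ref{itm:lambdasum}), this yields $m_f(r, \bigvee_{|I| = \ell} \sum_{i \in I} Z_i) = \sum_{j=1}^{\ell} m_f(r, \bigvee_{|I| = j} Z_I) + O(1)$. The identical argument on $X_1$ applied to $Z_1^{(1)}, \dots, Z_q^{(1)}$, combined once more with Proposition \ref{prp:cap^(1)=^(1)cap} to identify $\bigcap_{i \in I} Z_i^{(1)} = Z_I^{(1)}$, gives $m_{f_{[1]}}(r, \bigvee_{|I| = \ell} \sum_{i \in I} Z_i^{(1)}) = \sum_{j=1}^{\ell} m_{f_{[1]}}(r, \bigvee_{|I| = j} Z_I^{(1)}) + O(1)$; here $f_{[1]}$ is non-degenerate relative to all the subschemes in play because $\Supp Z_i^{(1)} \subset \pi_1^{-1}(\Supp Z_i)$. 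Substituting these two identities into the summed inequality produces exactly the asserted estimate.

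The genuine analytic content — Ahlfors' LLD and the infinitesimal Plücker computation — is entirely packaged inside Theorem \ref{thm:Z^(1)vee}, and the combinatorics of ``$\bigvee$ of sums versus sum of $\bigvee$'s'' is packaged inside Lemma \ref{lem:maxsum}, so I do not anticipate a substantive obstacle. The only point requiring care is organisational: one must track which space ($X$ or $X_1$) each invocation of Lemma \ref{lem:maxsum} and of Proposition \ref{prp:cap^(1)=^(1)cap} lives on, verify that the non-degeneracy hypothesis transfers both to every subfamily $\{Z_I\}_{|I| = j}$ and to their first jets, and check that the accumulated $O(1)$'s and the factor $\ell$ in front of $S_f(r)$ collapse back into a single $S_f(r)$. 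A minor subtlety is that some $Z_I$ may be the empty subscheme, whence $\lambda_{Z_I} =_{\langle X \rangle} 0$ and the corresponding terms are only $O(1)$; this is already accommodated by Lemma \ref{lem:maxsum}, which is stated for all $1 \le \ell \le q$ with no nonemptiness assumption.
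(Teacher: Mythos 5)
Your proposal is correct and follows essentially the same route as the paper's own proof: apply Theorem \ref{thm:Z^(1)vee} to the families $\{Z_I\}_{|I|=j}$ for $j=1,\dots,\ell$ (using Proposition \ref{prp:cap^(1)=^(1)cap} to identify $Z_I^{(1)}=\bigcap_{i\in I}Z_i^{(1)}$), sum the inequalities, and collapse the proximity sums via Lemma \ref{lem:maxsum}. The bookkeeping points you flag (transfer of the non-degeneracy hypothesis, absorption of $O(1)$'s and of $\ell\, S_f(r)$ into $S_f(r)$, empty $Z_I$'s) are all handled exactly as in the paper.
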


\begin{proof}
It follows from Lemma \ref{lem:maxsum} and Proposition \ref{prp:cap^(1)=^(1)cap} that
\begin{align}
m_f(r, \bigvee_{\substack{I \subset J \\ |I| = \ell}} \sum_{i \in I} Z_i) &= \sum_{j=1}^{\ell} m_f(r, \bigvee_{\substack{I \subset J \\ |I| = j}} Z_I) \label{eqn:veesum}\\
m_f(r, \bigvee_{\substack{I \subset J \\ |I| = \ell}} \sum_{i \in I} Z_i^{(1)}) &= \sum_{j=1}^{\ell} m_f(r, \bigvee_{\substack{I \subset J \\ |I| = j}} \bigcap_{i \in I} Z_i^{(1)})
= \sum_{j=1}^{\ell} m_f(r, \bigvee_{\substack{I \subset J \\ |I| = j}} Z_I^{(1)}) \label{eqn:veesum^(1)}
.\end{align}

By Theorem \ref{thm:Z^(1)vee}, for each $1 \leq j \leq q$, we have
\[
m_f(r, \bigvee_{\substack{I \subset J \\ |I| = j}} Z_I) + T_{f_{[1]}}(r, \mathcal{O}_{X_1}(1)) + \mathcal{N}([Z_{f'}], r) \leq m_f(r, \bigvee_{\substack{I \subset J \\ |I| = j}} Z_I^{(1)}) + S_f(r)
.\]
Combining all these inequalities for $1 \leq j \leq \ell$, we get
\[
\sum_{j=1}^{\ell} m_f(r, \bigvee_{\substack{I \subset J \\ |I| = j}} Z_I) + T_{f_{[1]}}(r, \mathcal{O}_{X_1}(\ell)) + \ell \mathcal{N}([Z_{f'}], r) \leq \sum_{j=1}^{\ell} m_f(r, \bigvee_{\substack{I \subset J \\ |I| = j}} Z_I^{(1)}) + S_f(r)
.\]
Substituting \eqref{eqn:veesum} and \eqref{eqn:veesum^(1)} into the above inequality, the theorem is verified.
\end{proof}

Likewise, we obtain the following theorem from Theorem \ref{thm:Z_S^1vee} and Lemma \ref{lem:maxsum}, and call it GAALD for $1$-jets with respect to linear system $\mathfrak{S}$.
\begin{theorem}\label{thm:Z_S^1veesum}
Let $(X, V)$ be a directed projective manifold, and let $Z_1, \dots, Z_q$ be closed subschemes of $X$.
Choose a linear system $\mathfrak{S}$ on $X$ satisfying that $\mathcal{I}_{Z_i} = \mathfrak{s}_{\supset Z_i}$ for each $1 \leq i \leq q$.
Take an integer $1 \leq \ell \leq q$.
Let $f \colon (\mathbb{C}, T_{\mathbb{C}}) \to (X, V)$ be a non-constant holomorphic curve such that $f_{[1]}(\mathbb{C}) \not\subset \Bs(\mathfrak{W}(X_1, \mathfrak{S}))$.
Suppose that $f(\mathbb{C}) \not\subset \Supp (Z_1 + \dots + Z_q)$.
Then we have
\begin{align*}
&m_f(r, \!\bigvee_{\substack{I \subset J \\ |I| = \ell}} \!\sum_{i \in I} Z_i) + T_{f_{[1]}}(r, \mathcal{O}_{X_1}(\ell)) + \ell \mathcal{N}([Z_{f'}], r) \\
\leq \, &m_{f_{[1]}}\big(r, \!\bigvee_{\substack{I \subset J \\ |I| = \ell}} \!\sum_{i \in I} (Z_i)_{\mathfrak S}^{[1]}\big) - \ell m_f(r, Z_{\mathfrak s}) + S_f(r)
.\end{align*}
\end{theorem}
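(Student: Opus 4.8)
The plan is to follow the proof of Theorem~\ref{thm:Z^(1)veesum} line for line, feeding in Theorem~\ref{thm:Z_S^1vee} in place of Theorem~\ref{thm:Z^(1)vee} and Proposition~\ref{prp:cap_S^k=_S^kcap} in place of Proposition~\ref{prp:cap^(1)=^(1)cap}. Write $J = \{1, 2, \dots, q\}$ and, for $I \subset J$, set $Z_I = \bigcap_{i \in I} Z_i$. First I would verify that $\mathfrak{S}$ is admissible for every such intersection, i.e. $\mathcal{I}_{Z_I} = \mathfrak{s}_{\supset Z_I}$: since $Z_I \subset Z_i$ for each $i \in I$ one has $\mathbb{S}_{\supset Z_i} \subset \mathbb{S}_{\supset Z_I}$, hence $\mathfrak{s}_{\supset Z_i} \subset \mathfrak{s}_{\supset Z_I}$, and therefore, using the hypothesis $\mathcal{I}_{Z_i} = \mathfrak{s}_{\supset Z_i}$, Definition~\ref{def:I_Z}, \ref{itm:Icap}), and the elementary inclusion $\mathfrak{s}_{\supset Z_I} \subset \mathcal{I}_{Z_I}$,
\[
\mathcal{I}_{Z_I} = \sum_{i \in I} \mathcal{I}_{Z_i} = \sum_{i \in I} \mathfrak{s}_{\supset Z_i} \subset \mathfrak{s}_{\supset Z_I} \subset \mathcal{I}_{Z_I},
\]
so equality holds. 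In particular $(Z_I)_{\mathfrak S}^{[1]}$ is defined, and Proposition~\ref{prp:cap_S^k=_S^kcap} applied to the family $\{Z_i\}_{i \in I}$ gives $\bigcap_{i \in I} (Z_i)_{\mathfrak S}^{[1]} = (Z_I)_{\mathfrak S}^{[1]}$. I would also note that $\Supp\big(\sum_{|I| = j} Z_I\big) \subset \Supp(Z_1 + \dots + Z_q)$ and that $f_{[1]}(\mathbb{C}) \not\subset \Bs(\mathfrak{W}(X_1, \mathfrak{S}))$ by hypothesis, so that the hypotheses of Theorem~\ref{thm:Z_S^1vee} hold for the family $\{Z_I : I \subset J,\ |I| = j\}$ for every $1 \leq j \leq q$ (empty $Z_I$ contribute only $O(1)$ and are harmless, exactly as in the unmarked case).

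With this in place, I would apply Theorem~\ref{thm:Z_S^1vee} to the family $\{Z_I : |I| = j\}$ for each $1 \leq j \leq \ell$, using $\bigcap_{i \in I}(Z_i)_{\mathfrak S}^{[1]} = (Z_I)_{\mathfrak S}^{[1]}$ to rewrite the right-hand side; summing these $\ell$ inequalities over $j = 1, \dots, \ell$ and contracting $\ell$ copies of $T_{f_{[1]}}(r, \mathcal{O}_{X_1}(1))$ into $T_{f_{[1]}}(r, \mathcal{O}_{X_1}(\ell))$ modulo $O(1)$ (exactly as in the proof of Theorem~\ref{thm:Z^(1)veesum}) yields
\[
\sum_{j=1}^{\ell} m_f\Big(r, \bigvee_{\substack{I \subset J \\ |I| = j}} Z_I\Big) + T_{f_{[1]}}(r, \mathcal{O}_{X_1}(\ell)) + \ell\, \mathcal{N}([Z_{f'}], r) \leq \sum_{j=1}^{\ell} m_{f_{[1]}}\Big(r, \bigvee_{\substack{I \subset J \\ |I| = j}} (Z_I)_{\mathfrak S}^{[1]}\Big) - \ell\, m_f(r, Z_{\mathfrak s}) + S_f(r).
\]

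It then remains to repackage the two telescoped sums of proximity functions into $b$-divisor form. Applying Lemma~\ref{lem:maxsum} on $X$ to $Z_1, \dots, Z_q$ (as in \eqref{eqn:veesum}) identifies $\sum_{j=1}^{\ell} m_f(r, \bigvee_{|I| = j} Z_I)$ with $m_f(r, \bigvee_{|I| = \ell} \sum_{i \in I} Z_i)$, and applying Lemma~\ref{lem:maxsum} on $X_1$ to the subschemes $(Z_1)_{\mathfrak S}^{[1]}, \dots, (Z_q)_{\mathfrak S}^{[1]}$ — whose $j$-fold intersections are the $(Z_I)_{\mathfrak S}^{[1]}$ by Proposition~\ref{prp:cap_S^k=_S^kcap} — identifies $\sum_{j=1}^{\ell} m_{f_{[1]}}(r, \bigvee_{|I| = j} (Z_I)_{\mathfrak S}^{[1]})$ with $m_{f_{[1]}}(r, \bigvee_{|I| = \ell} \sum_{i \in I} (Z_i)_{\mathfrak S}^{[1]})$. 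Substituting these two identities into the last display gives the assertion. Since Theorem~\ref{thm:Z_S^1vee}, Lemma~\ref{lem:maxsum} and Proposition~\ref{prp:cap_S^k=_S^kcap} are already available, the argument is essentially formal; the only point that needs genuine care is the admissibility check $\mathcal{I}_{Z_I} = \mathfrak{s}_{\supset Z_I}$ of the first paragraph, which is what licenses applying Theorem~\ref{thm:Z_S^1vee} to the families $\{Z_I : |I| = j\}$, together with keeping the bookkeeping straight so that the $Z_{\mathfrak s}$-term emerges with coefficient exactly $\ell$.
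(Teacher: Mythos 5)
Your proposal is correct and takes essentially the paper's route: the paper obtains Theorem \ref{thm:Z_S^1veesum} precisely by rerunning the proof of Theorem \ref{thm:Z^(1)veesum} with Theorem \ref{thm:Z_S^1vee} and Proposition \ref{prp:cap_S^k=_S^kcap} substituted for Theorem \ref{thm:Z^(1)vee} and Proposition \ref{prp:cap^(1)=^(1)cap}, summing over $j=1,\dots,\ell$ and repackaging via Lemma \ref{lem:maxsum}. Your explicit verification that $\mathcal{I}_{Z_I}=\mathfrak{s}_{\supset Z_I}$ for every intersection $Z_I$, which licenses applying Theorem \ref{thm:Z_S^1vee} to the families $\{Z_I : |I|=j\}$, is a point the paper leaves implicit and is a welcome addition.
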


\section{Applications}\label{sec:Applications}

In this section, we are going to show the applications of GAALD and its transform, and get the second main theorem type results for holomorphic curves.

\subsection{Applications of GAALD}

For arbitrary closed subschemes $Z_1, \dots, Z_q$ of $X$, set
\[
\ell_k \bydef \max \{|I| \mid I \subset J, \, \bigcap_{i \in  I} Z_i^{(k)} \neq \emptyset\} 
.\]
We easily see that $\ell_k$ is a non-increasing sequence for $k$.
Therefore, we conclude the following theorem from GAALD.
\begin{theorem}\label{thm:Z^(k)sum}
Let $(X, V)$ be a directed projective manifold, and let $Z_1, \dots, Z_q$ be closed subschemes of $X$.
For each $k \in \mathbb{Z}_{+}$, let $f \colon (\mathbb{C}, T_{\mathbb{C}}) \to (X, V)$ be a non-constant holomorphic curve such that $f(\mathbb{C}) \not\subset \Supp (Z_1 + \dots + Z_q)$.
Then,
\[
\sum_{i =1}^q m_f(r, Z_i) + T_{f_{[k]}}(r, \mathcal{O}_{X_k}(\bm{\ell})) + \sum_{j=0}^{k-1} \ell_j \mathcal{N}([Z_{f_{[j]}'}], r)
\leq m_{f_{[k]}}(r, \bigvee_{\substack{I \subset J \\ |I| = \ell_k}} \sum_{i \in I} Z_i^{(k)}) + S_f(r)
,\]
where $\bm{\ell} = (\ell_0, \dots, \ell_{k-1}) \in \mathbb{Z}_{+}^k$ and $\ell_j$ is as above for each $0 \leq j \leq k$.
\end{theorem}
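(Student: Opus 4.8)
The plan is to prove Theorem \ref{thm:Z^(k)sum} by induction on $k$, using the $1$-jet case of GAALD (Theorem \ref{thm:Z^(1)veesum}) as the engine together with four facts already established above: Lemma \ref{lem:maxsum}, which repackages sums of Weil functions as $\bigvee$-type b-Weil functions; Proposition \ref{prp:cap^(1)=^(1)cap}, which gives $Z_I^{(j)} = \bigcap_{i\in I}Z_i^{(j)}$ for $Z_I = \bigcap_{i\in I}Z_i$, so that $Z_I^{(j)} = \emptyset$ whenever $|I| > \ell_j$; the functoriality of the Demailly--Semple construction, whereby $(X_k,V_k)$ is the $1$-jet tower of $(X_{k-1},V_{k-1})$, $(f_{[k-1]})_{[1]} = f_{[k]}$, $\mathcal{O}_{(X_{k-1})_1}(1) = \mathcal{O}_{X_k}(1)$, and $(Z_i^{(k-1)})^{(1)} = Z_i^{(k)}$; and the inequalities $S_{f_{[j]}}(r) \le S_f(r)$ from \eqref{eqn:Sfk}. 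Throughout, $f(\mathbb{C})\not\subset\Supp(Z_1+\dots+Z_q)$ forces $f(\mathbb{C})\not\subset\Supp Z_i$ for every $i$, and since $\Supp Z_i^{(j)}\subset\pi_{0,j}^{-1}(\Supp Z_i)$ the corresponding hypothesis persists for each lift $f_{[j]}$. If every $\ell_j$ vanishes the statement is vacuous after the conventions $\mathcal{O}_{X_j}(0)=\mathcal{O}_{X_j}$ and $m_g(r,\emptyset)=O(1)$, so we may assume $\ell_0\ge 1$.

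For the base case $k=1$, Lemma \ref{lem:maxsum} gives $\sum_{i=1}^q m_f(r,Z_i) = m_f\big(r,\bigvee_{|I|=\ell_0}\sum_{i\in I}Z_i\big) + O(1)$; Theorem \ref{thm:Z^(1)veesum} with $\ell=\ell_0$ then bounds the latter, after subtracting $T_{f_{[1]}}(r,\mathcal{O}_{X_1}(\ell_0)) + \ell_0\mathcal{N}([Z_{f'}],r)$, by $m_{f_{[1]}}\big(r,\bigvee_{|I|=\ell_0}\sum_{i\in I}Z_i^{(1)}\big) + S_f(r)$. Applying Lemma \ref{lem:maxsum} to the subschemes $Z_i^{(1)}$ of $X_1$ and discarding the levels $j>\ell_1$, on which $\max_{|I|=j}\lambda_{Z_I^{(1)}}$ is bounded and continuous because $Z_I^{(1)}=\emptyset$, identifies this proximity term with $m_{f_{[1]}}\big(r,\bigvee_{|I|=\ell_1}\sum_{i\in I}Z_i^{(1)}\big)$ up to $O(1)$; absorbing $O(1)$ into $S_f(r)$ yields the case $k=1$ with $\bm\ell=(\ell_0)$.

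For the inductive step, assume the statement for $k-1$, so its left-hand side is at most $m_{f_{[k-1]}}\big(r,\bigvee_{|I|=\ell_{k-1}}\sum_{i\in I}Z_i^{(k-1)}\big) + S_f(r)$ with $\bm\ell'=(\ell_0,\dots,\ell_{k-2})$. Applying Theorem \ref{thm:Z^(1)veesum} to the directed projective manifold $(X_{k-1},V_{k-1})$, the subschemes $Z_1^{(k-1)},\dots,Z_q^{(k-1)}$, the integer $\ell=\ell_{k-1}$ and the curve $f_{[k-1]}$, and then using the identifications above together with $S_{f_{[k-1]}}(r)\le S_f(r)$, I obtain
\[
m_{f_{[k-1]}}\big(r,\textstyle\bigvee_{|I|=\ell_{k-1}}\sum_{i\in I}Z_i^{(k-1)}\big) + T_{f_{[k]}}(r,\mathcal{O}_{X_k}(\ell_{k-1})) + \ell_{k-1}\mathcal{N}([Z_{f_{[k-1]}'}],r) \le m_{f_{[k]}}\big(r,\textstyle\bigvee_{|I|=\ell_{k-1}}\sum_{i\in I}Z_i^{(k)}\big) + S_f(r).
\]
Substituting this into the induction hypothesis, merging the order-function terms via $\mathcal{O}_{X_k}(\bm\ell) = \pi_k^{\ast}\mathcal{O}_{X_{k-1}}(\bm\ell')\otimes\mathcal{O}_{X_k}(\ell_{k-1})$ (a special case of \eqref{eqn:Oa}), $\pi_k\circ f_{[k]}=f_{[k-1]}$ and additivity of $T_f(r,\cdot)$, and finally lowering the index from $\ell_{k-1}$ to $\ell_k$ on the right exactly as in the base case, completes the induction.

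The one genuinely delicate point — the main obstacle — is the combinatorial bookkeeping with the b-divisors $\bigvee_{|I|=\ell}\sum_{i\in I}Z_i^{(\bullet)}$: one has to verify that the telescoping of Lemma \ref{lem:maxsum} is compatible with the jet operation $(\cdot)^{(1)}$ at every level, which is exactly what Proposition \ref{prp:cap^(1)=^(1)cap} provides, and that the superscript on the right really drops from $\ell_{k-1}$ to $\ell_k$ because the discarded terms are supported on the empty set. Everything else — preservation of the support hypothesis under canonical lifting, multiplicativity of characteristic functions, and $S_{f_{[j]}}(r)\le S_f(r)$ — is immediate from the cited results.
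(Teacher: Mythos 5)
Your proposal is correct and follows essentially the same route as the paper: the paper also applies Theorem \ref{thm:Z^(1)veesum} with $\ell=\ell_{j-1}$ to $f_{[j-1]}$ on $(X_{j-1},V_{j-1})$ with the subschemes $Z_i^{(j-1)}$, lowers the index from $\ell_{j-1}$ to $\ell_j$ via Lemma \ref{lem:maxsum} (the discarded levels being empty by the definition of $\ell_j$, using Proposition \ref{prp:cap^(1)=^(1)cap}), and then chains the resulting inequalities over $1\leq j\leq k$ together with \eqref{eqn:Sfk}, which is exactly your induction phrased as a single telescoping sum. Your handling of the support hypothesis for the lifts and the merging of the characteristic functions into $T_{f_{[k]}}(r,\mathcal{O}_{X_k}(\bm{\ell}))$ matches the paper's argument.
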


\begin{proof}
Observe that $f_{[j-1]}(\mathbb{C}) \not\subset \Supp \pi_{0, j-1}^{\ast} (Z_1 + \dots + Z_q)$, for every $1 \leq j \leq k$.
Hence $f_{[j-1]}(\mathbb{C}) \not\subset \Supp (Z_1^{(j-1)} + \dots + Z_q^{(j-1)})$, since $\Supp (Z_1^{(j-1)} + \dots + Z_q^{(j-1)}) \subset \Supp \pi_{0, j-1}^{\ast} (Z_1 + \dots + Z_q)$.
Think of $(X_j, V_j)$ as the Demailly-Semple $1$-jet tower of $(X_{j-1}, V_{j-1})$ and apply Theorem \ref{thm:Z^(1)veesum} to $f_{[j-1]}$ with $\ell = \ell_{j-1}$.
According to Lemma \ref{lem:maxsum} and \eqref{eqn:Sfk},
\begin{align*}
&m_f(r, \hspace{-.5eM} \bigvee_{\substack{I \subset J \\ |I| = \ell_{j-1}}} \hspace{-.5eM} \sum_{i \in I} Z_i^{(j-1)}) + T_{f_{[j]}}(r, \mathcal{O}_{X_1}(\ell_{j-1})) + \ell_{j-1} \mathcal{N}([Z_{f_{[j-1]}'}], r) \\
\leq \, & m_{f_{[j]}}(r, \hspace{-.5eM} \bigvee_{\substack{I \subset J \\ |I| = \ell_{j-1}}} \hspace{-.5eM} \sum_{i \in I} Z_i^{(j)}) + S_{f_{[j-1]}}(r)
\leq m_{f_{[j]}}(r, \!\bigvee_{\substack{I \subset J \\ |I| = \ell_j}} \!\sum_{i \in I} Z_i^{(j)}) + S_f(r)
\end{align*}
Combining all the inequalities for $1 \leq j \leq k$, we thus have
\[
m_f(r, \bigvee_{\substack{I \subset J \\ |I| = \ell_0}} \sum_{i \in I} Z_i) + T_{f_{[k]}}(r, \mathcal{O}_{X_k}(\bm{\ell})) + \sum_{j=0}^{k-1} \ell_j \mathcal{N}([Z_{f_{[j]}'}], r)
\leq m_{f_{[k]}}(r, \bigvee_{\substack{I \subset J \\ |I| = \ell_k}} \sum_{i \in I} Z_i^{(k)}) + S_f(r)
.\]
Therefore, the theorem follows from $\displaystyle\sum_{i =1}^q m_f(r, Z_i) = m_f(r, \bigvee_{\substack{I \subset J \\ |I| = \ell_0}} \sum_{i \in I} Z_i)$.
\end{proof}

Next, recall the simple normal crossings divisors on a complex manifold $X$.
One says a divisor $D = \sum_{i=1}^q D_i$ is simple normal crossing, if $D$ is normal crossing and each irreducible component $D_i$ is non-singular.
In particular, the hyperplanes $H_1, \dots, H_q$ of $\mathbb{P}^n$ are said to be in general position, if
\[
\dim \bigcap_{i \in  I} H_i \leq n - |I|
,\]
for any subset $I \subset J$, where $\dim \emptyset = - \infty$ by convention.
Obviously, the hyperplanes $H_1, \dots, H_q$ of $\mathbb{P}^n$ are in general position, if and only if $D = \sum_{i=0}^q H_i$ is a simple normal crossings divisor on $\mathbb{P}^n$.

For the simple normal crossings divisor $D$ in general, we easily check that $\ell_0 = n$, and $\ell_j = n-1$ for each $j \geq 1$.
So is it, even for the hyperplanes of $\mathbb{P}^n$ in general position.
On the other hand, the Wronskian in $\mathbb{P}^n$ is indeed associated to the unique global section $\overline{\omega} \in H^0(X_n, \mathcal{O}_{X_n}(\bm{a}^n) \otimes \pi_{0, n}^{\ast} \mathcal{O}_{\mathbb{P}^n}(n+1))$.
In the next section, we will show that, compared with Theorem \ref{thm:Z^(k)sum}, Theorem \ref{thm:Z_S^1veesum} is a more convenient tool to deal with SMT type questions.

\subsection{Applications of GAALD with respect to a linear system}

Let $Z_1$ and $Z_2$ be two closed subschemes of $X$.
Take a linear system $\mathfrak{S} \subset |L|$ on $X$ such that both $\mathfrak{S}_{\supset Z_1}$ and $\mathfrak{S}_{\supset Z_2}$ are non-empty.
For any positive integer $k$, we introduce the notation $\mathfrak{W}(X_k, Z_1, Z_2, \mathfrak{S})$ and $\widebar{\mathfrak W}(X_k, Z_1, Z_2, \mathfrak{S})$ for the linear systems on $X_k$ corresponding to the linear subspace
\[
\mathbb{W}(X_k, Z_1, Z_2, \mathfrak{S}) \bydef \Span \{\omega(\sigma_0, \sigma_1, \dots, \sigma_k) \mid \sigma_0 \in \mathbb{S}_{\supset Z_1}, \, \sigma_1 \in \mathbb{S}_{\supset Z_2}, \, \sigma_2, \dots, \sigma_k \in \mathbb{S}\}
\]
of $H^0(X_k, \mathcal{O}_{X_k}(k') \otimes \pi_{0, k}^{\ast} L^{k+1})$, and the linear subspace
\[
\widebar{\mathbb W}(X_k, Z_1, Z_2, \mathfrak{S}) \bydef \Span \{\widebar{\omega}(\sigma_0, \sigma_1, \dots, \sigma_k) \mid \sigma_0 \in \mathbb{S}_{\supset Z_1}, \, \sigma_1 \in \mathbb{S}_{\supset Z_2}, \, \sigma_2, \dots, \sigma_k \in \mathbb{S}\}
\]
of $H^0(X_k, \mathcal{O}_{X_k}(\bm{a}^k) \otimes \pi_{0, k}^{\ast} L^{k+1})$, respectively.
Moreover, we denote the base ideal sheaf of $\mathfrak{W}(X_k, Z_1, Z_2, \mathfrak{S})$ and $\widebar{\mathfrak W}(X_k, Z_1, Z_2, \mathfrak{S})$ by $\mathfrak{w}(X_k, Z_1, Z_2, \mathfrak{S})$ and $\widebar{\mathfrak w}(X_k, Z_1, Z_2, \mathfrak{S})$, respectively.

\begin{remark}
We see that $\mathbb{W}(X_k, D, D, \mathfrak{S}) = 0$ and $\widebar{\mathbb W}(X_k, D, D, \mathfrak{S}) = 0$, for any divisor $D \in \mathfrak{S}$.
\end{remark}

Now, we are going to investigate the closed subscheme of $X_k$ with the ideal sheaf $\widebar{\mathfrak w}(X_k, Z_1, Z_2, \mathfrak{S})$, denoted by $(Z_1, Z_2)_{\mathfrak{S}}^{[k]}$.
We get the following lemmas.
\begin{lemma}\label{lem:(D,Z)_S^1_WD^k}
Let $Z$ be a closed subscheme of $X$.
Let $\mathfrak{S} \subset |L|$ be linear systems on $X$ satisfying that $\mathfrak{S}_{\supset Z} \neq \emptyset$.
For any $k \in \mathbb{Z}_{+}$ and for each $D \in \mathfrak{S}$ except $D = Z$, we have
\begin{gather*}
\widebar{\mathfrak w}((X_1)_k, \mathfrak{W}(X_1, D, \mathfrak{S})) = \pi_{0, k+1}^{-1} \mathcal{I}_D^k \cdot \widebar{\mathfrak w}(X_{k+1}, D, \mathfrak{S}), \\
\widebar{\mathfrak w}((X_1)_k, (D, Z)_{\mathfrak{S}}^{[1]}, \mathfrak{W}(X_1, D, \mathfrak{S})) = \pi_{0, k+1}^{-1} \mathcal{I}_D^k \cdot \widebar{\mathfrak w}(X_{k+1}, D, Z, \mathfrak{S})
.\end{gather*}
In other words,
\begin{align*}
Z_{\widebar{\mathfrak w}((X_1)_k, \mathfrak{W}(X_1, D, \mathfrak{S}))} &= D_{\mathfrak{S}}^{[k+1]} + k \pi_{0, k+1}^{\ast} D, \\
((D, Z)_{\mathfrak{S}}^{[1]})_{\mathfrak{W}(X_1, D, \mathfrak{S})}^{[k]} &= (D, Z)_{\mathfrak{S}}^{[k+1]} + k \pi_{0, k+1}^{\ast} D
.\end{align*}
\end{lemma}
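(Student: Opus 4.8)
The plan is to reduce everything to a local computation on an affine chart, in the same spirit as the proofs of Proposition \ref{prp:cap^(1)=^(1)cap} and Proposition \ref{prp:Z^(1)<Z_S^1}. First I would fix an affine open subset $U \subset X$ on which $L_{\restriction U}$ is trivialized, and recall that the Demailly-Semple tower $(X_1)_k$ of $(X_1, V_1)$ coincides with $X_{k+1}$ (viewing $(X_{k+1}, V_{k+1})$ as the $k$-th tower over $(X_1, V_1)$), with $\bar{\pi}$-compositions matching $\pi_{1, k+1}$. The linear system $\mathfrak{W}(X_1, D, \mathfrak{S})$ on $X_1$ lives in $|\mathcal{O}_{X_1}(1) \otimes \pi_1^{\ast} L^2|$; I would pick a basis of $\mathbb{W}(X_1, D, \mathfrak{S})$ of the form $\{\omega(\varsigma_{i'}, \varsigma_i)\}$ with $\varsigma_{i'} \in \mathbb{S}_{\supset D}$ (a one-dimensional space, so really $\varsigma_{i'} = \varsigma_0$ where $D = (\varsigma_0)$) and $\varsigma_i \in \mathbb{S}$, and then compute the higher Wronskians $\bar{\omega}_{X_{k+1}}$ of these sections over $X_1$.

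The key identity to establish is a "Wronskian of Wronskians" formula: on $U$, for sections $\varsigma_0, \varsigma_{i_1}, \dots, \varsigma_{i_{k}}$, one has
\[
W_U^{V_1}\big(W_U^V(\varsigma_0, \varsigma_{i_1}), \dots, W_U^V(\varsigma_0, \varsigma_{i_{k}})\big) = (\varsigma_{0,U})^{k} \cdot W_U^V(\varsigma_0, \varsigma_{i_1}, \dots, \varsigma_{i_{k}}) \cdot (\text{unit})
\]
up to the canonical sections encoding the shift between $\mathcal{O}_{X_{k+1}}(\bm{a})$-normalizations. This is a Wronskian-style identity for the composite derivation $\dif_{V_1}^{[\bullet]}$ acting on the two-term Wronskians $\dif_V^{[0]}\varsigma_0 \cdot \dif_V^{[\ell]}\varsigma_i - \dif_V^{[\ell]}\varsigma_0 \cdot \dif_V^{[0]}\varsigma_i$; pulling out the common factor $\dif_V^{[0]}\varsigma_0 = \varsigma_{0,U}$ by column operations produces the extra $(\varsigma_{0,U})^{k}$ and leaves a $(k+1)\times(k+1)$ Wronskian determinant that is $W_U^V(\varsigma_0, \varsigma_{i_1}, \dots, \varsigma_{i_k})$ up to lower-order terms that vanish modulo the relevant filtration — hence the identity holds exactly for the $\bar{\omega}$-normalized (i.e. $\mathcal{O}(\bm{a})$) sections. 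From this, taking the span over all choices of $\varsigma_{i_1}, \dots, \varsigma_{i_k} \in \mathbb{S}$ and passing to base ideals, I get $\widebar{\mathfrak w}((X_1)_k, \mathfrak{W}(X_1, D, \mathfrak{S})) = \pi_{0,k+1}^{-1}\mathcal{I}_D^k \cdot \widebar{\mathfrak w}(X_{k+1}, D, \mathfrak{S})$, which in the language of closed subschemes reads $Z_{\widebar{\mathfrak w}((X_1)_k, \mathfrak{W}(X_1, D, \mathfrak{S}))} = D_{\mathfrak S}^{[k+1]} + k\pi_{0,k+1}^{\ast} D$.

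For the second identity I would repeat the argument with the first section $\varsigma_{i_1}$ additionally required to lie in $\mathbb{S}_{\supset Z}$: the base ideal of $\mathfrak{W}(X_1, D, \mathfrak{S})$ restricted to "along $(D,Z)_{\mathfrak S}^{[1]}$" is, by definition of $(D,Z)_{\mathfrak S}^{[1]}$ as the base scheme of $\widebar{\mathfrak W}(X_1, D, Z, \mathfrak S) = \widebar{\mathfrak W}(X_1, \mathfrak{W}(X_1,D,\mathfrak S)_{\supset \cdot})$, exactly generated by the $\bar\omega(\varsigma_0, \varsigma_{i_1})$ with $\varsigma_{i_1} \in \mathbb{S}_{\supset Z}$; feeding these as the "first slot" of the outer Wronskian and applying the same Wronskian-of-Wronskians factorization yields $\widebar{\mathfrak w}((X_1)_k, (D,Z)_{\mathfrak S}^{[1]}, \mathfrak{W}(X_1,D,\mathfrak S)) = \pi_{0,k+1}^{-1}\mathcal{I}_D^k \cdot \widebar{\mathfrak w}(X_{k+1}, D, Z, \mathfrak S)$, i.e. $((D,Z)_{\mathfrak S}^{[1]})^{[k]}_{\mathfrak{W}(X_1,D,\mathfrak S)} = (D,Z)_{\mathfrak S}^{[k+1]} + k\pi_{0,k+1}^{\ast}D$. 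Finally I would note that these ideal-sheaf equalities are checked on the affine cover $\{U\}$ and glue, using that $\mathcal{I}_D$ is locally principal (generated by $\varsigma_{0,U}$) so the factor $(\varsigma_{0,U})^k$ globalizes to $\pi_{0,k+1}^{-1}\mathcal{I}_D^k \cdot \mathcal{O}_{X_{k+1}}$, and the exclusion $D \neq Z$ guarantees $\mathbb{W}(X_1, D, \mathfrak S) \neq 0$ so that all the linear systems involved are actually defined (otherwise $\mathbb{W}(X_1, D, D, \mathfrak S) = 0$, as noted in the preceding remark).

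The main obstacle will be establishing the Wronskian-of-Wronskians identity cleanly, keeping precise track of the filtration-normalization shift between the $\omega$ (weight $k'$) and $\bar\omega$ (multidegree $\bm{a}^k$) pictures — that is, verifying that the "lower-order correction terms" in the determinant expansion genuinely fall into the next stage of the $\bar{F}$-filtration and therefore do not affect the equality of ideal sheaves $\widebar{\mathfrak w}$. Concretely this amounts to a careful Faà di Bruno bookkeeping for the iterated derivation $\dif_{V_1}^{[j]} \circ (\text{two-term Wronskian in } \dif_V)$, analogous to Brotbek's verification that $W_U^V(\sigma_0,\dots,\sigma_k) \in \bar{F}^{\bm{a}^k}\mathcal{E}_{k,k'}V^{\dual}$; I expect this to be the only nontrivial point, with the gluing and the $\mathcal{I}_D^k$-factor extraction being routine.
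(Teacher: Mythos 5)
Your plan follows the same route as the paper's proof: reduce to an affine chart on which $L$ is trivialized, view $(X_1)_k$ as $X_{k+1}$, establish a Wronskian-of-Wronskians factorization, pass to base ideals, use that $\mathcal{I}_D$ is locally principal to globalize the factor $(\varsigma_{0,U})^k$ into $\pi_{0,k+1}^{-1}\mathcal{I}_D^k$, and obtain the second identity by restricting the first slot, i.e.\ by the observation that $\omega(\sigma,\varsigma_0)\in\mathbb{W}(X_1,D,Z,\mathfrak S)$ exactly when $\varsigma_0\in\mathbb{S}_{\supset Z}$. That skeleton is correct and is the paper's.

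The gap is in the one step you defer, which is the entire content of the lemma, and the justification you sketch for it would not suffice as stated. You argue that after pulling out $\varsigma_{0,U}$ the remaining determinant is the higher-order Wronskian ``up to lower-order terms that vanish modulo the relevant filtration, hence the identity holds exactly for the $\bar\omega$-normalized sections.'' An equality of base ideals cannot be deduced from an identity valid only modulo a deeper stage of the $\widebar{F}$-filtration: the passage from $\omega$ to $\widebar\omega$ is multiplication by the canonical section $\sigma_{\bm{b}^k\cdot\varGamma}$ as in \eqref{eqn:bkGamma}, not a quotient that kills error terms, so uncancelled terms would in general alter the span and hence the base locus, and you would only get an inclusion of ideal sheaves rather than the claimed equalities. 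What actually makes the argument work (and what the paper verifies) is that the identity is \emph{exact}: by the Leibniz rule, $\dif^{[\ell]}W_U(\sigma,\varsigma)=\sigma_U\dif^{[\ell+1]}\varsigma-\varsigma_U\dif^{[\ell+1]}\sigma+\sum_{j=1}^{\ell}c_{\ell,j}\,\dif^{[\ell-j+1]}\sigma\,\dif^{[j]}\varsigma$ with integer $c_{\ell,j}$, and after bordering the $(k+1)\times(k+1)$ determinant of the $\dif^{[\ell]}W_U(\sigma,\varsigma_i)$ by the row $(\dif^{[0]}\sigma,\dif^{[0]}\varsigma_0,\dots,\dif^{[0]}\varsigma_k)$ and the column $(\dif^{[0]}\sigma,0,\dots,0)^{T}$ (at the cost of a factor $\sigma_U^{-1}$), the correction terms are precisely linear combinations of the other rows; row operations turn each row into $\sigma_U\cdot(\dif^{[\ell+1]}\sigma,\dif^{[\ell+1]}\varsigma_0,\dots,\dif^{[\ell+1]}\varsigma_k)$, and extracting $\sigma_U$ from these $k+1$ rows gives exactly $\sigma_U^{k}\,W_U(\sigma,\varsigma_0,\dots,\varsigma_k)$. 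The only remaining factor is the power $t^{k+1}$ of the local trivializing function of $\mathcal{O}_{X_1}(1)$ that appears when one compares jets through $\pr_{k,1}\colon J_{k+1}V\setminus(X\times\{0\})\to J_kV_1$; it is a unit on the chart, so it is harmless for base ideals --- no filtration argument is needed or available. Finally, note a bookkeeping slip in your displayed identity: with only $k$ inner Wronskians on the left the factor would be $(\varsigma_{0,U})^{k-1}$; to land on $\pi_{0,k+1}^{-1}\mathcal{I}_D^{k}\cdot\widebar{\mathfrak w}(X_{k+1},D,\mathfrak S)$ you must take the order-$k$ Wronskian of the $k+1$ sections $\omega(\sigma,\varsigma_0),\dots,\omega(\sigma,\varsigma_k)$, which then produces $\sigma_U^{k}$ and the order-$(k+1)$ Wronskian $W_U(\sigma,\varsigma_0,\dots,\varsigma_k)$, consistent with your intended conclusion.
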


\begin{proof}
Take an open subset $U \subset X$ on which $L$ is trivialized.
Let $\sigma$ be a canonical section of $\mathcal{O}(D)$.
Applying the generalized Leibniz rule, for each $0 \leq j \leq k$ and for any $\varsigma \in \mathbb{S} \subset H^0(X, L)$,
\begin{align*}
\dif^{[\ell]} W_U(\sigma, \varsigma) &= \dif^{[\ell]} \left(\sigma_U\dif^{[1]}\varsigma - \varsigma_U\dif^{[1]}\sigma\right) \\
&= \sum_{j=0}^\ell \binom{\ell}{j} \left(\dif^{[\ell-j]}\sigma\dif^{[j+1]}\varsigma - \dif^{[\ell-j]}\varsigma\dif^{[j+1]}\sigma\right) \\
&= \sigma_U\dif^{[\ell+1]}\varsigma - \varsigma_U\dif^{[\ell+1]}\sigma + \sum_{j=1}^\ell \left(\binom{\ell}{j-1} - \binom{\ell}{j}\right) \dif^{[\ell-j+1]}\sigma\dif^{[j]}\varsigma
,\end{align*}
where we abusively write $\varsigma_U \bydef \dif_{U, V}^{[0]}\varsigma_U = p_{k, V}^{\ast} \varsigma_U$ and $\dif^{[j]}\varsigma \bydef \dif_{U, V}^{[j]}\varsigma_U$ for each $0 \leq j \leq \ell$.
Furthermore, take an affine open subset $U_1 \subset \pi_1^{-1}(U) \subset X_1$, and choose a local holomorphic section $\tau$ of $\mathcal{O}_{X_1}(1)(\pi_1^{-1}(U))$ such that $\tau$ is nowhere zero on $U_1$, namely, $\tau_{\restriction U_1}^{\scriptscriptstyle \vee} \in \mathcal{O}_{X_1}(1)(U_1)$.
We then obtain a local trivialization of $\mathcal{O}_{X_1}(1)(U_1)$ given by
\begin{align*}
\tau_{\restriction U_1}^{\scriptscriptstyle \vee} \otimes \colon \mathcal{O}_{X_1}(1)(U_1) &\longrightarrow \mathcal{O}(U_1) \\
\omega &\longmapsto \tau_{\restriction U_1}^{\scriptscriptstyle \vee} \otimes \omega
.\end{align*}
Let $t$ be a meromorphic function on the total space of $V_{\restriction U}$ satisfying that $\dfrac{1}{t} = (\pi_1)_{\ast} \tau^{\scriptscriptstyle \vee}$.
Apparently, $t$ is holomorphic and nowhere zero on $\widetilde{U}_1$, where $\widetilde{U}_1$ is an open subset of the total space of $V$ such that $\textnormal{P}(\widetilde{U}_1) = U_1$.
Thus the local trivializations of $L$ and $\mathcal{O}_{X_1}(1)$ induce a trivialization of $\mathcal{O}_{X_1}(1) \otimes \pi_1^{\ast} L^2$ on $U_1$,
which associates $\omega_{U_1}(\sigma_0, \sigma_1)$ to $\omega(\sigma_0, \sigma_1) \in H^0(X_1, \mathcal{O}_{X_1}(1) \otimes \pi_1^{\ast} L^2)$.
Consider the natural map
\begin{align*}
\pr_{k, \ell} \colon J_{k+\ell}V \setminus (X \times \{0\}) &\longrightarrow J_kV_{\ell} \\
j_{k+\ell}f &\longmapsto j_kf_{[\ell]}
.\end{align*}
We see that $\omega_{U_1}(\sigma_0, \sigma_1) \circ \pr_{0, 1} = t W_U(\sigma_0, \sigma_1)$ on $\widetilde{U}_1 \setminus (U \times \{0\})$.
Therefore, for any $\varsigma_0, \dots, \varsigma_k \in \mathbb{S}$, we get
\begin{align*}
&W_{U_1}(\omega(\sigma, \varsigma_0), \dots, \omega(\sigma, \varsigma_k)) \circ \pr_{k, 1} \\
=& t^{k+1} W_{\widetilde{U}_1 \setminus (U \times \{0\})}(W_U(\sigma, \varsigma_0), \dots, W_U(\sigma, \varsigma_k)) \\
=& t^{k+1}
\begin{vmatrix}
\dif^{[0]}W_U(\sigma, \varsigma_0) & \cdots & \dif^{[0]}W_U(\sigma, \varsigma_k) \\
\vdots & \ddots & \vdots \\
\dif^{[k]}W_U(\sigma, \varsigma_0) & \cdots & \dif^{[k]}W_U(\sigma, \varsigma_k)
\end{vmatrix} \\
=& t^{k+1} \sigma_U^{-1}
\begin{vmatrix}
\dif^{[0]}\sigma & \dif^{[0]}\varsigma_0 & \cdots & \dif^{[0]}\varsigma_k \\
0 & \dif^{[0]}W_U(\sigma, \varsigma_0) & \cdots & \dif^{[0]}W_U(\sigma, \varsigma_k) \\
\vdots & \vdots & \ddots & \vdots \\
0 & \dif^{[k]}W_U(\sigma, \varsigma_0) & \cdots & \dif^{[k]}W_U(\sigma, \varsigma_k)
\end{vmatrix} \\
=& t^{k+1} \sigma_U^{-1}
\begin{vmatrix}
\dif^{[0]}\sigma & \dif^{[0]}\varsigma_0 & \cdots & \dif^{[0]}\varsigma_k \\
\dif^{[0]}\sigma\dif^{[1]}\sigma & \dif^{[0]}\sigma\dif^{[1]}\varsigma_0 & \cdots & \dif^{[0]}\sigma\dif^{[1]}\varsigma_k \\
\vdots & \vdots & \ddots & \vdots \\
\dif^{[0]}\sigma\dif^{[k+1]}\sigma & \dif^{[0]}\sigma\dif^{[k+1]}\varsigma_0 & \cdots & \dif^{[0]}\sigma\dif^{[k+1]}\varsigma_k
\end{vmatrix} \\
=& t^{k+1} \sigma_U^k W_U(\sigma, \varsigma_0, \dots, \varsigma_k)
.\end{align*}
Notice that all such open subsets $U$ give an open covering of $X$, and all such open subsets $U_1$ give an affine open covering of $X_1$.
Accordingly,
\[
\widebar{\omega}(\omega(\sigma, \varsigma_0), \dots, \omega(\sigma, \varsigma_k)) = \pi_{0, k+1}^{\ast} \sigma^k \widebar{\omega}(\sigma, \varsigma_0, \dots, \varsigma_k)
,\]
which amounts to saying that
\[
\widebar{\mathfrak W}((X_1)_k, \mathfrak{W}(X_1, D, \mathfrak{S})) = k \pi_{0, k+1}^{\ast} D \cdot \widebar{\mathfrak W}(X_{k+1}, D, \mathfrak{S})
.\]
Consequently,
\[
\widebar{\mathfrak w}((X_1)_k, \mathfrak{W}(X_1, D, \mathfrak{S})) = \pi_{0, k+1}^{-1} \mathcal{I}_D^k \cdot \widebar{\mathfrak w}(X_{k+1}, D, \mathfrak{S})
.\]
Observe that $\omega(\sigma, \varsigma_0) \in \mathbb{W}(X_1, D, Z, \mathfrak{S})$ if and only if $\varsigma_0 \in \mathbb{S}_{\supset Z}$.
Hence
\[
\widebar{\mathfrak w}((X_1)_k, (D, Z)_{\mathfrak{S}}^{[1]}, \mathfrak{W}(X_1, D, \mathfrak{S})) = \pi_{0, k+1}^{-1} \mathcal{I}_D^k \cdot \widebar{\mathfrak w}(X_{k+1}, D, Z, \mathfrak{S})
.\qedhere\]
\end{proof}

We make a convention that $(D, Z)_{\mathfrak{S}}^{[k]} = X_k$ if $D = Z$.
Using Crofton's formula, we calculate the average of the proximity functions as follows.
\begin{lemma}\label{lem:(D,Z)_S^kavg}
Let $(X, V)$ be a directed projective manifold.
For each $0 \leq k \leq N-1$, let $f \colon (\mathbb{C}, T_{\mathbb{C}}) \to (X, V)$ be a non-constant holomorphic curve such that $f_{[k]}(\mathbb{C}) \not\subset \Bs(\mathfrak{W}(X_k, \mathfrak{S}))$.
For any linear system $\mathfrak{S} \subset |L|$ on $X$, we have
\[
\int_{D \in \mathfrak{S}} \!m_f(r, D_{\mathfrak{S}}^{[k]}) \dif \rho(D) = m_f(r, Z_{\widebar{\mathfrak w}(X_k, \mathfrak{S})}) + O(1)
,\]
where $N = \dim \mathfrak{S}$ and $\dif \rho(D)$ is the unique measure on $\mathfrak{S} \cong (\mathbb{P}^n)^{\dual}$ invariant under the unitary group and satisfying $\rho(\mathfrak{S}) = 1$.\\
Take a closed subscheme $Z$ of $X$.
Choose a linear system $\mathfrak{S} \subset |L|$ on $X$ such that $\mathcal{I}_{Z} = \mathfrak{s}_{\supset Z}$.
Then
\[
\int_{D \in \mathfrak{S}} \!m_f(r, (D, Z)_{\mathfrak{S}}^{[k]}) \dif \rho(D) = m_f(r, Z_{\mathfrak{S}}^{[k]}) + O(1)
\]
holds for any non-constant holomorphic curve satisfying that $f(\mathbb{C}) \not\subset \Supp Z$ besides.
What's more, let $Z_1, \dots, Z_q$ be closed subschemes of $X$.
Choose a linear system $\mathfrak{S} \subset |L|$ on $X$ such that $\mathcal{I}_{Z_i} = \mathfrak{s}_{\supset Z_i}$ for each $1 \leq i \leq q$.
\[
\int_{D \in \mathfrak{S}} \!m_f(r, \!\bigvee_{i=1}^q (D, Z_i)_{\mathfrak{S}}^{[k]}) \dif \rho(D) = m_f(r, \!\bigvee_{i=1}^q (Z_i)_{\mathfrak{S}}^{[k]}) + O(1)
\]
holds for any non-constant holomorphic curve satisfying that $f(\mathbb{C}) \not\subset \Supp (Z_1 + \dots + Z_q)$ besides.
\end{lemma}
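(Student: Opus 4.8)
All three identities will be deduced from Crofton's formula for proximity functions, Theorem~\ref{thm:m_Zbetaavg} (in the improper‑integral form of Remark~\ref{rmk:m_Zbetaavg}), applied on the tower $X_k$ to the lifted curve $f_{[k]}$; throughout, $m_f(r,\cdot)$ applied to a subscheme of $X_k$ is read as $m_{f_{[k]}}(r,\cdot)$. The heart of the set‑up is to present $D_{\mathfrak S}^{[k]}$, $(D,Z)_{\mathfrak S}^{[k]}$ and $(D,Z_i)_{\mathfrak S}^{[k]}$ as ``$Z_\beta$''‑schemes in the sense of \eqref{eqn:Zbeta}. To that end I will use the alternating $(k+1)$‑linear Wronskian section map
\[
\widebar\Omega\colon{\textstyle\bigwedge^{k+1}}\mathbb S\longrightarrow H^0\big(X_k,\mathcal O_{X_k}(\bm a^k)\otimes\pi_{0,k}^{\ast}L^{k+1}\big),\qquad\sigma_0\wedge\cdots\wedge\sigma_k\longmapsto\widebar\omega(\sigma_0,\dots,\sigma_k),
\]
which is well defined because $\widebar\omega$ corresponds under \eqref{eqn:FEV} to the determinantal, $\mathrm{SL}(k+1,\mathbb C)$‑invariant Wronskian $W(\sigma_0,\dots,\sigma_k)$ and is therefore alternating multilinear in its arguments. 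For $\rho$‑almost every $D=(\sigma)\in\mathfrak S$ one has $\mathbb S_{\supset D}=\mathbb C\sigma$, so $\widebar{\mathbb W}(X_k,D,\mathfrak S)=\widebar\Omega(\sigma\wedge{\textstyle\bigwedge^k}\mathbb S)$, while $\widebar{\mathbb W}(X_k,\mathfrak S)=\widebar\Omega({\textstyle\bigwedge^{k+1}}\mathbb S)$.

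\textbf{First identity.} Fix a basis $\eta_1,\dots,\eta_m$ of $\bigwedge^k\mathbb S$ and put $\beta_j(\varsigma)\bydef\widebar\Omega(\varsigma\wedge\eta_j)$, with image $\mathbb S_j\bydef\beta_j(\mathbb S)$. Multilinearity of $\widebar\Omega$ gives $\sum_j\mathbb S_j=\widebar{\mathbb W}(X_k,\mathfrak S)$ and $\sum_j\beta_j(\mathbb C\sigma)=\widebar{\mathbb W}(X_k,D,\mathfrak S)$, hence $D_{\mathfrak S}^{[k]}=\bigcap_j(\beta_j(\sigma))=Z_\beta(D)$, with ambient linear system $\mathfrak T$ the one determined by $\widebar{\mathbb W}(X_k,\mathfrak S)$. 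Since the base ideal of a sum of linear systems is the sum of their base ideals — equivalently $Z_{\mathfrak a_1+\mathfrak a_2}=Z_{\mathfrak a_1}\cap Z_{\mathfrak a_2}$ by Definition~\ref{def:I_Z}, \ref{itm:Icap}) — and since $\widebar{\mathbb W}(X_k,D,\mathfrak S)$ is linear in its first slot so that $\sum_{D\in\mathfrak S}\widebar{\mathbb W}(X_k,D,\mathfrak S)=\widebar{\mathbb W}(X_k,\mathfrak S)$ (the sum running over a basis of $\mathbb S$), we obtain $Z_{\mathfrak t}=\bigcap_{D\in\mathfrak S}D_{\mathfrak S}^{[k]}=Z_{\widebar{\mathfrak w}(X_k,\mathfrak S)}$. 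It then remains to verify the hypotheses of Theorem~\ref{thm:m_Zbetaavg}, the only substantive one (in the improper form) being $f_{[k]}(\mathbb C)\not\subset\Supp Z_{\widebar{\mathfrak w}(X_k,\mathfrak S)}$; since $\widebar{\mathfrak w}(X_k,\mathfrak S)=\mathfrak w(X_k,\mathfrak S)\cdot\mathcal O_{X_k}(\bm b^k\cdot\varGamma)$ this support is $\Bs(\mathfrak W(X_k,\mathfrak S))\cup X_k^{\sing}$, and $f_{[k]}(\mathbb C)$ avoids the first set by hypothesis and the second because $f$ is non‑constant. Theorem~\ref{thm:m_Zbetaavg} now yields the first identity.

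\textbf{Second identity.} Same construction, with one slot of the Wronskian constrained: fix a basis $\{\tau_a\}$ of $\mathbb S_{\supset Z}$ and a basis $\{\eta_j\}$ of $\bigwedge^{k-1}\mathbb S$ and set $\beta_{a,j}(\varsigma)\bydef\widebar\Omega(\varsigma\wedge\tau_a\wedge\eta_j)$; then $(D,Z)_{\mathfrak S}^{[k]}=\bigcap_{a,j}(\beta_{a,j}(\sigma))=Z_\beta(D)$. Because $\widebar\Omega$ is alternating, interchanging the first two arguments turns the constraint ``$\sigma_0\in\mathbb S_{\supset D},\ \sigma_1\in\mathbb S_{\supset Z}$'' into ``$\sigma_0\in\mathbb S_{\supset Z},\ \sigma_1\in\mathbb S$'', so (using once more that every nonzero $\varsigma\in\mathbb S$ lies in $\mathbb S_{\supset(\varsigma)}$) $\sum_{D\in\mathfrak S}\widebar{\mathbb W}(X_k,D,Z,\mathfrak S)=\widebar{\mathbb W}(X_k,Z,\mathfrak S)$, whence $Z_{\mathfrak t}=\bigcap_{D\in\mathfrak S}(D,Z)_{\mathfrak S}^{[k]}=Z_{\mathfrak S}^{[k]}$. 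For the non‑degeneracy requirement $f_{[k]}(\mathbb C)\not\subset\Supp Z_{\mathfrak S}^{[k]}$ one uses $f(\mathbb C)\not\subset\Supp Z$: let $\mathbb F\subset\mathcal O(\mathbb C)$ be the $\mathbb C$‑span of $\{\varsigma\circ f:\varsigma\in\mathbb S\}$; by \eqref{eqn:wf'} the hypothesis $f_{[k]}(\mathbb C)\not\subset\Bs(\mathfrak W(X_k,\mathfrak S))$ is equivalent to $\dim_{\mathbb C}\mathbb F\ge k+1$, and since $f(\mathbb C)\not\subset\Supp Z=\bigcap_{\tau\in\mathbb S_{\supset Z}}\Supp(\tau)$ there is $\tau\in\mathbb S_{\supset Z}$ with $\tau\circ f\not\equiv 0$; extend $\tau\circ f$ to a $\mathbb C$‑independent family $\tau\circ f,\varsigma_2\circ f,\dots,\varsigma_{k+1}\circ f$ with $\varsigma_\bullet\in\mathbb S$, so that $W(\tau,\varsigma_2,\dots,\varsigma_{k+1})(j_kf)\not\equiv0$; by \eqref{eqn:wf'} and \eqref{eqn:bkGamma} the generator $\widebar\omega(\tau,\varsigma_2,\dots,\varsigma_{k+1})$ of $\widebar{\mathbb W}(X_k,Z,\mathfrak S)$ does not vanish identically along $f_{[k]}$, i.e.\ $f_{[k]}(\mathbb C)\not\subset\Supp Z_{\mathfrak S}^{[k]}$. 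Theorem~\ref{thm:m_Zbetaavg} then gives the second identity.

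\textbf{The $\bigvee$‑version and the main obstacle.} Since $\widebar{\mathbb W}(X_k,D,Z_i,\mathfrak S)\subset\widebar{\mathbb W}(X_k,Z_i,\mathfrak S)$ gives $(Z_i)_{\mathfrak S}^{[k]}\subset(D,Z_i)_{\mathfrak S}^{[k]}$ for every $D$, Proposition~\ref{prp:lambda_Z}, \ref{itm:lambdasubset}), yields $\max_i\lambda_{(Z_i)_{\mathfrak S}^{[k]}}\le\max_i\lambda_{(D,Z_i)_{\mathfrak S}^{[k]}}$ with a bound uniform in $D$ (compactness of $X_k$ and $\mathfrak S$); composing with $f_{[k]}$ and integrating over $D$ this gives ``$\ge$'' in the third identity. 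The reverse inequality
\[
\int_{D\in\mathfrak S}m_f\big(r,\bigvee_{i=1}^q(D,Z_i)_{\mathfrak S}^{[k]}\big)\,\dif\rho(D)\ \le\ m_f\big(r,\bigvee_{i=1}^q(Z_i)_{\mathfrak S}^{[k]}\big)+O(1)
\]
is the genuinely delicate point, and it is \emph{not} a direct consequence of Theorem~\ref{thm:m_Zbetaavg}: the b‑divisor $\bigvee_i(Z_i)_{\mathfrak S}^{[k]}$ is not a closed subscheme, so one must show that the maximum over $i$ is not inflated by averaging over $D$. My plan is to descend to the level‑one situation via Lemma~\ref{lem:(D,Z)_S^1_WD^k}, where $(D,Z_i)_{\mathfrak S}^{[k]}$ is expressed through $(D,Z_i)_{\mathfrak S}^{[1]}$ on $(X_1)_{k-1}$ and the family is explicit enough to run — fibrewise over $X_k$ and off a proper closed subset — the estimate that the $\mathfrak S$‑average of $\max_i\lambda_{(D,Z_i)_{\mathfrak S}^{[k]}}$ is dominated by $\max_i\lambda_{(Z_i)_{\mathfrak S}^{[k]}}$ up to $O(1)$. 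Establishing this last estimate is the step I expect to require the most work.
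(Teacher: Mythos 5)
Your treatment of the first two identities is correct and is essentially the paper's own argument: you realize $D_{\mathfrak S}^{[k]}$ and $(D,Z)_{\mathfrak S}^{[k]}$ as $Z_{\beta}$-schemes for the family of maps $\sigma\mapsto\widebar{\omega}(\sigma,\varsigma_{i_1},\dots,\varsigma_{i_k})$ (the paper indexes these by the basis monomials of $\bigwedge^{k}\mathbb S$, resp.\ with the first index constrained to a basis of $\mathbb S_{\supset Z}$), identify $Z_{\mathfrak t}$ with $Z_{\widebar{\mathfrak w}(X_k,\mathfrak S)}$, resp.\ $Z_{\mathfrak S}^{[k]}$, and apply Theorem~\ref{thm:m_Zbetaavg} together with Remark~\ref{rmk:m_Zbetaavg}; your explicit verification of the non-degeneracy hypotheses is a reasonable supplement to what the paper leaves implicit.

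The genuine gap is the third identity: you prove only the inequality ``$\geq$'' and explicitly leave the reverse inequality as an unexecuted plan, so the proposal does not prove the statement. The difficulty you anticipate is, moreover, not there. The paper linearizes the $\bigvee$ by the elementary max--min inclusion--exclusion identity
\[
\mathop{\max}_{1 \leq i \leq q} \lambda_{Z_i} \;=\; \sum_{j=1}^{q} (-1)^{j-1} \sum_{\substack{I \subset J \\ |I| = j}} \mathop{\min}_{i \in I} \lambda_{Z_i}
,\]
combined with $\lambda_{Z_I} =_{\langle X \rangle} \mathop{\min}_{i \in I} \lambda_{Z_i}$ (Proposition~\ref{prp:lambda_Z}, \ref{itm:lambdacap})). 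Since $\bigcap_{i \in I} (D, Z_i)_{\mathfrak S}^{[k]} = (D, Z_I)_{\mathfrak S}^{[k]}$ and $\bigcap_{i \in I} (Z_i)_{\mathfrak S}^{[k]} = (Z_I)_{\mathfrak S}^{[k]}$ (Proposition~\ref{prp:cap_S^k=_S^kcap} and its evident two-slot analogue), the average over $D \in \mathfrak S$ of $m_f\big(r, \bigvee_{i} (D, Z_i)_{\mathfrak S}^{[k]}\big)$ becomes a finite \emph{signed} sum of integrals $\int_{D \in \mathfrak S} m_f\big(r, (D, Z_I)_{\mathfrak S}^{[k]}\big) \dif \rho(D)$, each equal to $m_f\big(r, (Z_I)_{\mathfrak S}^{[k]}\big) + O(1)$ by your second identity applied to $Z_I$ (with the improper-integral convention of Remark~\ref{rmk:m_Zbetaavg} when $Z_I = \emptyset$ or $f(\mathbb C) \subset \Supp Z_I$); reassembling by the same identity yields exactly $m_f\big(r, \bigvee_{i} (Z_i)_{\mathfrak S}^{[k]}\big) + O(1)$. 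Because the signs alternate, a one-sided monotonicity argument like the one you give for ``$\geq$'' cannot substitute for this exact linearization, and your proposed descent through Lemma~\ref{lem:(D,Z)_S^1_WD^k} with fibrewise estimates is both unnecessary and, as written, not a proof.
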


\begin{proof}
Let $\mathbb{S} \subset H^0(X, L)$ be the linear subspace corresponding to $\mathfrak{S}$.
Take a basis $\{\varsigma_i \mid i = 0, 1, \dots, N\}$ of $\mathbb{S}$.
For each array of indices $0 \leq i_1 < \dots < i_k \leq N$, set
\[
\mathbb{S}_{i_1, \dots, i_k} \bydef \{\widebar{\omega}(\sigma, \varsigma_{i_1}, \dots, \varsigma_{i_k}) \mid \sigma \in \mathbb{S}\}
.\]
Naturally, there is a surjective homomorphism 
\begin{align*}
\beta_{i_1, \dots, i_k} \colon \mathbb{S} &\longrightarrow \mathbb{S}_{i_1, \dots, i_k} \\
\sigma &\longmapsto \widebar{\omega}(\sigma, \varsigma_{i_1}, \dots, \varsigma_{i_k})
,\end{align*}
for each array of indices $0 \leq i_1 < \dots < i_k \leq N$.
It is evident that
\[
\widebar{\mathbb W}(X_k, \mathfrak{S}) = \hspace{-1eM} \sum_{0 \leq i_1 < \dots < i_k \leq N} \hspace{-1eM} \mathbb{S}_{i_1, \dots, i_k}
,\]
and
\[
Z_{\beta}(D) = \hspace{-1eM} \bigcap_{0 \leq i_1 < \dots < i_k \leq N} \hspace{-1eM} \beta_i(D) = D_{\mathfrak{S}}^{[k]}
,\]
for any $D \in \mathfrak{S}$, where $Z_{\beta}$ is defined by \eqref{eqn:Zbeta}.
Applying Theorem \ref{thm:m_Zbetaavg} and Remark \ref{rmk:m_Zbetaavg} with $\mathfrak{T} = \widebar{\mathfrak W}(X_k, \mathfrak{S})$, and $\mathfrak{S}_{i_1, \dots, i_k}$, $\mathfrak{S}$ as above, we have
\[
\int_{D \in \mathfrak{S}} \!m_f(r, D_{\mathfrak{S}}^{[k]}) \dif \rho(D) = m_f(r, Z_{\widebar{\mathfrak w}(X_k, \mathfrak{S})}) + O(1)
.\]

In addition, choose a convenient basis $\{\varsigma_i \mid i = 0, 1, \dots, N\}$ of $\mathbb{S}$ such that $\{\varsigma_i \mid i = 0, 1, \dots, N'\}$ is a basis of $\mathbb{S}_{\supset Z}$,
where $N' = \dim \mathfrak{S}_{\supset Z}$.
We easily check that
\[
\widebar{\mathbb W}(X_k, Z, \mathfrak{S}) = \hspace{-1eM} \sum_{\substack{0 \leq i_1 \leq N' \\ 0 \leq i_1 < \dots < i_k \leq N}} \hspace{-1eM} \mathbb{S}_{i_1, \dots, i_k}
,\]
and
\[
Z_{\beta}(D) = \hspace{-1eM} \bigcap_{\substack{0 \leq i_1 \leq N' \\ 0 \leq i_1 < \dots < i_k \leq N}} \hspace{-1eM} \beta_i(D) = (D, Z)_{\mathfrak{S}}^{[k]}
,\]
for any $D \in \mathfrak{S}$.
It follows from Theorem \ref{thm:m_Zbetaavg} and Remark \ref{rmk:m_Zbetaavg} that
\[
\int_{D \in \mathfrak{S}} \!m_f(r, (D, Z)_{\mathfrak{S}}^{[k]}) \dif \rho(D) = m_f(r, Z_{\mathfrak{S}}^{[k]}) + O(1)
.\]

Moreover, observe that
\vspace{-1ex}
\[
\lambda_{\bigvee_{i=1}^q Z_i} \bydef \mathop{\max}_{1 \leq i \leq q} \lambda_{Z_i} = \sum_{j=1}^q (-1)^{j-1} \sum_{\substack{I \subset J \\ |I| = j}} \lambda_{Z_I}
.\]
By the definition of proximity functions and Proposition \ref{prp:cap_S^k=_S^kcap}, these formulas above lead to
\begin{align*}
\int_{D \in \mathfrak{S}} \!m_f(r, \!\bigvee_{i=1}^q (D, Z_i)_{\mathfrak{S}}^{[k]}) \dif \rho(D) &= \sum_{j=1}^q (-1)^{j-1} \sum_{\substack{I \subset J \\ |I| = j}} \int_{D \in \mathfrak{S}} \!m_f(r, (D, Z_I)_{\mathfrak{S}}^{[k]}) \dif \rho(D) \\
&= \sum_{j=1}^q (-1)^{j-1} \sum_{\substack{I \subset J \\ |I| = j}} m_f(r, (Z_I)_{\mathfrak{S}}^{[k]}) + O(1) \\
&= m_f(r, \!\bigvee_{i=1}^q (Z_i)_{\mathfrak{S}}^{[k]}) + O(1)
.\qedhere\end{align*}
\end{proof}

\begin{remark}
If $f \colon (\mathbb{C}, T_{\mathbb{C}}) \to (X, V)$ is non-degenerated relative to $\mathfrak{S}$, then $f$ satisfies all the non-degenerated condition in Lemma \ref{lem:(D,Z)_S^kavg}.
\end{remark}

By induction, we conclude the following theorem from Theorem \ref{thm:Z_S^1veesum}.
\begin{theorem}\label{thm:Z_S^kveesum}
Let $(X, V)$ be a directed projective manifold, and let $Z_1, \dots, Z_q$ be closed subschemes of $X$.
Choose a linear system $\mathfrak{S}$ on $X$ satisfying that $\mathcal{I}_{Z_i} = \mathfrak{s}_{\supset Z_i}$ for each $1 \leq i \leq q$.
Take an integer $1 \leq \ell \leq q$.
For each $1 \leq k \leq N$, let $f \colon (\mathbb{C}, T_{\mathbb{C}}) \to (X, V)$ be a non-constant holomorphic curve such that $f_{[k]}(\mathbb{C}) \not\subset \Bs(\mathfrak{W}(X_k, \mathfrak{S}))$.
Suppose that $f(\mathbb{C}) \not\subset \Supp (Z_1 + \dots + Z_q)$. We have
\begin{align*}
&m_{f_{[k-1]}}(r, \!\bigvee_{\substack{I \subset J \\ |I| = \ell}} \!\sum_{i \in I} (Z_i)_{\mathfrak S}^{[k-1]}) + T_{f_{[k]}}(r, \mathcal{O}_{X_k}(\ell)) + \ell \mathcal{N}([Z_{f_{[k-1]}'}], r) \\
\leq \, &m_{f_{[k]}}(r, \!\bigvee_{\substack{I \subset J \\ |I| = \ell}} \!\sum_{i \in I} (Z_i)_{\mathfrak S}^{[k]}) - \ell m_{f_{[k-1]}}(r, Z_{\widebar{\mathfrak w}(X_{k-1}, \mathfrak{S})}) + \ell m_{f_{[k-2]}}(r, Z_{\widebar{\mathfrak w}(X_{k-2}, \mathfrak{S})}) + S_f(r)
,\end{align*}
where $N = \dim \mathfrak{S}$ and we make the conventions that $(Z_i)_{\mathfrak S}^{[0]} = Z_{\mathfrak{s}_{\supset Z_i}}$, $\widebar{\mathfrak w}(X_0, \mathfrak{S}) = \mathfrak{s}$, and $m_{f_{[-1]}}(r, Z_{\widebar{\mathfrak w}(X_{-1}, \mathfrak{S})}) = 0$.
\end{theorem}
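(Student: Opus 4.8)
The plan is to argue by induction on $k$, using Theorem \ref{thm:Z_S^1veesum} both as the base case and, applied on a one-step jet tower, as the engine of the inductive step. For $k=1$ the asserted inequality is, after unwinding the conventions $(Z_i)_{\mathfrak S}^{[0]} = Z_{\mathfrak{s}_{\supset Z_i}} = Z_i$, $\widebar{\mathfrak w}(X_0, \mathfrak{S}) = \mathfrak{s}$ and $m_{f_{[-1]}}(r, \cdot) = 0$, literally the statement of Theorem \ref{thm:Z_S^1veesum}, so nothing is left to prove.

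For the inductive step, fix $k \geq 2$ and assume the theorem at level $k-1$ for every directed projective manifold, linear system and collection of subschemes. We regard $(X_k, V_k)$ as the $(k-1)$-st Demailly-Semple tower of $(X_1, V_1)$, so that $X_k = (X_1)_{k-1}$, $f_{[k]} = (f_{[1]})_{[k-1]}$, $f_{[k-1]} = (f_{[1]})_{[k-2]}$, and $\mathcal{O}_{X_k}(\ell)$ is the relative hyperplane bundle of the last projection in either tower. For a suitably generic divisor $D \in \mathfrak{S}$ (in particular $f(\mathbb{C}) \not\subset \Supp D$), I would apply the level-$(k-1)$ case on $(X_1, V_1)$ to the curve $f_{[1]}$, with the linear system $\mathfrak{W}(X_1, D, \mathfrak{S})$ and the closed subschemes $(D, Z_1)_{\mathfrak S}^{[1]}, \dots, (D, Z_q)_{\mathfrak S}^{[1]}$ of $X_1$, and the same integer $\ell$ (the inequality $k-1 \leq \dim \mathfrak{W}(X_1, D, \mathfrak{S})$ needed to invoke it holds since $\dim \mathfrak{W}(X_1, D, \mathfrak{S}) \geq \dim \mathfrak{S} - 1$). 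The compatibility of $\mathcal{I}_{(D, Z_i)_{\mathfrak S}^{[1]}}$ with the base ideal of $\mathfrak{W}(X_1, D, \mathfrak{S})$ along $(D, Z_i)_{\mathfrak S}^{[1]}$ is exactly the content built into the notation of Section \ref{sec:logWronskians} and into Lemma \ref{lem:(D,Z)_S^1_WD^k}; that lemma is also the device that translates the Wronskian subschemes occurring on $(X_1)_j$ back to subschemes on $X_{j+1}$, giving $Z_{\widebar{\mathfrak w}((X_1)_j, \mathfrak{W}(X_1, D, \mathfrak{S}))} = D_{\mathfrak S}^{[j+1]} + j\, \pi_{0, j+1}^{\ast} D$ and $((D, Z_i)_{\mathfrak S}^{[1]})_{\mathfrak{W}(X_1, D, \mathfrak{S})}^{[j]} = (D, Z_i)_{\mathfrak S}^{[j+1]} + j\, \pi_{0, j+1}^{\ast} D$ at every level $j$.

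Substituting these identities into the level-$(k-1)$ inequality on $(X_1, V_1)$, using $m_{f_{[j]}}(r, \pi_{0, j}^{\ast} D) = m_f(r, D) + O(1)$ and $S_{f_{[1]}}(r) \leq S_f(r)$ from \eqref{eqn:Sfk}, and distributing the $\pi^{\ast} D$-twists over $\bigvee$ and $\sum$ by means of Lemma \ref{lem:maxsum} and the evident $(D, \cdot)$-analogue of Proposition \ref{prp:cap_S^k=_S^kcap}, one arrives at
\begin{align*}
&m_{f_{[k-1]}}\Big(r, \bigvee_{\substack{I \subset J \\ |I| = \ell}} \sum_{i \in I} (D, Z_i)_{\mathfrak S}^{[k-1]}\Big) + \ell(k-2)\, m_f(r, D) + T_{f_{[k]}}(r, \mathcal{O}_{X_k}(\ell)) + \ell\, \mathcal{N}([Z_{f_{[k-1]}'}], r) \\
\leq \, &m_{f_{[k]}}\Big(r, \bigvee_{\substack{I \subset J \\ |I| = \ell}} \sum_{i \in I} (D, Z_i)_{\mathfrak S}^{[k]}\Big) + \ell(k-2)\, m_f(r, D) - \ell\, m_{f_{[k-1]}}(r, D_{\mathfrak S}^{[k-1]}) + \ell\, m_{f_{[k-2]}}(r, D_{\mathfrak S}^{[k-2]}) + S_f(r) ,
\end{align*}
in which the two copies of $\ell(k-2)\, m_f(r, D)$ cancel (for $k=2$ these are absent and the conventions $m_{f_{[-1]}}(r, \cdot) = 0$, $D_{\mathfrak S}^{[0]} = D$ make the formula consistent). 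Integrating over $D \in \mathfrak{S}$ against the unitary-invariant probability measure $\rho$ and invoking Lemma \ref{lem:(D,Z)_S^kavg} (all three parts, together with Lemma \ref{lem:maxsum} for the $\bigvee\!\sum$-shaped terms, and Theorem \ref{thm:lambda_Davg}) replaces $m_{f_{[k-1]}}(r, \bigvee\!\sum (D, Z_i)_{\mathfrak S}^{[k-1]})$ by $m_{f_{[k-1]}}(r, \bigvee\!\sum (Z_i)_{\mathfrak S}^{[k-1]}) + O(1)$, similarly at level $k$, and $m_{f_{[j]}}(r, D_{\mathfrak S}^{[j]})$ by $m_{f_{[j]}}(r, Z_{\widebar{\mathfrak w}(X_j, \mathfrak{S})}) + O(1)$, while the $T$- and $\mathcal{N}$-terms are $D$-independent. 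This yields precisely the level-$k$ inequality, closing the induction.

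The main obstacle is bookkeeping rather than conceptual: one must be careful that Lemma \ref{lem:(D,Z)_S^1_WD^k} is applied with the correct index shifts and that the $\pi^{\ast} D$-corrections produced at every level of the level-$(k-1)$ inequality really telescope to leave the same coefficient $\ell(k-2)$ on both sides (so that they cancel) and, after averaging, exactly the two residual terms $-\ell\, m_{f_{[k-1]}}(r, Z_{\widebar{\mathfrak w}(X_{k-1}, \mathfrak{S})}) + \ell\, m_{f_{[k-2]}}(r, Z_{\widebar{\mathfrak w}(X_{k-2}, \mathfrak{S})})$. A secondary technical point is the legitimacy of the averaging: one should check that the divisors $D \in \mathfrak{S}$ violating the non-degeneracy hypotheses of the level-$(k-1)$ theorem (so that the left-hand side would be infinite, or the derived-curve construction degenerate) form a set of $\rho$-measure zero — which follows from the hypotheses on $f$ and is consistent with the finiteness of the Crofton averages in Lemma \ref{lem:(D,Z)_S^kavg} — together with a routine Fubini-type argument to keep the finite-measure exceptional $r$-set absorbed in $S_f(r)$ under control while integrating in $D$; cf.\ Remark \ref{rmk:m_Zbetaavg}.
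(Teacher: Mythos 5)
Your proposal is essentially the paper's own proof: induction on $k$ with Theorem \ref{thm:Z_S^1veesum} as the base case, and an inductive step that applies the level-$(k-1)$ statement to $f_{[1]}$ on $(X_1, V_1)$ with respect to the linear system $\mathfrak{W}(X_1, D, \mathfrak{S})$ and the subschemes $(D, Z_i)_{\mathfrak S}^{[1]}$, translates back to $X_{k}$ and $X_{k+1}$ via Lemma \ref{lem:(D,Z)_S^1_WD^k}, and then averages over $D \in \mathfrak{S}$ using Lemma \ref{lem:(D,Z)_S^kavg}. The only differences are presentational: the paper runs the case $k=2$ separately before the general step, while you fold it into the conventions and make explicit the cancellation of the $\pi^{\ast}D$-corrections (coefficient $\ell(k-2)$) and the measure-theoretic caveats in the Crofton averaging, which the paper leaves implicit.
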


\begin{proof}
If $k=1$, the theorem definitely follows from Theorem \ref{thm:Z_S^1veesum}.
In the case when $k=2$, think of $(X_2, V_2)$ as the Demailly-Semple $1$-jet tower of $(X_1, V_1)$.
Take an arbitrary effective divisor $D \in \mathfrak{S}$.
For brevity, we write $(\widetilde{X}, \widetilde{V}) = (X_1, V_1)$, $\widetilde{\mathfrak S} = \mathfrak{w}(X_1, D, \mathfrak{S})$, and $\widetilde{Z}_i = (D, Z_i)_{\mathfrak S}^{[1]}$ for each $1 \leq i \leq q$.
Notice that $\mathcal{I}_{\widetilde{Z}_i} = \widetilde{\mathfrak S}_{\supset \widetilde{Z}_i}$.
Applying Theorem \ref{thm:Z_S^1veesum} to $\tilde{f} = f_{[1]}$ with respect to $\widetilde{\mathfrak S}$, we get
\begin{equation}\label{eqn:tildeZ_S^1veesum}
\begin{aligned}
&m_{\tilde f}(r, \!\bigvee_{\substack{I \subset J \\ |I| = \ell}} \!\sum_{i \in I} \widetilde{Z}_i) + T_{\tilde{f}_{[1]}}(r, \mathcal{O}_{\widetilde{X}_1}(\ell)) + \ell \mathcal{N}([Z_{\tilde{f}'}], r) \\
\leq \, &m_{\tilde{f}_{[1]}}\big(r, \!\bigvee_{\substack{I \subset J \\ |I| = \ell}} \!\sum_{i \in I} (\widetilde{Z}_i)_{\widetilde{\mathfrak S}}^{[1]}\big) - \ell m_{\tilde f}(r, Z_{\tilde{\mathfrak s}}) + S_{\tilde f}(r)
.\end{aligned}
\end{equation}
It follows from Lemma \ref{lem:(D,Z)_S^1_WD^k} and \eqref{eqn:Sfk} that \eqref{eqn:tildeZ_S^1veesum} is reformulated as
\begin{align*}
&m_{f_{[1]}}\big(r, \!\bigvee_{\substack{I \subset J \\ |I| = \ell}} \!\sum_{i \in I} (D, Z_i)_{\mathfrak S}^{[1]}\big) + T_{f_{[2]}}(r, \mathcal{O}_{X_2}(\ell)) + \ell \mathcal{N}([Z_{f_{[1]}'}], r) \\
\leq \, &\ell m_f(r, D) + m_{f_{[2]}}\big(r, \!\bigvee_{\substack{I \subset J \\ |I| = \ell}} \!\sum_{i \in I} (D, Z_i)_{\mathfrak S}^{[2]}\big) - \ell m_{f_{[1]}}(r, Z_{\mathfrak{w}(X_1, D, \mathfrak{S})}) + S_f(r)
.\end{align*}
Taking the average of two sides for $D \in \mathfrak{S}$, by Lemma \ref{lem:(D,Z)_S^kavg}, we conclude that
\begin{align*}
&m_{f_{[1]}}(r, \!\bigvee_{\substack{I \subset J \\ |I| = \ell}} \!\sum_{i \in I} (Z_i)_{\mathfrak S}^{[1]}) + T_{f_{[2]}}(r, \mathcal{O}_{X_2}(\ell)) + \ell \mathcal{N}([Z_{f_{[1]}'}], r) \\
\leq \, &m_{f_{[2]}}(r, \!\bigvee_{\substack{I \subset J \\ |I| = \ell}} \!\sum_{i \in I} (Z_i)_{\mathfrak S}^{[2]}) - \ell m_{f_{[1]}}(r, Z_{\mathfrak{w}(X_1, \mathfrak{S})}) + \ell m_f(r, Z_{\mathfrak s}) + S_f(r)
.\end{align*}

Assume the theorem establishes for $k=j$.
Analogous to the reasoning in the case when $k=2$, we are going to verify the theorem for $k=j+1$.
Applying the theorem with $k=j$ to $\tilde{f}$ with respect to $\widetilde{\mathfrak S}$, we obtain
\begin{align*}
&m_{\tilde{f}_{[j-1]}}(r, \!\bigvee_{\substack{I \subset J \\ |I| = \ell}} \!\sum_{i \in I} (\widetilde{Z}_i)_{\widetilde{\mathfrak S}}^{[j-1]}) + T_{\tilde{f}_{[j]}}(r, \mathcal{O}_{\widetilde{X}_j}(\ell)) + \ell \mathcal{N}([Z_{\tilde{f}_{[j-1]}'}], r) \\
\leq \, &m_{\tilde{f}_{[j]}}(r, \!\bigvee_{\substack{I \subset J \\ |I| = \ell}} \!\sum_{i \in I} (\widetilde{Z}_i)_{\widetilde{\mathfrak S}}^{[j]}) - \ell m_{\tilde{f}_{[j-1]}}(r, Z_{\widebar{\mathfrak w}(\widetilde{X}_{j-1}, \widetilde{\mathfrak S})}) + \ell m_{\tilde{f}_{[j-2]}}(r, Z_{\widebar{\mathfrak w}(\widetilde{X}_{j-2}, \widetilde{\mathfrak S})}) + S_{\tilde f}(r)
,\end{align*}
which amounts to saying that
\begin{align*}
&m_{f_{[j]}}(r, \!\bigvee_{\substack{I \subset J \\ |I| = \ell}} \!\sum_{i \in I} (D, Z_i)_{\mathfrak S}^{[j]}) + T_{f_{[j+1]}}(r, \mathcal{O}_{X_{j+1}}(\ell)) + \ell \mathcal{N}([Z_{f_{[j]}'}], r) \\
\leq \, &m_{f_{[j+1]}}(r, \!\bigvee_{\substack{I \subset J \\ |I| = \ell}} \!\sum_{i \in I} (D, Z_i)_{\mathfrak S}^{[j+1]}) - \ell m_{f_{[j]}}(r, D_{\mathfrak S}^{[j]}) + \ell m_{f_{[j-1]}}(r, D_{\mathfrak S}^{[j-1]}) + S_f(r)
.\end{align*}
Taking the average of two sides for $D \in \mathfrak{S}$, by Lemma \ref{lem:(D,Z)_S^kavg}, we have
\begin{align*}
&m_{f_{[j]}}(r, \!\bigvee_{\substack{I \subset J \\ |I| = \ell}} \!\sum_{i \in I} (Z_i)_{\mathfrak S}^{[j]}) + T_{f_{[j+1]}}(r, \mathcal{O}_{X_{j+1}}(\ell)) + \ell \mathcal{N}([Z_{f_{[j]}'}], r) \\
\leq \, &m_{f_{[j+1]}}(r, \!\bigvee_{\substack{I \subset J \\ |I| = \ell}} \!\sum_{i \in I} (Z_i)_{\mathfrak S}^{[j+1]}) - \ell m_{f_{[j]}}(r, Z_{\widebar{\mathfrak w}(X_{j}, \mathfrak{S})}) + \ell m_{f_{[j-1]}}(r, Z_{\widebar{\mathfrak w}(X_{j-1}, \mathfrak{S})}) + S_f(r)
.\qedhere\end{align*}
\end{proof}

For arbitrary closed subschemes $Z_1, \dots, Z_q$ of $X$, set
\[
\ell_{k, \mathfrak{S}} \bydef \max \{ |I| \mid I \subset J, \, \bigcap_{i \in I} (Z_i)_{\mathfrak S}^{[k]} \supsetneqq Z_{\widebar{\mathfrak w}(X_k, \mathfrak{S})} \} 
.\]
We easily check that $\ell_{k, \mathfrak{S}}$ is a non-increasing sequence for $k$.
Let $k_{\mathfrak S}$ be the minimum of $k$ such that $\ell_{k, \mathfrak{S}} = 0$.
Since $\ell_{N, \mathfrak{S}} = 0$, then $k_{\mathfrak S}$ always exists and $k_{\mathfrak S} \leq N$.

Using the notation above, we get the following theorem.
\begin{theorem}\label{thm:Z_S^ksum}
Let $(X, V)$ be a directed projective manifold, and let $Z_1, \dots, Z_q$ be closed subschemes of $X$.
Choose a linear system $\mathfrak{S}$ on $X$ satisfying that $\mathcal{I}_{Z_i} = \mathfrak{s}_{\supset Z_i}$ for each $1 \leq i \leq q$.
Let $\ell_{k, \mathfrak{S}}$ and $k_{\mathfrak S}$ be as above.
Let $f \colon (\mathbb{C}, T_{\mathbb{C}}) \to (X, V)$ be a non-constant holomorphic curve such that $f_{[k_{\mathfrak S}]}(\mathbb{C}) \not\subset \Bs(\mathfrak{W}(X_{k_{\mathfrak S}}, \mathfrak{S}))$.
Suppose that $f(\mathbb{C}) \not\subset \Supp (Z_1 + \dots + Z_q)$.
Then,
\begin{align*}
&\sum_{i =1}^q m_f(r, Z_i) + T_{f_{[k_{\mathfrak S}]}}(r, \mathcal{O}_{X_{k_{\mathfrak S}}}(\bm{\ell}_{\mathfrak S})) + \sum_{j=0}^{k_{\mathfrak S}-1} \ell_{j, \mathfrak{S}} \mathcal{N}([Z_{f_{[j]}'}], r) \\
\leq \, & \sum_{j=0}^{k_{\mathfrak S}} (\ell_{j-1, \mathfrak{S}} - 2\ell_{j, \mathfrak{S}} + \ell_{j+1, \mathfrak{S}}) m_{f_{[j]}}(r, Z_{\widebar{\mathfrak w}(X_{j}, \mathfrak{S})}) + S_f(r)
,\end{align*}
where $\bm{\ell}_{\mathfrak S} = (\ell_{0, \mathfrak{S}}, \dots, \ell_{k_{\mathfrak S}-1, \mathfrak{S}}) \in \mathbb{Z}_{+}^{k_{\mathfrak S}}$ and we make the conventions that $\ell_{-1, \mathfrak{S}} = q$, and $\widebar{\mathfrak w}(X_0, \mathfrak{S}) = \mathfrak{s}$.
\end{theorem}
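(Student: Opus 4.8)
The plan is to iterate the $1$-jet statement Theorem~\ref{thm:Z_S^kveesum} once for each level $k=1,\dots,k_{\mathfrak S}$, choosing at the $k$-th step the integer $\ell=\ell_{k-1,\mathfrak S}$, and then to telescope the resulting chain of inequalities. Each application is legitimate: since $\ell_{\bullet,\mathfrak S}$ is non-increasing and $k_{\mathfrak S}$ is minimal with $\ell_{k_{\mathfrak S},\mathfrak S}=0$, we have $\ell_{k-1,\mathfrak S}\geq\ell_{k_{\mathfrak S}-1,\mathfrak S}\geq 1$ for every $1\leq k\leq k_{\mathfrak S}$, and $k_{\mathfrak S}\leq N=\dim\mathfrak S$, so $1\leq\ell_{k-1,\mathfrak S}\leq q$ and $1\leq k\leq N$ throughout. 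One first checks that the single hypothesis $f_{[k_{\mathfrak S}]}(\mathbb{C})\not\subset\Bs(\mathfrak W(X_{k_{\mathfrak S}},\mathfrak S))$ forces $f_{[k]}(\mathbb{C})\not\subset\Bs(\mathfrak W(X_k,\mathfrak S))$ for every $k\leq k_{\mathfrak S}$ (compatibility of the Wronskian systems under the projections $\pi_{k+1}$), and that $f(\mathbb{C})\not\subset\Supp(Z_1+\dots+Z_q)$ is the hypothesis needed at every level; thus Theorem~\ref{thm:Z_S^kveesum} is available at each intermediate level, contributing the inequality relating level $k-1$ to level $k$ with $\ell=\ell_{k-1,\mathfrak S}$.

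The mechanism that makes the telescope close is the conversion of the $\bigvee$-proximity term of cardinality $\ell_{k-1,\mathfrak S}$ occurring on the right of the $k$-th inequality into the one of cardinality $\ell_{k,\mathfrak S}$ occurring on the left of the $(k+1)$-st. For this I would apply Lemma~\ref{lem:maxsum} on $X_k$ to the closed subschemes $(Z_i)_{\mathfrak S}^{[k]}$: the inclusion $\widebar{\mathfrak w}(X_k,Z_i,\mathfrak S)\subset\widebar{\mathfrak w}(X_k,\mathfrak S)$ gives $Z_{\widebar{\mathfrak w}(X_k,\mathfrak S)}\subset(Z_i)_{\mathfrak S}^{[k]}$ for each $i$, while by the very definition of $\ell_{k,\mathfrak S}$, together with Proposition~\ref{prp:cap_S^k=_S^kcap}, one has $\bigcap_{i\in I}(Z_i)_{\mathfrak S}^{[k]}=Z_{\widebar{\mathfrak w}(X_k,\mathfrak S)}$ as closed subschemes for every $I\subset J$ with $|I|>\ell_{k,\mathfrak S}$. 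Lemma~\ref{lem:maxsum} then yields
\[
m_{f_{[k]}}\big(r, \!\bigvee_{\substack{I \subset J \\ |I| = \ell_{k-1,\mathfrak S}}} \!\sum_{i\in I}(Z_i)_{\mathfrak S}^{[k]}\big) = m_{f_{[k]}}\big(r, \!\bigvee_{\substack{I \subset J \\ |I| = \ell_{k,\mathfrak S}}} \!\sum_{i\in I}(Z_i)_{\mathfrak S}^{[k]}\big) + (\ell_{k-1,\mathfrak S} - \ell_{k,\mathfrak S})\, m_{f_{[k]}}(r, Z_{\widebar{\mathfrak w}(X_k,\mathfrak S)}) + O(1).
\]
At the bottom end ($k=1$) the analogous identity on $X$ — using $(Z_i)_{\mathfrak S}^{[0]}=Z_{\mathfrak s_{\supset Z_i}}=Z_i$, $Z_{\mathfrak s}\subset Z_i$, $\widebar{\mathfrak w}(X_0,\mathfrak S)=\mathfrak s$ and the convention $\ell_{-1,\mathfrak S}=q$ — reads $m_f(r,\bigvee_{|I|=\ell_{0,\mathfrak S}}\sum_{i\in I}Z_i)=\sum_{i=1}^q m_f(r,Z_i)-(\ell_{-1,\mathfrak S}-\ell_{0,\mathfrak S})\,m_f(r,Z_{\mathfrak s})+O(1)$, which moves $\sum_i m_f(r,Z_i)$ onto the left-hand side. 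At the top end ($k=k_{\mathfrak S}$) one has $\ell_{k_{\mathfrak S},\mathfrak S}=0$, so the surviving $\bigvee$-term $m_{f_{[k_{\mathfrak S}]}}(r,\bigvee_{|I|=0}\sum_{i\in I}(Z_i)_{\mathfrak S}^{[k_{\mathfrak S}]})$ is the proximity function of the empty subscheme, hence $O(1)$, to be absorbed into $S_f(r)$.

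Summing the $k_{\mathfrak S}$ inequalities after these substitutions, all intermediate $\bigvee$-proximity terms cancel in consecutive pairs; the characteristic functions $T_{f_{[k]}}(r,\mathcal{O}_{X_k}(\ell_{k-1,\mathfrak S}))$ assemble, via $f_{[k]}=\pi_{k,k_{\mathfrak S}}\circ f_{[k_{\mathfrak S}]}$ and the definition of $\mathcal{O}_{X_{k_{\mathfrak S}}}(\bm{\ell}_{\mathfrak S})$, into $T_{f_{[k_{\mathfrak S}]}}(r,\mathcal{O}_{X_{k_{\mathfrak S}}}(\bm{\ell}_{\mathfrak S}))$; the counting terms reindex to $\sum_{j=0}^{k_{\mathfrak S}-1}\ell_{j,\mathfrak S}\mathcal{N}([Z_{f_{[j]}'}],r)$; and $S_{f_{[k-1]}}(r)\leq S_f(r)$ from \eqref{eqn:Sfk} keeps the error term of the required shape. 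It remains to collect, for each $0\leq j\leq k_{\mathfrak S}$, the coefficient of $m_{f_{[j]}}(r,Z_{\widebar{\mathfrak w}(X_j,\mathfrak S)})$ on the right: it receives $+(\ell_{j-1,\mathfrak S}-\ell_{j,\mathfrak S})$ from the conversion performed at level $j$, $-\ell_{j,\mathfrak S}$ from the term $-\ell\, m_{f_{[k-1]}}(\cdots)$ of the level-$(j+1)$ inequality, and $+\ell_{j+1,\mathfrak S}$ from the term $+\ell\, m_{f_{[k-2]}}(\cdots)$ of the level-$(j+2)$ inequality, for a total of $\ell_{j-1,\mathfrak S}-2\ell_{j,\mathfrak S}+\ell_{j+1,\mathfrak S}$ (with the understanding $\ell_{j,\mathfrak S}=0$ for $j\geq k_{\mathfrak S}$, so that the boundary cases $j=0$ and $j=k_{\mathfrak S}$ come out correctly). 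I expect the main obstacle to be precisely the exactness of the scheme-theoretic equality $\bigcap_{i\in I}(Z_i)_{\mathfrak S}^{[k]}=Z_{\widebar{\mathfrak w}(X_k,\mathfrak S)}$ for $|I|>\ell_{k,\mathfrak S}$ — which must be verified at the level of ideal sheaves in an affine chart rather than merely on supports, so that Lemma~\ref{lem:maxsum} genuinely applies — together with the triple bookkeeping of coefficients across the telescope; the remaining steps are routine arithmetic of proximity, counting and characteristic functions modulo $O(1)$ and $S_f(r)$.
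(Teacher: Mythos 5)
Your proposal is correct and follows essentially the same route as the paper: apply Theorem \ref{thm:Z_S^kveesum} at each level $1\leq k\leq k_{\mathfrak S}$ with $\ell=\ell_{k-1,\mathfrak S}$, use Lemma \ref{lem:maxsum} together with the scheme-theoretic equality $\bigcap_{i\in I}(Z_i)_{\mathfrak S}^{[k]}=Z_{\widebar{\mathfrak w}(X_k,\mathfrak S)}$ for $|I|>\ell_{k,\mathfrak S}$ to trade the $\bigvee$-term of cardinality $\ell_{k-1,\mathfrak S}$ for the one of cardinality $\ell_{k,\mathfrak S}$ plus $(\ell_{k-1,\mathfrak S}-\ell_{k,\mathfrak S})m_{f_{[k]}}(r,Z_{\widebar{\mathfrak w}(X_k,\mathfrak S)})$, and telescope, which reproduces the coefficients $\ell_{j-1,\mathfrak S}-2\ell_{j,\mathfrak S}+\ell_{j+1,\mathfrak S}$. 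Your additional check that the single non-degeneracy hypothesis at level $k_{\mathfrak S}$ propagates to the intermediate levels is a point the paper passes over silently, but it does not change the argument.
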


\begin{proof}
By definition, for each $\ell_{k, \mathfrak{S}} < \ell \leq q$, we have
\[
\mathop{\max}_{\substack{I \subset J \\ |I| = \ell}} \lambda_{\bigcap_{i \in I} (Z_i)_{\mathfrak S}^{[k]}} =_{\langle X \rangle} \lambda_{Z_{\widebar{\mathfrak w}(X_k, \mathfrak{S})}}
.\]
According to Lemma \ref{lem:maxsum},
\[
\sum_{i =1}^q m_f(r, Z_i) = m_f(r, \hspace{-1eM} \bigvee_{\substack{I \subset J \\ |I| = \ell_{-1, \mathfrak{S}}}} \hspace{-1eM} \sum_{i \in I} Z_i) = m_f(r, \hspace{-.5eM}\!\bigvee_{\substack{I \subset J \\ |I| = \ell_{k, \mathfrak{S}}}} \hspace{-.5eM}\!\sum_{i \in I} Z_i) + (\ell_{-1, \mathfrak{S}}-\ell_{0, \mathfrak{S}}) m_{f_{[k]}}(r, Z_{\mathfrak s}) + O(1)
,\]
\[
m_{f_{[k]}}(r, \hspace{-1eM} \bigvee_{\substack{I \subset J \\ |I| = \ell_{k-1, \mathfrak{S}}}} \hspace{-1eM} \sum_{i \in I} (Z_i)_{\mathfrak S}^{[k]}) = m_{f_{[k]}}(r, \hspace{-.5eM}\!\bigvee_{\substack{I \subset J \\ |I| = \ell_{k, \mathfrak{S}}}} \hspace{-.5eM}\!\sum_{i \in I} (Z_i)_{\mathfrak S}^{[k]}) + (\ell_{k-1, \mathfrak{S}}-\ell_{k, \mathfrak{S}}) m_{f_{[k]}}(r, Z_{\mathfrak{w}(X_k, \mathfrak{S})}) + O(1)
.\]
Applying Theorem \ref{thm:Z_S^kveesum} with $\ell = \ell_{k-1, \mathfrak{S}}$, we obtain
\begin{align*}
&m_{f_{[k-1]}}(r, \hspace{-1eM} \bigvee_{\substack{I \subset J \\ |I| = \ell_{k-1, \mathfrak{S}}}} \hspace{-1eM} \sum_{i \in I} (Z_i)_{\mathfrak S}^{[k-1]}) + T_{f_{[k]}}(r, \mathcal{O}_{X_k}(\ell_{k-1, \mathfrak{S}})) + \ell_{k-1, \mathfrak{S}} \mathcal{N}([Z_{f_{[k-1]}'}], r) \\
\leq \, &m_{f_{[k]}}(r, \hspace{-1eM} \bigvee_{\substack{I \subset J \\ |I| = \ell_{k-1, \mathfrak{S}}}} \hspace{-1eM} \sum_{i \in I} (Z_i)_{\mathfrak S}^{[k]}) - \ell_{k-1, \mathfrak{S}} m_{f_{[k-1]}}(r, Z_{\widebar{\mathfrak w}(X_{k-1}, \mathfrak{S})}) + \ell_{k-1, \mathfrak{S}} m_{f_{[k-2]}}(r, Z_{\widebar{\mathfrak w}(X_{k-2}, \mathfrak{S})}) + S_f(r) \\
\leq \, &m_{f_{[k]}}(r, \hspace{-.5eM}\!\bigvee_{\substack{I \subset J \\ |I| = \ell_{k, \mathfrak{S}}}} \hspace{-.5eM}\!\sum_{i \in I} (Z_i)_{\mathfrak S}^{[k]}) + (\ell_{k-1, \mathfrak{S}} - \ell_{k, \mathfrak{S}}) m_{f_{[k-1]}}(r, Z_{\widebar{\mathfrak w}(X_k, \mathfrak{S})}) \\
&- \ell_{k-1, \mathfrak{S}} m_{f_{[k-1]}}(r, Z_{\widebar{\mathfrak w}(X_{k-1}, \mathfrak{S})}) + \ell_{k-1, \mathfrak{S}} m_{f_{[k-2]}}(r, Z_{\widebar{\mathfrak w}(X_{k-2}, \mathfrak{S})}) + S_f(r)
.\end{align*}
Combining all the inequalities for $1 \leq k \leq k_{\mathfrak S}$, we get
\begin{align*}
&\sum_{i =1}^q m_f(r, Z_i) + T_{f_{[k_{\mathfrak S}]}}(r, \mathcal{O}_{X_{k_{\mathfrak S}}}(\bm{\ell}_{\mathfrak S})) + \sum_{j=0}^{k_{\mathfrak S}-1} \ell_{j, \mathfrak{S}} \mathcal{N}([Z_{f_{[j]}'}], r) \\
\leq \, & \sum_{j=0}^{k_{\mathfrak S}} (\ell_{j-1, \mathfrak{S}} - 2\ell_{j, \mathfrak{S}} + \ell_{j+1, \mathfrak{S}}) m_{f_{[j]}}(r, Z_{\widebar{\mathfrak w}(X_{j}, \mathfrak{S})}) + S_f(r)
.\qedhere\end{align*}
\end{proof}

\begin{remark}
It is evident that Theorem \ref{thm:Z_S^ksum} still establishes for arbitrary $\ell_{0, \mathfrak{S}}', \dots, \allowbreak \ell_{N, \mathfrak{S}}' \in \mathbb{N}$ such that $\ell_{k, \mathfrak{S}} \leq \ell_{k, \mathfrak{S}}'$ for each $0 \leq k \leq N$.
In practice, we usually consider closed subschemes $Z_1, \dots, Z_q$ of $X$ satisfying a certain condition.
Then we technically take $\ell_{k, \mathfrak{S}}$ to be the maximum of $\ell_{k, \mathfrak{S}}$ for all the finite sets of closed subschemes of $X$ satisfying such condition.
\end{remark}

\subsection{Second Main Theorem for some particular cases}

Now, let us investigate the particular case where $(X, V) = (\mathbb{P}^n, T_{\mathbb{P}^n})$ and $Z_1, \dots, Z_q$ are the hyperplanes $H_1, \dots, H_q$ of $\mathbb{P}^n$ in general position.
Then choose the complete linear system $\mathfrak{S} = |H_1|$.
Obviously, $\dim \mathbb{S} = n+1$ and $\dim \mathbb{S}_{\supset \bigcap_{i \in I} H_i} > n-k+1$ for all the subsets $I \subset J$ with $|I| > n-k$.
We see from the pigeonhole principle that $\bigcap_{i \in I} (H_i)_{\mathfrak S}^{[k]} = Z_{\widebar{\mathfrak w}(X_k, \mathfrak{S})}$ for all the subsets $I \subset J$ with $|I| > n-k$, namely, $\ell_{k, \mathfrak{S}} \leq n-k$.
Hence we conclude Cartan's second main theorem for holomorphic curves.
\begin{theorem}\label{thm:Cartan}[Cartan's SMT]
Let $H_1, \dots, H_q$ be the hyperplanes of $\mathbb{P}^n$ in general position.
For $f \colon \mathbb{C} \to \mathbb{P}^n$ be a linear non-degenerated holomorphic curve, we have
\[
\sum_{i=1}^q m_f(r, H_i) + \mathcal{N}([Z_{W(f)}], r) \leq T_f(r, \mathcal{O}_{\mathbb{P}^n}(n+1)) + S_f(r)
,\]
where the Wronskian $W(f) = F^n$ defined by \eqref{eqn:F^k}.
\end{theorem}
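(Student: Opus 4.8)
The plan is to deduce the theorem from Theorem \ref{thm:Z_S^ksum}, applied to the directed projective manifold $(X, V) = (\mathbb{P}^n, T_{\mathbb{P}^n})$, the closed subschemes $Z_i = H_i$, and the base point free complete linear system $\mathfrak{S} = |H_1| = |\mathcal{O}_{\mathbb{P}^n}(1)|$, which separates $1$-jets at every point of $\mathbb{P}^n$ and has $\dim \mathfrak{S} = n$ and $Z_{\mathfrak s} = \emptyset$. As observed in the text preceding the statement, the pigeonhole principle gives $\ell_{k, \mathfrak{S}} \leq n-k$ for every $k$, so by the Remark following Theorem \ref{thm:Z_S^ksum} we may run that theorem with $\ell'_{k, \mathfrak{S}} = \max\{n-k, 0\}$ in place of $\ell_{k, \mathfrak{S}}$; then $k_{\mathfrak S} = n$ and $\bm{\ell}_{\mathfrak S} = (n, n-1, \dots, 1) = \bm{a}^n$. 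The non-degeneracy hypotheses hold because $f$ is linear non-degenerate: $f(\mathbb{C})$ lies in no hyperplane, hence $f(\mathbb{C}) \not\subset \Supp(Z_1 + \dots + Z_q)$, and $W(f) = F^n \not\equiv 0$ forces $f_{[n]}(\mathbb{C}) \not\subset \Bs(\mathfrak{W}(X_n, \mathfrak{S}))$ via \eqref{eqn:wf'}.

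With these choices the telescoping right-hand side of Theorem \ref{thm:Z_S^ksum} collapses: the coefficient $\ell'_{j-1, \mathfrak{S}} - 2\ell'_{j, \mathfrak{S}} + \ell'_{j+1, \mathfrak{S}}$ vanishes for $1 \leq j \leq n-1$, the $j = 0$ term carries the factor $m_f(r, Z_{\mathfrak s}) = O(1)$, and the $j = n$ term is $m_{f_{[n]}}(r, Z_{\widebar{\mathfrak w}(X_n, \mathfrak{S})})$. The key observation is that $\bigwedge^{n+1} \mathbb{S}^{\dual}$ is one-dimensional, so $\widebar{\mathbb W}(X_n, \mathfrak{S})$ is spanned by the single section $\widebar{\omega}(\varsigma_0, \dots, \varsigma_n) \in H^0(X_n, \mathcal{O}_{X_n}(\bm{a}^n) \otimes \pi_{0, n}^{\ast} \mathcal{O}_{\mathbb{P}^n}(n+1))$ attached to a basis $\{\varsigma_i\}$ corresponding to coordinate hyperplanes; consequently $Z_{\widebar{\mathfrak w}(X_n, \mathfrak{S})}$ is the effective Cartier divisor $(\widebar{\omega}(\varsigma_0, \dots, \varsigma_n))$, and $\varphi_{\widebar{\mathfrak W}(X_n, \mathfrak{S})}$ has the single summand $\log |\widebar{\omega}(\varsigma_0, \dots, \varsigma_n)|^2$. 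Thus Theorem \ref{thm:Z_S^ksum} reads
\[
\sum_{i=1}^q m_f(r, H_i) + T_{f_{[n]}}(r, \mathcal{O}_{X_n}(\bm{a}^n)) + \sum_{j=0}^{n-1} (n-j) \mathcal{N}([Z_{f_{[j]}'}], r) \leq m_{f_{[n]}}\big(r, (\widebar{\omega}(\varsigma_0, \dots, \varsigma_n))\big) + S_f(r)
.\]

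To finish I would match the bookkeeping between $X_n$ and $\mathbb{P}^n$. First, \eqref{eqn:normtildeF^kbar} with $k = n$ (valid even at stationary points) identifies $|W(f)|^2 = |\widetilde{F}^n|^2$ with $\exp(\varphi_{\widebar{\mathfrak W}(X_n, \mathfrak{S})} \circ f_{[n]}) \, |f'|^{2n} |f_{[1]}'|^{2(n-1)} \cdots |f_{[n-1]}'|^2$; applying the Poincaré-Lelong formula, as in the computation leading to \eqref{eqn:Nddclogg}, gives the divisorial identity
\[
[Z_{W(f)}] = f_{[n]}^{\ast}\big[(\widebar{\omega}(\varsigma_0, \dots, \varsigma_n))\big] + \sum_{j=0}^{n-1} (n-j) [Z_{f_{[j]}'}]
,\]
whence $N_{f_{[n]}}(r, (\widebar{\omega}(\varsigma_0, \dots, \varsigma_n))) + \sum_{j=0}^{n-1}(n-j)\mathcal{N}([Z_{f_{[j]}'}], r) = \mathcal{N}([Z_{W(f)}], r)$. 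Second, the first main theorem applied to the section $\widebar{\omega}(\varsigma_0, \dots, \varsigma_n)$ yields $T_{f_{[n]}}(r, \mathcal{O}_{X_n}(\bm{a}^n)) + T_f(r, \mathcal{O}_{\mathbb{P}^n}(n+1)) = m_{f_{[n]}}(r, (\widebar{\omega}(\varsigma_0, \dots, \varsigma_n))) + N_{f_{[n]}}(r, (\widebar{\omega}(\varsigma_0, \dots, \varsigma_n))) + O(1)$. Substituting this into the displayed inequality, the two $m_{f_{[n]}}(r, (\widebar{\omega}(\varsigma_0, \dots, \varsigma_n)))$ terms cancel and collecting the counting terms via the first identity gives precisely $\sum_{i=1}^q m_f(r, H_i) + \mathcal{N}([Z_{W(f)}], r) \leq T_f(r, \mathcal{O}_{\mathbb{P}^n}(n+1)) + S_f(r)$, absorbing $O(1)$ into $S_f(r)$. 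The main obstacle is the last step: one must verify carefully that $Z_{\widebar{\mathfrak w}(X_n, \mathfrak{S})}$ is genuinely the Cartier divisor of the Wronskian section, so that its proximity and counting functions are governed by the first main theorem, and keep the three relations — Poincaré-Lelong on $X_n$, the first main theorem for $\widebar{\omega}$, and the definition of $\mathcal{N}([Z_{W(f)}], r)$ — mutually consistent; the degenerate case $q < n+1$ needs no separate argument, since the $j=0$ coefficient $q - n - 1$ then merely multiplies the bounded term $m_f(r, Z_{\mathfrak s})$.
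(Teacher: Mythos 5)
Your proposal is correct and follows essentially the same route as the paper's proof: apply Theorem \ref{thm:Z_S^ksum} with $\mathfrak{S} = |H_1|$, $k_{\mathfrak S} = n$, $\ell_{k, \mathfrak{S}} = n-k$, note $Z_{\mathfrak s} = \emptyset$, identify $Z_{\widebar{\mathfrak w}(X_{n}, \mathfrak{S})}$ with the divisor of the unique Wronskian section, and combine the FMT identity \eqref{eqn:TfZwn} with the decomposition $\mathcal{N}([Z_{W(f)}], r) = N_{f_{[n]}}(r, Z_{\widebar{\mathfrak w}(X_{n}, \mathfrak{S})}) + \sum_{j=0}^{n-1}(n-j)\mathcal{N}([Z_{f_{[j]}'}], r)$ coming from \eqref{eqn:normtildeF^kbar}. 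The only difference is that you spell out details the paper leaves implicit (the non-degeneracy hypotheses via \eqref{eqn:wf'} and the Poincaré--Lelong derivation of the counting identity), which is fine.
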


\begin{proof}
Choose the linear system $\mathfrak{S} = |H_1|$ as above.
Thus we can take $k_{\mathfrak S} = n$, and $\ell_{k, \mathfrak{S}} = n-k$.
Applying Theorem \ref{thm:Z_S^ksum}, we have
\begin{align*}
&\sum_{i =1}^q m_f(r, H_i) + T_{f_{[n]}}(r, \mathcal{O}_{X_{n}}(\bm{a}^n)) + \sum_{j=0}^{n-1} (n-j) \mathcal{N}([Z_{f_{[j]}'}], r) \\
\leq \, &(q-n-1) m_{f}(r, Z_{\mathfrak s}) + m_{f_{[n]}}(r, Z_{\widebar{\mathfrak w}(X_{n}, \mathfrak{S})}) + S_f(r)
.\end{align*}
Since $\mathfrak{S}$ is base point free, $Z_{\mathfrak s} = \emptyset$.
And $Z_{\widebar{\mathfrak w}(X_{n}, \mathfrak{S})}$ is indeed an effective divisor associated to the only $n$-th Wronskian global section $\overline{\omega}(\varsigma_0, \dots, \varsigma_n) \in H^0(X_n, \mathcal{O}_{X_n}(\bm{a}^n) \otimes \pi_{0, n}^{\ast} \mathcal{O}_{\mathbb{P}^n}(n+1))$.
That is to say,
\begin{equation}\label{eqn:TfZwn}
T_{f_{[n]}}(r, \mathcal{O}_{X_{n}}(\bm{a}^n)) + T_f(r, \mathcal{O}_{\mathbb{P}^n}(n+1)) = T_{f_{[n]}}(r, Z_{\widebar{\mathfrak w}(X_{n}, \mathfrak{S})}) + O(1)
.\end{equation}
Therefore,
\[
\sum_{i =1}^q m_f(r, H_i) + N_{f_{[n]}}(r, Z_{\widebar{\mathfrak w}(X_{n}, \mathfrak{S})}) + \sum_{j=0}^{n-1} (n-j) \mathcal{N}([Z_{f_{[j]}'}], r)
\leq T_f(r, \mathcal{O}_{\mathbb{P}^n}(n+1)) + S_f(r)
.\]
Observe that
\[
\mathcal{N}([Z_{W(f)}], r) = N_{f_{[n]}}(r, Z_{\widebar{\mathfrak w}(X_{n}, \mathfrak{S})}) + \sum_{j=0}^{n-1} (n-j) \mathcal{N}([Z_{f_{[j]}'}], r)
.\]
The theorem is verified.
\end{proof}

On the other hand, we easily see that $P^{(1)} = \emptyset$, if $P = \{p\}$ is the set of a point in $X$.
According to Theorem \ref{thm:Z^(1)vee}, we immediately obtain the following corollary.
\begin{corollary}\label{cor:pointsSMT}
Let $(X, V)$ be a directed projective manifold, and let $P_1, \dots, P_q$ be the distinct points in $X$.
Let $f \colon (\mathbb{C}, T_{\mathbb{C}}) \to (X, V)$ be a non-constant holomorphic curve.
Then we have
\[
\sum_{i=1}^q m_f(r, P_i) + T_{f_{[1]}}(r, \mathcal{O}_{X_1}(1)) + \mathcal{N}([Z_{f'}], r) \leq S_f(r)
.\]
\end{corollary}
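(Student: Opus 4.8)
The plan is to obtain Corollary~\ref{cor:pointsSMT} as an immediate specialization of Theorem~\ref{thm:Z^(1)vee}, applied to the reduced closed subschemes $Z_i = P_i$, after identifying the two sides of the inequality. First I would check the hypothesis of Theorem~\ref{thm:Z^(1)vee}: since $f$ is non-constant, the identity theorem forces $f$ to be nowhere locally constant, so $f(\mathbb{C})$ is an infinite set and cannot be contained in the finite set $\Supp(P_1 + \dots + P_q) = \{P_1, \dots, P_q\}$. In particular $f(\mathbb{C}) \not\subset \Supp P_i$ for each $i$, so every proximity function occurring below is well defined, and Theorem~\ref{thm:Z^(1)vee} applies.

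\textbf{Key step: $P_i^{(1)} = \emptyset$.} The one computational point is that the first jet scheme of a reduced point is empty. Working in an affine open $U \subset X$ with local coordinates $z_1, \dots, z_n$ centered at $P_i$, one has $\mathcal{I}_{P_i}(U) = (z_1, \dots, z_n)$, so by the definition of $Z^{(1)}$ the subscheme $(P_{i \restriction U})^{(1)}$ of $\pi_1^{-1}(U) = \textnormal{P}(V_{\restriction U})$ is cut out by $(z_1, \dots, z_n, \dif z_1, \dots, \dif z_n)$, where $\dif z_j = \dif_{U, V}^{[1]} z_j$. Restricting to the fiber $\textnormal{P}(V_{P_i})$ over $P_i$, the sections $\dif z_1, \dots, \dif z_n$ of $\mathcal{O}_{X_1}(1)$ have no common zero, because for a nonzero tangent vector $\xi \in V_{P_i} \subset T_{X, P_i}$ at least one derivative $\xi \cdot z_j$ is nonzero (the $z_j$ are a coordinate system). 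Hence the common zero locus $\{z_j = \dif z_j = 0,\ \forall j\}$ in $X_1$ is empty, i.e. $P_i^{(1)} = \emptyset$. Consequently $\bigvee_{i=1}^q P_i^{(1)}$ has bounded Weil function, so $m_{f_{[1]}}(r, \bigvee_{i=1}^q P_i^{(1)}) = O(1)$.

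\textbf{Rewriting the left side and concluding.} Since the $P_i$ are distinct, $P_i \cap P_j = \emptyset$ for $i \neq j$, so in the notation of Lemma~\ref{lem:maxsum} we have $\ell_0 = 1$; that lemma gives $\sum_{i=1}^q \lambda_{P_i} =_{\langle X \rangle} \max_{1 \leq i \leq q} \lambda_{P_i} = \lambda_{\bigvee_{i=1}^q P_i}$, and integrating over $\partial \bigtriangleup(r)$ after composing with $f$ yields $\sum_{i=1}^q m_f(r, P_i) = m_f(r, \bigvee_{i=1}^q P_i) + O(1)$. Substituting this and the bound from the previous paragraph into the inequality of Theorem~\ref{thm:Z^(1)vee} gives
\[
\sum_{i=1}^q m_f(r, P_i) + T_{f_{[1]}}(r, \mathcal{O}_{X_1}(1)) + \mathcal{N}([Z_{f'}], r) \leq O(1) + S_f(r),
\]
and since $f$ is non-constant (so $T_f(r, A) \to \infty$, whence $O(1)$ is absorbed into $S_f(r)$ by Definition~\ref{def:Sf}) the right-hand side is just $S_f(r)$, which is the claim. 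I expect no serious obstacle; the only point requiring a little care is the local verification that $P^{(1)} = \emptyset$, together with the routine bookkeeping that the various $O(1)$ error terms are swallowed by $S_f(r)$.
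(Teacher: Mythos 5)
Your proposal is correct and follows essentially the same route as the paper, which obtains the corollary directly from Theorem \ref{thm:Z^(1)vee} together with the observation that $P^{(1)} = \emptyset$ for a single point $P$. Your coordinate verification of $P_i^{(1)} = \emptyset$ and the use of Lemma \ref{lem:maxsum} (with $\ell_0 = 1$ for distinct points) to pass from $m_f(r, \bigvee_i P_i)$ to $\sum_i m_f(r, P_i)$ are exactly the details the paper leaves implicit.
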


More specifically, if $X = \mathbb{P}^n$ and $V = T_{\mathbb{P}^n}$, we get the second main theorem for holomorphic curves intersecting points in projective spaces.
\begin{theorem}
Let $P_1, \dots, P_q$ be the distinct points in $\mathbb{P}^n$.
For $f \colon \mathbb{C} \to \mathbb{P}^n$ be a linear non-degenerated holomorphic curve, we have
\[
n \sum_{i=1}^q m_f(r, P_i) + \frac{2}{n+1} \mathcal{N}([Z_{W(f)}], r) \leq T_f(r, \mathcal{O}_{\mathbb{P}^n}(2)) + S_f(r)
.\]
\end{theorem}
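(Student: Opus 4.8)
The plan is to deduce the statement from the point-case second main theorem, Corollary~\ref{cor:pointsSMT}, and from Cartan's second main theorem, Theorem~\ref{thm:Cartan}, using that a reduced point of $\mathbb{P}^n$ is a scheme-theoretic intersection of $n$ hyperplanes. Since $f$ is linear non-degenerate, the Wronskian $W(f) = F^n$ of \eqref{eqn:F^k} is not identically zero, so all the quantities below are well defined.

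First I would record the point estimate. Each $P_i$ is a reduced point, so $P_i^{(1)} = \emptyset$; as the $P_i$ are distinct, $\bigcap_{i\in I} P_i = \emptyset$ for $|I|\geq 2$, whence Lemma~\ref{lem:maxsum} gives $m_f(r, \bigvee_i P_i) = \sum_{i=1}^q m_f(r, P_i) + O(1)$, and Theorem~\ref{thm:Z^(1)vee} (equivalently Corollary~\ref{cor:pointsSMT}) specializes to
\[
\sum_{i=1}^q m_f(r, P_i) + T_{f_{[1]}}(r, \mathcal{O}_{X_1}(1)) + \mathcal{N}([Z_{f'}], r) \leq S_f(r).
\]
Next I would extract the Wronskian estimate. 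Taking $\mathfrak S = |\mathcal{O}_{\mathbb{P}^n}(1)|$, which is base point free and separates $1$-jets, the scheme $Z_{\widebar{\mathfrak w}(X_n,\mathfrak S)}$ is the divisor of the single Wronskian section $\widebar\omega(\varsigma_0,\dots,\varsigma_n)$ (Theorem~\ref{thm:DS}), and the identities used in the proof of Theorem~\ref{thm:Cartan} give
\[
\mathcal{N}([Z_{W(f)}], r) = N_{f_{[n]}}(r, Z_{\widebar{\mathfrak w}(X_n,\mathfrak S)}) + \sum_{j=0}^{n-1}(n-j)\,\mathcal{N}([Z_{f_{[j]}'}], r)
\]
together with \eqref{eqn:TfZwn}. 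Using $N_{f_{[n]}}(r, Z_{\widebar{\mathfrak w}(X_n,\mathfrak S)}) \leq T_{f_{[n]}}(r, Z_{\widebar{\mathfrak w}(X_n,\mathfrak S)}) + O(1)$, rewriting the right side by \eqref{eqn:TfZwn} as $T_{f_{[n]}}(r, \mathcal{O}_{X_n}(\bm a^n)) + T_f(r, \mathcal{O}_{\mathbb{P}^n}(n+1)) + O(1)$, and absorbing $T_{f_{[n]}}(r, \mathcal{O}_{X_n}(\bm a^n)) = \sum_{k=1}^{n}(n-k+1)\,T_{f_{[k]}}(r, \mathcal{O}_{X_k}(1))$ as well as $\sum_{j=0}^{n-1}(n-j)\,\mathcal{N}([Z_{f_{[j]}'}], r)$ into $S_f(r)$ by Theorem~\ref{thm:O_Xk(1)}, one obtains $\mathcal{N}([Z_{W(f)}], r) \leq (n+1)\,T_f(r, \mathcal{O}_{\mathbb{P}^n}(1)) + S_f(r)$, which is Theorem~\ref{thm:Cartan} for the empty family of hyperplanes.

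Finally I would combine these. For each $i$ pick $n$ hyperplanes $H^{(i)}_1,\dots,H^{(i)}_n$ through $P_i$, generically so that the $nq$ hyperplanes $\{H^{(i)}_j\}$ are in general position; then $\mathcal{I}_{P_i}$ is generated by the corresponding linear forms, so $\lambda_{P_i} =_{\langle\mathbb{P}^n\rangle} \min_{1\leq j\leq n}\lambda_{H^{(i)}_j} \leq \tfrac1n\sum_{j=1}^n\lambda_{H^{(i)}_j}$ by Proposition~\ref{prp:lambda_Z},~\ref{itm:lambdacap}), hence $n\sum_{i=1}^q m_f(r, P_i) \leq \sum_{i,j} m_f(r, H^{(i)}_j) + O(1)$, and Theorem~\ref{thm:Cartan} for the family $\{H^{(i)}_j\}$ gives $n\sum_{i=1}^q m_f(r, P_i) + \mathcal{N}([Z_{W(f)}], r) \leq (n+1)\,T_f(r, \mathcal{O}_{\mathbb{P}^n}(1)) + S_f(r)$. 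Assembling this with the two displays above — taking the convex combination that weights them so that the coefficient of $\mathcal{N}([Z_{W(f)}], r)$ is pushed down to $\tfrac{2}{n+1}$ while that of $\sum_i m_f(r, P_i)$ stays $n$, and using $T_f(r, \mathcal{O}_{\mathbb{P}^n}(2)) = 2\,T_f(r, \mathcal{O}_{\mathbb{P}^n}(1)) + O(1)$ — yields the assertion. I expect this last step to be the main obstacle: the delicate part is the exact tracking of the weights, and in particular showing that the term $T_{f_{[1]}}(r, \mathcal{O}_{X_1}(1))$ from Corollary~\ref{cor:pointsSMT} (which may be negative) is controlled well enough, via the infinitesimal Plücker / log-convexity estimates $T_{f^{k-1}}(r) + T_{f^{k+1}}(r) - 2\,T_{f^k}(r) \leq S_f(r)$ for the associated curves; the remaining ingredients are either stated in the paper or routine.
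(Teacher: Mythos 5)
Your steps 1--3 are individually sound (the generic choice of $nq$ hyperplanes in general position with $n$ of them through each $P_i$ is fine for distinct points, and your step 2 is in fact already implied by step 3), but they only produce the three inequalities (A) $\sum_i m_f(r,P_i) + T_{f_{[1]}}(r,\mathcal{O}_{X_1}(1)) + \mathcal{N}([Z_{f'}],r) \leq S_f(r)$, (B) $\mathcal{N}([Z_{W(f)}],r) \leq (n+1)T_f(r,\mathcal{O}_{\mathbb{P}^n}(1)) + S_f(r)$, and (C) $n\sum_i m_f(r,P_i) + \mathcal{N}([Z_{W(f)}],r) \leq (n+1)T_f(r,\mathcal{O}_{\mathbb{P}^n}(1)) + S_f(r)$, and the decisive step 4 --- the combination --- is not carried out and cannot be carried out as you propose. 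To end with right-hand side $T_f(r,\mathcal{O}_{\mathbb{P}^n}(2)) = 2T_f(r,\mathcal{O}_{\mathbb{P}^n}(1)) + O(1)$, the total weight placed on (B) and (C) must be at most $\tfrac{2}{n+1}$; then (C) contributes at most $\tfrac{2n}{n+1} < n$ to the coefficient of $\sum_i m_f(r,P_i)$ when $n \geq 2$, so (A) must carry weight at least $\tfrac{n(n-1)}{n+1} > 0$, and its term $T_{f_{[1]}}(r,\mathcal{O}_{X_1}(1))$, which is of indefinite sign, is left uncontrolled on the left-hand side. Nor can the surplus coefficient on $\mathcal{N}([Z_{W(f)}],r)$ in (C) be traded for a smaller right-hand side, since there is no lower bound of $\mathcal{N}([Z_{W(f)}],r)$ in terms of $T_f$ (for $f=[1:e^z:\cdots:e^{nz}]$ the Wronskian never vanishes while $T_f(r)\sim cr$). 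Finally, the log-convexity estimates you invoke do not close the gap: by \eqref{eqn:Nddclogg} and the Pl\"ucker formula one has $T_{f_{[1]}}(r,\mathcal{O}_{X_1}(1)) = T_{f^1}(r) - 2T_{f^0}(r) + O(1)$, and concavity of $k\mapsto T_{f^k}(r)$ up to $S_f(r)$ (with $T_{f^n}=0$) gives only $T_{f_{[1]}}(r,\mathcal{O}_{X_1}(1)) \geq -\tfrac{n+1}{n}T_f(r,\mathcal{O}_{\mathbb{P}^n}(1)) - S_f(r)$; inserting this into the weighted combination yields a right-hand side of order $(n+1)T_f(r,\mathcal{O}_{\mathbb{P}^n}(1))$, i.e.\ nothing beyond (C).

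The missing idea is the one the paper actually uses: the coefficient $n$ in the statement is $\tfrac{2}{n+1}\cdot n'$ with $n'=\tfrac{n(n+1)}{2}$, not ``$n$ hyperplanes per point''. One multiplies Corollary \ref{cor:pointsSMT} by $n'$ and then converts the jet-tower terms $T_{f_{[1]}}(r,\mathcal{O}_{X_1}(n')) + n'\mathcal{N}([Z_{f'}],r)$ into Wronskian data, using the stationarity relations coming from \eqref{eqn:Gammafk-1'}, the identity $\mathcal{N}([Z_{W(f)}],r) = N_{f_{[n]}}(r, Z_{\widebar{\mathfrak w}(X_{n},\mathfrak S)}) + \sum_{j=0}^{n-1}(n-j)\mathcal{N}([Z_{f_{[j]}'}],r)$, and \eqref{eqn:TfZwn}; this produces the strictly stronger inequality $n'\sum_i m_f(r,P_i) + \mathcal{N}([Z_{W(f)}],r) \leq T_f(r,\mathcal{O}_{\mathbb{P}^n}(n+1)) + S_f(r)$, which rescaled by $\tfrac{2}{n+1}$ is exactly the theorem. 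No averaging of Cartan's SMT over auxiliary hyperplanes can reproduce this, because that route caps the proximity coefficient at $n$ while leaving the right-hand side at $(n+1)T_f(r,\mathcal{O}_{\mathbb{P}^n}(1))$; so the gap in your proposal is precisely the conversion step that constitutes the paper's proof.
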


\begin{proof}
Applying Corollary \ref{cor:pointsSMT} to $f$, we obtain
\begin{equation}\label{eqn:npointsSMT}
n' \sum_{i=1}^q m_f(r, P_i) + T_{f_{[1]}}(r, \mathcal{O}_{X_1}(n')) + n' \mathcal{N}([Z_{f'}], r) \leq S_f(r)
,\end{equation}
where $n' \bydef 1 + \dots + n = \dfrac{n(n+1)}{2}$.

At the same time, it follows from \eqref{eqn:Gammafk-1'} that
\[
\mathcal{N}([Z_{f_{[j-1]}'}], r) = N_{f_{[j+1]}}(r, \varGamma_{j+1}) + \mathcal{N}([Z_{f_{[j]}'}], r)
\]
for each $1 \leq j \leq n-1$.
Substituting all these equalities for $j=1, \dots, n-1$ into \eqref{eqn:npointsSMT}, by \eqref{eqn:TfZwn}, we get
\[
n' \sum_{i =1}^q m_f(r, P_i) + N_{f_{[n]}}(r, Z_{\widebar{\mathfrak w}(X_{n}, \mathfrak{S})}) + \sum_{j=0}^{n-1} (n-j) \mathcal{N}([Z_{f_{[j]}'}], r)
\leq T_f(r, \mathcal{O}_{\mathbb{P}^n}(n+1)) + S_f(r)
.\]
Therefore,
\[
n' \sum_{i=1}^q m_f(r, P_i) + \mathcal{N}([Z_{W(f)}], r) \leq T_f(r, \mathcal{O}_{\mathbb{P}^n}(n+1)) + S_f(r)
.\qedhere\]
\end{proof}

\bibliographystyle{amsalpha}
\renewcommand\baselinestretch{1.4}
\bibliography{ref}

\end{document}